\newcommand* \bigcdot{\mathpalette \bigcdot@{.5}}
\newcommand* \bigcdot@[2]{\mathbin{\vcenter{\hbox{\scalebox{#2}{$\m@th#1\bullet$}}}}}
\theoremstyle{definition}
\newtheorem{Def}{Definition}[chapter]
\newtheorem{df}[Def]{Definition}
\newtheorem{thm}[Def]{Theorem}
\newtheorem{cor}[Def]{Corollary}
\newtheorem{lem}[Def]{Lemma}
\newtheorem{prop}[Def]{Proposition}
\newtheorem{rem}[Def]{Remark}
\newtheorem{ex}[Def]{Example}
\newtheorem{q}[Def]{Question}
\newtheorem{prob}[Def]{Problem}
\newtheorem{question}[Def]{Question}
\renewcommand{\thefootnote}{\fnsymbol{footnote}}
\numberwithin{section}{chapter}
\numberwithin{equation}{chapter}
\begin{document}
\frontmatter

\title[The Beurling-Lax-Halmos Theorem for Infinite Multiplicity]
  {\bigskip
  The Beurling-Lax-Halmos Theorem \\ for Infinite Multiplicity\\ {\rm\sl \Large October 14, 2019}}

\author{\Large Ra{\'u}l\ E.\ Curto}

\author{\Large In Sung Hwang}

\author{\Large Woo Young Lee}

\maketitle

%
%
%
%

\setcounter{page}{4}\tableofcontents

\chapter*{Abstract}

In this paper, we consider several questions emerging from the
Beurling-Lax-Halmos Theorem, which characterizes the shift-invariant
subspaces of vector-valued Hardy spaces. \ The Beurling-Lax-Halmos Theorem states that a backward shift-invariant subspace is a model space $\mathcal{H}(\Delta) \equiv H_E^2 \ominus \Delta H_{E}^2$, for some inner function $\Delta$. \ Our first question calls for a description of the set $F$ in $H_E^2$ such that $\mathcal{H}(\Delta)=E_F^*$, where $E_F^*$ denotes the smallest backward shift-invariant subspace containing the set $F$.

In our pursuit of a general solution to this question, we are naturally led to take into account a canonical decomposition of operator-valued strong $L^2$-functions. \ This decomposition reduces to the
Douglas-Shapiro-Shields factorization if the flip of the strong
$L^2$-function is of bounded type. \ (Given a strong $L^2$-function $\Phi$, we define its {\it flip} by $\breve\Phi(z):=\Phi(\overline z)$.) 

Next, we ask: Is every shift-invariant subspace the kernel of a (possibly unbounded) 
Hankel operator\,? \ As we know, the kernel of a Hankel operator is shift-invariant, so the above question is equivalent to seeking a solution to the equation $\ker H_{\Phi}^*=\Delta H_{E^{\prime}}^2$, where $\Delta$ is an inner function satisfying $\Delta^* \Delta=I_{E^{\prime}}$ almost everywhere on the unit circle $\mathbb{T}$ and $H_{\Phi}$ denotes the Hankel operator with symbol $\Phi$.

\ Consideration of the above question on the structure of shift-invariant subspaces leads us to study and coin a new notion of ``Beurling degree" for an inner function. \ We then establish a deep connection between the
spectral multiplicity of the model operator, i.e., the truncated
backward shift on the corresponding  model space, and the Beurling
degree of the corresponding characteristic function. \ 

At the same time, we consider the notion of meromorphic pseudo-continuations of bounded type for operator-valued functions, and then use this notion to study
the spectral multiplicity of model operators (truncated backward
shifts) between separable complex Hilbert spaces. \ In particular,
we consider the case of multiplicity-free: more precisely, for which
characteristic function $\Delta$ of the  model operator
 $T$ does it follow that $T$ is multiplicity-free, i.e.,
$T$ has multiplicity 1 ? \ We show that if $\Delta$ has a meromorphic
pseudo-continuation of bounded type in the complement of the closed
unit disk and the adjoint of the flip of $\Delta$ is an outer
function then $T$ is multiplicity-free. \ 

In the case when the
characteristic function $\Delta$ of the model operator $T$ has a
finite-dimensional domain (in particular, when $\Delta$ is an inner
matrix function) admitting a meromorphic pseudo-continuation of bounded type in the
complement of the closed unit disk, we prove that the spectral multiplicity of $T$ can be
computed from that of the induced $C_0$-contraction, and as a result the characteristic function is two-sided inner. \ Finally, by using the
preceding results we analyze left and right coprimeness, the
model operator, and an interpolation problem for operator-valued
functions.


\renewcommand{\thefootnote}{}
\footnote{ 2010 \textit{Mathematics Subject Classification.} Primary
46E40, 47B35, 30H10, 30J05; Secondary 43A15, 47A15
\\
\smallskip
\indent\textit{Key words.} The Beurling-Lax-Halmos Theorem, strong
$L^2$-functions, a canonical decomposition, the
Douglas-Shapiro-Shields factorization, a complementary factor of an
inner function, the degree of non-cyclicity, functions of bounded
type, the Beurling degree, the spectral multiplicity, the model operator,
characteristic inner functions, meromorphic
pseudo-continuation of bounded type, multiplicity-free.
\\
\smallskip
\indent
The work of the
second named author was supported by NRF(Korea) grant No.
2019R1A2C1005182.  The work of the third named author was supported
by NRF(Korea) grant No. 2018R1A2B6004116.
\\
\smallskip
\indent
Affiliations:\\
\indent Ra{\'u}l\ E.\ Curto: Department of Mathematics, University
of Iowa, Iowa City, IA 52242, U.S.A.
email: raul-curto@uiowa.edu\\
\indent In Sung Hwang: Department of Mathematics, Sungkyunkwan
University, Suwon 440-746, Korea,
email: ihwang@skku.edu\\
\indent Woo Young Lee: Department of Mathematics, Seoul National
University, Seoul 151-742, Korea, email: wylee@snu.ac.kr
}

\bigskip

%
%
%
%
%
%

\mainmatter


\maketitle

%
%
%
%

\chapter{Introduction}

\medskip

The celebrated Beurling Theorem \cite{Be} characterizes the
shift-invariant subspaces of the Hardy space. \ P.D. Lax \cite{La}
extended the Beurling Theorem to the case of finite
multiplicity, and proved the so-called Beurling-Lax Theorem. \ Subsequently, P.R.
Halmos \cite{Ha} gave a beautiful proof for the case of
infinite multiplicity, and thus established the so-called Beurling-Lax-Halmos
Theorem. \ Since then, the Beurling-Lax-Halmos Theorem has been
extended to various settings and extensively applied in
connection with model theory, system theory and the
interpolation problem by many authors (cf. \cite{ADR},
\cite{AS}, \cite{BH1}, \cite{BH2}, \cite{BH3}, \cite{Ca}, \cite{dR},
\cite{He}, \cite{Po}, \cite{Ri}, \cite{SFBK}). \ 

In this paper,
we will focus on a detailed analysis of the Beurling-Lax-Halmos Theorem for infinite
multiplicity. \ We obtain answers to several questions emerging
from the classical Beurling-Lax-Halmos Theorem and establish some new and exciting results, including: (i) a
canonical decomposition for operator-valued $L^2$-functions (in
fact, for a much bigger class of functions), (ii) the introduction of the Beurling degree of an inner
function, and (iii) the study of the spectral multiplicity of a model operator. \ 

Let
$\mathbb T$ be the unit circle in the
complex plane $\mathbb C$. \ Throughout this paper, whenever we deal
with operator-valued functions $\Phi$ on $\mathbb T$, we assume that
$\Phi(z)$ is a bounded linear operator between separable complex
Hilbert spaces for almost all $z \in \mathbb T$. \
For a separable complex Hilbert space
$E$, let $S_{E}$ be the shift operator on the $E$-valued Hardy space
$H^2_{E}$, i.e.,
$$
(S_{E}f)(z):=zf(z) \quad \hbox{for each} \ f \in H^2_E.
$$
The Beurling-Lax-Halmos Theorem states that every subspace $M$ invariant under $S_{E}$ (i.e., a closed subspace of $H^2_{E}$
such that $S_{E} f\in M$ for all $f \in M$) is of the form $\Delta
H^2_{E^\prime}$, where $E^\prime$ is a closed subspace of $E$ and
$\Delta$ is an {\it inner} function. \ As usual, $\Delta$ is an inner function if $\Delta(z)$ is
an isometric operator from $E^\prime$ into $E$ for almost all
$z\in\mathbb T$, i.e., $\Delta^*\Delta=I_{E^\prime}$ a.e. on
$\mathbb T$. \ If, in addition, $\Delta\Delta^*= I_{E}$ a.e. on $\mathbb
T$, then $\Delta$ is called a {\it two-sided} inner function. \

There exists an equivalent description of a closed subspace $M$ of $H^2_{E}$ which is invariant under
the backward shift operator $S^*_{E}$; that is, $M=\mathcal H(\Delta):= H^2_{E}\ominus \Delta H^2_{E^\prime}$ for
some inner function $\Delta$. \ The space $\mathcal H(\Delta)$ is often
called a model space or a de Branges-Rovnyak space \cite{dR},
\cite{Sa}, \cite{SFBK}. \ Thus, for a subset $F$ of $H^2_{E}$, if
$E_F^*$ denotes the smallest $S^*_{E}$-invariant subspace containing
$F$, i.e.,
$$
E_F^*:=\bigvee\bigl\{S^{*n}_{E} F: n\ge 0\bigr\},
$$
(where $\bigvee$ denotes the closed linear span), then $E_F^*=\mathcal
H(\Delta)$ for some inner function $\Delta$.  \ 

Now, given a
backward shift-invariant subspace $\mathcal H(\Delta)$, we may ask:

\begin{question} \label{mainq}
(i) \ What is
the smallest number of vectors in $F$ satisfying $\mathcal
H(\Delta)=E_F^*$ ? \newline
(ii) \ More generally, we are interested in the problem of describing the
set $F$ in $H^2_E$ such that $\mathcal H(\Delta)=E_F^*$. \label{mainmain}  
\end{question}

To examine Question \ref{mainq} we need to consider (possibly unbounded) linear operators
(defined on the unit circle) constructed by arranging the vectors in $F$ as column vectors. \ In other words, in what follows we will encounter bounded linear operators whose ``column" vectors are $L^2$-functions. \ (Since bounded linear operators
between separable Hilbert spaces can be represented as infinite matrices, considering the columns of such a matrix as column vectors of the operator seems well justified). \ This approach naturally leads to the notion of (operator-valued) strong $L^2$-function. \ This notion seems to have been introduced by V. Peller \cite[Appendix 2.3]{Pe} for the purpose of defining general symbols of vectorial Hankel operators. \ However, Peller’s book gives only the definition of a strong $L^2$-function, and does not describe the properties of such functions. \ Besides Peller’s book, we have not found any other references in the literature to strong $L^2$-functions. \ In Chapter 3 we study strong $L^2$-functions (including operator-valued $L^2$- and $L^{\infty}$-functions) and then derive some basic properties. 

Let $\mathcal B(D,E)$ denote the
set of all bounded linear operators between separable complex
Hilbert spaces $D$ and $E$. \ A {\it strong $L^2$-function}
$\Phi$ is a $\mathcal B(D,E)$-valued function defined almost
everywhere on the unit circle $\mathbb T$ such that $\Phi(\cdot)x\in
L^2_E$ for each $x\in D$. \ We can easily see that
 every operator-valued $L^p$-function ($p\geq 2$) is a strong
$L^2$-function (cf.~p.\pageref{lps}). \ Following V. Peller
\cite{Pe}, we write $L^2_s(\mathcal B(D,E))$ for the set of  strong
$L^2$-functions with values in $\mathcal B(D,E)$. \ 

The set
$L^2_s(\mathcal B(D,E))$ constitutes a nice collection of general symbols of
vectorial Hankel operators (see \cite{Pe}). \ Similarly, we write
$H^2_s(\mathcal B(D,E))$ for the set of strong $L^2$-functions with
values in $\mathcal B(D,E)$ such that $\Phi(\cdot)x\in H^2_E$ for
each $x\in D$. \  Of course, $H^2_s(\mathcal B(D,E))$ contains all
$\mathcal B(D,E)$-valued $H^2$-functions. \ In Chapter 3, we study
operator-valued Hardy classes as well as strong $L^2$-functions as a
groundwork of this paper. \

Question \ref{mainq} is closely related to a canonical
decomposition of strong $L^2$-functions. \ We first observe that if
$\Phi$ is an operator-valued $L^\infty$-function, then the kernel of
the Hankel operator $H_{\Phi^*}$ is shift-invariant. \ Thus by the
Beurling-Lax-Halmos Theorem, the kernel of the Hankel operator
$H_{\Phi^*}$ is of the form $\Delta H^2_{E^\prime}$ for some inner
function $\Delta$. \ If the kernel of the Hankel operator $H_{\Phi^*}$
is trivial, take $E^{\prime}=\{0\}$. \ Of course, $\Delta$ need not
be a two-sided inner function. \ In fact, we can show that if $\Phi$
is an operator-valued $L^{\infty}$-function and $\Delta$ is a
two-sided inner function, then the kernel of the Hankel operator
$H_{\Phi^*}$ is $\Delta H^2_{E^\prime}$ if and only if $\Phi$ is
expressed in the form
\begin{equation}\label{DSSF}
\Phi=\Delta A^*,
\end{equation}
where $A$ is an operator-valued $H^\infty$-function such that
$\Delta$ and $A$ are right coprime (see Lemma \ref{rem2.4}). \ The
expression (\ref{DSSF}) is called the (canonical) {\it Douglas-Shapiro-Shields
factorization} of an operator-valued $L^\infty$-function $\Phi$ (see
\cite{DSS}, \cite{FB}, \cite{Fu}; in particular, \cite{Fu} contains
many important applications of the Douglas-Shapiro-Shields
factorization to linear system theory). \ 

Let $\mathbb D$ be the
open unit disk in the complex plane $\mathbb C$. \ We recall that a
meromorphic function $\varphi:\mathbb D \rightarrow \mathbb C$ is
said to be {\it of bounded type} (or {\it in the Nevanlinna class})
if it is a quotient of two bounded analytic functions. \  A matrix
function of bounded type is defined by a matrix-valued function
whose entries are all of bounded type.\label{matrixbt} \  Very recently,
a systematic study on matrix-valued functions of bounded type was
undertaken in the research monograph \cite{CHL3}. \ It is also known that every
matrix-valued $L^\infty$-function whose adjoint is of bounded type
satisfies (\ref{DSSF}) (cf. \cite{GHR}). \ In fact, if
we extend the notion of ``bounded type'' for operator-valued
$L^\infty$-functions (as we will do in Definition \ref{dfbundd}
for a bigger class), then we may say that the expression
(\ref{DSSF}) characterizes the class of $L^\infty$-functions whose flips are
of bounded type, where the flip $\breve\Phi$ of $\Phi$ is defined by
$\breve\Phi(z):=\Phi(\overline z)$. \ From this viewpoint, we may
ask whether there exists an appropriate decomposition corresponding
to general $L^\infty$-functions, more generally, to strong
$L^2$-functions. \ The following problem is the first objective of this
paper,
\begin{prob}
Find a canonical decomposition of strong $L^2$-functions.
\end{prob}
To establish  a canonical decomposition of strong $L^2$-functions,
we need to introduce new notions; this will be done in Chapter 4. \
First of all, we coin the notion of ``complementary factor",
denoted by $\Delta_c$, of an inner function $\Delta$ with values in
$\mathcal B(D, E)$. \ This notion is defined by using the kernel of
$\Delta^*$, denoted by $\ker \Delta^*$, which is defined by the set
of vectors $f$ in $H^2_{E}$ such that $\Delta^*f=0$ a.e. on $\mathbb
T$. \ Moreover, the kernel of $H_{\Delta^*}$ can be represented by
orthogonally adding the complementary factor $\Delta_c$ to $\Delta$ (see
Lemma \ref{thm2.9}). \ We also employ a notion of ``degree of
non-cyclicity" on the set of all subsets (or vectors) of $H^2_E$,
which is a complementary notion of ``degree of cyclicity" due to
V.I. ~Vasyunin and N.K. ~Nikolskii \cite{VN}. \ The {\it degree of
non-cyclicity}, denoted by $\hbox{nc} (F)$, of subsets $F \subseteq
H^2_{E}$, is defined by the number
\begin{equation}\label{defncnumber}
\hbox{nc}(F):=\sup_{\zeta\in\mathbb D}\dim\bigl\{g(\zeta): g\in H^2_E
\ominus E_F^*\bigr\}.
\end{equation}
Thus, in comparison with the degree of cyclicity, the degree of
non-cyclicity admits $\infty$, which is often beneficial when trying to understand the Beurling-Lax-Halmos Theorem. \ Now, for a canonical
decomposition of strong $L^2$-functions $\Phi$, we are tempted to
guess that $\Phi$ can be factored as $\Delta A^*$ (where
$\Delta$ is a possibly one-sided inner function) as in the Douglas-Shapiro-Shields factorization, in which $\Delta$ is
two-sided inner. \ But this is not the case. In fact, we can see
that a canonical decomposition is actually affected by the kernel of
$\Delta^*$ through some examples (see p.~\pageref{7.ex}). \ Upon reflection,
we recognize that this is not an accident. \ This is accomplished in
Chapter 5. \ 

Theorem \ref{vectormaintheorem} realizes the idea
inside those examples: if $\Phi$ is a strong $L^2$-function with
values in $\mathcal B(D, E)$, then $\Phi$ can be expressed in the
form
\begin{equation}\label{maincanonical}
\Phi=\Delta A^*+B,
\end{equation}
where  $\Delta$ is an inner function with values in $\mathcal
B(E^{\prime}, E)$, $\Delta$ and $A$ are right coprime, $\Delta^*B
=0$, and $\hbox{\rm nc}\{\Phi_+\}\le \hbox{\rm dim}\, E^\prime$. \ ($\{\Phi_+\}$ denotes the set of all ``column" vectors of the
analytic part of $\Phi$). \ In particular, if $\dim E^\prime<\infty$ (for instance, if $\dim E<\infty${\rm )}, then the expression
{\rm (\ref{maincanonical})} is unique (up to a unitary constant
right factor) (see Theorem \ref{vectormaintheorem},
p.~\pageref{pfa}). \ The expression (\ref{maincanonical})  will be
called {\it a canonical decomposition} of a strong $L^2$-function
$\Phi$. \ The proof of Theorem \ref{vectormaintheorem} shows that
the inner function $\Delta$ in the canonical decomposition
(\ref{maincanonical}) of a strong $L^2$-function $\Phi$ can be
obtained from the equation
$$
\ker H_{\breve{\Phi}}^*=\Delta H^2_{E^{\prime}}
$$
which is guaranteed by the Beurling-Lax-Halmos Theorem (see
Corollary \ref{kerhadjoint}). \ In this case, the expression
(\ref{maincanonical}) will be called the {\it BLH-canonical
decomposition} of $\Phi$, recalling that $\Delta$ comes from
the Beurling-Lax-Halmos Theorem. \ However, if $\dim
E^\prime=\infty$ (even in the case when $\dim D<\infty$), then it is possible
to get another inner function $\Theta$ of a canonical decomposition
(\ref{maincanonical}) for the same function: in this case, $\ker
H_{\breve{\Phi}}^* \ne \Theta H^2_{E^{\prime\prime}}$. \ Therefore
the canonical decomposition of a strong $L^2$-function is not unique
in general (see Remark \ref{canoexam}). \ But the second assertion
of Theorem \ref{vectormaintheorem} says that if the codomain of
$\Phi(z)$ is finite-dimensional (in particular, if $\Phi$ is a
matrix-valued $L^2$-function), then the canonical decomposition
(\ref{maincanonical}) of $\Phi$ is unique; in other words, the inner
function $\Delta$ in (\ref{maincanonical}) should be obtained from
the equation $\ker H_{\breve{\Phi}}^*=\Delta H^2_{E^{\prime}}$. \
Thus the unique canonical decomposition (\ref{maincanonical}) of
matrix-valued $L^2$-functions is precisely the BLH-canonical
decomposition. \ 

Further, if the flip $\breve{\Phi}$ of $\Phi$ is of
bounded type then $B$ turns to be a zero function, so that the
decomposition (\ref{maincanonical}) reduces to the
Douglas-Shapiro-Shields factorization. \ In fact, the
Douglas-Shapiro-Shields factorization was given for
$L^\infty$-functions, but the case $B=0$ in (\ref{maincanonical}) is
available for strong $L^2$-functions. \ Moreover, the notion of
``bounded type" for matrix-valued functions is not appropriate for
operator-valued functions, i.e., the statement ``each entry of the
matrix is of bounded type" does not produce a natural extension to
operator-valued functions even though it has a meaning for infinite
matrices (remember that we deal with operators between separable Hilbert
spaces). \ 

Thus we need to introduce an appropriate notion of
``bounded type" for operator-valued functions. \ We will do this in
Section 4.4. \ Moreover, to guarantee the statement ``each entry is
of bounded type," we adopt the notion of ``meromorphic
pseudo-continuation of bounded type" in $\mathbb{D}^e := \{z:
1<|z|\le\infty\}$, which coincides with the notion of ``bounded
type" for matrix-valued functions (cf. \cite{Fu1}): This will be
done in Section 4.5. \

On the other hand, we recall that
the {\it spectral multiplicity} for a bounded linear operator $T$
acting on a separable complex Hilbert space $E$ is defined by the number
$\mu_T$:
$$
\mu_T :=\inf \dim F,
$$
where $F\subseteq E$, the infimum being taken over all generating
subspaces $F$, i.e., subspaces such that $M_F\equiv\bigvee
\{T^nF:n\ge 0\}=E$.  \ In the definition of the spectral
multiplicity, $F$ may be taken as a subset rather than a subspace. \
In this case, we may regard $\mu_T$ as the quantity $\inf\dim \,\bigvee \{f: f\in F\}$
such that $M_F=E$. \ Unless this leads to ambiguity, we will deal
with $M_F$ for subsets $F\subseteq E$. \ If $S_E$ is the shift
operator on $H^2_{E}$, then it is known that $\mu_{S_{E}}=\dim E$.
\ By contrast, if $S^*_{E}$ is the backward shift operator on
$H^2_E$, then $S^*_{E}$ has a cyclic vector, i.e.,
$\mu_{S^*_{E}}=1$. Moreover, the cyclic vectors of $S^*_{E}$ form a
dense subset of $H^2_{E}$ (see \cite{Ha2}, \cite{Ni1}, \cite{Wo}). \
We here observe that Question \ref{mainq}(i) is identical to the
problem of finding the spectral multiplicity of the truncated
backward shift operator $S_{E}^*\vert_{\mathcal H(\Delta)}$, i.e.,
the restriction of $S^*_{E}$ to its invariant subspace $\mathcal
H(\Delta)$. \ The second objective of this paper is to show that this
problem has a deep connection with a canonical decomposition of
strong $L^2$-functions involved with the inner function $\Delta$. \

To understand the smallest $S_{E}^*$-invariant subspace containing a
subset $F\subseteq H^2_{E}$, we need to consider the kernels of the
adjoints of unbounded Hankel operators with  strong $L^2$-symbols
involved with $F$. \ Thus we will deal with unbounded Hankel
operators $H_\Phi$ with strong $L^2$-symbols $\Phi$. \ However, the
adjoint of the unbounded Hankel operator need not be a Hankel
operator. \ But if $\Phi$ is an $L^\infty$-function then
$H_{\Phi^*}=H_{\breve\Phi}^*$, where $\breve\Phi$ is the flip of
$\Phi$. \ Thus for a bounded symbol $\Phi$, we may use the notations
$H_{\Phi^*}$ and $H_{\breve\Phi}^*$ interchangeably. \ By contrast,  for a strong $L^2$-function $\Phi$,
$H_{\Phi^*}$ may not be equal to $H_{\breve\Phi}^*$ even though
$\Phi^*$ is a strong $L^2$-function. \ In particular, the kernel of
an unbounded Hankel operator $H_{\Phi^*}$ is likely to be trivial
because it is defined on the dense subset of polynomials. \ From
this viewpoint, to avoid potential technical issues in our arguments, we will deal
with the operator $H_{\breve\Phi}^*$ in place of $H_{\Phi^*}$. \ In
spite of this, and since the kernel of the adjoint of an unbounded
operator is always closed, we can show that via the
Beurling-Lax-Halmos Theorem, the kernel of $H^*_{\breve{\Phi}}$ with
strong $L^2$-symbol $\Phi$ is still of the form $\Delta
H^2_{E^\prime}$ (see Corollary \ref{kerhadjoint}). \

We now consider several questions, which are of independent interest. \
This will be done in Chapter 4. The next question arises naturally
from the Beurling-Lax-Halmos Theorem. \

\begin{q}\label{q111}
Since the kernel of the Hankel operator $H_{\breve\Phi}^*$ is of the form
$\Theta H^2_{E^\prime}$, which property of $\Phi$ determines the dimension of the space
$E^\prime$\,? \ In particular, if $\Phi$ is an $n\times m$
matrix-valued $L^2$-function and $\dim E^\prime=r$, which property of $\Phi$ determines the number $r$ ?
\end{q}

To answer Question \ref{q111}, we employ the notion of degree of
non-cyclicity (\ref{defncnumber}). \ Indeed, we can show that if the
kernel of the adjoint of the Hankel operator $H_{\breve{\Phi}}$ is
$\Theta H^2_{E^\prime}$ for some inner function $\Theta$, then  the
dimension of $E^\prime$ can be computed by the degree of
non-cyclicity of $\{\Phi_+\}$ (see Theorem \ref{thm7566}). \ Here we
note that the definition of $\{\Phi_+\}$ depends on the orthonormal bases of the
domain $D$ of $\Phi(\cdot)$. \ However, the degree of non-cyclicity
of $\{\Phi_+\}$ is independent of the particular choice of orthonormal basis of $D$ (see Theorem \ref{thm7566}). \

When $\Delta$ is an inner function, we may ask when it is possible to complement $\Delta$ to a two-sided inner function by aid of an
inner function $\Omega$; in other words, when is $[\Delta, \Omega]$ a two-sided inner function, where $[\Delta(\cdot), \Omega(\cdot)]$ is
understood as an $1\times 2$ operator matrix defined on the
unit circle $\mathbb T$ ? \ (It turns out that this question
can be answered by using the Complementing Lemma; see \cite{VN} or
\cite{Ni1}). \ The following question refers to more general
cases.

\begin{q}\label{q222}
If $\Delta$ is an $n\times r$ inner matrix function, which condition
on $\Delta$ allows us to complement $\Delta$ to an $n\times (r+q)$
inner matrix function using an $n\times q$ inner matrix function
?
\end{q}

An answer to Question \ref{q222} is also subject to the degree of non-cyclicity
of $\{\Delta\}$ (see Corollary \ref{cor5.hh3333}). \

By the Beurling-Lax-Halmos Theorem, we saw that the kernel of the
adjoint of a Hankel operator with  a strong $L^2$-symbol is of the form
$\Delta H^2_{E^\prime}$ for some inner function $\Delta$. \ In view
of its converse, we may ask:

\begin{q}\label{q333}
Is every shift-invariant subspace $\Delta H^2_{E^\prime}$
represented by the kernel of
$H_{\breve{\Phi}}^*$ with some strong $L^2$-symbol $\Phi$ with values in
$\mathcal B(D,E)$ ?
\end{q}

Question \ref{q333} asks whether a strong $L^{2}$-solution
$\Phi$ always exists for the equation $\ker H_{\breve\Phi}^*=\Delta
H^2_{E^{\prime}}$ for a given inner function $\Delta$. \ In Theorem \ref{kkknvnv} we give an affirmative answer to Question \ref{q333}. \ The matrix-valued version of this result is as
follows (see Corollary \ref{existrem}): for a given $n\times r$
inner matrix function $\Delta$, there always exists a solution
$\Phi\in L^\infty_{M_{n\times m}}$ of the
equation $\ker H_{\breve\Phi}^*=\Delta H^2_{\mathbb C^r}$, for some $m\leq r+1$. \ In view of this, it is reasonable to ask whether such a solution $\Phi\in
L^2_{M_{n\times m}}$ exists for each $m=1,2,\cdots$. \ But the answer to this question is negative
(see Remark \ref{remark7.9}). \ 

It is then natural to ask how
to determine a possible dimension of $D$ for which there exists a
strong $L^2$-solution $\Phi$ (with values in $\mathcal B(D,E)$) of
the  equation $\ker H_{\breve\Phi}^*=\Delta H^2_{E^\prime}$. \ In
fact, we would like to ask what is the infimum of $\dim D$ that guarantees the existence of a strong $L^2$-solution $\Phi$. \ To find a way to determine such an infimum, we
introduce the notion of ``Beurling degree" for an inner function. \ We do this by employing the canonical decomposition of a strong $L^2$-function
induced by the given inner function: if $\Delta$ is an inner
function with values in $\mathcal B(E^\prime,E)$,  then the {\it
Beurling degree}, denoted by $\hbox{deg}_{B} (\Delta)$, of $\Delta$
is defined by the infimum of the dimension of the nonzero space $D$ for
which there exists a pair $(A,B)$ such that $\Phi\equiv \Delta A^*+B$
is a canonical decomposition of a strong $L^2$-function $\Phi$ with
values in $\mathcal B(D,E)$ (Definition \ref{df of degree}). \

We now recall that the Model Theorem\label{MT} (\cite{Ni1},
\cite{SFBK}) states that if a bounded operator $T$ acting on a Hilbert space $\mathcal{H}$ (in symbols, $T\in \mathcal {B(H)}$) is a contraction
(i.e., $||T||\le 1$) satisfying 
\begin{equation}\label{stronglimit}
\lim_{n\to\infty} T^n x=0\quad \hbox{for each $x\in \mathcal{H}$},
\end{equation}
then $T$ is unitarily equivalent to a truncated backward shift
$S_{E}^*\vert_{\mathcal H(\Delta)}$ for some inner function $\Delta$
with values in $\mathcal B(E^{\prime}, E)$, where
$E=\hbox{cl}\,\hbox{ran} (I-T^*T)$. \ In this case,
$S_{E}^*\vert_{\mathcal H(\Delta)}$ is called the the {\it model
operator} of $T$ and $\Delta$ is called the {\it characteristic
function} of $T$. \ \ We often write $T\in C_{0\, \bigcdot}$ for a contraction operator
$T\in\mathcal {B(H)}$ satisfying the condition (\ref{stronglimit}). \ 

We can now prove that if $\Delta$ is the
characteristic function of the model operator $T$ with values in
$\mathcal B(E^\prime,E)$, with $\dim E^\prime<\infty$ (in
particular, when $\Delta$ is an inner matrix function), then the spectral
multiplicity of the model operator is equal to the Beurling degree
of $\Delta$. \ Equivalently, given an inner function $\Delta$ with
values in $\mathcal B(E^\prime, E)$, with $\dim E^\prime<\infty$,
let $T:=S_{E}^*|_{\mathcal H(\Delta)}$. \ Then
\begin{equation}\label{main_222}
\mu_T=\hbox{\rm deg}_{B}(\Delta)
\end{equation}
(see Theorem \ref{maintheorem_sm}). \ The equality (\ref{main_222})
is the second objective of this paper. \ It is somewhat surprising that
the spectral multiplicity of the model operator can be computed by a
function-theoretic property of the corresponding characteristic
function. \

The third objective of this paper is to consider the case of $\mu_T=1$,
i.e., when the operator $T$ has a cyclic vector. \ In general, if
$T\in\mathcal {B(H)}$ is such that $\mu_T=1$, then $T$ is said to be
{\it multiplicity-free}. \ To avoid confusion, we regard $T$ to be
multiplicity-free if the operator $T$ acts on the zero space. \ Thus
we are interested in the following question on  the characteristic
function $\Delta$ of $T$.

\begin{q}\label{q444}
Let $T:=S_{E}^*|_{\mathcal H(\Delta)}$.
For which inner function $\Delta$ does it follow that $T$ is multiplicity-free?
\end{q}
To get an answer to Question \ref{q444}, we consider the notion of
``characteristic scalar" inner function, which
is a generalization of the case of two-sided inner matrix function
(and we often call it {\it square inner} matrix function) (cf.
\cite{Hel}, \cite{SFBK}, \cite{CHL3}). \ This will be done in
Section 7.1. \ If $\Delta$ is an inner function and $\Delta_c$ is
its complementary factor, we write $\Delta_{cc}\equiv
(\Delta_c)_c$, $\Delta_{ccc}\equiv (\Delta_{cc})_c,\cdots$, etc. for
the successive iterated complementary factors of $\Delta$. \ The key idea for
an answer to Question \ref{q444} is given in the following result. \ First, let 
$\widetilde\Delta
(z):=\Delta(\overline z)^*$. 

\medskip
\begin{adjustwidth}{0.7cm}{}
If an
inner function $\Delta$ has a meromorphic pseudo-continuation of
bounded type in $\mathbb{D}^e$ and if $\widetilde\Delta$ is an outer function,
then $\Delta_{cc}=\Delta$  (see Lemma \ref{ffbfbbfbfbfvvvbbfLll}).
\end{adjustwidth}

\medskip
\noindent We can then get an answer to Question \ref{q444}, as follows: 

\medskip
\begin{adjustwidth}{0.7cm}{}
If $T:=
S_{E}^*\vert_{\mathcal H(\Delta)}$, where $\Delta$ has a meromorphic
pseudo-continuation of bounded type in $\mathbb{D}^e$ and $\widetilde{\Delta}$
is an outer function, then $T$ is multiplicity-free (see Theorem
\ref{wwwkslgjho}).
\end{adjustwidth}  

\medskip
\noindent Recall that for an inner matrix function $\Delta$,
the condition ``$\Delta$ has a meromorphic pseudo-continuation of
bounded type'' in $\mathbb{D}^e$ is equivalent to the condition
``\,$\breve{\Delta}$ is of bounded type'' (see Corollary
\ref{cor512,222}). \ As a consequence, the matrix-valued version of Theorem
\ref{wwwkslgjho} can be rephrased as follows: If $\Delta$ is an inner matrix
function whose flip $\breve{\Delta}$ is of bounded type and if
$\Delta^t$, the transpose of $\Delta$, is an outer function, then
$T:=S_E^*|_{\mathcal H(\Delta)}$ is multiplicity-fee (see Corollary
\ref{cor5.16}). \ We may ask whether the converse of the key idea
(Lemma \ref{ffbfbbfbfbfvvvbbfLll}) for Theorem \ref{wwwkslgjho} is
true; i.e., if $\Delta$ is an inner function having a meromorphic
pseudo-continuation of bounded type in $\mathbb{D}^e$ and
$\Delta_{cc}=\Delta$, does it follow that $\widetilde\Delta$ is an
outer function? \ We can show that the answer to this question is
affirmative when $\Delta$ is an inner matrix function: i.e., if
$\Delta_{cc}=\Delta$, then $\widetilde\Delta$ is an outer function
when $\Delta$ is an inner matrix function whose flip $\breve \Delta$
is of bounded type (see Corollary \ref{cor33fbfbbfLll}). \

On the other hand, the theory of spectral multiplicity for
$C_0$-operators has been well developed in terms of their
characteristic functions (cf. \cite[Appendix 1]{Ni1}). \ However
this theory is not applied directly to $C_{0\, \bigcdot}$-operators, in
which cases their characteristic functions need not be two-sided
inner. \ The fourth objective of this paper is to show that if the
characteristic function of a $C_{0\, \bigcdot}$-operator $T$ has a
finite-dimensional domain and a
meromorphic pseudo-continuation of bounded type in $\mathbb{D}^e$, then its
spectral multiplicity can be computed by that of the $C_0$-operator
induced by $T$. \ This will be done in Section 7.3. \ The main
theorem of that section is as follows: Given an inner function
$\Delta$ with values in $\mathcal B(E^\prime, E)$, with $\dim\,
E^\prime<\infty$, let $T:=S_{E}^*\vert_{\mathcal H(\Delta)}$. If
$\Delta$ has a meromorphic pseudo-continuation of bounded  type in
$\mathbb{D}^e$, then
\begin{equation}\label{maineqqq}
\mu_T=\mu_{T_s},
\end{equation}
where $T_s$ is a $C_0$-contraction of the form $T_s:=
S^*_{E^{\prime}}|_{\mathcal H(\Delta_s)}$ with $\Delta_s:=
\widetilde{(\widetilde\Delta)^i}$. \
Hence in particular, $\mu_T\le
\dim\, E^{\prime}$. (Here $(\cdot)^i$ means the inner part of the
inner-outer factorization of the given $H^\infty$-function.) (see
Theorem \ref{hwysonghkkl}). \ 

In Theorem \ref{hwysonghkkl}, we note
that $\Delta_s\equiv \widetilde{(\widetilde\Delta)^i}$ is a
two-sided inner function (see Lemma \ref{lemdchgvfervbhyu}) (and
hence, $T_s$ belongs to the class $C_0$). \ Therefore
(\ref{maineqqq}) shows that the spectral multiplicity of a
$C_{0\, \bigcdot}$-operator can be determined by the induced
$C_0$-operator if its characteristic function has a meromorphic
pseudo-continuation of bounded  type in $\mathbb{D}^e$. \ On the other hand,
it was known (cf. \cite[p.~41]{Ni1}) that if
$T:=S_{E}^*\vert_{\mathcal H(\Delta)}$ for an inner function
$\Delta$ with values in $\mathcal B(E^\prime, E)$, with
$\dim\,E^\prime<\dim\,E$, then
\begin{equation}\label{muTTTT}
\mu_T\le \dim\, E^\prime +1;
\end{equation}
if further $\dim\,E^\prime=\dim\,E<\infty$, then
\begin{equation}\label{muTTTTe}
\mu_T\le \dim\, E^\prime.
\end{equation}
Thus, the equation (\ref{maineqqq}) shows that (\ref{muTTTTe}) still
holds without the assumption $\dim\, E^\prime=\dim\, E$. \

\bigskip

The organization of this paper is as follows. \ The main theorems of
this paper are Theorem \ref{vectormaintheorem} (a canonical
decomposition of strong $L^2$-functions), Theorem
\ref{maintheorem_sm} (the Beurling degree and the spectral
multiplicity), Theorem \ref{wwwkslgjho} (multiplicity-free model
operators), and Theorem \ref{hwysonghkkl} (the spectral multiplicity
of model operators). \ To prove those theorems, we need to consider
several questions emerging from the Beurling-Lax-Halmos Theorem. \ We also consider several auxiliary lemmas, and new
notions of complementary factors of inner functions, the degree
of non-cyclicity, bounded type strong $L^2$-functions, and the
Beurling degree of an inner function. \ 

In Chapter 2 we give the
notations and the basic definitions. \ In Chapter 3 we study
operator-valued strong $L^2$-functions and then prove some
properties which will be used in the sequel. \ In Section 4.1-4.3
we introduce notions of complementary factors of inner functions and
the degree of non-cyclicity, and then give answers to Question
\ref{q111} and Question \ref{q222}. \ In Section 4.4 we introduce
the notion of ``bounded type" strong $L^2$-functions, which correspond to the functions whose entries are of bounded type in the
matrix-valued case.  \ 

In Chapter 5 we establish a canonical
decomposition of a strong $L^2$-functions $\Phi$, which reduces to
the Douglas-Shapiro-Shields factorization of $\Phi$ if
$\breve{\Phi}$ is of bounded type. \ In Chapter 6 we give an answer
to Question \ref{q333} and then establish a connection between the
spectral multiplicity of the model operator and the Beurling degree
of the corresponding characteristic function. \ 

In Chapter 7 we
consider the spectral multiplicity of model operators by using the
notion of meromorphic pseudo-continuation of bounded type in the
complement of the closed unit disk and then give an answer to
Question \ref{q444}. \ In Chapter 8 by using the preceding results,
we analyze the left and right coprimeness, the model operator  and
an interpolation problem for operator-valued functions. \ In Chapter
9 we address some unsolved problems.

%
%
%
%
%

\chapter{Preliminaries}

In this chapter we provide notations and definitions, which will be
used in this paper.

We write $\mathbb D$ for the open unit disk in the complex plane
$\mathbb C$ and $\mathbb T$ for the unit circle in $\mathbb C$. \ To
avoid a confusion, we will write $z$ for points on $\mathbb T$ and
$\zeta$ for points in $\mathbb C \setminus \mathbb T$. \ For $\phi
\in L^2$, write
$$
\breve{\phi}(z):=\phi(\overline{z})\quad \hbox{and} \quad
\widetilde{\phi}(z):=\overline{\phi(\overline z)}.
$$
For
$\phi \in L^2$, write
$$
\phi_+:= P_+\phi \quad \hbox{and} \quad \breve{\phi}_- :=
P_-\phi,
$$
where $P_+$ and $P_-$ are the orthogonal projections from $L^2$ onto
$H^2$ and $L^2\ominus H^2$, respectively. \ Thus, we may write
$\phi=\breve\phi_- + \phi_+. $

Throughout the paper, we assume that
$$
\begin{aligned}
X\ \hbox{and}\ Y \ &\hbox{are complex Banach spaces;}\\
D\ \hbox{and}\ E \ &\hbox{are separable complex Hilbert spaces}.
\end{aligned}
$$
We write $\mathcal B(X, Y)$ for the set of all bounded linear operators
from $X$ to $Y$ and abbreviate $\mathcal B(X, X)$ to $\mathcal
B(X)$. \ For a complex Banach space $X$, we write $X^*$ for its dual. \
We write $M_{n\times m}$ for the set of $n\times m$ complex matrices,
and abbreviate $M_{n\times n}$ to $M_{n}$. \ We also write $\hbox{g.c.d.}(\cdot)$ and $\hbox{l.c.m.}(\cdot)$ denote the greatest
common inner divisor and the least common inner multiple,
respectively, while $\hbox{left-g.c.d.}(\cdot)$ and
$\hbox{left-l.c.m.}(\cdot)$ denote the greatest common left inner
divisor and the least common left inner multiple, respectively. \

If $A: D\to E$ is a linear operator whose domain is a subspace of
$D$, then $A$ is also a linear operator from the closure of the
domain of $A$ into $E$.  \ So we will only consider those $A$ such
that the domain of $A$ is dense in $D$. \ Such an operator $A$ is
said to be {\it densely defined}. \ If $A: D \rightarrow E $ is
densely defined, we write $\hbox{dom}\, A$, $\hbox{ker}\,A$, and
$\hbox{ran}\,A$ for the domain, the kernel, and the range of $A$,
respectively. \ If $A : D \rightarrow E $ is densely defined, write
$$
\hbox{dom}\,A^* = \bigl\{e \in E :  \langle Ad, e \rangle \ \hbox{is
a bounded linear functional for all} \ d \in \hbox{dom}\,A \bigr\}.
$$
Then there exists a unique $f\in E$ such that $\langle Ad, e
\rangle=\langle d,f \rangle$ for all $d\in \hbox{dom}\,A$.  \ Denote
this unique vector $f$ by $f\equiv A^*e$. \ Thus $\langle Ad,
e\rangle=\langle d, A^*e\rangle $ for all $d\in \hbox{dom}\,A$ and
$e\in\hbox{dom}\,A^*$. \ We call $A^*$ the adjoint of $A$. \ It is
well known from unbounded operator theory (cf. \cite{Go},
\cite{Con})
 that if $A$ is densely
defined, then $\ker A^*= (\hbox{ran}\, A)^\perp$, so that $\ker A^*$
is closed even though $\ker A$ may not be closed. \

We
recall (\cite{Ab}, \cite{Co}, \cite{GHR}, \cite{Ni1}) that a
meromorphic function $\phi:\mathbb D \rightarrow \mathbb C$ is said
to be of {\it bounded type} (or in the Nevanlinna class $\mathcal
N$) if there are functions $\psi_1,\psi_2\in H^\infty$ such that
$$
\phi(z)=\frac{\psi_1 (z)}{\psi_2 (z)}\quad\hbox{for almost all}\ z\in\mathbb{T}.
$$
It is well known that $\phi$ is of bounded type if and only if
$\phi=\frac{\psi_1}{\psi_2}$ for some $\psi_i \in H^p$ ($p>0, \
i=1,2$). \ If $\psi_2=\psi^i \psi^e$ is the inner-outer
factorization of $\psi_2$, then
$\phi=\overline{\psi^{i}}\frac{\psi_1}{\psi^e}$. \ Thus if $\phi\in
L^2$ is of bounded type, then $\phi$ can be written as
$$
\phi=\overline{\theta}a,
$$
where $\theta$ is inner, $a \in H^2$ and $\theta$ and $a$ are
coprime.

Write $\mathbb{D}^e:=\{z: 1<|z|\leq \infty\}$. \ For a function $g:\mathbb{D}^e \to
\mathbb C$, define a function $g_{\mathbb D}: \mathbb D \to \mathbb
C$ by\label{pddc}
$$
g_{\mathbb D}(\zeta):=\overline{g(1/\overline{\zeta})}
\quad(\zeta \in \mathbb D).
$$
For a function $g:\mathbb{D}^e \to \mathbb C$, we say that $g$ belongs to
$H^p(\mathbb{D}^e)$ if $g_{\mathbb D} \in H^p$ $(1\leq p\leq \infty)$. \ A
function $g:\mathbb{D}^e \to \mathbb C$ is said to be of {\it bounded type}
if $g_{\mathbb D}$ is of bounded type. \ If $f\in H^2$, then the
function $\hat f$ defined in $\mathbb{D}^e$ is called a {\it
pseudo-continuation} of $f$ if $\hat f$ is a function of bounded type
and  $\hat f(z)=f(z)$ for almost all $z\in\mathbb T$ (cf. \cite{BB},
\cite{Ni1}, \cite{Sh}). \ Then we can easily show that $\breve{f}$
is of bounded type if and only if $f$ has a pseudo-continuation $\hat
f$. \ In this case, $\hat{f}_{\mathbb D}(z)=\overline{f(z)}$ for
almost all $z\in \mathbb T$. \ In particular,
\begin{equation}\label{btp}
\phi\equiv\breve{\phi}_- +\phi_+\in L^2\ \hbox{is of bounded type}\
\Longleftrightarrow \ \hbox{$\phi_-$ has a pseudo-continuation.}
\end{equation}

We review here a few essential facts concerning vector-valued $L^p$-
and $H^p$-functions that we will used to begin with, using
\cite{DS}, \cite{Du}, \cite{FF}, \cite{HP}, \cite{Ho}, \cite{Ni1},
\cite{Ni2}, \cite{Pe}, \cite{Sa} as general references. \

Let $(\Omega, \mathfrak M, \mu)$ be a positive $\sigma$-finite
measure space and $X$ be a complex Banach space. \
 A function $f: \Omega \to X$ of the form $
f=\sum_{k= 1}^{\infty}x_k \chi_{\sigma_k} $ (where $x_k \in X,  \
\sigma_k \in \mathfrak M$ and $\sigma_k \cap \sigma_j=\emptyset$ for
$k \neq j$) is said to be {\it countable-valued}. \ A function
$f:\Omega \to X$ is called {\it weakly measurable} if the map
$s\mapsto \phi(f(s))$ is measurable for all $\phi \in X^*$ and is
called {\it strongly} {\it measurable} if there exist
countable-valued functions $f_n$ such that $ f(s)=\lim_n f_n(s)$ for
almost all $s\in\Omega$. \ It is known that when $X$ is separable,
\begin{itemize}
\item[(i)] if $f$ is weakly measurable, then $||f(\cdot)||$ is
measurable;
\item[(ii)] $f$ is strongly measurable if and only if it is weakly
measurable.
\end{itemize}
A countable-valued function
$f=\sum_{k=1}^{\infty} x_k \chi_{\sigma_k}$ is called ({\it Bochner})
{\it integrable}
if
$$\int_{\Omega}||f(s)||d\mu(s)<\infty$$
and its
integral is defined by
$$
\int_{\Omega}f d\mu:=\sum_{k= 1}^{\infty}x_k \mu(\sigma_k).
$$
A function $g:\Omega \to X$ is called {\it integrable} if there
exist countable-valued integrable functions $g_n$ such that
$g(s)=\lim_{n}g_n(s)$ for almost all $s\in\Omega$ and $
\lim_{n}\int_{\Omega}||g-g_n||d \mu=0. $ \ Then $\int_{\Omega}g d
\mu \equiv \lim_{n}\int_{\Omega}g_nd\mu$ exists and $\int_{\Omega}g
d \mu$  is called the  ({\it Bochner}) {\it integral} of $g$.  \ If
$f:\Omega \rightarrow X$ is integrable, then we can see that
\begin{equation}\label{usefulwww}
T\biggl(\int_{\Omega}f d\mu \biggr)=\int_{\Omega}(Tf)d \mu \quad
\hbox{for each} \  T\in\mathcal B(X,Y).
\end{equation}
Let $m$ denote the normalized Lebesgue measure on $\mathbb T$. \ For
a complex Banach space $X$ and $1\le p\le \infty$, let
$$
L^p_X \equiv L^p(\mathbb T, X):=\bigl\{ f:\mathbb T \to X: f \
\hbox{is strongly measurable and} \ ||f||_p <\infty \bigr\},
$$
where
$$
||f||_p\equiv ||f||_{L^p_X}:=
\begin{cases}
\biggl( \int_{\mathbb T}||f(z)||_X^p
dm(z)\biggr)^{\frac{1}{p}}\ \
           (1\le p<\infty);\\
{\rm ess\ sup}_{z\in\mathbb T}\, ||f(z)||_X \ \ (p=\infty).
\end{cases}
$$
Then we can see that $L^p_X$ forms a Banach space. For $f \in
L^1_X$, the $n$-th Fourier coefficient of $f$, denoted by
$\widehat{f} (n)$, is defined by
$$
\widehat{f}(n):=\int_{\mathbb T} \overline z^n f(z)\, dm(z)\ \
\hbox{for each $n\in\mathbb Z$}.
$$
Also, $H^p_X\equiv H^p(\mathbb T, X)$ is defined by the set of $f\in
L^p_X$ with $\widehat{f}(n)=0$ for $n<0$. \ A function $f: \mathbb D
\to X$ is (norm) analytic if $f$ can be written as
$$
f(\zeta)=\sum_{n=0}^{\infty}x_n \zeta^n \quad(\zeta \in \mathbb D,
x_n \in X),
$$
Let $\hbox{Hol}(\mathbb D, X)$ denote the set of all analytic
functions $f: \mathbb D \to X$. \ Also we write $H^2(\mathbb D,X)$
for the set of all $f\in \hbox{Hol}(\mathbb D,X)$ satisfying
$$
||f||_{H^2(\mathbb D,X)}:=\sup_{0<r<1}\left(
\int_{\mathbb T} ||f(rz)||^2_X dm(z)\right)^{\frac{1}{2}}<\infty.
$$
Let $E$ be a separable complex Hilbert space. \ As in the
scalar-valued case, if $f\in H^2(\mathbb D,E)$, then there exists a
``boundary function" $bf\in H^2_E$ such that
$$
f(rz)=(bf \ast P_r)(z)\ \ (r\in [0,1)\ \hbox{and} \ z\in\mathbb T)
$$
(where $P_r$ denotes the Poisson kernel) and
$$
(bf)(z)=\lim_{rz\to z} f(rz)\ \ \hbox{nontangentially a.e. on $\mathbb T$}.
$$
Moreover, the mapping $f\mapsto bf$ is an isometric bijection (cf.
\cite[Theorem 3.11.7]{Ni2}). \ We conventionally identify
$H^2(\mathbb D,E)$ with $H^2_E\equiv H^2(\mathbb T,E)$. \ For $f,  g
\in L_E^2$ with a separable complex Hilbert space $E$, the inner
product $\langle f, \ g\rangle$ is defined by
$$
\bigl\langle f, \ g \bigr\rangle\equiv \bigl\langle f(z), \ g(z)
\bigr\rangle_{L^2_E} :=\int_{\mathbb T}\bigl\langle f(z), g(z)
\bigr\rangle_E dm(z).
$$
If $f,g\in L^2_X$ with
$X=M_{n\times m}$, then $\langle f, g\rangle=\int_{\mathbb T}
\hbox{tr}\, (g^*f)dm$.

\medskip

For a function $\Phi:\mathbb T \to \mathcal B(D,E)$,
write
$$
\Phi^*(z):=\Phi(z)^*\quad\hbox{for} \ z \in \mathbb T.
$$
A function $\Phi:\mathbb T \to \mathcal B(X, Y)$ is called {\it SOT
measurable} if $z \mapsto \Phi(z)x$ is strongly measurable for every
$x \in X$ and is called {\it WOT measurable} if $z\mapsto \Phi(z)x$
is weakly measurable for every $x \in X$.  \ We can easily check
that if $\Phi:\mathbb T \to \mathcal B(X, Y)$ is strongly
measurable, then $\Phi$ is SOT-measurable and if $D$ and $E$ are
separable complex Hilbert spaces then $\Phi: \mathbb T \to \mathcal
B(D, E)$ is SOT measurable if and only if $\Phi$ is WOT measurable.

\medskip

We then have:

\begin{lem}\label{remmeasurable}
If $\Phi:\mathbb T \to \mathcal B(D,E)$ is WOT measurable, then so
is $\Phi^*$.
\end{lem}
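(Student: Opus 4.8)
The plan is to unwind the definition of WOT measurability on both sides and then use the defining identity of the Hilbert-space adjoint together with the fact that complex conjugation preserves measurability; no serious machinery is needed.

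First I would record what must be shown. By definition, $\Phi$ being WOT measurable means that for every $x\in D$ the $E$-valued function $z\mapsto\Phi(z)x$ is weakly measurable; since $E$ is a separable complex Hilbert space, the Riesz representation theorem identifies $E^*$ with $E$, so this is equivalent to the scalar function $z\mapsto\langle\Phi(z)x,\,y\rangle_E$ being $m$-measurable for every $x\in D$ and every $y\in E$. Likewise, $\Phi^*:\mathbb T\to\mathcal B(E,D)$ is WOT measurable precisely when $z\mapsto\langle\Phi(z)^*y,\,x\rangle_D$ is $m$-measurable for every $y\in E$ and every $x\in D$. Thus it suffices to fix arbitrary $x\in D$ and $y\in E$ and prove measurability of the latter function.

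Next I would invoke the adjoint identity: for (almost) every $z\in\mathbb T$, by the definition of $\Phi(z)^*$ as the Hilbert-space adjoint of $\Phi(z)$,
\[
\langle\Phi(z)^*y,\,x\rangle_D=\langle y,\,\Phi(z)x\rangle_E=\overline{\langle\Phi(z)x,\,y\rangle_E}.
\]
By hypothesis $z\mapsto\langle\Phi(z)x,\,y\rangle_E$ is measurable, and since complex conjugation is a continuous (hence Borel) self-map of $\mathbb C$, the composition $z\mapsto\overline{\langle\Phi(z)x,\,y\rangle_E}$ is measurable as well. Hence $z\mapsto\langle\Phi(z)^*y,\,x\rangle_D$ is measurable, and as $x$ and $y$ were arbitrary, $\Phi^*$ is WOT measurable.

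There is essentially no obstacle here; the only point worth a moment's attention is the passage between ``weakly measurable as a vector-valued function'' and ``all inner products are measurable,'' which is exactly the content of the Riesz representation theorem for the Hilbert spaces $D$ and $E$. Everything else is a one-line computation, so the proof should be short.
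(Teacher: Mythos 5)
Your proof is correct and is essentially the same argument as the paper's: both reduce WOT measurability of $\Phi^*$ to measurability of $z\mapsto\langle\Phi^*(z)y,\,x\rangle$ and obtain it from the identity $\langle\Phi^*(z)y,\,x\rangle=\overline{\langle\Phi(z)x,\,y\rangle}$, using that conjugation preserves measurability. Your extra remark on the Riesz identification of $E^*$ with $E$ just makes explicit what the paper leaves implicit.
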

\begin{proof} Suppose that $\Phi$ is WOT measurable. \ Then the
function
$$
z \mapsto \overline{\bigl\langle \Phi^*(z) y, \
x\bigr\rangle}=\bigl\langle x, \  \Phi^*(z)y
\bigr\rangle=\bigl\langle \Phi(z)x, \  y \bigr\rangle
$$
is measurable for all $x \in D$ and $y\in E$.  \ Thus the function $
z \mapsto \bigl\langle \Phi^*(z)y, \ x\bigr\rangle $ is measurable
for all $x \in D$ and $y\in E$.
\end{proof}

\medskip

Let $\Phi:\mathbb T \to \mathcal B(D, E)$ be a WOT measurable
function. \ Then $\Phi$ is called {\it WOT integrable} if
$\bigl\langle \Phi(\cdot)x, y \bigr\rangle \in L^1$ for every $x \in
D$ and $y\in E$, and there exists an operator $U\in \mathcal B(D,E)$
such that $ \bigl\langle Ux, y\bigr\rangle=\int_{\mathbb T}
\bigl\langle \Phi(z)x, y \bigr\rangle dm(z)$. \ Also $\Phi$ is
called {\it SOT integrable} if $\Phi(\cdot)x$ is integrable for
every $x \in D$. \ In this case, the operator $ V: x\mapsto
\int_{\mathbb T}\Phi(z)x dm(z) $ is bounded, i.e., $V \in \mathcal
B(D, E)$. \  If $\Phi:\mathbb T \to \mathcal B(D, E)$ is SOT
integrable, then it follows from (\ref{usefulwww}) that for every $x
\in D$ and $y\in E$,
\begin{equation}\label{ffnskdll}
\biggl \langle \int_{\mathbb T}\Phi(z)xdm(z), \ y \biggr
\rangle=\int_{\mathbb T}\bigl\langle \Phi(z)x, \ y\bigr \rangle
dm(z),
\end{equation}
which implies that $\Phi$ is WOT integrable and that the SOT
integral of $\Phi$ is equal to the WOT integral of $\Phi$.

\medskip

We can say more:

\begin{lem}\label{remark3.3sdddf}
For $\Phi\in L^1_{\mathcal B(D,E)}$, the Bochner integral of $\Phi$
is  equal to the SOT integral of $\Phi$, in the sense that
$$
\left(\int_{\mathbb T}\Phi(z) dm(z)\right)x = \int_{\mathbb
T}\Phi(z)xdm(z) \quad \hbox{for all} \ x\in D.
$$
\end{lem}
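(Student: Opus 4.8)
The plan is to prove the identity by reducing to the already-established agreement of the SOT integral with the WOT integral, together with the classical fact that the Bochner integral of a $\mathcal B(D,E)$-valued $L^1$-function is WOT computable. First I would check that $\Phi\in L^1_{\mathcal B(D,E)}$ is SOT integrable: for each fixed $x\in D$, the map $z\mapsto \Phi(z)x$ is strongly measurable (since $\Phi$ is strongly measurable and evaluation at $x$ is a bounded operator, using that strong measurability passes through bounded maps), and $\|\Phi(z)x\|_E\le \|\Phi(z)\|\,\|x\|$ is integrable because $\|\Phi(\cdot)\|\in L^1$; hence $\Phi(\cdot)x\in L^1_E$ is Bochner integrable and the map $V:x\mapsto\int_{\mathbb T}\Phi(z)x\,dm(z)$ is defined on all of $D$. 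Boundedness of $V$ follows from $\|Vx\|\le\int_{\mathbb T}\|\Phi(z)\|\,dm(z)\,\|x\|$, so $V\in\mathcal B(D,E)$ and $\Phi$ is SOT integrable in the sense of the excerpt.

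Next I would identify the Bochner integral $U:=\int_{\mathbb T}\Phi(z)\,dm(z)\in\mathcal B(D,E)$ with $V$. The natural route is to test both operators against vectors. By (\ref{usefulwww}) applied to the bounded functional "evaluate at $x$ then pair with $y$", i.e. $T\colon \mathcal B(D,E)\to\mathbb C$, $T(S)=\langle Sx,y\rangle$, one gets
$$
\bigl\langle Ux,y\bigr\rangle = T\Bigl(\int_{\mathbb T}\Phi(z)\,dm(z)\Bigr)=\int_{\mathbb T}T(\Phi(z))\,dm(z)=\int_{\mathbb T}\bigl\langle\Phi(z)x,y\bigr\rangle\,dm(z)
$$
for all $x\in D$, $y\in E$. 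On the other hand, by (\ref{ffnskdll}) the operator $V$ satisfies exactly the same pairing identity $\langle Vx,y\rangle=\int_{\mathbb T}\langle\Phi(z)x,y\rangle\,dm(z)$. Since a bounded operator in $\mathcal B(D,E)$ is determined by the numbers $\langle\,\cdot\,x,y\rangle$ over all $x\in D$, $y\in E$, we conclude $U=V$, which is precisely the asserted equality $\bigl(\int_{\mathbb T}\Phi(z)\,dm(z)\bigr)x=\int_{\mathbb T}\Phi(z)x\,dm(z)$ for all $x\in D$.

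The one point requiring a little care — the main (mild) obstacle — is justifying the application of (\ref{usefulwww}) with $T$ the evaluation-and-pairing functional: (\ref{usefulwww}) was stated for $T\in\mathcal B(X,Y)$ between Banach spaces, and here we want $X=\mathcal B(D,E)$, $Y=\mathbb C$, with $T(S)=\langle Sx,y\rangle$, which is indeed a bounded linear functional on $\mathcal B(D,E)$ of norm $\le\|x\|\,\|y\|$; so (\ref{usefulwww}) applies verbatim. The only other thing to note is that $\Phi\in L^1_{\mathcal B(D,E)}$ really is Bochner integrable in the first place (so that $U$ exists), which is immediate from the definition of $L^1_X$ since $\|\Phi(\cdot)\|_{\mathcal B(D,E)}\in L^1$. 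With these observations in hand the proof is a short two-step argument: (i) $\Phi$ is SOT integrable, giving $V\in\mathcal B(D,E)$; (ii) both $U$ and $V$ have the same weak matrix entries, hence coincide.
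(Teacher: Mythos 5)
Your proof is correct, but it takes a different route from the one the paper gives (the paper's detailed argument, relegated to an appendix, approximates $\Phi$ by countable-valued integrable functions $\Phi_n$ with $\int_{\mathbb T}\|\Phi-\Phi_n\|\,dm\to 0$, observes that $\bigl(\int\Phi_n\,dm\bigr)x=\int\Phi_n(\cdot)x\,dm$ holds trivially for countable-valued functions, and passes both sides to the limit). You instead reduce the operator identity to scalars: you verify that $\Phi$ is SOT integrable, apply (\ref{usefulwww}) with the bounded functional $S\mapsto\langle Sx,y\rangle$ on $\mathcal B(D,E)$ to compute the weak matrix entries of the Bochner integral, invoke (\ref{ffnskdll}) for the weak matrix entries of the SOT integral, and conclude by the fact that an operator between Hilbert spaces is determined by the numbers $\langle\,\cdot\,x,y\rangle$. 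Your treatment of the one delicate point is right: (\ref{usefulwww}) is stated for $T\in\mathcal B(X,Y)$ between Banach spaces, and $\mathbb C$ is a perfectly good $Y$, so the evaluation-and-pairing functional is admissible; likewise the strong measurability and integrability of $\Phi(\cdot)x$ needed for (\ref{ffnskdll}) are established correctly. What each approach buys: yours is shorter and recycles lemmas already in the paper, at the cost of passing through duality and the Hilbert-space determination of operators by their sesquilinear form; the paper's approximation argument is more pedestrian but works directly from the definition of the Bochner integral and exhibits the equality as a limit of exact finite-rank identities, without needing to test against vectors at all.
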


\begin{proof}
This follows from a straightforward calculation.
\end{proof}

%
%
%
%
%
%

\chapter{Strong $L^2$-functions}

To examine Question \ref{mainq}, we need to consider
operator-valued functions defined on the unit circle constructed by
arranging the vectors in $F$ as their column vectors. \ Using this viewpoint, we will consider operator-valued functions  whose ``column"
vectors are $L^2$-functions. \ Note that (bounded linear) operators
between separable Hilbert spaces may be represented as infinite
matrices, so that column vectors of operators are well
justified. \ This viewpoint leads us to define (operator-valued) strong
$L^2$-functions. \ In this chapter we consider strong
$L^2$-functions and then derive some of their properties.

The terminology of a ``strong $H^2$-function" is reserved for the
operator-valued functions on the unit disk $\mathbb D$, following to
N.K. Nikolskii \cite{Ni1}: A function $\Phi:\mathbb D\to \mathcal
B(D,E)$ is called a {\it strong $H^2$-function} if $\Phi(\cdot)x\in
H^2(\mathbb D, E)$ for each $x\in D$. \ To describe this in detail, and to explain the crucial role that strong $L^2$-functions play in our theory, we need to introduce some additional notation and terminology.

\bigskip
Let $L^{\infty}(\mathcal B(D,E))$ be the space of all bounded (WOT)
measurable $\mathcal B(D,E)$-valued functions on $\mathbb T$. \  For
$\Psi \in L^{\infty}(\mathcal B(D,E))$, define
$$
||\Psi||_{\infty} :=\hbox{ess sup}_{z \in \mathbb T}||\Psi(z)||.
$$
For $1\le p< \infty$, we define the class $L^p_s(\mathcal
B(D,E))\equiv L^p_s(\mathbb T, \mathcal B(D,E))$
as the set of all (WOT) measurable $\mathcal
B(D,E)$-valued functions $\Phi$ on $\mathbb T$
such that $\Phi(\cdot)x\in L^p_E$.
A function $\Phi\in L^p_s(\mathcal B(D,E))$ is called a {\it strong
$L^p$-function}. \ We claim that
\begin{equation}\label{lps}
L^p_{\mathcal B(D, E)}\subseteq L^{p}_s(\mathcal B(D,E)):
\end{equation}
indeed if $\Phi \in L^p_{\mathcal B(D, E)}$, then for all $x \in D$
with $||x||=1$,
$$
||\Phi(z)x||_{L^p_E}^p=\int_{\mathbb T}||\Phi(z)x||_E^pdm(z) \leq
\int_{\mathbb T}||\Phi(z)||_{\mathcal B(D, E)}^pdm(z)=
||\Phi||_{L^p_{\mathcal B(D, E)}}^p,
$$
which gives (\ref{lps}). \ Also we can easily check that
\begin{equation}\label{linfty}
L^{\infty}_{\mathcal B(D, E)}\subseteq L^\infty (\mathcal B(D, E))
\subseteq L^{p}_s(\mathcal B(D,E)).
\end{equation}

\bigskip

\begin{rem}\label{ex3.3}
We may define a norm on $L^p_s(\mathcal B(D,E))$: i.e.,
$$
||\Phi||_{p}^{(s)}:=\hbox{sup}\Bigl\{||\Phi(z)x||_{L^p_E}: x\in D \
\hbox{with} \ ||x||=1\Bigr\}.
$$
Then $L^{p}_s(\mathcal B(D,E))$ forms a normed space for $1\le
p<\infty$. \ Moreover, we can show that $||\Phi||_{p}^{(s)}$ is a
complete norm for $1\le p<\infty$, i.e., $L^{p}_s(\mathcal B(D,E))$
is a Banach space for $1\le p<\infty$. \  However, in general, we
cannot guarantee that $||\Phi||_{p}^{(s)}=||\Phi||_{L^p_{\mathcal
B(D, E)}}$. \ To see this, let $C$ be the upper unit circle and
$1\le p< \infty$. \ Put
$$
\Phi:=\begin{bmatrix} \chi_{C}&0\\0&1-\chi_{C}\end{bmatrix}.
$$
Then $||\Phi(z)||=1$ for all $z\in\mathbb T$, so that
$||\Phi||_{L^p_{M_2}}=1$. \ Let $x:=[\alpha,\ \beta]^t$ be a unit
vector in $\mathbb C^2$.  \ Then we have that
$$
||\Phi(z)x||_{L^{p}_{\mathbb C^2}}^p=\int_{\mathbb
T}\bigl|\bigl|[\alpha\chi_C, \ \beta
(1-\chi_C)]^t\bigr|\bigr|^pdm(z)
=\frac{1}{2}(|\alpha|^p+|\beta|^p)\leq\frac{1}{\sqrt{2}},
$$
which gives $ ||\Phi||_p^{(s)}\neq ||\Phi||_{L^p_{M_2}}$.
\qed
\end{rem}

\bigskip

If $\Phi\in L^1_s (\mathcal B(D,E))$ and $x \in D$, then
$\Phi(\cdot)x\in L^1_E$. \ Thus the $n$-th Fourier coefficient
$\widehat{\Phi(\cdot)x}(n)$ of $\Phi(\cdot)x$ is given by
$$
\widehat{\Phi(\cdot)x}(n)=\int_{\mathbb
T}\overline{z}^n\Phi(z)x\,dm(z).
$$
We now define the $n$-th Fourier coefficient of $\Phi\in L^1_s (\mathcal
B(D,E))$, denoted by $\widehat \Phi (n)$, by
$$
\widehat\Phi(n)x:=\widehat{\Phi(\cdot)x}(n) \quad (n\in\mathbb Z, \
x\in D).
$$
We define
$$
H^2_s(\mathcal B(D,E))\equiv H^2_s(\mathbb T, \mathcal B(D,E))
:=\bigl\{\Phi\in L^2_s(\mathcal B(D,E))
: \ \widehat\Phi(n)=0\ \hbox{for} \ n<0\bigr\},
$$
or equivalently, $H^2_s(\mathcal B(D,E))$ is the set of all WOT
measurable functions $\Phi$ on $\mathbb T$ such that
$\Phi(\cdot)x\in H^2_E$ for each $x\in D$. \ We also define
$$
H^{\infty}(\mathcal B(D,E))\equiv H^\infty (\mathbb T, \mathcal B(D,E))
:=\bigl\{\Phi\in
L^{\infty}(\mathcal B(D,E)): \ \widehat\Phi(n)=0\ \hbox{for} \ n<0\bigr\}.
$$
On the other hand, we define $H^\infty(\mathbb D, \mathcal B(D,E))$
as the set of all analytic functions $\Phi:\mathbb D\to \mathcal B(D,E)$
satisfying
$$
||\Phi||_{H^\infty}:=\sup_{\zeta \in \mathbb D}||\Phi(\zeta)||.
$$
If $D$ and $E$ are separable Hilbert spaces,
we conventionally identify $H^\infty(\mathbb D, \mathcal B(D,E))$
with $H^\infty (\mathbb T, \mathcal B(D,E))$ (cf. \cite[Theorem 3.11.10]{Ni2}).

On the other hand, by (\ref{lps}), we have $L^1_{\mathcal
B(D,E)}\subseteq L^1_s (\mathcal B(D,E))$.  \ Thus if $\Phi\in
L^1_{\mathcal B(D,E)}$, then there are two definitions of
the $n$-th Fourier coefficient of $\Phi$. \ However, we can, by
Lemma \ref{remark3.3sdddf}, see that the $n$-th Fourier coefficient
of $\Phi$ as an element of $L^1_{\mathcal B(D,E)}$ coincides with
the $n$-th Fourier coefficient of $\Phi$ as an element of
$L^1_s(\mathcal B(D,E))$.

\bigskip

We now denote by $H^2_s(\mathbb D, \mathcal B(D,E))$ the set of all strong
$H^2$-functions with values in $\mathcal B(D,E)$.

We then have:

\begin{lem}\label{fffflll}  $H^2(\mathbb D, \mathcal B(D, E))\subseteq
H^2_s(\mathbb D, \mathcal B(D, E))$.
\end{lem}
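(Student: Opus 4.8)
The plan is to unravel the two relevant definitions and reduce the inclusion to a pointwise-in-$x$ computation. Fix $\Phi\in H^2(\mathbb D,\mathcal B(D,E))$. By definition $\Phi\in\mathrm{Hol}(\mathbb D,\mathcal B(D,E))$, so $\Phi(\zeta)=\sum_{n=0}^{\infty}A_n\zeta^n$ with $A_n\in\mathcal B(D,E)$ and the power series converging in operator norm locally uniformly on $\mathbb D$; moreover $\|\Phi\|_{H^2(\mathbb D,\mathcal B(D,E))}=\sup_{0<r<1}\bigl(\int_{\mathbb T}\|\Phi(rz)\|^2\,dm(z)\bigr)^{1/2}<\infty$. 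To show $\Phi\in H^2_s(\mathbb D,\mathcal B(D,E))$ I must check that $\Phi(\cdot)x\in H^2(\mathbb D,E)$ for every $x\in D$, and this splits into an analyticity statement and a norm estimate.

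First I would verify that $\Phi(\cdot)x\in\mathrm{Hol}(\mathbb D,E)$. For fixed $x\in D$ the evaluation map $T_x\colon\mathcal B(D,E)\to E$, $A\mapsto Ax$, is bounded linear with $\|T_x\|\le\|x\|$, and composing a norm-analytic Banach-space-valued function with a bounded linear operator again yields a norm-analytic function; concretely $\Phi(\zeta)x=\sum_{n=0}^{\infty}(A_nx)\zeta^n$ converges in the norm of $E$ locally uniformly on $\mathbb D$ because $\|A_nx\|_E\le\|A_n\|\,\|x\|$. Next, using the pointwise bound $\|\Phi(rz)x\|_E\le\|\Phi(rz)\|_{\mathcal B(D,E)}\,\|x\|$ valid for all $r\in[0,1)$ and $z\in\mathbb T$, I integrate and take the supremum over $r$ to obtain
\[
\|\Phi(\cdot)x\|_{H^2(\mathbb D,E)}^2
=\sup_{0<r<1}\int_{\mathbb T}\|\Phi(rz)x\|_E^2\,dm(z)
\le\|x\|^2\,\|\Phi\|_{H^2(\mathbb D,\mathcal B(D,E))}^2<\infty .
\]
Hence $\Phi(\cdot)x\in H^2(\mathbb D,E)$ for each $x\in D$, which is precisely the defining property of a strong $H^2$-function, so $\Phi\in H^2_s(\mathbb D,\mathcal B(D,E))$.

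There is essentially no obstacle in this argument; the only point deserving a word of care is the analyticity claim, namely that evaluation at $x$ preserves norm-analyticity, and this is immediate either from the power-series representation together with the submultiplicative estimate $\|A_nx\|\le\|A_n\|\,\|x\|$ or from the general principle that bounded operators send analytic vector-valued functions to analytic vector-valued functions. Finally, under the identification of $H^2(\mathbb D,E)$ with $H^2_E$ via boundary functions recalled in Chapter~2, the same estimate applied to $b\Phi$ shows the inclusion is compatible with passing to the circle, so no ambiguity arises from this identification.
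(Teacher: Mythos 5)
Your proof is correct and follows essentially the same route as the paper: expand $\Phi$ in its operator-valued power series, note that $\Phi(\zeta)x=\sum_n(A_nx)\zeta^n$ is analytic in $E$, and then use the pointwise bound $\|\Phi(rz)x\|_E\le\|\Phi(rz)\|\,\|x\|$ to get $\|\Phi(\cdot)x\|_{H^2(\mathbb D,E)}\le\|\Phi\|_{H^2(\mathbb D,\mathcal B(D,E))}\|x\|$. The closing remark about boundary functions is harmless but not needed for the lemma.
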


\begin{proof} Let $\Phi \in H^2(\mathbb
D, \mathcal B(D, E))$. Then $\Phi$ can be written as
$$
\Phi(\zeta)=\sum_{n=0}^{\infty}A_n \zeta^n \quad(A_n\in  \mathcal
B(D, E)).
$$
Thus for each $x \in D$,
$$
\Phi(\zeta)x=\sum_{n=0}^{\infty}(A_nx) \zeta^n \in
\hbox{Hol}(\mathbb D, E).
$$
Observe that
$$
\aligned ||\Phi(\cdot)x||_{H^2(\mathbb D, E)}^2&=\hbox{sup}_{0<
r<1}\int_{\mathbb T}||\Phi(rz)x||_{E}^2 dm(z)\\
&\leq||\Phi||_{H^2(\mathbb D,
\mathcal B(D, E))}^2 \cdot ||x||_D^2\\
&<\infty,
\endaligned
$$
which implies $\Phi\in H^2_s(\mathbb D, \mathcal B(D, E))$.
\end{proof}

\medskip

\begin{thm}\label{fffbgdnfflll}
If $\dim D<\infty$, then
$$
H^2(\mathbb D, \mathcal B(D, E))= H^2_s(\mathbb D, \mathcal B(D,E)),
$$
where the equality is set-theoretic.
\end{thm}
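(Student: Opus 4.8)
The plan is to show the only nontrivial inclusion, namely $H^2_s(\mathbb D,\mathcal B(D,E))\subseteq H^2(\mathbb D,\mathcal B(D,E))$ when $\dim D<\infty$, since the reverse inclusion is already Lemma \ref{fffflll}. Fix an orthonormal basis $e_1,\dots,e_d$ of $D$ (where $d=\dim D<\infty$). Given a strong $H^2$-function $\Phi:\mathbb D\to\mathcal B(D,E)$, each coordinate function $f_k:=\Phi(\cdot)e_k$ lies in $H^2(\mathbb D,E)$ by hypothesis, so each $f_k$ has a power series expansion $f_k(\zeta)=\sum_{n\ge 0}c_{k,n}\zeta^n$ with $c_{k,n}\in E$ and $\sum_n\|c_{k,n}\|_E^2=\|f_k\|_{H^2(\mathbb D,E)}^2<\infty$. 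Define operators $A_n\in\mathcal B(D,E)$ by prescribing $A_n e_k:=c_{k,n}$ and extending linearly; since $D$ is finite-dimensional, $A_n$ is automatically bounded, and by linearity $\Phi(\zeta)x=\sum_{n\ge 0}A_n x\,\zeta^n$ for every $x\in D$, so $\Phi$ is at least a candidate for membership in $H^2(\mathbb D,\mathcal B(D,E))$ once we control the norm.

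The key step is the norm estimate: I must show $\|\Phi\|_{H^2(\mathbb D,\mathcal B(D,E))}^2=\sup_{0<r<1}\int_{\mathbb T}\|\Phi(rz)\|^2\,dm(z)<\infty$. The bridge is the equivalence, in finite dimensions, of the operator norm with the Hilbert–Schmidt norm up to the constant $d$: for any $T\in\mathcal B(D,E)$ with $\dim D=d$ one has $\|T\|^2\le \sum_{k=1}^d\|Te_k\|_E^2\le d\,\|T\|^2$. Applying the left inequality pointwise with $T=\Phi(rz)$ and integrating gives
\[
\int_{\mathbb T}\|\Phi(rz)\|^2\,dm(z)\le\sum_{k=1}^d\int_{\mathbb T}\|\Phi(rz)e_k\|_E^2\,dm(z)\le\sum_{k=1}^d\|f_k\|_{H^2(\mathbb D,E)}^2,
\]
where the last bound is uniform in $r\in(0,1)$ by the definition of the $H^2(\mathbb D,E)$ norm of each $f_k$. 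Taking the supremum over $r$ shows $\Phi\in H^2(\mathbb D,\mathcal B(D,E))$, which completes the argument.

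The only place requiring genuine care — and where the hypothesis $\dim D<\infty$ is indispensable — is precisely this comparison of operator norm and coordinatewise $\ell^2$-norm; if $D$ were infinite-dimensional, the constant $d$ would blow up and the estimate would fail (indeed the inclusion itself fails). One should also note that the measurability of $z\mapsto\|\Phi(rz)\|$ is not an issue, since $\|\Phi(rz)\|=\sup\{\|\Phi(rz)x\|_E:x\in D,\ \|x\|=1\}$ can be computed as a supremum over a countable dense subset of the unit sphere of $D$ and each $z\mapsto\|\Phi(rz)x\|_E$ is measurable by SOT-measurability of $\Phi$ (and analyticity in $\zeta$). I expect this to be routine, so the whole proof is short; the substantive content is the finite-dimensional norm equivalence, everything else being bookkeeping with power series.
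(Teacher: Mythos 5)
Your argument follows the same route as the paper's proof: extract the coordinate functions $\phi_k=\Phi(\cdot)e_k\in H^2(\mathbb D,E)$, assemble the Taylor coefficients into operators $A_n$ (bounded because $\dim D<\infty$), and then control $\|\Phi(rz)\|_{\mathcal B(D,E)}^2$ by $\sum_{k=1}^d\|\Phi(rz)e_k\|_E^2$ and integrate, which is exactly the paper's norm estimate.

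There is, however, one step you gloss over that the paper treats as a genuine part of the proof. Membership in $H^2(\mathbb D,\mathcal B(D,E))$ requires, by the definition used here, that $\Phi\in\hbox{Hol}(\mathbb D,\mathcal B(D,E))$, i.e.\ that $\sum_{n\ge0}A_n\zeta^n$ converges to $\Phi(\zeta)$ in the \emph{operator norm} on $\mathbb D$. Your sentence ``by linearity $\Phi(\zeta)x=\sum_n A_nx\,\zeta^n$ for every $x\in D$'' only gives convergence in the strong operator topology, and the subsequent norm estimate (finiteness of the $H^2$-sup) does not by itself supply norm analyticity; so as written the proof never verifies the first half of the definition. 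The gap is easy to close, and with the same device you already use: for fixed $\zeta\in\mathbb D$,
$$
\Bigl\|\sum_{n=M}^{\infty}A_n\zeta^n\Bigr\|_{\mathcal B(D,E)}\le\sum_{k=1}^d\Bigl\|\sum_{n=M}^{\infty}c_{k,n}\zeta^n\Bigr\|_E\longrightarrow 0\quad(M\to\infty),
$$
since each coordinate power series converges in $E$; this is precisely the $\epsilon$-argument the paper carries out to establish its claim that $\Phi(\zeta)=\sum_n A_n\zeta^n$ in $\hbox{Hol}(\mathbb D,\mathcal B(D,E))$ before doing the norm estimate. With that one line added, your proof is complete and coincides with the paper's.
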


\begin{proof}
By Lemma \ref{fffflll}, we have $H^2(\mathbb D, \mathcal B(D,
E))\subseteq H^2_s(\mathbb D, \mathcal B(D, E))$. \ For the reverse
inclusion, suppose $\Phi \in H^2_s(\mathbb D, \mathcal B(D, E))$ and
$\dim D=d<\infty$. Let $\{e_j: j=1,2,\cdots, d\}$ be an orthonormal
basis of $D$. \ Then for each $j=1,2,\cdots, d$,
\begin{equation}\label{5439064jgjg}
\phi_j (\zeta)\equiv\Phi(\zeta)e_j \in H^2(\mathbb D, E).
\end{equation}
Thus we may write
$$
\phi_j(\zeta)=\sum_{n=0}^{\infty}a_n^{(j)}\zeta^n \quad(a_n^{(j)}
\in E).
$$
For each $n=0,1,2,\cdots$, define $ A_n: D \to E$ by
$$
A_nx:=\sum_{j=1}^d \alpha_j a_n^{(j)} \quad\Bigl(\hbox{where}\
x:=\sum_{j=1}^d \alpha_je_j \Bigr).
$$
Then $A_n \in \mathcal B(D, E)$. We claim that
\begin{equation}\label{7111}
\Phi(\zeta)=\sum_{n=0}^{\infty}A_n\zeta^n \in \hbox{Hol}(\mathbb D,
\mathcal B(D, E)).
\end{equation}
To prove (\ref{7111}), let $\epsilon>0$ be arbitrary. \ For each
$\zeta \in \mathbb D$, there exists $M>0$ such that for all
$j=1,2,\cdots, d$,
$$
\Biggl|\Biggl|\sum_{n=M}^{\infty}a_n^{(j)}\zeta^n
\Biggr|\Biggr|_{E}<\frac{\epsilon}{d}.
$$
Let $x:=\sum_{j=1}^d \alpha_je_j$ with $||x||_{D}=1$. Then we have
$$
\aligned
\Biggl|\Biggl|\Bigl(\Phi(\zeta)-\sum_{n=0}^{M-1}A_n\zeta^n\Bigr)x
\Biggr|\Biggr|_E&=\Biggl|\Biggl|\sum_{n=M}^{\infty}\sum_{j=1}^d
\alpha_ja_n^{(j)}\zeta^n
\Biggr|\Biggr|_E\\
&\leq \sum_{j=1}^d\Biggl|\Biggl| \sum_{n=M}^{\infty}a_n^{(j)}\zeta^n
\Biggr|\Biggr|_E\\
&<\epsilon,
\endaligned
$$
which proves (\ref{7111}). \ For all $r\in [0,1)$, we have that
$$
\aligned ||\Phi(rz)x||_E^2&=\Biggl|\Biggl|\sum_{j=1}^d \alpha_j
\Phi(rz)e_j\Biggr|\Biggr|^2_E\\
&\leq \Biggl(\sum_{j=1}^d|\alpha_j||| \Phi(rz)e_j||_E\Biggr)^2\\
&\leq \sum_{j=1}^d|| \Phi(rz)e_j||_E^2.
\endaligned
$$
Thus $||\Phi(rz)||_{\mathcal B(D, E)}^2\leq \sum_{j=1}^d||
\Phi(rz)e_j||_E^2$, and hence it follows from (\ref{5439064jgjg})
that
$$
\aligned ||\Phi||_{H^2(\mathbb D, \mathcal B(D,
E))}&=\sup_{0<r<1}\int_{\mathbb T}||\Phi(rz)||_{\mathcal
B(D, E)}^2 dm(z)\\
&\leq \sup_{0< r<1}\int_{\mathbb
T}\sum_{j=1}^d||\Phi(rz)e_j||_E^2 dm(z)\\
&\leq\sum_{j=1}^d ||\phi_j||_{H^2(\mathbb D, E)}^2 <\infty,
\endaligned
$$
which implies $\Phi \in H^2(\mathbb D, \mathcal B(D, E))$. This completes the
proof.
\end{proof}

\medskip

\begin{rem}\label{rem99988}
Theorem \ref{fffbgdnfflll} may fail if the condition ``$\dim
D<\infty$ is dropped. \ For example, if $\Phi$ is defined on the
unit disk $\mathbb D$ by
$$
\Phi(\zeta):=\begin{bmatrix}\zeta&\zeta^2&\zeta^3&\cdots\end{bmatrix}:\ell^2
\to \mathbb C \qquad(\zeta \in \mathbb D),
$$
then $\Phi(\zeta)$ is a bounded linear operator for each $\zeta\in\mathbb D$:
indeed,
$$
\aligned ||\Phi(\zeta)||_{\mathcal B(\ell^2, \mathbb
C)}&=\sup_{||x||=1}\bigl|\Phi(\zeta)x\bigr|\\
&=\sup_{||x||=1}\Biggl|\sum_{n=1}^{\infty}\zeta^n x_n \Biggr|
\quad(x\equiv (x_n)\in \ell^2)\\
&=\sup_{||x||=1} \Bigl|\Bigl \langle (\zeta, \ \zeta^2, \ \zeta^3, \
\cdots), \ (\overline{x}_1,  \ \overline{x}_2, \ \overline{x}_3, \
\cdots)
\Bigr \rangle \Bigr|\\
&=\bigl|\bigl|(\zeta, \ \zeta^2, \ \zeta^3, \ \cdots)\bigr|\bigr|_{\ell^2}\\
&=\Bigl(\frac{|\zeta|^2}{1-|\zeta|^2}\Bigr)^{\frac{1}{2}}.
\endaligned
$$
Moreover, for each $x\equiv (x_n)\in \ell^2$,
$$
\Phi(\zeta)x =\sum_{n=1}^{\infty}x_{n}\zeta^n \in H^2(\mathbb D,
\mathbb C),
$$
which says that $\Phi \in H^2_s(\mathbb D, \mathcal B(\ell^2,
\mathbb C))$. \ However, we have $\Phi \notin H^2(\mathbb D,
\mathcal B(\ell^2, \mathbb C))$: indeed, for $\zeta=rz\in \mathbb
D$,
$$
||\Phi(\zeta)||_{\mathcal B(\ell^2, \mathbb
C)}^2=||\Phi(\zeta)\Phi(\zeta)^*||_{\mathcal B(\ell^2, \mathbb
C)}=\frac{r^2}{1-r^2},
$$
so that
$$
\aligned \sup_{0< r<1}\int_{\mathbb T}||\Phi(rz)||_{\mathcal
B(\ell^2, \mathbb C)}^2 dm(z)&= \sup_{0< r<1}\int_{\mathbb
T}\frac{r^2}{1-r^2} dm(z)\\
&= \sup_{0< r<1}\frac{r^2}{1-r^2}\\
&=\infty.
\endaligned
$$
\qed
\end{rem}

\bigskip

In general, the boundary values of strong $H^2$-functions do not
need to be bounded linear operators (defined almost everywhere on
$\mathbb T$). \ Thus we do not guarantee that the boundary value of
a strong $H^2$-function belongs to $H^2_s(\mathbb T, \mathcal
B(D,E))$. \ For example, if $\Phi$ is defined on the unit disk
$\mathbb D$ by\label{sh2ex}
$$
\Phi(\zeta)=\begin{bmatrix} 1&\zeta&\zeta^2&\zeta^3&\cdots
\end{bmatrix}:\ell^2\to\mathbb C \ \ (\zeta\in\mathbb D),
$$
then by Remark \ref{rem99988}, $\Phi$ is a strong $H^2$-function
with values in $\mathcal B(\ell^2,\mathbb C)$. \ However, the
boundary value
$$
\Phi(z)=\begin{bmatrix}
1&z&z^2&z^3&\cdots
\end{bmatrix}: \ell^2\to\mathbb C\ \ (z\in\mathbb T)
$$
is not bounded for all $z\in\mathbb T$
because
for any $z_0 \in \mathbb T$, if we let
$$
x_0:=\Bigl(1, \overline{z}_0, \frac{\overline{z}_0^2}{2},
\frac{\overline{z}_0^3}{3}, \cdots\Bigr)^t \in \ell^2,
$$
then
$$
\Phi(z_0)x_0=1+\sum_{n=1}^{\infty}\frac{1}{n}=\infty,
$$
which shows that $\Phi\notin H^2_s(\mathbb T, \mathcal B(D,E))$.

\medskip

In spite of it, there are useful relations between the set $H^2_s
(\mathbb D, \mathcal B(D,E))$ and the set $H^2_s(\mathbb T, \mathcal
B(D,E))$. \ To see this, let $\Phi\in H^2_s(\mathbb T, \mathcal
B(D,E))$. \ Then $\Phi(z)\in \mathcal B(D,E)$ for almost all
$z\in\mathbb T$ and $\Phi(z)x\in H^2_E$ for each $x\in D$. \ We now
define a (function-valued with domain $D$) function $p\Phi$ on the
unit disk $\mathbb D$ by the Poisson integral in the strong sense:
$$
\begin{aligned}
p\Phi(re^{i\theta})x&:=\left(\Phi(\cdot)x \ast
P_r\right)(e^{i\theta})\ \ (x\in D)\\
&=\int_0^{2\pi} P_r(\theta-t)\Phi(e^{it})x\,dm(t)\in E,
\end{aligned}
$$
where $P_r(\cdot)$ is the Poisson kernel. \ Then $p\Phi(\zeta)x\in
H^2 (\mathbb D, E)$. \ Thus, for all $\zeta\in\mathbb D$,
$p\Phi(\zeta)$ can be viewed as a function from $D$ into $E$. \ A
straightforward calculation shows that $p\Phi(\zeta)$ is a linear
map for each $\zeta\in\mathbb D$. \ Since $p\Phi(\zeta)x\in
H^2(\mathbb D,E)$ is the Poisson integral of $\Phi(z)x\in H^2_E$, we
will conventionally identify $\Phi(z)x$ and $p\Phi(\zeta)x$ for each
$x\in D$. \ From this viewpoint, we will also regard $\Phi\in
H^2_s(\mathbb T, \mathcal B(D,E))$ as an (linear, but not
necessarily bounded) operator-valued function defined on the unit
disk $\mathbb D$.\label{strongh2p}

\medskip

We thus have:

\begin{lem}\label{strongh2} The following inclusion holds:
$$
H^2_{\mathcal B(D,E)}\cup H^\infty(\mathcal B(D,E))
\subseteq H^2_s(\mathbb D, \mathcal B(D,E)).
$$
\end{lem}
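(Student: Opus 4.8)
The plan is to prove the two inclusions $H^2_{\mathcal B(D,E)}\subseteq H^2_s(\mathbb D,\mathcal B(D,E))$ and $H^\infty(\mathcal B(D,E))\subseteq H^2_s(\mathbb D,\mathcal B(D,E))$ separately, in each case producing from a boundary-value function $\Phi$ on $\mathbb T$ the associated function $p\Phi$ on $\mathbb D$ (via the strong-sense Poisson integral described on p.~\pageref{strongh2p}) and checking that $p\Phi(\cdot)x\in H^2(\mathbb D,E)$ for every $x\in D$.

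\smallskip
\emph{Step 1 (the $H^2$ inclusion).} Let $\Phi\in H^2_{\mathcal B(D,E)}$. First I would observe that $\Phi\in L^2_{\mathcal B(D,E)}\subseteq L^2_s(\mathcal B(D,E))$ by (\ref{lps}), and that $\widehat\Phi(n)=0$ for $n<0$; hence for each $x\in D$ the function $\Phi(\cdot)x$ lies in $L^2_E$ with $\widehat{\Phi(\cdot)x}(n)=\widehat\Phi(n)x=0$ for $n<0$, so $\Phi(\cdot)x\in H^2_E$. Identifying $H^2_E$ with $H^2(\mathbb D,E)$ via the boundary-function isometry (as recalled in Chapter 2), the Poisson integral $p\Phi(\zeta)x$ is exactly the analytic extension of $\Phi(\cdot)x$, so $p\Phi(\cdot)x\in H^2(\mathbb D,E)$ for every $x\in D$. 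This is precisely the statement $\Phi\in H^2_s(\mathbb D,\mathcal B(D,E))$.

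\smallskip
\emph{Step 2 (the $H^\infty$ inclusion).} Let $\Phi\in H^\infty(\mathcal B(D,E))$. By (\ref{linfty}) we have $\Phi\in L^\infty(\mathcal B(D,E))\subseteq L^2_s(\mathcal B(D,E))$, and the vanishing of negative Fourier coefficients again forces $\Phi(\cdot)x\in H^2_E$ for each $x\in D$; as in Step 1 the Poisson integral gives $p\Phi(\cdot)x\in H^2(\mathbb D,E)$. (Alternatively one may invoke the conventional identification of $H^\infty(\mathbb D,\mathcal B(D,E))$ with $H^\infty(\mathbb T,\mathcal B(D,E))$ recalled in Chapter 3: a bounded analytic $\mathcal B(D,E)$-valued function on $\mathbb D$ applied to a fixed $x$ is a bounded analytic $E$-valued function, hence an element of $H^2(\mathbb D,E)$ since $\mathbb D$ has finite measure.) Taking the union of the two cases yields the claimed inclusion.

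\smallskip
I do not expect a genuine obstacle here: the content is entirely bookkeeping around the definitions of strong $L^2$/$H^2$-functions, the inclusions (\ref{lps})--(\ref{linfty}), and the identification of $H^2(\mathbb D,E)$ with $H^2_E$. The one point requiring a little care — and the closest thing to a ``hard part'' — is making sure the passage between the disk picture ($H^2(\mathbb D,\mathcal B(D,E))$, $p\Phi$) and the circle picture ($H^2_s(\mathbb T,\mathcal B(D,E))$) is done consistently with the conventional identification set up just before the statement, since, as the examples in Remark \ref{rem99988} and on p.~\pageref{sh2ex} show, $H^2(\mathbb D,\mathcal B(D,E))$ and $H^2_s(\mathbb D,\mathcal B(D,E))$ genuinely differ when $\dim D=\infty$; so the proof must phrase everything in terms of $p\Phi(\cdot)x$ rather than claim operator-norm integrability. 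With that caveat the argument is a short direct verification.
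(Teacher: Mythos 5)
There is a genuine gap: membership in $H^2_s(\mathbb D,\mathcal B(D,E))$ requires more than what you verify. By the paper's definition, a strong $H^2$-function is a function $\Phi:\mathbb D\to\mathcal B(D,E)$ — so each value $\Phi(\zeta)$, $\zeta\in\mathbb D$, must be a \emph{bounded} operator — such that $\Phi(\cdot)x\in H^2(\mathbb D,E)$ for every $x\in D$. Your Steps 1 and 2 only show that the boundary function lies in $H^2_s(\mathbb T,\mathcal B(D,E))$ (which is immediate from (\ref{lps}) and (\ref{linfty})) and that for each fixed $x$ the Poisson extension $p\Phi(\cdot)x$ lies in $H^2(\mathbb D,E)$. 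But this much holds for \emph{every} element of $H^2_s(\mathbb T,\mathcal B(D,E))$, and the discussion immediately preceding the lemma stresses that for such a general element, $p\Phi(\zeta)$ is only known to be a linear, not necessarily bounded, map from $D$ into $E$. If your argument were complete as written, it would prove $H^2_s(\mathbb T,\mathcal B(D,E))\subseteq H^2_s(\mathbb D,\mathcal B(D,E))$ with no hypotheses at all, which is not what the lemma asserts; the entire content of the lemma is precisely the verification that $p\Phi(\zeta)\in\mathcal B(D,E)$ for every $\zeta\in\mathbb D$, and that step is absent from your proposal.

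What is missing is a bound uniform over the unit sphere of $D$: one shows $\sup\bigl\{\,\|\Phi(\cdot)x\|_{L^1_E}: x\in D,\ \|x\|=1\bigr\}\le M<\infty$, which for $\Phi\in H^2_{\mathcal B(D,E)}$ follows from $\|\Phi(\cdot)x\|_{L^1_E}\le\|\Phi(\cdot)x\|_{L^2_E}\le\|\Phi\|_{L^2_{\mathcal B(D,E)}}$ and for $\Phi\in H^\infty(\mathcal B(D,E))$ from the essential-sup bound; then the Poisson-kernel estimate $P_r(\theta-t)\le\frac{1+r}{1-r}$ yields $\|p\Phi(re^{i\theta})x\|_E\le\frac{1+r}{1-r}\,M\,\|x\|_D$, so each $p\Phi(\zeta)$ is a bounded operator and $\Phi\in H^2_s(\mathbb D,\mathcal B(D,E))$. (Your parenthetical alternative in Step 2, via the identification of $H^\infty(\mathbb D,\mathcal B(D,E))$ with $H^\infty(\mathbb T,\mathcal B(D,E))$, does dispose of the $H^\infty$ half, since there the disk function has operator-bounded values by definition; but the case $\Phi\in H^2_{\mathcal B(D,E)}$ still needs the uniform estimate — one could also extract it from a closed-graph argument — and as written your proof contains neither.)
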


\begin{proof}
Note that by (\ref{lps}) and (\ref{linfty}), $H^2_{\mathcal
B(D,E)}\cup H^\infty(\mathcal B(D,E)) \subseteq H^2_s(\mathbb T,
\mathcal B(D,E))$. \ Thus in view of the preceding remark, it
suffices to show $\Phi(\zeta)\in\mathcal B(D,E)$ for all $\zeta\in
\mathbb D$. \ To see this we first claim that there exists $M>0$
such that
\begin{equation}\label{0001}
\hbox{sup}\Bigl\{||\Phi(\cdot)x||_{L^1_E}: x \in D \ \hbox{with} \
||x||=1 \Bigr\}<M,
\end{equation}
To see this, if $\Phi \in H^2_{\mathcal B(D,E)}$, then for all $x \in D$
with $||x||=1$,
$$
\aligned
||\Phi(\cdot)x||_{L^1_E}
&\le ||\Phi(\cdot)x||_{L^2_E}\\
&\leq\Biggl(\int_{\mathbb T}||\Phi(z)||_{\mathcal B(D, E)}^2dm(z)\Biggr)^{\frac{1}{2}}\\
&=||\Phi||_{L^2_{\mathcal B(D, E)}}.
\endaligned
$$
If instead $\Phi \in H^\infty(\mathcal B(D,E))$, then for all $x \in D$ with
$||x||=1$,
$$
||\Phi(\cdot)x||_{L^1_E}=\int_{\mathbb T}||\Phi(z)x||_Edm(z)\leq
||\Phi(z)||_{\infty},
$$
which proves the claim (\ref{0001}). \ Now, let
$\zeta=re^{i\theta}\in \mathbb D$ and $x \in D$ with $||x||=1$. \
Then for $y \in E$ with $||y||\leq 1$,
$$
\aligned \Bigl |\bigl \langle \Phi(re^{i \theta})x, \ y
\bigr\rangle_E \Bigr |&=\Biggl |\Bigl \langle \int_{0}^{2 \pi}
P_r(\theta-t)\Phi(e^{i t})x dm(t),\ y \Bigr\rangle_E\Biggr |\\
&=\Biggl | \int_{0}^{2 \pi} \Bigl \langle
P_r(\theta-t)\Phi(e^{i t})x , \ y\Bigr\rangle_E dm(t)\Biggr |\quad\hbox{(by (\ref{ffnskdll}))}\\
&\leq \frac{1+r}{1-r}\int_{0}^{2 \pi} \bigl |\bigl \langle \Phi(e^{i
t})x , \ y\bigr\rangle_E \bigr | dm(t),
\endaligned
$$
which implies, by our assumption,
$$
\aligned ||\Phi(\zeta)x||_E
&\leq \frac{1+r}{1-r}\int_{0}^{2 \pi} \bigl |\bigl | \Phi(e^{i t})x
\bigr|\bigr |_E dm(t)\\
&=\frac{1+r}{1-r}||\Phi(\cdot)x||_{L^1_E}\\
&<\infty,
\endaligned
$$
which shows that $\Phi(\zeta)\in\mathcal B(D,E)$ for all $\zeta\in
\mathbb D$. \ Thus we have $\Phi\in H^2_s(\mathbb D, \mathcal
B(D,E))$.
\end{proof}

\bigskip

We now recall a notion from classical Banach space theory, about regarding a vector as an operator acting on the scalars. \ This notion is important as motivation for the study of strong $L^2$-functions. \  
Let $E$ be a separable complex Hilbert space. \  For a function
$f:\mathbb T \to E$, define $[f]:\mathbb T \to \mathcal B(\mathbb C,
E)$ by
\begin{equation}\label{bracket}
[f](z)\alpha:=\alpha f(z)\quad(\alpha\in\mathbb C).
\end{equation}
If $g:\mathbb T \to E$ is a countable-valued function of
the form
$$
g=\sum_{k= 1}^{\infty}x_k \chi_{\sigma_k} \quad(x_k \in E),
$$
then for each $\alpha \in \mathbb C$,
$$
\Biggl(\sum_{k= 1}^{\infty}[x_k]
\chi_{\sigma_k}\Biggr)\alpha=\sum_{k= 1}^{\infty}\alpha
x_k\chi_{\sigma_k}=\alpha g=[g]\alpha,
$$
which implies that $[g]$ is a countable-valued function of the form
$
[g]=\sum_{k= 1}^{\infty}[x_k] \chi_{\sigma_k}.
$

\medskip

We then have:


\begin{lem}\label{dhfbgbgbgbg}
Let $E$ be a separable complex Hilbert space and $1\leq p\leq
\infty$. \ Define $\Gamma: L^{p}_E \to L^{p}_{\mathcal B(\mathbb C,
E)}$ by
$$
\Gamma(f)(z)=[f](z),
$$
where $[f](z):\mathbb C \to E$ is given by $[ f](z)\alpha:=\alpha
f(z)$. \ Then

\medskip

\begin{itemize}
\item[(a)] $\Gamma$ is unitary, and hence $
L^{p}_E \cong L^{p}_{\mathcal B(\mathbb C, E)}$;
\item[(b)] $L^{p}_{\mathcal B(\mathbb C, E)}= L^{p}_s(\mathcal B(\mathbb C,
E))$  for $1\le p<\infty$;
\item[(c)]  $\widehat{[f]}(n)=[\widehat{f}(n)]$ for
$f \in L^p_E$ and $n \in \mathbb Z$.
\end{itemize}
\medskip
In particular,  $H^{p}_E \cong H^{p}_{\mathcal
B(\mathbb C, E)} =H^{p}_s(\mathcal B(\mathbb C, E))$ for $1\le p<\infty$.
\end{lem}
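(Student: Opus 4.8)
The whole lemma turns on the elementary identification of $E$ with $\mathcal B(\mathbb C, E)$. First I would record that $e\mapsto[f]$, more precisely $e\mapsto[e]$, is a linear bijection of $E$ onto $\mathcal B(\mathbb C, E)$, with inverse $T\mapsto T(1)$ (every bounded linear $T:\mathbb C\to E$ satisfies $T(\alpha)=\alpha T(1)$), and that it is isometric since $\|[e]\|=\sup_{|\alpha|=1}\|\alpha e\|_E=\|e\|_E$; in particular $\mathcal B(\mathbb C, E)$ is itself a separable Hilbert space. Everything else is this identification carried to the level of functions.

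For (a): I would check $\Gamma$ is well defined, i.e.\ that $[f]\in L^p_{\mathcal B(\mathbb C, E)}$ for $f\in L^p_E$. For measurability, note $z\mapsto[f](z)\alpha=\alpha f(z)$ is strongly measurable for every $\alpha\in\mathbb C$, so for $\phi\in\mathcal B(\mathbb C,E)^*$ — which has the form $\phi(T)=\langle T(1),e_0\rangle_E$ — the scalar function $z\mapsto\phi([f](z))=\langle f(z),e_0\rangle_E$ is measurable; thus $[f]$ is weakly measurable and hence, $\mathcal B(\mathbb C,E)$ being separable, strongly measurable (by the preliminaries, weak and strong measurability coincide for separable-valued functions). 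Since $\|[f](z)\|=\|f(z)\|_E$ a.e., we get $\|[f]\|_{L^p_{\mathcal B(\mathbb C,E)}}=\|f\|_{L^p_E}$, so $\Gamma$ is a linear isometry. For surjectivity, given $\Psi\in L^p_{\mathcal B(\mathbb C,E)}$ put $f(z):=\Psi(z)(1)$, the composition of $\Psi$ with the bounded map $T\mapsto T(1)$, hence strongly measurable, with $\|f(z)\|_E=\|\Psi(z)\|$; then $f\in L^p_E$ and $[f](z)\alpha=\alpha\Psi(z)(1)=\Psi(z)(\alpha)$, so $\Gamma(f)=\Psi$. Thus $\Gamma$ is a norm-preserving linear bijection, i.e.\ unitary, and $L^p_E\cong L^p_{\mathcal B(\mathbb C,E)}$.

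For (b), the inclusion $L^p_{\mathcal B(\mathbb C,E)}\subseteq L^p_s(\mathcal B(\mathbb C,E))$ is (\ref{lps}); conversely, if $\Phi\in L^p_s(\mathcal B(\mathbb C,E))$ then $\Phi(z)\in\mathcal B(\mathbb C,E)$ for a.e.\ $z$, so $\Phi(z)\alpha=\alpha\Phi(z)(1)$, and $f:=\Phi(\cdot)(1)$ lies in $L^p_E$ by the very definition of a strong $L^p$-function (taking $x=1$), whence $\Phi=[f]=\Gamma(f)\in L^p_{\mathcal B(\mathbb C,E)}$ by (a). For (c), using $L^p_E\subseteq L^1_E$ together with Lemma \ref{remark3.3sdddf} — equivalently (\ref{usefulwww}) applied to the evaluation map $T\mapsto T(1)$ — to match the Bochner and strong Fourier coefficients, I compute, for each $\alpha\in\mathbb C$, $\widehat{[f]}(n)\alpha=\widehat{[f](\cdot)\alpha}(n)=\widehat{\alpha f(\cdot)}(n)=\alpha\widehat f(n)=[\widehat f(n)]\alpha$, i.e.\ $\widehat{[f]}(n)=[\widehat f(n)]$.

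The ``in particular'' clause then follows by assembling the pieces: by (c) and the injectivity of $e\mapsto[e]$, the condition $\widehat f(n)=0$ is equivalent to $\widehat{[f]}(n)=0$, so $\Gamma$ restricts to an isometric bijection of $H^p_E$ onto $H^p_{\mathcal B(\mathbb C,E)}$; and intersecting the identity $L^p_{\mathcal B(\mathbb C,E)}=L^p_s(\mathcal B(\mathbb C,E))$ of (b) with the requirement ``$\widehat\Phi(n)=0$ for $n<0$'' gives $H^p_{\mathcal B(\mathbb C,E)}=H^p_s(\mathcal B(\mathbb C,E))$. The one point I would handle with care — essentially the only thing beyond the trivial identification $E=\mathcal B(\mathbb C,E)$ — is the measurability bookkeeping for operator-valued functions (passing among WOT, SOT and strong/Bochner measurability, and checking that the two notions of Fourier coefficient agree).
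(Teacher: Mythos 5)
Your proof is correct and follows essentially the same route as the paper: the identification $E\cong\mathcal B(\mathbb C,E)$ with inverse $T\mapsto T(1)$, the pointwise norm identity $\|[f](z)\|_{\mathcal B(\mathbb C,E)}=\|f(z)\|_E$, the same surjectivity argument via $f:=\Psi(\cdot)1$, part (b) by evaluating at $x=1$, and the same direct computation for (c) (with the Bochner/strong Fourier-coefficient agreement handled by Lemma \ref{remark3.3sdddf}, just as in the paper's surrounding discussion). The only cosmetic difference is that for the strong measurability of $[f]$ the paper approximates by the countable-valued functions $[f_n]$, while you invoke weak measurability together with the separability of $\mathcal B(\mathbb C,E)$; both are valid.
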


\begin{proof}
(a)   Let $f \in L^p_E$ $(1\leq p\leq \infty)$ be arbitrary.  \ We
first show that  $[f]\in L^p_{\mathcal B(\mathbb C, E)}$. \ Since
$f$ is strongly measurable, there exist countable-valued functions
$f_n$ such that $f(z)=\lim_{n}f_n(z)$ for almost all $z \in \mathbb
T$. \ Observe that for almost all $z \in \mathbb T$,
$$
||[f](z)||_{\mathcal B(\mathbb C,
E)}=\hbox{sup}_{|\alpha|=1}||[f](z)\alpha||_E=||f(z)||_E.
$$
Thus we have that
$$
\bigl|\bigl|[f_n](z)-[f](z)\bigr|\bigr|_{\mathcal B(\mathbb C, E)}
=\bigl|\bigl|f_n(z)-f(z)\bigr|\bigr|_E\rightarrow 0 \quad \hbox{as }
\ n\rightarrow \infty,
$$
which implies that $[f]$ is strongly measurable and $
||[f]||_{L^p_{\mathcal B(\mathbb C, E)}}=||f||_{L^p_E}$. \ Thus
$\Gamma$ is an isometry. \ For $h \in L^{p}_{\mathcal B(\mathbb C,
E)}$, let $g(z):=h(z)1\in L^p_E$. \ Then for all $\alpha \in \mathbb
C$, we have
$$
\Gamma(g)(z)\alpha=\alpha h(z)1=h(z)\alpha,
$$
which implies that $\Gamma$ is a surjection from $L^{p}_E$ onto
$L^{p}_{\mathcal B(\mathbb C, E)}$. \ Thus $\Gamma$ is unitary, so
that $L^p_E\cong L^{p}_{\mathcal B(\mathbb C, E)}$. \ This proves
(a). \

\smallskip
(b)  Suppose $h \in L^{p}_s(\mathcal B(\mathbb C, E))$ $(1\leq p<
\infty)$. \ If $g(z):=h(z)1\in L^p_E$, then  $h=[g]\in
L^{p}_{\mathcal B(\mathbb C, E)}$. \ The converse is clear.

\smallskip

(c) Let $f \in L^p_E$.  \ Then for all $\alpha \in \mathbb C$ and
$n\in\mathbb Z$,
$$
\widehat{[f]}(n)\alpha=\int_{\mathbb T} \overline{z}^n[f](z)\alpha
dm=\alpha\int_{\mathbb T} \overline{z}^n f(z) dm=\alpha
\widehat{f}(n)=[\widehat{f}(n)]\alpha,
$$
which gives (c).

The last assertion follows at once from (b) and (c).
\end{proof}

\medskip
For $\mathcal X$ a closed subspace of $D$, $P_{\mathcal X}$ denotes
the orthogonal projection from $D$ onto $\mathcal{X}$. \ Then we
have:

\begin{lem}\label{thmved}
If $\dim D<\infty$, then
\begin{itemize}
\item[(a)] $L^2_s(\mathbb T, \mathcal B(D, E))= L^{2}_{\mathcal B(D, E)}$;
\item[(b)] $H^2_s(\mathbb T, \mathcal B(D, E))= H^{2}_{\mathcal B(D,E)}$,
\end{itemize}
where the equalities are set-theoretic.
\end{lem}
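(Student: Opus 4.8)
The statement to prove is Lemma~\ref{thmved}: when $\dim D < \infty$, we have the set-theoretic equalities $L^2_s(\mathbb T, \mathcal B(D,E)) = L^2_{\mathcal B(D,E)}$ and $H^2_s(\mathbb T, \mathcal B(D,E)) = H^2_{\mathcal B(D,E)}$. By the general inclusions $(\ref{lps})$ (namely $L^p_{\mathcal B(D,E)} \subseteq L^p_s(\mathcal B(D,E))$) and the analogous inclusion for $H^2$, one direction is already known; the work is the reverse inclusion $L^2_s(\mathbb T,\mathcal B(D,E)) \subseteq L^2_{\mathcal B(D,E)}$, and then (b) will follow from (a) by checking Fourier coefficients. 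So the plan is: fix $\Phi \in L^2_s(\mathbb T, \mathcal B(D,E))$ and show that $z \mapsto \Phi(z)$ is a bona fide $L^2$-function with values in $\mathcal B(D,E)$, i.e. that $\|\Phi(\cdot)\|_{\mathcal B(D,E)} \in L^2(\mathbb T)$ (measurability of $\Phi$ as a $\mathcal B(D,E)$-valued function being inherited from WOT measurability since $D,E$ are separable).

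First I would fix an orthonormal basis $\{e_j : j = 1, \dots, d\}$ of $D$, where $d = \dim D < \infty$. By definition of a strong $L^2$-function, $\phi_j := \Phi(\cdot)e_j \in L^2_E$ for each $j$. For a unit vector $x = \sum_{j=1}^d \alpha_j e_j \in D$ and a.e.\ $z \in \mathbb T$, one has
\[
\|\Phi(z)x\|_E = \Bigl\| \sum_{j=1}^d \alpha_j \phi_j(z) \Bigr\|_E \le \sum_{j=1}^d |\alpha_j|\, \|\phi_j(z)\|_E \le \sum_{j=1}^d \|\phi_j(z)\|_E,
\]
using $|\alpha_j| \le 1$. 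Taking the supremum over unit $x$ gives $\|\Phi(z)\|_{\mathcal B(D,E)} \le \sum_{j=1}^d \|\phi_j(z)\|_E$ for a.e.\ $z$. (This is exactly the estimate already carried out inside the proof of Theorem~\ref{fffbgdnfflll}, transplanted from the disk to the circle.) The right-hand side is in $L^2(\mathbb T)$ since each $\|\phi_j(\cdot)\|_E \in L^2(\mathbb T)$ and a finite sum of $L^2$-functions is $L^2$; hence $\|\Phi(\cdot)\|_{\mathcal B(D,E)} \in L^2(\mathbb T)$. Combined with strong measurability of $\Phi$ (which holds because WOT measurability implies SOT measurability for separable Hilbert space values, and with $\dim D < \infty$ the SOT and norm topologies on $\mathcal B(D,E)$ restricted to the relevant set coincide up to the finite-sum bound, so strong measurability of $z \mapsto \Phi(z)$ follows from strong measurability of the $\phi_j$), we conclude $\Phi \in L^2_{\mathcal B(D,E)}$. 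This proves (a).

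For (b): if $\Phi \in H^2_s(\mathbb T, \mathcal B(D,E))$, then a fortiori $\Phi \in L^2_s(\mathbb T,\mathcal B(D,E))$, so by (a) $\Phi \in L^2_{\mathcal B(D,E)}$. It remains to see $\Phi \in H^2_{\mathcal B(D,E)}$, i.e.\ $\widehat\Phi(n) = 0$ for $n < 0$. But the Fourier coefficient $\widehat\Phi(n)$ as an element of $L^2_{\mathcal B(D,E)}$ agrees with the one defined via $\widehat\Phi(n)x = \widehat{\Phi(\cdot)x}(n)$ (this identification was recorded earlier in the chapter, and follows from Lemma~\ref{remark3.3sdddf}); since $\Phi(\cdot)x = $ an element of $H^2_E$ has vanishing negative Fourier coefficients for every $x \in D$, we get $\widehat\Phi(n)x = 0$ for all $x$ when $n < 0$, hence $\widehat\Phi(n) = 0$. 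The reverse inclusion $H^2_{\mathcal B(D,E)} \subseteq H^2_s(\mathbb T,\mathcal B(D,E))$ is the special case of $(\ref{lps})$ together with the defining property of $H^2_{\mathcal B(D,E)}$.

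**Main obstacle.** Nothing here is deep; the only point requiring genuine care is the measurability bookkeeping — namely arguing cleanly that $z \mapsto \Phi(z)$ is strongly (Bochner) measurable as a $\mathcal B(D,E)$-valued function, not merely WOT/SOT measurable. The key leverage is finite-dimensionality of $D$: it lets one dominate $\|\Phi(z) - (\text{countable-valued approximant})\|_{\mathcal B(D,E)}$ by $\sum_{j=1}^d \|\phi_j(z) - (\text{approximant})_j(z)\|_E$, so the strong measurability and the $L^2$-norm bound for the $\phi_j$'s (which hold because $\phi_j \in L^2_E$) transfer to $\Phi$. Once that domination is in hand, both (a) and (b) are immediate.
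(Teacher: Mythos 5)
Your proof is correct and follows essentially the same route as the paper: reduce to the columns $\phi_j=\Phi(\cdot)e_j\in L^2_E$ over a finite orthonormal basis of $D$, transfer square-integrability of the norm and strong measurability from the $\phi_j$ to $\Phi$ by the finite-sum domination (the paper packages exactly this step through the decomposition $\Phi=\sum_{j}[\phi_j]P_{D_j}$ and Lemma \ref{dhfbgbgbgbg}), and obtain (b) from (a) via the agreement of the two definitions of Fourier coefficients (Lemma \ref{remark3.3sdddf}). Your parenthetical about SOT versus norm topologies is loosely worded, but the countable-valued approximant domination in your closing paragraph is the correct and sufficient argument, so there is no gap.
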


\begin{proof}
(a) Let $d:=\dim D<\infty$. \ It follows from (\ref{lps}) that
$L^{2}_{\mathcal B(D, E)}\subseteq L^2_s(\mathcal B(D, E))$. \ For
the reverse inclusion, let $\{e_j\}_{j=1}^d$ be an orthonormal basis
of $D$. \ Suppose $\Phi \in L^2_s(\mathcal B(D, E))$. \ Then
$$
\phi_j(z)\equiv \Phi(z)e_j\in L^2_E  \qquad (j=1,2,\cdots, d).
$$
It thus follows from Lemma \ref{dhfbgbgbgbg} that $[\phi_j]\in
L^2_{\mathcal B(\mathbb C, E)}$. \ For $j=1,2,\cdots, d$,  define
$\Phi_j:\mathbb T \to \mathcal B(D, E)$ by
$$
\Phi_j:=[\phi_j]P_{D_j} \quad \Bigl( \mathbb C \cong D_j:=\bigvee
e_j\Bigr).
$$
Since $[\phi_j]$ is strongly measurable, it is easy to show that
$\Phi_j$ is strongly measurable for each $j=1,2,\cdots$. \ It
follows from Lemma \ref{dhfbgbgbgbg} that
$$
\aligned ||\Phi_j||_{L^{2}_{\mathcal B(D, E)}}^2&=\int_{\mathbb
T}\bigl|\bigl|\Phi_j(z)\bigr|\bigr|_{\mathcal
B(D, E)}^2dm(z)\\
&=\int_{\mathbb T}\bigl|\bigl|[\phi_j](z)\bigr|\bigr|_{\mathcal
B(\mathbb C, E)}^2dm(z)\\
&=\bigl|\bigl|[\phi_j]\bigr|\bigr|_{L^2_{\mathcal B(\mathbb C,
E)}}^2\\
&=||\phi_j||_{L^2_E}^2\\
&<\infty.
\endaligned
$$
Thus $\Phi_j \in L^{2}_{\mathcal B(D, E)}$, and hence $
\Phi=\sum_{j=1}^d \Phi_j\in L^{2}_{\mathcal B(D, E)}$. \ This proves
(a).

\medskip

(b) This follows from Lemma \ref{remark3.3sdddf} and (a).
\end{proof}

\bigskip

To proceed, we define a ``boundary function" $b\Phi$ for each
function $\Phi \in H^2_s(\mathbb D, \mathcal B(D, E))$ with
$\dim\,D<\infty$. \
In this case, we may assume that $D=\mathbb C^d$.

Let $\Phi \in H^2_s(\mathbb D, \mathcal B(D, E))$ and
$\{e_j\}_{j=1}^d$ be the canonical basis for $\mathbb C^d$. \ Then
$\phi_j(\zeta)\equiv\Phi(\zeta)e_j \in H^2(\mathbb D, E)$. \ Thus we
have
\begin{equation}\label{nmnmmnkgkekf}
\phi_j(z)\equiv(b\phi_j)(z):=\lim_{rz \to z}\phi_j(rz)\in H^2_E.
\end{equation}
It follows from Lemma \ref{dhfbgbgbgbg} that for each
$j=1,2,3,\cdots,d$,
$$
[\phi_j]\in H^2_{\mathcal B(\mathbb C, E)}=
H^2_s(\mathbb T, \mathcal B(\mathbb C, E)),
$$
where $[\phi_j](z)\alpha:=\alpha \phi_j(z)$ for all $\alpha \in
\mathbb C$. \ Note that there exists a subset $\sigma \subset\mathbb
T$ with $m(\sigma)=0$ such that
\begin{equation}\label{jmuynhgbvfc}
\phi_j(z) \in E \quad \hbox{for each} \ z \in \mathbb
T_0\equiv\mathbb T\setminus\sigma.
\end{equation}
Define a function $b$ on $H^2_s(\mathbb D, \mathcal B(D, E))$ by
\begin{equation}\label{defkkgkhk}
(b \Phi)(z):=\bigl[[\phi_1](z), [\phi_2](z),\cdots,
[\phi_d](z)\bigr] \quad(z \in \mathbb T_0).
\end{equation}
Then we have that for all $x\in D$,
\begin{equation}\label{cbdjefberbgh}
(b \Phi)(z)x=\lim_{rz\to z}\Phi(rz)x \in E  \ \  (z \in \mathbb
T_0).
\end{equation}
A straightforward calculation shows that
$(b \Phi)(z)$ is a linear mapping from $D$ into $E$ for almost all
$z \in \mathbb T$.

\bigskip

We thus have:

\begin{thm}\label{xsxsndsnafsd}
If $\dim D <\infty$, then the function $b$ defined by
(\ref{defkkgkhk}) is a linear bijection from $H^2_s(\mathbb D,
\mathcal B(D, E))$ onto $H^2_s(\mathbb T, \mathcal B(D, E))$.
\end{thm}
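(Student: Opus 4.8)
The plan is to reduce the statement to the scalar‑valued, column‑by‑column case, where the classical isometric identification of $H^2(\mathbb D, E)$ with $H^2_E$ is available. Since $\dim D = d < \infty$ we may take $D = \mathbb C^d$ with canonical basis $\{e_j\}_{j=1}^d$. By Theorem~\ref{fffbgdnfflll} we have $H^2_s(\mathbb D, \mathcal B(D,E)) = H^2(\mathbb D, \mathcal B(D,E))$, and by Lemma~\ref{thmved}(b) we have $H^2_s(\mathbb T, \mathcal B(D,E)) = H^2_{\mathcal B(D,E)}$; thus it suffices to prove that $b$ is a linear bijection of $H^2(\mathbb D, \mathcal B(D,E))$ onto $H^2_{\mathcal B(D,E)}$.

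First I would record the ``column decomposition'': the assignment $\Phi \mapsto (\phi_1,\dots,\phi_d)$, with $\phi_j := \Phi(\cdot)e_j$, is a linear bijection of $H^2(\mathbb D, \mathcal B(D,E))$ onto $\bigl(H^2(\mathbb D, E)\bigr)^d$. Surjectivity here uses that any linear map out of the finite‑dimensional space $D$ is automatically bounded, together with the assembly argument from the proof of Theorem~\ref{fffbgdnfflll} (set $\Phi(\zeta)\bigl(\sum_j \alpha_j e_j\bigr) := \sum_j \alpha_j \phi_j(\zeta)$ and check analyticity and membership via the estimate $\|\Phi(rz)\|_{\mathcal B(D,E)}^2 \le \sum_j \|\phi_j(rz)\|_E^2$). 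Dually, $(\psi_1,\dots,\psi_d) \mapsto \Psi$, where $\Psi(z)\bigl(\sum_j \alpha_j e_j\bigr) := \sum_j \alpha_j \psi_j(z)$, is a linear bijection of $\bigl(H^2_E\bigr)^d$ onto $H^2_{\mathcal B(D,E)}$; membership and the ``column'' description of $\Psi$ follow from Lemma~\ref{dhfbgbgbgbg} and Lemma~\ref{thmved}(b), writing $\Psi = \sum_j [\psi_j] P_{D_j}$ as in the proof of Lemma~\ref{thmved}.

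Next I would invoke the fact recalled in Chapter~2 that the boundary‑value map $f \mapsto bf$ is an isometric bijection of $H^2(\mathbb D, E)$ onto $H^2_E$. Applying it in each of the $d$ columns, the composition
\[
H^2(\mathbb D, \mathcal B(D,E)) \longrightarrow \bigl(H^2(\mathbb D, E)\bigr)^d \longrightarrow \bigl(H^2_E\bigr)^d \longrightarrow H^2_{\mathcal B(D,E)}
\]
is a linear bijection. It then remains to check that this composition is exactly the map $b$ of (\ref{defkkgkhk}): unwinding the definitions, the image of $\Phi$ under the composition is the function $z \mapsto \Psi(z)$ with $\Psi(z)e_j = (b\phi_j)(z)$, and by (\ref{bracket}) this operator acting on $\mathbb C^d$ is precisely $\bigl[[\phi_1](z),\dots,[\phi_d](z)\bigr]$, with the value on the null set $\sigma$ from (\ref{jmuynhgbvfc}) being irrelevant. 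Combined with (\ref{nmnmmnkgkekf})--(\ref{cbdjefberbgh}) this identifies the composition with $b$, which finishes the proof.

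The step I expect to demand the most care is not any single estimate but the bookkeeping around the bracket operation $[\cdot]$: verifying that the ``row operator matrix'' $\bigl[[\phi_1](z),\dots,[\phi_d](z)\bigr]$ produced by (\ref{defkkgkhk}) coincides, almost everywhere, with the operator $\Psi(z)$ reconstructed column‑by‑column from the boundary functions $\psi_j = b\phi_j$, and that the resulting object indeed lies in $H^2_s(\mathbb T, \mathcal B(D,E))$ rather than merely being a WOT‑measurable operator‑valued function. The analytic‑extension half of surjectivity, by contrast, is essentially painless, since finite‑dimensionality of $D$ forces every candidate $\Phi(\zeta)$ to be bounded.
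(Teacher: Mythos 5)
Your argument is correct, and at its core it is the same columnwise reduction the paper uses: fix the canonical basis of $D=\mathbb C^d$, treat the columns $\phi_j=\Phi(\cdot)e_j$, and let the classical boundary correspondence $H^2(\mathbb D,E)\cong H^2_E$ (with finite-dimensionality of $D$ supplying operator-norm bounds) do the work. The difference is organizational. The paper's proof stays entirely inside the strong spaces and verifies the four properties directly: well-definedness by the estimate $\|(b\Phi)(z_0)\|\le\sum_j\|\phi_j(z_0)\|_E$, injectivity by reproducing $\Phi(rz)x=((b\Phi)x\ast P_r)(z)$ from the boundary data, and surjectivity by Poisson-integrating the columns of a given $A\in H^2_s(\mathbb T,\mathcal B(\mathbb C^d,E))$ and checking nontangential limits via (\ref{cbdjefberbgh}); it never invokes Theorem \ref{fffbgdnfflll} or Lemma \ref{thmved}. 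You instead factor $b$ as a composition of three bijections (column decomposition, columnwise boundary map, column reassembly) and import injectivity and surjectivity wholesale from the quoted classical theorem, after first replacing $H^2_s(\mathbb D,\mathcal B(D,E))$ and $H^2_s(\mathbb T,\mathcal B(D,E))$ by $H^2(\mathbb D,\mathcal B(D,E))$ and $H^2_{\mathcal B(D,E)}$ via Theorem \ref{fffbgdnfflll} and Lemma \ref{thmved}. This is legitimate and arguably cleaner, but note that the reduction step actually buys you nothing and slightly raises the burden: the surjectivity of your first arrow then requires the operator-norm $H^2$ estimate from Theorem \ref{fffbgdnfflll}, and the last arrow needs Lemma \ref{thmved}(b), whereas landing directly in the strong spaces (as the paper does) only requires that the assembled function have columns in $H^2$ and bounded values, which is immediate when $\dim D<\infty$. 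So your route trades the paper's short Poisson computations for a dependence on the earlier set-theoretic identifications; both are sound, and your identification of the composed map with $b$ via (\ref{bracket}), (\ref{nmnmmnkgkekf})--(\ref{cbdjefberbgh}) is exactly the bookkeeping that needs to be said.
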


\begin{proof}
Let $d:=\dim D<\infty$. Then we may assume that $D=\mathbb C^d$. \
Let $\{e_j\}_{j=1}^d$ be the canonical basis for $\mathbb C^d$ and
$\mathbb T_0$ be defined as the above.

\medskip

(1) $b$ is well-defined: Let $\Phi \in H^2_s(\mathbb D, \mathcal
B(\mathbb C^d, E))$. \ Then it follows from (\ref{jmuynhgbvfc}) that
for each $z_0 \in \mathbb T_0$,
$$
||(b\Phi)(z_0)||_{\mathcal B(\mathbb C^d,
E)}\leq\sum_{n=1}^d||\phi_{j}(z_0)||_E<\infty
$$
which implies that $(b\Phi)(z_0)$ is bounded for each $z_0 \in
\mathbb T_0$. \ If $x\equiv(x_1,x_2,\cdots, x_d)^t \in \mathbb C^d$,
then
$$(b\Phi)(z)x=\sum_{n=1}^d x_{j}\phi_{j}(z) \in
H^2_E,
$$
which implies that $b\Phi \in H^2_s(\mathcal B(\mathbb C^d, E))$,
and hence $b$ is well-defined.

\medskip

(2) $b$ is linear: Immediate from a direct calculation.

\medskip

(3) $b$ is one-one: Let $\Phi, \Psi \in H^2_s(\mathbb D, \mathcal
B(\mathbb C^d, E))$. \ If $b \Phi=b \Psi$, then it follows that for
each $x \in \mathbb C^d$ and $rz \in \mathbb D$,
$$
\aligned
\Phi(rz)x&=((b\Phi)x \ast P_r)(z)\\
&=\int_{0}^{2 \pi} P_r(\theta-t)(b\Phi)(e^{i t})x dm(t)\\
&=\int_{0}^{2 \pi} P_r(\theta-t)(b\Psi)(e^{i t})x dm(t)\\
&=\Psi(rz)x \quad(z=e^{i \theta}),
\endaligned
$$
which gives the result.

\medskip

(4) $b$ is onto: Let $A \in H^2_s(\mathbb T, \mathcal B(\mathbb C^d,
E))$. \ Then $A(z)e_j\in H^2_E$ for all $j=1,2,\cdots, d$. \ For
each $j=1,2,\cdots, d$, let
$$
\phi_j(rz):=(Ae_j \ast P_r)(z) \in H^2(\mathbb D, E)
$$
and define
$$
\Phi(\zeta):=[\phi_1(\zeta), \phi_2(\zeta), \cdots, \phi_d(\zeta)]
\quad(\zeta:=rz).
$$
Then $\Phi\in H^2_s(\mathbb D, \mathcal B(\mathbb C^d, E))$. \ It
follows from (\ref{cbdjefberbgh}) that for all $x=(x_1, x_2, \cdots,
x_d)^t \in \mathbb C^d$ and for almost all $z \in \mathbb T$,
$$
\aligned (b \Phi)(z)x&=\lim_{rz\to z}\Phi(rz)x\\
&=\lim_{rz\to z}\sum_{j=1}^d x_j\phi_j(rz)\\
&=\sum_{j=1}^d x_jA(z)e_j\\
&=A(z)x,
\endaligned
$$
which implies that $b$ is onto.  \ This completes the proof.
\end{proof}

\bigskip

We thus have:

\begin{cor}\label{xsxsndsnafsdd}
If $\dim D <\infty$, then the
function $b$ defined by (\ref{defkkgkhk}) is an isometric bijection
from $H^2(\mathbb D, \mathcal B(D, E))$ onto $H^2_{\mathcal B(D,
E)}$.
\end{cor}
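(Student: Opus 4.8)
The plan is to reduce the statement to the set-theoretic identifications already at hand and then settle the isometry by a Poisson-integral argument. Since $\dim D<\infty$, Theorem \ref{fffbgdnfflll} gives $H^2(\mathbb D,\mathcal B(D,E))=H^2_s(\mathbb D,\mathcal B(D,E))$ and Lemma \ref{thmved}(b) gives $H^2_s(\mathbb T,\mathcal B(D,E))=H^2_{\mathcal B(D,E)}$; meanwhile Theorem \ref{xsxsndsnafsd} says that $b$ is a linear bijection from $H^2_s(\mathbb D,\mathcal B(D,E))$ onto $H^2_s(\mathbb T,\mathcal B(D,E))$. Composing these three facts shows that $b$ is already a linear bijection from $H^2(\mathbb D,\mathcal B(D,E))$ onto $H^2_{\mathcal B(D,E)}$, so the only thing left to prove is $\|b\Phi\|_{L^2_{\mathcal B(D,E)}}=\|\Phi\|_{H^2(\mathbb D,\mathcal B(D,E))}$ for every $\Phi\in H^2(\mathbb D,\mathcal B(D,E))$.

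For the inequality ``$\le$'' I would first show that $\Phi$ is the (Bochner) Poisson integral of $b\Phi$. Writing $D=\mathbb C^d$ with canonical basis $\{e_j\}_{j=1}^d$ and $\phi_j(\zeta):=\Phi(\zeta)e_j\in H^2(\mathbb D,E)$, one has $\phi_j(rz)=(b\phi_j\ast P_r)(z)$. Since $b\Phi\in H^2_{\mathcal B(D,E)}\subseteq L^1_{\mathcal B(D,E)}$ (finite measure space), the Bochner integral $\int_{\mathbb T}P_r(\theta-t)(b\Phi)(e^{it})\,dm(t)$ exists, and Lemma \ref{remark3.3sdddf} applied columnwise shows it equals $\Phi(rz)$ for $z=e^{i\theta}$. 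Passing the operator norm through the integral gives $\|\Phi(rz)\|_{\mathcal B(D,E)}\le\bigl(P_r\ast\|(b\Phi)(\cdot)\|_{\mathcal B(D,E)}\bigr)(z)$, and since convolution with $P_r$ multiplies Fourier coefficients by $r^{|n|}$ it contracts the $L^2(\mathbb T)$-norm; hence $\int_{\mathbb T}\|\Phi(rz)\|^2\,dm\le\int_{\mathbb T}\|(b\Phi)(z)\|^2\,dm$ for all $r\in[0,1)$, and taking the supremum over $r$ yields $\|\Phi\|_{H^2(\mathbb D,\mathcal B(D,E))}\le\|b\Phi\|_{L^2_{\mathcal B(D,E)}}$.

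For the reverse inequality I would use the boundary behavior. By (\ref{cbdjefberbgh}), $\Phi(rz)e_j\to(b\Phi)(z)e_j$ in $E$ for a.e.\ $z$ and each $j$, and since the domain is $d$-dimensional, Cauchy-Schwarz gives $\|\Phi(rz)-(b\Phi)(z)\|_{\mathcal B(D,E)}\le\bigl(\sum_{j=1}^d\|\Phi(rz)e_j-(b\Phi)(z)e_j\|_E^2\bigr)^{1/2}\to 0$, so $\|\Phi(rz)\|_{\mathcal B(D,E)}\to\|(b\Phi)(z)\|_{\mathcal B(D,E)}$ for a.e.\ $z$. Fatou's lemma then gives $\int_{\mathbb T}\|(b\Phi)(z)\|^2\,dm\le\liminf_{r\to1}\int_{\mathbb T}\|\Phi(rz)\|^2\,dm\le\|\Phi\|_{H^2(\mathbb D,\mathcal B(D,E))}^2$, and combined with the previous paragraph this gives equality; hence $b$ restricts to an isometric bijection of $H^2(\mathbb D,\mathcal B(D,E))$ onto $H^2_{\mathcal B(D,E)}$, as claimed.

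The measurability and $L^2(\mathbb T)$-integrability of $z\mapsto\|\Phi(rz)\|_{\mathcal B(D,E)}$ are immediate (norm-continuity of $\Phi$ on $\mathbb D$ and the very definition of $\|\cdot\|_{H^2(\mathbb D,\mathcal B(D,E))}$) and are not a real obstacle. The one place needing genuine care — and which I expect to be the main point of the proof — is the Poisson-representation identity $\Phi(rz)=\int_{\mathbb T}P_r(\theta-t)(b\Phi)(e^{it})\,dm(t)$ together with the legitimacy of moving $\|\cdot\|_{\mathcal B(D,E)}$ inside that Bochner integral; once this is in place, the rest is bookkeeping on top of Theorems \ref{fffbgdnfflll}, \ref{xsxsndsnafsd} and Lemma \ref{thmved}.
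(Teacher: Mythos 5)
Your proof is correct, and its skeleton is the same as the paper's: you reduce to the linear bijection via Theorem \ref{fffbgdnfflll}, Theorem \ref{xsxsndsnafsd} and Lemma \ref{thmved}, and then everything hinges on the Poisson representation $\Phi(re^{i\theta})=\int_{\mathbb T}P_r(\theta-t)(b\Phi)(e^{it})\,dm(t)$, which you justify exactly as the paper does, by Lemma \ref{remark3.3sdddf} together with $\phi_j(rz)=(b\phi_j\ast P_r)(z)$ for each column. The only divergence is the last step: the paper, having established (\ref{kckdvkdfvk}), simply invokes the Banach space-valued Hardy space theory \cite[Theorem 3.11.6]{Ni2} to conclude that $\Phi\mapsto b\Phi$ is isometric, whereas you prove the norm equality by hand — the inequality $\|\Phi\|_{H^2(\mathbb D,\mathcal B(D,E))}\le\|b\Phi\|_{L^2_{\mathcal B(D,E)}}$ from $\bigl\|\int f\,dm\bigr\|\le\int\|f\|\,dm$ plus the $L^2(\mathbb T)$-contractivity of convolution with $P_r$, and the reverse inequality from the a.e.\ convergence $\|\Phi(rz)-(b\Phi)(z)\|_{\mathcal B(D,E)}\le\bigl(\sum_{j=1}^d\|\Phi(rz)e_j-(b\Phi)(z)e_j\|_E^2\bigr)^{1/2}\to 0$ (this column-sum bound is exactly where $\dim D<\infty$ enters) together with Fatou's lemma. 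Both halves of your argument are valid, so you get a self-contained proof of the isometry where the paper outsources it to a reference; the cost is a little extra bookkeeping, the gain is that the role of the finite-dimensionality and of the boundary convergence (\ref{cbdjefberbgh}) in the isometry becomes explicit.
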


\begin{proof}
By Theorem \ref{xsxsndsnafsd} together with Theorem
\ref{fffbgdnfflll} and Lemma \ref{thmved}, the function $b$ defined
by (\ref{defkkgkhk}) is a linear bijection from $H^2(\mathbb D,
\mathcal B(D, E))$ onto $H^2_{\mathcal B(D,E)}$. \ In view of the
Banach space-valued version of the usual Hardy space theory (cf.
\cite[Theorem 3.11.6]{Ni2}), it suffices to show that
\begin{equation}\label{kckdvkdfvk}
\Phi(re^{it})=(b \Phi \ast P_r)(e^{it}).
\end{equation}
Indeed, if $z \in \mathbb T$, $r \in (0,1)$, and $x \in D$, then
$$
\aligned (b \Phi \ast P_r)(e^{it})x&=\Biggl(\int_{0}^{2
\pi}P_r(\theta-t)(b \Phi)(e^{it})dm(t)\Biggr)x\\
&=\int_{0}^{2
\pi}P_r(\theta-t)(b \Phi)(e^{it})xdm(t) \quad (\hbox{by Lemma} \ \ref{remark3.3sdddf}) \\
&=\Phi(re^{it})x,
\endaligned
$$
which gives (\ref{kckdvkdfvk}).
\end{proof}

\bigskip

According to the convention of the usual Hardy space theory, we will
identify $b\Phi$ with $\Phi\in H^2(\mathbb D, \mathcal B(D, E))$. \
In this sense, we eventually have:

\medskip

\begin{cor}
If $\dim\,D<\infty$, then
$$
H^2_s(\mathbb D, \mathcal B(D,E))=H^2 (\mathbb D, \mathcal B(D,E))
=H^2_{\mathcal B(D,E)}=H^2_s(\mathbb T, \mathcal B(D,E)),
$$
where the first and last equalities are set-theoretic, while the second equality
establishes an isometric isomorphism.
\end{cor}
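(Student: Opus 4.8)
The plan is to assemble this corollary directly from the four preceding results, keeping careful track of which links in the displayed chain are genuine set-theoretic equalities and which one is the isometric identification implemented by the boundary map $b$. First I would invoke Theorem \ref{fffbgdnfflll}: under the standing hypothesis $\dim D<\infty$ it gives the set-theoretic equality $H^2_s(\mathbb D, \mathcal B(D,E))=H^2(\mathbb D, \mathcal B(D,E))$, which disposes of the leftmost equality with no further work.

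Next I would treat the middle equality. Here ``equality'' abbreviates the isometric isomorphism furnished by Corollary \ref{xsxsndsnafsdd}, namely that the boundary-value map $b$ of (\ref{defkkgkhk}) is an isometric bijection from $H^2(\mathbb D, \mathcal B(D,E))$ onto $H^2_{\mathcal B(D,E)}$; combined with Corollary \ref{xsxsndsnafsdd}'s proof, which verifies $\Phi(re^{it})=(b\Phi\ast P_r)(e^{it})$, this lets us adopt the convention (already declared in the excerpt, following standard Hardy-space practice) of identifying an analytic operator-valued function on $\mathbb D$ with its boundary function on $\mathbb T$, so that the isometry reads as $H^2(\mathbb D,\mathcal B(D,E))=H^2_{\mathcal B(D,E)}$. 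Finally, for the rightmost equality I would quote Lemma \ref{thmved}(b), which gives $H^2_s(\mathbb T,\mathcal B(D,E))=H^2_{\mathcal B(D,E)}$ set-theoretically when $\dim D<\infty$. Chaining the three relations produces the asserted chain, with the first and last links genuine set-theoretic equalities and the middle one the isometric identification of Corollary \ref{xsxsndsnafsdd}.

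There is no real obstacle here: the content has all been done in Lemma \ref{fffflll}, Theorems \ref{fffbgdnfflll} and \ref{xsxsndsnafsd}, Corollary \ref{xsxsndsnafsdd}, and Lemma \ref{thmved}. The only point requiring a word of care is expository, namely checking that the three identifications are mutually compatible: that the map $b$ appearing in Corollary \ref{xsxsndsnafsdd} is precisely the map of Theorem \ref{xsxsndsnafsd} restricted to $H^2(\mathbb D,\mathcal B(D,E))$, and that the Poisson-integral ``boundary function'' identification used to read the isometry as an equality is the same identification under which $H^2_{\mathcal B(D,E)}$ and $H^2_s(\mathbb T,\mathcal B(D,E))$ are being viewed throughout. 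Since both Theorem \ref{xsxsndsnafsd} and Corollary \ref{xsxsndsnafsdd} use the same explicit formula (\ref{defkkgkhk}) for $b$ and the Poisson identification is the standard one, this compatibility is immediate, and no new estimates are needed.
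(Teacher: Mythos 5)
Your argument is correct and is essentially the paper's own proof: the first equality comes from Theorem \ref{fffbgdnfflll}, the isometric middle identification from Corollary \ref{xsxsndsnafsdd} (via the boundary map $b$ of (\ref{defkkgkhk})), and the last equality from Lemma \ref{thmved}. The compatibility remark about $b$ and the Poisson identification is a fine expository touch but adds nothing beyond what the paper already cites.
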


\begin{proof}
This follows from Theorem \ref{fffbgdnfflll}, Lemma \ref{thmved},
and Corollary \ref{xsxsndsnafsdd}.
\end{proof}

\bigskip

A function $\Delta \in H^{\infty}(\mathcal B(D, E))$ is called an
{\it inner} function with values in $\mathcal B(D,E)$ if $\Delta(z)$
is an isometric operator from $D$ into $E$ for almost all
$z\in\mathbb T$, i.e., $\Delta^*\Delta=I_D$ a.e. on $\mathbb T$. \ $\Delta$ is called a {\it two-sided inner} function if $\Delta\Delta^*=I_E$
a.e. on $\mathbb T$ and $\Delta^*\Delta=I_D$ a.e. on $\mathbb T$. \
If $\Delta$ is an inner function with values in $\mathcal B(D, E)$,
we may assume that $D$ is a subspace of $E$, and if further $\Delta$
is two-sided inner then we may assume that $D=E$.

We write $\mathcal{P}_{D}$ for the set of all polynomials
with values in $D$, i.e.,
$p(z)=\sum_{k=0}^n \widehat p(k) z^k$, where $\widehat p(k)\in D$.
If $F$
 is a strong $H^2$-function with values in $\mathcal B(D,E)$,
then the function $Fp$ belongs to $H^2_{E}$ for all $p\in \mathcal
P_D$. \ The strong $H^2$-function $F$ is called {\it outer} if
$\hbox{cl}\, F \mathcal{P}_D=H^2_{E}$.  We then have an analogue of
the scalar factorization theorem:

\medskip

\noindent{\bf Inner-Outer Factorization for strong $H^2$-functions} \
(cf. \cite[Corollary I.9]{Ni1}). \
Every strong $H^2$-function $F$
with values in $\mathcal{B}(D, E)$ can be expressed in the form
$$
F=F^iF^e,
$$
where $F^e$ is an outer function with values in $\mathcal{B}(D,
E^\prime)$ and $F^i$ is an inner function with values in
$\mathcal{B}(E^\prime, E)$ for some subspace $E^\prime$ of $E$.

\bigskip

For a function $\Phi: \mathbb T \to \mathcal B(D,E)$, write
$$
\breve{\Phi}(z):= \Phi(\overline{z}), \quad \widetilde\Phi
:= \breve{\Phi}^*.
$$
We call $\breve\Phi$ the {\it flip} of $\Phi$. \ For
$\Phi\in L^2_s(\mathcal B(D, E))$, we denote by
$\breve{\Phi}_-\equiv\mathbb P_-\Phi$ and $\Phi_+\equiv\mathbb
P_+\Phi$ the functions
$$
\aligned &((\mathbb P_- \Phi)(\cdot))x:=P_-(\Phi(\cdot)x) \quad
\hbox{a.e.
on} \ \mathbb T \quad(x \in D);\\
&((\mathbb P_+ \Phi)(\cdot)) x:=P_+(\Phi(\cdot)x) \quad \hbox{a.e.
on} \ \mathbb T \quad(x \in D),
\endaligned
$$
where $P_+$ and $P_-$ are the orthogonal projections from $L^2_E$
onto $H^2_E$ and $L^2_E \ominus H^2_E$, respectively. \ Then we may
write $\Phi\equiv \breve{\Phi}_-+\Phi_+ $. \ Note that if $\Phi\in
L^2_s(\mathcal B(D, E))$, then $ \Phi_+,  \ \Phi_- \in
H^2_s(\mathcal B(D, E))$.

In the sequel, we will often encounter the adjoints of inner matrix
functions. \ If $\Delta$ is a two-sided inner matrix function, it is
easy to show that $\Delta^*$ is of bounded type, i.e., all entries
of $\Delta^*$ are of bounded type (see p.~\pageref{matrixbt}). \ We
may predict that if $\Delta$ is an inner matrix function then
$\Delta^*$ is of bounded type.  \ However the following example
shows that this is not the case.

\medskip

\begin{ex}\label{ex3.6}
Let $h(z):= e^{\frac{1}{z-3}}$.  Then $h \in H^{\infty}$ and
$\overline{h}$ is not of bounded type. \ Let
$$
f(z):=\frac{h(z)}{\sqrt{2}||h||_{\infty}}.
$$
Clearly, $\overline{f}$ is not of bounded type. \ Let
$h_1(z):=\sqrt{1-|f(z)|^2}$. \ Then $h_1\in L^{\infty}$ and
$|h_1|\geq \frac{1}{\sqrt{2}}$. \ Thus
there exists an outer function $g$ such that $|h_1|=|g|$ a.e. on
$\mathbb T$ (see \cite[Corollary 6.25]{Do1}). \ Put
$$
\Delta:=\begin{bmatrix}f\\g \end{bmatrix} \quad (f,g\in H^\infty).
$$
Then $\Delta^*\Delta=|f|^2+|g|^2=|f|^2+|h_1|^2=1$ a.e. on $\mathbb
T$, which implies that $\Delta$ is an inner function. \ Note that
$\Delta^*$ is not necessarily of bounded type.
\end{ex}

\medskip

For a function $\Phi\in H^2_s(\mathcal B(D,E))$, we say that an
inner function $\Delta$ with values in $\mathcal B(D^{\prime},E)$ is
a {\it left inner divisor} of $\Phi$ if $\Phi=\Delta A$ for $A\in
H^2_s(\mathcal B(D,D^\prime))$. \ For $\Phi\in H^2_s(\mathcal
B(D_1,E))$ and $\Psi\in H^2_s (\mathcal B(D_2,E))$, we say that
$\Phi$ and $\Psi$ are {\it left coprime} if the only common left
inner divisor of both $\Phi$ and $\Psi$ is a unitary operator. \
Also,
we say that $\Phi$ and $\Psi$ are {\it right coprime} if
$\widetilde\Phi$ and $\widetilde\Psi$ are left coprime. \ Left or
right coprime-ness seems to be somewhat delicate problem. \ Left or
right coprime-ness for matrix-valued functions was developed in
\cite{CHKL}, \cite{CHL1}, \cite{CHL2}, \cite{CHL3}, and \cite{FF}.

\medskip

\begin{lem} \label{rem.sdcfcfc}
If $\Theta$ is a two-sided inner function, then any left inner divisor of $\Theta$ is
two-sided inner.
\end{lem}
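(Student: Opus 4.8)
The plan is to pass from the factorization defining ``left inner divisor'' to pointwise operator identities on $\mathbb{T}$ and read off two-sidedness there. Since $\Theta$ is two-sided inner we may regard it as an element of $H^{\infty}(\mathcal{B}(E,E))\subseteq H^2_s(\mathcal{B}(E,E))$, and by the definition of a left inner divisor we may write $\Theta=\Delta A$, where $\Delta$ is inner with values in $\mathcal{B}(D',E)$ for some closed subspace $D'\subseteq E$ and $A\in H^2_s(\mathcal{B}(E,D'))$. Hence there is a set of full measure in $\mathbb{T}$ on which $\Theta(z),\Delta(z),A(z)$ are all bounded operators, $\Theta(z)=\Delta(z)A(z)$, $\Delta(z)^*\Delta(z)=I_{D'}$, and — because $\Theta$ is two-sided inner — $\Theta(z)^*\Theta(z)=I_E$ and $\Theta(z)\Theta(z)^*=I_E$. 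The goal is to prove $\Delta(z)\Delta(z)^*=I_E$ for a.e.\ $z$, which, together with $\Delta^*\Delta=I_{D'}$, is exactly the statement that $\Delta$ is two-sided inner.

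Fix such a $z$ and suppress it from the notation. From $\Theta=\Delta A$ and $\Theta^*\Theta=I_E$ we get $A^*\Delta^*\Delta A=I_E$, and since $\Delta^*\Delta=I_{D'}$ this gives $A^*A=I_E$; thus $A$ is itself inner. Next, $\Theta\Theta^*=I_E$ gives $\Delta(AA^*)\Delta^*=I_E$. Multiplying this identity on the left by $\Delta^*$ and on the right by $\Delta$, and using $\Delta^*\Delta=I_{D'}$ twice, the left-hand side collapses to $AA^*$ and the right-hand side becomes $\Delta^*\Delta=I_{D'}$, so $AA^*=I_{D'}$. Substituting this back into $\Delta(AA^*)\Delta^*=I_E$ yields $\Delta\Delta^*=I_E$. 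As $z$ ranged over a full-measure set, $\Delta\Delta^*=I_E$ a.e.\ on $\mathbb{T}$, so $\Delta$ is two-sided inner. (One can also conclude without the second computation: $AA^*$ is a projection, hence $AA^*\le I_{D'}$, and $\Delta\Delta^*$ is a projection, hence $\Delta\Delta^*\le I_E$; then $I_E=\Delta(AA^*)\Delta^*\le\Delta\Delta^*\le I_E$ forces $\Delta\Delta^*=I_E$.)

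There is essentially no serious obstacle here; the only point requiring a little care is that ``left inner divisor'' is a statement about strong $H^2$-functions, so a priori the complementary factor $A$ need not be a bounded operator-valued function. This is harmless because the identities $A^*A=I_E$ and $AA^*=I_{D'}$ (equivalently: $A(z)$ is unitary between $E$ and $D'$ for a.e.\ $z$) are extracted purely pointwise from the two-sidedness of $\Theta$, and no integrability or finite-dimensionality hypotheses on $D'$ or $E$ enter. Note also that the argument in fact shows more than the stated conclusion: the companion factor $A$ is automatically two-sided inner as well.
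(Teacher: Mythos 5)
Your proof is correct and follows essentially the same route as the paper: from $I_E=\Theta\Theta^*=\Delta AA^*\Delta^*$ a.e., multiply by $\Delta^*$ and $\Delta$ to get $AA^*=\Delta^*\Delta=I_{D'}$, then substitute back to obtain $\Delta\Delta^*=I_E$. The extra observation that $A^*A=I_E$ (so $A$ is also two-sided inner) is a harmless addition not needed for the stated conclusion.
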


\begin{proof}
Suppose that $\Theta$ is a two-sided inner function with values in
$\mathcal B(E)$ and $\Delta$  is a left inner divisor, with values
in $\mathcal B(E^{\prime}, E)$, of $\Theta$. \ Then we may write
$\Theta=\Delta A$ for some $A\in H^2_s(\mathcal B(E, E^{\prime}))$.
\ Since $\Theta$ is two-sided inner, it follows that $I_E=\Theta
\Theta^*=\Delta AA^*\Delta^*$ a.e. on $\mathbb T$, so that
$I_{E^{\prime}}= \Delta^*\Delta=AA^*$ a.e. on $\mathbb T$. \ Thus
$I_E=\Delta \Delta^*$ a.e. on $\mathbb T$, and hence $\Delta$ is
two-sided inner.
\end{proof}

\medskip

\begin{lem}\label{corfgghh2.9}
If $\Phi\in L^{\infty}(\mathcal B(D,E))$, then $\Phi^*\in
L^{\infty}(\mathcal B(E,D))$. \ In this case,
\begin{equation}\label{2999}
\widehat{\Phi^*}(-n)=\widehat{\widetilde{\Phi}}(n)=\widehat{\Phi}(n)^*
\quad(n \in \mathbb Z).
\end{equation}
In particular, $\Phi\in H^{\infty}(\mathcal B(D,E))$ if and only if
$\widetilde{\Phi}\in H^{\infty}(\mathcal B(E,D))$.
\end{lem}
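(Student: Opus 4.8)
The plan is to verify the three assertions in turn, all by direct computation within the WOT framework already established. For the first assertion, Lemma \ref{remmeasurable} already guarantees that $\Phi^*$ is WOT measurable once $\Phi$ is, so it remains only to bound $\Phi^*$; this is immediate from the pointwise equality $\|\Phi^*(z)\|_{\mathcal B(E,D)}=\|\Phi(z)\|_{\mathcal B(D,E)}$ for a.e.\ $z\in\mathbb T$, which forces $\|\Phi^*\|_\infty=\|\Phi\|_\infty<\infty$.

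For the Fourier-coefficient identities, note first that $\Phi(\cdot)x\in L^\infty_E\subseteq L^1_E$ for each $x$, so every Fourier coefficient in sight is well defined. I would fix $n\in\mathbb Z$, $x\in E$, $y\in D$ and compute $\langle\widehat{\Phi^*}(-n)x,y\rangle_D$ by expressing $\widehat{\Phi^*}(-n)$ as a WOT integral, moving the inner product inside via (\ref{ffnskdll}), and using $\langle\Phi^*(z)x,y\rangle_D=\langle x,\Phi(z)y\rangle_E$ together with $\overline{z}^{-n}=z^n$ on $\mathbb T$; taking conjugates one recognizes $\overline{\langle\widehat\Phi(n)y,x\rangle_E}=\langle\widehat\Phi(n)^*x,y\rangle_D$, giving $\widehat{\Phi^*}(-n)=\widehat\Phi(n)^*$. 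For the middle term I would write $\widetilde\Phi(z)=\breve\Phi(z)^*=\Phi(\overline{z})^*$ and substitute $w=\overline{z}$ in $\widehat{\widetilde\Phi}(n)=\int_{\mathbb T}\overline{z}^{\,n}\Phi(\overline{z})^*\,dm(z)$; since $m$ is invariant under $z\mapsto\overline{z}$ and $\overline{z}^{\,n}$ becomes $w^n=\overline{w}^{-n}$, this integral equals $\int_{\mathbb T}\overline{w}^{-n}\Phi^*(w)\,dm(w)=\widehat{\Phi^*}(-n)$. Stringing the two computations together yields the full chain $\widehat{\Phi^*}(-n)=\widehat{\widetilde\Phi}(n)=\widehat\Phi(n)^*$.

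For the ``in particular'' clause, I would first observe that $\breve\Phi\in L^\infty(\mathcal B(D,E))$, since $z\mapsto\overline{z}$ is a measure-preserving homeomorphism of $\mathbb T$ and hence preserves both WOT measurability and the essential supremum of the operator norm; applying the first assertion to $\breve\Phi$ then gives $\widetilde\Phi=\breve\Phi^*\in L^\infty(\mathcal B(E,D))$, so it is meaningful to ask whether $\widetilde\Phi\in H^\infty(\mathcal B(E,D))$. By definition this asks that $\widehat{\widetilde\Phi}(n)=0$ for all $n<0$, which by the identity just proved is equivalent to $\widehat\Phi(n)^*=0$, i.e.\ $\widehat\Phi(n)=0$, for all $n<0$, precisely the condition $\Phi\in H^\infty(\mathcal B(D,E))$. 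Every step here is routine; the only points requiring a little care are the passage of the inner product through the weak integral---covered by (\ref{ffnskdll}) and Lemma \ref{remark3.3sdddf}---and the bookkeeping between $z$ and $\overline{z}$ on $\mathbb T$ in handling the flip, so I anticipate no genuine obstacle.
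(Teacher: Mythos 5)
Your proposal is correct and follows essentially the same route as the paper: pointwise norm equality plus Lemma \ref{remmeasurable} for the boundedness of $\Phi^*$, and a weak-integral computation via (\ref{ffnskdll}) combined with the conjugation symmetry of $\mathbb T$ for the identities (\ref{2999}), from which the $H^\infty$ equivalence is immediate. The only cosmetic difference is which equality you compute directly (you do $\widehat{\Phi^*}(-n)=\widehat\Phi(n)^*$ and get the middle term by the substitution $w=\overline z$, while the paper computes $\widehat{\widetilde\Phi}(n)=\widehat\Phi(n)^*$ and treats the first equality as definitional), which does not change the substance of the argument.
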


\begin{proof}  Suppose $\Phi\in L^{\infty}(\mathcal B(D,E))$.  \ Then
$$
\hbox{ess sup}_{z \in \mathbb T}||\Phi^*(z)||=\hbox{ess sup}_{z \in
\mathbb T}||\Phi(z)||<\infty,
$$
which together with Lemma \ref{remmeasurable} implies $\Phi^*\in
L^{\infty}(\mathcal B(E,D))$. \ The first equality of the assertion
(\ref{2999}) comes from the definition. \ For the second equality,
observe that for each $x\in D$, $y \in E$ and $n \in \mathbb Z$,
$$
\aligned \bigl\langle \widehat{\Phi}(n) x, \ y \bigr
\rangle&=\biggl\langle \int_{\mathbb T}\overline{z}^n
\Phi(z)x dm(z), \ y \biggr \rangle\\
&= \int_{\mathbb T}\bigl\langle\overline{z}^n
\Phi(z)x , \ y \bigr \rangle dm(z)\quad\hbox{(by (\ref{ffnskdll}))}\\
&= \int_{\mathbb T}\bigl\langle x , \ \overline{z}^n
\widetilde\Phi (z) y \bigr \rangle dm(z)\\
&=\bigl\langle  x, \ \widehat{\widetilde{\Phi}}(n)y \bigr \rangle.
\endaligned
$$
\end{proof}

\bigskip

\begin{lem}\label{lem3.4ed}
Let $1\leq p<\infty$.  If $\Phi \in L^\infty(\mathcal B(D,E))$, then
$\Phi L^p_s(\mathcal B(E^{\prime}, D)) \subseteq L^p_s(\mathcal
B(E^{\prime},E))$. \  Also, if $\Phi \in H^\infty(\mathcal B(D,E))$,
then $ \Phi H^2_s(\mathcal B(E^{\prime}, D)) \subseteq
H^2_s(\mathcal B(E^{\prime},E)). $
\end{lem}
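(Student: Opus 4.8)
The plan is to show, for any $\Psi\in L^p_s(\mathcal B(E^\prime,D))$, that the pointwise product $z\mapsto\Phi(z)\Psi(z)$ — which is $\mathcal B(E^\prime,E)$-valued a.e., since $\Phi(z)\in\mathcal B(D,E)$ and $\Psi(z)\in\mathcal B(E^\prime,D)$ for almost every $z$ — again lies in $L^p_s(\mathcal B(E^\prime,E))$. Two things must be checked: that $\Phi\Psi$ is WOT measurable, and that $(\Phi\Psi)(\cdot)x\in L^p_E$ for every $x\in E^\prime$. The second is immediate: $(\Phi\Psi)(z)x=\Phi(z)\bigl(\Psi(z)x\bigr)$, and since $\Psi(\cdot)x\in L^p_D$ while $\|\Phi(z)\|\le\|\Phi\|_{\infty}$ a.e., we get $\|(\Phi\Psi)(z)x\|_E\le\|\Phi\|_{\infty}\,\|\Psi(z)x\|_D$, so $(\Phi\Psi)(\cdot)x\in L^p_E$.

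The substantive step is measurability. Because $D$ and $E$ are separable Hilbert spaces, the assumption that $\Phi$ is WOT measurable is equivalent to $\Phi$ being SOT measurable, so $z\mapsto\Phi(z)d$ is strongly measurable for each $d\in D$. Now fix $x\in E^\prime$ and choose countable-valued functions $g_n=\sum_k x^{(n)}_k\chi_{\sigma^{(n)}_k}$ with $g_n\to\Psi(\cdot)x$ a.e.\ (possible since $\Psi(\cdot)x\in L^p_D$ is strongly measurable). Then $z\mapsto\Phi(z)g_n(z)=\sum_k\chi_{\sigma^{(n)}_k}(z)\,\Phi(z)x^{(n)}_k$ is strongly measurable, being a countable patching of the strongly measurable functions $\Phi(\cdot)x^{(n)}_k$; moreover $\Phi(z)g_n(z)\to\Phi(z)\bigl(\Psi(z)x\bigr)$ a.e., because $\|\Phi(z)\bigl(g_n(z)-\Psi(z)x\bigr)\|\le\|\Phi\|_{\infty}\,\|g_n(z)-\Psi(z)x\|\to0$. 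Since a pointwise a.e.\ limit of strongly measurable $E$-valued functions is strongly measurable, $(\Phi\Psi)(\cdot)x$ is strongly, hence weakly, measurable for every $x\in E^\prime$; that is, $\Phi\Psi$ is WOT measurable. Together with the first paragraph this yields $\Phi\Psi\in L^p_s(\mathcal B(E^\prime,E))$, the first assertion.

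For the second assertion, $H^\infty(\mathcal B(D,E))\subseteq L^\infty(\mathcal B(D,E))$, so the first part already gives $\Phi\,H^2_s(\mathcal B(E^\prime,D))\subseteq L^2_s(\mathcal B(E^\prime,E))$; it remains only to see that $(\Phi\Psi)(\cdot)x\in H^2_E$ for each $x\in E^\prime$, i.e.\ that $\Phi g\in H^2_E$ whenever $g:=\Psi(\cdot)x\in H^2_D$. The plan is to pass to the disk: identify $\Phi$ with its analytic representative in $H^\infty(\mathbb D,\mathcal B(D,E))$ and $g$ with its representative in $H^2(\mathbb D,D)$. Then $\zeta\mapsto\Phi(\zeta)g(\zeta)$ is analytic on $\mathbb D$, and
$$
\sup_{0<r<1}\int_{\mathbb T}\|\Phi(rz)g(rz)\|_E^2\,dm(z)\ \le\ \|\Phi\|_{H^\infty}^2\ \sup_{0<r<1}\int_{\mathbb T}\|g(rz)\|_D^2\,dm(z)\ =\ \|\Phi\|_{H^\infty}^2\,\|g\|_{H^2_D}^2\ <\ \infty,
$$
so $\Phi g\in H^2(\mathbb D,E)$ and its boundary function belongs to $H^2_E$. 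It then suffices to identify that boundary function with $z\mapsto\Phi(z)g(z)$ a.e.: by Lemma~\ref{strongh2}, $\Phi(\cdot)d\in H^2(\mathbb D,E)$ for every $d\in D$, so $\Phi(rz)\to\Phi(z)$ strongly and nontangentially a.e.\ (first on a countable dense subset of $D$, then on all of $D$ using the bound $\|\Phi(\zeta)\|\le\|\Phi\|_{H^\infty}$ for $\zeta\in\mathbb D$), whence $\Phi(rz)g(rz)\to\Phi(z)g(z)$ nontangentially a.e.; this forces $\Phi(\cdot)g(\cdot)$ to be the boundary function of $\Phi g$, so $(\Phi\Psi)(\cdot)x\in H^2_E$.

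I expect the measurability of the product (the second paragraph) to be the only genuine obstacle; the remaining steps are routine once the norm estimates and the facts about $H^2$-functions on the disk from earlier in this chapter are invoked. A subtler point within the last step is the passage from nontangential strong convergence of $\Phi(rz)$ on a countable dense subset of $D$ to convergence on all of $D$, which relies on the uniform bound $\sup_{\zeta\in\mathbb D}\|\Phi(\zeta)\|=\|\Phi\|_{H^\infty}<\infty$.
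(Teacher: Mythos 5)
Your proof is correct, but for the second assertion it takes a genuinely different route from the paper. For the first assertion the two arguments are essentially the same reduction: you obtain measurability of $\Phi\Psi$ by approximating $\Psi(\cdot)x$ with countable-valued functions and using SOT measurability of $\Phi$, while the paper expands $A(z)x=\sum_k\langle A(z)x,d_k\rangle d_k$ in an orthonormal basis of $D$ and reads off WOT measurability directly; both then finish with the same $\|\Phi\|_\infty$ estimate. For the $H^\infty\cdot H^2_s$ assertion, however, the paper stays on the circle: arguing by contradiction, it assumes $\widehat{\Phi A}(-n_0)\neq 0$, pairs against a suitable $y_0\in E$ via the SOT/WOT integral identity (\ref{ffnskdll}), and uses $\widehat{\Phi^*}(n_0)=\widehat{\Phi}(-n_0)^*=0$ from Lemma \ref{corfgghh2.9} together with $A(\cdot)x_0\in H^2_D$ to force the pairing to vanish — a short, purely Fourier-analytic argument needing no boundary-value machinery. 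You instead pass to the disk: analyticity of $\zeta\mapsto\Phi(\zeta)g(\zeta)$, the mean estimate $\sup_r\int\|\Phi(rz)g(rz)\|^2dm\le\|\Phi\|_{H^\infty}^2\|g\|_{H^2_D}^2$, and identification of the boundary function with $\Phi(z)g(z)$ via nontangential limits, which requires the Chapter 3 identifications (Lemma \ref{strongh2}, the Ni2 boundary-value theorem), the uniform bound $\|\Phi(\zeta)\|\le\|\Phi\|_\infty$ for the Poisson extension, and your countable-dense-subset argument to get SOT nontangential convergence of $\Phi(rz)$ off a single null set. Your route is heavier but buys slightly more: it exhibits the analytic extension of $\Phi A$ concretely as the product of the extensions of $\Phi$ and $g$, whereas the paper's Fourier argument only certifies that the negative coefficients vanish; conversely, the paper's proof is shorter and does not depend on the uniform bound for the Poisson extension or on nontangential convergence, which you correctly flag as the delicate points of your approach.
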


\begin{proof}   Suppose that
$\Phi \in L^\infty(\mathcal B(D,E))$ and $A \in L^p_s(\mathcal
B(E^{\prime}, D))$.  Let $x\in E^{\prime}$ be arbitrary. \ Then we
have $A(z)x \in L^p_D$. \ Let $\{d_k\}_{k\geq 1}$ be an orthonormal
basis for $D$. Thus we may write
\begin{equation}\label{kkkngd}
A(z)x=\sum_{k\geq 1}\langle A(z)x, \ d_k \rangle d_k \quad \hbox{
for almost all} \ z\in \mathbb T.
\end{equation}
Thus it follows that for all $y\in E$,
$$
\bigl\langle \Phi(z)A(z)x, \ y \bigr \rangle=\sum_{k\geq
1}\bigl\langle A(z)x, \ d_k \bigr\rangle \bigl \langle \Phi(z)d_k,y
\bigr \rangle,
$$
which implies that $\Phi A$ is WOT measurable. \ On the other hand,
since $\Phi \in L^\infty(\mathcal B(D,E))$, it follows that
$$
\int_{\mathbb T}||(\Phi A)(z)x||^p_E dm(z)\le
||\Phi||_{\infty}^p\int_{\mathbb T}||A(z)x||_D^p dm(z)<\infty
\quad(x \in E^{\prime}),
$$
which implies that $\Phi A \in L^p_s(\mathcal B(E^{\prime},E))$. \
This proves the first assertion. \ For the second assertion, suppose
$\Phi \in H^\infty(\mathcal B(D,E))$ and $A \in H^2_s(\mathcal
B(E^{\prime}, D))$. \ Then $\Phi A\in L^2_s (\mathcal
B(E^{\prime},E))$. \  Assume to the contrary that $\Phi A \notin
H^2_s(\mathcal B(E^{\prime}, E))$. \ Thus, there exists $n_0>0$ such
that $\widehat{\Phi A}(-n_0) \neq 0$. \ Thus for some $x_0 \in
E^{\prime}$,
\begin{equation}\label{bbbxvdvvd}
\int_{\mathbb T}z^{n_0}\Phi(z)A(z)x_0dm(z) \neq 0.
\end{equation}
Then by (\ref{ffnskdll}), there exists a nonzero $y_0
\in E$ such that
\begin{equation}
0 \neq \biggl \langle \int_{\mathbb T}z^{n_0}\Phi(z)A(z)x_0dm(z), \
y_0 \biggr \rangle=\int_{\mathbb T}\bigl \langle  A(z)x_0, \
\overline{z}^{n_0}\Phi^*(z)y_0\bigr \rangle dm(z).
\end{equation}
On the other hand, since $\Phi \in H^\infty(\mathcal B(D,E))$, it
follows from Lemma \ref{corfgghh2.9} that $
\widehat{\Phi^*}(n_0)=\widehat{\Phi}(-n_0)^*=0$. \ Thus it follows
from (\ref{ffnskdll}) that
$$
0=\bigl\langle \widehat{\Phi^*}(n_0)y_0, \ A(z)x_0
\bigr\rangle=\int_{\mathbb T}\bigl\langle
\overline{z}^{n_0}\Phi^*(z)y_0, \ A(z)x_0 \bigr\rangle dm(z),
$$
a contradiction.
\end{proof}

\medskip

\begin{cor}\label{lemma4.100000}
Let $1\leq p<\infty$. \ If $\Phi \in L^\infty(\mathcal B(D,E))$,
then $\Phi L^p_D \subseteq  L^p_E$.  \ Also, if $\Phi \in
H^\infty(\mathcal B(D,E))$, then $\Phi H^2_D \subseteq  H^2_E$.
\end{cor}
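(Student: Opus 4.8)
The plan is to obtain this statement as a direct specialization of Lemma \ref{lem3.4ed} to the case $E^\prime=\mathbb C$, transported through the identifications of Lemma \ref{dhfbgbgbgbg}. \ Recall that Lemma \ref{dhfbgbgbgbg} identifies $L^p_D$ with $L^p_s(\mathcal B(\mathbb C,D))=L^p_{\mathcal B(\mathbb C,D)}$ via the isometry $f\mapsto[f]$, and likewise $L^p_E$ with $L^p_s(\mathcal B(\mathbb C,E))$; when $p=2$ the analogous identifications hold for $H^2_D$ and $H^2_E$.

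First I would record the elementary bracket compatibility: for $\Phi\in L^\infty(\mathcal B(D,E))$ and $f\in L^p_D$, the function $z\mapsto\Phi(z)f(z)$ is exactly $[\,\cdot\,]$ applied to the operator product $\Phi[f]$. \ Indeed, for almost every $z$ and every $\alpha\in\mathbb C$,
\[
(\Phi[f])(z)\alpha=\Phi(z)\bigl([f](z)\alpha\bigr)=\Phi(z)\bigl(\alpha f(z)\bigr)=\alpha\,\Phi(z)f(z)=[\Phi f](z)\alpha,
\]
so $\Phi[f]=[\Phi f]$.

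Next I would invoke Lemma \ref{lem3.4ed} with $E^\prime=\mathbb C$. \ Since $[f]\in L^p_s(\mathcal B(\mathbb C,D))$, that lemma gives $\Phi[f]\in L^p_s(\mathcal B(\mathbb C,E))=L^p_{\mathcal B(\mathbb C,E)}$; by the computation above this reads $[\Phi f]\in L^p_{\mathcal B(\mathbb C,E)}$, hence $\Phi f\in L^p_E$ by Lemma \ref{dhfbgbgbgbg}(a). \ This proves the first assertion. \ For the second assertion I would repeat the argument verbatim with $p=2$, using the $H^\infty$-part of Lemma \ref{lem3.4ed} (which yields $\Phi H^2_s(\mathcal B(\mathbb C,D))\subseteq H^2_s(\mathcal B(\mathbb C,E))$) together with the $H^2$-identifications of Lemma \ref{dhfbgbgbgbg}.

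There is essentially no obstacle here; the only point needing care is that one must pass to the operator-valued picture in order to apply Lemma \ref{lem3.4ed}, and the identity $\Phi[f]=[\Phi f]$ makes this transparent. \ One could instead give a self-contained argument --- WOT-measurability of $z\mapsto\Phi(z)f(z)$ by expanding $f(z)$ in an orthonormal basis of $D$ as in the proof of Lemma \ref{lem3.4ed}, the pointwise bound $||\Phi(z)f(z)||_E\le||\Phi||_\infty||f(z)||_D$, and, for the $H^2$ case, vanishing of the negative Fourier coefficients --- but this merely duplicates the proof of Lemma \ref{lem3.4ed}, so the specialization route is preferable.
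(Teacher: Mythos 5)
Your argument is correct and is precisely the paper's own proof: the paper likewise notes $[\Phi f]=\Phi[f]$ and then deduces the result from Lemma \ref{dhfbgbgbgbg} together with Lemma \ref{lem3.4ed}, and you have simply written out the specialization $E^\prime=\mathbb C$ in more detail. No gap, and no genuinely different route.
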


\begin{proof}  Suppose that $\Phi \in L^\infty(\mathcal B(D,E))$.
For $f \in L^P_D$, we can see that $[\Phi f]=\Phi[f]$. \ The result
thus follows from Lemma \ref{dhfbgbgbgbg} and Lemma \ref{lem3.4ed}.
\end{proof}

\bigskip

For an inner function $\Delta\in H^\infty(\mathcal B(E^\prime,E))$,
$\mathcal H(\Delta)$ denotes the orthogonal complement of the
subspace $\Delta H^2_{E^\prime}$ in $H^2_E$, i.e.,
$$
\mathcal H(\Delta):=H^2_{E} \ominus \Delta H^2_{E^\prime}.
$$
The space $\mathcal H(\Delta)$ is often called a {\it model space}
or a {\it de Branges-Rovnyak space} (cf. \cite{dR}, \cite{Sa}, \cite{SFBK}).

\bigskip

We then have:

\begin{cor}\label{thm2.4edfrgt}
Let $\Delta$ be an inner function with values in $\mathcal B(D,E)$.
Then $f\in \mathcal H(\Delta)$ if and only if  $f\in H^2_E$ and
$\Delta^* f \in L^2_D \ominus H^2_{D}$.
\end{cor}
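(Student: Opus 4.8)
The plan is to unwind both sides of the claimed equivalence directly from the orthogonal decomposition $H^2_E = \mathcal{H}(\Delta) \oplus \Delta H^2_D$ and from the defining property of the adjoint $\Delta^*$ (here meaning the pointwise adjoint $\Delta^*(z) = \Delta(z)^*$, applied to $H^2_E$-functions via Corollary \ref{lemma4.100000}, since $\Delta^* \in L^\infty(\mathcal{B}(E,D))$ by Lemma \ref{corfgghh2.9}). The key observation is that for $f \in H^2_E$, saying $f \perp \Delta H^2_D$ is the same as saying $\langle f, \Delta g \rangle = 0$ for every $g \in H^2_D$, and by moving $\Delta$ across the inner product this becomes $\langle \Delta^* f, g \rangle = 0$ for all $g \in H^2_D$, i.e. $\Delta^* f \perp H^2_D$ in $L^2_D$. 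So the heart of the argument is the manipulation $\langle f, \Delta g \rangle_{L^2_E} = \langle \Delta^* f, g \rangle_{L^2_D}$, valid for $f \in H^2_E \subseteq L^2_E$ and $g \in H^2_D \subseteq L^2_D$, which follows by integrating the pointwise identity $\langle f(z), \Delta(z) g(z) \rangle_E = \langle \Delta(z)^* f(z), g(z) \rangle_D$ over $\mathbb{T}$.

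First I would record that $\Delta^* f \in L^2_D$ makes sense: since $\Delta$ is inner it is bounded, so $\Delta^* \in L^\infty(\mathcal{B}(E,D))$ by Lemma \ref{corfgghh2.9}, and then $\Delta^* f \in L^2_D$ by Corollary \ref{lemma4.100000} (applied to the $L^p$ statement with $p = 2$). Next, for $f \in H^2_E$ I would write the chain of equivalences: $f \in \mathcal{H}(\Delta)$ $\iff$ $f \perp \Delta H^2_D$ $\iff$ $\langle f, \Delta g\rangle_{L^2_E} = 0$ for all $g \in H^2_D$ $\iff$ $\langle \Delta^* f, g\rangle_{L^2_D} = 0$ for all $g \in H^2_D$ $\iff$ $\Delta^* f \perp H^2_D$ in $L^2_D$ $\iff$ $\Delta^* f \in L^2_D \ominus H^2_D$. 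The only substantive point needing justification is the third equivalence, i.e. that $\langle f, \Delta g\rangle_{L^2_E} = \langle \Delta^* f, g\rangle_{L^2_D}$; this I would obtain from the pointwise adjoint relation together with the fact that $z \mapsto \langle \Delta(z)^* f(z), g(z)\rangle_D$ is in $L^1$ (Cauchy–Schwarz, since $\Delta^* f, g \in L^2_D$), so the integral defining the inner products is legitimate and the interchange is just $\int_{\mathbb{T}} \langle f(z), \Delta(z) g(z)\rangle_E \, dm(z) = \int_{\mathbb{T}} \langle \Delta(z)^* f(z), g(z)\rangle_D \, dm(z)$.

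I expect the main (and only real) obstacle to be a bookkeeping one: making sure that $\mathcal{H}(\Delta)$, $\Delta H^2_D$, and $\Delta^* f$ all live in the spaces I claim and that the inner-product computations are over the correct Hilbert spaces ($L^2_E$ versus $L^2_D$), so that the transfer of $\Delta$ from one side of the inner product to the other is the pointwise adjoint and nothing more. Once the membership $\Delta^* f \in L^2_D$ is in hand via Corollary \ref{lemma4.100000} and Lemma \ref{corfgghh2.9}, the rest is the routine ``$M \ominus N$ means orthogonal to every element of $N$'' unwinding, run in both directions, so the proof is short. No step requires the inner-outer factorization or any of the deeper machinery; it is purely the interplay between orthogonal complements and the pointwise adjoint of an $L^\infty$ operator-valued function.
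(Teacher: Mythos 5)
Your proposal is correct and follows essentially the same route as the paper: establish $\Delta^* f \in L^2_D$ via Lemma \ref{corfgghh2.9} and Corollary \ref{lemma4.100000}, then unwind $f \perp \Delta H^2_D$ by moving $\Delta$ across the inner product to the pointwise adjoint, so that $f \in \mathcal H(\Delta)$ iff $\langle \Delta^* f, g\rangle = 0$ for all $g \in H^2_D$. Your extra remarks on the $L^1$-integrability of the integrand are just the routine justification implicit in the paper's argument.
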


\begin{proof}
Let $f\in H^2_E$. By Lemma \ref{corfgghh2.9} and Corollary
\ref{lemma4.100000}, $\Delta^*f\in L^2_D$. \  Then $f \in \mathcal
H(\Delta)$ if and only if $\bigl\langle f, \Delta g \bigr\rangle=0$
for all $g \in H^2_D$ if and only if $\bigl\langle \Delta^* f, \ g
\bigr \rangle=0$ for all $g\in H^2_D$, which gives the result.
\end{proof}

%
%
%
%
%
%

\chapter{The Beurling-Lax-Halmos Theorem} \

In this chapter we introduce the Beurling-Lax-Halmos Theorem and the
Douglas-Shapiro-Shields factorization. \ Then we coin the new
notions of complementary factor of an inner function, degree of
non-cyclicity, strong $L^2$-functions of bounded type, and
meromorphic pseudo-continuation of bounded type for operator-valued
functions.

\vskip 1 cm

%
%
%
%

\noindent {\bf \S\ 4.1. The Beurling-Lax-Halmos Theorem} \

\bigskip
\noindent We first review a few essential facts for (vectorial)
Toeplitz operators and (vectorial) Hankel operators, and for that we
will use \cite{BS}, \cite{Do1}, \cite{Do2}, \cite{MR}, \cite{Ni1},
\cite{Ni2}, and \cite{Pe} for general references. \ For $\Phi\in
L^2_s(\mathcal{B}(D,E)) $, the Hankel operator $H_\Phi: H^2_D\to
H^2_E$ is a densely defined operator defined by
$$
 H_\Phi p:=J P_-(\Phi p)  \quad(p \in \mathcal{P}_{D}),
$$
where $J$ denotes the unitary operator from $L^2_{E}$ to $L^2_{E}$
given by $(Jg)(z) :=\overline{z} g(\overline{z})$ for $g \in
L^2_{E}$. \ Also a Toeplitz operator $T_{\Phi}: H^2_D\to H^2_E$ is a
densely defined operator defined by
$$
T_{\Phi}p:= P_+(\Phi p) \quad(p \in \mathcal{P}_{D}).
$$

\medskip

The following lemma gives a characterization of bounded Hankel
operators on $H^2_D$.

\medskip

\begin{lem}\label{boundedhankel}  \cite[Theorem 2.2]{Pe} \
Let $\Phi\in L^2_s(B(D,E))$. \
 Then $H_{\Phi}$ is extended to a bounded
operator on $H^2_D$ if and only if there exists a function $\Psi\in
L^{\infty}(\mathcal B(D, E))$ such that
$\widehat{\Psi}(n)=\widehat{\Phi}(n)$ for $n<0$ and
$$
||H_{\Phi}||=\hbox{dist}_{L^{\infty}}(\Psi, H^{\infty}(\mathcal B(D,
E)).
$$
\end{lem}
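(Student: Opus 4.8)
The statement is the vectorial Nehari theorem, adapted to the (possibly unbounded) Hankel operator $H_\Phi$ with a \emph{strong} $L^2$-symbol $\Phi$. My plan is to reduce it to the classical operator-valued Nehari theorem (equivalently, the Sz.-Nagy--Foias commutant lifting theorem) and then carry out the bookkeeping forced by the fact that $\Phi$ need not belong to $L^2_{\mathcal B(D,E)}$ in the Bochner sense. As usual, I would establish the two inequalities $||H_\Phi||\le \hbox{dist}_{L^\infty}(\Psi, H^\infty(\mathcal B(D,E)))$ and $||H_\Phi||\ge \hbox{dist}_{L^\infty}(\Psi, H^\infty(\mathcal B(D,E)))$ separately: the first gives the ``if'' part, and the second simultaneously produces the required symbol $\Psi$ and pins down the norm.

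For the ``if'' part, suppose $\Psi\in L^\infty(\mathcal B(D,E))$ satisfies $\widehat\Psi(n)=\widehat\Phi(n)$ for $n<0$. Since a Hankel operator depends only on the negative Fourier coefficients of its symbol, $H_\Phi p = H_\Psi p = JP_-(\Psi p)$ for every $p\in\mathcal P_D$. As $J$ and $P_-$ are contractions and $\Psi\in L^\infty$, one gets $||H_\Phi p||\le ||\Psi||_\infty\,||p||$, so $H_\Phi$ extends to a bounded operator on $H^2_D$. Moreover, for every $Q\in H^\infty(\mathcal B(D,E))$ we have $Qp\in H^2_E$ for all $p\in\mathcal P_D$ (Corollary \ref{lemma4.100000}), hence $H_Q=0$ and $H_{\Psi-Q}=H_\Phi$. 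Applying the previous estimate to $\Psi-Q$ and taking the infimum over $Q$ yields $||H_\Phi||\le \hbox{dist}_{L^\infty}(\Psi,H^\infty(\mathcal B(D,E)))$.

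For the converse, assume $H_\Phi$ is bounded and put $\gamma:=||H_\Phi||$. Consider the associated ``little Hankel'' operator $\Gamma f:=P_-(\Phi(\cdot)f)$, which is well-defined on $\mathcal P_D$ because $\Phi(\cdot)x\in L^2_E$ for each $x\in D$, and which differs from $H_\Phi$ by the unitary $J$, so it extends to a bounded operator $\Gamma:H^2_D\to L^2_E\ominus H^2_E$ with $||\Gamma||=\gamma$. Splitting $\Phi p=P_+(\Phi p)+P_-(\Phi p)$ and using $zH^2_E\subseteq H^2_E$, one checks on $\mathcal P_D$ the intertwining relation $\Gamma S_D = B\,\Gamma$, where $S_D$ is the shift on $H^2_D$ and $B$ is the compression of multiplication by $z$ to $L^2_E\ominus H^2_E$. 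The operator-valued Nehari theorem (commutant lifting) then produces $\Psi\in L^\infty(\mathcal B(D,E))$ with $||\Psi||_\infty=\gamma$ such that $P_-(\Psi f)=\Gamma f = P_-(\Phi(\cdot)f)$ for all $f\in H^2_D$. Taking $f=x\in D$ and comparing Fourier coefficients gives $\widehat\Psi(n)x=\widehat\Phi(n)x$ for every $x\in D$ and $n<0$, i.e. $\widehat\Psi(n)=\widehat\Phi(n)$ for $n<0$. Combining this $\Psi$ with the ``if'' part already proved, $\gamma\le \hbox{dist}_{L^\infty}(\Psi,H^\infty(\mathcal B(D,E)))\le ||\Psi||_\infty=\gamma$, so every inequality is an equality.

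The main obstacle is the passage through commutant lifting together with the verification that the symbol it returns genuinely reproduces the negative Fourier coefficients of $\Phi$; the subtlety is precisely that $\Phi$ is only a strong $L^2$-function, so one must argue throughout with the densely defined operator $H_\Phi$ on $\mathcal P_D$ and with the slices $\Phi(\cdot)x\in L^2_E$ rather than with $\Phi$ as an $L^2$-valued function. Once $H_\Phi$ is known to be bounded, the remaining ingredients --- the intertwining identity, the vanishing $H_Q=0$ for $Q\in H^\infty$, and the coefficientwise comparison --- are routine.
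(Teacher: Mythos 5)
The paper itself contains no proof of this lemma: it is quoted verbatim from Peller's book (\cite[Theorem 2.2]{Pe}), so there is no internal argument to compare yours against. Your write-up is essentially the standard commutant-lifting proof of the vectorial Nehari theorem (Page's theorem), and it is correct. The easy direction is handled properly: matching negative Fourier coefficients forces $P_-(\Phi p)=P_-(\Psi p)$ for every $p\in\mathcal P_D$, $H_Q=0$ for $Q\in H^\infty(\mathcal B(D,E))$ by Corollary \ref{lemma4.100000}, and hence $\|H_\Phi\|\le \hbox{dist}_{L^\infty}(\Psi,H^\infty(\mathcal B(D,E)))$; in the converse, the intertwining $\Gamma S_D=B\Gamma$ with $B=P_-M_z|_{L^2_E\ominus H^2_E}$ is exactly right, the coefficientwise comparison at constant polynomials recovers $\widehat\Psi(n)=\widehat\Phi(n)$ for $n<0$, and the sandwich $\gamma\le\hbox{dist}\le\|\Psi\|_\infty=\gamma$ closes the distance formula. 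Two small cautions. First, be careful which black box you invoke: citing ``the operator-valued Nehari theorem'' would be circular, since that \emph{is} the statement being proved; the honest external input is the Sz.-Nagy--Foia\c s commutant lifting theorem (or Parrott's lemma via one-step extensions), which is how Peller argues as well. Second, ``commutant lifting then produces $\Psi\in L^\infty$'' compresses a step worth one more sentence: lifting gives $Y:H^2_D\to L^2_E$ with $P_-Y=\Gamma$, $YS_D=M_zY$ and $\|Y\|=\|\Gamma\|$; one then extends $Y$ to $L^2_D$ by setting $\widehat Y(\overline z^{\,n}f):=M_{\overline z}^{\,n}Yf$, checks it intertwines the two bilateral shifts with the same norm, and only then identifies $\widehat Y=M_\Psi$ with $\|\Psi\|_\infty=\gamma$. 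With those two points made explicit, your proof is complete and is, in substance, the proof in the cited source.
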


\medskip

The following basic properties can be easily derived: \ If $D$, $E$,
and $D^\prime$ are separable complex Hilbert spaces and $\Phi \in
L^{\infty} (\mathcal B(D,E))$, then
\begin{align}
&T_\Phi^*=T_{\Phi^*}, \ H_{\Phi}^*=H_{\widetilde\Phi};\label{form_1}\\
&H_\Phi T_\Psi = H_{\Phi\Psi}\quad \hbox{if} \
\Psi \in H^{\infty} (\mathcal B(D^\prime, D));\label{formula}\\
&H_{\Psi\Phi}=T_{\widetilde{\Psi}}^*H_\Phi \quad \hbox{if} \ \Psi
\in H^{\infty} (\mathcal B(E, D^\prime)).\label{formula22}
\end{align}

\bigskip
\noindent A {\it shift} operator $S_E$ on $H^2_E$ is defined by
$$
(S_E f)(z):=zf(z)\quad \hbox{for each} \ f\in H^2_E.
$$
Thus we may write $S_E=T_{zI_E}$.

\medskip

The following theorem is a fundamental result in modern
operator theory.

\medskip

\noindent {\bf The Beurling-Lax-Halmos Theorem.}\label{beur}
\cite{Be},  \cite{La}, \cite{Ha}, \cite{FF}, \cite{Pe} A subspace
$M$ of $H^2_{E}$ is invariant for the shift operator $S_E$ on
$H^2_{E}$ if and only if
$$
M=\Delta H^2_{E^{\prime}},
$$
where $E^{\prime}$ is a subspace of $E$ and $\Delta$ is an inner
function with values in $\mathcal B(E^{\prime}, E)$. \ Furthermore,
$\Delta$ is unique up to a unitary constant right factor, i.e., if
$M=\Theta H^2_{E^{\prime \prime}}$, where $\Theta$ is an inner
function with values  in $\mathcal B(E^{\prime\prime}, E)$, then
$\Delta=\Theta V$, where $V$ is a unitary operator from $E^{\prime}$
onto $E^{\prime\prime}$.

\bigskip

As customarily done, we say that two inner functions $A, B \in
H^{\infty} (\mathcal B(D,E))$ are {\it equal} if they are equal up
to a unitary constant right factor. \ If $\Phi\in
L^{\infty}(\mathcal B(D,E))$, then by (\ref{formula}) and
(\ref{formula22}),
$$
H_{\Phi^*} S_E = S_E^*H_{\Phi^*},
$$
which implies that the kernel of the Hankel operator $H_{\Phi^*}$ is
an invariant subspace of the shift operator $S_E$ on $H^2_{E}$. \
Thus, by the Beurling-Lax-Halmos Theorem,
$$
\hbox{ker}\, H_{\Phi^*}=\Delta H^2_{E^{\prime}}
$$
for some inner function $\Delta$ with values in $\mathcal
B(E^{\prime}, E)$.  We note that $E^{\prime}$ may be the zero space
and $\Delta$ need not be two-sided inner.

\bigskip

We however have:

\begin{lem}\label{rem2.4}
If $\Phi\in L^\infty (\mathcal B(D,E))$ and $\Delta$ is a two-sided
inner function with values in $\mathcal B(E)$, then the following
are equivalent:

\medskip

\begin{itemize}

\item[(a)] $\ker H_{\Phi^*}=\Delta H^2_E$;
\item[(b)] $\Phi=\Delta A^*$,
where $A\in H^\infty (\mathcal B(E,D))$ is such that $\Delta$ and
$A$ are right coprime.
\end{itemize}
\end{lem}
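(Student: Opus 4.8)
**The plan is to prove the equivalence (a) $\Leftrightarrow$ (b) in two directions, using the Beurling-Lax-Halmos Theorem together with the formulas $H_\Phi^* = H_{\widetilde\Phi}$, $H_{\Phi^*} = H_{\breve\Phi}$ for bounded symbols, and the defining intertwining relation $H_{\Phi^*}S_E = S_E^* H_{\Phi^*}$.**

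For the direction (b) $\Rightarrow$ (a), I would start from $\Phi = \Delta A^*$ with $A \in H^\infty(\mathcal B(E,D))$ and $\Delta$ two-sided inner, so $\Phi^* = A\Delta^*$. Since $\Delta$ is two-sided inner, $\Delta^*$ acts as a co-isometry and in fact $\Delta H^2_E \subseteq \ker H_{\Phi^*}$ because for $f = \Delta g$ with $g \in H^2_E$ we get $\Phi^* f = A\Delta^*\Delta g = Ag \in H^2_D$ (using $\Delta^*\Delta = I_E$ and $A \in H^\infty$, via Corollary \ref{lemma4.100000}), hence $P_-(\Phi^* f) = 0$ and $H_{\Phi^*}f = 0$. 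Thus $\ker H_{\Phi^*} \supseteq \Delta H^2_E$. The reverse inclusion is where right coprimeness enters: by Beurling-Lax-Halmos, $\ker H_{\Phi^*} = \Theta H^2_{E''}$ for some inner $\Theta$, and $\Delta H^2_E \subseteq \Theta H^2_{E''}$ forces $\Delta = \Theta \Delta'$ for an inner function $\Delta'$ (a left-divisibility fact from the Beurling-Lax-Halmos lattice, applied to $\Delta$ as an element of the $S_E$-invariant subspace $\Theta H^2_{E''}$). Since $\Delta$ is two-sided inner, Lemma \ref{rem.sdcfcfc} shows $\Theta$ is two-sided inner as well, so $\Theta$ is square. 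Then I would show that $\Theta$ being a left divisor of $\Delta = \Phi A^{*-1}$-type data, combined with right coprimeness of $\Delta$ and $A$, forces $\Delta' $ to be unitary-constant. Concretely: from $\ker H_{\Phi^*} = \Theta H^2_{E''}$ one also derives, by the same computation as above run in reverse, a factorization $\Phi = \Theta C^*$ with $C \in H^\infty$; comparing $\Delta A^* = \Theta C^*$ and using that $\Theta = \Delta (\Delta')^*$ (since both are two-sided inner) gives $\Delta'{}^* A^* \Theta^{-*}\cdots$, ultimately $A^* = (\Delta')^* C^*$, i.e. $C = \Delta' A$, so $\Delta'$ is a common left inner divisor of $\Delta$ and... here I need the \emph{right}-coprime version, so I should instead track the flips/adjoints and conclude $\Delta'$ is unitary constant, whence $\Theta H^2_{E''} = \Delta H^2_E$.

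For the direction (a) $\Rightarrow$ (b), assume $\ker H_{\Phi^*} = \Delta H^2_E$ with $\Delta$ two-sided inner. Every $f \in \Delta H^2_E$ satisfies $H_{\Phi^*}f = 0$, i.e. $P_-(\Phi^* \Delta g) \in H^2_D$ for all $g \in H^2_E$; since $\Delta$ is two-sided inner this says $\Phi^* \Delta \in H^\infty(\mathcal B(E,D))$. Set $A := (\Phi^*\Delta)^* = \Delta^* \Phi \in$... wait — I want $A \in H^\infty(\mathcal B(E,D))$, so put $A^* := \Phi^* \Delta$ viewed appropriately, giving $A \in H^\infty(\mathcal B(E,D))$ and $\Phi = \Phi \Delta^* \Delta = (A^* )^*{}^* \cdots$; cleanly, from $\Phi^*\Delta \in H^\infty$ and $\Delta\Delta^* = I_E$ we get $\Phi^* = \Phi^* \Delta \Delta^* = (\Phi^*\Delta)\Delta^*$, hence $\Phi = \Delta (\Phi^*\Delta)^*$, so with $A := \Phi^*\Delta \in H^\infty(\mathcal B(E,D))$ — reindexing domains — we obtain $\Phi = \Delta A^*$. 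It remains to check right coprimeness of $\Delta$ and $A$: if $\Omega$ is a common right inner divisor, i.e. $\widetilde\Omega$ is a common left inner divisor of $\widetilde\Delta$ and $\widetilde A$, then writing $\Delta = \Delta_0 \Omega$ and $A = A_0 \Omega$ gives $\Phi = \Delta_0 \Omega A_0^* = \Delta_0 (\Omega A_0^*)$... I would instead argue via the kernel: a nontrivial common right inner divisor $\Omega$ would let one enlarge $\ker H_{\Phi^*}$ strictly beyond $\Delta H^2_E$ (since $\Phi^* = A^*{}^*\cdots = A_0^* \Omega^* \Delta_0^*$ and one can cancel $\Omega$), contradicting (a). This ``the Beurling-Lax-Halmos datum is already coprime'' step is the one to do carefully.

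\textbf{Main obstacle.} The delicate point is the coprimeness bookkeeping: translating between left/right inner divisors, flips $\widetilde{(\cdot)}$, and adjoints, and showing that the $\Delta$ coming from the Beurling-Lax-Halmos Theorem is automatically right coprime with the corresponding $A$ (equivalently, that no nontrivial inner factor can be cancelled between $\Delta$ and $A$ without shrinking the kernel). I expect this to require a careful use of the uniqueness clause in the Beurling-Lax-Halmos Theorem (uniqueness of $\Delta$ up to unitary constant right factor) applied to $\ker H_{\Phi^*}$, plus Lemma \ref{rem.sdcfcfc} to keep everything two-sided inner so that adjoints behave like inverses. The boundedness/measurability technicalities (that $\Phi^*\Delta \in L^\infty$ and that multiplication by $H^\infty$ functions preserves $H^2$) are routine and handled by Lemma \ref{corfgghh2.9} and Corollary \ref{lemma4.100000}.
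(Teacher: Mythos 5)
Your proposal is correct and follows essentially the same route as the paper's proof: for (a)$\Rightarrow$(b) set $A:=\Phi^*\Delta\in H^\infty(\mathcal B(E,D))$ and use $\Delta\Delta^*=I_E$, with coprimeness obtained by showing a nontrivial common divisor would cancel and strictly enlarge $\ker H_{\Phi^*}$; for (b)$\Rightarrow$(a) write $\ker H_{\Phi^*}=\Theta H^2_{E''}$, factor $\Delta=\Theta\Delta_0$ with everything two-sided inner via Lemma \ref{rem.sdcfcfc}, set $C:=\Phi^*\Theta\in H^\infty$, and eliminate $\Delta_0$ by right coprimeness. The side-of-divisor slip you flag resolves exactly as you anticipate: the correct relations are $\widetilde{A}=\widetilde{\Delta}_0\widetilde{C}$ and $\widetilde{\Delta}=\widetilde{\Delta}_0\widetilde{\Theta}$, so $\widetilde{\Delta}_0$ is a common left inner divisor of the flips and right coprimeness forces it to be a unitary constant, which is precisely the paper's argument.
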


\begin{proof} Let $\Phi\in L^\infty (\mathcal B(D,E))$ and $\Delta$
be a two-sided inner function with values in $\mathcal B(E)$.

\smallskip

(a) $\Rightarrow$ (b): Suppose $\ker H_{\Phi^*}=\Delta H^2_{E}$. If
we put $A:=\Phi^*\Delta\in H^\infty (\mathcal B(E,D))$, then
$\Phi=\Delta A^*$. \ We now claim that $\Delta$ and $A$ are right
coprime. \ To see this, suppose $\Omega$ is a common left inner
divisor, with values in $\mathcal B(E^{\prime}, E)$, of
$\widetilde{\Delta}$ and $\widetilde{A}$. \ Then we may write
$\widetilde{\Delta}=\Omega\widetilde{\Delta}_1$ and
$\widetilde{A}=\Omega\widetilde{A}_1$, where $\widetilde\Delta_1\in
H^\infty(\mathcal B(E,E^{\prime}))$ and $\widetilde A_1\in
H^\infty(\mathcal B(D, E^{\prime}))$. \ Since ${\Delta}$ is
two-sided inner, it follows from Lemma \ref{rem.sdcfcfc} and Lemma
\ref{corfgghh2.9} that $\Omega$ and $\Delta_1$ are two-sided inner.
\  Since $\Phi=\Delta_1 A_1^*$, we have
$$
\Delta_1 H^2_{E^{\prime}}\subseteq \ker H_{\Phi^*}=\Delta H^2_E
=\Delta_1\widetilde{\Omega} H^2_E,
$$
which implies $H^2_{E^{\prime}}=\widetilde{\Omega} H^2_E$. \ Thus by
the Beurling-Lax-Halmos Theorem, $\widetilde{\Omega}$ is a unitary
constant and so is $\Omega$. \ Therefore, $\Delta$ and $A$ are right
coprime.

\smallskip

(b) $\Rightarrow$ (a): Suppose (b) holds. Clearly, $\Delta H^2_E
\subseteq \ker H_{\Phi^*}$.  \ By the Beurling-Lax-Halmos Theorem,
$\ker H_{\Phi^*}=\Theta H^2_{E^\prime}$ for some inner function
$\Theta$, so that $\Delta H^2_{E} \subseteq \Theta H^2_{E^\prime}$.
\ Thus
$\Theta$ is a left inner divisor of $\Delta$ (cf. \cite{FF},
\cite{Pe}) so that, by Lemma \ref{rem.sdcfcfc}, we may write
$\Delta=\Theta \Delta_0$ for some two-sided inner function
$\Delta_0$ with values in $\mathcal B(E, E^{\prime})$. \ Put $G:=
\Phi^*\Theta \in H^{\infty}(\mathcal B(E^\prime,D))$. \ Then
$G=A\Delta_0^*$, and hence,
$\widetilde{A}=\widetilde{\Delta}_0\widetilde{G}$. \ But since
$\Delta$ and $A$ are right coprime, $\widetilde{\Delta}_0$ is a
unitary operator, and so is $\Delta_0$.  \ Therefore $\ker
H_{\Phi^*}=\Delta H^2_{E}$, which proves (a).
\end{proof}

\medskip

We recall that the factorization in Lemma \ref{rem2.4}(b) is called
the ({\it canonical}) {\it Douglas-Shapiro-Shields factorization}\label{dssf} of
$\Phi\in L^\infty (\mathcal B(D,E))$\label{DSS} (see \cite{DSS},
\cite{FB}, \cite{Fu}). \ Consequently, Lemma \ref{rem2.4} may be
rephrased as: If $\Phi\in L^\infty (\mathcal B(D,E))$, then the
following are equivalent:
\begin{itemize}
\item[(a)] $\Phi$ admits a Douglas-Shapiro-Shields factorization;
\item[(b)] $\ker
H_{\Phi^*}=\Delta H^2_{E}$ for some two-sided inner function
$\Delta\in H^\infty (\mathcal B(E))$.
\end{itemize}

\bigskip

The following lemma will be frequently used in the sequel.

\medskip

\noindent {\bf Complementing Lemma.} \cite[p.~49, p.~53]{Ni1}
\label{Complementing} Let $\Psi\in H^{\infty} (\mathcal B(E^\prime,
E))$ with $E^\prime\subseteq E$ and $\dim\, E^\prime<\infty$, and
let $\theta$ be a scalar inner function. \ Then the following statements are
equivalent:
\begin{itemize}
\item[(a)] There exists a function $G$ in
$H^{\infty} (\mathcal B(E,E^\prime))$ such that $G\Psi=\theta
I_{E^\prime}$;
\item[(b)] There exist functions $\Phi$ and $\Omega$ in
$H^{\infty} (\mathcal B(E))$ with $\Phi|_{E^\prime}=\Psi$,
$\Phi|_{(E\ominus E^\prime)}$ being an inner function such that
$\Omega \Phi=\Phi \Omega={\theta}I_{E}$.
\end{itemize}
In addition, if $\dim\,E<\infty$, then (a) and (b) are equivalent to
the following statement:
\begin{itemize}
\item[(c)]  $\hbox{ess inf}_{z \in \mathbb T} \min
\bigl\{||\Psi(z)x||:||x||=1\bigr\} > 0$.
\end{itemize}

\bigskip

We recall that if $\Phi$ is a strong $H^2$-function with values in
$\mathcal B(D,E)$, with $\dim E<\infty$, the {\it local rank} of
$\Phi$ is defined by (cf. \cite{Ni1})
$$
\hbox{Rank}\,\Phi:=\hbox{max}_{\zeta \in \mathbb
D}\,\hbox{rank}\,\Phi(\zeta),
$$
where $\hbox{rank}\,\Phi(\zeta):=\dim \Phi(\zeta)(D)$.

\bigskip

As we have remarked in the Introduction, if $\Phi$ is a strong
$L^2$-function with values in $\mathcal B(D,E)$, then $H^*_{\breve
\Phi}$ need not be a Hankel operator. \ Of course, if $\Phi\in
L^\infty(\mathcal B(D,E))$, then by (\ref{form_1}),
$H_{\breve\Phi}^*=H_{\widetilde{\breve\Phi}}=H_{\Phi^*}$. \ By
contrast, for a strong $L^2$-function $\Phi$ with  values in
$\mathcal B(D,E)$, $H_{\breve\Phi}^*\ne H_{\Phi^*}$ in general even
though $\Phi^*$ is also a strong $L^2$-function. \ We note that if
$\Phi^*$ is a strong $L^2$-function with values in $\mathcal B(E,
D)$, then $\ker H_{\Phi^*}$ is possibly trivial because
$H_{\Phi^*}$ is defined in the dense subset of polynomials in
$H^2_{E}$. \ Thus it is much better to deal with $H_{\breve\Phi}^*$
in place of $H_{\Phi^*}$. \ Even though $H_{\breve\Phi}^*$ need not
be a Hankel operator, we can show that the kernel of  $H^*_{\breve
\Phi}$ is still of the form $\Delta H^2_{D^\prime}$ for some inner
function $\Delta$. \ To see this, we observe:

\medskip

\begin{lem}\label{thm4.2}
Let $\Phi$ be a strong $L^2$-function with values in $\mathcal B(D,
E)$. \ Then,
$$
\begin{aligned}
\hbox{ker}\, H_{\breve{\Phi}}^*=\Bigl\{f \in H^2_{E}: \int_{\mathbb
T}\bigl \langle \Phi(z)x, \
 z^n f(z)\bigr\rangle_{E} dm(z)=0  \quad
 &\hbox{for all} \  x \in D\\ & \hbox{and} \
 n=1,2,3, \cdots\Bigr\}.
\end{aligned}
$$
\end{lem}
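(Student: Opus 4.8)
The statement is a direct computation, and the plan is the following. Since $H_{\breve\Phi}$ is densely defined on $\mathcal P_D$, the general fact $\ker A^*=(\operatorname{ran}A)^\perp$ recorded in Chapter 2 shows that a function $f\in H^2_E$ lies in $\ker H_{\breve\Phi}^*$ if and only if $\langle H_{\breve\Phi}p,f\rangle=0$ for every $p\in\mathcal P_D$. By linearity of $H_{\breve\Phi}$ and of the inner product it suffices to test this on the monomials $p(z)=z^kx$ with $x\in D$ and $k=0,1,2,\dots$, which span $\mathcal P_D$.

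First I would unwind the definition $H_{\breve\Phi}(z^kx)=JP_-\bigl(\breve\Phi(\cdot)z^kx\bigr)$; this makes sense because $\Phi$ is a strong $L^2$-function, so $\breve\Phi(\cdot)z^kx\in L^2_E$. The operator $J$ is a self-adjoint unitary involution (indeed $J^2=I$, hence $J^*=J$), and for $f\in H^2_E$ the function $(Jf)(z)=\bar z f(\bar z)$ has only strictly negative Fourier coefficients, so $Jf\in L^2_E\ominus H^2_E=\operatorname{ran}P_-$. Consequently
\[
\langle H_{\breve\Phi}(z^kx),f\rangle=\bigl\langle P_-\bigl(\breve\Phi(\cdot)z^kx\bigr),Jf\bigr\rangle=\bigl\langle \breve\Phi(\cdot)z^kx,\,Jf\bigr\rangle_{L^2_E}=\int_{\mathbb T}\bigl\langle \breve\Phi(z)z^kx,\ \bar z f(\bar z)\bigr\rangle_E\,dm(z),
\]
the projection $P_-$ dropping out because $Jf$ already lies in its range.

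Next I would substitute $z\mapsto\bar z$, which preserves $m$, and use $\breve\Phi(\bar z)=\Phi(z)$ together with the elementary identities $\langle\bar z^k\xi,\eta\rangle=\langle\xi,z^k\eta\rangle$ and $z^k\cdot z f(z)=z^{k+1}f(z)$. This rewrites the last integral as $\int_{\mathbb T}\langle\Phi(z)x,\ z^{k+1}f(z)\rangle_E\,dm(z)$. Setting $n:=k+1$, which runs through $1,2,3,\dots$ as $k$ runs through $0,1,2,\dots$, we conclude that $f\in\ker H_{\breve\Phi}^*$ precisely when $\int_{\mathbb T}\langle\Phi(z)x,z^nf(z)\rangle_E\,dm(z)=0$ for all $x\in D$ and all $n\ge 1$, which is the asserted description.

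The argument is routine; the only place that needs care is the bookkeeping of the flip and of the conjugation substitution --- keeping straight that $\breve\Phi(\bar z)=\Phi(z)$, that the scalar powers of $z$ land in the correct (conjugate-linear) slot of the inner product, and that $Jf$ is supported on negative frequencies so that $P_-$ may be deleted. A sign or exponent slip in that bookkeeping is the main risk.
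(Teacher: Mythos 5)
Your proof is correct and follows essentially the same route as the paper's: reduce to testing $\langle H_{\breve\Phi}p,f\rangle=0$ on monomials $p(z)=z^kx$, use that $J$ is a self-adjoint unitary with $Jf\in L^2_E\ominus H^2_E$ to drop $P_-$, and then perform the substitution $z\mapsto\bar z$ to arrive at $\int_{\mathbb T}\langle\Phi(z)x,z^{k+1}f(z)\rangle_E\,dm(z)=0$. The bookkeeping of the flip, the conjugation, and the shift $n=k+1$ is handled correctly, so nothing is missing.
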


\begin{proof} Observe that
$$
\aligned f \in \hbox{ker}\,H_{\breve{\Phi}}^* &\Longleftrightarrow
\bigl\langle H_{\breve{\Phi}}p, \ f \bigr\rangle_{L^2_E}=0 \quad
\hbox{for all} \ p \in \mathcal P_D\\
&\Longleftrightarrow \bigl\langle \breve\Phi(z) p(z), \ (Jf)(z)
\bigr\rangle_{L^2_E}=0 \quad
\hbox{for all} \ p \in \mathcal P_D\\
&\Longleftrightarrow \int_{\mathbb T}\bigl\langle
\Phi(\overline{z})x z^k, \ \overline{z}f(\overline{z})
\bigr\rangle_E dm(z)=0\quad \hbox{for all} \  x \in D\ \hbox{and} \
 k=0,1,2,\cdots\\
&\Longleftrightarrow \int_{\mathbb T} \bigl \langle \Phi(z)x, \ z^n
f(z)\bigr\rangle_Edm(z)=0 \quad \hbox{for all} \  x \in D\
\hbox{and} \
 n=1,2,3,\cdots,
\endaligned
$$
which gives the result.
\end{proof}

\medskip

We then have:

\begin{lem}\label{kerhadjoint}
If $\Phi$ is a strong $L^2$-function with values in $\mathcal
B(D,E)$, then
\begin{equation}\label{31400}
\hbox{ker}\, H_{\breve\Phi}^*=\Delta H^2_{E^{\prime}},
\end{equation}
where $E^\prime$ is a subspace of $E$ and $\Delta$ is an inner
function with values in $\mathcal B(E^{\prime}, E)$.
\end{lem}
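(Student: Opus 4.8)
The plan is to show that $\ker H_{\breve\Phi}^*$ is a closed subspace of $H^2_E$ that is invariant under the shift $S_E$, and then to invoke the Beurling-Lax-Halmos Theorem.

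First, since $\Phi$ is a strong $L^2$-function with values in $\mathcal B(D,E)$ and $z\mapsto\overline z$ is a measure-preserving bijection of $\mathbb T$, the flip $\breve\Phi$ is again a strong $L^2$-function with values in $\mathcal B(D,E)$; hence $H_{\breve\Phi}\colon H^2_D\to H^2_E$ is a densely defined operator with domain $\mathcal P_D$, so $H_{\breve\Phi}^*$ is well defined. As recalled in Chapter 2 from unbounded operator theory, $\ker H_{\breve\Phi}^*=(\operatorname{ran} H_{\breve\Phi})^\perp$, and therefore $\ker H_{\breve\Phi}^*$ is closed in $H^2_E$, even though $H_{\breve\Phi}^*$ need not itself be a Hankel operator.

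Next I would establish $S_E\bigl(\ker H_{\breve\Phi}^*\bigr)\subseteq \ker H_{\breve\Phi}^*$. By Lemma \ref{thm4.2}, a function $f\in H^2_E$ lies in $\ker H_{\breve\Phi}^*$ if and only if
\[
\int_{\mathbb T}\bigl\langle \Phi(z)x,\ z^n f(z)\bigr\rangle_E\,dm(z)=0
\qquad\text{for all } x\in D \text{ and } n=1,2,3,\dots .
\]
If $f$ satisfies this and $g:=S_Ef$, i.e.\ $g(z)=zf(z)$, then for every $x\in D$ and every integer $n\ge 1$ one has $z^n g(z)=z^{n+1}f(z)$ with $n+1\ge 2\ge 1$, so
\[
\int_{\mathbb T}\bigl\langle \Phi(z)x,\ z^n g(z)\bigr\rangle_E\,dm(z)
=\int_{\mathbb T}\bigl\langle \Phi(z)x,\ z^{n+1} f(z)\bigr\rangle_E\,dm(z)=0 .
\]
Applying Lemma \ref{thm4.2} in the other direction gives $g\in\ker H_{\breve\Phi}^*$, so $\ker H_{\breve\Phi}^*$ is $S_E$-invariant.

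Finally, being a closed $S_E$-invariant subspace of $H^2_E$, $\ker H_{\breve\Phi}^*$ is of the form $\Delta H^2_{E^{\prime}}$ for a subspace $E^{\prime}\subseteq E$ and an inner function $\Delta\in H^\infty(\mathcal B(E^{\prime},E))$ by the Beurling-Lax-Halmos Theorem (with $E^{\prime}=\{0\}$ when the kernel is trivial), which is precisely (\ref{31400}). I do not expect a serious obstacle: the only point deserving care is the invariance step, where one must remember that $H_{\breve\Phi}^*$ is in general \emph{not} a Hankel operator, so one cannot simply quote the intertwining identity $H_{\Phi^*}S_E=S_E^*H_{\Phi^*}$ valid in the $L^\infty$ case, and must instead rely on the explicit description of the kernel furnished by Lemma \ref{thm4.2}.
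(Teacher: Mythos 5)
Your proof is correct and follows essentially the same route as the paper: use Lemma \ref{thm4.2} to show that $zf\in\ker H_{\breve\Phi}^*$ whenever $f\in\ker H_{\breve\Phi}^*$, note that the kernel of the adjoint of a densely defined operator is closed, and invoke the Beurling-Lax-Halmos Theorem. Your added remark about why one cannot simply quote the intertwining identity from the bounded case is exactly the point the paper is implicitly relying on.
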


\begin{proof}
By Lemma \ref{thm4.2}, if $f\in \ker H_{\breve\Phi}^*$, then $zf\in
\ker H_{\breve\Phi}^*$. \ Since $\hbox{ker}\,H_{\breve\Phi}^*$ is
always closed, it follows that  $\hbox{ker}\,H_{\breve\Phi}^*$ is an
invariant subspace for $S_E$. \ Thus, by the Beurling-Lax-Halmos
Theorem, there exists an inner function $\Delta$ with values in
$\mathcal B(E^{\prime}, E)$ such that $\hbox{ker}\,
H_{\breve\Phi}^*=\Delta H^2_{E^{\prime}}$ for a subspace $E^\prime$
of $E$.
\end{proof}

\vskip 1cm

%
%
%
%
%
%

\noindent {\bf \S\ 4.2. Complementary factors of inner functions} \

\bigskip
\noindent Let $\{\Theta_i\in H^\infty(\mathcal B(E_i, E)): i\in J\}$
be a family of inner functions.  Then the greatest common left inner
divisor $\Theta_d\label{thetad}$ and the least common left inner
multiple $\Theta_m\label{thetam}$ of the family $\{\Theta_i: i\in
J\}$ are the inner functions defined by
$$
\Theta_d H^2_{D}:=\bigvee_{i \in J}\Theta_{i}H^2_{E_i} \quad
\hbox{and} \quad \Theta_m H^2_{D^{\prime}}:=\bigcap_{i\in
J}\Theta_{i}H^2_{E_i}.
$$
By the Beurling-Lax-Halmos Theorem, $\Theta_d$ and $\Theta_m$ exist,
and are unique up to a unitary constant right factor.   We write
$$
\Theta_d \equiv \hbox{left-g.c.d.}\,\{\Theta_i: i\in J\}\quad
\hbox{and} \quad \Theta_m \equiv \hbox{left-l.c.m.}\,\{\Theta_i:
i\in J\}.
$$
If $\Theta_i$ is a scalar inner function, we write
$$
\hbox{g.c.d.}\, \{\Theta_i: i\in J\}
\equiv\hbox{left-g.c.d.}\,\{\Theta_i: i\in J\}
$$
and
$$
\hbox{l.c.m.}\,\{\Theta_i: i\in J\}\equiv
\hbox{left-l.c.m.}\,\{\Theta_i: i\in J\}.
$$

\bigskip

For $\Phi \in L^{\infty}(\mathcal B(D, E))$, we symbolically define
the kernel of $\Phi$ by
$$
\hbox{ker}\, \Phi:= \bigl\{f \in H^2_{D}: \Phi(z)f(z)=0 \ \hbox{for
almost all} \ z \in \mathbb T \bigr\}.
$$
Note that the kernel of $\Phi$ consists of functions in $H^2_D$, but
not in $L^2_D$, such that $\Phi f=0$ a.e. on $\mathbb T$. \ Since $\ker \Phi$ is an invariant subspace for $S_D$, it follows from the Beurling-Lax-Halmos Theorem that $\ker \Phi=\Omega H_{D^{\prime}}^2$, for some inner function $\Omega \in H^{\infty}(D^{\prime},D)$.

Let $\Delta$ be an inner function with values in $\mathcal B(D,E)$.
\  If $g \in \hbox{ker}\, \Delta^*$, then $g \in H^2_E$, so that by
Lemma \ref{strongh2} and Lemma \ref{dhfbgbgbgbg}, $[g]$ is a strong
$H^2$-function with values in $\mathcal B(\mathbb C, E)$ (see
p.\pageref{bracket} for the definition of $[g]$). \ Write
$$
[g]=[g]^i[g]^e \quad(\hbox{inner-outer factorization)},
$$
where $[g]^e$ is an outer function with values in
$\mathcal{B}(\mathbb C, E^\prime)$ and $[g]^i$ is an inner function
with values in $\mathcal{B}(E^\prime, E)$ for some subspace
$E^\prime$ of $E$. \ If $g \neq 0$, then $[g]^e$ is a nonzero outer
function, so that $E^{\prime}=\mathbb C$. \ Thus, $[g]^i\in
H^{\infty}(\mathcal B(\mathbb C, E))$. \ If instead $g= 0$, then
$E^{\prime}=\{0\}$. \ Therefore, in this case, $[g]^i\in
H^{\infty}(\mathcal B(\{0\}, E))$.

\medskip

We then have:

\begin{lem}\label{thm2.9}
Let $\Delta$ be an inner function with values in $\mathcal B(D,E)$. \ Then we may write 
\begin{equation} \label{kerdelta*}
\ker \Delta^*=\Omega H_{D^{\prime}}^2 
\end{equation}
for some inner function $\Omega$ with values in $B(D^{\prime},E)$. \ Put
\begin{equation}\label{2.9.9.9}
\Delta_c:=\hbox{left-g.c.d.} \bigl\{\,[g]^i: g\in \hbox{ker}\,
\Delta^* \bigr\}.
\end{equation}
Then we have
\medskip
\begin{itemize}
\item[(a)]  $\Omega = \Delta_c$;
\item[(b)]  $[\Delta, \Delta_c]$ is an inner function with values in
$\mathcal B(D\oplus D^\prime, E)$;
\item[(c)]  $\hbox{ker}\, H_{\Delta^*}=[\Delta, \Delta_c] H^2_{D \oplus
D^{\prime}}\equiv \Delta H^2_D \bigoplus \Delta_c H^2_{D^{\prime}}$,
\end{itemize}
where $[\Delta, \Delta_c]$ is obtained by complementing $\Delta_c$
to $\Delta$, in other words, $[\Delta, \Delta_c]$ is regarded as a
$1\times 2$ operator matrix.
\end{lem}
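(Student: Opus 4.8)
The plan is to prove the three assertions in turn after first recording (\ref{kerdelta*}). Since $\Delta\in H^\infty(\mathcal B(D,E))$ we have $\Delta^*\in L^\infty(\mathcal B(E,D))$, and $\ker\Delta^*=\{f\in H^2_E:\Delta(z)^*f(z)=0\text{ a.e. on }\mathbb T\}$ is a closed subspace of $H^2_E$ invariant under $S_E$ (if $\Delta^*f=0$ a.e.\ then $\Delta^*(zf)=z\,\Delta^*f=0$ a.e.), so the Beurling--Lax--Halmos Theorem gives an inner function $\Omega$ with values in $\mathcal B(D',E)$ with $\ker\Delta^*=\Omega H^2_{D'}$; this is (\ref{kerdelta*}).

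For (a), I would show that $\ker\Delta^*=\bigvee\{[g]^iH^2_{\mathbb C}:g\in\ker\Delta^*\}$; since the right-hand side is, by the definition of $\hbox{left-g.c.d.}$, exactly $\Delta_cH^2$ with $\Delta_c$ as in (\ref{2.9.9.9}), comparison with (\ref{kerdelta*}) together with the uniqueness clause of the Beurling--Lax--Halmos Theorem forces $\Omega=\Delta_c$ up to a unitary constant right factor. For ``$\supseteq$'': fix $g\in\ker\Delta^*$ with $g\neq0$; then, as noted before the lemma, the outer factor in $[g]=[g]^i[g]^e$ is a \emph{scalar} outer function, hence nonzero a.e.\ on $\mathbb T$, and from $0=\Delta^*g=[g]^e\,\Delta^*[g]^i$ a.e.\ one gets $\Delta(z)^*[g]^i(z)=0$ a.e., so $[g]^iH^2_{\mathbb C}\subseteq\ker\Delta^*$ (the $g=0$ term contributes only $\{0\}$). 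For ``$\subseteq$'': since $[g]^e$ is a scalar outer function in $H^2$, we have $g=[g]^i[g]^e\in[g]^iH^2_{\mathbb C}$.

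For (b) and (c): first $[\Delta,\Delta_c]\in H^\infty(\mathcal B(D\oplus D',E))$ because each entry is, and innerness of $\Delta$ and $\Delta_c$ gives $\Delta^*\Delta=I_D$ and $\Delta_c^*\Delta_c=I_{D'}$ a.e., so (b) follows once the off-diagonal entries of $[\Delta,\Delta_c]^*[\Delta,\Delta_c]$ are shown to vanish. Taking $h\equiv y$ constant in $\Delta_cH^2_{D'}=\ker\Delta^*$ shows $\Delta(z)^*\Delta_c(z)y=0$ a.e.\ for each $y\in D'$, and running $y$ over a countable dense subset of $D'$ and intersecting the corresponding null sets yields $\Delta^*\Delta_c=0$ a.e., whence $\Delta_c^*\Delta=0$ a.e.\ by taking adjoints. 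Then $\langle\Delta f,\Delta_ch\rangle=\int_{\mathbb T}\langle\Delta_c^*\Delta f,h\rangle\,dm=0$ for all $f\in H^2_D$, $h\in H^2_{D'}$, so $\Delta H^2_D\oplus\Delta_cH^2_{D'}$ is a genuine orthogonal sum equal to $[\Delta,\Delta_c]H^2_{D\oplus D'}$. For the remaining equality in (c), since $\Delta^*\in L^\infty$ one has $H_{\Delta^*}f=JP_-(\Delta^*f)$ for all $f\in H^2_E$ (using Corollary \ref{lemma4.100000} for $\Delta^*f\in L^2_D$), so $\ker H_{\Delta^*}=\{f\in H^2_E:\Delta^*f\in H^2_D\}$; then $\Delta^*(\Delta g)=g\in H^2_D$ and $\Delta^*(\Delta_ch)=0\in H^2_D$ give ``$\supseteq$'', while conversely if $\Delta^*f=g\in H^2_D$ then $\Delta^*(f-\Delta g)=0$, so $f-\Delta g\in\ker\Delta^*=\Delta_cH^2_{D'}$ and $f\in\Delta H^2_D\oplus\Delta_cH^2_{D'}$.

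Essentially every step is routine bookkeeping with the Beurling--Lax--Halmos Theorem, the inner-outer factorization for strong $H^2$-functions, and Corollary \ref{lemma4.100000}. The one point that needs genuine care is the first inclusion in (a): one must notice that the intermediate space in the inner-outer factorization of $[g]$ is $\mathbb C$ when $g\neq0$, so that $[g]^e$ is a scalar outer function and may be cancelled a.e.\ to push $[g]^i$ back into $\ker\Delta^*$; without this, the chain $0=\Delta^*g=[g]^e\,\Delta^*[g]^i$ would not force $\Delta^*[g]^i=0$.
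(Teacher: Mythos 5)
Your proof is correct and, in structure, it is the paper's proof: (\ref{kerdelta*}) from shift-invariance of $\ker \Delta^*$ plus the Beurling--Lax--Halmos Theorem, (a) by a two-sided inclusion between $\bigvee\{[g]^iH^2: g\in\ker\Delta^*\}=\Delta_cH^2_{D''}$ and $\Omega H^2_{D'}$ followed by the uniqueness clause, and (b) from $\Delta^*\Delta_c=0$ a.e. Two steps are carried out by slightly different means, and both of your variants are sound. In (a), for the inclusion $\Delta_c H^2_{D''}\subseteq\ker\Delta^*$ the paper identifies $[g]^iH^2=\mathrm{cl}\,[g]\mathcal P_{\mathbb C}$ and uses that $\ker\Delta^*$ is closed and shift-invariant, whereas you cancel the a.e.\ nonvanishing scalar outer factor $[g]^e$ to get $\Delta^*[g]^i=0$ a.e.\ directly; this is exactly the point you flag (the intermediate space being $\mathbb C$ when $g\neq0$), and it is legitimate. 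In (c), the paper proves the reverse inclusion by contradiction: for $f\notin M:=\Delta H^2_D\oplus\ker\Delta^*$ it sets $f_2:=f-P_Mf\neq0$, invokes Corollary \ref{thm2.4edfrgt} to get $\Delta^*f_2\in L^2_D\ominus H^2_D$ with $\Delta^*f_2\neq0$, and concludes $H_{\Delta^*}f=J(\Delta^*f_2)\neq0$. You instead use the formula $H_{\Delta^*}f=JP_-(\Delta^*f)$ on all of $H^2_E$ to identify $\ker H_{\Delta^*}=\{f\in H^2_E:\Delta^*f\in H^2_D\}$ and then, for such $f$, decompose directly as $f=\Delta(\Delta^*f)+\bigl(f-\Delta\Delta^*f\bigr)$ with the second summand in $\ker\Delta^*$; this is a bit more economical, bypassing Corollary \ref{thm2.4edfrgt} and the projection onto $M$, at the harmless cost of making explicit the continuity argument extending $H_{\Delta^*}$ from $\mathcal P_E$ to $H^2_E$, which the paper also uses implicitly when it writes $H_{\Delta^*}f=J(\Delta^*f_2)$ for general $f$.
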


\medskip

\begin{df}
The inner function $\Delta_c$ in (\ref{2.9.9.9}) is said to be the {\it complementary factor} of
the inner function $\Delta$.
\end{df}

\medskip

\begin{proof}[Proof of Lemma \ref{thm2.9}] \ If $\hbox{ker}\, \Delta^*=\{0\}$, then (a) and (b) are trivial. \
Suppose that $\hbox{ker}\, \Delta^*\neq \{0\}$. \ Recall that
\begin{equation}\label{kernelunit}
\Delta_c=\hbox{left-g.c.d.} \bigl\{\, [g]^i :  g \in \hbox{ker}\,
\Delta^*\bigr\}\in H^{\infty}(\mathcal B(D^{\prime\prime},E)),
\end{equation}
where $D^{\prime\prime}$ is a nonzero subspace of $E$. If $g \in
\hbox{ker}\, \Delta^*$, then it follows from (\ref{kerdelta*}) that
$$
\aligned \Delta_c H^2_{D^{\prime\prime}}&=\bigvee \Bigl\{\ [g]^i
H^2:\, g \in \hbox{ker}\, \Delta^*\Bigr\}\\
&=\bigvee \Bigl\{\ [g]\mathcal P_{\mathbb C}:\  g \in \hbox{ker}\,
\Delta^*\Bigr\}\\
&\subseteq \, \hbox{ker}\, \Delta^*=\Omega H^2_{D^{\prime}}.
\endaligned
$$
For the reverse inclusion, let $0 \neq g \in \hbox{ker}\, \Delta^*$.
\  Then it follows that
$$
g(z)=[g](z)1=([g]^i[g]^e)(z)1=[g]^i(z)\bigl([g]^e(z)1\bigr) \in
[g]^i H^2.
$$
Thus we have
$$
\Omega H^2_{D^{\prime}} =\hbox{ker}\, \Delta^* \subseteq \bigvee
\Bigl\{ \ [g]^i H^2:\, g \in \hbox{ker}\, \Delta^*\Bigr\}= \Delta_c
H^2_{D^{\prime\prime}}.
$$
Therefore, by the Beurling-Lax-Halmos Theorem, $\Omega=\Delta_c$ and
$D^\prime=D^{\prime\prime}$, which gives (a). \ Note that
$\Delta^*\Delta_c=0$. \ We thus have
$$
\begin{bmatrix} \Delta^*\\ \Delta_c^*\end{bmatrix}
[\Delta,   \Delta_c]=\begin{bmatrix} I_D&0\\
0&I_{D^\prime}\end{bmatrix},
$$
which implies that $[\Delta, \Delta_c]$ is an inner function with
values in $\mathcal B(D\oplus D^\prime, E)$, which gives (b). \ For
(c), we first note that $\Delta H^2_{D}$ and $\hbox{ker}\, \Delta^*$
are orthogonal and
$$
\Delta H^2_{D} \bigoplus \hbox{ker}\, \Delta^*\subseteq \hbox{ker}\,
H_{\Delta^*}.
$$
For the reverse inclusion, suppose that $f \in H^2_E$ and $f \notin
\Delta H^2_{D} \bigoplus \hbox{ker}\, \Delta^* \equiv M$. \ Write
$$
f_1:= P_{M}f \ \ \hbox{and} \ \ f_2:= f-f_1 \neq 0.
$$
Since $f_2 \in H^2_E \ominus  M=\mathcal H(\Delta) \cap (H^2_E
\ominus \hbox{ker}\, \Delta^*)$, it follows from Corollary
\ref{thm2.4edfrgt} that $\Delta^*f_2 \in L^2_D \ominus H^2_D$ and
$\Delta^*f_2 \neq 0$. \ We thus have $H_{\Delta^*}f=J(\Delta^*
f_2)$, and hence, $||H_{\Delta^*}f||=||\Delta^* f_2|| \neq 0$, which
implies that $f \notin \hbox{ker}\, H_{\Delta^*}$. \ We thus have
that
$$
\hbox{ker}\, H_{\Delta^*}=\Delta H^2_{D} \bigoplus \hbox{ker}\,
\Delta^*.
$$
Thus it follows from (a) that
$$
\hbox{ker}\, H_{\Delta^*}=\Delta H^2_{D} \bigoplus \Delta_c
H^2_{D^{\prime}}=[\Delta,  \Delta_c] H^2_{D \oplus D^{\prime}},
$$
which gives (c). \ This completes the proof.
\end{proof}

\vskip 1cm

%
%
%
%
%
%

\noindent {\bf \S\ 4.3. The degree of non-cyclicity} \

\bigskip
\noindent For a subset $F$ of $H^2_E$, let $E_F^*$ denote the
smallest $S_E^*$-invariant subspace containing $F$, i.e.,
$$
E_F^*=\bigvee \bigl\{S_E^{*n} F:\ n\ge 0\bigr\}.
$$
Then by the Beurling-Lax-Halmos Theorem, $E_F^*=\mathcal H(\Delta)$
for an inner function $\Delta$ with values in $\mathcal B(D, E)$. \
In general, if $\dim E=1$, then every $S_E^*$-invariant subspace $M$
admits a cyclic vector, i.e., $M=E_f^*$ for some $f\in H^2$. \
However, if $\dim E\ge 2$, then this is not such a case. \ For
example, if $M=\mathcal H(\Delta)$ with
$\Delta=\left[\begin{smallmatrix} z&0\\0&z\end{smallmatrix}\right]$,
then $M$ does not admit a cyclic vector, i.e., $M\ne E_f^*$ for any
vector $f\in H^2_{\mathbb C^2}$.

\medskip

If $\Phi\in H^2_s(\mathcal B(D,E))$ and $\{d_k\}_{k\geq 1}$ is an
orthonormal basis for $D$, write
$$
\phi_k:= \Phi d_k \in H^2_E\cong H^2_s(\mathcal B(\mathbb C, E)).
$$
We then define
$$
\{\Phi\}:= \{\phi_k\}_{k\ge 1}\subseteq H^2_E.
$$
Hence, $\{\Phi\}$ may be regarded as the set of ``column" vectors
$\phi_k$ (in $H^2_E$) of $\Phi$, in which case we may think of $\Phi$ as
an infinite matrix-valued function.

\bigskip


\begin{lem}\label{thm327}
For $\Phi\in H^2_s(\mathcal B(D,E))$, we have
\begin{equation}\label{ephistar}
E_{\{\Phi\}}^*=\hbox{cl ran}\,H_{\overline z \breve\Phi}.
\end{equation}
\end{lem}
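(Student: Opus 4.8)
The plan is to identify both sides of \eqref{ephistar} as $S_E^*$-invariant subspaces and compute them concretely via the defining relations. First I would establish the inclusion $\hbox{cl ran}\,H_{\overline z\breve\Phi}\subseteq E_{\{\Phi\}}^*$. The key observation is the identity $H_{\overline z\breve\Phi}=S_E^*H_{\breve\Phi}$, which follows from the formula $H_{\Psi}T_\Psi$-type relations, or more directly from the definition of the Hankel operator together with $J$: since $\overline z\breve\Phi=\breve\Phi\cdot\overline z$ and conjugating by $J$ turns multiplication by $\overline z$ into $S_E^*$. Once we know $\hbox{ran}\,H_{\overline z\breve\Phi}\subseteq \hbox{ran}\,H_{\breve\Phi}+S_E^*\hbox{ran}\,H_{\breve\Phi}$ and, iterating, that $S_E^{*n}H_{\breve\Phi}p$ stays in the closed span of $\{S_E^{*k}\phi_j\}$, the inclusion follows. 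Concretely, writing $p=\sum \widehat p(k) z^k$ with $\widehat p(k)\in D$ expanded in the orthonormal basis $\{d_k\}$, the vector $H_{\breve\Phi}p$ is a finite linear combination of terms $JP_-(z^m\breve\Phi d_j)=JP_-(\breve\phi_j z^m)$, and one checks by a direct Fourier-coefficient computation that $JP_-(\breve\phi_j z^m)=S_E^{*(m+1)}\phi_j$ (using $\widehat{\breve g}(n)=\widehat g(-n)$ and the action of $J$). Hence every element of $\hbox{ran}\,H_{\overline z\breve\Phi}=\hbox{ran}\,S_E^*H_{\breve\Phi}$ lies in $\bigvee\{S_E^{*n}\phi_j:n\ge1,\ j\ge1\}\subseteq E_{\{\Phi\}}^*$, and taking closures gives the inclusion.

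For the reverse inclusion $E_{\{\Phi\}}^*\subseteq \hbox{cl ran}\,H_{\overline z\breve\Phi}$, I would first note that the right-hand side is closed and $S_E^*$-invariant: indeed $S_E^*H_{\overline z\breve\Phi}=H_{\overline z^2\breve\Phi}$ again has range inside $\hbox{ran}\,H_{\overline z\breve\Phi}$ because $\overline z^2\breve\Phi$ is obtained from $\overline z\breve\Phi$ by a further negative shift, and Hankel ranges are nested under this operation — more precisely $\hbox{ran}\,H_{\overline z^{k}\breve\Phi}\subseteq \hbox{cl ran}\,H_{\overline z\breve\Phi}$ for all $k\ge1$ by the same computation as above. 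So it suffices to show each generator $\phi_j$ lies in $\hbox{cl ran}\,H_{\overline z\breve\Phi}$. From the identification $JP_-(\breve\phi_j z^m)=S_E^{*(m+1)}\phi_j$ with $m=0$ we get $S_E^*\phi_j\in\hbox{ran}\,H_{\overline z\breve\Phi}$, which only recovers $\phi_j$ up to its constant term. To capture $\phi_j$ itself, I would instead use the description of $\ker H_{\breve\Phi}^*$ from Lemma \ref{thm4.2} and Lemma \ref{kerhadjoint}: since $\ker H_{\breve\Phi}^*=\Delta H^2_{E'}=\bigl(\hbox{cl ran}\,H_{\overline z\breve\Phi}\bigr)^\perp$ would need to coincide with $(E_{\{\Phi\}}^*)^\perp$, I should show $(E_{\{\Phi\}}^*)^\perp=\ker H_{\overline z\breve\Phi}^*$. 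A function $f\in H^2_E$ is orthogonal to $E_{\{\Phi\}}^*$ iff $\langle f,\ S_E^{*n}\phi_j\rangle=0$ for all $n\ge0$, $j\ge1$, iff $\langle S_E^n f,\ \phi_j\rangle=0$, iff $\langle z^n f(z),\ \Phi(z)d_j\rangle_{L^2_E}=0$ for all $n\ge0$ and all $j$. Comparing with Lemma \ref{thm4.2}, $f\in\ker H_{\breve\Phi}^*$ iff $\int\langle\Phi(z)x,z^nf(z)\rangle\,dm=0$ for all $x\in D$ and $n\ge1$; the range $n\ge1$ versus $n\ge0$ is exactly the discrepancy absorbed by passing from $\breve\Phi$ to $\overline z\breve\Phi$, so that $f\in\ker H_{\overline z\breve\Phi}^*$ iff the same integrals vanish for $n\ge0$. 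Thus $(E_{\{\Phi\}}^*)^\perp=\ker H_{\overline z\breve\Phi}^*=\bigl(\hbox{cl ran}\,H_{\overline z\breve\Phi}\bigr)^\perp$, and taking orthogonal complements yields \eqref{ephistar}.

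The main obstacle I anticipate is bookkeeping with the conjugate-linear flip operator $J$ and the index shifts: getting the precise identity $JP_-(\breve\phi\, z^m)=S_E^{*(m+1)}\phi$ right, and correctly matching the range of admissible Fourier indices ($n\ge0$ on the $E_{\{\Phi\}}^*$ side versus $n\ge1$ in Lemma \ref{thm4.2}) so that the extra factor $\overline z$ in $H_{\overline z\breve\Phi}$ is exactly what reconciles them. A secondary technical point is that $\Phi$ is only a strong $L^2$-function, so $H_{\breve\Phi}$ is merely densely defined on $\mathcal P_D$; I must work throughout with the dense domain of polynomials and only take closures at the end, and verify that $\{\Phi\}$ (hence $E_{\{\Phi\}}^*$) does not depend on the chosen orthonormal basis $\{d_k\}$, which is immediate since a change of basis replaces $\{\phi_k\}$ by unitary combinations with the same closed span. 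Once these indexing identities are pinned down, both inclusions are short.
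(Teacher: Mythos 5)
Your ``key observation'' is false, and the error propagates through the first half of the argument. With the paper's convention $H_\Psi p=JP_-(\Psi p)$, the same Fourier computation you perform for $H_{\breve\Phi}$ gives, for all $m\ge 0$ and $x\in D$,
$$
H_{\overline z\breve\Phi}(z^m x)=S_E^{*m}\bigl(\Phi(\cdot)x\bigr),
\qquad
H_{\breve\Phi}(z^m x)=S_E^{*(m+1)}\bigl(\Phi(\cdot)x\bigr),
$$
so on $\mathcal P_D$ one has $S_E^*H_{\overline z\breve\Phi}=H_{\breve\Phi}$ and $S_E^*H_{\breve\Phi}=H_{z\breve\Phi}$; your identity $H_{\overline z\breve\Phi}=S_E^*H_{\breve\Phi}$ has the shift acting in the wrong direction. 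Likewise $S_E^*H_{\overline z\breve\Phi}\neq H_{\overline z^{2}\breve\Phi}$: in fact $H_{\overline z^{2}\breve\Phi}(x)=S_E\bigl(\Phi(\cdot)x\bigr)$, so your asserted nesting $\hbox{ran}\,H_{\overline z^{k}\breve\Phi}\subseteq \hbox{cl ran}\,H_{\overline z\breve\Phi}$ already fails for $k=2$ (take $\Phi\equiv 1$, where the left side contains $z$ but the right side is the constants). The difficulty you then perceive --- that you can only reach $S_E^*\phi_j$ and lose the constant term --- is an artifact of this mis-orientation: the correct formula at $m=0$ gives $H_{\overline z\breve\Phi}(d_j)=\phi_j$ outright, and $H_{\overline z\breve\Phi}(z^m d_j)=S_E^{*m}\phi_j$ identifies the closed range with $E^*_{\{\Phi\}}$ in one stroke. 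That direct computation is exactly the paper's proof: it verifies $S_E^{*k}f=H_{[\overline z\breve f]}z^{k}$, deduces $E_f^*=\hbox{cl ran}\,H_{[\overline z\breve f]}$, and then takes the span over the basis vectors.

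The argument you retreat to at the end is, however, correct and self-contained, so your proposal does contain a valid proof --- just not by the route advertised at the outset. Since the flip of $z\Phi(z)$ is $\overline z\breve\Phi$, Lemma \ref{thm4.2} applied to $z\Phi$ characterizes $\ker H^*_{\overline z\breve\Phi}$ as the set of $f\in H^2_E$ with $\int_{\mathbb T}\bigl\langle \Phi(z)x,\ z^n f(z)\bigr\rangle_E\,dm(z)=0$ for all $x\in D$ and $n\ge 0$, and this set coincides with $\bigl(E^*_{\{\Phi\}}\bigr)^\perp$ (check it on the basis vectors $d_j$ and extend to all $x$ using the boundedness of $x\mapsto\Phi(\cdot)x$, the same implicit step the paper itself uses when it spans over the basis). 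Combining this with $\ker H^*_{\overline z\breve\Phi}=(\hbox{ran}\,H_{\overline z\breve\Phi})^\perp$ for the densely defined operator and taking orthogonal complements yields (\ref{ephistar}). This route is genuinely different from the paper's: it leans on Lemma \ref{thm4.2} and the closedness of adjoint kernels rather than on an explicit range identification, at the cost of not producing the single-vector statement $E_f^*=\hbox{cl ran}\,H_{[\overline z\breve f]}$ as a by-product. If you keep your write-up, delete or correct the two false operator identities and the range-nesting claim; the final orthocomplement paragraph can stand alone.
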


\begin{rem}
By definition, $\{\Phi\}$ depends on the
orthonormal basis of $D$. \ However, Lemma \ref{thm327} shows that
$E_{\{\Phi\}}^*$ is independent of a particular choice of the
orthonormal basis of $D$ because the right-hand side of
(\ref{ephistar}) is independent of the orthonormal basis of $D$.
\end{rem}

\begin{proof}[Proof of Lemma \ref{thm327}] \ 
We first claim that if $f\in H^2_{E}$, then
\begin{equation}\label{invinv}
E_f^*= \hbox{cl ran} H_{[\overline{z}\breve{f}]}.
\end{equation}
To see this, observe that for each $k=1,2, \cdots$,
$$
\aligned S_E^{*k} f
&=\overline z \sum_{j=0}^\infty \widehat{f}(k+j) z^{j+1}\\
&=J\Biggl(\sum_{j=0}^\infty \widehat{f}(k+j) \overline z^{j+1}\Biggr)\\
&=JP_-\Biggl(z^{k-1}\sum_{j=0}^\infty \widehat{f}(j)\overline z^j\Biggr)\\
&=JP_-\left(z^{k-1}\breve f\right)\\
&=H_{[\overline{z}\breve{f}]} z^{k},
\endaligned
$$
which proves (\ref{invinv}).  \ Let $\{d_k\}_{k \geq 1}$ be an
orthonormal basis for $D$, and let $\phi_k:= \Phi d_k$. \ Since by
(\ref{invinv}), $E_{\phi_k}^*=\hbox{cl
ran}\,H_{[\overline{z}\breve{\phi}_k]}$ for each $k=1,2,3, \cdots$,
it follows that
$$
E_{\{\Phi\}}^*=\bigvee \hbox{ran}\, H_{[\overline{z}
\breve{\phi}_k]}= \hbox{cl ran}\,H_{\overline{z}\breve{\Phi}},
$$
which gives the result.
\end{proof}

\bigskip

We now introduce:

\medskip

\begin{df}  Let $F \subseteq H^2_{E}$.
The {\it degree of non-cyclicity}, denoted by $\hbox{nc}(F)$, of $F$
is defined by the number
$$
\hbox{nc}(F):=\sup_{\zeta\in\mathbb D}\, \dim \bigl\{g(\zeta): g\in
H^2_E\ominus E_F^*\bigr\}.
$$
We will often refer to $\hbox{nc}(F)$ as the {\it nc-number} of $F$.
\end{df}

\bigskip

Since $E_F^*$ is an invariant subspace for $S_E^*$, it follows from
the Beurling-Lax-Halmos Theorem that $E_F^*=\mathcal H(\Delta)$ for
some inner function $\Delta$ with values in $\mathcal B(D, E)$. \
Thus
$$
\hbox{nc}(F)=\sup_{\zeta \in \mathbb D}\, \hbox{dim}\,
\bigl\{g(\zeta): \, g\in \Delta H^2_{D}\bigr\}=\dim D.
$$
In particular, $\hbox{nc}(F)\le \dim E$. \ We note that
$\hbox{nc}(F)$ may take $\infty$. \ So it is customary to make the
following conventions: (i) if $n$ is real then $n+\infty=\infty$;
(ii) $\infty+\infty=\infty$. If $\dim E=r<\infty$, then
$\hbox{nc}(F)\le r$ for every subset $F \subseteq H^2_{E}$.  If $F
\subseteq H^2_{E}$ and $\dim E=r<\infty$, then the {\it degree of
cyclicity}, denoted by $\hbox{dc}(F)$, of $F \subseteq H^2_{E}$ is
defined by the number (cf. \cite{VN})
$$
\hbox{dc}(F):=r-\hbox{nc}(F).
$$
In particular, if $E_F^*=\mathcal H(\Delta)$, then $\Delta$ is
two-sided inner if and only if $\hbox{nc} (F)=r$.

\bigskip

The following theorem gives an answer to Question \ref{q111}.

\medskip

\begin{thm}\label{thm7566}
Let  $\Phi$ be a strong $L^2$-function with values in $\mathcal B(D,
E)$.  \ In view of the Beurling-Lax-Halmos Theorem and Lemma
\ref{kerhadjoint}, we may write
$$
 E_{\{\Phi_+\}}^*=\mathcal H(\Delta) \quad
\hbox{and} \quad \ker H_{\breve{\Phi}}^*=\Theta H^2_{E^{\prime}},
$$
for some inner functions $\Delta$ and $\Theta$ with values in
$\mathcal B(E^{\prime\prime}, E)$ and $\mathcal B(E^{\prime}, E)$,
respectively. \ Then
\begin{equation}\label{mmmm}
\Delta=\Theta \Delta_1
\end{equation}
for some two-sided inner function $\Delta_1$ with values in
$\mathcal B(E^{\prime\prime}, E^{\prime})$. Hence, in particular,
\begin{equation}\label{sssdv}
\ker H_{\breve{\Phi}}^*=\Theta H^2_{E^{\prime}} \Longleftrightarrow
\ \hbox{nc}\{\Phi_+\}=\dim E^\prime.
\end{equation}
\end{thm}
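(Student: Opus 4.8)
The plan is to pin down $\ker H_{\breve{\Phi}}^*$ explicitly in terms of the subspace $E_{\{\Phi_+\}}^*=\mathcal H(\Delta)$, and then read off how $\Delta$ is divided by $\Theta$. First I would invoke Lemma \ref{thm4.2}: a function $f\in H^2_E$ lies in $\ker H_{\breve{\Phi}}^*$ if and only if $\int_{\mathbb T}\langle\Phi(z)x,z^nf(z)\rangle_E\,dm(z)=0$ for all $x\in D$ and all $n\ge1$. Writing $\Phi=\breve{\Phi}_-+\Phi_+$ and noting that $\breve{\Phi}_-(\cdot)x\in L^2_E\ominus H^2_E$ has Fourier coefficients supported in $\{-1,-2,\dots\}$ while $z^nf$ (with $n\ge1$) is supported in $\{1,2,\dots\}$, the contribution of $\breve{\Phi}_-$ to the integral vanishes. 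Substituting $z^nf=z^{n-1}(zf)$ and reindexing with $m=n-1$, the condition becomes $\int_{\mathbb T}\langle\Phi_+(z)x,z^m(zf)(z)\rangle\,dm=0$ for all $x\in D$ and all $m\ge0$. Since $E_{\{\Phi_+\}}^*=\bigvee\{S_E^{*m}(\Phi_+x):m\ge0,\ x\in D\}$ and $zf\in H^2_E$, this says precisely that $zf\perp E_{\{\Phi_+\}}^*$, i.e. $zf\in H^2_E\ominus\mathcal H(\Delta)=\Delta H^2_{E''}$. Thus I arrive at the identity
\begin{equation*}
\Theta H^2_{E'}=\ker H_{\breve{\Phi}}^*=\bigl\{f\in H^2_E:\ zf\in\Delta H^2_{E''}\bigr\}.
\end{equation*}

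From this identity two inclusions follow. If $g=\Delta h$ with $h\in H^2_{E''}$, then $zg=\Delta(zh)\in\Delta H^2_{E''}$, so $\Delta H^2_{E''}\subseteq\Theta H^2_{E'}$; hence (by the Beurling-Lax-Halmos Theorem, exactly as in the proof of Lemma \ref{rem2.4}) $\Theta$ is a left inner divisor of $\Delta$, say $\Delta=\Theta\Delta_1$ for an inner function $\Delta_1$ with values in $\mathcal B(E'',E')$, which is (\ref{mmmm}) apart from the two-sidedness. Conversely, for every $h\in H^2_{E'}$ we have $\Theta h\in\Theta H^2_{E'}$, so the identity gives $z\Theta h\in\Delta H^2_{E''}=\Theta\Delta_1H^2_{E''}$; applying $\Theta^*$ and using $\Theta^*\Theta=I_{E'}$ yields $zh\in\Delta_1H^2_{E''}$, and therefore $(zI_{E'})H^2_{E'}=zH^2_{E'}\subseteq\Delta_1H^2_{E''}$. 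This last inclusion means that $\Delta_1$ is a left inner divisor of the two-sided inner function $zI_{E'}$, so by Lemma \ref{rem.sdcfcfc} the function $\Delta_1$ is two-sided inner, which completes the proof of (\ref{mmmm}). For (\ref{sssdv}), a two-sided inner $\Delta_1$ with values in $\mathcal B(E'',E')$ makes $\Delta_1(z)$ unitary from $E''$ onto $E'$ a.e. on $\mathbb T$, hence $\dim E''=\dim E'$; since $E_{\{\Phi_+\}}^*=\mathcal H(\Delta)$ with $\Delta$ taking values in $\mathcal B(E'',E)$, the remarks following the definition of the degree of non-cyclicity give $\hbox{nc}\{\Phi_+\}=\dim E''$, so $\hbox{nc}\{\Phi_+\}=\dim E'$; as the space $E'$ in $\ker H_{\breve{\Phi}}^*=\Theta H^2_{E'}$ is unique up to isomorphism by the Beurling-Lax-Halmos Theorem, both implications in (\ref{sssdv}) are immediate.

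The step I expect to be the main obstacle is the passage from Lemma \ref{thm4.2} to the displayed identity: one has to check carefully that the annihilator of $E_{\{\Phi_+\}}^*$ is exactly the set of $g\in H^2_E$ annihilating all the integrals $\int_{\mathbb T}\langle\Phi_+(z)x,z^mg(z)\rangle\,dm$ — here using that $x\mapsto\Phi_+(\cdot)x$ is a bounded operator $D\to L^2_E$, so that $\bigvee\{\Phi_+x:x\in D\}$ coincides with the span over an orthonormal basis of $D$ — and to keep straight the single shift of index ($n\ge1$ for the Hankel kernel versus $m\ge0$ for the backward-shift-invariant subspace). Once that identity is in place, the divisibility arguments and the appeal to Lemma \ref{rem.sdcfcfc} are routine.
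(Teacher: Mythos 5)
Your proof is correct and follows essentially the same route as the paper: both arguments use Lemma \ref{thm4.2} to compare $\Delta H^2_{E''}$ (the orthogonal complement of $E^*_{\{\Phi_+\}}$, i.e.\ $\ker H^*_{\overline z\breve\Phi}$) with $\ker H^*_{\breve\Phi}=\Theta H^2_{E'}$, obtain $\Delta=\Theta\Delta_1$ and $zI_{E'}=\Delta_1\Delta_2$, and invoke Lemma \ref{rem.sdcfcfc} to conclude $\Delta_1$ is two-sided inner. The only difference is cosmetic: you re-derive inline (via the identity $\ker H^*_{\breve\Phi}=\{f: zf\in\Delta H^2_{E''}\}$ and the boundedness of $x\mapsto\Phi_+(\cdot)x$) what the paper gets by citing Lemma \ref{thm327} and its remark on basis-independence.
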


\begin{proof}
Suppose that $\hbox{ker}\, H_{\breve{\Phi}}^*=\Theta
H^2_{E^{\prime}}$ for some inner function $\Theta$ with values in
$\mathcal B(E^{\prime},E)$ and $E_{\{\Phi_+\}}^*=\mathcal H(\Delta)$
for some inner function $\Delta$ with values in $\mathcal
B(E^{\prime\prime}, E)$. \  Then it follows from Lemma \ref{thm327}
that
$$
\mathcal H(\Delta)=E_{\{\Phi_+\}}^*=\hbox{cl}\,\hbox{ran}\,
H_{\overline z \breve\Phi}=\bigl(\hbox{ker}\,H_{\overline z
\breve\Phi}^*\bigr)^{\perp}.
$$
It thus follows from Lemma \ref{thm4.2} that
$$
\aligned \Delta H^2_{E^{\prime\prime}}
&=\hbox{ker}\,H_{\overline{z}\breve{\Phi}}^*\\
&=\Bigl\{f \in H^2_{E}: \int_{\mathbb T}\bigl \langle \Phi(z)x, \
z^n f(z)\bigr\rangle_E dm(z)=0 \quad \hbox{for all} \  x \in D\\
&\qquad\qquad\qquad\qquad\qquad\qquad\qquad\qquad\qquad\qquad\hbox{and} \
n=0,1,2,3, \cdots\Bigr\}\\
&\subseteq \Bigl\{f \in H^2_{E}: \int_{\mathbb T}\bigl \langle
\Phi(z)x, \ z^n f(z)\bigr\rangle_E dm(z)=0 \quad \hbox{for all} \  x
\in D \\ &\qquad\qquad\qquad\qquad\qquad\qquad\qquad\qquad\qquad\qquad\hbox{and} \
n=1,2,3, \cdots\Bigr\}\\
&=\hbox{ker}\, H_{\breve{\Phi}}^*=\Theta H^2_{E^{\prime}},
\endaligned
$$
which implies that $\Theta$ is a left inner divisor of $\Delta$.
Thus we can write
\begin{equation}\label{mmmma}
\Delta=\Theta \Delta_1
\end{equation}
for some inner function $\Delta_1 \in H^{\infty}(\mathcal
B(E^{\prime\prime}, E^{\prime}))$. \ By the same argument as above,
we also have $ z\Theta H^2_{E^{\prime}}\subseteq\Delta
H^2_{E^{\prime\prime}}$, so that we may write $z \Theta=\Delta
\Delta_2$ for some inner function $\Delta_2\in H^{\infty}(\mathcal
B(E^{\prime}, E^{\prime\prime}))$. Therefore by (\ref{mmmma}), we
have $zI_{E^{\prime}}=\Delta_1 \Delta_2$, and hence by Lemma
\ref{rem.sdcfcfc}, $\Delta_1$ is two-sided inner. \ This proves
(\ref{mmmm}) and in turn (\ref{sssdv}). \ This completes the proof.
\end{proof}

\medskip

From Theorem \ref{thm7566}, we get several corollaries.

\medskip

\begin{cor}\label{corfkgkhgkhkhk}
Let $\Phi$ be a strong $L^2$-function with value in $\mathcal B(D,
E)$.  \ Then the following statements are equivalent:
\begin{itemize}
\item[(a)] $E^*_{\{\Phi_+\}}=H^2_E$;
\item[(b)] $\hbox{nc}\{\Phi_+\}=0$;
\item[(c)] $\hbox{ker}\,H_{\breve{\Phi}}^*=\{0\}$.
\end{itemize}
\end{cor}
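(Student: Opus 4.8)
The plan is to read off all three equivalences from Theorem \ref{thm7566}, together with the elementary fact recorded just before that theorem: if $E_F^*=\mathcal H(\Delta)$ with $\Delta$ valued in $\mathcal B(D,E)$, then $\hbox{nc}(F)=\dim D$. First I would use the Beurling-Lax-Halmos Theorem to write $E_{\{\Phi_+\}}^*=\mathcal H(\Delta)$ for an inner function $\Delta$ with values in $\mathcal B(E^{\prime\prime},E)$, and Lemma \ref{kerhadjoint} to write $\ker H_{\breve\Phi}^*=\Theta H^2_{E^\prime}$ for an inner function $\Theta$ with values in $\mathcal B(E^\prime,E)$, where $E^\prime$ and $E^{\prime\prime}$ are allowed to be the zero space. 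By the quoted fact, $\hbox{nc}\{\Phi_+\}=\dim E^{\prime\prime}$.

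For the equivalence (a) $\Leftrightarrow$ (b): the condition $\hbox{nc}\{\Phi_+\}=0$ means $\dim E^{\prime\prime}=0$, i.e.\ $E^{\prime\prime}=\{0\}$; this is the same as $\Delta H^2_{E^{\prime\prime}}=\{0\}$, hence the same as $\mathcal H(\Delta)=H^2_E$, i.e.\ $E_{\{\Phi_+\}}^*=H^2_E$. This is just a chain of definitional equivalences; there is nothing to prove beyond being careful with the degenerate domain.

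For the equivalence (b) $\Leftrightarrow$ (c): by (\ref{mmmm}) in Theorem \ref{thm7566} we have $\Delta=\Theta\Delta_1$ for a two-sided inner function $\Delta_1$ with values in $\mathcal B(E^{\prime\prime},E^\prime)$; since $\Delta_1(z)$ is then a.e.\ a unitary operator from $E^{\prime\prime}$ onto $E^\prime$, we get $\dim E^{\prime\prime}=\dim E^\prime$, whence $\hbox{nc}\{\Phi_+\}=\dim E^\prime$. Consequently $\hbox{nc}\{\Phi_+\}=0$ if and only if $E^\prime=\{0\}$, if and only if $\Theta H^2_{E^\prime}=\{0\}$, if and only if $\ker H_{\breve\Phi}^*=\{0\}$. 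Equivalently, one may simply invoke (\ref{sssdv}) with $E^\prime=\{0\}$.

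The only thing needing a moment's care — and the closest thing to an obstacle here — is the bookkeeping around trivial spaces: that a two-sided inner function valued in $\mathcal B(E^{\prime\prime},\{0\})$ forces $E^{\prime\prime}=\{0\}$, and that $\mathcal H(\Delta)=H^2_E$ precisely when the domain of $\Delta$ is $\{0\}$. Both are immediate from the definitions but should be spelled out so the equivalence chains are airtight. No analytic input beyond Theorem \ref{thm7566} (which itself rests on Lemma \ref{thm327} and Lemma \ref{thm4.2}) is required.
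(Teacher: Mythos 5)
Your argument is correct and is exactly the intended one: the paper's proof of this corollary is simply ``Immediate from Theorem \ref{thm7566},'' and your unpacking — reading $\hbox{nc}\{\Phi_+\}=\dim E^{\prime\prime}=\dim E^\prime$ from the factorization $\Delta=\Theta\Delta_1$ (equivalently from (\ref{sssdv})) and then handling the trivial-space cases — is precisely how that citation is meant to be filled in. No difference in approach worth noting.
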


\begin{proof}
Immediate from  Theorem \ref{thm7566}.
\end{proof}

\medskip

\begin{cor}\label{innerdc}  Let
$\Delta$ be an inner function with values in $\mathcal B(D, E)$. \
If $\Delta_c$ is the complementary factor of $\Delta$, with values in $\mathcal
B(D^{\prime},E)$, then
$$
\hbox{nc}\{\Delta\}=\dim D +\dim D^{\prime}.
$$
\end{cor}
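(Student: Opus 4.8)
The plan is to obtain the identity as a direct consequence of Lemma \ref{thm2.9} and Theorem \ref{thm7566}, once one computes $\ker H_{\breve\Delta}^*$ explicitly. Since $\Delta\in H^\infty(\mathcal B(D,E))$ is analytic, we have $\Delta_+=\mathbb P_+\Delta=\Delta$, so that $\{\Delta_+\}=\{\Delta\}$; hence it suffices to exhibit the inner function $\Theta$ and the coefficient space $E^\prime$ for which $\ker H_{\breve\Delta}^*=\Theta H^2_{E^\prime}$ (the existence of such a representation being guaranteed by Lemma \ref{kerhadjoint}) and then read off $\dim E^\prime$.

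First I would reduce $H_{\breve\Delta}^*$ to an honest Hankel operator. Because $\Delta$ is bounded, i.e. $\breve\Delta\in L^\infty(\mathcal B(D,E))$, the identity $H_{\breve\Delta}^*=H_{\widetilde{\breve\Delta}}$ from (\ref{form_1}) applies, and since $\widetilde{\breve\Delta}=\Delta^*$ this yields $H_{\breve\Delta}^*=H_{\Delta^*}$, whence $\ker H_{\breve\Delta}^*=\ker H_{\Delta^*}$. Next, Lemma \ref{thm2.9}(c) identifies this kernel as $\ker H_{\Delta^*}=[\Delta,\Delta_c]H^2_{D\oplus D^\prime}$, where $\Delta_c$ is the complementary factor of $\Delta$ (with values in $\mathcal B(D^\prime,E)$), and by Lemma \ref{thm2.9}(b) the $1\times 2$ operator matrix $[\Delta,\Delta_c]$ is an inner function with values in $\mathcal B(D\oplus D^\prime,E)$. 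Thus $\ker H_{\breve\Delta}^*=\Theta H^2_{E^\prime}$ with $\Theta:=[\Delta,\Delta_c]$ and $E^\prime:=D\oplus D^\prime$, so $\dim E^\prime=\dim D+\dim D^\prime$.

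Finally, applying Theorem \ref{thm7566} to $\Phi:=\Delta$, the equivalence (\ref{sssdv}) gives $\hbox{nc}\{\Delta\}=\hbox{nc}\{\Delta_+\}=\dim E^\prime=\dim D+\dim D^\prime$ (interpreting the equality in the extended sense, via the paper's conventions on $\infty$, when one of the spaces is infinite-dimensional), which is the claim. The proof is essentially an assembly of earlier results; the only step that genuinely uses the hypothesis that $\Delta$ is inner (and in particular bounded) is the reduction $H_{\breve\Delta}^*=H_{\Delta^*}$, which fails for a general strong $L^2$-symbol — that is the single point where one must not be careless, but it is not a substantive obstacle.
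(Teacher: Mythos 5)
Your proof is correct and follows essentially the same route as the paper, which deduces the corollary immediately from Lemma \ref{thm2.9}(c) and Theorem \ref{thm7566}; you have simply made explicit the intermediate steps (the identity $H_{\breve\Delta}^*=H_{\Delta^*}$ via (\ref{form_1}) and the observation $\{\Delta_+\}=\{\Delta\}$) that the paper leaves tacit.
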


\begin{proof}
Immediate from Lemma \ref{thm2.9}(c) and Theorem \ref{thm7566}.
\end{proof}

\medskip

\begin{cor}\label{btdc}
If $\Phi$ is an $n\times m$ matrix $L^2$-function, i.e., $\Phi \in
L^2_{M_{n \times m}}$, then the following are equivalent:
\begin{itemize}
\item[(a)] $\Phi$ is of bounded type;
\item[(b)] $\ker H^*_{\Phi}=\Delta H^2_{\mathbb C^n}$ for some
   two-sided inner matrix function $\Delta$;
\item[(c)]  $\hbox{nc}\,\{\Phi_-\}=n$.
\end{itemize}
\end{cor}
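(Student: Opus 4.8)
The plan is to establish (b)$\Leftrightarrow$(c) by transporting Theorem \ref{thm7566} through the flip, and to establish (a)$\Leftrightarrow$(b) by a direct factorization argument, since $\Phi$ is only assumed to lie in $L^2_{M_{n\times m}}$ (a bounded-type $L^2$-function need not be bounded, so Lemma \ref{rem2.4} is not directly available). The starting point is a bookkeeping identity for flips. From $\Phi=\breve\Phi_-+\Phi_+$ with $\breve\Phi_-=\mathbb P_-\Phi$ and $\Phi_+=\mathbb P_+\Phi$, and since $\Phi_-=\breve{(\breve\Phi_-)}$ is analytic while $\breve{(\Phi_+)}$ is antianalytic, applying the flip gives $\breve\Phi=\Phi_-+\breve{(\Phi_+)}$, whence $(\breve\Phi)_+=\Phi_-$ and $\breve{(\breve\Phi)}=\Phi$. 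As the flip is isometric on $L^2$, $\breve\Phi$ is again a strong $L^2$-function.

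Applying Lemma \ref{kerhadjoint} and equation (\ref{sssdv}) of Theorem \ref{thm7566} with $\breve\Phi$ in place of $\Phi$ gives $\ker H_\Phi^*=\ker H_{\breve{(\breve\Phi)}}^*=\Theta H^2_{E'}$ for an inner function $\Theta$ with values in $\mathcal B(E',\mathbb C^n)$, together with $\dim E'=\hbox{nc}\{\Phi_-\}$. Since $\mathbb C^n$ is finite-dimensional, an isometry $\mathbb C^n\to\mathbb C^n$ is unitary a.e., so an inner function with values in $\mathcal B(E',\mathbb C^n)$ and $\dim E'=n$ is automatically an $n\times n$ two-sided inner matrix function; conversely a two-sided inner $\Delta$ with $\ker H_\Phi^*=\Delta H^2_{\mathbb C^n}$ forces $\dim E'=n$ by the uniqueness in the Beurling--Lax--Halmos Theorem. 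Hence (b) $\Leftrightarrow$ $\dim E'=n$ $\Leftrightarrow$ $\hbox{nc}\{\Phi_-\}=n$, which is (b)$\Leftrightarrow$(c).

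For (a)$\Rightarrow$(b): if $\Phi$ is of bounded type, each entry is $\phi_{ij}=a_{ij}/\theta_{ij}$ with $\theta_{ij}$ inner and $a_{ij}\in H^2$, so putting $\theta:=\hbox{l.c.m.}\{\theta_{ij}\}$ (a scalar inner function, as there are finitely many entries) one can write $\Phi=\overline\theta B$ with $B\in H^2_{M_{n\times m}}$. For $g\in H^2_{\mathbb C^n}$ and any polynomial $p\in\mathcal P_{\mathbb C^m}$,
$$
\bigl\langle H_\Phi p,\ \widetilde\theta\,g\bigr\rangle=\bigl\langle P_-(\Phi p),\ J(\widetilde\theta\,g)\bigr\rangle=\bigl\langle \overline\theta\,Bp,\ \overline\theta\,Jg\bigr\rangle=\langle Bp,\ Jg\rangle=0,
$$
using $J(\widetilde\theta g)=\overline\theta\,Jg$ pointwise (so $J(\widetilde\theta g)\in L^2_{\mathbb C^n}\ominus H^2_{\mathbb C^n}$), $|\theta|=1$ a.e., $J^*=J$, and $Bp\in H^2_{\mathbb C^n}\perp Jg$. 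Hence $\widetilde\theta I_n H^2_{\mathbb C^n}\subseteq\ker H_\Phi^*=\Theta H^2_{E'}$, so $\Theta$ is a left inner divisor of the two-sided inner matrix function $\widetilde\theta I_n$; by Lemma \ref{rem.sdcfcfc}, $\Theta$ is two-sided inner, which is (b). Conversely, for (b)$\Rightarrow$(a), suppose $\ker H_\Phi^*=\Delta H^2_{\mathbb C^n}$ with $\Delta$ an $n\times n$ two-sided inner matrix function. Then $\theta:=\det\Delta$ is a scalar inner function, and the adjugate identity $\Delta\cdot\hbox{adj}(\Delta)=\theta I_n$ with $\hbox{adj}(\Delta)\in H^\infty_{M_n}$ gives $\theta I_n H^2_{\mathbb C^n}\subseteq\Delta H^2_{\mathbb C^n}=\ker H_\Phi^*$. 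Evaluating the relation $\langle H_\Phi p,\ \theta e_i\rangle=0$ at the monomials $p=e_jz^k$ ($k\ge 0$), exactly as in the computation of Lemma \ref{thm4.2}, yields $\int_{\mathbb T}z^{k+1}\widetilde\theta(z)\phi_{ij}(z)\,dm(z)=0$ for all $k\ge 0$; hence $\widetilde\theta\,\phi_{ij}\in H^2$, so $\phi_{ij}=(\widetilde\theta\,\phi_{ij})/\widetilde\theta$ is a quotient of $H^2$-functions and therefore of bounded type. Thus $\Phi$ is of bounded type.

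The genuinely new content is the equivalence (a)$\Leftrightarrow$(b): the implications with (c) are immediate once the flip bookkeeping is set up, but connecting ``bounded type'' with ``$\ker H_\Phi^*$ generated by an $n\times n$ two-sided inner function'' in the merely $L^2$-setting requires the explicit factorization $\Phi=\overline\theta B$ and the description of $\ker H_\Phi^*$ coming from Lemma \ref{thm4.2}, rather than a one-line appeal to the Douglas--Shapiro--Shields machinery of Lemma \ref{rem2.4}. Keeping the several flips straight so that Theorem \ref{thm7566} is applied to the correct function is the other place where care is needed.
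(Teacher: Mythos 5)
Your argument is correct, and it takes a partly different route from the paper. The equivalence (b)$\Leftrightarrow$(c) is handled exactly as the paper does it, namely by applying Theorem \ref{thm7566} to the flip $\breve\Phi$ (the paper's proof is literally this one line). Where you diverge is the remaining equivalence: the paper proves (a)$\Leftrightarrow$(c) by citing \cite[Corollary 2, p.~47]{Ni1} together with (\ref{btp}), i.e.\ it outsources the link between bounded type and full degree of non-cyclicity to Nikolskii's pseudo-continuation criterion, whereas you prove (a)$\Leftrightarrow$(b) directly: for (a)$\Rightarrow$(b) you factor $\Phi=\overline\theta B$ with $\theta=\hbox{l.c.m.}\{\theta_{ij}\}$ and show $\widetilde\theta I_nH^2_{\mathbb C^n}\subseteq\ker H_\Phi^*$, then invoke Lemma \ref{rem.sdcfcfc}; for (b)$\Rightarrow$(a) you use $\Delta\,\hbox{adj}(\Delta)=(\det\Delta)I_n$ to manufacture a scalar inner multiple and read off $\widetilde\theta\,\phi_{ij}\in H^2$ from the Fourier computation of Lemma \ref{thm4.2}. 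This is self-contained within the paper's toolkit (it is the same mechanism as Lemma \ref{thgg668}, Corollary \ref{cor512,222} and Proposition \ref{corapp-thm1}, transplanted to the $L^2$ setting, with the adjugate replacing the Complementing Lemma), at the cost of being longer than the paper's citation-based argument; the paper's route is shorter but leans on an external result.

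One small repair is needed in your flip bookkeeping: with the paper's conventions $(\breve\Phi)_+=\Phi_-+\widehat{\Phi}(0)$, not $\Phi_-$, so Theorem \ref{thm7566} applied to $\breve\Phi$ gives $\dim E'=\hbox{nc}\{\Phi_-+\widehat{\Phi}(0)\}$. To get exactly $\hbox{nc}\{\Phi_-\}$, apply the theorem instead to $\breve{(\Phi-\widehat{\Phi}(0))}$, whose analytic part is $\Phi_-$, and note that $H_\Phi=H_{\Phi-\widehat{\Phi}(0)}$ since the constant is analytic; the paper's one-line proof elides the same point (compare the explicit subtraction of $\widehat\Phi(0)$ in the proof of Theorem \ref{maintheorem_sm}). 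With that adjustment your proof is complete.
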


\begin{proof}  The equivalence (a) $\Leftrightarrow$ (c)
follows from \cite[Corollary 2, p.~47]{Ni1} and (\ref{btp}), and the
equivalence (b) $\Leftrightarrow$ (c) follows at once from Theorem
\ref{thm7566}.
\end{proof}

\medskip

The equivalence (a) $\Leftrightarrow$ (b) of Corollary \ref{btdc}
was known from \cite{GHR} for the cases of $\Phi \in
L^{\infty}_{M_{n}}$. \ On the other hand, it was known (\cite[Lemma
4]{Ab}) that if $\phi\in L^\infty$, then
\begin{equation}\label{ab}
\hbox{$\phi$ is of bounded type} \Longleftrightarrow \ker
H_\phi\ne\{0\}.
\end{equation}
The following corollary shows that (\ref{ab}) still holds for
$L^2$-functions.

\medskip

\begin{cor}\label{cor566we}
If $\phi \in L^2$, then $\phi$ is of bounded type if and only if
$\ker H_{\phi}^* \neq \{0\}$.
\end{cor}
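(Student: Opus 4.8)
The plan is to derive this scalar statement from two results already in hand: the matrix-valued Corollary \ref{btdc}, applied in the case $n=m=1$ (so that $L^2_{M_{1\times 1}}=L^2$), and Lemma \ref{kerhadjoint}, which pins down the shape of the kernel of the adjoint of a Hankel operator with strong $L^2$-symbol. Since scalar inner functions are automatically two-sided inner (if $\Delta\in H^\infty$ with $|\Delta|=1$ a.e., then $\Delta^*\Delta=\Delta\Delta^*=1$), condition (b) of Corollary \ref{btdc} for $n=1$ reads simply ``$\ker H_\phi^*=\Delta H^2$ for some scalar inner function $\Delta$'', and this is the bridge between the two implications.

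For the forward implication, suppose $\phi\in L^2$ is of bounded type. Viewing $\phi$ as an element of $L^2_{M_{1\times 1}}$, Corollary \ref{btdc} (the implication (a)$\Rightarrow$(b)) gives $\ker H_\phi^*=\Delta H^2$ for some scalar inner function $\Delta$. As $\Delta$ is inner, $|\Delta|=1$ a.e.\ on $\mathbb T$, so $\Delta\not\equiv 0$ and $\Delta\in\Delta H^2=\ker H_\phi^*$; hence $\ker H_\phi^*\neq\{0\}$.

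For the reverse implication, suppose $\ker H_\phi^*\neq\{0\}$. Since the flip is an involution, $\breve{(\breve\phi)}=\phi$, so $H_\phi^*=H_{\breve\psi}^*$ with $\psi:=\breve\phi$; moreover $\psi$ is a scalar $L^2$-function and hence a strong $L^2$-function with values in $\mathcal B(\mathbb C,\mathbb C)$. Applying Lemma \ref{kerhadjoint} to $\psi$ in place of $\Phi$ yields $\ker H_\phi^*=\ker H_{\breve\psi}^*=\Delta H^2_{E'}$ for some subspace $E'\subseteq\mathbb C$ and some inner function $\Delta$ with values in $\mathcal B(E',\mathbb C)$. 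Because $\ker H_\phi^*\neq\{0\}$ forces $E'\neq\{0\}$, i.e.\ $E'=\mathbb C$, the function $\Delta$ is a scalar inner function and $\ker H_\phi^*=\Delta H^2$. This is precisely condition (b) of Corollary \ref{btdc} with $n=1$, so by that corollary $\phi$ is of bounded type. I do not expect any genuine obstacle; the only point requiring care is the bookkeeping of the flip (making sure Lemma \ref{kerhadjoint} applied to $\breve\phi$ really returns $\ker H_\phi^*$) and the collapse of the coefficient spaces to $\mathbb C$, which is what lets ``inner'' coincide with ``two-sided inner'' in the scalar setting. (One could instead argue directly via the pseudo-continuation criterion \eqref{btp} and the range computation underlying Lemma \ref{thm327}, but invoking Corollaries \ref{btdc} and \ref{kerhadjoint} is shorter.)
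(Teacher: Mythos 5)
Your proof is correct and follows essentially the same route as the paper, which simply declares the corollary ``immediate from Corollary \ref{btdc}''; you merely spell out the details (the flip bookkeeping via Lemma \ref{kerhadjoint} and the collapse of $E'$ to $\mathbb C$, so that ``inner'' means ``two-sided inner'') that the paper leaves implicit.
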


\begin{proof}  Immediate from Corollary \ref{btdc}.
\end{proof}

\medskip

\begin{cor}\label{cor312}
If $\Delta$ is an $n\times r$ inner matrix function then the
following are equivalent:
\begin{itemize}
\item[(a)] $\Delta^*$ is of bounded type;
\item[(b)] $\breve{\Delta}$ is of bounded type;
\item[(c)] $[\Delta, \Delta_c]$ is two-sided inner,
\end{itemize}
where $\Delta_c$ is the complementary factor of $\Delta$.
\end{cor}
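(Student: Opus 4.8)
The plan is to prove the three-way equivalence $(a)\Leftrightarrow(b)\Leftrightarrow(c)$ by combining the scalar bounded-type characterization with the already-established structural result of Lemma \ref{thm2.9}, which identifies $\ker H_{\Delta^*}$ with $[\Delta,\Delta_c]H^2_{D\oplus D'}$. The equivalence $(a)\Leftrightarrow(b)$ is essentially notational: for a matrix function, $\breve\Delta(z)=\Delta(\overline z)$ and $\Delta^*(z)=\Delta(z)^*$ have entries $\delta_{ij}(\overline z)$ and $\overline{\delta_{ji}(z)}$ respectively, and a scalar $L^2$-function $\phi$ is of bounded type if and only if $\overline\phi=\widetilde{\breve\phi}$ is of bounded type if and only if $\breve\phi$ is of bounded type (using that $\psi$ is of bounded type $\Leftrightarrow$ $\widetilde\psi$ is, since bounded type is a quotient of $H^\infty$-functions and conjugation/flipping preserves that class). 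So $(a)$ and $(b)$ hold simultaneously, entry by entry.

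For the heart of the matter, $(c)\Leftrightarrow(a)$, I would argue as follows. First I would observe that $[\Delta,\Delta_c]$ is always an inner function with values in $\mathcal B(\mathbb C^r\oplus D',\mathbb C^n)$ by Lemma \ref{thm2.9}(b), where $\ker\Delta^*=\Delta_c H^2_{D'}$, $D'$ a subspace of $\mathbb C^n$. It is two-sided inner precisely when $\dim(\mathbb C^r\oplus D')=n$, i.e.\ when $\dim D'=n-r$, which is the maximal possible value. By Corollary \ref{innerdc}, $\mathrm{nc}\{\Delta\}=r+\dim D'$, so $[\Delta,\Delta_c]$ is two-sided inner iff $\mathrm{nc}\{\Delta\}=n$. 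Now apply Corollary \ref{btdc} with $m=r$ and with $\Phi=\breve\Delta$ in place of $\Phi$: more carefully, since $\Delta\in H^\infty_{M_{n\times r}}$ is analytic, $\Delta=\Delta_+$ and $\breve\Delta=\breve\Delta_-$ (its Fourier coefficients $\widehat{\breve\Delta}(k)=\widehat\Delta(-k)$ vanish for $k>0$). Thus $\{\breve\Delta_-\}=\{\breve\Delta\}$, and the left-hand side of $(c)$ in Corollary \ref{btdc} reads $\mathrm{nc}\{\breve\Delta_-\}=n$. By Corollary \ref{btdc}, this is equivalent to $\breve\Delta$ being of bounded type, which is $(b)$, hence $(a)$.

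The remaining link is to match $\mathrm{nc}\{\breve\Delta_-\}$ with the condition on $[\Delta,\Delta_c]$, i.e.\ to connect Corollary \ref{btdc}(c) applied to $\Phi=\breve\Delta$ with Corollary \ref{innerdc} applied to $\Delta$. By Theorem \ref{thm7566} applied to $\Phi$ (noting $\Phi_+=0$ here since the analytic part of $\breve\Delta$ vanishes — so one must instead run Theorem \ref{thm7566}/Lemma \ref{thm327} with the \emph{co-analytic} part, or equivalently apply the theory to $\widetilde{\breve\Delta}=\Delta^*$), we get $\ker H_{\breve{(\breve\Delta)}}^*=\ker H_\Delta^*=\Delta H^2_{\mathbb C^r}$? — no: the relevant object is $\ker H_{\Delta^*}$, which by Lemma \ref{thm2.9}(c) equals $[\Delta,\Delta_c]H^2_{\mathbb C^r\oplus D'}$. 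So $\mathrm{nc}\{\Phi_-\}$ in Corollary \ref{btdc}, applied with $\Phi$ whose co-analytic part encodes $\Delta^*$, reproduces $\dim(\mathbb C^r\oplus D')=r+\dim D'=\mathrm{nc}\{\Delta\}$, and $(c)$ of Corollary \ref{btdc} becomes $r+\dim D'=n$, i.e.\ $\dim D'=n-r$, i.e.\ $[\Delta,\Delta_c]$ two-sided inner. Chaining these equivalences gives $(a)\Leftrightarrow(b)\Leftrightarrow(c)$.

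\textbf{Main obstacle.} The delicate point is bookkeeping around flips, adjoints, and analytic versus co-analytic parts: $\Delta$ is analytic so its ``$\{\Phi_-\}$'' data is empty, and one must decide whether to feed $\Delta^*$, $\breve\Delta$, or $\widetilde\Delta$ into Corollaries \ref{btdc} and \ref{innerdc}, then verify that the inner function appearing in $\ker H^*_{\breve{(\cdot)}}$ really is $[\Delta,\Delta_c]$ (not merely $\Delta$). I would handle this by proving directly that $\ker H_{\Delta^*}=[\Delta,\Delta_c]H^2_{\mathbb C^r\oplus D'}$ from Lemma \ref{thm2.9}(c), then invoking that $[\Delta,\Delta_c]$ is two-sided inner $\Leftrightarrow$ its domain has dimension $n$ $\Leftrightarrow$ $\mathrm{nc}\{\Delta\}=n$ (Corollary \ref{innerdc}), and finally that $\mathrm{nc}\{\Delta\}=n$ is equivalent to $\breve\Delta$ of bounded type via the scalar criterion \cite[Corollary 2, p.~47]{Ni1} together with (\ref{btp}) and Corollary \ref{btdc} — keeping careful track throughout that for an \emph{analytic} $\Delta$, $\breve\Delta$ is purely co-analytic so that ``$\Phi_-$'' in those corollaries carries the full information.
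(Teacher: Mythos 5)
Your argument is correct and takes essentially the same route as the paper: (a)$\Leftrightarrow$(b) is the trivial entrywise observation, and (b)$\Leftrightarrow$(c) rests, exactly as in the text, on Lemma \ref{thm2.9} (giving $\ker H_{\Delta^*}=[\Delta,\Delta_c]H^2_{\mathbb C^r\oplus D^{\prime}}$) combined with Corollary \ref{btdc} — you merely invoke the nc-number clause of that corollary via Corollary \ref{innerdc}, whereas the paper uses the two-sided-inner-kernel clause plus Beurling--Lax--Halmos uniqueness, a negligible difference. The flip/adjoint bookkeeping you flag is settled precisely as in your closing paragraph, by working with $\ker H_{\Delta^*}$ directly rather than feeding $\breve\Delta$ into Theorem \ref{thm7566}.
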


\begin{proof}
The equivalence (a) $\Leftrightarrow$ (b) is trivial.  \ The
equivalence (b) $\Leftrightarrow$ (c) follows from Lemma
\ref{thm2.9} and Corollary \ref{btdc}.
\end{proof}

\bigskip

The following corollary gives an answer to Question \ref{q222}.

\begin{cor}\label{cor5.hh3333}
If $\Delta$ is an $n\times r$ inner matrix function, then $[\Delta,
\Omega]$ is inner for some $n\times q \ (q\geq 1)$ inner matrix
function $\Omega$ if and only if
$$
q\le \hbox{nc}\{\Delta\}-r.
$$
In particular, $\Delta$ is complemented to a two-sided inner
function if and only if $\hbox{nc}\{\Delta\}=n$.
\end{cor}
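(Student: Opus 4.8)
The plan is to reduce the statement to the structure of $\ker\Delta^*$ provided by Lemma~\ref{thm2.9} together with the identity $\hbox{nc}\{\Delta\}=r+\dim D^\prime$ coming from Corollary~\ref{innerdc}, where $\Delta_c$ denotes the complementary factor of $\Delta$ (with values in $\mathcal B(D^\prime,E)$, $E=\mathbb C^n$) and we set $r_c:=\dim D^\prime$, so that $\hbox{nc}\{\Delta\}-r=r_c$. The elementary fact I would record first is that, for an $n\times q$ matrix function $\Omega$, the augmented function $[\Delta,\Omega]$ is inner if and only if $\Omega$ is itself an inner matrix function and $\Delta^*\Omega=0$ a.e.\ on $\mathbb T$: this is immediate on computing the $2\times2$ block matrix $[\Delta,\Omega]^*[\Delta,\Omega]$, using $\Delta^*\Delta=I_r$ and $\Omega^*\Delta=(\Delta^*\Omega)^*$.

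For the forward implication, suppose $[\Delta,\Omega]$ is inner with $\Omega$ an $n\times q$ inner matrix function, $q\ge1$. By the observation above, $\Delta^*\Omega=0$ a.e., so every column of $\Omega$ lies in $\ker\Delta^*$, which by Lemma~\ref{thm2.9}(a) equals $\Delta_c H^2_{D^\prime}$. Since $\Omega H^2_{\mathbb C^q}$ is the closed linear span of the shifts $z^k\Omega e_j$ of the columns of $\Omega$, and $\Delta_c H^2_{D^\prime}$ is closed and $S$-invariant, we obtain $\Omega H^2_{\mathbb C^q}\subseteq\Delta_c H^2_{D^\prime}$; hence $\Delta_c$ is a left inner divisor of $\Omega$ (cf.~\cite{FF},~\cite{Pe}), say $\Omega=\Delta_c\Omega_1$ with $\Omega_1\in H^\infty(\mathcal B(\mathbb C^q,D^\prime))$. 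Then $I_q=\Omega^*\Omega=\Omega_1^*\Delta_c^*\Delta_c\Omega_1=\Omega_1^*\Omega_1$ a.e., so $\Omega_1$ is an $r_c\times q$ inner matrix function; an isometry of $\mathbb C^q$ into $\mathbb C^{r_c}$ forces $q\le r_c=\hbox{nc}\{\Delta\}-r$.

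For the converse, assume $1\le q\le r_c$ and pick a constant isometry $V\colon\mathbb C^q\to D^\prime$ (which exists since $q\le\dim D^\prime$). Put $\Omega:=\Delta_c V$. Then $\Omega\in H^\infty$, and $\Omega^*\Omega=V^*\Delta_c^*\Delta_c V=V^*V=I_q$ a.e., so $\Omega$ is an $n\times q$ inner matrix function; moreover $\Delta^*\Omega=(\Delta^*\Delta_c)V=0$ a.e., since $\Delta^*\Delta_c=0$ (from the proof of Lemma~\ref{thm2.9}). By the initial observation, $[\Delta,\Omega]$ is inner, proving the main equivalence. For the ``in particular'' clause: an $n\times n$ inner matrix function is automatically two-sided inner (a square matrix $M(z)$ with $M(z)^*M(z)=I_n$ a.e.\ is unitary a.e.), while any two-sided inner complement $[\Delta,\Omega]$ must be $n\times n$, i.e.\ $q=n-r$. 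If $r<n$, feeding $q=n-r$ into the main equivalence gives $n-r\le\hbox{nc}\{\Delta\}-r$, i.e.\ $\hbox{nc}\{\Delta\}\ge n$, which together with the a priori bound $\hbox{nc}\{\Delta\}\le\dim E=n$ forces $\hbox{nc}\{\Delta\}=n$; conversely $\hbox{nc}\{\Delta\}=n$ makes $q=n-r\ge1$ an admissible value. The case $r=n$ is trivial, since then $\Delta$ itself is square inner (hence two-sided inner) and $\hbox{nc}\{\Delta\}=r+r_c=n$ automatically.

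The only step needing a little care is the passage from $\Omega H^2_{\mathbb C^q}\subseteq\Delta_c H^2_{D^\prime}$ to ``$\Delta_c$ left-divides $\Omega$''; but since $\Delta_c$ is inner, this is exactly the standard inner-divisor lemma already used in the proof of Lemma~\ref{rem2.4}, so it presents no genuine obstacle here. Everything else is bookkeeping with Lemma~\ref{thm2.9} and Corollary~\ref{innerdc}, together with the elementary block-matrix computation recorded at the outset.
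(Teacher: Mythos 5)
Your argument is correct and follows essentially the same route as the paper's proof: the block-matrix identity $[\Delta,\Omega]^*[\Delta,\Omega]=I$ forces $\Delta^*\Omega=0$, hence $\Omega H^2_{\mathbb C^q}\subseteq\ker\Delta^*=\Delta_c H^2_{D^\prime}$ (Lemma \ref{thm2.9}), so $\Delta_c$ left-divides $\Omega$ and $q\le\dim D^\prime=\hbox{nc}\{\Delta\}-r$ by Corollary \ref{innerdc}, while the converse takes $\Omega$ to be $\Delta_c$ composed with a constant isometry, which is exactly the paper's choice $\Omega=\Delta_c\vert_{\mathbb C^q}$. Your explicit treatment of the ``in particular'' clause is a harmless elaboration of what the paper leaves implicit.
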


\begin{proof} Suppose that  $[\Delta, \Omega]$ is
an inner matrix function for some $n\times q \ (q\geq 1)$ inner
matrix function $\Omega$. \ Then
$$
I_{r+q}=[\Delta, \Omega]^*[\Delta, \Omega]=\begin{bmatrix}
I_r&\Delta^*\Omega\\ \Omega^* \Delta&I_{q}\end{bmatrix},
$$
which implies that $\Omega H^2_{\mathbb C^q}\subseteq \ker
\Delta^*$. \ Since by Lemma \ref{thm2.9}, $\hbox{ker}\,
\Delta^*=\Delta_c H^2_{\mathbb C^p}$, it follows that $\Omega
H^2_{\mathbb C^q}\subseteq \Delta_c H^2_{\mathbb C^p}$, so that
$\Delta_c$ is a left inner divisor of $\Omega$. \ Thus we can write
$$
\Omega=\Delta_c \Omega_1 \quad\hbox{for some $p\times q$ inner
matrix function} \  \Omega_1.
$$
Thus we have $q\le p$. \ But since by Corollary \ref{innerdc},
$\hbox{nc}\{\Delta\}=r+p$, it follows that $q\le
\hbox{nc}\{\Delta\}-r$. \ For the converse, suppose that $q\le
\hbox{nc}\{\Delta\}-r$. \ Then it follows from Corollary
\ref{innerdc} that the complementary factor $\Delta_c$ of $\Delta$
is in $H^{\infty}_{M_{n \times p}}$ for some $p\geq q$.  \ Thus if
we take $\Omega:=\Delta_c\vert_{\mathbb C^q}$, then
$[\Delta,\Omega]$ is inner.
\end{proof}

\bigskip

We give an illuminating example of how to find the nc number.

\begin{ex}
 Let $f$ and $g$ be given in Example \ref{ex3.6}, and let
$$
\Phi:=\begin{bmatrix}f&f&0\\ g&g&0\\
0&0&a\end{bmatrix} \quad(a \in H^{\infty})
$$
To find the degree of non-cyclicity of $\Phi$, write $
\Psi:=\left[\begin{smallmatrix}f&f\\g&g\end{smallmatrix}\right]$. \
Then it follows that
$$
\begin{bmatrix}h_1\\h_2\\h_3\end{bmatrix} \in \hbox{ker}\, H_{\breve\Phi}^*
\Longleftrightarrow
\begin{bmatrix}h_1\\h_2 \end{bmatrix} \in \ker
H_{\Psi^*} \ \hbox{and} \ h_3 \in \ker H_{\overline{a}}.
$$
Case 1: If $\overline{a}$ is not of bounded type, then $\hbox{ker}\,
H_{\breve\Phi}^*=[f \ g \ 0]^t H^2$. \ By Theorem \ref{thm7566},
$\hbox{nc}\{\Phi\}=1$.

\smallskip

\noindent Case 2: If $\overline{a}$ is of bounded type of the form
$a=\theta \overline{b}$ (coprime), then
$$
\hbox{ker}\,
H_{\Phi^*}^*=\begin{bmatrix}f&0\\g&0\\0&\theta\end{bmatrix}
H^2_{\mathbb C^2}.
$$
By Theorem \ref{thm7566}, $\hbox{nc}\{\Phi\}=2$.
\end{ex}

\vskip 1cm

%
%
%
%

\noindent {\bf \S\ 4.4. Strong $L^2$-functions of bounded type} \

\bigskip
\noindent We introduce the notion of ``bounded type" for strong
$L^2$-functions. \ Recall that a matrix-valued function of bounded
type was defined by a matrix whose entries are of bounded type (see
p.~\pageref{matrixbt}). \ But this definition is not appropriate for
operator-valued functions, in particular strong $L^2$-functions,
even though the terminology of ``entry'' can be properly interpreted. \
Thus we need a new idea about how to define a ``bounded type" strong
$L^2$-functions, which is equivalent to the condition that each
entry is of bounded type when the function is matrix-valued. \
Our motivation stems from the equivalence
(a)$\Leftrightarrow$(b) in Corollary \ref{btdc}.

\medskip

\begin{df}\label{dfbundd}
A strong $L^2$-function $\Phi$ with values in $\mathcal B(D,E)$ is
said to be of {\it bounded type} if $\hbox{ker}\,
H_{{\Phi}}^*=\Theta H^2_{E}$ for some two-sided inner function
$\Theta$ with values in $\mathcal B(E)$.
\end{df}

\medskip

\ On the other hand, in \cite{FB}, it was shown that if $\Phi$
belongs to $L^\infty (\mathcal B(D,E))$, then $\Phi$ admits a
Douglas-Shapiro-Shields factorization (see p.~\pageref{DSS}) if and
only if $E_{\{\Phi_+\}}^*=\mathcal H(\Theta)$ for a two-sided inner
function $\Theta$.  \ Thus, by Theorem \ref{thm7566}, we can see that
if $\Phi\in L^\infty(\mathcal B(D, E))$, then
\begin{equation}\label{btf}
\hbox{$\breve{\Phi}$ is of bounded type}\\
\Longleftrightarrow\ \hbox{$\Phi$ admits a  Douglas-Shapiro-Shields
factorization}.
\end{equation}

\medskip

We can prove more:

\begin{lem}\label{thmdfgfgty}
Let $\Phi$ be a strong $L^{2}$-function with values in $\mathcal
B(D,E)$. \ Then the following are equivalent:

\begin{itemize}

\item[(a)]  $\breve{\Phi}$ is of bounded type;
\item[(b)] $E_{\{\Phi_+\}}^*=\mathcal H(\Delta)$ for some two-sided inner function
      $\Delta$ with values in $\mathcal B(E)$;
\item[(c)]  $E_{\{\Phi_+\}}^*\subseteq \mathcal H(\Theta)$ for
      some two-sided inner function $\Theta$ with values in $\mathcal B(E)$;
\item[(d)] $\{\Phi_+\}\subseteq \mathcal H(\Theta)$ for some two-sided inner
             function $\Theta$ with values in $\mathcal B(E)$;
\item[(e)] For $\{\varphi_{k_1}, \varphi_{k_2}, \cdots \} \subseteq \{\Phi\}$, write $\Psi\equiv[\varphi_{k_1}, \varphi_{k_2},
\cdots]$. Then $\breve{\Psi}$ is of bounded type.
\end{itemize}
\end{lem}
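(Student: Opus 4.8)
The plan is to run the cycle $(a)\Rightarrow(b)\Rightarrow(c)\Rightarrow(d)\Rightarrow(e)\Rightarrow(a)$, inserting the two short implications $(d)\Rightarrow(c)$ and $(c)\Rightarrow(b)$ so that $(a)$--$(d)$ become manifestly equivalent; the engine for the nontrivial steps is Theorem \ref{thm7566} (which relates $\ker H_{\breve\Phi}^*$ to $E_{\{\Phi_+\}}^*$ through the factorization $\Delta=\Theta\Delta_1$) together with Lemma \ref{rem.sdcfcfc} (a left inner divisor of a two-sided inner function is two-sided inner).

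First I would establish $(a)\Leftrightarrow(b)$. Apply Theorem \ref{thm7566} to write $E_{\{\Phi_+\}}^*=\mathcal H(\Delta)$ with $\Delta\in H^\infty(\mathcal B(E'',E))$, $\ker H_{\breve\Phi}^*=\Theta H^2_{E'}$ with $\Theta\in H^\infty(\mathcal B(E',E))$, and $\Delta=\Theta\Delta_1$ with $\Delta_1\in H^\infty(\mathcal B(E'',E'))$ two-sided inner. If $(a)$ holds, then $\ker H_{\breve\Phi}^*=\Xi H^2_E$ for a two-sided inner $\Xi\in H^\infty(\mathcal B(E))$, and the uniqueness clause of the Beurling-Lax-Halmos Theorem lets us take $\Theta=\Xi$ and $E'=E$; since $\Delta_1$ is two-sided inner and its codomain is now $E$, we may take $E''=E$, whence $\Delta=\Theta\Delta_1$ is a product of two-sided inner functions on $\mathcal B(E)$ and so is two-sided inner — this is $(b)$. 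Conversely, if $(b)$ holds, then $\Delta$ is two-sided inner, and from $\Delta=\Theta\Delta_1$ we see $\Theta$ is a left inner divisor of $\Delta$; Lemma \ref{rem.sdcfcfc} then forces $\Theta$ two-sided inner, so $\ker H_{\breve\Phi}^*=\Theta H^2_E$, i.e. $(a)$.

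The remaining implications are lighter. $(b)\Rightarrow(c)$: take $\Theta:=\Delta$. $(c)\Rightarrow(d)$: $\{\Phi_+\}\subseteq E_{\{\Phi_+\}}^*$. $(d)\Rightarrow(c)$: $\mathcal H(\Theta)$ is a closed $S_E^*$-invariant subspace containing $\{\Phi_+\}$, hence contains the smallest one, $E_{\{\Phi_+\}}^*$. $(c)\Rightarrow(b)$: from $\mathcal H(\Delta)\subseteq\mathcal H(\Theta)$ we get $\Theta H^2_E\subseteq\Delta H^2_{E''}$, so $\Delta$ is a left inner divisor of the two-sided inner $\Theta$, and Lemma \ref{rem.sdcfcfc} makes $\Delta$ two-sided inner with values in $\mathcal B(E)$, which is $(b)$; at this point $(a)$--$(d)$ are all equivalent. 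For $(d)\Rightarrow(e)$: given $\{\varphi_{k_1},\varphi_{k_2},\dots\}\subseteq\{\Phi\}$, observe that $\Psi=[\varphi_{k_1},\varphi_{k_2},\dots]$ is nothing but $\Phi P_{D_0}$ with $D_0=\bigvee\{d_{k_j}\}$, so $\Psi$ is again a strong $L^2$-function with values in $\mathcal B(D,E)$ and $\{\Psi_+\}\subseteq\{\Phi_+\}\cup\{0\}\subseteq\mathcal H(\Theta)$; thus $\Psi$ satisfies condition $(d)$, and $(d)\Rightarrow(a)$ applied to $\Psi$ yields that $\breve\Psi$ is of bounded type, which is $(e)$. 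Finally $(e)\Rightarrow(a)$ is immediate: taking the full collection of columns $\{\varphi_{k_j}\}=\{\Phi\}$ gives $\Psi=\Phi$, so $\breve\Psi=\breve\Phi$ is of bounded type.

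I expect the main friction to lie in the bookkeeping of the coefficient spaces $E',E'',E$ appearing in the factorization $\Delta=\Theta\Delta_1$ produced by Theorem \ref{thm7566}: one must combine the uniqueness statement of the Beurling-Lax-Halmos Theorem with the fact that a two-sided inner function has unitarily equivalent domain and codomain in order to transport the property "two-sided inner on $\mathcal B(E)$" back and forth between $\Theta$ and $\Delta$ in both directions of $(a)\Leftrightarrow(b)$. A secondary point requiring a line of care is the verification that $\Psi=[\varphi_{k_1},\varphi_{k_2},\dots]$ is a bona fide strong $L^2$-function, which is where the identification $\Psi=\Phi P_{D_0}$ does the work.
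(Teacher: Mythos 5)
Your proof is correct and uses essentially the same machinery as the paper: Theorem \ref{thm7566} plus Lemma \ref{rem.sdcfcfc} (and the Beurling--Lax--Halmos uniqueness) drive the equivalence of (a)--(d), and your $(d)\Rightarrow(e)$ step is exactly the paper's argument, namely showing the sub-function $\Psi$ inherits condition (d) and then running the same chain to conclude $\breve\Psi$ is of bounded type. The only difference is organizational (you close the loop $(a)\Leftrightarrow(b)\Leftrightarrow(c)\Leftrightarrow(d)$ directly rather than as one long cycle), which does not change the substance.
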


\begin{proof} (a) $\Rightarrow$ (b):  Suppose that $\breve{\Phi}$ is of bounded
type. \ Then $\hbox{ker}\, H_{\breve{\Phi}}^*=\Theta H^2_{E}$ for
some two-sided inner function $\Theta$ with values in $\mathcal
B(E)$. \ It thus follows from Theorem \ref{thm7566} that
$E_{\{\Phi_+\}}^*=\mathcal H(\Delta)$ for some two-sided inner
function $\Delta$ with values in $\mathcal B(E)$.

(b) $\Rightarrow$ (c), (c) $\Rightarrow$ (d): Clear.

(d) $\Rightarrow$ (e):  Suppose that $\{\varphi_{k_1},
\varphi_{k_2}, \cdots \} \subseteq \{\Phi\}$ and
$\{\Phi_+\}\subseteq \mathcal H(\Theta)$ for some two-sided inner
function $\Theta\in H^{\infty}(\mathcal B(E))$. \ Write $\Psi\equiv
[\varphi_{k_1}, \varphi_{k_2}, \cdots ]$. \ Then
$\{\Psi_+\}\subseteq \mathcal H(\Theta)$, so that $E_{\{\Psi_+\}}^*
\subseteq \mathcal H(\Theta)$. \ Suppose that
$E_{\{\Psi_+\}}^*=\mathcal H(\Delta)$ for some inner function
$\Delta$ with values in $\mathcal B(D^{\prime}, E)$. \ Thus $\Theta
H^2_{E} \subseteq \Delta H^2_{D^{\prime}}$, so that by Lemma
\ref{rem.sdcfcfc}, $\Delta$ is two-sided inner. \ Thus, by Theorem
\ref{thm7566}, $\hbox{ker}\, H_{\breve{\Psi}}^*=\Omega H^2_{E}$ for
some two-sided inner function $\Omega$ with values in $\mathcal
B(E)$, so that $\Psi$ is of bounded type.

(e) $\Rightarrow$ (a): Clear.
\end{proof}

\medskip

\begin{cor}\label{thmthjdkfjkf}
Let $\Delta$ be an inner function with values in $\mathcal B(D, E)$.
\  Then
$$
\breve{\Delta} \ \hbox{is of bounded type} \Longleftrightarrow
[\Delta, \Delta_c]\ \hbox{is two-sided inner},
$$
where $\Delta_c$ is the complementary factor of $\Delta$. \ Hence,
in particular, if $\Delta$ is a two-sided inner function with values
in $\mathcal B(E)$, then $\breve{\Delta}$ is of bounded type.
\end{cor}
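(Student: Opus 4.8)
The goal is to prove the equivalence $\breve\Delta\text{ of bounded type}\iff [\Delta,\Delta_c]\text{ two-sided inner}$ for an inner function $\Delta$ with values in $\mathcal B(D,E)$, where $\Delta_c$ is the complementary factor from Lemma \ref{thm2.9}. My first move is to reduce this to a kernel computation. By definition of ``bounded type'' (Definition \ref{dfbundd}), $\breve\Delta$ is of bounded type precisely when $\ker H_{\breve\Delta}^*=\Theta H^2_E$ for some two-sided inner $\Theta$ with values in $\mathcal B(E)$; but wait — I must be careful here, since the definition of bounded type is phrased in terms of $\ker H_\Phi^*$ applied to the function $\Phi$ itself, and here the relevant function is $\breve\Delta$, so that $\ker H_{(\breve\Delta)}^*$ is what the definition wants. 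Actually the cleanest route is to use Lemma \ref{thmdfgfgty}: $\breve\Delta$ is of bounded type iff $E^*_{\{\Delta_+\}}=\mathcal H(\Omega)$ for some two-sided inner $\Omega$. Since $\Delta$ is itself analytic, $\Delta_+=\Delta$, so this reads $E^*_{\{\Delta\}}=\mathcal H(\Omega)$ with $\Omega$ two-sided inner.

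\textbf{Key steps.} First I would invoke Lemma \ref{thm327} (or rather its consequence through Lemma \ref{thm4.2}) together with Lemma \ref{thm2.9}(c): for the analytic function $\Delta$ we have $\ker H_{\breve\Delta}^* = \ker H_{\Delta^*}$ on the relevant subspace — more precisely, $\bigl(E^*_{\{\Delta\}}\bigr)^\perp = \ker H^*_{\overline z\breve\Delta}$ is, up to a shift, the same as $\ker H_{\Delta^*}$, which by Lemma \ref{thm2.9}(c) equals $[\Delta,\Delta_c]H^2_{D\oplus D'}$. Hence $E^*_{\{\Delta\}} = \mathcal H([\Delta,\Delta_c])$. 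Now apply Theorem \ref{thm7566}: $E^*_{\{\Delta\}}=\mathcal H(\Theta)$ for a two-sided inner $\Theta$ if and only if $\mathrm{nc}\{\Delta\}=\dim E$; and by Corollary \ref{innerdc}, $\mathrm{nc}\{\Delta\}=\dim D+\dim D'$ where $\Delta_c$ has values in $\mathcal B(D',E)$. So the condition becomes $\dim D + \dim D' = \dim E$, i.e. the isometry $[\Delta,\Delta_c]\colon D\oplus D'\to E$ is onto a.e., which is exactly the assertion that $[\Delta,\Delta_c]$ is two-sided inner. Chaining these equivalences gives the claim. For the ``in particular'' clause: if $\Delta$ is two-sided inner with values in $\mathcal B(E)$, then $\ker\Delta^*=\{0\}$, so $\Delta_c$ has values in $\mathcal B(\{0\},E)$ and $[\Delta,\Delta_c]=\Delta$ is already two-sided inner; the left-hand equivalence then forces $\breve\Delta$ to be of bounded type.

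\textbf{Main obstacle.} The delicate point is the identification $\ker H_{\breve\Delta}^*=\ker H_{\Delta^*}$ (or the shifted version $\ker H^*_{\overline z\breve\Delta}=\ker H_{\Delta^*}$), because in general, as emphasized in the text, $H_{\breve\Phi}^*\ne H_{\Phi^*}$ for a mere strong $L^2$-function $\Phi$. Here, however, $\Delta\in H^\infty(\mathcal B(D,E))$ is bounded, so by (\ref{form_1}) one has $H_{\breve\Delta}^*=H_{\widetilde{\breve\Delta}}=H_{\Delta^*}$ honestly as bounded Hankel operators, and the subtlety evaporates; I should spell this out carefully to justify invoking Lemma \ref{thm2.9}(c). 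The only other thing to watch is bookkeeping of the extra factor of $z$ coming from Lemma \ref{thm327}'s $H_{\overline z\breve\Delta}$ versus the plain $H_{\Delta^*}=H_{\widetilde{\breve\Delta}}$, but multiplication by $z$ is a unitary on the relevant shifted subspace and does not affect whether the complementary index equals $\dim E$. With those two points handled, the rest is a routine concatenation of Theorem \ref{thm7566}, Corollary \ref{innerdc}, Lemma \ref{thm2.9}, and Lemma \ref{thmdfgfgty}.
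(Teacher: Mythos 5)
Your reduction of the $H^*_{\breve\Delta}$ versus $H_{\Delta^*}$ issue is fine (since $\Delta\in H^\infty(\mathcal B(D,E))$, (\ref{form_1}) gives $H_{\breve\Delta}^*=H_{\Delta^*}$), and your treatment of the ``in particular'' clause agrees with the paper. But the core step of your argument has a genuine gap: you pass from ``$E^*_{\{\Delta\}}=\mathcal H(\Theta)$ with $\Theta$ two-sided inner'' to ``$\mathrm{nc}\{\Delta\}=\dim E$'' and then from $\dim D+\dim D'=\dim E$ to ``the isometry $[\Delta,\Delta_c](z)$ is onto a.e.'' The equivalence ``$\Delta$ two-sided inner $\Leftrightarrow$ $\mathrm{nc}(F)=\dim E$'' is stated in the paper only under the hypothesis $\dim E=r<\infty$, and it is false in the infinite-dimensional setting, which is exactly the setting of this corollary: an a.e.\ isometry whose domain has the same (infinite) dimension as $E$ need not be surjective. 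For instance, if $\Theta$ is the constant inner function given by the inclusion of a proper infinite-dimensional subspace $E_1\subsetneq E$ (equivalently, a constant unilateral shift), then $\mathcal H(\Theta)=H^2_{E\ominus E_1}$ has $\mathrm{nc}=\dim E_1=\infty=\dim E$, yet $\Theta$ is not two-sided inner. So dimension counting cannot deliver the conclusion; moreover, Theorem \ref{thm7566} is not the statement you quote. (A secondary inaccuracy, though not fatal: $E^*_{\{\Delta\}}$ is $\mathcal H([\Delta,\Delta_c]\Delta_1)$ for some two-sided inner $\Delta_1$, not $\mathcal H([\Delta,\Delta_c])$ itself; e.g.\ $\Delta(z)=z$ gives $E^*_{\{z\}}=\mathcal H(z^2)$. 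Your ``up to a shift'' hedge covers the nc-count, but the identification as stated is wrong.)

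The repair is to avoid the nc-number detour altogether, which is how the paper argues. Since $\Delta$ is bounded, $\ker H_{\breve\Delta}^*=\ker H_{\Delta^*}$, and Lemma \ref{thm2.9}(c) gives $\ker H_{\Delta^*}=[\Delta,\Delta_c]H^2_{D\oplus D'}$. By Definition \ref{dfbundd}, $\breve\Delta$ is of bounded type iff this kernel equals $\Theta H^2_E$ for some two-sided inner $\Theta$ with values in $\mathcal B(E)$; by the uniqueness clause of the Beurling--Lax--Halmos Theorem, any such $\Theta$ differs from $[\Delta,\Delta_c]$ by a constant unitary right factor, which preserves two-sidedness. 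Hence $\breve\Delta$ is of bounded type iff $[\Delta,\Delta_c]$ is two-sided inner, with no restriction on $\dim E$. Your route via Lemma \ref{thmdfgfgty} could also be salvaged by noting $E^*_{\{\Delta\}}=\mathcal H([\Delta,\Delta_c]\Delta_1)$ with $\Delta_1$ two-sided inner and that a product with a two-sided inner factor is two-sided iff the other factor is, but the dimension-count as written does not prove the corollary in the generality claimed.
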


\begin{proof}
The first assertion follows from Lemma \ref{thm2.9}. \ The second
assertion follows from the first assertion together with the
observation that if $\Delta$ is two-sided inner then $[\Delta,
\Delta_c]=\Delta$.
\end{proof}

\medskip

\begin{cor}\label{remdjfghhghgh}
Let $\Delta$ be an inner function with values in $\mathcal B(D, E)$.
\  Then $[\Delta, \Omega]$ is two-sided inner for some inner
function $\Omega$ with values in $\mathcal B(D^{\prime}, E)$ if and
only if $\breve{\Delta}$ is of bounded type.
\end{cor}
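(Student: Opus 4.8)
The plan is to reduce both implications to Corollary~\ref{thmthjdkfjkf}, which already characterizes ``$\breve\Delta$ is of bounded type'' by the condition that $[\Delta,\Delta_c]$ is two-sided inner, where $\Delta_c$ denotes the complementary factor of $\Delta$. Thus the whole statement amounts to showing that $\Delta$ admits \emph{some} inner complement to a two-sided inner function if and only if the canonical complement $[\Delta,\Delta_c]$ already does the job.

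The direction $(\Leftarrow)$ is immediate: if $\breve\Delta$ is of bounded type, then $[\Delta,\Delta_c]$ is two-sided inner by Corollary~\ref{thmthjdkfjkf}, so one may simply take $\Omega=\Delta_c$.

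For $(\Rightarrow)$, assume $[\Delta,\Omega]$ is two-sided inner for some inner function $\Omega$ with values in $\mathcal B(D',E)$. First I would use the block identity $[\Delta,\Omega]^*[\Delta,\Omega]=I_{D\oplus D'}$ to read off $\Delta^*\Omega=0$, whence $\Omega H^2_{D'}\subseteq\ker\Delta^*$. By Lemma~\ref{thm2.9}(a) we have $\ker\Delta^*=\Delta_c H^2_{D''}$ for the appropriate space $D''$, so $\Delta_c$ is a left inner divisor of $\Omega$ (the operator-valued ``left inner divisor'' fact already used in the proof of Lemma~\ref{rem2.4}; cf.\ \cite{FF}, \cite{Pe}), say $\Omega=\Delta_c\Omega_1$ with $\Omega_1\in H^2_s(\mathcal B(D',D''))$; since $\Delta_c^*\Delta_c=I$ and $\Omega^*\Omega=I_{D'}$, one gets $\Omega_1^*\Omega_1=I_{D'}$, so $\Omega_1$ is inner. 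Then I would factor $[\Delta,\Omega]=[\Delta,\Delta_c]\,\left[\begin{smallmatrix}I_D&0\\0&\Omega_1\end{smallmatrix}\right]$, where the right-hand factor is an inner function lying in $H^\infty$, so that $[\Delta,\Delta_c]$ is a left inner divisor of the two-sided inner function $[\Delta,\Omega]$. By Lemma~\ref{rem.sdcfcfc} it follows that $[\Delta,\Delta_c]$ is two-sided inner, and Corollary~\ref{thmthjdkfjkf} then yields that $\breve\Delta$ is of bounded type.

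The only step requiring a little care is the divisibility passage $\Omega H^2_{D'}\subseteq\Delta_c H^2_{D''}\ \Rightarrow\ \Omega=\Delta_c\Omega_1$ with $\Omega_1$ inner; this is exactly the argument invoked in Lemma~\ref{rem2.4}, but since we are in the operator-valued (rather than matrix) setting I would spell out that $\Omega_1$ is WOT measurable, lies in $H^2_s$, and is isometric a.e.\ precisely because $\Delta_c$ is. Everything else is a routine block-matrix computation together with the single application of Lemma~\ref{rem.sdcfcfc}.
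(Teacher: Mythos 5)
Your proof is correct and follows essentially the same route as the paper's: read off $\Delta^*\Omega=0$, deduce $\Omega H^2_{D'}\subseteq\ker\Delta^*=\Delta_c H^2_{D''}$ so that $\Delta_c$ left-divides $\Omega$ and hence $[\Delta,\Delta_c]$ left-divides $[\Delta,\Omega]$, then apply Lemma \ref{rem.sdcfcfc} and Corollary \ref{thmthjdkfjkf}, with the converse given by taking $\Omega=\Delta_c$. The extra details you supply (the explicit block factorization and the check that $\Omega_1$ is inner) are fine but only make explicit what the paper leaves implicit.
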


\begin{proof}  Suppose that  $[\Delta, \Omega]$ is two-sided
inner for some inner function $\Omega$ with values in $\mathcal
B(D^{\prime}, E)$.  \ Then $\Delta^* \Omega=0$, so that $\Omega
H^2_{D^{\prime}}\subseteq \hbox{ker}\, \Delta^*=\Delta_c
H^2_{D^{\prime\prime}}$. \ Thus $\Delta_c$ is a left inner divisor
of $\Omega$, and hence $[\Delta, \Delta_c]$ is a left inner divisor
of $[\Delta, \Omega]$. \ Therefore by Lemma \ref{rem.sdcfcfc},
$[\Delta, \Delta_c]$ is two-sided inner, so that by Corollary
\ref{thmthjdkfjkf}, $\breve{\Delta}$ is of bounded type. \ The
converse follows at once  from Corollary \ref{thmthjdkfjkf} with
$\Omega=\Delta_c$.
\end{proof}

\bigskip

We now ask: If $\Delta\equiv[\delta_1, \delta_2, \cdots,
\delta_m] \in H^{\infty}_{M_{n\times m}}$ is an inner matrix
function, does there exist $j$ ($1\leq j\leq m$) such that
$\hbox{dc}\{\delta_j\}=\hbox{dc}\{\Delta\}$\, ? \ The answer, however, is
negative. \ To see this, let $f$ and $g$ be given in Example
\ref{ex3.6} and let
$$
\Delta:=\begin{bmatrix}f&0\\g&0\\0&f\\0&g\end{bmatrix}\equiv
\begin{bmatrix} \delta_1, \delta_2\end{bmatrix}.
$$
Since
$$
\begin{bmatrix}f&0&0\\g&0&0\\0&1&0\\0&0&1\end{bmatrix}\ \hbox{is
inner},
$$
in view of Corollary \ref{cor5.hh3333}, we have
$\hbox{dc}(\delta_1)\le 1$. \ But since $\hbox{dc}(\delta_1)\ne 0$
(because $\delta_1^*$ is not of bounded type), it follows that
$\hbox{dc}\{\delta_1\}=1$. Similarly, $\hbox{dc}\{\delta_2\}=1$. \
However, we have $\hbox{dc}\{\Delta\}=2$, because we can show that
$\Delta_c=0$.

\vskip 1cm

%
%
%
%
%
%

\noindent {\bf \S\ 4.5. Meromorphic pseudo-continuations of bounded
type} \

\bigskip
\noindent In general, if a strong $L^2$-function $\Phi$ is of
bounded type then we cannot guarantee that each entry
$\phi_{ij}\equiv\langle \Phi d_j, \ e_i\rangle$ is of bounded type, where  $\{d_j\}$ and $\{e_i\}$ are orthonormal bases
of $D$ and $E$, respectively. \ But if we strengthen the assumption
then we may have the assertion. \ To see this, for a function $\Psi:
\mathbb{D}^e\equiv\{z: 1<|z|\le\infty\}\to \mathcal B(D,E)$,  we define
$\Psi_{\mathbb D}: \mathbb D\to \mathcal B(E,D)$ by
$$
\Psi_{\mathbb D}(\zeta):=\Psi^*(1/\overline{\zeta})\quad \hbox{for
$\zeta\in\mathbb D$}.
$$
If $\Psi_{\mathbb D}$ is a strong $H^2$-function, inner, and
two-sided inner with values in $\mathcal B(E,D)$, then we shall say
that $\Psi$ is a strong $H^2$-function, inner, and two-sided inner
in $\mathbb{D}^e$ with values in $\mathcal B(D,E)$, respectively.

A $\mathcal B(D, E)$-valued function $\Psi$ is said to be {\it
meromorphic of bounded type} in $\mathbb{D}^e$ if it can be represented by
$$
\Psi=\frac{G}{\theta},
$$
where $G$ is a strong $H^2$-function in $\mathbb{D}^e$, with values in
$\mathcal B(D,E)$ and $\theta$ is a scalar inner function in $\mathbb{D}^e$.
(cf. \cite{Fu}). \ A function $\Phi \in L^2_s(\mathcal B(D, E))$ is
said to have a {\it meromorphic pseudo-continuation $\hat\Phi$ of
bounded type} in $\mathbb{D}^e$ if $\hat\Phi$ is meromorphic of bounded type
in $\mathbb{D}^e$ and $\Phi$ is the nontangential SOT limit of $\hat\Phi$,
that is, for all $x \in D$,
$$
\Phi(z)x= \hat{\Phi}(z)x:= \lim_{rz\to z}\hat{\Phi}(rz)x \quad
\hbox{for almost all} \  z\in\mathbb T.
$$
Note that for almost all $z\in\mathbb T$,
$$
\Phi(z)x=\lim_{rz\to z}\hat{\Phi}(rz)x=\lim_{rz\to z}
\hat{\Phi}_{\mathbb D}^*(r^{-1}z)x=\hat{\Phi}_{\mathbb D}^*(z) x
\quad (x \in D).
$$

\medskip
We then have:


\begin{lem}\label{thgg668} Let $\Phi$ be a strong $L^{2}$-function
with values in $\mathcal B(D,E)$. \ If $\Phi$ has a meromorphic
pseudo-continuation of bounded type in $\mathbb{D}^e$, then $\breve\Phi$ is of
bounded type.
\end{lem}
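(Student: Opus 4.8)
The plan is to verify condition (d) of Lemma \ref{thmdfgfgty} — that $\{\Phi_+\}\subseteq\mathcal H(\Theta)$ for some two-sided inner function $\Theta$ with values in $\mathcal B(E)$ — and then to invoke the implication (d)$\Rightarrow$(a) of that lemma. The correct choice will turn out to be $\Theta=zgI_E$, where $g$ is the scalar inner function attached to the pseudo-continuation of $\Phi$.

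First I would turn the hypothesis into a boundary identity. Write the given pseudo-continuation as $\hat\Phi=G/\theta$, where $G$ is a strong $H^2$-function in $\mathbb{D}^e$ with values in $\mathcal B(D,E)$ and $\theta$ is a scalar inner function in $\mathbb{D}^e$, and put $g:=\theta_{\mathbb D}$, a scalar inner function on $\mathbb D$. Using the relation $\Phi(z)^*=\hat\Phi_{\mathbb D}(z)$ recorded just before the statement, together with $(G/\theta)_{\mathbb D}=G_{\mathbb D}/\theta_{\mathbb D}$ and $|g|=1$ a.e.\ on $\mathbb T$, I would obtain
\[
\Phi(z)^*=\overline{g(z)}\,G_{\mathbb D}(z)\qquad\text{for a.e. }z\in\mathbb T,
\]
where $G_{\mathbb D}$ is a strong $H^2$-function with values in $\mathcal B(E,D)$. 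The point to keep in mind is that the boundary operators $G_{\mathbb D}(z)$ need not be bounded, but the columns $G_{\mathbb D}(\cdot)y$ do lie in $H^2_D$ for each $y\in E$, so $\langle G_{\mathbb D}(\cdot)y,x\rangle\in H^2$ for all $x\in D$. Pairing $\overline{g}\,\Phi(\cdot)x$ against an orthonormal basis of $E$ and using the identity above yields $\langle\overline{g}\,\Phi(\cdot)x,y\rangle=\overline{\langle G_{\mathbb D}(\cdot)y,x\rangle}$, which has only nonpositive Fourier coefficients; hence $\psi_x:=\overline{g}\,\Phi(\cdot)x\in L^2_E$ satisfies $\widehat{\psi_x}(m)=0$ for every $m\ge1$, and $\Phi(\cdot)x=g\psi_x$.

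With this in hand I would take $\Theta:=zgI_E$, which is two-sided inner with values in $\mathcal B(E)$ because $zg$ is a scalar inner function. For $x\in D$ and $\xi\in H^2_E$, since $zg\xi\in H^2_E$ a short computation (using $|g|=1$ a.e.) gives
\[
\bigl\langle P_+(\Phi(\cdot)x),\,zg\xi\bigr\rangle_{L^2_E}=\bigl\langle g\psi_x,\,zg\xi\bigr\rangle_{L^2_E}=\int_{\mathbb T}\overline{z}\,\langle\psi_x(z),\xi(z)\rangle_E\,dm(z)=\widehat{\langle\psi_x(\cdot),\xi(\cdot)\rangle}(1)=0,
\]
the last equality because $\langle\psi_x(\cdot),\xi(\cdot)\rangle$ can have no Fourier coefficient of strictly positive index ($\psi_x$ has none of positive index and $\xi$ has none of negative index). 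Thus $P_+(\Phi(\cdot)x)\in\mathcal H(zgI_E)$ for every $x\in D$, so $\{\Phi_+\}\subseteq\mathcal H(zgI_E)$, and Lemma \ref{thmdfgfgty} then gives that $\breve\Phi$ is of bounded type.

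The only delicate step is the first one: correctly unwinding the operator-valued pseudo-continuation through the $(\cdot)_{\mathbb D}$ transform to reach the boundary identity $\Phi^*=\overline{g}\,G_{\mathbb D}$ on $\mathbb T$, and making sure to argue throughout with the $H^2_D$-valued columns $G_{\mathbb D}(\cdot)y$ rather than with the possibly unbounded boundary operators $G_{\mathbb D}(z)$. Everything after that reduces to the one-line Fourier-support computation above.
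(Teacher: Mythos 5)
Your proof is correct, and at its core it is the same computation as the paper's. You unwind the pseudo-continuation into the boundary identity $\langle \overline{g}\,\Phi(\cdot)x,\,y\rangle=\overline{\langle G_{\mathbb D}(\cdot)y,\,x\rangle}$ (working column-wise, which is the right way to dodge the unboundedness of the boundary operators, and is exactly how the paper's identity $\Phi(z)x=\delta_{\mathbb D}(z)G_{\mathbb D}^*(z)x$ should be read), and your orthogonality statement $\langle \Phi_+(\cdot)x,\,zg\xi\rangle=0$ for all $\xi\in H^2_E$ is literally the paper's statement that $\int_{\mathbb T}\langle\Phi(z)x,\,z^n\delta_{\mathbb D}(z)p(z)\rangle\,dm(z)=0$ for $n\ge 1$, $p\in\mathcal P_E$, i.e.\ $\delta_{\mathbb D}H^2_E\subseteq\ker H_{\breve\Phi}^*$ via Lemma \ref{thm4.2}. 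The only divergence is the closing step: the paper quotes Lemma \ref{kerhadjoint} and Lemma \ref{rem.sdcfcfc} to conclude that the Beurling--Lax--Halmos inner function of $\ker H_{\breve\Phi}^*$ divides $\delta_{\mathbb D}I_E$ and hence is two-sided inner, while you recast the same fact as $\{\Phi_+\}\subseteq\mathcal H(zgI_E)$ and cite Lemma \ref{thmdfgfgty}(d)$\Rightarrow$(a). That is legitimate and non-circular (Lemma \ref{thmdfgfgty} precedes this lemma and its proof does not use it), but it is a repackaging rather than a new argument, since the implication (d)$\Rightarrow$(a) itself runs through Theorem \ref{thm7566} and Lemma \ref{rem.sdcfcfc}; the paper's route is marginally more economical in that the shift $n\ge 1$ built into Lemma \ref{thm4.2} spares the extra factor of $z$ (your $\Theta=zgI_E$ versus the paper's $\delta_{\mathbb D}I_E$), whereas your formulation has the small virtue of exhibiting the conclusion directly in terms of the analytic part $\Phi_+$ sitting in a scalar-type model space.
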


\begin{proof} Suppose that  $\Phi$ has a meromorphic pseudo-continuation of bounded
type in $\mathbb{D}^e$.  \ Thus the meromorphic pseudo-continuation $\hat\Phi$
of $\Phi$ can be written as
$$
\hat{\Phi}(\zeta):=\frac{G(\zeta)}{\delta(\zeta)} \quad(\zeta \in
\mathbb{D}^e),
$$
where $G$ is a strong $H^2$-function in $\mathbb{D}^e$, with values in
$\mathcal B(D,E)$ and $\delta$ is a scalar inner function in $\mathbb{D}^e$.\
Then for all $x \in D$,
$$
\Phi(z)x=\hat{\Phi}_{\mathbb D}^*(z) x=\delta_{\mathbb
D}(z)G_{\mathbb D}^*(z)x \quad \ \hbox{for almost all} \ z \in
\mathbb {T}.
$$
Thus for all $x\in D$, $p \in \mathcal P_E$, and $n=1,2,3,\cdots$,
$$
\aligned \int_{\mathbb T}\bigl\langle \Phi(z)x,\,
z^n{\delta_{\mathbb D}}(z)p(z)\bigr \rangle_{E}\,dm(z)
&=\int_{\mathbb T}\bigl\langle G_{\mathbb D}^*(z)x, \
z^np(z)\bigr \rangle_E\, dm(z)\\
&=\bigl\langle x, \ z^n G_{\mathbb D}(z)p(z)\bigr \rangle_{L^2_D}=0,
\endaligned
$$
where the last equality follows from the fact that $z^n G_{\mathbb
D}(z)p(z)\in zH^2_D$. \ Thus by Lemma \ref{thm4.2}, we can see that
\begin{equation}\label{newsharp}
\delta_{\mathbb D} H^2_E=\hbox{cl}\, \delta_{\mathbb D} \mathcal P_E
\subseteq \hbox{ker}\, H_{\breve \Phi}^*.
\end{equation}
In view of Lemma \ref{kerhadjoint}, $\hbox{ker}\,
H_{\breve\Phi}^*=\Delta H^2_{E^{\prime}}$ for some inner function
$\Delta$ with values in $\mathcal B(E^\prime, E)$. \ Thus $\Delta$
is a left inner divisor of $\delta_{\mathbb D}I_{E}$ (cf. \cite{FF},
\cite{Pe}). \ Thus, it follows from Lemma \ref{rem.sdcfcfc} that
that $\Delta$ is two-sided inner, so that $\breve\Phi$ is of bounded
type.
\end{proof}

\medskip

The following lemma was proved in \cite{Fu1} under the more restrictive setting of
$H^{\infty} (\mathcal B(D, E))$.

\begin{lem}\label{thgg6688}
Let $\Phi \in L^\infty (\mathcal B(D, E))$. Then the following are
equivalent:

\begin{itemize}
\item[(a)]  $\Phi$ has a meromorphic pseudo-continuation of
bounded type in $\mathbb{D}^e$;
\item[(b)]  $\theta H^2_{E} \subseteq \hbox{ker}\,H_{\Phi^*}$ for some scalar
inner function $\theta$;
\item[(c)] $\Phi=\theta A^*$  for
a scalar inner function $\theta$ and some $A\in H^\infty (\mathcal
B(E,D))$.
\end{itemize}
\end{lem}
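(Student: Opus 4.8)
The plan is to establish the cycle (a) $\Rightarrow$ (b) $\Rightarrow$ (c) $\Rightarrow$ (a). The implication (a) $\Rightarrow$ (b) is essentially already contained in the proof of Lemma \ref{thgg668}: if $\Phi$ has a meromorphic pseudo-continuation of bounded type in $\mathbb{D}^e$, then writing $\hat\Phi = G/\delta$ with $G$ a strong $H^2$-function in $\mathbb{D}^e$ and $\delta$ a scalar inner function in $\mathbb{D}^e$, the computation in \eqref{newsharp} gives $\delta_{\mathbb D} H^2_E \subseteq \ker H_{\breve\Phi}^*$. Since $\Phi \in L^\infty(\mathcal B(D,E))$, we have $H_{\breve\Phi}^* = H_{\widetilde{\breve\Phi}} = H_{\Phi^*}$ by \eqref{form_1}, so taking $\theta := \delta_{\mathbb D}$ yields (b). (One should check that $\theta$ is genuinely a scalar inner function on $\mathbb D$, which follows from the definition of ``inner in $\mathbb{D}^e$'' given just before the lemma.)

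For (b) $\Rightarrow$ (c): assume $\theta H^2_E \subseteq \ker H_{\Phi^*}$ for a scalar inner function $\theta$. By the Beurling-Lax-Halmos Theorem, $\ker H_{\Phi^*} = \Delta H^2_{E'}$ for some inner function $\Delta$ with values in $\mathcal B(E',E)$, and the inclusion $\theta H^2_E \subseteq \Delta H^2_{E'}$ means $\Delta$ is a left inner divisor of $\theta I_E$; hence by Lemma \ref{rem.sdcfcfc}, $\Delta$ is two-sided inner. Now apply Lemma \ref{rem2.4}: since $\ker H_{\Phi^*} = \Delta H^2_E$ with $\Delta$ two-sided inner (renaming $E'$ as $E$ via the two-sidedness), $\Phi = \Delta A_0^*$ for some $A_0 \in H^\infty(\mathcal B(E,D))$ right coprime with $\Delta$. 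To pass from the two-sided inner $\Delta$ to a \emph{scalar} inner $\theta$, use the containment $\theta H^2_E \subseteq \Delta H^2_E$: there is $\Psi \in H^\infty(\mathcal B(E))$ with $\theta I_E = \Delta \Psi$, and then $\Phi = \Delta A_0^* = \theta\,(\Delta \Psi \overline{\theta})\,\cdot$ — more cleanly, set $A := A_0 \Psi^* \in H^\infty(\mathcal B(E,D))$ and compute $\theta A^* = \theta \Psi A_0^* = \Delta^* \theta I_E$ composed appropriately; the precise identity to verify is $\theta A^* = \Delta A_0^* = \Phi$, using $\Delta\Psi = \theta I_E$ hence $\Psi^* \Delta^* = \overline\theta I_E$ and $\Delta A_0^* \cdot$ rearrangements. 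This is the one spot requiring a careful bookkeeping of which side the scalar inner function sits on.

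For (c) $\Rightarrow$ (a): given $\Phi = \theta A^*$ with $\theta$ scalar inner and $A \in H^\infty(\mathcal B(E,D))$, define the candidate pseudo-continuation $\hat\Phi$ on $\mathbb{D}^e$ directly by $\hat\Phi := \widetilde{A}/\theta_{\mathbb{D}^e}$, where $\theta_{\mathbb{D}^e}$ denotes the scalar inner function in $\mathbb{D}^e$ whose $\mathbb D$-reflection is $\theta$, and $\widetilde A$ is the corresponding strong $H^2$-function in $\mathbb{D}^e$ built from $A$ via the $(\cdot)_{\mathbb D}$ correspondence; then check that its nontangential SOT boundary limit recovers $\Phi(z)x = \theta(z)A^*(z)x = \theta(z)\overline{\theta(z)}\cdots$, i.e. that $\hat\Phi_{\mathbb D}^*(z) = \theta(z) A(z)^* = \Phi(z)$ a.e. on $\mathbb T$. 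This is a routine unwinding of the definitions of ``meromorphic of bounded type in $\mathbb{D}^e$'' and ``pseudo-continuation.''

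I expect the main obstacle to be the bookkeeping in (b) $\Rightarrow$ (c): reconciling the two-sided operator inner function $\Delta$ furnished by the Beurling-Lax-Halmos Theorem and Lemma \ref{rem2.4} with the \emph{scalar} inner function $\theta$ demanded in (c), while keeping $A$ an honest $H^\infty(\mathcal B(E,D))$-function. The divisibility $\Delta \mid \theta I_E$ (as left inner divisors) is the right tool, but one must be attentive that composing $\Psi^*$ on the correct side produces $\theta A^* = \Phi$ rather than $A^* \theta$, which for operator-valued (non-commuting) factors is not automatic and relies on $\theta$ being scalar so that $\theta I_E$ is central.
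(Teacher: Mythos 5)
Your (a)$\Rightarrow$(b) and (c)$\Rightarrow$(a) are essentially the paper's arguments: (a)$\Rightarrow$(b) is the observation that (\ref{newsharp}) in the proof of Lemma \ref{thgg668} gives $\delta_{\mathbb D}H^2_E\subseteq \ker H_{\breve\Phi}^*=\ker H_{\Phi^*}$, and (c)$\Rightarrow$(a) is the same reflection construction $\hat\Phi(\zeta):=A^*(1/\overline\zeta)\big/\overline{\theta(1/\overline\zeta)}$ on $\mathbb{D}^e$, with $A$ a strong $H^2$-function by Lemma \ref{strongh2}, followed by the boundary-value check.

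The gap is in (b)$\Rightarrow$(c), exactly at the spot you flag and leave unfinished. With $\theta I_E=\Delta\Psi$ (so $\Psi=\theta\Delta^*\in H^\infty(\mathcal B(E))$, after identifying $E'$ with $E$) and $\Phi=\Delta A_0^*$ from Lemma \ref{rem2.4}, your proposed $A:=A_0\Psi^*$ does not work: $\Psi^*$ is co-analytic, so $A_0\Psi^*$ need not belong to $H^\infty(\mathcal B(E,D))$, and in any case $\theta A^*=\theta\Psi A_0^*=\theta^2\Delta^*A_0^*\neq\Phi$. The correct choice is $A:=A_0\Psi$, for which $\theta A^*=\theta\Psi^*A_0^*=\theta\overline\theta\,\Delta A_0^*=\Phi$; but note that $A_0\Psi=\Phi^*\Delta\Delta^*\theta=\theta\Phi^*$, so the whole detour through the Beurling-Lax-Halmos Theorem and the Douglas-Shapiro-Shields factorization collapses to the paper's one-line proof: put $A:=\theta\Phi^*$. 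Hypothesis (b) says precisely that the Hankel operator $H_{\Phi^*}$ annihilates $\theta H^2_E$, i.e. $\Phi^*\theta h\in H^2_D$ for all $h\in H^2_E$; since $\theta\Phi^*\in L^\infty(\mathcal B(E,D))$, this forces $\theta\Phi^*\in H^\infty(\mathcal B(E,D))$, and then $\Phi=\theta\overline\theta\Phi=\theta A^*$. Also observe that (c) asks for no coprimeness, so Lemma \ref{rem2.4} buys nothing here; invoking it only adds the burden of the two-sided-inner reduction that created your bookkeeping problem.
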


\begin{proof} First of all, recall that
$L^\infty (\mathcal B(D, E))\subseteq L^2_s(\mathcal B(D, E))$.

(a) $\Rightarrow$ (b): This follows from (\ref{newsharp}) in the
proof of Lemma \ref{thgg668}.

(b) $\Rightarrow$ (c): Suppose that $\theta H^2_{E} \subseteq
\hbox{ker}\,H_{\Phi^*}$ for some scalar inner function $\theta$. \
Put $A:=\theta \Phi^*$. Then  $A$ belongs to $H^\infty (\mathcal
B(E,D))$ and $\Phi=\theta A^*$.

(c) $\Rightarrow$ (a): Suppose that $\Phi=\theta A^*$  for a scalar
inner function $\theta$ and some $A\in H^\infty (\mathcal B(E,D))$.
\  Thus it follows from Lemma \ref{strongh2} that $A$ is a strong
$H^2$-function. Let
$$
{\hat\Phi}(\zeta):=\frac{A^*(1/\overline{\zeta})}
{\overline{\theta}(1/\overline{\zeta})} \quad (\zeta \in \mathbb{D}^e).
$$
Then $\hat{\Phi}$ is meromorphic of bounded type in $\mathbb{D}^e$ and for
all $x \in D$,
$$
\hat{\Phi}(z)x=\frac{A^*(z)x}
{\overline{\theta}(z)}=\theta(z)A^*(z)x=\Phi(z)x \quad \hbox{for
almost all} \ z \in \mathbb  T,
$$
which implies that $\Phi$ has a meromorphic pseudo-continuation of
bounded type in $\mathbb{D}^e$.
\end{proof}

\medskip

An examination of the proof of  Lemma \ref{thgg6688} shows that
Lemma \ref{thgg6688} still holds for every function $\Phi \in
L^2_{\mathcal B(D,E)}$.

\medskip

\begin{cor}\label{cor32900}
If $\Phi\in L^2_{\mathcal B(D,E)}$, then Lemma \ref{thgg6688} holds
with ${A}\in H^2_{\mathcal B(E,D)}$ in place of $A \in
H^\infty(\mathcal B(E,D))$.
\end{cor}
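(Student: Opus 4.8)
The plan is to re-run the proof of Lemma \ref{thgg6688} implication by implication, observing that the standing hypothesis $\Phi\in L^\infty(\mathcal B(D,E))$ there enters only through the consequence that $\Phi$ is a strong $L^2$-function, and then tracking where the factor $A$ ends up. Since $L^2_{\mathcal B(D,E)}\subseteq L^2_s(\mathcal B(D,E))$ by (\ref{lps}), every $\Phi\in L^2_{\mathcal B(D,E)}$ is a strong $L^2$-function, so Lemmas \ref{thm4.2}, \ref{kerhadjoint}, and \ref{thgg668} are all available. One preliminary remark: when $\Phi$ is merely square-integrable the Hankel operator $H_{\Phi^*}$ need not be bounded, so in condition (b) one reads $\ker H_{\Phi^*}$ as the (closed) space $\ker H_{\breve\Phi}^*$; by (\ref{form_1}) the two agree when $\Phi\in L^\infty$, so nothing is lost. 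The implication (a)$\Rightarrow$(b) is then copied verbatim from the proof of Lemma \ref{thgg6688}: equation (\ref{newsharp}) in the proof of Lemma \ref{thgg668} produces, from a meromorphic pseudo-continuation $\hat\Phi=G/\delta$ of bounded type, the scalar inner function $\theta:=\delta_{\mathbb D}$ with $\theta H^2_E\subseteq\ker H_{\breve\Phi}^*$, and that argument uses no boundedness.

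For (b)$\Rightarrow$(c) I would again put $A:=\theta\Phi^*$. Since $\|\Phi^*(z)\|=\|\Phi(z)\|$ and $\Phi^*$ is WOT measurable by Lemma \ref{remmeasurable}, we get $\Phi^*\in L^2_{\mathcal B(E,D)}$, hence $A\in L^2_{\mathcal B(E,D)}$ and $\theta A^*=\Phi$. The only thing left is to show $A$ is analytic, i.e.\ $\widehat A(m)=0$ for $m<0$. Unwinding $\theta\mathcal P_E\subseteq\ker H_{\breve\Phi}^*$ through Lemma \ref{thm4.2} with test polynomials $z^k y$ $(y\in E,\ k\ge0)$, conjugating, and pulling $x\in D$ through the integral gives
\[
\int_{\mathbb T}z^{\,m}\,\bigl\langle A(z)y,\,x\bigr\rangle_D\,dm(z)=0\qquad(x\in D,\ y\in E,\ m\ge 1),
\]
so that $\langle A(\cdot)y,x\rangle\in H^2$ for all such $x,y$; since $D$ is separable this forces $A(\cdot)y\in H^2_D$ for every $y\in E$, whence $\widehat A(m)=0$ for $m<0$ and $A\in H^2_{\mathcal B(E,D)}$. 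This is exactly the step that yields $A\in H^2$ rather than $A\in H^\infty$, since $\Phi$, hence $\Phi^*$ and $\theta\Phi^*$, is only square-integrable.

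For (c)$\Rightarrow$(a): given $\Phi=\theta A^*$ with $\theta$ scalar inner and $A\in H^2_{\mathcal B(E,D)}$, Lemma \ref{strongh2} gives $A\in H^2_s(\mathbb D,\mathcal B(E,D))$, i.e.\ $A$ is a strong $H^2$-function on $\mathbb D$. Setting
\[
\hat\Phi(\zeta):=\frac{A^*(1/\overline\zeta)}{\overline\theta(1/\overline\zeta)}\qquad(\zeta\in\mathbb{D}^e)
\]
exhibits a quotient of a strong $H^2$-function in $\mathbb{D}^e$ (namely $\zeta\mapsto A^*(1/\overline\zeta)$, whose $\mathbb D$-transform is $A$) by a scalar inner function in $\mathbb{D}^e$, hence a function meromorphic of bounded type in $\mathbb{D}^e$; since an $H^2$-function still has nontangential limits almost everywhere on $\mathbb T$, the computation $\hat\Phi(z)x=\theta(z)A^*(z)x=\Phi(z)x$ a.e.\ goes through as in Lemma \ref{thgg6688}, so $\hat\Phi$ is a meromorphic pseudo-continuation of bounded type of $\Phi$ in $\mathbb{D}^e$.

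I expect the genuinely delicate point to be (b)$\Rightarrow$(c): one must carefully justify that the kernel hypothesis — which, for an unbounded Hankel symbol, must be phrased via $H_{\breve\Phi}^*$ and unwound with Lemma \ref{thm4.2} — forces the square-integrable product $\theta\Phi^*$ to be analytic, and that the underlying Fourier-coefficient manipulation (passing $x\in D$ through the WOT/Bochner integral, conjugating, and reading off vanishing coefficients) remains valid for $L^2$- rather than $L^\infty$-data. Everything else is a line-by-line transcription of the proof of Lemma \ref{thgg6688} with ``$L^\infty$'' weakened to ``$L^2$'' and ``$H^\infty$'' weakened to ``$H^2$''.
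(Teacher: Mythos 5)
Your proposal is correct and takes exactly the route the paper intends: the paper gives no separate argument for Corollary \ref{cor32900} beyond the remark that an examination of the proof of Lemma \ref{thgg6688} shows it persists for $\Phi\in L^2_{\mathcal B(D,E)}$, and your implication-by-implication transcription (reading $\ker H_{\Phi^*}$ as $\ker H_{\breve\Phi}^*$ for the unbounded symbol, taking $A:=\theta\Phi^*\in L^2_{\mathcal B(E,D)}$ and extracting its analyticity via Lemma \ref{thm4.2}, and reusing the $\hat\Phi(\zeta)=A^*(1/\overline\zeta)/\overline\theta(1/\overline\zeta)$ construction with Lemma \ref{strongh2}) is precisely that examination. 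Nothing in your argument departs from the paper's approach.
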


\medskip

The following proposition gives an answer to an opening remark of
this section.

\begin{prop}\label{corapp-thm1}
Let $D$ and $E$ be separable complex Hilbert spaces and let
$\{d_j\}$ and $\{e_i\}$ be orthonormal bases of $D$ and $E$,
respectively. \ If $\Phi \in L^2_{\mathcal B(D,E)}$ has a
meromorphic pseudo-continuation of bounded type in $\mathbb{D}^e$, then
$\breve{\phi}_{ij}(z)\equiv\langle \breve{\Phi}(z) d_j, \
e_i\rangle_E$ is of bounded type for each $i,j$.
\end{prop}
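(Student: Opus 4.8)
\textbf{Proof proposal for Proposition \ref{corapp-thm1}.}

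The plan is to reduce the matrix-valued (countably-indexed) statement to the scalar characterization of bounded type given in Corollary \ref{cor566we}, which states that a function $\phi\in L^2$ is of bounded type if and only if $\ker H_\phi^*\neq\{0\}$. First I would invoke Lemma \ref{thgg668}: since $\Phi\in L^2_{\mathcal B(D,E)}$ has a meromorphic pseudo-continuation of bounded type in $\mathbb{D}^e$, its flip $\breve\Phi$ is of bounded type in the sense of Definition \ref{dfbundd}, i.e.\ $\ker H_{\breve\Phi}^*=\Theta H^2_E$ for some two-sided inner function $\Theta$ with values in $\mathcal B(E)$. Actually, I would go one step further and use the stronger conclusion available in the proof of Lemma \ref{thgg668} (equation (\ref{newsharp})): there is a scalar inner function $\delta$ (namely $\delta_{\mathbb D}$, the companion of the scalar inner denominator of the pseudo-continuation) such that $\delta H^2_E\subseteq \ker H_{\breve\Phi}^*$. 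This scalar inner function is the key device.

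Next I would fix indices $i,j$ and show directly that $\delta$ is (up to the obvious flip normalization) a bounded-type denominator for $\breve\phi_{ij}$. The idea: from $\delta H^2_E\subseteq\ker H_{\breve\Phi}^*$ and Lemma \ref{thm4.2}, one has $\int_{\mathbb T}\langle \Phi(z)x,\, z^n\delta(z)h(z)\rangle_E\,dm(z)=0$ for all $x\in D$, all $h\in H^2_E$, and all $n\geq 1$. Specializing $x=d_j$ and $h(z)=e_i$ (a constant function) gives $\int_{\mathbb T} z^n\,\overline{\delta(z)}\,\langle\Phi(z)d_j,e_i\rangle_E\,dm(z)$ is controlled so that the function $\overline{\delta}\,\phi_{ij}^{(0)}$ has vanishing Fourier coefficients in the relevant range, where $\phi_{ij}^{(0)}(z):=\langle\Phi(z)d_j,e_i\rangle_E\in L^2$. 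This forces $\overline\delta\,\phi_{ij}^{(0)}\in H^2$ in the appropriate conjugated sense, hence $\phi_{ij}^{(0)}$ is of bounded type; equivalently, passing to the flip, $\breve\phi_{ij}(z)=\langle\breve\Phi(z)d_j,e_i\rangle_E$ is of bounded type. An even cleaner route: observe that $\phi_{ij}^{(0)}=\langle\Phi(\cdot)d_j,e_i\rangle\in L^2$, and that $\breve\phi_{ij}=\langle\breve\Phi(\cdot)d_j,e_i\rangle$; then show $\ker H_{\breve\phi_{ij}}^*\supseteq \delta_{\mathbb D}H^2\neq\{0\}$ by the same Lemma \ref{thm4.2} computation restricted to the one-dimensional slots, and conclude by Corollary \ref{cor566we} that $\breve\phi_{ij}$ is of bounded type.

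The mildly technical point — and the step I expect to be the main obstacle — is the bookkeeping that translates ``$\delta H^2_E\subseteq\ker H_{\breve\Phi}^*$ for the operator-valued $\Phi$'' into ``$\delta H^2\subseteq\ker H_{\breve\phi_{ij}}^*$ for the scalar entry''; one must be careful that the pairing with fixed basis vectors $d_j\in D$ and $e_i\in E$ interacts correctly with the integral formula of Lemma \ref{thm4.2}, and that the scalar function $\phi_{ij}^{(0)}$ genuinely lies in $L^2$ (which it does, since $\Phi(\cdot)d_j\in L^2_E$ because $\Phi\in L^2_{\mathcal B(D,E)}\subseteq L^2_s(\mathcal B(D,E))$ by (\ref{lps}), and $|\phi_{ij}^{(0)}(z)|\leq\|\Phi(z)d_j\|_E$). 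Once that reduction is in place, the conclusion is immediate from Corollary \ref{cor566we}. I would also remark that this is exactly the phenomenon flagged in the opening paragraph of \S 4.5: ``bounded type'' for strong $L^2$-functions does not by itself pass to entries, but the strictly stronger hypothesis of a meromorphic pseudo-continuation of bounded type supplies the \emph{scalar} inner denominator $\delta$ that is common to all entries, which is precisely what makes the entrywise statement work.
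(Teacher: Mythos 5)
Your argument is correct, but it takes a slightly different route from the paper's. The paper applies Corollary \ref{cor32900} (the $L^2$-version of Lemma \ref{thgg6688}) to factor $\Phi=\theta A^*$ with $\theta$ a scalar inner function and $A\in H^2_{\mathcal B(E,D)}$, then checks directly that each entry $a_{ij}=\langle \widetilde A d_j, e_i\rangle_E$ lies in $H^2$, so that $\breve\phi_{ij}=\breve\theta\, a_{ij}$ is exhibited explicitly as a conjugate-inner times an $H^2$-function; you instead stop at the Hankel-kernel inclusion $\delta_{\mathbb D}H^2_E\subseteq\ker H^*_{\breve\Phi}$ (equation (\ref{newsharp}) in the proof of Lemma \ref{thgg668}), pair with the basis vectors via Lemma \ref{thm4.2}, and invoke the scalar criterion of Corollary \ref{cor566we}. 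Both proofs pivot on the same scalar inner function supplied by the pseudo-continuation, so the content is parallel; what the paper's version buys is the explicit common denominator $\theta$ for all entries, which is exploited later (the proof of Lemma \ref{thnfhfhbbbbskl} cites "an analysis of the proof" of this proposition to see that each $\theta_{ij}$ divides $\theta$), whereas your version is marginally shorter at this spot since it never needs the analytic factor $A$. One bookkeeping remark: taking $h(z)=e_i$ constant only yields $\delta_{\mathbb D}\in\ker H^*_{\breve\phi_{ij}}$, which is in fact already enough for Corollary \ref{cor566we}; to get the full inclusion $\delta_{\mathbb D}H^2\subseteq\ker H^*_{\breve\phi_{ij}}$ (and hence the divisibility statement the paper later uses) you should take $h(z)=q(z)e_i$ with arbitrary scalar $q\in H^2$, exactly as in your "cleaner route," since $\langle\Phi(z)d_j,\ z^n\delta_{\mathbb D}(z)q(z)e_i\rangle_E=\overline{z^n\delta_{\mathbb D}(z)q(z)}\,\phi_{ij}(z)$. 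With that reading, the proof is complete.
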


\begin{proof}
Let $\Phi\in L^2_{\mathcal B(D,E)}$. \ Suppose that  $\Phi$ has a
meromorphic pseudo-continuation of bounded type in $\mathbb{D}^e$. \ Then by
Corollary \ref{cor32900}, $\Phi=\theta A^*$ for a scalar inner
function $\theta$ and some $A\in H^2_{\mathcal B(E, D)}$. \ Write
$$
\phi_{ij}(z):=\langle {\Phi}(z) d_j, \ e_i\rangle_E \quad \hbox{and}
\quad a_{ij}(z):=\langle \widetilde{A}(z) d_j, \ e_i\rangle_E.
$$
Then for each $i,j$,
$$
\begin{aligned}
\int_{\mathbb T}|\phi_{ij}(z)|^2dm(z)
&=\int_{\mathbb T}|\langle {\Phi}(z) d_j, \ e_i\rangle_E|^2dm(z)\\
&\leq \int_{\mathbb T}||\Phi(z)||^2_{\mathcal B(D,E)}dm(z) <\infty,
\end{aligned}
$$
which implies $\phi_{ij}\in L^2$. \ Similarly, $a_{ij}\in L^2$ and
for $n=1,2,3,\cdots$,
$$
\widehat{a_{ij}}(-n)=\int_{\mathbb T}z^n\langle \widetilde{A}(z)
d_j, \ e_i\rangle_E dm(z) =\langle d_j, \  z^{-n} \breve{A}(z)
e_i\rangle_{L^2_D} =0,
$$
which implies $a_{ij}\in H^2$. \ Note that
$$
\breve{\phi}_{ij}(z)=\breve{\theta}(z) \langle \widetilde{A}(z) d_j,
\ e_i\rangle_E=\breve{\theta}(z)a_{ij}(z)\,,
$$
which implies that $\breve{\phi}_{ij}$ is of bounded type for each
$i,j$.
\end{proof}

\medskip

\begin{ex}
The converse of Lemma \ref{thgg668} is not true in general. \ To see
this, let $\{\alpha_n\}$ be a sequence of distinct points in
$\mathbb D$ such that $\sum_{n=1}^{\infty}(1-|\alpha_n|)=\infty$ and
put $\Delta:=\hbox{diag}(b_{\alpha_n})$, where $b_{\alpha_n}(z):=
\frac{z-\alpha_n}{1-\overline{\alpha_n}z}$. \ Then $\Delta$ is
two-sided inner, and hence by Lemma \ref{thmthjdkfjkf},
$\breve{\Delta}$ is of bounded type. \ On the other hand, by Lemma
\ref{thm2.9}, $ \hbox{ker}\, H_{\Delta^*}=\Delta H^2_{\ell^2}$. \
Thus if $\Delta$ had a meromorphic pseudo-continuation of bounded
type in $\mathbb{D}^e$, then by Lemma \ref{thgg6688}, we would have $\theta
H^2_{\ell^2}\subseteq \Delta H^2_{\ell^2}$ for a scalar inner
function $\theta$, so that we should have $\theta(\alpha_n)=0$ for
each $n=1,2,\cdots$, and hence $\theta=0$, a contradiction. \
Therefore, $\Delta$ cannot have a meromorphic pseudo-continuation of
bounded type in $\mathbb{D}^e$.
\end{ex}

\bigskip

For matrix-valued cases, a function having a meromorphic
pseudo-continuation of bounded type in $\mathbb{D}^e$ is actually a function whose flip is of bounded type.

\begin{cor}\label{cor512,222}
For $\Phi\equiv[\phi_{ij}]\in L^2_{M_{n\times m}}$, the following
are equivalent:

\begin{itemize}
\item[(a)] $\Phi$ has a meromorphic pseudo-continuation
of bounded type in $\mathbb{D}^e$;
\item[(b)] $\breve\Phi$ is of bounded type;
\item[(c)] $\breve{\phi}_{ij}$ is of bounded type for each $i,j$.
\end{itemize}
\end{cor}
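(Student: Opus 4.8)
The plan is to prove Corollary~\ref{cor512,222} by establishing the cycle of implications (a)$\Rightarrow$(b)$\Rightarrow$(c)$\Rightarrow$(a). The implication (a)$\Rightarrow$(b) is immediate from Lemma~\ref{thgg668} applied to $\Phi\in L^2_{M_{n\times m}}\subseteq L^2_s(\mathcal B(\mathbb C^m,\mathbb C^n))$, which tells us that a meromorphic pseudo-continuation of bounded type in $\mathbb D^e$ forces $\breve\Phi$ to be of bounded type in the sense of Definition~\ref{dfbundd}. The implication (a)$\Rightarrow$(c) is likewise immediate from Proposition~\ref{corapp-thm1}, which directly says that if $\Phi\in L^2_{\mathcal B(D,E)}$ has a meromorphic pseudo-continuation of bounded type in $\mathbb D^e$, then each entry $\breve\phi_{ij}$ is of bounded type; specializing to $D=\mathbb C^m$, $E=\mathbb C^n$ with the canonical bases gives (c).

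Thus the only substantive work is the implication (c)$\Rightarrow$(a) (equivalently (b)$\Rightarrow$(a), though I would route through (c) since it is the concrete condition). First I would use the fact recorded in the Preliminaries: if $\breve\phi_{ij}$ is of bounded type and $\phi_{ij}\in L^2$, then $\phi_{ij}$ can be written as $\phi_{ij}=\overline{\theta_{ij}}\,a_{ij}$ with $\theta_{ij}$ a scalar inner function, $a_{ij}\in H^2$, and $\theta_{ij},a_{ij}$ coprime --- equivalently, by (\ref{btp}), $(\phi_{ij})_-$ has a scalar pseudo-continuation. Next I would take $\theta:=\mathrm{l.c.m.}\{\theta_{ij}: 1\le i\le n,\ 1\le j\le m\}$, a single scalar inner function (the least common inner multiple over a finite family exists and is again scalar inner). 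For each $(i,j)$ we then have $\overline\theta\,\theta_{ij}=\overline{b_{ij}}$ for some scalar inner $b_{ij}$, whence $\theta\phi_{ij}=b_{ij}a_{ij}\cdot\overline{\theta_{ij}}\theta = $ wait --- more cleanly: since $\theta_{ij}$ divides $\theta$, write $\theta=\theta_{ij}\psi_{ij}$ with $\psi_{ij}$ inner; then $\theta\phi_{ij}=\theta_{ij}\psi_{ij}\overline{\theta_{ij}}a_{ij}=\psi_{ij}a_{ij}\in H^2$. Hence $\theta\Phi\in H^2_{M_{n\times m}}$. Setting $A:=(\theta\Phi)^*=\widetilde{\theta\Phi}{}^{\,*}$, more precisely $A:=\overline\theta{}^{\,*}$-free: put $A^*:=\theta\Phi$ so that $\Phi=\overline\theta\,A^*=\theta^{-1}A^*$ with $A\in H^2_{M_{m\times n}}$. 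This is exactly the factorization appearing in Corollary~\ref{cor32900}/Lemma~\ref{thgg6688}(c), and by that corollary (which extends Lemma~\ref{thgg6688} to $L^2_{\mathcal B(D,E)}$) statement (c)-type factorization implies (a), the existence of a meromorphic pseudo-continuation of bounded type in $\mathbb D^e$ --- explicitly $\hat\Phi(\zeta):=A^*(1/\overline\zeta)/\overline\theta(1/\overline\zeta)$.

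The main obstacle I anticipate is purely bookkeeping: making sure the passage "each entry of bounded type $\Rightarrow$ a single common scalar inner denominator $\theta$ works for the whole matrix" is airtight, i.e., that $\mathrm{l.c.m.}$ of finitely many scalar inner functions is scalar inner and that divisibility $\theta_{ij}\mid\theta$ genuinely clears the denominator of $\phi_{ij}$ to land in $H^2$. This is where the hypothesis $\Phi\in L^2_{M_{n\times m}}$ (finite matrices) is essential --- for operator-valued $\Phi$ one cannot in general extract a common scalar inner denominator, which is precisely why Definition~\ref{dfbundd} and the pseudo-continuation notion diverge in the infinite-dimensional setting. Once the common denominator is in hand, everything reduces to citing Corollary~\ref{cor32900}, so no further genuine analysis is needed; I would present the finite-matrix l.c.m. step carefully and leave the rest as an appeal to the already-established lemmas.
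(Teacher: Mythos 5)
Your chain of implications does not close. You announce the cycle (a)$\Rightarrow$(b)$\Rightarrow$(c)$\Rightarrow$(a), but what you actually argue is (a)$\Rightarrow$(b), (a)$\Rightarrow$(c) and (c)$\Rightarrow$(a); the hypothesis (b) is never used, and your parenthetical claim that (b)$\Rightarrow$(a) is ``equivalent'' to (c)$\Rightarrow$(a) presupposes (b)$\Rightarrow$(c), which is part of what is to be proved. The missing step is not cosmetic: from (b) one only knows $\ker H^*_{\breve\Phi}=\Theta H^2_{\mathbb C^n}$ for some \emph{two-sided inner matrix} function $\Theta$, and before Corollary \ref{cor32900} can be invoked one must extract a \emph{scalar} inner $\theta$ with $\theta H^2_{\mathbb C^n}\subseteq\Theta H^2_{\mathbb C^n}$. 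The paper does this with the Complementing Lemma (producing $G$ with $G\Theta=\theta I_n$); for matrix functions one could instead use $\Theta\,\mathrm{adj}(\Theta)=(\det\Theta)I_n$. Without some such argument, (b) is not linked to (a) or (c) at all.

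Your (c)$\Rightarrow$(a) step also contains a flip/adjoint mix-up that invalidates the key computation. The hypothesis is that $\breve\phi_{ij}$ is of bounded type, so the decomposition from the Preliminaries applies to $\breve\phi_{ij}$: $\breve\phi_{ij}=\overline{\vartheta_{ij}}\,b_{ij}$ with $b_{ij}\in H^2$, hence $\phi_{ij}=\widetilde{\vartheta}_{ij}\,\overline{\widetilde{b}_{ij}}$, i.e.\ inner times the \emph{conjugate} of an $H^2$-function (equivalently, $\overline{\phi_{ij}}$ is of bounded type, since $\widetilde{\breve\phi}_{ij}=\overline{\phi_{ij}}$ and the tilde operation preserves bounded type). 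Your form $\phi_{ij}=\overline{\theta_{ij}}a_{ij}$ is what one gets from ``$\phi_{ij}$ is of bounded type,'' a different and here irrelevant condition. As a result, the l.c.m.\ $\theta$ must clear the denominators of $\Phi^*$, not of $\Phi$: the correct conclusion is $\theta\Phi^*\in H^2_{M_{m\times n}}$, i.e.\ $\Phi=\theta A^*$ with $A:=\theta\Phi^*$ analytic, which is exactly the factorization Corollary \ref{cor32900} requires. The condition you actually derive, $\theta\Phi\in H^2_{M_{n\times m}}$ (i.e.\ $\Phi=\overline\theta B$ with $B$ analytic), is strictly weaker and does not imply (a): the scalar example $\Phi=e^{1/(z-3)}\in H^\infty$ satisfies it with $\theta=1$, yet $\breve\Phi$ is not of bounded type and $\Phi$ has no meromorphic pseudo-continuation of bounded type in $\mathbb{D}^e$. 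With the entrywise decomposition corrected and $\theta$ applied to $\Phi^*$, your l.c.m.\ strategy does go through and gives a legitimate explicit proof of (c)$\Rightarrow$(a) (the paper leaves this direction to a terse citation of Corollary \ref{cor32900} and Proposition \ref{corapp-thm1}); the obstacle you worried about, the existence of the finite l.c.m., is indeed harmless.
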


\begin{proof}  (a) $\Rightarrow$ (b):
This follows from Lemma \ref{thgg668}.

(b) $\Rightarrow$ (a):  Suppose that $\breve{\Phi}$ is of bounded
type. \ Then $\hbox{ker}\, H_{\breve{\Phi}}^*=\Theta H^2_{\mathbb
C^n}$ for some two-sided inner function $\Theta \in
H^{\infty}_{M_n}$. \ Thus by the Complementing Lemma (cf.
p.~\pageref{Complementing}), there exist a scalar inner function
$\theta$ and a function $G$ in $H^{\infty}_{M_{n}}$ such that
$G\Theta=\Theta G=\theta I_n$, and hence, $\theta H^2_{\mathbb C^n}
=\Theta G H^2_{\mathbb C^n}\subseteq \Theta H^2_{\mathbb C^n}=
\hbox{ker}\,H^*_{\breve{\Phi}}$. \ It thus follows from Corollary
\ref{cor32900} that $\breve\Phi$ has a meromorphic
pseudo-continuation of bounded type in $\mathbb{D}^e$.

(a) $\Leftrightarrow$ (c):  This follows from Corollary
\ref{cor32900} and Proposition \ref{corapp-thm1}.
\end{proof}

\medskip

However, by contrast to the matrix-valued case, it may happened that
an $L^\infty$-function $\Phi$ is not of bounded type in the sense of
Definition \ref{dfbundd} even though each entry $\phi_{ij}$ of
$\Phi$ is of bounded type.

\medskip

\begin{ex}  Let $\{\alpha_j\}$ be a sequence of distinct points in $(0,1)$
satisfying $\sum_{j=1}^{\infty}(1-\alpha_j)<\infty$. \ For each $j
\in \mathbb{Z}_+$, choose a sequence $\{\alpha_{ij}\}$ of distinct
points on the circle $C_j:=\{z\in \mathbb C: |z|=\alpha_j\}$. \ Let
$$
B_{ij}:=\frac{\overline{b}_{\alpha_{ij}}}{(i+j)!} \quad(i,j \in
\mathbb Z_+),
$$
where $b_{\alpha}(z):=\frac{z-\alpha}{1-\overline{\alpha}z}$, and
let
$$
\Phi:=[B_{ij}]=\begin{bmatrix}
\frac{\overline{b}_{\alpha_{11}}}{2!}&\frac{\overline{b}_{\alpha_{12}}}{3!}
            &\frac{\overline{b}_{\alpha_{13}}}{4!}&\cdots\\
\frac{\overline{b}_{\alpha_{21}}}{3!}&\frac{\overline{b}_{\alpha_{22}}}{4!}
            &\frac{\overline{b}_{\alpha_{23}}}{5!}&\cdots\\
\frac{\overline{b}_{\alpha_{31}}}{4!}&\frac{\overline{b}_{\alpha_{32}}}{5!}
            &\frac{\overline{b}_{\alpha_{33}}}{6!}&\cdots\\
\vdots&\vdots&\vdots
\end{bmatrix}.
$$
Observe that
$$
\sum_{i,j}|B_{ij}(z)|^2=\sum_{i}\frac{i}{((1+i)!)^2}\leq\sum_{i}
\frac{1}{(1+i)^2}<\infty,
$$
which implies that $\Phi \in L^{\infty}(\mathcal B(\ell^2))$. \ For
a function $f \in H^2_{\ell^2}$, we write $f=(f_1,f_2,f_3,
\cdots)^t$ ($f_n\in H^2$). Thus if $f=(f_1,f_2,f_3, \cdots)^t \in
\hbox{ker}\, H_{\Phi}$, then
$\sum_{j}\frac{\overline{b}_{\alpha_{ij}}}{(i+j)!}f_j \in H^2$ for
each $i\in\mathbb Z_+$, which forces that $f_j(\alpha_{ij})=0$ for
each $i,j$. \ Thus $f_j=0$ for each $j$ (by the Identity Theorem). \
Therefore we can conclude that $\hbox{ker}\,
H_{\widetilde{\Phi}}^*=\{0\}$, so that $\widetilde{\Phi}$ is not of
bounded type. \ But we note that every entry of $\widetilde{\Phi}$
is of bounded type.
\end{ex}

\bigskip

We conclude this chapter with an application to $C_0$-contractions.

The class $C_{0\, \bigcdot}$ denotes the set of all contractions
$T\in\mathcal {B(H)}$ satisfying the condition (\ref{stronglimit}). \
The class $C_{00}$ denotes the set of all contractions $T\in\mathcal
{B(H)}$ such that $\lim_{n\to\infty} T^n x=0$ and
$\lim_{n\to\infty}T^{*n}x=0$ for each $x\in H$. \ It was known
(\cite[p.43]{Ni1}) that if $T$ is a $C_{0\, \bigcdot}$-contraction  with
characteristic function $\Delta$ (i.e., $T\cong S_E^*\vert_{\mathcal
H(\Delta)}$), then
\begin{equation}\label{C00}
T\in C_{00}\ \Longleftrightarrow \ \hbox{$\Delta$ is {\it two-sided}
inner}.
\end{equation}
A contraction $T\in\mathcal {B(H)}$ is called a {\it completely
non-unitary} (c.n.u.) if there exists no nontrivial reducing
subspace on which $T$ is unitary. \ The class $C_0$ is the set of
all c.n.u. contractions $T$ such that there exists a nonzero
function $\varphi\in H^\infty$ annihilating $T$, i.e.,
$\varphi(T)=0$, where $\varphi(T)$ is given by the calculus of
Sz.-Nagy and ~Foia\c s. We can easily check that $C_0\subseteq
C_{00}$. \ Moreover, it is well known (\cite[p.73]{Ni1}) that if
$T:= P_{\mathcal H(\Delta)}S_{E}\vert_{\mathcal H(\Delta)}\in
C_{00}$ and $\varphi\in H^\infty$, then
\begin{equation}\label{C0char}
\varphi(T)=0\ \Longleftrightarrow\ \exists\, G\in H^\infty (\mathcal
B(E))\ \hbox{such that} \ G\Delta=\Delta G=\varphi I_E.
\end{equation}
The theory of spectral multiplicity for operators of class $C_0$ has
been well developed (see \cite[Appendix 1]{Ni1}, \cite{SFBK}). \ If
$T\in C_0$, then there exists an inner function $m_T$ such that
$m_T(T)=0$ and
$$
\varphi\in H^\infty, \ \varphi(T)=0\ \Longrightarrow \
\varphi/m_T\in H^\infty.
$$
The function $m_T$ is called the {\it minimal annihilator} of the
operator $T$.

\bigskip

In view of (\ref{C00}), we may ask what is a condition on the
characteristic function $\Delta$ of $T$ for a $C_{0
\, \bigcdot}$-contraction $T$ to belong to the class $C_0$. \ The
following proposition gives an answer.

\begin{prop}\label{C0condition}
Let $T:=S_E^*|_{\mathcal H(\Delta)}$ for an inner function $\Delta$
with values in $\mathcal B(D, E)$. \ Then the following are
equivalent:
\begin{itemize}
\item[(a)] $T \in C_0$;
\item[(b)] $\Delta$ is two-sided inner and has a meromorphic pseudo-continuation of
bounded type in $\mathbb{D}^e$.
\end{itemize}
Hence,  in particular, if $\Delta$ is an inner matrix function then
$T\in C_{0}$ if and only if $T\in C_{00}$.
\end{prop}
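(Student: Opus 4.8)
The plan is to reduce the equivalence to three facts already available: the characterization (\ref{C00}) of $C_{00}$ via the characteristic function, the description (\ref{C0char}) of $H^\infty$-annihilators of a compressed shift, and Lemma \ref{thgg668}--Lemma \ref{thgg6688}, which translate ``$\Delta$ has a meromorphic pseudo-continuation of bounded type in $\mathbb{D}^e$'' into the inclusion $\theta H^2_E\subseteq \ker H_{\Delta^*}$ for some scalar inner $\theta$. The bridge between the operator $T=S_E^*|_{\mathcal H(\Delta)}$ in the statement and the compressed shift $T^*=P_{\mathcal H(\Delta)}S_E|_{\mathcal H(\Delta)}$ appearing in (\ref{C0char}) is the Sz.-Nagy--Foia\c s calculus identity $\varphi(T)^*=\widetilde\varphi(T^*)$. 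Two preliminary observations are needed: since $(S_E^*)^n\to 0$ strongly, $T\in C_{0\,\bigcdot}$; and every contraction in $C_{0\,\bigcdot}$ is completely non-unitary, because a reducing subspace on which $T$ acted as a unitary $U$ would satisfy $\|x\|=\|U^n x\|\to 0$ for each of its vectors. Hence $T\in C_0$ if and only if $T$ admits a nonzero $H^\infty$-annihilator.

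$(a)\Rightarrow(b)$: If $T\in C_0$, then $T\in C_{00}$ (as $C_0\subseteq C_{00}$), so $\Delta$ is two-sided inner by (\ref{C00}), and in particular $T^*=P_{\mathcal H(\Delta)}S_E|_{\mathcal H(\Delta)}\in C_{00}$. Let $m_T$ be the inner minimal annihilator of $T$; then $\widetilde{m_T}$ is a nonzero scalar inner function with $\widetilde{m_T}(T^*)=m_T(T)^*=0$, so by (\ref{C0char}) there is $G\in H^\infty(\mathcal B(E))$ with $G\Delta=\Delta G=\widetilde{m_T}\,I_E$. Consequently $\widetilde{m_T}\,H^2_E=\Delta GH^2_E\subseteq \Delta H^2_E=\ker H_{\Delta^*}$, the last equality by Lemma \ref{thm2.9}(c) (here $\ker\Delta^*=\{0\}$ since $\Delta$ is two-sided inner, so $\Delta_c=0$). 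By Lemma \ref{thgg6688}, $\Delta$ has a meromorphic pseudo-continuation of bounded type in $\mathbb{D}^e$.

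$(b)\Rightarrow(a)$: If $\Delta$ is two-sided inner, then $T\in C_{00}$ by (\ref{C00}) and hence $T^*\in C_{00}$. Since $\Delta$ has a meromorphic pseudo-continuation of bounded type in $\mathbb{D}^e$, Lemma \ref{thgg6688} gives a scalar inner $\theta$ with $\theta H^2_E\subseteq \ker H_{\Delta^*}=\Delta H^2_E$. Put $G_1:=\theta\Delta^*$; the inclusion forces $G_1 e=\theta\Delta^* e\in H^2_E$ for every $e\in E$, and $G_1$ is bounded, so $G_1\in H^\infty(\mathcal B(E))$, with $\Delta G_1=G_1\Delta=\theta I_E$. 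By (\ref{C0char}), $\theta(T^*)=0$, hence $\widetilde\theta(T)=\theta(T^*)^*=0$ with $\widetilde\theta\ne 0$; thus $T$ is c.n.u.\ with a nonzero annihilator, i.e.\ $T\in C_0$. For the last assertion, $C_0\subseteq C_{00}$ gives one direction; conversely, if $T\in C_{00}$ with $\Delta$ an inner matrix function, then $\Delta$ is a square two-sided inner matrix function by (\ref{C00}), so $\breve\Delta$ is of bounded type (Corollary \ref{thmthjdkfjkf}), whence $\Delta$ has a meromorphic pseudo-continuation of bounded type in $\mathbb{D}^e$ (Corollary \ref{cor512,222}), and $(b)\Rightarrow(a)$ yields $T\in C_0$.

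The step I expect to be most delicate is the upgrade, in $(b)\Rightarrow(a)$, from the mere inclusion $\theta H^2_E\subseteq\Delta H^2_E$ to the honest $H^\infty$-factorization $\Delta G_1=\theta I_E$: the cancellation implicit in the inclusion is exactly what places $\theta\Delta^*$ in $H^\infty(\mathcal B(E))$, something a bare Fourier-coefficient count does not detect. One must also be careful to apply the Sz.-Nagy--Foia\c s calculus on the correct side (to $T$ versus $T^*$), and to invoke (\ref{C00}) and (\ref{C0char}) only once $\Delta$ is known to be two-sided inner, so that $T^*$ is the compressed shift in $C_{00}$ to which (\ref{C0char}) refers.
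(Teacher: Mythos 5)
Your proof is correct and follows essentially the same route as the paper: it combines (\ref{C00}), (\ref{C0char}), Lemma \ref{thgg6688}, and the identity $\varphi(T)^*=\widetilde\varphi(T^*)$, exactly as the paper does. The only cosmetic differences are that you apply (\ref{C0char}) directly to $T^*=P_{\mathcal H(\Delta)}S_E|_{\mathcal H(\Delta)}$ (rather than passing through the Model Theorem with $\widetilde\Delta$) and cite (\ref{C0char}) again for the sufficiency step where the paper computes $\delta(T^*)=0$ directly; both variants are sound.
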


\begin{proof}
(a) $\Rightarrow$ (b): Suppose $T\in C_0$, and hence $\varphi(T)=0$
for some nonzero function $\varphi\in H^{\infty}$. \ Then $T\in
C_{00}$, so that by the above remark, $\Delta$ is two-sided inner. \
Thus by the Model Theorem, we have
$$
T\cong P_{\mathcal H(\widetilde{\Delta})}S_{E}|_{\mathcal
H(\widetilde{\Delta})}.
$$
It thus follows from (\ref{C0char}) that there exists $\Omega \in
H^{\infty}(\mathcal B(E))$ such that $
\widetilde{\Delta}\Omega=\Omega \widetilde{\Delta}=\varphi I_E$. \
Thus $H_{\Delta^*}\bigl(\widetilde{\varphi}H^2_E\bigr)=
H_{\Delta^*}\bigl(\Delta\widetilde{\Omega}H^2_E\bigr)=0$. \ We thus
have
$$
\widetilde{\varphi}^iH^2_E\subseteq\hbox{cl}\,\widetilde{\varphi}H^2_E
\subseteq \hbox{ker}\, H_{\Delta^*}.
$$
It thus follows from Lemma \ref{thgg6688} that $\Delta$ has a
meromorphic pseudo-continuation of bounded type in $\mathbb{D}^e$. \ This
gives the implication (a)$\Rightarrow$(b).

(b) $\Rightarrow$ (a): Suppose that $\Delta$ is two-sided inner and
has a meromorphic pseudo-continuation of bounded type in $\mathbb{D}^e$. \
Then by Lemma \ref{thm2.9} and Lemma \ref{thgg6688}, there exists a
scalar function $\delta$ such that $\delta
H^2_E\subseteq\hbox{ker}\, H_{\Delta^*}=\Delta H^2_E$. \ Thus we may
write $\delta I_{E}=\Delta \Omega=\Omega \Delta$ for some $\Omega
\in H^{\infty}(\mathcal B(E))$. \ Thus we have
$$
\delta\bigl(P_{\mathcal
H(\Delta)}S_E|_{\mathcal H(\Delta)}\bigr)=P_{\mathcal H(\Delta)}
(\delta I_E)|_{\mathcal H(\Delta)}=0,
$$
so that
$$
\widetilde{\delta}(T)=\bigl(\delta(T^*)\bigr)^*= \Bigr(\delta
\bigl(P_{\mathcal H(\Delta)}S_E|_{\mathcal
H(\Delta)}\bigr)\Bigr)^*=0,
$$
which gives $T\in C_0$. \ This prove the implication
(b)$\Rightarrow$(a).

The second assertion follows from the first together with Corollary
\ref{thmthjdkfjkf} and Corollary \ref{cor512,222}.
\end{proof}

%
%
%
%
%
%
%

\chapter{A canonical decomposition of strong $L^2$-functions} \

In this chapter, we establish a canonical decomposition of strong
$L^2$-functions. \ To better understand this canonical decomposition, we first consider an example of a
matrix-valued $L^2$-function that does not admit a
Douglas-Shapiro-Shields factorization. \ Suppose that $\theta_1$ and
$\theta_2$ are coprime inner functions. \ Consider
$$
\Phi:=\begin{bmatrix}\theta_1&0&0\\
0&\theta_2&0\\0&0&a\end{bmatrix}\equiv [\phi_1, \phi_2, \phi_3] \in
H^{\infty}_{M_3},
$$
where $a\in H^\infty$ is such that $\overline{a}$ is not of bounded
type. \ Then a direct calculation shows that
$$
\hbox{ker}\, H_{\Phi^*}=
\begin{bmatrix}\theta_1&0\\0&\theta_2\\0&0\end{bmatrix}H^2_{\mathbb C^2}
\equiv \Delta H^2_{\mathbb C^2}.
$$
Since $\Delta$ is not two-sided inner, it follows from Lemma
\ref{rem2.4} that $\Phi$ does not admit a Douglas-Shapiro-Shields
factorization. \ For a decomposition of $\Phi$, suppose that
\begin{equation}\label{3001}
\Phi=\Omega A^*,
\end{equation}
where $\Omega, A \in H^2_{M_{3\times k}} (k=1,2)$, $\Omega$ is an
inner function, and $\Omega$ and $A$ are right coprime. We then have
\begin{equation}\label{3002}
\Phi^* \Omega=A \in H^2_{M_{3\times k}}.
\end{equation}
But since $\overline{a}$ is not of bounded type, it follows from
(\ref{3002}) that the 3rd row vector of $\Omega$ is zero. \ Thus by
(\ref{3001}), we must have $a=0$, a contradiction. Therefore we
could not get any decomposition of the form $\Phi=\Omega A^*$ with a
$3\times k$ inner matrix function $\Omega$ for each $k=1,2,3$. \ To
get another idea, we note that $\hbox{ker}\, \Delta^*=[0 \ 0\ 1]^t
H^2\equiv\Delta_c H^2$. \ Then by a direct manipulation, we can get
\begin{equation}
\Phi=\begin{bmatrix}\theta_1&0&0\\
0& \theta_2&0\\0&0&a\end{bmatrix} =
\begin{bmatrix}\theta_1&0\\0&\theta_2\\0&0\end{bmatrix}
\begin{bmatrix}1&0\\0&1\\0&0\end{bmatrix}^*
   +\begin{bmatrix}0\\0\\1\end{bmatrix}\begin{bmatrix}0&0&a\end{bmatrix}
\equiv\Delta A^*+\Delta_c C\,,\label{7.ex}
\end{equation}
where $\Delta$ and $A$ are right coprime because $\widetilde\Delta
H^2_{\mathbb C^3} \bigvee \widetilde A H^2_{\mathbb C^3}
=H^2_{\mathbb C^2}$.

To encounter another situation, consider
$$
\Phi:=\begin{bmatrix}f&f&0\\g&g&0\\0&0&\theta
\overline{a}\end{bmatrix}\equiv [\phi_1, \phi_2, \phi_3] \in
H^{\infty}_{M_3},
$$
where $f$ and $g$ are given in Example \ref{ex3.6},  $\theta$ is
inner, and $a\in H^{\infty}$ is such that $\theta$ and $a$ are
coprime. \ It then follows from Lemma \ref{thm2.9} that
$$
\ker H_{[\overline{f} \ \overline{g}]} =\begin{bmatrix} f\\g
\end{bmatrix} H^2.
$$
We thus have that
$$
\ker H_{\Phi^ *} = \ker H_{[\overline{f} \ \overline{g}]} \bigoplus
   \ker H_{\overline\theta a}
=\begin{bmatrix}f&0\\g&0\\0&\theta\end{bmatrix}H^2_{\mathbb C^2}
\equiv \Delta H^2_{\mathbb C^2}.
$$
Thus by Lemma \ref{rem2.4}, $\Phi$ does not admit a
Douglas-Shapiro-Shields factorization. \ Observe that
\begin{equation}\label{7.exm}
\Phi=\begin{bmatrix}f&f&0\\g&g&0\\0&0&\theta
\overline{a}\end{bmatrix} =
\begin{bmatrix}f&0\\g&0\\0&\theta\end{bmatrix}
\begin{bmatrix}1&0\\1&0\\0&a\end{bmatrix}^*=\Delta
A^*.
\end{equation}
Since $\widetilde{\theta}$ and $\widetilde{a}$ are coprime, it
follows that $\Delta$ and $A$ are right coprime. \ Note that
$\Delta$ is not two-sided inner and $\hbox{ker}\, \Delta^*=\{0\}$.
\bigskip

The above examples (\ref{7.ex}) and (\ref{7.exm}) seem to signal that the decomposition of a matrix-valued $H^2$-functions
$\Phi$ satisfying $\ker H_{\breve{\Phi}}^*=\Delta H^2_{\mathbb C^n}$
may be affected by the kernel of $\Delta^*$ and in turn, the
complementary factor $\Delta_c$ of $\Delta$. \ Indeed, if we regard
$\Delta^*$ as an operator acting from $L^2_E$, and hence $\ker
\Delta^*\subseteq L^2_E$, then $B$ in the canonical decomposition
(\ref{canonical}) satisfies the inclusion $\{B\}\subseteq \ker
\Delta^*$. The following theorem gives a canonical decomposition of
strong $L^2$-functions which realizes the idea inside those
examples.

\bigskip

We are ready for:
\medskip

\begin{thm}\label{vectormaintheorem} (A canonical
decomposition of strong $L^2$-functions) If $\Phi$ is a strong
$L^2$-function with values in $\mathcal B(D, E)$, then $\Phi$ can be
expressed in the form
\begin{equation}\label{canonical}
\Phi=\Delta A^*+B,
\end{equation}
where
\begin{itemize}
\item[(i)] $\Delta$ is an inner function with values
              in $\mathcal B(E^{\prime}, E)$,
              $\widetilde{A}\in H^2_s(\mathcal B(D,E^\prime))$, and
               $B\in L^2_s(\mathcal B(D,E))$;
\item[(ii)] $\Delta$ and $A$ are right coprime;
\item[(iii)] $\Delta^*B =0$;
\item[(iv)] $\hbox{\rm nc}\{\Phi_+\}\le \hbox{\rm dim}\, E^\prime$.
\end{itemize}
\smallskip\noindent
In particular, if $\dim E^\prime<\infty$ {\rm (}for instamce,
$\dim E<\infty${\rm )}, then the expression {\rm (\ref{canonical})}
is unique {\rm (}up to a unitary constant right factor{\rm )}.
\end{thm}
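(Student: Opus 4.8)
The plan is to construct the inner function $\Delta$ directly from the Beurling–Lax–Halmos Theorem applied to $\ker H_{\breve\Phi}^*$, then define $A$ and $B$ so that (i)–(iv) hold, and finally settle uniqueness when $\dim E'<\infty$. First I would decompose $\Phi = \breve\Phi_- + \Phi_+$ (the notation from Chapter 2/3) so that the analytic part $\Phi_+$ is a strong $H^2$-function; this is legitimate since $\Phi\in L^2_s(\mathcal B(D,E))$ implies $\Phi_+\in H^2_s(\mathcal B(D,E))$. By Lemma \ref{kerhadjoint}, $\ker H_{\breve\Phi}^* = \Delta H^2_{E'}$ for an inner function $\Delta$ with values in $\mathcal B(E',E)$, and by Theorem \ref{thm7566} (applied to $\Phi$), $E^*_{\{\Phi_+\}} = \mathcal H(\Delta_*)$ with $\Delta = \Theta\Delta_1$ for a two-sided inner $\Delta_1$; in particular $\mathrm{nc}\{\Phi_+\} = \dim E'$, which already gives (iv) with equality. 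This choice of $\Delta$ is exactly the one in the BLH-canonical decomposition advertised in the Introduction.

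Next I would produce $A$ and $B$. The orthogonal projection $P_+$ onto $H^2_E$ splits, for each $x\in D$, the vector $\Phi(\cdot)x\in L^2_E$; the key point is that $\Delta\Delta^*$ acts as the orthogonal projection of $H^2_E$ onto $\Delta H^2_{E'}$ (this uses $\Delta^*\Delta = I_{E'}$ and Corollary \ref{lemma4.100000}/Corollary \ref{thm2.4edfrgt}). So I would set, columnwise, $\Phi_+ = \Delta\Delta^*\Phi_+ + (I-\Delta\Delta^*)\Phi_+$ and then define $A$ by $\widetilde A := $ the strong $H^2$-function with $A^* := \Delta^*\Phi_+$ (so $A = (\Delta^*\Phi_+)^*$ lands in $H^2_s(\mathcal B(D,E'))$ after flipping), and $B := \breve\Phi_- + (I-\Delta\Delta^*)\Phi_+$. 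Then $\Delta A^* + B = \Delta\Delta^*\Phi_+ + \breve\Phi_- + (I-\Delta\Delta^*)\Phi_+ = \Phi$, giving (\ref{canonical}). Property (iii), $\Delta^* B = 0$, holds because $\Delta^*\breve\Phi_- = 0$ (the columns of $\breve\Phi_-$ lie in $L^2_E\ominus H^2_E$, and $\Delta^*$ maps $H^2_E$-orthogonality appropriately — more precisely one checks $\Delta^*\breve\Phi_-$ contributes no analytic part and the defining property of $\ker H_{\breve\Phi}^*$ forces annihilation) and $\Delta^*(I-\Delta\Delta^*) = \Delta^* - \Delta^* = 0$. For the right coprimeness (ii): if $\Omega$ were a common left inner divisor of $\widetilde\Delta$ and $\widetilde A$, one writes $\widetilde\Delta = \Omega\widetilde\Delta_0$, $\widetilde A = \Omega\widetilde A_0$, and then $\ker H_{\breve\Phi}^* = \Delta H^2_{E'} \subseteq \Delta_0 H^2$; combined with the fact (from Theorem \ref{thm7566}) that $\Delta$ generates $\ker H_{\breve\Phi}^*$ exactly, one forces $\Omega$ to be unitary, just as in the proof of Lemma \ref{rem2.4}(a)$\Rightarrow$(b).

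Finally, for uniqueness when $\dim E'<\infty$: suppose $\Phi = \Delta A^* + B = \Theta C^* + B'$ are two canonical decompositions. From $\Delta^* B = 0$ and $\Delta^*\Delta = I$ one recovers $\Delta^*\Phi_+ = A^*$, so the analytic part of $\Phi$ determines $A^*$ once $\Delta$ is known; hence it suffices to show $\Delta$ is forced. Here is where finite-dimensionality of $E'$ enters: by Theorem \ref{thm7566}, $E^*_{\{\Phi_+\}} = \mathcal H(\Delta\Delta_1^{-1})$-type relation pins $\ker H_{\breve\Phi}^*$, and when $\dim E' = \mathrm{nc}\{\Phi_+\}<\infty$ the two-sided inner factor $\Delta_1$ in $\Delta = \Theta\Delta_1$ has equal finite-dimensional domain and codomain, so $\Theta H^2_{E''} = \Delta H^2_{E'}$ forces $\Theta = \Delta V$ for a unitary constant $V$ by the uniqueness clause of the Beurling–Lax–Halmos Theorem; this is exactly the dimension-counting that fails when $\dim E' = \infty$ (cf. Remark \ref{canoexam}). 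I expect the main obstacle to be item (iii) together with the verification that the columnwise definitions of $A$ and $B$ actually land in $H^2_s$ and $L^2_s$ respectively — i.e., the measurability and membership bookkeeping for strong $L^2$/$H^2$-functions — since $\Delta^*$ need not be bounded-type and the operators involved are genuinely unbounded; the coprimeness and uniqueness arguments, by contrast, are essentially transcriptions of the matrix-valued reasoning already used for Lemma \ref{rem2.4} and Theorem \ref{thm7566}.
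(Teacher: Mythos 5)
Your overall architecture (take $\Delta$ from $\ker H_{\breve\Phi}^*=\Delta H^2_{E^\prime}$ via Lemma \ref{kerhadjoint}, get (iv) from Theorem \ref{thm7566}) matches the paper, but your construction of $A$ and $B$ contains a genuine error. You set $A^*:=\Delta^*\Phi_+$ and $B:=\breve\Phi_-+(I-\Delta\Delta^*)\Phi_+$, so $\Delta^*B=\Delta^*\breve\Phi_-$, and you assert $\Delta^*\breve\Phi_-=0$. This is false in general: take $D=E=\mathbb C$ and $\Phi(z)=\overline z$. Then $\breve\Phi(z)=z$, $H_{\breve\Phi}=0$, so $\ker H_{\breve\Phi}^*=H^2$ and $\Delta=1$, while $\Delta^*\breve\Phi_-=\overline z\neq 0$; your decomposition has $B=\overline z$ and violates (iii), whereas the correct one is $\Phi=\Delta A^*$ with $A=z$, $B=0$. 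What the defining property of $\ker H_{\breve\Phi}^*$ (Lemma \ref{thm4.2}) actually gives is that $\Delta^*\Phi(\cdot)x$ has no Fourier coefficients of index $\ge 1$, i.e.\ $\Delta^*\Phi$ is ``coanalytic''; it does not say that $\Delta^*$ annihilates the antianalytic part of $\Phi$. The paper therefore takes $A:=\Phi^*\Delta$ (so $A^*=\Delta^*\Phi$, using the \emph{full} $\Phi$) and $B:=\Phi-\Delta A^*=(I-\Delta\Delta^*)\Phi$; then (iii) is automatic, and the real work --- which your outline skips --- is the Hankel computation showing $H_{\widetilde A}=0$, whence $\widetilde A\in H^2_s(\mathcal B(D,E^\prime))$ by Lemma \ref{boundedhankel}. (Also, $\Delta\Delta^*$ is not the orthogonal projection of $H^2_E$ onto $\Delta H^2_{E^\prime}$ as an operator on $H^2_E$; that projection is $\Delta P_+\Delta^*$.)

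Two further steps are under-argued. For right coprimeness, after writing $\widetilde\Delta=\Omega\widetilde\Delta_1$ and $\widetilde A=\Omega\widetilde A_1$, the inclusion $\Delta H^2_{E^\prime}\subseteq\Delta_1H^2$ together with $\ker H_{\breve\Phi}^*=\Delta H^2_{E^\prime}$ does not by itself force $\Omega$ unitary; one must prove the reverse inclusion $\Delta_1H^2\subseteq\ker H_{\breve\Phi}^*$, using $\Phi=\Delta_1A_1^*+B$, $\Delta_1^*B=0$ and the coanalyticity of $A_1^*(\cdot)x$, and only then apply the Beurling--Lax--Halmos uniqueness. For uniqueness when $\dim E^\prime<\infty$, you assume $\mathrm{nc}\{\Phi_+\}=\dim E^\prime$ and that the factor linking $\Delta$ and $\Theta$ is two-sided, but for an \emph{arbitrary} canonical decomposition condition (iv) only gives $\mathrm{nc}\{\Phi_+\}\le\dim E^\prime$, and Theorem \ref{thm7566}'s factorization applies to the BLH inner function, not to a given $\Delta$. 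The actual argument must show that any $\Delta$ satisfying (i)--(iv) obeys $\ker H_{\breve\Phi}^*=\Delta H^2_{E^\prime}$: one first gets $\Delta H^2_{E^\prime}\subseteq\ker H_{\breve\Phi}^*=\Theta H^2_{E^{\prime\prime}}$, then the two-sided dimension count ($\dim E^{\prime\prime}=\mathrm{nc}\{\Phi_+\}\le\dim E^\prime$ from (iv), and $\dim E^\prime\le\dim E^{\prime\prime}$ because the $\dim E^{\prime\prime}\times\dim E^\prime$ factor $\Delta_1$ in $\Delta=\Theta\Delta_1$ is inner), and finally a second Hankel-kernel computation combined with the right coprimeness (ii) to conclude that $\Delta_1$ is a unitary constant. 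Without these steps the uniqueness claim is circular, as Remark \ref{canoexam} shows it genuinely fails when $\dim E^\prime=\infty$.
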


\begin{proof}\label{pfa} \ If $\ker
H_{\breve{\Phi}}^*=\{0\}$, take $E^{\prime}:=\{0\}$ and $B:=\Phi$. \
Then $\widetilde{\Delta}$ and $\widetilde{A}$ are zero operator with
codomain $\{0\}$. \ Thus $\Phi=\Delta A^*+B$, where $\Delta$ and $A$
are right coprime. \ It also follows from Theorem \ref{thm7566} that
$\hbox{nc}\{\Phi_+\}=0$, which gives the inequality (iv). \ If
instead $\ker H_{\breve{\Phi}}^* \ne\{0\}$, then in view of Lemma
\ref{kerhadjoint}, we may suppose $\ker H_{\breve{\Phi}}^*=\Delta
H^2_{E^{\prime}}$ for some nonzero inner function $\Delta$ with
values in $\mathcal B(E^{\prime}, E)$. \ Put $A:=\Phi^*\Delta$. Then
it follows from  Lemma \ref{lem3.4ed} that $A^*$ is a strong
$L^2$-function with values in $\mathcal B(D, E^\prime)$. \ Thus
$\widetilde{A}=\breve A^*$ is a strong $L^{2}$-function with values
in $\mathcal B(D, E^{\prime})$. \ Since $\ker
H_{\breve{\Phi}}^*=\Delta H^2_{E^{\prime}}$, it follows that for all
$p\in \mathcal P_D$ and $h\in H^2_{E^{\prime}}$
$$
\aligned 0&=\langle H_{\breve{\Phi}}p, \ \Delta h\rangle_{L^2_E}\\
&=\int_{\mathbb T}\bigl\langle \breve{\Phi}(z)p(z), \
\overline{z}\Delta(\overline{z})h(\overline{z})\bigr\rangle_E dm(z)\\
&=\int_{\mathbb T}\bigl\langle
\widetilde{\Delta}(z)\breve{\Phi}(z)p(z), \
\overline{z} {h}(\overline z)\rangle_{E^{\prime}}dm(z)\\
&=\bigl\langle H_{\widetilde{A}}p, \
h\bigr\rangle_{L^2_{E^{\prime}}},
\endaligned
$$
which implies $H_{\widetilde{A}}=0$. Thus by Lemma
\ref{boundedhankel}, $\widetilde{A}$ belongs to $H^2_s(\mathcal
B(D,E))$. \ Put $B := \Phi-\Delta A^*$. \ Then by Lemma
\ref{lem3.4ed}, $B$ is a strong $L^2$-function with values in
$\mathcal B(D, E)$. \ Observe that
$$
\Phi=\Delta A^*+B\quad\hbox{and}\quad\Delta^* B=0.
$$
For the first assertion, we need to show that $\Delta$ and $A$ are
right coprime. \ To see this, we suppose that $\Omega$ is a common
left inner divisor, with values in $\mathcal B(E^{\prime\prime},
E^{\prime})$, of $\widetilde{\Delta}$ and $\widetilde{A}$. \ Then we
may write
$$
\widetilde{\Delta}=\Omega\widetilde{\Delta}_1 \quad \hbox{and} \quad
\widetilde{A}=\Omega\widetilde{A}_1,
$$
where $\widetilde\Delta_1\in H^\infty(\mathcal
B(E,E^{\prime\prime}))$ and $\widetilde A_1\in H^2_s(\mathcal B(D,
E^{\prime\prime}))$. Thus we have
\begin{equation}\label{AAD3333}
\Delta=\Delta_1\widetilde{\Omega} \quad \hbox{and} \quad
A=A_1\widetilde{\Omega}.
\end{equation}
Since $\Omega$ is inner, it follows that $\Delta_1=\Delta
\widetilde\Omega^*$, and hence, by Lemma \ref{corfgghh2.9},
$\Delta_1$ is inner. \  We now claim that
\begin{equation}\label{important}
\Delta_1H^2_{E^{\prime\prime}} = \hbox{ker}\,
H_{\breve{\Phi}}^*=\Delta H^2_{E^{\prime}}.
\end{equation}
Since $\Omega$ is an inner function with values in $\mathcal
B(E^{\prime\prime}, E^{\prime})$, we know that
$\widetilde{\Omega}\in H^\infty(\mathcal B(E^\prime,
E^{\prime\prime}))$ by Lemma \ref{corfgghh2.9}. \ Thus it follows
from Corollary \ref{lemma4.100000} and (\ref{AAD3333}) that
$$
\Delta H^2_{E^{\prime}}=\Delta_1\widetilde{\Omega}
H^2_{E^{\prime}}\subseteq \Delta_1H^2_{E^{\prime\prime}}.
$$
For the reverse inclusion, by (\ref{AAD3333}), we may write $
\Phi=\Delta_1 A_1^*+B$. \ Since $0=\Delta^*B
=\widetilde{\Omega}^*\Delta_1^*B $, it follows that $\Delta_1^*B
=0$. \ Therefore for all $f \in H^2_{E^{\prime\prime}}$, $x \in D$
and $n=1,2,\cdots$, we have
$$
\aligned \int_{\mathbb T}\bigl \langle \Phi(z)x, \ z^n
\Delta_1(z)f(z)\bigr\rangle_{E}dm(z)&= \int_{\mathbb T}\bigl \langle
\bigl(\Delta_1(z)A_1^*(z)+B (z)\bigr)x, \
z^n \Delta_1(z)f(z)\bigr\rangle_{E}dm(z)\\
&=\int_{\mathbb T}\bigl \langle A_1^*(z)x, \ z^n
f(z)\bigr\rangle_{E^{\prime\prime}}dm(z)\\
&=\bigl \langle A_1^*(z)x, \ z^n
f(z)\bigr\rangle_{L^2_{E^{\prime\prime}}}\\
&=0,
\endaligned
$$
where the last equality follows from the fact that
$A_1^*(z)x=\widetilde{A}_1(\overline{z})x\in
L^2_{E^{\prime\prime}}\ominus z H^2_{E^{\prime\prime}}$. \ Thus by
Lemma \ref{thm4.2}, we have
$$
\Delta_1H^2_{E^{\prime\prime}} \subseteq \hbox{ker}\,
H_{\breve{\Phi}}^* =\Delta H^2_{E^{\prime}},
$$
which proves (\ref{important}). \ Thus it follows from the
Beurling-Lax-Halmos Theorem and (\ref{AAD3333}) that
$\widetilde{\Omega}$ is a unitary operator, and so is $\Omega$. \
Therefore $A$ and $\Delta$ are right coprime. \ The assertion (iv)
on the nc-number comes from Theorem \ref{thm7566}. \ This proves the
first assertion (\ref{canonical}).

Suppose $\dim E^\prime<\infty$. \ For the uniqueness of the
expression (\ref{canonical}), we suppose that $\Phi=\Delta_1
A_1^*+B_1=\Delta_2 A_2^*+B_2$ are two canonical decompositions of
$\Phi$. \ We want to show that $\Delta_1=\Delta_2$, which gives
$$
A_1^*=\Delta_1^*(\Delta_1 A_1^*+B_1)=\Delta_2^*(\Delta_2 A_2^*+B_2)
=A_2^*
$$
and in turn, $B_1=B_2$, which implies that the representation
(\ref{canonical}) is unique. \ To prove $\Delta_1=\Delta_2$, it
suffices to show that if $\Phi=\Delta A^*+B$ is a canonical
decomposition of $\Phi$, then
\begin{equation}\label{888}
\ker H_{\breve\Phi}^*=\Delta H^2_{E^{\prime}}.
\end{equation}
If $E^{\prime}=\{0\}$, then $\hbox{\rm nc}\{\Phi_+\}=0$. \ Thus it
follows from Corollary \ref{corfkgkhgkhkhk} that
$$
\ker H_{\breve\Phi}^*=\{0\}=\Delta H^2_{E^{\prime}},
$$
which proves (\ref{888}). If instead $E^{\prime}\ne \{0\}$, then we
suppose $r:=\dim E^\prime <\infty$. \ Thus, we may assume that
$E^\prime = \mathbb C^r$, so that $\Delta$ is an inner function
with values in $\mathcal B(\mathbb C^r, E)$. \ Suppose that
$\Phi=\Delta A^*+B$ is a canonical decomposition of $\Phi$ in
$L^2_s(\mathcal B(D, E))$. \ We first claim that
\begin{equation}\label{qqq3}
\Delta H^2_{\mathbb C^r} \subseteq \hbox{ker}\, H_{\breve{\Phi}}^*.
\end{equation}
Observe that for each $g \in H^2_{\mathbb C^r}$, $x \in D$ and
$k=1,2,3, \cdots,$
$$
\aligned \int_{\mathbb T}\bigl \langle \Phi(z)x, \ z^k
\Delta(z)g(z)\bigr\rangle_{E}dm(z) &=\int_{\mathbb T}\bigl \langle
A^*(z)x, \ z^k g(z)\bigr\rangle_{\mathbb
C^r}dm(z)\\
&=\bigl \langle \widetilde{A}(\overline{z})x, \ z^k
g(z)\bigr\rangle_{L^2_{\mathbb
C^r}}\\
&=0.
\endaligned
$$
It thus follows from Lemma \ref{thm4.2} that $\Delta H^2_{\mathbb
C^r} \subseteq \hbox{ker}\, H_{\breve{\Phi}}^*$, which proves
(\ref{qqq3}). \ In view of Lemma \ref{kerhadjoint}, we may assume
that $\ker H_{\breve{\Phi}}^* = \Theta H^2_{E^{\prime\prime}}$ for
some inner function $\Theta$ with values in $\mathcal
B(E^{\prime\prime}, E)$. \ Then by Theorem \ref{thm7566},
\begin{equation}\label{34001}
p\equiv \dim E^{\prime\prime}=\hbox{nc}\,\{\Phi_+\}\leq r.
\end{equation}
Thus we may assume $E^{\prime\prime}\equiv \mathbb C^p$. \ Since
\begin{equation}\label{nfbkg9}
\Delta H^2_{\mathbb C^r} \subseteq \hbox{ker}\,
H_{\breve{\Phi}}^*=\Theta H^2_{\mathbb C^p},
\end{equation}
it follows that $\Theta$ is left inner divisor of $\Delta$, i.e.,
there exists a $p\times r$ inner matrix function $\Delta_1$ such
that $ \Delta=\Theta \Delta_1$. \ Since $\Delta_1$ is inner, it
follows that $r\leq p$. \ But since by (\ref{34001}), $p\le r$, we
must have $r=p$, which implies that $\Delta_1$ is two-sided inner. \
Thus we have
\begin{equation}\label{gkgkgkkgkgkgkkg}
\Theta^*\Phi =\Delta_1A^*+\Delta_1\Delta^*B=\Delta_1A^*.
\end{equation}
Since $\hbox{ker}\, H_{\breve{\Phi}}^*=\Theta H^2_{\mathbb C^r}$, it
follows from Lemma \ref{thm4.2} and (\ref{gkgkgkkgkgkgkkg}) that for
all $f \in H^2_{\mathbb C^r}$, $x \in D$ and $n=1,2,\cdots$,
\begin{equation}\label{kdjfffffffj}
\int_{\mathbb T}\bigl \langle \Delta_1(z) A^*(z)x, \ z^n
f(z)\bigr\rangle_{\mathbb C^r}dm(z)=\int_{\mathbb T}\bigl \langle
\Phi(z)x, \ z^n \Theta(z)f(z)\bigr\rangle_{E}dm(z) =0.
\end{equation}
Write $\Psi :=\Delta_1 A^*$. \ Then by Lemma \ref{lem3.4ed},
$\Psi\in L^2_s(\mathcal B(D, \mathbb C^r))$. \ Thus by Lemma
\ref{boundedhankel}, Lemma \ref{thm4.2} and (\ref{kdjfffffffj}), we
have $\breve{\Psi}\in H^2_s(\mathcal B(D,\mathbb C^r))$. \ Since
$\widetilde{A}=\widetilde{\Delta}_1 \breve{\Psi}$, it follows that
$\widetilde{\Delta}_1$ is a common left inner divisor of
$\widetilde{\Delta}$ and $\widetilde{A}$. \ But since $\Delta$ and
$A$ are right coprime, it follows that $\widetilde{\Delta}_1$ is a
unitary matrix, and so is $\Delta_1$,
which proves (\ref{888}). \ This proves the uniqueness of the
expression (\ref{canonical}) when $\dim E^\prime<\infty$.

This completes the proof.
\end{proof}

\bigskip

The proof of Theorem \ref{vectormaintheorem} shows that the inner
function $\Delta$ in a canonical decomposition (\ref{canonical}) of
a strong $L^2$-function $\Phi$ can be obtained from equation
$$
\ker H_{\breve{\Phi}}^*=\Delta H^2_{E^{\prime}}
$$
which is guaranteed by the Beurling-Lax-Halmos Theorem (see
Corollary \ref{kerhadjoint}). \ In this case, the expression
(\ref{canonical}) will be called the {\it BLH-canonical
decomposition} of $\Phi$ in the viewpoint that $\Delta$ comes from
the Beurling-Lax-Halmos Theorem. However, if $\dim E^\prime=\infty$
(even though $\dim D<\infty$), then it is possible to get another
inner function $\Theta$ of a canonical decomposition
(\ref{canonical}) for the same function: in this case, $\ker
H_{\breve{\Phi}}^* \ne \Theta H^2_{E^{\prime\prime}}$. Indeed, the
following remark shows that the canonical decomposition
(\ref{canonical}) is not unique in general.

\bigskip

\begin{rem}\label{canoexam}
If $\dim E^\prime=\infty$ (even though $\dim D <\infty$), the
canonical decomposition (\ref{canonical}) may not be unique even if
$\breve\Phi$ is of bounded type. \ To see this, let $\Phi$ be an
inner function with values in $\mathcal B(\mathbb C^2, \ell^2)$
defined by
$$
\Phi:=\begin{bmatrix}\theta_1&0\\
0&0\\
0&\theta_2\\
0&0\\
0&0\\
0&0\\
\vdots&\vdots
\end{bmatrix},
$$
where $\theta_1$ and $\theta_2$ are scalar inner functions. \ Then
$$
\hbox{ker}\, H_{\breve{\Phi}}^*=\hbox{ker}\, H_{\Phi^*}
=\hbox{diag}(\theta_1,1,\theta_2,1, 1,1,\cdots)H^2_{\ell^2} \equiv
\Theta H^2_{\ell^2},
$$
which implies that $\breve{\Phi}$ is of bounded type since $\Theta$
is two-sided inner (see Definition \ref{dfbundd}). \ Let
$$
A:=\Phi^*\Theta=
\begin{bmatrix}
1&0&0&0&0&0&0&\cdots\\
0&0&1&0&0&0&0&\cdots
\end{bmatrix}\quad\hbox{and}\quad B:=0.
\
$$
Then $\widetilde A$ belongs to belongs to $H^2_s (\mathcal B(\mathbb
C^2, \ell^2))$ and $\widetilde\Theta H^2_{\ell^2} \bigvee \widetilde
A H^2_{\mathbb C^2} =H^2_{\ell^2}$, which implies that $\Theta$ and
$A$ are right coprime. \ Clearly, $\Theta^*B=0$ and
 $\hbox{nc}\{\Phi_+\}\le \dim \ell^2=\infty$.
Therefore,
$$
\Phi=\Theta A^*
$$
is the BLH-canonical decomposition of $\Phi$. \ On the other hand,
to get another canonical decomposition of $\Phi$, let
$$
\Delta:=
\begin{bmatrix}
\theta_1&0&0&0&0&0&0&\cdots\\
0&0&0&0&0&0&0&\cdots\\
0&\theta_2&0&0&0&0&0&\cdots\\
0&0&1&0&0&0&0&\cdots\\
0&0&0&1&0&0&0&\cdots\\
0&0&0&0&1&0&0&\cdots\\
\vdots&\vdots&\vdots&\vdots&\vdots&\vdots&\vdots&\ddots
\end{bmatrix}.
$$
Then $\Delta$ is an inner function. \ If we define
$$
A_1:=
\begin{bmatrix}
1&0&0&0&0&0&0&\cdots\\
0&1&0&0&0&0&0&\cdots
\end{bmatrix}\quad\hbox{and}\quad B:=0\,,
$$
then $\widetilde{A}_1$ belongs to $H^2_s \mathcal (B(\mathbb C^2,
\ell^2))$ such that $\Delta$ and $A_1$ are right coprime,
$\Delta^*B=0$ and $\hbox{nc}\{\Phi_+\}\leq \dim \ell^2=\infty$. \
Therefore $\Phi=\Delta A_1^*$ is also a canonical decomposition of
$\Phi$. \ In this case, $\hbox{ker}\, H_{\breve{\Phi}}^* \ne \Delta
H^2_{\ell^2}$. \ Therefore, the canonical decomposition of $\Phi$ is
not unique.
\end{rem}

\bigskip

\begin{rem} Let $\Delta$ be an inner matrix function with values in
$\mathcal B(E^{\prime}, E)$. \ Then Theorem \ref{vectormaintheorem}
says that if $\dim E^{\prime}<\infty$, the expression
(\ref{canonical}) satisfying the conditions (i) - (iv) in Theorem
\ref{vectormaintheorem} gives $\hbox{ker}\, H_{\breve{\Phi}}^* =
\Delta H^2_{E^\prime}$. \ We note that the condition (iv) on
nc-number cannot be dropped from the assumptions of Theorem
\ref{vectormaintheorem}. \ To see this, let
$$
\Delta:=\frac{1}{\sqrt{2}}\begin{bmatrix}z\\1\end{bmatrix},  \quad
A:=\begin{bmatrix}\sqrt{2}\\0\end{bmatrix} \hbox{and} \quad B:=0.
$$
If
$$
\Phi:=\Delta A^*+B =\begin{bmatrix} z&0\\1&0\end{bmatrix},
$$
then $\Phi$ satisfies the conditions (i), (ii), and (iii), but $\ker
H_{\breve \Phi}^*=zH^2\oplus H^2 \neq \Delta H^2$. \ Note that by
Theorem \ref{thm7566}, $\hbox{nc}\{\Phi_+\}=2$, which does not
satisfy the condition on nc-number, say $\hbox{nc}\{\Phi_+\}\le 1$.
\end{rem}

\bigskip

\begin{cor}\label{remarksdscjkdfv}
If $\breve\Delta$ is of bounded type then $B$ in {\rm
(\ref{canonical})} is given by
$$
B=\Delta_c\Delta_c^*\Phi,
$$
where $\Delta_c$ is the complementary factor of $\Delta$, with
values in $\mathcal B(D^{\prime},E)$. Moreover, if $\dim
E^\prime<\infty$, then $\dim D^{\prime}$ can be computed by the
formula
$$
\dim D^{\prime}=\hbox{nc}\{\Delta\}-\hbox{nc}\{\Phi_+\}.
$$
\end{cor}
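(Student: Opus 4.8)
The plan is to extract everything from the hypothesis via Corollary \ref{thmthjdkfjkf}: if $\breve{\Delta}$ is of bounded type then $[\Delta,\Delta_c]$ is two-sided inner. Concretely this gives the two pointwise identities
$$
\Delta(z)\Delta(z)^*+\Delta_c(z)\Delta_c(z)^*=I_E\quad\hbox{and}\quad \Delta(z)^*\Delta_c(z)=0\qquad\hbox{a.e. on}\ \mathbb T,
$$
the second of which forces $\Delta_c^*\Delta=0$ as well. Before manipulating these, I would record that $\Delta_c\in H^\infty(\mathcal B(D^\prime,E))$, so by Lemma \ref{corfgghh2.9} $\Delta_c^*\in L^\infty(\mathcal B(E,D^\prime))$ and hence $\Delta_c\Delta_c^*\in L^\infty(\mathcal B(E))$; consequently, by Lemma \ref{lem3.4ed}, $\Delta_c\Delta_c^*\Phi$ is again a strong $L^2$-function, so the computation that follows is legitimate.

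Next I would left-multiply the canonical decomposition $\Phi=\Delta A^*+B$ by $\Delta_c\Delta_c^*$. The term $\Delta_c\Delta_c^*\Delta A^*$ vanishes because $\Delta_c^*\Delta=0$, while the resolution of the identity above together with condition (iii) of Theorem \ref{vectormaintheorem}, namely $\Delta^*B=0$, gives
$$
\Delta_c\Delta_c^*B=(I_E-\Delta\Delta^*)B=B-\Delta(\Delta^*B)=B.
$$
Hence $\Delta_c\Delta_c^*\Phi=B$, which is the first assertion. (Note this argument is valid for any canonical decomposition whose $\Delta$ has $\breve\Delta$ of bounded type, so no appeal to uniqueness is needed here.)

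For the dimension formula, assume $\dim E^\prime<\infty$. Applying Corollary \ref{innerdc} to the inner function $\Delta$ with values in $\mathcal B(E^\prime,E)$ and its complementary factor $\Delta_c$ with values in $\mathcal B(D^\prime,E)$ yields $\hbox{nc}\{\Delta\}=\dim E^\prime+\dim D^\prime$. On the other hand, since $\dim E^\prime<\infty$, Theorem \ref{vectormaintheorem} guarantees that (\ref{canonical}) is the BLH-canonical decomposition, i.e. $\ker H_{\breve\Phi}^*=\Delta H^2_{E^\prime}$, so the equivalence (\ref{sssdv}) of Theorem \ref{thm7566} gives $\hbox{nc}\{\Phi_+\}=\dim E^\prime$. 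Subtracting, $\dim D^\prime=\hbox{nc}\{\Delta\}-\hbox{nc}\{\Phi_+\}$. The argument is essentially formal; the only points demanding care are the strong-$L^2$/measurability bookkeeping that makes $\Delta_c\Delta_c^*\Phi$ a bona fide strong $L^2$-function and, in the second part, checking that the codomain $D^\prime$ of the complementary factor appearing in Corollary \ref{innerdc} is literally the space $D^\prime$ named in the statement. There is no substantive obstacle once the structural results quoted above are in hand.
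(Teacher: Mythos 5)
Your proof is correct and follows essentially the same route as the paper: both parts rest on Corollary \ref{thmthjdkfjkf} giving $\Delta\Delta^*+\Delta_c\Delta_c^*=I_E$ and $\Delta^*\Delta_c=0$, combined with $\Delta^*B=0$, and the dimension formula comes from Corollary \ref{innerdc} together with $\hbox{nc}\{\Phi_+\}=\dim E^\prime$ via Theorems \ref{vectormaintheorem} and \ref{thm7566}. Your rearrangement (multiplying by $\Delta_c\Delta_c^*$ instead of first writing $B=(I-\Delta\Delta^*)\Phi$) is an inessential variation, and your observation that the first assertion needs no uniqueness is a correct small refinement.
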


\begin{proof}
Suppose that $\breve\Delta$ is of bounded type. \ Then by Corollary
\ref{thmthjdkfjkf}, $[\Delta, \Delta_c]$ is two-sided inner, where
$\Delta_c$ is the complementary factor of $\Delta$, with values in
$\mathcal B(D^{\prime},E)$. \ We thus have
$$
I=[\Delta, \Delta_c] [\Delta,
\Delta_c]^*=\Delta\Delta^*+\Delta_c\Delta_c^*,
$$
so that
$$
B=\Phi-\Delta A^*=(I-\Delta \Delta^*)\Phi=\Delta_c\Delta_c^*\Phi.
$$
This proves the first assertion. \ The second assertion follows at
once from the facts that $\hbox{nc}\{\Phi_+\}= \dim E^\prime<\infty$
(by Theorem \ref{thm7566}) and $\hbox{nc}\{\Delta\}= \dim E^\prime +
\dim D^{\prime}$ (by Corollary \ref{innerdc}).
\end{proof}

\bigskip

The following corollary is an extension of Lemma \ref{rem2.4} (the
Douglas-Shapiro-Shields factorization) to strong $L^2$-functions.


\begin{cor}\label{ext24}
If $\Phi$ is a strong $L^2$-function with values in $\mathcal
B(D,E)$, then the following are equivalent:
\begin{itemize}
\item[(a)] The flip $\breve\Phi$ of $\Phi$ is of bounded type;
\item[(b)] $\Phi=\Delta A^*$ ($\Delta$ is two-sided inner) is  a canonical
decomposition of $\Phi$.
\end{itemize}
\end{cor}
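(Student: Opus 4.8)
The plan is to deduce both implications from the canonical decomposition of Theorem \ref{vectormaintheorem} together with the characterization of bounded type in terms of the kernel of a Hankel operator (Definition \ref{dfbundd}, Lemma \ref{thmdfgfgty}) and the Douglas-Shapiro-Shields machinery of Lemma \ref{rem2.4}. For the implication (b) $\Rightarrow$ (a): if $\Phi = \Delta A^*$ is a canonical decomposition with $\Delta$ two-sided inner, then $B=0$ and in particular $\{\Phi_+\}\subseteq$ (something killed by $\Delta^*$). The cleanest route is to observe directly, via Lemma \ref{thm4.2}, that $\Delta H^2_E \subseteq \ker H_{\breve\Phi}^*$ (this is exactly the computation $\int \langle \Phi(z)x, z^n\Delta(z)g(z)\rangle\,dm = \int\langle A^*(z)x, z^n g(z)\rangle\,dm = 0$ since $A^*(z)x = \widetilde A(\overline z)x \in L^2_E \ominus zH^2_E$, already carried out inside the proof of Theorem \ref{vectormaintheorem}). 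Conversely, by Lemma \ref{kerhadjoint} write $\ker H_{\breve\Phi}^* = \Theta H^2_{E''}$; then $\Delta H^2_E \subseteq \Theta H^2_{E''}$ forces $\Theta$ to be a left inner divisor of $\Delta$, and since $\Delta$ is two-sided inner, Lemma \ref{rem.sdcfcfc} makes $\Theta$ two-sided inner. Hence $\ker H_{\breve\Phi}^* = \Theta H^2_{E''}$ with $\Theta$ two-sided inner, which is precisely the statement that $\breve\Phi$ is of bounded type (Definition \ref{dfbundd}).

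For the implication (a) $\Rightarrow$ (b): suppose $\breve\Phi$ is of bounded type, so $\ker H_{\breve\Phi}^* = \Theta H^2_E$ for some two-sided inner $\Theta$ with values in $\mathcal B(E)$. Apply Theorem \ref{vectormaintheorem} to produce a canonical decomposition $\Phi = \Delta A^* + B$ where $\Delta$ is obtained from the equation $\ker H_{\breve\Phi}^* = \Delta H^2_{E'}$ — that is, the BLH-canonical decomposition. But here $\ker H_{\breve\Phi}^* = \Theta H^2_E$, so by uniqueness of the inner function in the Beurling-Lax-Halmos Theorem we may take $\Delta = \Theta$, which is two-sided inner with $E' = E$. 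It remains to show $B = 0$. Since $\Delta$ is two-sided inner, $\Delta\Delta^* = I_E$ a.e. on $\mathbb T$, and from $\Delta^* B = 0$ we get $B = (\Delta\Delta^*)B = \Delta(\Delta^* B) = 0$. Thus $\Phi = \Delta A^*$ with $\Delta$ two-sided inner is a canonical decomposition of $\Phi$, giving (b).

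The main subtlety — though not a genuine obstacle — is being careful about which canonical decomposition one starts from in (a) $\Rightarrow$ (b), since Theorem \ref{vectormaintheorem} only guarantees uniqueness when $\dim E' < \infty$, and here $E$ may be infinite-dimensional. One avoids this by invoking the construction in the proof of Theorem \ref{vectormaintheorem} rather than an arbitrary decomposition: the $\Delta$ there is built from $\ker H_{\breve\Phi}^*$ directly, so once we know that kernel equals $\Theta H^2_E$ with $\Theta$ two-sided inner, the produced $\Delta$ is (up to a unitary constant right factor) that very $\Theta$. The other point requiring a line of care is verifying in (b) $\Rightarrow$ (a) that $A^*(z)x$ lies in $L^2_E \ominus zH^2_E$ rather than merely in $L^2_E \ominus H^2_E$; this holds because $\widetilde A \in H^2_s(\mathcal B(D,E))$ (part of the definition of a canonical decomposition), so $\widetilde A(\overline z)x$ has only nonnegative Fourier modes and hence $A^*(z)x = \overline{z}^0$-part aside, the relevant orthogonality against $z^n$, $n \geq 1$, is immediate.
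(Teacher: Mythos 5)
Your proof is correct and follows essentially the same route as the paper: the implication (b)$\Rightarrow$(a) via Lemma \ref{thm4.2}, Lemma \ref{kerhadjoint} and Lemma \ref{rem.sdcfcfc}, and (a)$\Rightarrow$(b) by invoking the BLH-construction in the proof of Theorem \ref{vectormaintheorem}, where two-sided innerness of $\Delta$ forces $B=(I-\Delta\Delta^*)\Phi=0$. Your added care about which canonical decomposition to use (constructive rather than an arbitrary one, since uniqueness is only guaranteed for $\dim E^\prime<\infty$) is exactly the point the paper leaves implicit.
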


\begin{proof} The implication (a)$\Rightarrow$(b) follows from
the proof of Theorem \ref{vectormaintheorem}. \ For the implication
(b)$\Rightarrow$(a), suppose $\Phi=\Delta A^*$ ($\Delta$ is
two-sided inner) is a canonical decomposition of $\Phi$. \ By Lemma
\ref{kerhadjoint}, there exists an inner function $\Theta$ with
values in $\mathcal B(D^{\prime}, E)$ such that $\hbox{ker}\,
H_{\breve\Phi}^*=\Theta H^2_{D^{\prime}}$. \ Then it follows from
Lemma \ref{thm4.2} that $ \Delta H^2_E \subseteq \ker
H_{\breve{\Phi}}^*=\Theta H^2_{D^{\prime}}$. \  Since $\Delta$ is
two-sided inner, we have that by Lemma \ref{rem.sdcfcfc}, $\Theta$
is two-sided inner, and hence the flip $\breve{\Phi}$ of $\Phi$ is
of bounded type. This completes the proof.
\end{proof}

\bigskip

If $\Delta$ is an inner matrix function such that
$\Delta\Delta^*\Phi$ is analytic (even though $\breve\Delta$ is not
of bounded type) then the perturbation part $B$ of the canonical
decomposition may be also determined in terms of the complementary
factor of $\Delta$.

\medskip

\begin{cor}\label{hjnfdcvwsa}
Let $\Phi$ be an $n\times m$ matrix-valued $H^2$-function. Then the
following are equivalent:
\begin{itemize}
\item[(a)] $\ker H_{\breve{\Phi}}^*=\Delta H^2_{\mathbb C^r}$ for an
$n\times r$ inner matrix function $\Delta$ such that
$\Delta\Delta^*\Phi$ is analytic;
\item[(b)] $\Phi=\Delta A^*+\Delta_c \Delta_c^* \Phi$ is a canonical decomposition of $\Phi$, where
$\Delta_c$ is the complementary factor of $\Delta$.
\end{itemize}
\end{cor}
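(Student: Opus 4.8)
The key is to run everything through the uniqueness of the canonical decomposition for matrix‑valued functions. Since $\Phi\in H^2_{M_{n\times m}}$ has finite‑dimensional codomain $E=\mathbb C^n$, Theorem \ref{vectormaintheorem} guarantees that \emph{any} canonical decomposition of $\Phi$ is the BLH‑canonical one; hence in both (a) and (b) the inner function $\Delta$ is forced to be the factor supplied by $\ker H^*_{\breve\Phi}=\Delta H^2_{\mathbb C^r}$, the analytic factor is forced to be $A=\Phi^*\Delta$, and the perturbation is forced to be $B=\Phi-\Delta A^*=(I-\Delta\Delta^*)\Phi$ (here $\Delta A^*=\Delta\Delta^*\Phi$). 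Thus, once the kernel relation $\ker H^*_{\breve\Phi}=\Delta H^2_{\mathbb C^r}$ is in place, the whole corollary collapses to the single equivalence: $(I-\Delta\Delta^*)\Phi=\Delta_c\Delta_c^*\Phi$ if and only if $\Delta\Delta^*\Phi$ is analytic.

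\textbf{(a)$\Rightarrow$(b).} Assume $\ker H^*_{\breve\Phi}=\Delta H^2_{\mathbb C^r}$ and $\Delta\Delta^*\Phi$ analytic. Form the BLH‑canonical decomposition as in the proof of Theorem \ref{vectormaintheorem}: $A:=\Phi^*\Delta$, with $\widetilde A\in H^2_s$, $\Delta$ and $A$ right coprime, and $B:=\Phi-\Delta A^*=\Phi-\Delta\Delta^*\Phi$. By hypothesis $B$ is then a matrix‑valued $H^2$‑function, and $\Delta^*B=\Delta^*\Phi-(\Delta^*\Delta)\Delta^*\Phi=0$, so every column of $B$ lies in $\ker\Delta^*$. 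By Lemma \ref{thm2.9}(a), $\ker\Delta^*=\Delta_c H^2_{\mathbb C^p}$ for the complementary factor $\Delta_c$, so $B=\Delta_c C$ for a strong $H^2$‑function $C$; applying $\Delta_c^*$ gives $C=\Delta_c^*B$, whence $B=\Delta_c\Delta_c^*B=\Delta_c\Delta_c^*(\Phi-\Delta\Delta^*\Phi)=\Delta_c\Delta_c^*\Phi$, using $\Delta_c^*\Delta=0$ from Lemma \ref{thm2.9}(b). Therefore $\Phi=\Delta A^*+\Delta_c\Delta_c^*\Phi$ is a (in fact the BLH‑) canonical decomposition of $\Phi$, which is (b). This mirrors Corollary \ref{remarksdscjkdfv}, with ``$\breve\Delta$ of bounded type'' weakened to ``$\Delta\Delta^*\Phi$ analytic''.

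\textbf{(b)$\Rightarrow$(a).} Given that $\Phi=\Delta A^*+\Delta_c\Delta_c^*\Phi$ is a canonical decomposition, Theorem \ref{vectormaintheorem} immediately yields $\ker H^*_{\breve\Phi}=\Delta H^2_{\mathbb C^r}$, so it remains only to show that $\Delta\Delta^*\Phi=\Delta A^*$ is analytic. Recasting the hypothesis, $\Phi=\Delta\Delta^*\Phi+\Delta_c\Delta_c^*\Phi=[\Delta,\Delta_c][\Delta,\Delta_c]^*\Phi$, i.e.\ the columns of $\Phi$ lie pointwise in the range of the inner function $[\Delta,\Delta_c]$ (Lemma \ref{thm2.9}(b)). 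The route I would take is to pass to the flip: a direct computation gives $\breve{(\Delta\Delta^*\Phi)}=\widetilde\Delta^{\,*}\widetilde A$ with $\widetilde A=\widetilde\Delta\breve\Phi$, where $\widetilde\Delta\in H^\infty$ by Lemma \ref{corfgghh2.9}, and $\widetilde A\in H^2_s$ since that holds for the analytic factor of any canonical decomposition. So $\Delta\Delta^*\Phi$ is analytic precisely when $\widetilde\Delta^{\,*}\widetilde A$ has no strictly positive Fourier coefficients, and one must deduce this from $\Phi\in H^2_s$ together with $\Phi=[\Delta,\Delta_c][\Delta,\Delta_c]^*\Phi$ (equivalently $B=\Delta_c\Delta_c^*\Phi$). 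This last step is exactly the part I expect to be the main obstacle: in general $\Delta\Delta^*\Phi$ need \emph{not} be analytic (it is not, even when $\breve\Delta$ is of bounded type), so the argument genuinely has to use that the ``missing'' summand $B$ coincides with $\Delta_c\Delta_c^*\Phi$. If the flip computation proves too delicate to close directly, the fallback is a column‑by‑column analysis: write $\phi_j=[\Delta,\Delta_c]v_j+u_j$ with $u_j\in\mathcal H([\Delta,\Delta_c])\subseteq\mathcal H(\Delta)$ (using that $\Delta H^2$ and $\Delta_c H^2$ are orthogonal sub‑bands of $[\Delta,\Delta_c]H^2=\ker H_{\Delta^*}$), note that the hypothesis forces the bad component $u_j$ to be annihilated in the product $\Delta\Delta^*\phi_j$, and invoke Corollary \ref{thm2.4edfrgt} to control $\Delta^*u_j$; this reduces the claim to showing $\Delta\Delta^*u_j\in H^2$ for $u_j\in\mathcal H([\Delta,\Delta_c])$, which is where the content lies.
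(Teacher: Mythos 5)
Your proof of (a)$\Rightarrow$(b) is correct and is essentially the paper's own argument: form the BLH-decomposition with $A=\Phi^*\Delta$, $B=(I-\Delta\Delta^*)\Phi$, use the hypothesis to see $B$ is analytic, place its columns in $\ker\Delta^*=\Delta_cH^2_{\mathbb C^p}$ via Lemma \ref{thm2.9}, and conclude $B=\Delta_c\Delta_c^*B=\Delta_c\Delta_c^*\Phi$ from $\Delta_c^*\Delta=0$. So far, no complaints.

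The direction (b)$\Rightarrow$(a), however, is left genuinely unproved. You correctly extract the easy part (uniqueness from Theorem \ref{vectormaintheorem} gives $\ker H^*_{\breve\Phi}=\Delta H^2_{\mathbb C^r}$, and $\Delta^*B=0$ forces $\Delta A^*=\Delta\Delta^*\Phi$, $B=(I-\Delta\Delta^*)\Phi$), but the remaining claim --- that $\Delta\Delta^*\Phi$ is analytic --- is exactly where you stop, offering two unfinished routes and declaring it ``where the content lies.'' A proposal that ends there has a hole precisely at the substantive half of the equivalence. The paper closes it in a few lines: since $B=(I-\Delta\Delta^*)\Phi$, hypothesis (b) rewrites as $\Phi=\Delta\Delta^*\Phi+\Delta_c\Delta_c^*\Phi$, and because $\Delta^*\Delta_c=0$ the two summands are orthogonal, $\langle \Delta_c\Delta_c^*\phi_j,\ \Delta\Delta^*\phi_j\rangle=0$ for every column $\phi_j\in H^2_{\mathbb C^n}$; from this orthogonal splitting of the analytic columns the paper concludes $\Delta\Delta^*\Phi\in H^2_{M_{n\times m}}$. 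Your proposal never produces an argument of this kind. Two further inaccuracies in your sketch: the ``fallback'' reduction to ``$\Delta\Delta^*u_j\in H^2$ for $u_j\in\mathcal H([\Delta,\Delta_c])$'' drops the hypothesis --- what you actually need is the statement for those $u_j$ satisfying $u_j=[\Delta,\Delta_c][\Delta,\Delta_c]^*u_j$ a.e., and without that condition the reduced claim is simply false; and the parenthetical ``it is not, even when $\breve\Delta$ is of bounded type'' is backwards in the present setting, since for an $H^2$ matrix function $\Phi$ with $\breve\Delta$ of bounded type Corollary \ref{remarksdscjkdfv} already yields $B=\Delta_c\Delta_c^*\Phi$, so (b) holds automatically and the corollary then asserts that $\Delta\Delta^*\Phi$ \emph{is} analytic.
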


\begin{proof} (a)$\Rightarrow$(b): Suppose that  $\ker H_{\breve{\Phi}}^*=\Delta H^2_{\mathbb C^r}$ for an
$n\times r$ inner matrix function $\Delta$ such that
$\Delta\Delta^*\Phi$ is analytic. \ Then by the proof of Theorem
\ref{vectormaintheorem}, we can write
$$
\Phi=\Delta A^*+B,
$$
where $B=(I-\Delta\Delta^*)\Phi$.  \ Write $\Phi\equiv [\phi_1,
\phi_1, \cdots, \phi_m]$. \ Since $\Delta\Delta^*\Phi \in
H^{2}_{M_{n \times m}}$ and $\Delta^*(I-\Delta \Delta^*)=0$, it
follows from Corollary \ref{lemma4.100000} and Lemma \ref{thm2.9}
that for each $j=1,2,\cdots m$,
$$
(I-\Delta \Delta^*)\phi_j \in \hbox{ker}\, \Delta^* =\Delta_c
H^2_{\mathbb C^p},
$$
which implies that $B=(I-\Delta\Delta^*)\Phi = \Delta_c D$ for some
$D \in H^2_{M_{p\times m}}$. \ Thus
$$
\Delta_c^*B=\Delta_c^*(I-\Delta\Delta^*)\Phi = D,
$$
so that
$$
B=\Delta_c D=\Delta_c
\Delta_c^*(I-\Delta\Delta^*)\Phi=\Delta_c\Delta_c^*\Phi.
$$

(b)$\Rightarrow$(a): Suppose that $\Phi=\Delta A^*+\Delta_c
\Delta_c^* \Phi$ is a canonical decomposition of $\Phi$. \ Since
$\Phi$ is a matrix-valued function, it follows from Theorem
\ref{vectormaintheorem} that
$$
\Delta_c \Delta_c^* \Phi=B=(I-\Delta\Delta^*)\Phi,
$$
so that
$$
\Phi=\Delta_c \Delta_c^* \Phi+\Delta\Delta^*\Phi.
$$
But since $\langle \Delta_c \Delta_c^* \phi_j, \ \Delta\Delta^*\phi_j
\rangle=0$ for all $j=1,2,\cdots, m$, it follows that
$\Delta\Delta^*\Phi \in H^2_{M_{n \times m}}$. \ This completes the
proof.
\end{proof}

\medskip

\begin{cor}
Let $\Phi$ be an $n\times m$ matrix-valued $H^2$-function satisfying
$\ker H_{\breve{\Phi}}^*=\Delta H^2_{\mathbb C^r}$ for an $n\times
r$ inner matrix function $\Delta$ such that $\Delta\Delta^*$ is
analytic. Then $\Phi$ can be written as
\begin{equation}\label{cd}
\Phi=\Delta A^*+\Delta_c C \quad(\textrm{with } C:=P_+\Delta_c^*\Phi\in
H^2_{M_{p\times m}}),
\end{equation}
where $\Delta_c$ is the complementary factor of $\Delta$.
\end{cor}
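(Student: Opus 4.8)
The plan is to obtain this as a short corollary of Corollary~\ref{hjnfdcvwsa}, whose hypothesis (a) is a slight weakening of the hypothesis imposed here. First I would observe that if $\Delta\Delta^*$ is analytic then, being a bounded function (indeed $\Vert\Delta\Delta^*\Vert\le 1$ a.e.\ on $\mathbb T$), it belongs to $H^\infty_{M_n}$; since $\Phi$ is a matrix-valued $H^2$-function, Lemma~\ref{lem3.4ed} then gives that $\Delta\Delta^*\Phi$ is analytic, i.e.\ $\Delta\Delta^*\Phi\in H^2_{M_{n\times m}}$. Thus condition (a) of Corollary~\ref{hjnfdcvwsa} holds, and that corollary yields
\[
\Phi=\Delta A^*+\Delta_c\Delta_c^*\Phi ,
\]
a canonical decomposition of $\Phi$, where $\Delta_c$ is the complementary factor of $\Delta$, say with values in $\mathcal B(\mathbb C^p,\mathbb C^n)$ and $\ker\Delta^*=\Delta_c H^2_{\mathbb C^p}$.

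The only remaining point is to rewrite the perturbation term $B:=\Delta_c\Delta_c^*\Phi=(I-\Delta\Delta^*)\Phi$ as $\Delta_c C$ with $C=P_+\Delta_c^*\Phi\in H^2_{M_{p\times m}}$; equivalently, to verify that $\Delta_c^*\Phi$ is already analytic so that $P_+\Delta_c^*\Phi=\Delta_c^*\Phi$. For this I would reuse the computation inside the proof of Corollary~\ref{hjnfdcvwsa}, where it is shown that $B=\Delta_c D$ for some $D\in H^2_{M_{p\times m}}$. Since $\Delta_c$ is inner, $\Delta_c^*\Delta_c=I_{\mathbb C^p}$, whence $D=\Delta_c^*B$; and since $[\Delta,\Delta_c]$ is inner by Lemma~\ref{thm2.9}(b), we have $\Delta^*\Delta_c=0$ and hence $\Delta_c^*\Delta=0$. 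Consequently
\[
D=\Delta_c^*B=\Delta_c^*(I-\Delta\Delta^*)\Phi=\Delta_c^*\Phi ,
\]
so $\Delta_c^*\Phi=D\in H^2_{M_{p\times m}}$, forcing $C:=P_+\Delta_c^*\Phi=\Delta_c^*\Phi=D$. Substituting back gives $\Phi=\Delta A^*+\Delta_c D=\Delta A^*+\Delta_c C$, which is exactly \eqref{cd}.

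There is essentially no genuine obstacle: the one step needing a touch of care is the identity $\Delta_c^*(I-\Delta\Delta^*)=\Delta_c^*$, which rests on $\Delta_c^*\Delta=0$, i.e.\ on Lemma~\ref{thm2.9}(b). The degenerate case $\ker\Delta^*=\{0\}$ is harmless: then $p=0$, $\Delta_c$ and $C$ are empty, $B=0$, and the statement reduces to $\Phi=\Delta A^*$.
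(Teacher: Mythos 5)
Your proof is correct, but its final step takes a genuinely different route from the paper's. The paper first upgrades the hypothesis that $\Delta\Delta^*$ is analytic to the subspace identity $(I-\Delta\Delta^*)H^2_{\mathbb C^n}=\Delta_c H^2_{\mathbb C^p}$ (equation (\ref{751})) and then invokes the Projection Lemma of \cite[p.~43]{Ni1} to obtain the $\Phi$-independent operator identity $(I-\Delta\Delta^*)\vert_{H^2_{\mathbb C^n}}=\Delta_c P_+\Delta_c^*$, which it applies to $\Phi$. You instead stay with $\Phi$ itself: from $\Delta\Delta^*\in H^\infty_{M_n}$ and Lemma \ref{lem3.4ed} you get $\Delta\Delta^*\Phi\in H^2_{M_{n\times m}}$, so Corollary \ref{hjnfdcvwsa} applies, and the column-wise computation inside its proof, combined with $\Delta_c^*\Delta_c=I_p$ and $\Delta_c^*\Delta=0$ from Lemma \ref{thm2.9}, gives $\Delta_c^*\Phi=D\in H^2_{M_{p\times m}}$; hence $P_+$ acts trivially and $B=\Delta_c\Delta_c^*\Phi=\Delta_c C$. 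Your route avoids the Projection Lemma altogether, yields the sharper observation that $C=\Delta_c^*\Phi$ (the projection $P_+$ in the statement is redundant under these hypotheses), and in fact only needs the weaker assumption that $\Delta\Delta^*\Phi$ is analytic; what the paper's argument buys in exchange is the identity (\ref{751}) and the resulting formula for $(I-\Delta\Delta^*)\vert_{H^2_{\mathbb C^n}}$, which concern all of $H^2_{\mathbb C^n}$ rather than just the columns of $\Phi$ and are of independent interest. Your handling of the degenerate case $\ker\Delta^*=\{0\}$ is consistent with the paper's conventions.
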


\begin{proof}We  claim that if
$\Delta\Delta^*$ is analytic, then
\begin{equation}\label{751}
\left(I-\Delta\Delta^*\right)H^2_{\mathbb C^n} =\Delta_cH^2_{\mathbb
C^p}.
\end{equation}
To see this, let $f \in \Delta_c H^2_{\mathbb C^p}$.  Then
$f=\Delta_c g$ for some $g \in H^2_{\mathbb C^p}$. Observe that
$$
(I-\Delta \Delta^*)f=(I-\Delta \Delta^*)\Delta_c g=\Delta_c g=f,
$$
which implies that $f \in(I-\Delta \Delta^*)H^2_{\mathbb C^n}$. Thus
we have $\Delta_c H^2_{\mathbb C^p}\subseteq (I-\Delta
\Delta^*)H^2_{\mathbb C^n}$. \ The converse inclusion follows from
the proof of Corollary \ref{hjnfdcvwsa}. This proves (\ref{751}). \
Thus $I-\Delta \Delta^*$ is the orthogonal projection that maps from
$H^2_{\mathbb C^n}$ onto $\Delta_c H^2_{\mathbb C^p}$. Therefore by
the Projection Lemma in \cite[P.~43]{Ni1}, we have
$$
\left(I-\Delta\Delta^*\right)\vert_{H^2_{\mathbb C^n}}=
\Delta_cP_+\Delta_c^*,
$$
so that
$$
\Phi=\Delta A^*+B=\Delta A^*+ \Delta_c P_+\Delta_c^*\Phi,
$$
as desired
\end{proof}

%
%
%
%
%

\chapter{The Beurling degree} \

We first consider Question \ref{q333}. Question \ref{q333} can be
rephrased as: {\it If $\Delta$ is an inner function with values in
$\mathcal B(E^{\prime},E)$, does there exist a strong $L^2$-function
$\Phi$ with values in $\mathcal B(D,E)$ satisfying the equation}
\begin{equation}\label{q4}
\ker H_{\breve\Phi}^*=\Delta H^2_{E^\prime}\, ?
\end{equation}
To closely understand an answer to Question \ref{q333}, we examine a
question whether there exists an inner function $\Omega$
satisfying $\hbox{ker}\, H_{\Omega^*}=\Delta H^2_{E^\prime}$ if
$\Delta$ is an inner function with values in $\mathcal B(E^\prime,
E)$. \ In fact, the answer to this question is negative. \ Indeed,
if $\hbox{ker}\, H_{\Omega^*}=\Delta H^2_{E^{\prime}}$ for some
inner function $\Omega \in H^{\infty}(\mathcal B(D, E))$, then by
Lemma \ref{thm2.9}, we have $[\Omega, \Omega_c]=\Delta$, and hence
$\Delta_c=0$. \ Conversely, if $\Delta_c=0$ then by again Lemma
\ref{thm2.9}, we should have $\hbox{ker}\, H_{\Delta^*}=\Delta
H^2_{E^\prime}$. \ Consequently, $\hbox{ker}\, H_{\Omega^*}=\Delta
H^2_{E^{\prime}}$ for some inner function $\Omega$ if and only if
$\Delta_c=0$. \ Thus if
$$
\Delta:=\begin{bmatrix} 1\\ 0\end{bmatrix},
$$
then there exists no inner function $\Omega$ such that $\hbox{ker}\,
H_{\Omega^*}=\Delta H^2$. \ On the other hand, we note that the
solution $\Phi$ is not unique although there exists an inner
function $\Phi$ satisfying the equation (\ref{q4}). \ For example,
if $\Delta:=\hbox{\rm diag}\,(z,1,1)$, then the following $\Phi$ are
such solutions:
$$
\Phi=\begin{bmatrix}z\\0\\0\end{bmatrix}, \ \
\begin{bmatrix}z&0\\0&1\\0&0\end{bmatrix}, \ \ \Delta.
$$

\bigskip

The following theorem gives an affirmative answer to Question
\ref{q333}: indeed, we can always find a strong $L^2$-function
$\Phi$ with values in $\mathcal B(D,E)$ satisfying the equation
$\ker H_{\breve\Phi}^*=\Delta H^2_{E^\prime}$.


\begin{thm} \label{kkknvnv}
Let $\Delta$ be an inner function with values in $\mathcal
B(E^\prime, E)$. \ Then there exists a function $\Phi$ in
$H^2_s(\mathcal B(D,E))$, with either $D=E^\prime$ or $D=\mathbb
C\oplus E^\prime$, satisfying
$$
\hbox{ker}\, H_{\breve{\Phi}}^*=\Delta H^2_{E^\prime}.
$$
\end{thm}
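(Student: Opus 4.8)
The plan is to construct $\Phi$ explicitly from the data in the Beurling--Lax--Halmos decomposition $\ker H_{\Delta^*} = [\Delta,\Delta_c]H^2_{D\oplus D'}$ supplied by Lemma \ref{thm2.9}, and then to show that the kernel of $H^*_{\breve\Phi}$ comes out exactly as $\Delta H^2_{E'}$. The guiding principle (already visible in the examples (\ref{7.ex}), (\ref{7.exm}) and in Theorem \ref{vectormaintheorem}) is that the BLH-canonical decomposition forces $\{B\}\subseteq\ker\Delta^*$, so the natural candidate is $\Phi = \Delta A^* + B$ for a suitable choice of $A$ and $B$ with $\Delta^*B=0$ and $\Delta,A$ right coprime. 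Since we only need \emph{some} $\Phi$, the simplest attempt is $A:=I_{E'}$ (so $A^*=I_{E'}$) and $B$ chosen to ``fill in'' the kernel $\ker\Delta^*$ enough that no extra inner factor creeps into $\ker H^*_{\breve\Phi}$ beyond $\Delta$.

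First I would treat the case $\Delta_c=0$, i.e.\ $\ker\Delta^*=\{0\}$. Here Lemma \ref{thm2.9}(c) gives $\ker H_{\Delta^*}=\Delta H^2_{E'}$, and since $\Delta\in H^\infty(\mathcal B(E',E))$ we have (by (\ref{form_1})) $H^*_{\breve\Delta}=H_{\widetilde{\breve\Delta}}=H_{\Delta^*}$, so $\Phi:=\Delta$ itself works with $D=E'$. Next, for the general case $\Delta_c\neq 0$, with values in $\mathcal B(D',E)$: the point is that $\ker H_{\Delta^*}=\Delta H^2_{E'}\oplus\Delta_c H^2_{D'}$ is strictly larger than $\Delta H^2_{E'}$, so taking $\Phi=\Delta$ would give the wrong (too large) kernel. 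I would instead look for $B\in H^2_s(\mathcal B(\mathbb C,E))$ of the form $B=\Delta_c b$ for a single column $b\in H^2_s(\mathcal B(\mathbb C,D'))$, and set
$$
\Phi:=[\,\Delta,\ \Delta_c b\,]\in H^2_s(\mathcal B(\mathbb C\oplus E',E)),
$$
so $D=\mathbb C\oplus E'$. One checks using Lemma \ref{thm4.2} that $f\in\ker H^*_{\breve\Phi}$ iff $\Delta^*f\in L^2\ominus H^2$ and, simultaneously, $\langle \Delta_c(z)b(z),z^nf(z)\rangle$ integrates to zero for all $n\geq 1$; the first condition puts $f$ in $\ker H_{\Delta^*}=\Delta H^2_{E'}\oplus\Delta_c H^2_{D'}$, and the crux is to choose $b$ so that the second condition kills exactly the $\Delta_c H^2_{D'}$ summand, leaving $\Delta H^2_{E'}$. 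Concretely, $b$ should be chosen \emph{outer} as a strong $H^2$-function, i.e.\ $\mathrm{cl}\,\{b\,p:p\in\mathcal P_{\mathbb C}\}$ is all of $H^2_{D'}$ — such $b$ exists because $D'$ is separable (one can take $b(\zeta)=\sum_k c_k \zeta^k d_k$ with $\{d_k\}$ an orthonormal basis of $D'$ and $c_k>0$ decaying fast enough for membership in $H^2_s$, arranged to be outer). Then for $f=\Delta g_1+\Delta_c g_2\in\ker H_{\Delta^*}$ the extra condition becomes $\langle b(z)\cdot,\ z^n g_2(z)\rangle = 0$ for all $n\geq 1$, i.e.\ $z^{-1}\overline{g_2}\,\breve b \in $ the appropriate analytic class, which by outerness of $b$ forces $g_2=0$.

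I would then verify the right-coprimeness of $\Delta$ and $A=I_{E'}\oplus(\text{column from }b)$: since $\Delta^*(\Delta_c b)=0$, a common left inner divisor $\Omega$ of $\widetilde\Delta$ and $\widetilde A$ would (as in the proof of Theorem \ref{vectormaintheorem}) have to satisfy $\Delta=\Delta_1\widetilde\Omega$ with $\Delta_1$ inner and $\Delta_1 H^2=\ker H^*_{\breve\Phi}=\Delta H^2_{E'}$, forcing $\widetilde\Omega$ unitary; in fact this coprimeness step is automatic once we have pinned down $\ker H^*_{\breve\Phi}=\Delta H^2_{E'}$, and is not strictly needed for the statement of Theorem \ref{kkknvnv} (which only asserts the kernel equation). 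The main obstacle I anticipate is the analytic step verifying that the outer choice of $b$ forces $g_2=0$ in $f=\Delta g_1+\Delta_c g_2$: this requires carefully unwinding the flip and the condition $\int_{\mathbb T}\langle \Delta_c(z)b(z),z^nf(z)\rangle\,dm=0$ using $\Delta_c^*\Delta_c=I_{D'}$ and $\Delta_c^*\Delta=0$, reducing it to $\int \langle b(z),z^n g_2(z)\rangle\,dm = 0$ for all $n\geq1$, and then invoking that an outer strong $H^2$-function has dense range on polynomials together with Lemma \ref{thm4.2} in the scalar-domain setting (via Lemma \ref{thm327}) to conclude $g_2=0$. Handling the bookkeeping between ``boundary'' and ``disk'' versions of strong $H^2$-functions, and making sure $b$ can simultaneously be chosen in $H^2_s(\mathcal B(\mathbb C,D'))$ and outer when $\dim D'=\infty$, is where the real care is needed; everything else is assembling Lemma \ref{thm2.9}, Lemma \ref{thm4.2}, and Lemma \ref{kerhadjoint}.
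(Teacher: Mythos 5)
Your overall strategy is the paper's: take $\Phi=\bigl[\Delta,\ \Delta_c\,(\hbox{one column})\bigr]$ with $D=\mathbb C\oplus E^\prime$ (and $\Phi=\Delta$ when $\ker\Delta^*=\{0\}$), reduce via Lemma \ref{thm2.9} and Lemma \ref{thm4.2} to showing that the extra column kills the $\Delta_c H^2_{D^\prime}$ summand, and note that right coprimeness is not needed for the kernel equation. But the crucial choice of that column is wrong. You require $b$ to be \emph{outer} as a strong $H^2$-function, i.e.\ $\hbox{cl}\,\{bp:p\in\mathcal P_{\mathbb C}\}=H^2_{D^\prime}$ (cyclicity for the \emph{forward} shift), and you claim that the condition $\int_{\mathbb T}\langle b(z),z^ng_2(z)\rangle\,dm(z)=0$ for all $n\ge 1$ then forces $g_2=0$. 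That implication is false: the condition only says $g_2\perp S_{D^\prime}^{*n}b$ for $n\ge 1$ (equivalently $\langle g_2(z),b(z)\rangle_{D^\prime}\in H^1$), and outerness of $b$ gives no control over this. Take $D^\prime=\mathbb C$ and $b\equiv 1$, which is outer: then $\int\langle b,z^ng_2\rangle\,dm=\overline{\widehat{g_2}(-n)}=0$ for every $g_2\in H^2$ and every $n\ge1$, so nothing is killed. Concretely, for $\Delta=(z,0)^t$ your recipe yields $\Phi=\hbox{diag}(z,1)$, whose kernel is $zH^2\oplus H^2\supsetneq\Delta H^2$. Even your suggested $b(\zeta)=\sum_kc_k\zeta^kd_k$ fails: $g_2=z^kd_k\neq0$ satisfies the orthogonality conditions, so $\Delta_c z^kd_k$ sits in $\ker H^*_{\breve\Phi}\setminus\Delta H^2_{E^\prime}$ (and this $b$ is not even outer, since the $d_1$-component of $b\mathcal P_{\mathbb C}$ lies in $zH^2$).

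The fix is to demand cyclicity for the \emph{backward} shift, not outerness: choose $g\in H^2_{D^\prime}$ with $E_g^*=H^2_{D^\prime}$ (such vectors exist for every separable $D^\prime$, as cited in the Introduction), and take the column $z\Delta_c g$, i.e.\ $\Phi:=\bigl[[z\Delta_c g],\Delta\bigr]$. The factor $z$ shifts the index range so that the conditions for $n\ge1$ become $f\perp S_{D^\prime}^{*m}g$ for all $m\ge0$; then Lemma \ref{thm327} gives $f\in\bigl(\hbox{cl ran}\,H_{\overline z\breve{[g]}}\bigr)^\perp=(E_g^*)^\perp=\{0\}$, which is exactly the step your outerness hypothesis cannot deliver. (You are conflating $\hbox{cl}\,\bigvee\{z^nb:n\ge0\}=H^2_{D^\prime}$ with $\bigvee\{S^{*n}g:n\ge0\}=H^2_{D^\prime}$; for $b\equiv1$ the former holds and the latter fails, and Lemma \ref{thm327} is a statement about the latter.) With this replacement the rest of your outline goes through and coincides with the paper's proof.
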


\begin{proof}  If $\hbox{ker}\, \Delta^* = \{0\}$, take
$\Phi=\Delta$. \ Then it follows from Lemma \ref{thm2.9} that
$$
\hbox{ker}\, H_{\breve{\Phi}}^*=\hbox{ker}\, H_{\Delta^*}=\Delta
H^2_{E^\prime}.
$$
If instead $\hbox{ker}\, \Delta^*\neq \{0\}$, let $\Delta_c$ be the
complementary factor of $\Delta$ with values in $\mathcal
B(E^{\prime\prime}, E)$ for some nonzero Hilbert space
$E^{\prime\prime}$. \ Choose a cyclic vector $g \in
H^2_{E^{\prime\prime}}$ of $S^*_{E^{\prime\prime}}$ and define
$$
\Phi:=\bigl[[z\Delta_c g], \Delta\bigr],
$$
where $[z\Delta_c g](z):\mathbb C \rightarrow E$ is given by
$[z\Delta_c g](z)\alpha:=\alpha z \Delta_c(z) g(z)$. \ Then it
follows from Lemma \ref{dhfbgbgbgbg} and Corollary
\ref{lemma4.100000} that $\Phi$ belongs to  $H^2_s(\mathcal
B(D,E))$, where $D=\mathbb C\oplus E^\prime$. \ For each $x\equiv
\alpha \oplus x_0 \in D$, $f\in H^2_{E^\prime}$, and $
n=1,2,3,\cdots$, we have
$$
\aligned \int_{\mathbb  T}\bigl\langle \Phi(z)x, \ z^n
\Delta(z)f(z)\bigr\rangle_{E}dm(z) &=\int_{\mathbb T}\bigl\langle
\alpha z\Delta_c(z) g(z)+\Delta(z)x_0, \ z^n
\Delta(z)f(z)\bigr\rangle_{E}dm(z)\\
&=\int_{\mathbb T}\bigl\langle x_0, \ z^n
f(z)\bigr\rangle_{E^{\prime}}dm(z)\quad
\hbox{(since $\Delta^*\Delta_c=0$)}\\
&=0.
\endaligned
$$
It thus follows from Lemma \ref{thm4.2} that
\begin{equation}\label{ffmvvlgkg}
\Delta H^2_{E^\prime}\subseteq \hbox{ker}\, H_{\breve{\Phi}}^*.
\end{equation}
For the reverse inclusion, suppose $h \in \hbox{ker}\,
H_{\breve{\Phi}}^*$. \ Then by Lemma \ref{thm4.2}, we have that for
each $x_0 \in E^{\prime}$ and $ n=1,2,3,\cdots$,
$$
\int_{\mathbb T}\bigl\langle \Delta(z) x_0, \ z^nh(z) \bigr
\rangle_{E}dm(z)=0,
$$
which implies, by Lemma \ref{thm4.2}, that $h \in \hbox{ker}\,
H_{\Delta^*}$. \ It thus follows from Lemma \ref{thm2.9} that
\begin{equation}\label{ssbbdbfbf}
\hbox{ker}\, H_{\breve{\Phi}}^*\subseteq \hbox{ker}\,
H_{\Delta^*}=\Delta H^2_{E^\prime}\bigoplus \Delta_c
H^2_{E^{\prime\prime}}.
\end{equation}
Assume to the contrary that $\hbox{ker}\, H_{\breve{\Phi}}^* \neq
\Delta H^2_{E^\prime}$. \ Then by (\ref{ffmvvlgkg}) and
(\ref{ssbbdbfbf}), there exists a nonzero function $f\in
H^2_{E^{\prime\prime}}$ such that $\Delta_c f \in \hbox{ker}\,
H_{\breve\Phi}^*$. \ It thus follows from Lemma \ref{thm4.2} that
for each $x\equiv \alpha\oplus x_0 \in D$ and $n=1,2,3,\cdots$,
$$
\begin{aligned}
0 &=\int_{\mathbb T}\bigl\langle \Phi(z)x, \ z^n
\Delta_c(z)f(z)\bigr\rangle_{E}dm(z)\\
&=\int_{\mathbb T}\bigl\langle \alpha z\Delta_c(z)
g(z)+\Delta(z)x_0, \ z^n
\Delta_c(z)f(z)\bigr\rangle_{E}dm(z)\\
&=\int_{\mathbb T}\bigl\langle z[g](z)\alpha, \ z^n
f(z)\bigr\rangle_{E^{\prime\prime}}dm(z)\quad \hbox{(since
$\Delta^*\Delta_c=0$)},
\end{aligned}
$$
which implies that $f \in \hbox{ker}\,
H_{\overline{z}\breve{[g]}}^*$. \ Since $g$ is a cyclic vector of
$S^*_{E^{\prime\prime}}$, it thus follows from Lemma \ref{thm327}
that
$$
f\in \bigl(\hbox{cl ran}\, H_{\overline{z} \breve{[g]}}\bigr)^\perp
=\bigl(E_g^*\bigr)^\perp =\{0\},
$$
which is a contradiction. \ This completes the proof.
\end{proof}

\medskip

If $\Delta$ is an $n \times r$ inner matrix function, then we can
find a solution $\Phi\in H^{\infty}_{M_{n\times m}}$ (with $m \leq
r+1$) of the equation $\hbox{ker}\, H_{\breve{\Phi}}^*=\Delta
H^2_{\mathbb C^r}$.

\medskip

\begin{cor}\label{existrem}
For a given $n\times r$ inner matrix function $\Delta$, there exists
at least a solution $\Phi\in H^{\infty}_{M_{n\times m}}$ (with $m
\leq r+1$) of the equation $\hbox{ker}\, H_{\breve{\Phi}}^*=\Delta
H^2_{\mathbb C^r}$.
\end{cor}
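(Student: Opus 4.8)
The plan is to derive this directly from Theorem \ref{kkknvnv} and then upgrade the solution from class $H^2$ to class $H^\infty$ by making one careful choice inside the proof of that theorem. First I would apply Theorem \ref{kkknvnv} with $E=\mathbb C^n$ and $E^\prime=\mathbb C^r$. This yields $\Phi\in H^2_s(\mathcal B(D,\mathbb C^n))$ with $\hbox{ker}\,H_{\breve\Phi}^*=\Delta H^2_{\mathbb C^r}$, where either $D=\mathbb C^r$ (when $\hbox{ker}\,\Delta^*=\{0\}$) or $D=\mathbb C\oplus\mathbb C^r$ (otherwise). In both cases $m:=\dim D\le r+1$, so since $\dim D<\infty$, Lemma \ref{thmved}(b) gives $H^2_s(\mathbb T,\mathcal B(D,\mathbb C^n))=H^2_{\mathcal B(D,\mathbb C^n)}$, which we identify with $H^2_{M_{n\times m}}$; hence $\Phi\in H^2_{M_{n\times m}}$ already solves the equation. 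It remains only to arrange that $\Phi$ is bounded.

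For this I would revisit the explicit construction in the proof of Theorem \ref{kkknvnv}. When $\hbox{ker}\,\Delta^*=\{0\}$ the theorem takes $\Phi=\Delta$, which is inner, hence in $H^\infty_{M_{n\times r}}$, and we are done. When $\hbox{ker}\,\Delta^*\neq\{0\}$, it takes $\Phi=[[z\Delta_c g],\Delta]$, where $\Delta_c$ is the complementary factor of $\Delta$, an $n\times p$ inner matrix function with $p:=\dim E^{\prime\prime}\le n$, and $g\in H^2_{E^{\prime\prime}}$ is an \emph{arbitrary} cyclic vector for $S^*_{E^{\prime\prime}}$. The point is that the argument of Theorem \ref{kkknvnv} uses only the cyclicity of $g$ (to conclude $f\in(E_g^*)^\perp=\{0\}$) and nowhere its size, so we are free to take $g$ bounded. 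Since $\Delta_c(z)$ is an isometry of $\mathbb C^p$ into $\mathbb C^n$ for almost every $z\in\mathbb T$, we get $||z\Delta_c(z)g(z)||=||g(z)||\le||g||_\infty$ a.e., so the first column of $\Phi$ lies in $H^\infty_{\mathbb C^n}$; the remaining columns constitute the inner function $\Delta\in H^\infty_{M_{n\times r}}$. Thus $\Phi\in H^\infty_{M_{n\times(r+1)}}$, as required.

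The only step demanding genuine (if short) work beyond quoting Theorem \ref{kkknvnv} is the existence of a bounded cyclic vector for $S^*_{\mathbb C^p}$, and this is the step I expect to be the crux. I would exhibit one explicitly, in the spirit of Example \ref{ex3.6}: identifying $E^{\prime\prime}$ with $\mathbb C^p$, set
$$
g(z):=\bigl(e^{1/(z-3)},\,e^{1/(z-4)},\,\ldots,\,e^{1/(z-(p+2))}\bigr)^t,
$$
which is smooth on $\overline{\mathbb D}$, in particular bounded. To check cyclicity, note that for $f=(f_1,\ldots,f_p)\in H^2_{\mathbb C^p}$ one has $f\perp E_g^*$ if and only if $\sum_{j=1}^p h_j\overline{f_j}$ has vanishing nonnegative Fourier coefficients, i.e. $\sum_j\overline{h_j}f_j\in H^2$, where $h_j(z)=e^{1/(z-(j+2))}$. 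On $\mathbb T$, $\overline{h_j}$ is the restriction of $e^{z/(1-(j+2)z)}$, which is analytic on $\mathbb D$ except for an essential singularity at $z=1/(j+2)$ and is analytic at $1/(k+2)$ for $k\neq j$. Fixing $j$ and comparing germs at $1/(j+2)$, analyticity of $\sum_k\overline{h_k}f_k$ there forces $e^{z/(1-(j+2)z)}f_j(z)$ to be analytic at $1/(j+2)$; since no analytic factor can remove an isolated essential singularity, $f_j\equiv 0$ for every $j$. Hence $E_g^*=H^2_{\mathbb C^p}$, completing the construction.

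Beyond this, the proof is just bookkeeping: confirming that the dimension count gives $m\le r+1$ and that the identifications $H^2_s=H^2_{\mathcal B}=H^2_{M_{n\times m}}$ and $L^\infty\cap H^2=H^\infty$ are legitimate in the finite-dimensional setting. I would also remark, as a sanity check, that the case $p=1$ recovers precisely the assertion of Example \ref{ex3.6} that the conjugate of $e^{1/(z-3)}$ fails to be of bounded type, equivalently that $e^{1/(z-3)}$ is cyclic for the scalar backward shift.
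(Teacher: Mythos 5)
Your argument is essentially the paper's own: the paper likewise settles the case $\ker\Delta^*=\{0\}$ by taking $\Phi=\Delta$, and otherwise forms $\Phi=\bigl[[z\Delta_c g],\Delta\bigr]$ for a \emph{bounded} cyclic vector $g$ of $S^*_{\mathbb C^p}$ with entries $e^{1/(z-\alpha_j)}$ ($\alpha_j$ distinct points in $[2,3]$), notes $\Phi\in H^\infty_{M_{n\times(r+1)}}$, and then says that the proof of Theorem \ref{kkknvnv} goes through verbatim — exactly your plan, including the observation that only cyclicity of $g$, not its size, is used there. The single point where you diverge is that the paper simply cites \cite[p.~55]{Ni1} for the cyclicity of $g$, whereas you verify it by hand; your verification is right in outline (the equivalence you state should strictly read $\sum_j\overline{h_j}f_j\in zH^2$, but only the forward implication is needed), yet the key step is glossed: passing from ``$\sum_j\overline{h_j}f_j$ lies in $H^2$ as a boundary function'' to ``$\sum_j e^{z/(1-(j+2)z)}f_j(z)$, analytic in $\mathbb D$ minus the points $1/(j+2)$, has removable singularities'' tacitly identifies an $H^2$-function with a function analytic only near $\mathbb T$ that shares its a.e.\ nontangential boundary values. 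That identification requires a Privalov-type uniqueness argument (e.g.\ both functions are of bounded type on a slit annulus $\{1/2<|z|<1\}$, so their difference, having nontangential limit $0$ a.e.\ on $\mathbb T$, vanishes; then the identity theorem spreads the equality over the punctured disk), after which your essential-singularity argument indeed forces each $f_j\equiv 0$. With that step made explicit — or simply replaced by the citation the paper uses — your proof is complete.
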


\begin{proof}
If $\hbox{ker}\, \Delta^* = \{0\}$, then this is obvious. Let
$\hbox{ker}\, \Delta^*\neq \{0\}$ and $\Delta_c \in H^{\infty}_{M_{n
\times p}}$ be the complementary factor of $\Delta$. \ Then by Lemma
\ref{thm2.9},  $1\leq p\leq n-r$. For $j=1,2,\cdots, p$, put
$$
g_j:=e^\frac{1}{z-\alpha_j},
$$
where $\alpha_j$ are distinct points in the interval $[2, 3]$. \
Then it is known that (cf. \cite[P.~55]{Ni1})
$$
g:=\begin{bmatrix} g_1\\g_2\\ \vdots\\g_p\end{bmatrix} \in
H^{\infty}_{\mathbb C^p}
$$
is a cyclic vector of $S^*_{\mathbb C^p}$. \ Put $
\Phi:=\bigl[[z\Delta_c g], \Delta\bigr]$. \ Then by Lemma
\ref{dhfbgbgbgbg}, we have $\Phi \in H^{\infty}_{M_{n\times
(r+1)}}$. \ The same argument as the proof of Theorem \ref{kkknvnv}
gives the result.
\end{proof}

\medskip

\begin{cor}\label{cor83}
If $\Delta$ is an inner function with values in $\mathcal
B(E^\prime, E)$, then there exists a function $\Phi\in L^2_s
(\mathcal B(D,E))$ (with $D=E^\prime$ or $D=\mathbb C\oplus
E^\prime$) such that $\Phi\equiv\Delta A^*+B$ is the BLH-canonical
decomposition of $\Phi$.
\end{cor}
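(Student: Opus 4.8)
The plan is to combine Theorem~\ref{kkknvnv} with the construction carried out in the proof of Theorem~\ref{vectormaintheorem}. First I would apply Theorem~\ref{kkknvnv} to the given inner function $\Delta$ with values in $\mathcal B(E^\prime, E)$, obtaining a function $\Phi \in H^2_s(\mathcal B(D,E))$, with either $D = E^\prime$ or $D = \mathbb C \oplus E^\prime$, such that $\ker H_{\breve\Phi}^* = \Delta H^2_{E^\prime}$. Since $H^2_s(\mathcal B(D,E)) \subseteq L^2_s(\mathcal B(D,E))$, this $\Phi$ is in particular a strong $L^2$-function with values in $\mathcal B(D,E)$, so the hypotheses of Theorem~\ref{vectormaintheorem} are satisfied.

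Next I would follow the recipe inside the proof of Theorem~\ref{vectormaintheorem}, using for $\Delta$ the very inner function coming from $\ker H_{\breve\Phi}^* = \Delta H^2_{E^\prime}$. If $\ker H_{\breve\Phi}^* = \{0\}$ (equivalently $E^\prime = \{0\}$), one takes $B := \Phi$ and the trivial decomposition $\Phi = \Delta A^* + B$ with $A$, $\Delta$ zero operators into $\{0\}$, which is the BLH-canonical decomposition by definition. Otherwise, set $A := \Phi^* \Delta$ and $B := \Phi - \Delta A^*$. The argument in the proof of Theorem~\ref{vectormaintheorem} (invoking Lemma~\ref{lem3.4ed}, Lemma~\ref{boundedhankel} and Lemma~\ref{thm4.2}) shows that $\widetilde A \in H^2_s(\mathcal B(D, E^\prime))$, $B \in L^2_s(\mathcal B(D,E))$, $\Delta^* B = 0$, $\Delta$ and $A$ are right coprime, and $\hbox{nc}\{\Phi_+\} \le \dim E^\prime$; hence $\Phi = \Delta A^* + B$ is a canonical decomposition of $\Phi$ in the sense of Theorem~\ref{vectormaintheorem}.

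Finally, since the inner function $\Delta$ occurring in this decomposition is exactly the one produced by the Beurling-Lax-Halmos Theorem applied to $\ker H_{\breve\Phi}^*$, the decomposition $\Phi = \Delta A^* + B$ is precisely the BLH-canonical decomposition of $\Phi$, as required. I do not expect a genuine obstacle here: the mathematical content is entirely contained in Theorem~\ref{kkknvnv} and Theorem~\ref{vectormaintheorem}, and the only points needing care are the routine passage between $H^2_s$ and $L^2_s$ and the separate treatment of the degenerate case $\ker \Delta^* = \{0\}$ versus $\ker \Delta^* \neq \{0\}$ in the appeal to Theorem~\ref{kkknvnv}.
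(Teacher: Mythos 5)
Your proposal is correct and follows essentially the same route as the paper: apply Theorem \ref{kkknvnv} to produce $\Phi\in H^2_s(\mathcal B(D,E))$ with $\ker H_{\breve\Phi}^*=\Delta H^2_{E^\prime}$, then set $A:=\Phi^*\Delta$, $B:=\Phi-\Delta A^*$ and invoke the construction in the proof of Theorem \ref{vectormaintheorem} to see this is the BLH-canonical decomposition. The only difference is that you spell out the degenerate case $E^\prime=\{0\}$ and the inclusion $H^2_s\subseteq L^2_s$ explicitly, which the paper leaves implicit.
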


\begin{proof}
By Theorem \ref{kkknvnv}, there exists a function $\Phi\in
H^2_s(\mathcal B(D,E))$ such that $\hbox{ker}\,
H_{\breve{\Phi}}^*=\Delta H^2_{E^\prime}$, with $D=E^\prime$ or
$D=\mathbb C\oplus E^\prime$. \ If we put $A:=\Phi^*\Delta$ and
$B:=\Phi-\Delta A^*$, then by the proof of the first assertion of
Theorem \ref{vectormaintheorem}, $\Phi=\Delta A^*+B$ is the
BLH-canonical decomposition of $\Phi$.
\end{proof}

\bigskip

\begin{rem}\label{remark7.9}
In view of Corollary \ref{existrem}, it is reasonable to ask whether
 such a solution $\Phi\in L^{2}_{M_{n\times m}}$
of the equation $\hbox{ker}\, H_{\breve{\Phi}}^*=\Delta H^2_{\mathbb
C^r}$ ($\Delta$ an $n\times r$ inner matrix function) exists for
each $m=1,2,\cdots$ even though it exists for some $m$. For example,
let
\begin{equation}\label{ex444}
\Delta:=\frac{1}{\sqrt{2}}\begin{bmatrix} z\\1\end{bmatrix}.
\end{equation}
Then, by Corollary \ref{existrem}, there exists  a solution $\Phi\in
L^2_{M_{2\times m}}$ ($m=1$ or $2$) of the equation $\hbox{ker}\,
H_{\breve{\Phi}}^*=\Delta H^2$. \ For $m=2$, let
\begin{equation}\label{ex8712}
\Phi: =\begin{bmatrix} z&za\\1&-a\end{bmatrix}\in H^\infty_{M_{2}},
\end{equation}
where $a \in H^{\infty}$ is such that $\overline{a}$ is not of
bounded type. \ Then a direct calculation shows that $\ker H_{\breve
\Phi}^*=\hbox{ker}\, H_{\Phi^*}=\Delta H^2$. \ We may then ask how
about the case $m=1$. In this case, the answer is affirmative. To
see this, let
$$
\Psi:=\begin{bmatrix}z+za\\1-a\end{bmatrix}\in H^\infty_{M_{2\times
1}},
$$
where $a \in H^{\infty}$ is such that $\overline{a}$ is not of
bounded type. \ Then a direct calculation shows that $\hbox{ker}\,
H_{\Psi^*}=\Delta H^2$. \ Therefore, if $\Delta$ is given by
(\ref{ex444}), then we may assert that there exists a solution
$\Phi\in L^2_{M_{n\times m}}$ of the equation $\hbox{ker}\,
H_{\breve{\Phi}}^*=\Delta H^2$ for each $m=1,2$. \ However, this
assertion is not true in general, i.e., a solution exists for some
$m$, but may not exist for another $m_0<m$. \ To see this, let
$$
\Delta:=\begin{bmatrix} z&0&0\\0&z&0\\0&0&1\\0&0&0\end{bmatrix}\in
H^{\infty}_{M_{4\times 3}}.
$$
Then $\Delta$ is inner. We will show that there exists no solution
$\Phi\in L^2_{M_{4\times 1}}$ (i.e., the case $m=1$) of the equation
$\ker H_{\breve\Phi}^*=\Delta H^2_{\mathbb C^3}$. \ Assume to the
contrary that $\Phi \in L^2_{M_{4\times 1}}$ is a solution of the
equation $\ker H_{\breve\Phi}^*=\Delta H^2_{\mathbb C^3}$. \ By
Theorem \ref{vectormaintheorem}, $\Phi$ can be written as
$$
\Phi=\Delta A^*+B,
$$
where $A \in H^2_{M_{1\times 3}}$ is such that $\Delta$ and $A$ are
right coprime. \ But since $\widetilde{\Delta}H^2_{\mathbb
C^4}=zH^2\oplus zH^2\oplus H^2$, it follows that
$$
\widetilde{\Delta}H^2_{\mathbb C^4} \bigvee \widetilde{A}H^2 \neq
H^2_{\mathbb C^3},
$$
which implies that $\Delta$ and $A$ are not right coprime, a
contradiction. \ Therefore we cannot find any solution $\Phi$, in
$L^2_{M_{4\times 1}}$ (the case $m=1$), of the equation $\ker\,
H_{\breve\Phi}^* = \Delta H^2_{\mathbb C^3}$. \ By contrast, if
$m=2$, then we can find a solution $\Phi\in L^2_{M_{4\times 2}}$. \
Indeed, let
$$
\Phi:=\begin{bmatrix} z&0\\ 0&z\\ 0&0\\ a&0\end{bmatrix},
$$
where $a\in H^\infty$ is such that $\overline a$ is not of bounded
type. \
Then $\ker H_{\Phi^*}=zH^2\oplus zH^2\oplus H^2\oplus \{0\} =\Delta
H^2_{\mathbb C^3}$. \ Thus we obtain a solution for $m=2$ although
there exists no solution for $m=1$.
\end{rem}

\bigskip

Let $\Delta$ be an inner function with values in $\mathcal
B(E^\prime, E)$. \ In view of Remark \ref{remark7.9}, we may ask how
to determine a possible dimension of $D$ for which there exists a
solution $\Phi\in L^2_s(\mathcal B(D, E))$ of the equation $\ker
H_{\breve\Phi}^* = \Delta H^2_{E^{\prime}}$. \ In fact, if we have a
solution $\Phi\in L^2_s(\mathcal B( D, E))$  of the equation $\ker
H_{\breve\Phi}^* = \Delta H^2_{E^{\prime}}$, then a solution
$\Psi\in L^2_s(D^\prime, E))$ also exists if $D^\prime$ is a
separable complex Hilbert space containing $D$: indeed, if ${\mathbf
0}$ denotes the zero operator in $\mathcal B(D^\prime \ominus D, E)$
and $\Psi:=[\Phi, {\mathbf 0}]$, then it follows from Lemma
\ref{thm4.2} that $\ker H_{\breve\Phi}^*=\ker H_{\breve{\Psi}}^*$. \
Thus we would like to ask what is the infimum of $\dim D$ such that
there exists a solution $\Phi\in L^2_s(\mathcal B(D, E))$ of the
equation $\ker H_{\breve\Phi}^* = \Delta H^2_{E^{\prime}}$. \ To
answer this question, we introduce a notion of the ``Beurling
degree" for an inner function, by employing a canonical
decomposition of strong $L^2$-functions induced by the given inner
function.

\medskip

\begin{df}\label{df of degree}
Let $\Delta$ be an  inner function with values in $\mathcal
B(E^{\prime}, E)$. \ Then the {\it Beurling degree} of $\Delta$,
denoted by $\hbox{deg}_B (\Delta)$, is defined by
$$
\begin{aligned}
\hbox{\rm deg}_B(\Delta):=\inf &\Bigl\{ \dim D\in \mathbb
Z_+\cup\{\infty\}:
\exists\ \hbox{a pair $(A,B)$ such that }\\
&\hbox{$\Phi=\Delta A^*+B$ is a canonical decomposition of $\Phi\in
L^2_s(B(D,E))$} \Bigr\}
\end{aligned}
$$
\end{df}

\bigskip

\noindent {\it Note}. By Corollary \ref{cor83},
$\hbox{deg}_B(\Delta)$ is well-defined: indeed,
$1\leq\hbox{deg}_{B}(\Delta)\leq 1+\dim E^\prime$. \ In particular,
if $E^{\prime}=\{0\}$, then $\hbox{deg}_{B}(\Delta)=1$. \ Also if
$\Delta$ is a unitary operator then clearly,
$\hbox{deg}_{B}(\Delta)=1$.

\bigskip

We are ready for:

\medskip

\begin{thm}\label{maintheorem_sm} (The Beurling degree and the  spectral
multiplicity) Given an inner function $\Delta$ with values in
$\mathcal B(E^\prime, E)$, with $\dim E^\prime<\infty$, let
$T:=S_{E}^*|_{\mathcal H(\Delta)}$. \ Then
\begin{equation}\label{main_2}
\mu_T=\hbox{\rm deg}_{B}(\Delta).
\end{equation}
\end{thm}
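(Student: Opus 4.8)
The plan is to identify both $\mu_T$ and $\hbox{deg}_B(\Delta)$ with one and the same number: the least $\dim D$ for which there is a strong $L^2$-function $\Phi$ with values in $\mathcal B(D,E)$ satisfying $\hbox{ker}\,H_{\breve\Phi}^*=\Delta H^2_{E'}$. First I would reformulate the two sides. Since $\dim E'<\infty$, the uniqueness assertion of Theorem \ref{vectormaintheorem} shows that if $\Phi=\Delta A^*+B$ is a canonical decomposition of a strong $L^2$-function with values in $\mathcal B(D,E)$, then necessarily $\hbox{ker}\,H_{\breve\Phi}^*=\Delta H^2_{E'}$; conversely, given any $\Phi\in L^2_s(\mathcal B(D,E))$ with $\hbox{ker}\,H_{\breve\Phi}^*=\Delta H^2_{E'}$, putting $A:=\Phi^*\Delta$ and $B:=\Phi-\Delta A^*$ and invoking the existence part of that proof produces a (BLH-)canonical decomposition of $\Phi$ whose first factor is exactly $\Delta$. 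Hence $\hbox{deg}_B(\Delta)=\inf\{\dim D:\exists\,\Phi\in L^2_s(\mathcal B(D,E)),\ \hbox{ker}\,H_{\breve\Phi}^*=\Delta H^2_{E'}\}$, a finite number by Corollary \ref{cor83}. On the other side, since $\mathcal H(\Delta)$ is a closed $S_E^*$-invariant subspace, a subset $F$ generates $T$ (i.e. $\bigvee\{T^nF:n\ge0\}=\mathcal H(\Delta)$) precisely when $E_F^*=\mathcal H(\Delta)$, which in particular already forces $F\subseteq\mathcal H(\Delta)$; thus $\mu_T=\inf\{\dim\bigvee F:E_F^*=\mathcal H(\Delta)\}$. (When $\mathcal H(\Delta)=\{0\}$ both quantities equal $1$ by the stated conventions, so from now on I assume $\mathcal H(\Delta)\ne\{0\}$.)

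The bridge between the two descriptions is the Hankel intertwining $S_E^*H_{\breve\Phi}=H_{\breve\Phi}S_D$, valid for every strong $L^2$-symbol (a direct check from $S_E^*J=JP_-(z\,\cdot\,)$ on $L^2_E\ominus H^2_E$, in the spirit of (\ref{formula22})), together with the identity $\hbox{ker}\,H_{\breve\Phi}^*=(\hbox{ran}\,H_{\breve\Phi})^\perp$ for densely defined operators. For the inequality $\mu_T\le\hbox{deg}_B(\Delta)$, I would take $\Phi\in L^2_s(\mathcal B(D,E))$ with $\hbox{ker}\,H_{\breve\Phi}^*=\Delta H^2_{E'}$ and $\dim D=\hbox{deg}_B(\Delta)$, so that $\hbox{cl ran}\,H_{\breve\Phi}=(\Delta H^2_{E'})^\perp=\mathcal H(\Delta)$. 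Fixing an orthonormal basis $\{d_k\}$ of $D$ and setting $F:=\{H_{\breve\Phi}d_k\}_k\subseteq\mathcal H(\Delta)$, the intertwining gives $S_E^{*n}(H_{\breve\Phi}d_k)=H_{\breve\Phi}(z^nd_k)$, and since the $z^nd_k$ span $\mathcal P_D$ we obtain $E_F^*=\hbox{cl ran}\,H_{\breve\Phi}=\mathcal H(\Delta)$; hence $F$ is generating and $\mu_T\le\dim\bigvee F\le\dim D=\hbox{deg}_B(\Delta)$.

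For the reverse inequality, I would start from a generating $F$ with $\dim\bigvee F=\mu_T$ and, after replacing $F$ by a basis of $\bigvee F$, assume $F=\{f_1,\dots,f_r\}$ with $r=\mu_T$. Put $\Psi:=z\,[f_1,\dots,f_r]\in H^2_s(\mathcal B(\mathbb C^r,E))$. A short computation with $J$ and $P_-$ shows $H_{\breve\Psi}e_k=f_k$ for the standard basis $\{e_k\}$ of $\mathbb C^r$ (this is the $k=0$ case of the formula $S_E^{*k}f=H_{[\overline z\breve f]}z^k$ used in the proof of Lemma \ref{thm327}), and then the intertwining again yields $\hbox{cl ran}\,H_{\breve\Psi}=E_F^*=\mathcal H(\Delta)$, whence $\hbox{ker}\,H_{\breve\Psi}^*=\mathcal H(\Delta)^\perp=\Delta H^2_{E'}$. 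Thus $\Psi$ witnesses $\hbox{deg}_B(\Delta)\le r=\mu_T$, and (\ref{main_2}) follows. The step I expect to require the most care — and the only place where $\dim E'<\infty$ is genuinely used — is the first reformulation: one must verify that, under this hypothesis, the inner function occurring in \emph{any} canonical decomposition of $\Phi$ is forced to coincide with the inner function of $\hbox{ker}\,H_{\breve\Phi}^*$ (this is false without the hypothesis; cf. Remark \ref{canoexam}). Once that is in hand, everything else is the Hankel-operator bookkeeping already assembled in Chapters 3–5.
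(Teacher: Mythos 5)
Your proposal is correct and follows essentially the same route as the paper's proof: both directions reduce to the characterization $\hbox{deg}_B(\Delta)=\inf\{\dim D:\ \ker H_{\breve\Phi}^*=\Delta H^2_{E^\prime}\}$ obtained from the uniqueness/existence parts of Theorem \ref{vectormaintheorem} (which is indeed the only place $\dim E^\prime<\infty$ enters), and then pass between generating sets and closed Hankel ranges exactly as in Lemma \ref{thm327} — your vectors $H_{\breve\Phi}d_k$ coincide with the paper's columns of $\overline z\bigl(\Phi_+-\widehat{\Phi}(0)\bigr)$, and your $\Psi=z[f_1,\dots,f_r]$ is the paper's $z[G]$. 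The argument is sound as written.
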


\begin{proof}
Let $T:=S_E^*\vert_{\mathcal H(\Delta)}$. \ We first claim that
\begin{equation}\label{main2k}
\begin{aligned}
\hbox{deg}_B(\Delta)=\inf \bigl\{ \dim D: \ \ker
H_{\breve\Phi}^*=\Delta H^2_{E^\prime}\ \hbox{for some} &\ \Phi\in
L^2_s(\mathcal B(D,E))\\
&\qquad\qquad\hbox{with} \ D\neq \{0\} \bigr\}.
\end{aligned}
\end{equation}
To see this, let $\Delta$ be an inner function with values in
$\mathcal B(E^\prime, E)$, with $\dim E^\prime<\infty$. \ Suppose
that $\Phi=\Delta A^*+B$ is a canonical decomposition of $\Phi$  in
$L^2_s(\mathcal B(D,E))$. \ Then by the uniqueness of $\Delta$ in
Theorem \ref{vectormaintheorem}, we have
\begin{equation}\label{5.22.1}
\ker H_{\breve\Phi}^*=\Delta H^2_{E^{\prime}},
\end{equation}
which implies
\begin{equation}\label{471}
\begin{aligned}
\hbox{deg}_B(\Delta) \geq \inf \bigl\{ \dim D: \ \ker
H_{\breve\Phi}^*=\Delta H^2_{E^\prime}\ \hbox{for some} &\ \Phi\in
L^2_s(\mathcal B(D,E))\\
&\qquad\qquad\hbox{with} \ D\neq \{0\} \bigr\}.
\end{aligned}
\end{equation}
For the reverse inequality of (\ref{471}), suppose $\Phi\in
L^2_s(\mathcal B(D, E))$ satisfies $\ker H_{\breve\Phi}^*=\Delta
H^2_{E^{\prime}}$. \ Then by the same argument as in the proof of
the first assertion of Theorem \ref{vectormaintheorem},
$$
\hbox{$\Phi=\Delta A^*+B$ \ \ ($A:=\Phi^*\Delta$ and $B:=\Phi-\Delta
A^*$)}
$$
is a canonical decomposition of $\Phi$, and hence we have the
reverse inequality of (\ref{471}). \ This proves the claim
(\ref{main2k}). \ We will next show that
\begin{equation}\label{Bbbb}
\hbox{deg}_B(\Delta)\le \mu_T.
\end{equation}
If $\mu_T=\infty$, then (\ref{Bbbb}) is trivial. Suppose
$p\equiv\mu_T<\infty$. \ Then there exists a subset $G=\{g_1,
g_2,\cdots g_p\}\subseteq H^2_{E}$ such that $E_{G}^*=\mathcal
H(\Delta)$. \ Put
$$
\Psi:=z [G].
$$
Then by Lemma \ref{dhfbgbgbgbg}, $\Psi \in H^2_s(\mathcal B(\mathbb
C^p, E))$. \ It thus follows from Lemma \ref{thm327} that
$$
\mathcal H(\Delta)=E_{G}^*=\hbox{cl ran}\,
H_{\overline{z}[\breve{G}]} =\hbox{cl ran}\, H_{\breve{\Psi}},
$$
which implies $\hbox{ker}\,H_{\breve{\Psi}}^*= \Delta
H^2_{E^{\prime}}$. \ Thus by (\ref{main2k}),
$\hbox{deg}_B(\Delta)\le p=\mu_T$, which proves (\ref{Bbbb}). \ For
the reverse inequality of (\ref{Bbbb}), suppose that $r\equiv\dim
E^{\prime}<\infty$, Write $m_0\equiv \hbox{deg}_B(\Delta)$. \ Then
it follows from Theorem \ref{kkknvnv} and (\ref{main2k}) that
$m_0\leq r+1<\infty$ and there exists  a function $\Phi\in
L^2_s(\mathcal B(\mathbb C^{m_0},E))$ such that
\begin{equation}
\hbox{ker}\, H_{\breve{\Phi}}^*=\Delta H^2_{\mathbb C^r}.
\end{equation}
Now let
$$
G:=\Phi_+-\widehat{\Phi}(0).
$$
Thus we may write $G=z F$ for some $F\in H^2_s(\mathcal B(\mathbb
C^{m_0},E))$. \ Then by Lemma \ref{boundedhankel} and Lemma
\ref{thm327}, we have that
$$
E_{\{F\}}^*=\hbox{cl ran}\, H_{\breve{G}}=\bigl(\hbox{ker}\,
H_{\breve{\Phi}}^*\bigr)^{\perp}=\mathcal H(\Delta),
$$
which implies $\mu_T \leq m_0=\hbox{deg}_B(\Delta)$. \ This
completes the proof.
\end{proof}

\bigskip

\begin{cor}
Let $T:=S_{E}^*|_{\mathcal H(\Delta)}$. \ If
$\hbox{rank}\,(I-T^*T)<\infty$, then
$$
\mu_T=\deg_B(\Delta).
$$
\end{cor}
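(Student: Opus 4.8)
The plan is to reduce the statement to the case of an inner function with finite-dimensional domain, where Theorem~\ref{maintheorem_sm} already applies, the tool for the reduction being the Model Theorem. First I would note that $T$ is a $C_{0\,\bigcdot}$-contraction: since $S_E^{*n}x\to 0$ for every $x\in H^2_E$, the same limit holds for $T=S_E^*|_{\mathcal H(\Delta)}$ acting on $\mathcal H(\Delta)$. Hence, by the Model Theorem (p.~\pageref{MT}), $T$ is unitarily equivalent to $S_{\mathcal E}^*|_{\mathcal H(\Theta)}$ for some inner function $\Theta$ with values in $\mathcal B(\mathcal E',\mathcal E)$, where $\mathcal E=\operatorname{cl\,ran}(I-T^*T)$. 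By hypothesis $\dim\mathcal E=\operatorname{rank}(I-T^*T)<\infty$, and since $\Theta^*\Theta=I_{\mathcal E'}$ a.e.\ on $\mathbb T$, the operator $\Theta(z)$ is an isometry from $\mathcal E'$ into $\mathcal E$, so $\dim\mathcal E'\le\dim\mathcal E<\infty$. Applying Theorem~\ref{maintheorem_sm} to $\Theta$ then gives $\mu_T=\mu_{S_{\mathcal E}^*|_{\mathcal H(\Theta)}}=\deg_B(\Theta)$, and it only remains to prove $\deg_B(\Delta)=\deg_B(\Theta)$.

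For this, one inequality costs nothing: the bound $\deg_B(\Delta)\le\mu_T$ holds for \emph{every} inner function $\Delta$, by imitating the proof of Theorem~\ref{maintheorem_sm}. Choosing a generating set $G=\{g_1,\dots,g_p\}$ of $\mathcal H(\Delta)$ with $p=\mu_T$ and putting $\Psi:=z[G]\in H^2_s(\mathcal B(\mathbb C^p,E))$, Lemma~\ref{thm327} gives $\operatorname{cl\,ran}H_{\breve\Psi}=E_G^*=\mathcal H(\Delta)$, hence $\ker H_{\breve\Psi}^*=\Delta H^2_{E'}$, and then $A:=\Psi^*\Delta$, $B:=\Psi-\Delta A^*$ yield a canonical decomposition $\Psi=\Delta A^*+B$ with $\dim D=p$ by the first part of Theorem~\ref{vectormaintheorem} (which uses no finiteness). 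For the reverse inequality I would take a canonical decomposition $\Phi=\Delta A^*+B\in L^2_s(\mathcal B(\mathbb C^{m_0},E))$ achieving $m_0=\deg_B(\Delta)$, set $G:=\Phi_+-\widehat\Phi(0)=zF$, observe that $\breve\Phi$ and $\breve G$ have the same anti-analytic part so that $H_{\breve\Phi}=H_{\breve G}$, and conclude via Lemma~\ref{thm327} that the $m_0$ columns of $F$ generate $E_{\{F\}}^*=(\ker H_{\breve\Phi}^*)^\perp$. If $\ker H_{\breve\Phi}^*=\Delta H^2_{E'}$, this subspace equals $\mathcal H(\Delta)$, so $\mu_T\le m_0=\deg_B(\Delta)$ and the proof is complete.

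Thus the whole argument hinges on verifying $\ker H_{\breve\Phi}^*=\Delta H^2_{E'}$ for a minimal witness $\Phi$, and this is the step I expect to be the main obstacle. Writing $\ker H_{\breve\Phi}^*=\Theta'H^2_{E''}$ as in Lemma~\ref{kerhadjoint}, the inclusion $\Delta H^2_{E'}\subseteq\Theta'H^2_{E''}$ holds by the computation behind (\ref{qqq3}), so $\Delta=\Theta'\Delta_1$ for an inner $\Delta_1$, and condition (iv) of Theorem~\ref{vectormaintheorem} together with Theorem~\ref{thm7566} forces $\dim E'=\dim E''$. When $\dim E'<\infty$ this makes $\Delta_1$ two-sided inner and one is done; in general $\Delta_1$ need not be two-sided inner (cf.\ Remark~\ref{canoexam}), and the role of the hypothesis $\operatorname{rank}(I-T^*T)<\infty$ must be precisely to exclude this. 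Here I would use that $I-T^*T$ is the operator $f\mapsto P_{\mathcal H(\Delta)}\bigl(f(0)\bigr)$, whence $\operatorname{rank}(I-T^*T)=\dim\{f(0):f\in\mathcal H(\Delta)\}<\infty$; feeding this into the orthogonal splitting $\mathcal H(\Delta)=\mathcal H(\Theta')\oplus\Theta'\mathcal H(\Delta_1)$ and the attendant block-triangular form of $T$, whose diagonal blocks are $S_E^*|_{\mathcal H(\Theta')}$ and a copy of the model operator $S_{E''}^*|_{\mathcal H(\Delta_1)}$, should force $\Theta'\mathcal H(\Delta_1)=\{0\}$, i.e.\ $\Delta_1$ two-sided inner. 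Alternatively one may transplant $\Phi$ and the family $\{F\}$ through the unitary equivalence $T\cong S_{\mathcal E}^*|_{\mathcal H(\Theta)}$ and invoke $\dim\mathcal E'<\infty$ directly; either way, controlling the possible ``undershoot'' of a minimal canonical decomposition when $\dim E'=\infty$ is the delicate point.
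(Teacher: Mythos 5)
There is a genuine gap, and it sits exactly where you flag it. Your first two steps are fine: $T\in C_{0\,\bigcdot}$, the Model Theorem gives $\mu_T=\deg_B(\Theta)$ for a characteristic function $\Theta$ with finite-dimensional spaces, and $\deg_B(\Delta)\le\mu_T$ holds for every inner function (this is Remark \ref{rembdbffvfv}(a), proved exactly as you indicate). But the reverse inequality $\mu_T\le\deg_B(\Delta)$ is never established: you only assert that the rank hypothesis ``should force'' a minimal canonical decomposition $\Phi=\Delta A^*+B$ to satisfy $\ker H_{\breve\Phi}^*=\Delta H^2_{E'}$. That is precisely the uniqueness assertion of Theorem \ref{vectormaintheorem}, which is proved only under $\dim E'<\infty$, and Remark \ref{canoexam} shows it genuinely fails when $\dim E'=\infty$; neither the block-triangular splitting of $T$ nor the formula you quote for $\hbox{rank}\,(I-T^*T)$ (note that in fact $(I-T^*T)f=P_{\mathcal H(\Delta)}\widehat{f}(0)$, which is asserted rather than derived) is developed into an argument excluding the possible undershoot. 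As written, your proof yields only $\deg_B(\Delta)\le\mu_T=\deg_B(\Theta)$.

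The paper's proof avoids the whole detour and uses the hypothesis once, at the outset: since $\Delta(z)$ is an isometry from $E'$ into $E$, $\dim E'\le\dim E$, and by the Model Theorem identification of $E$ (p.~\pageref{MT}) one has $\dim E=\hbox{rank}\,(I-T^*T)<\infty$; hence $\dim E'<\infty$ and Theorem \ref{maintheorem_sm} applies verbatim to the given $\Delta$, giving $\mu_T=\deg_B(\Delta)$ with no second characteristic function $\Theta$ and no comparison of two Beurling degrees --- which, as your attempt shows, is where all the difficulty would concentrate. The efficient repair of your argument is the same observation: extract $\dim E'<\infty$ directly from the rank hypothesis and then quote Theorem \ref{maintheorem_sm}; trying instead to prove $\mu_T\le\deg_B(\Delta)$ while allowing $\dim E'=\infty$ amounts to extending Theorem \ref{maintheorem_sm} beyond the range in which the paper proves it, and no such extension is supplied in your proposal.
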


\medskip

\begin{proof}
This follows at once from Theorem \ref{maintheorem_sm} together with
the observation that if $\Delta$ is an inner function with values in
$\mathcal B(E^\prime, E)$, then $\dim\, E^\prime\le \dim\,E =
\hbox{rank}\,(I-T^*T)<\infty$, where the second equality comes from
the Model Theorem (cf. p.\pageref{MT}, paragraph containing (\ref{stronglimit})).
\end{proof}

\medskip

\begin{rem}\label{rembdbffvfv}
We conclude with some observations on Theorem \ref{maintheorem_sm}.

\begin{itemize}
\item[(a)] From a careful analysis of the proof of Theorem \ref{maintheorem_sm},
we can see that (\ref{Bbbb}) holds in general without the assumption
``$\dim\, E^\prime<\infty$'': more concretely, given an inner
function $\Delta$ with values in $\mathcal B(E^\prime,E)$, if
$T:=S_E^*\vert_{\mathcal H(\Delta)}$, then
$$
\hbox{deg}_B(\Delta)\le \mu_T.
$$

\medskip

\item[(b)] From  Remark \ref{remark7.9} and (\ref{main2k}),
we see that if
$$
\Delta:=\begin{bmatrix} z&0&0\\0&z&0\\0&0&1\\0&0&0\end{bmatrix},
$$
then $\hbox{deg}_{B}(\Delta)=2$. \ Let $T:=S_{\mathbb
C^4}^*|_{\mathcal H(\Delta)}$. \ Observe that
$$
\mathcal H(\Delta)=\mathcal H(z)\oplus \mathcal H(z)\oplus
\{0\}\oplus H^2.
$$
Since $\mathcal H(z)\oplus \mathcal H(z)$ has no cyclic vector, we
must have $\mu_T \neq 1$. \ In fact, if we put
$$
f=
\begin{bmatrix}
1\\0\\0\\a\end{bmatrix}\ \
\hbox{and } \ \
g=\begin{bmatrix} 0\\1\\0\\0\end{bmatrix},
$$
where $\overline a$ is not of bounded type, then $E^*_{\{f,
g\}}=\mathcal H(\Delta)$, which implies $\mu_T=2$. \ This
illustrates Theorem \ref{maintheorem_sm}.
\end{itemize}
\end{rem}

We now answer Question \ref{mainq}(ii) in the affirmative.

\begin{rem}
Suppose $\Delta$
is an inner function with values in $\mathcal B(E^\prime, E)$, with
$\dim E^\prime<\infty$. \ If $\Phi = \Delta A^*+B$ is a canonical
decomposition of $\Phi$ in $L^2_s(\mathcal B(D,E))$. \ Then by
Theorem \ref{vectormaintheorem}, we have
$$
\hbox{ker}\, H_{\breve{\Phi}}^*=\Delta H^2_{E^{\prime}}.
$$
It thus follows from the proof of Theorem \ref{maintheorem_sm} that
$$
E_{\{F\}}^*=\mathcal H(\Delta),
$$
where $F$ is defined by
$$
F(z):=\overline{z}\bigl(\Phi_+(z)-\hat{\Phi}(0)\bigr).
$$
This gives an answer to the problem of describing the set $\{F\}$ in
$H^2_E$ such that $\mathcal H(\Delta)=E_{\{F\}}^*$, given an inner
function $\Delta$ with values in $\mathcal B(E^\prime, E)$, with
$\dim\,E^\prime<\infty$.
\end{rem}

%
%
%
%
%
%

\chapter{The spectral multiplicity of model operators} \

In this chapter, we consider Question \ref{q444}: {\it Let
$T:=S_{E}^*|_{\mathcal H(\Delta)}$. For which inner function
$\Delta$ with values in $\mathcal B(E^\prime, E)$, does it follow
that}
$$
T \textit{ is multiplicity-free, i.e., } \mu_T=1 \textit{?}
$$
If $\dim E^\prime<\infty$, then in the viewpoint of Theorem
\ref{maintheorem_sm}, Question \ref{q444} is equivalent to the
following: if $T$ is the truncated backward shift
$S_{E}^*|_{\mathcal H(\Delta)}$, which inner function $\Delta$
guarantees that $\hbox{deg}_B(\Delta)=1$ ? \ To answer Question
\ref{q444}, in Section 7.1, we consider the notion of the
characteristic scalar inner function of operator-valued inner
functions having a meromorphic pseudo-continuation of bounded type in
$\mathbb{D}^e \equiv \{z: 1<|z|\le\infty\}$. \ In Section 7.2, we give an answer to Question \ref{q444}. \
In Section 7.3, we consider a reduction to the case of
$C_0$-contractions for the spectral multiplicity of model operators.

\vskip 1cm

%
%
%
%

\noindent {\bf \S\ 7.1. Characteristic scalar inner functions} \

\bigskip
\noindent In this section we consider the characteristic scalar
inner functions of operator-valued inner functions, by using the
results of Section 4.5.  \ The characteristic scalar inner function
of a two-sided inner matrix function has been studied in \cite{Hel},
\cite{SFBK} and \cite{CHL3}.

Let $\Delta\in H^\infty(\mathcal B(D,E))$ have a meromorphic
pseudo-continuation of bounded type in $\mathbb{D}^e$. \ Then by Lemma
\ref{thgg6688}, there exists a scalar inner function $\delta$ such
that $\delta H^2_{E} \subseteq \hbox{ker}\,H_{\Delta^*}$. \ Put
$G:=\delta\Delta^*\in H^{\infty}(\mathcal B(E, D))$. \ If further
$\Delta$ is inner then $ G \Delta=\delta I_D$, so that
$$
\hbox{g.c.d.}\,\bigl\{\delta : \ G \Delta=\delta I_D \ \hbox{for
some} \ G \in H^\infty(\mathcal B(E, D)) \bigr\}
$$
always exists. \ Thus the following definition makes sense.

\medskip

\begin{df}\label{def5.1}
Let $\Delta$ be an inner function with values in $\mathcal B(D,E)$.
\  If $\Delta$ has a meromorphic pseudo-continuation of bounded type
in $\mathbb{D}^e$, define
$$
m_{\Delta}:= \hbox{g.c.d.}\,\bigl\{\delta : \ G \Delta=\delta I_D \
\hbox{for some} \  G \in H^\infty(\mathcal B(E, D)) \bigr\},
$$
where $\delta$ is a scalar inner function. \ The inner function
$m_{\Delta}$ is called the {\it characteristic scalar inner
function} of $\Delta$.
\end{df}

\bigskip

We note that if $T\equiv P_{\mathcal H(\Delta)}S_E\vert_{\mathcal
H(\Delta)}\in C_0$, then $m_\Delta$ coincides with the minimal annihilator
$m_T$ of $T$ (cf. \cite{Ber}, \cite{SFBK}, \cite{CHL3}).

\medskip

We would like to remark that
\begin{equation}\label{rref}
\hbox{g.c.d.}\,\bigl\{\delta : \ G \Delta=\delta I_D \ \hbox{for
some} \ G \in H^\infty(\mathcal B(E, D)) \bigr\}
\end{equation}
may exist for some inner function $\Delta$ having no meromorphic
pseudo-continuation of bounded type in $\mathbb{D}^e$. \ To see this, let
\begin{equation}\label{notbtex}
\Delta:=\begin{bmatrix}f\\g \end{bmatrix} \quad (f,g\in H^\infty),
\end{equation}
where $f$ and $g$ are given in Example \ref{ex3.6}. \ Then $\Delta$
is an inner function. \ Since $\breve f$ is not of bounded type it
follows from Corollary \ref{cor512,222} that $\Delta$ has no
meromorphic pseudo-continuation of bounded type in $\mathbb{D}^e$. \ On the
other hand, since $\Delta$ is inner, by the Complementing Lemma,
there exists a function $G \in H^{\infty}_{M_{1\times 2}}$ such that
$G \Delta$ is a scalar inner function, so that (\ref{rref}) exists.

\medskip

If $\Delta$ is an $n\times n$ square inner matrix function then we
may write $\Delta\equiv [\theta_{ij}\bar b_{ij}]$, where
$\theta_{ij}$ is inner and $\theta_{ij}$ and $b_{ij}\in H^\infty$
are coprime for each $i,j=1,2,\cdots, n$. \ In Lemma 4.12 of
\cite{CHL3}, it was shown that
$$
m_{\Delta}= \hbox{\rm
l.c.m.}\,\bigl\{\theta_{ij}:i,j=1,2,\cdots,n\bigr\}.
$$
In this section, we examine the cases of general inner functions
that have meromorphic pseudo-continuations of bounded type in $\mathbb{D}^e$.

On the other hand, if $\Phi\in H^\infty(\mathcal B(D,E))$ has a
meromorphic pseudo-continuation of bounded type in $\mathbb{D}^e$, then by
Lemma \ref{thgg6688}, $\delta H^2_{E} \subseteq
\hbox{ker}\,H_{\Phi^*}$ for some scalar inner function $\delta$. \
Thus we may also define
$$
\omega_\Phi:= \hbox{g.c.d.}\,\bigl\{\delta : \ \delta H^2_{E}
\subseteq \hbox{ker}\,H_{\Phi^*} \ \hbox{for some scalar inner
function} \ \delta \bigr\}.
$$
If $\Delta$ is an inner function with values  in $\mathcal B(D, E)$
and has a meromorphic pseudo-continuation of bounded type in $\mathbb{D}^e$,
then  $\omega_{\Delta}$ is called the {\it pseudo-characteristic
scalar inner function} of $\Delta$. \ Note that $m_{\Delta}$ is an
inner divisor of $\omega_{\Delta}$. \ If further $\Delta$ is
two-sided inner, then
\begin{equation}\label{cefcvwf}
\delta H^2_E \subseteq \hbox{ker}\, H_{\Delta^*} \Longleftrightarrow
G\equiv\delta \Delta^*\in H^{\infty}(\mathcal B(E))
\Longleftrightarrow G \Delta = \Delta G=\delta I_E,
\end{equation}
which implies $m_{\Delta}=\omega_{\Delta}$.

\medskip

The following lemma shows a way to determine $\omega_\Phi$ more
easily.

\medskip

\begin{lem}\label{thnfhfhbbbbskl}
Let $D$ and $E$ be separable complex Hilbert spaces and let
$\{d_j\}$ and $\{e_i\}$ be orthonormal bases of $D$ and $E$,
respectively. Suppose $\Phi\in H^\infty(\mathcal B(D,E))$ has a
meromorphic pseudo-continuation of bounded type in $\mathbb{D}^e$.  \ In view
of Proposition \ref{corapp-thm1}, we may write
$$
\phi_{ij}\equiv\langle \Phi d_j, \
e_i\rangle_E=\theta_{ij}\overline{a}_{ij},
$$
where $\theta_{ij}$ is inner and $\theta_{ij}$ and $a_{ij}\in
H^\infty$ are coprime. \ Then we have
$$
\omega_{\Phi}=\hbox{\rm l.c.m.}\,\bigl\{\theta_{ij}:i,j=1,2,\cdots,
\bigr\}.
$$
\end{lem}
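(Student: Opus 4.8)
The plan is to identify $\omega_\Phi$ with $\omega:=\mathrm{l.c.m.}\{\theta_{ij}\}$ by proving the two divisibility relations that characterize $\omega_\Phi$ as a greatest common divisor, first taking care to see that $\omega$ is indeed a genuine scalar inner function. For the setup, I would note that $\Phi\in H^\infty(\mathcal B(D,E))\subseteq L^\infty(\mathcal B(D,E))$ has a meromorphic pseudo-continuation of bounded type in $\mathbb{D}^e$, so by Lemma \ref{thgg6688} there is a scalar inner $\delta_0$ with $\delta_0 H^2_E\subseteq\ker H_{\Phi^*}$ and $\Phi=\delta_0 A^*$ for some $A\in H^\infty(\mathcal B(E,D))$; then $\phi_{ij}=\delta_0\,\overline{a_{ij}}$ with $a_{ij}:=\langle A(\cdot)e_i,d_j\rangle\in H^\infty$, and dividing out the inner g.c.d. of $\delta_0$ with the inner part of $a_{ij}$ produces the coprime factorization $\phi_{ij}=\theta_{ij}\overline{a}_{ij}$ of the statement, each $\theta_{ij}$ an inner divisor of $\delta_0$ (this is the content of ``in view of Proposition \ref{corapp-thm1}'').

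The first half (necessity) is: every scalar inner $\delta$ with $\delta H^2_E\subseteq\ker H_{\Phi^*}$ is divisible by each $\theta_{ij}$. I would test the inclusion on the vector $\delta\,(h\,e_i)\in\delta H^2_E$ for arbitrary $h\in H^2$: the $d_j$-component of $\Phi^*(\delta h\,e_i)$ is $\delta h\,\langle\Phi^* e_i,d_j\rangle=\delta h\,\overline{\theta_{ij}}\,a_{ij}$, which must lie in $H^2_D$, hence $\theta_{ij}\mid a_{ij}\delta h$ for all $h$, so $\theta_{ij}\mid a_{ij}\delta$, and coprimeness of $\theta_{ij}$ and $a_{ij}$ forces $\theta_{ij}\mid\delta$. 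Applying this with $\delta=\delta_0$ shows $\delta_0 H^2_E\subseteq\theta_{ij}H^2_E$ for all $i,j$, so $\bigcap_{i,j}\theta_{ij}H^2_E\neq\{0\}$ and $\omega:=\mathrm{l.c.m.}\{\theta_{ij}\}$ is a bona fide scalar inner function; and since $\theta_{ij}\mid\delta$ for all $i,j$, by definition of the least common multiple $\omega\mid\delta$ for every such $\delta$.

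The second half (sufficiency) is: $\omega H^2_E\subseteq\ker H_{\Phi^*}$, which combined with the previous paragraph makes $\omega$ the greatest common divisor of the set defining $\omega_\Phi$, i.e. $\omega=\omega_\Phi$. For $f=\omega g$ with $g=\sum_i g_i e_i\in H^2_E$, the $d_j$-component of $\Phi^* f$ is the $L^2$-limit of the partial sums $\sum_{i\le N}\omega g_i\,\overline{\theta_{ij}}\,a_{ij}$ — here I use that $M_{\Phi^*}$ is bounded on $L^2_E$ and that pairing with $d_j$ is continuous — and since $\theta_{ij}\mid\omega$ each summand equals $(\omega/\theta_{ij})\,a_{ij}\,g_i\in H^2$; because $H^2$ is closed in $L^2$ the limit lies in $H^2$, so $\Phi^* f\in H^2_D$ and $f\in\ker H_{\Phi^*}$.

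I expect the only real care is needed in two places: justifying the coprime entrywise factorization $\phi_{ij}=\theta_{ij}\overline a_{ij}$ in the genuinely operator-valued (non-matrix) regime, which is exactly why the statement leans on Proposition \ref{corapp-thm1} and its proof, and the $L^2$-convergence plus term-by-term argument in the last step when $E$ is infinite-dimensional; both are routine but should be written carefully. Everything else is elementary scalar inner-function bookkeeping, invoking repeatedly that ``$\theta\mid\psi h$ for all $h\in H^2$'' forces $\theta\mid\psi$, together with the coprimeness of $\theta_{ij}$ and $a_{ij}$.
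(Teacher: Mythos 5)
Your proof is correct and takes essentially the same route as the paper's: you establish the same two divisibilities — each $\theta_{ij}$ divides every admissible scalar inner $\delta$ via the coprime entrywise factorization, and $\omega=\hbox{l.c.m.}\{\theta_{ij}\}$ itself satisfies $\omega H^2_E\subseteq\ker H_{\Phi^*}$ by a term-by-term argument using $\Phi\in H^\infty(\mathcal B(D,E))$ — exactly as the paper does through the factorization $\Phi=\theta A^*$ of Lemma \ref{thgg6688} and the integral criterion of Lemma \ref{thm4.2}. The differences (testing on $\delta h e_i$ instead of citing the proof of Proposition \ref{corapp-thm1}, and $L^2$-convergence of partial sums plus closedness of $H^2$ in place of the paper's Fatou-type interchange and passage from the basis $\{d_j\}$ to general $x\in D$) are only organizational.
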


\begin{proof}  Let $\Phi\in H^\infty(\mathcal B(D,E))$
have a meromorphic pseudo-continuation of bounded type in $\mathbb{D}^e$. \ By
Lemma \ref{thgg6688}, we may write $\Phi=\theta A^*$ for some $A \in
H^{\infty}(\mathcal B(E,D))$ and a scalar inner function $\theta$. \
Also by an analysis of the proof of Proposition \ref{corapp-thm1},
we can see that $\theta_0\equiv\hbox{\rm
l.c.m.}\,\bigl\{\theta_{ij}:i,j=1,2,\cdots, \bigr\}$ is an inner
divisor of $\theta$. \ Thus by Lemma \ref{thgg6688}, $\theta_0$ is
an inner divisor of $\omega_{\Phi}$. \ Since $\Phi\in
H^\infty(\mathcal B(D,E))$, it follows that for all $f \in H^2_E$
and $j,n \geq 1$,
\begin{equation}\label{kkkgjfd} \langle
\Phi(z)d_j, \ z^n\theta_0(z)f(z)\rangle_E \in L^2.
\end{equation}
On the other hand, for all $f \in H^2_E$,
\begin{equation}\label{49jfgd}
f(z)=\sum_{i\geq 1}\langle f(z), \ e_i\rangle e_i\equiv\sum_{i\geq
1}f_i(z) e_i  \quad \hbox{for almost all} \ z\in \mathbb T \quad(f_i
\in H^2).
\end{equation}
Since $\theta_0=\hbox{\rm
l.c.m.}\,\bigl\{\theta_{ij}:i,j=1,2,\cdots, \bigr\}$, it follows
from (\ref{kkkgjfd}) and (\ref{49jfgd}) that for all $j,n \geq 1$,
$$
\aligned \int_{\mathbb T}\langle\Phi(z)d_j, \
z^n\theta_0(z)f(z)\rangle_Edm(z) &=\int_{\mathbb T}
\overline{z}^n\sum_{i\geq1}\overline{f_i}(z)\overline{\theta_0}(z)
\theta_{ij}(z)\overline{a}_{ij}(z)dm(z)\\
&=0,
\endaligned
$$
where the last equality follows from the fact that
$\overline{z}^n\sum_{i\geq1}\overline{f_i}(z)\overline{\theta_0}(z)
\theta_{ij}(z)\overline{a}_{ij}(z) \in L^2 \ominus H^2$. \ Since
$\{d_i\}$ is an orthonormal basis for $D$, it follows from Fatou's Lemma that for all $x \in D$ and $n=1,2,3,\cdots$,
$$
\int_{\mathbb T}\langle \Phi(z)x, \
z^n\theta_0(z)f(z)\rangle_{E}dm(z)=0.
$$
Thus by Lemma \ref{thm4.2}, $\theta_0 H^2_E \subseteq \hbox{ker}\,
H_{\Phi^*}$, so that $\omega_{\Phi}$ is an inner divisor of
$\theta_0$, and therefore $\theta_0=\omega_{\Phi}$. \ This complete
the proof.
\end{proof}

\medskip

\begin{cor}\label{cor720jfjg}  Let $\Delta$ be a two-sided inner matrix
function. \ Thus, in view of Corollary \ref{cor512,222}, we may
write $\Delta\equiv \bigl[\theta_{ij}\overline{b}_{ij}\bigr]$, where
$\theta_{ij}$ is an inner function and $\theta_{ij}$ and $b_{ij}\in
H^\infty$ are coprime for each $i,j=1,2,\cdots$. \ Then
$$
\omega_{\Delta}=m_\Delta=\hbox{\rm
l.c.m.}\,\bigl\{\theta_{ij}:i,j=1,2,\cdots, \bigr\}.
$$
\end{cor}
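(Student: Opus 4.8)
The plan is to deduce Corollary \ref{cor720jfjg} from Lemma \ref{thnfhfhbbbbskl} together with the identity $\omega_\Delta = m_\Delta$ valid for two-sided inner functions, and the fact (Corollary \ref{cor512,222}) that every entry of a two-sided inner matrix function has a flip of bounded type, hence can be written as $\theta_{ij}\overline b_{ij}$ with $\theta_{ij}$ inner and $\theta_{ij}, b_{ij}$ coprime. Concretely, the first step is to observe that a two-sided inner matrix function $\Delta$ has a meromorphic pseudo-continuation of bounded type in $\mathbb{D}^e$: this follows from Corollary \ref{cor512,222} (equivalence of (a) and (b)) since $\breve\Delta$ is of bounded type by Corollary \ref{thmthjdkfjkf}, using that $[\Delta,\Delta_c]=\Delta$ when $\Delta$ is two-sided inner. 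This legitimizes the very definitions of $m_\Delta$ and $\omega_\Delta$ in this setting.

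Next I would invoke (\ref{cefcvwf}): since $\Delta$ is two-sided inner, for a scalar inner function $\delta$ one has $\delta H^2_E\subseteq \ker H_{\Delta^*}$ if and only if $\delta\Delta^*\in H^\infty(\mathcal B(E))$ if and only if $G\Delta=\Delta G=\delta I_E$ for $G=\delta\Delta^*$. Taking greatest common inner divisors over all such $\delta$ on both sides shows the two defining g.c.d.'s coincide, i.e. $m_\Delta = \omega_\Delta$. (This is exactly the remark made right after the definition of $\omega_\Delta$; I would simply spell it out.)

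Finally, applying Lemma \ref{thnfhfhbbbbskl} with $D=E=\mathbb C^n$ (or the appropriate matrix dimensions), using the orthonormal bases given by the standard basis vectors, yields $\omega_\Delta = \mathrm{l.c.m.}\{\theta_{ij}: i,j\}$, where the $\theta_{ij}$ are precisely the inner parts appearing in the coprime factorizations $\langle \Delta d_j, e_i\rangle = \theta_{ij}\overline a_{ij}$; and since these are the same $\theta_{ij}$ as in the statement (the coprime factorization of a scalar $L^2$-function of bounded type is unique up to unimodular constants), we get $\omega_\Delta = m_\Delta = \mathrm{l.c.m.}\{\theta_{ij}\}$. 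I do not anticipate a genuine obstacle here; the only point requiring a modicum of care is checking that the hypotheses of Lemma \ref{thnfhfhbbbbskl} are met — namely that $\Delta\in H^\infty(\mathcal B(\mathbb C^n))$ (clear) and that it has a meromorphic pseudo-continuation of bounded type in $\mathbb{D}^e$ (the first step above). So the proof is essentially a three-line assembly: pseudo-continuation via Corollary \ref{cor512,222}, then $m_\Delta=\omega_\Delta$ via (\ref{cefcvwf}), then the l.c.m.\ formula via Lemma \ref{thnfhfhbbbbskl}.
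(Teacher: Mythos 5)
Your proposal is correct and follows essentially the same route as the paper: the paper's proof is simply "immediate from Lemma \ref{thnfhfhbbbbskl}," relying on the remark following (\ref{cefcvwf}) that $m_\Delta=\omega_\Delta$ for two-sided inner $\Delta$ and on Corollaries \ref{thmthjdkfjkf} and \ref{cor512,222} for the meromorphic pseudo-continuation hypothesis, exactly as you do. Your write-up just makes these implicit verifications explicit.
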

\begin{proof} Immediate from Lemma \ref{thnfhfhbbbbskl}.
\end{proof}

\medskip

\begin{rem}\label{exlema444}
If $\Delta$ is not two-sided inner then Corollary \ref{cor720jfjg}
may fail.  \ To see this, let
$$
\Delta:=\frac{1}{\sqrt{2}}\begin{bmatrix}1\\z\end{bmatrix}.
$$
Then by Corollary \ref{cor512,222}, $\Delta$ has a meromorphic
pseudo-continuation of bounded type in $\mathbb{D}^e$. \ It thus follows from
Lemma \ref{thnfhfhbbbbskl} that $\omega_{\Delta}=z$. \ On the other
hand, let $G:=\begin{bmatrix}\sqrt{2} & 0
\end{bmatrix}$.
Then $G \Delta=1$, so that $m_{\Delta}=1\neq z=\omega_{\Delta}$. \
Note that, by Corollary \ref{cor720jfjg},
$$
[\Delta,
\Delta_c]=\frac{1}{\sqrt{2}}\begin{bmatrix}1&1\\z&-z\end{bmatrix}
\quad \hbox{and} \quad m_{[\Delta, \Delta_c]}= \omega_{[\Delta,
\Delta_c]}=z.
$$
\end{rem}

\medskip

The following lemma shows that Remark \ref{exlema444} is not
an accident.


\begin{lem}\label{thnfhjjjfhbbbbskl}
Let $\Delta$ be an inner function and have a meromorphic
pseudo-continuation of bounded type in $\mathbb{D}^e$. \ Then
$$
m_{[\Delta, \Delta_c]}=\omega_{[\Delta, \Delta_c]}=\omega_{\Delta}
$$
and $\Delta_c$ has a meromorphic pseudo-continuation of bounded type
in $\mathbb{D}^e$: in this case, $\omega_{\Delta_c}$ is an inner divisor of
$\omega_\Delta$.
\end{lem}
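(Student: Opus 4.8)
\textbf{Proof plan for Lemma \ref{thnfhjjjfhbbbbskl}.}

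The plan is to reduce everything to the two-sided inner function $[\Delta,\Delta_c]$, using Corollary \ref{thmthjdkfjkf} (which tells us $[\Delta,\Delta_c]$ is two-sided inner precisely when $\breve\Delta$ is of bounded type) together with Lemma \ref{thgg668} (a meromorphic pseudo-continuation of bounded type in $\mathbb{D}^e$ forces $\breve\Delta$ to be of bounded type). So the first step is: from the hypothesis that $\Delta$ has a meromorphic pseudo-continuation of bounded type, conclude $\breve\Delta$ is of bounded type, hence by Corollary \ref{thmthjdkfjkf} that $[\Delta,\Delta_c]$ is two-sided inner. Once $[\Delta,\Delta_c]$ is two-sided inner, equation (\ref{cefcvwf}) applies and gives $m_{[\Delta,\Delta_c]}=\omega_{[\Delta,\Delta_c]}$; this disposes of the first equality for free, provided we also check that $[\Delta,\Delta_c]$ itself has a meromorphic pseudo-continuation of bounded type, which follows from Lemma \ref{thgg6688} applied to the (bounded) two-sided inner function $[\Delta,\Delta_c]$ once we know $\breve{[\Delta,\Delta_c]}$ is of bounded type — but $\breve{[\Delta,\Delta_c]}=[\breve\Delta,\breve\Delta_c]$ and Corollary \ref{thmthjdkfjkf} again (together with the fact that $([\Delta,\Delta_c])_c=0$, since $[\Delta,\Delta_c]$ is two-sided inner) gives this.

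Next I would prove $\omega_{[\Delta,\Delta_c]}=\omega_\Delta$. By definition $\omega_\Delta=\hbox{g.c.d.}\{\delta:\delta H^2_E\subseteq\ker H_{\Delta^*}\}$ and similarly for $[\Delta,\Delta_c]$. The key identity is Lemma \ref{thm2.9}(c): $\ker H_{\Delta^*}=\Delta H^2_D\oplus\Delta_c H^2_{D'}=[\Delta,\Delta_c]H^2_{D\oplus D'}=\ker H_{[\Delta,\Delta_c]^*}$, where the last equality holds because $[\Delta,\Delta_c]$ is two-sided inner so its kernel-of-Hankel is exactly $[\Delta,\Delta_c]H^2$ (apply Lemma \ref{thm2.9} to $[\Delta,\Delta_c]$, whose complementary factor is $0$). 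Thus the two sets $\{\delta:\delta H^2_E\subseteq\ker H_{\Delta^*}\}$ and $\{\delta:\delta H^2_E\subseteq\ker H_{[\Delta,\Delta_c]^*}\}$ coincide, and therefore their g.c.d.'s agree: $\omega_\Delta=\omega_{[\Delta,\Delta_c]}$. Combined with the first step this yields $m_{[\Delta,\Delta_c]}=\omega_{[\Delta,\Delta_c]}=\omega_\Delta$.

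Finally, for the statement about $\Delta_c$: since $[\Delta,\Delta_c]$ is two-sided inner, $\Delta_c$ is a left inner divisor of it, so write $[\Delta,\Delta_c]=$ (that divisor) composed appropriately; more directly, $\breve\Delta_c$ is a ``column block'' of $\breve{[\Delta,\Delta_c]}=[\breve\Delta,\breve\Delta_c]$, which is of bounded type (being the flip of a two-sided inner function, by Corollary \ref{thmthjdkfjkf}). Hence $\breve\Delta_c$ is of bounded type, and then by Lemma \ref{thgg668}'s converse direction — more precisely by the equivalence in Lemma \ref{thgg6688} once we note $\Delta_c\in H^\infty$ — $\Delta_c$ has a meromorphic pseudo-continuation of bounded type in $\mathbb{D}^e$. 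For the divisibility claim $\omega_{\Delta_c}\mid\omega_\Delta$: since $\omega_{\Delta}H^2_E\subseteq\ker H_{\Delta^*}$ and, by Lemma \ref{thm2.9}(c), $\ker H_{\Delta_c^*}\supseteq\Delta_c H^2_{D'}$ while $\ker H_{[\Delta,\Delta_c]^*}=\Delta H^2_D\oplus\Delta_c H^2_{D'}$, one checks $\omega_{[\Delta,\Delta_c]}H^2_E\subseteq\ker H_{\Delta_c^*}$ by symmetry of the roles of $\Delta$ and $\Delta_c$ inside $[\Delta,\Delta_c]$ (both are left inner divisors, and $\delta\cdot[\Delta,\Delta_c]^*=G$ analytic forces $\delta\Delta_c^*$ to be analytic too, being a sub-block). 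Hence $\omega_{\Delta_c}\mid\omega_{[\Delta,\Delta_c]}=\omega_\Delta$. The main obstacle I anticipate is being careful about the bookkeeping in identifying $\ker H_{\Delta^*}$ with $\ker H_{[\Delta,\Delta_c]^*}$ and checking that $\delta\Delta_c^*$ inherits analyticity from $\delta[\Delta,\Delta_c]^*$ — i.e. making precise the ``sub-block of an analytic operator function is analytic'' step — but this is straightforward once the matrix decomposition $[\Delta,\Delta_c]^*=\left[\begin{smallmatrix}\Delta^*\\\Delta_c^*\end{smallmatrix}\right]$ is written out.
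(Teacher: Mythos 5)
The skeleton of your argument is sound — the kernel identity $\ker H_{\Delta^*}=[\Delta,\Delta_c]H^2_{D\oplus D^{\prime}}=\ker H_{[\Delta,\Delta_c]^*}$ from Lemma \ref{thm2.9}, the use of (\ref{cefcvwf}) to get $m_{[\Delta,\Delta_c]}=\omega_{[\Delta,\Delta_c]}$ once $[\Delta,\Delta_c]$ is two-sided inner, and the sub-block argument ($\delta[\Delta,\Delta_c]^*$ analytic forces $\delta\Delta_c^*$ analytic) for $\omega_{\Delta_c}\mid\omega_{[\Delta,\Delta_c]}$ — but there is a genuine gap in how you establish that $[\Delta,\Delta_c]$ and $\Delta_c$ have meromorphic pseudo-continuations of bounded type in $\mathbb{D}^e$. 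You deduce this from ``$\breve{[\Delta,\Delta_c]}$ (resp.\ $\breve{\Delta_c}$) is of bounded type,'' appealing to ``Lemma \ref{thgg668}'s converse direction'' and to Lemma \ref{thgg6688}. Lemma \ref{thgg668} has no converse in the operator-valued setting: the example given in the paper just before Corollary \ref{cor512,222}, $\Delta=\hbox{diag}(b_{\alpha_n})$ with $\sum(1-|\alpha_n|)=\infty$, is two-sided inner (so its flip is of bounded type by Corollary \ref{thmthjdkfjkf}) yet has no meromorphic pseudo-continuation of bounded type. Nor is ``flip of bounded type'' one of the equivalent conditions in Lemma \ref{thgg6688}; the equivalence you are implicitly using is Corollary \ref{cor512,222}, which is proved only for matrix-valued functions (it needs the Complementing Lemma to pass from an operator-valued two-sided inner function to a scalar inner multiple, and $E$ here may be infinite-dimensional). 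Since $m_{[\Delta,\Delta_c]}$, $\omega_{[\Delta,\Delta_c]}$ and $\omega_{\Delta_c}$ are only defined when the function in question admits some scalar inner $\theta$ with $\theta H^2\subseteq\ker H_{(\cdot)^*}$, this gap affects both the well-definedness of the quantities in the statement and the assertion that $\Delta_c$ has a meromorphic pseudo-continuation.

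The repair is already contained in your own text: by the hypothesis and Lemma \ref{thgg6688} there is a scalar inner $\theta$ with $\theta H^2_E\subseteq\ker H_{\Delta^*}$, and your kernel identity transports it to $\theta H^2_E\subseteq\ker H_{[\Delta,\Delta_c]^*}$, so $[\Delta,\Delta_c]$ has a meromorphic pseudo-continuation by Lemma \ref{thgg6688}(b)$\Rightarrow$(a); your sub-block observation then yields $\theta H^2_E\subseteq\ker H_{\Delta_c^*}$, so $\Delta_c$ has one as well — this is exactly the route the paper takes. With that repair your proof goes through, and your divisibility step is in fact more direct than the paper's: you compare the defining sets of scalar inner functions for $\omega_{[\Delta,\Delta_c]}$ and $\omega_{\Delta_c}$ directly, whereas the paper first shows $\Delta_{cc}$ is a left inner divisor of $\Delta$ and applies the first assertion to $\Delta_c$ to get $\omega_{\Delta_c}=\omega_{[\Delta_{cc},\Delta_c]}$ as an inner divisor of $\omega_{[\Delta,\Delta_c]}$.
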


\begin{proof}  Suppose that $\Delta$ is an inner function with values
in $\mathcal B(D,E)$ and has a meromorphic pseudo-continuation of
bounded type in $\mathbb{D}^e$. \ Then it follows from Corollary
\ref{thmthjdkfjkf} and Lemma \ref{thgg668} that  $[\Delta,
\Delta_c]$ is two-sided inner. \  On the other hand, it follows from
Lemma \ref{thm2.9} that
$$
\hbox{ker}\,H_{\Delta^*}=[\Delta, \Delta_c] H^2_{D \oplus
D^{\prime}}= \hbox{ker}\,H_{[\Delta, \Delta_c]^*}.
$$
Thus by Lemma \ref{thgg6688},  $[\Delta, \Delta_c]$ has a
meromorphic pseudo-continuation of bounded type in $\mathbb{D}^e$ and
$m_{[\Delta, \Delta_c]}=\omega_{[\Delta,
\Delta_c]}=\omega_{\Delta}$.  \ This proves the first assertion. \
Since $[\Delta, \Delta_c]$ has a meromorphic pseudo-continuation of
bounded type in $\mathbb{D}^e$, it follows from Lemma \ref{thgg6688} that
$\Delta_c$ has a meromorphic pseudo-continuation of bounded type in
$\mathbb{D}^e$. \ On the other hand, by Lemma \ref{thm2.9}(b), $\Delta_c^*\Delta=0$. \ Thus, by Lemma \ref{thm2.9}(a), $\Delta H_D^2 \subseteq \ker \Delta_c^*=\Delta_{cc}H_{D^{\prime \prime}}^2$, which implies that $\Delta_{cc}$ is a left inner divisor of $\Delta$. \ Thus, $[\Delta_{cc}, \Delta_c]$ is a left inner divisor of
$[\Delta, \Delta_c]$, so that $\omega_{\Delta_c}=\omega_{[\Delta_{cc}, \Delta_c]}$ is an
inner divisor of $\omega_\Delta=\omega_{[\Delta, \Delta_c]}$. \ This
proves the second assertion.
\end{proof}

\vskip 1cm

%
%
%
%
%
%

\noindent {\bf \S\ 7.2. Multiplicity-free model operators} \

\bigskip
\noindent In this section we give an answer to Question \ref{q444}.
This is accomplished by several lemmas.

\bigskip

\begin{lem}\label{cyclicgggg}
Let $\Phi \in H^\infty(\mathcal B(D, E))$ have a meromorphic
pseudo-continuation of bounded type in $\mathbb{D}^e$. \ Then for each cyclic
vector $g$ of $S^*_{D}$,
$$
\hbox{ker}\, H_{[z\Phi g]^{\smallsmile}}^*= \hbox{ker}\, \Phi^*,
$$
where $[z\Phi g]^{\smallsmile}$ denotes the flip of  $[z\Phi g]$.
\end{lem}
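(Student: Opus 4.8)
The plan is to show the two inclusions $\ker H_{[z\Phi g]^{\smallsmile}}^* \subseteq \ker\Phi^*$ and $\ker\Phi^* \subseteq \ker H_{[z\Phi g]^{\smallsmile}}^*$ using the integral description of the kernel of $H^*_{\breve\Psi}$ provided by Lemma \ref{thm4.2}. Throughout, write $\Psi := [z\Phi g]$, so that $\Psi(z)\alpha = \alpha z\Phi(z)g(z)$ for $\alpha\in\mathbb C$; this is a strong $H^2$-function with values in $\mathcal B(\mathbb C, E)$ by Lemma \ref{strongh2} together with Corollary \ref{lemma4.100000}, since $\Phi g\in H^2_E$. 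By Lemma \ref{thm4.2},
$$
\ker H_{\breve\Psi}^* = \Bigl\{f\in H^2_E : \int_{\mathbb T}\bigl\langle z\Phi(z)g(z), z^n f(z)\bigr\rangle_E\,dm(z) = 0 \text{ for all } n\ge 1\Bigr\}.
$$

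First I would treat the inclusion $\ker\Phi^*\subseteq\ker H_{\breve\Psi}^*$. Let $f\in\ker\Phi^*$, i.e. $\Phi^*f = 0$ a.e. on $\mathbb T$. Then for every $n\ge 1$,
$$
\int_{\mathbb T}\bigl\langle z\Phi(z)g(z), z^n f(z)\bigr\rangle_E\,dm(z) = \int_{\mathbb T}\overline{z}^{\,n-1}\bigl\langle g(z), \Phi^*(z)f(z)\bigr\rangle_{D}\,dm(z) = 0,
$$
so $f\in\ker H_{\breve\Psi}^*$. (This direction does not even use that $\Phi$ has a pseudo-continuation, nor that $g$ is cyclic.)

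For the reverse inclusion, suppose $f\in\ker H_{\breve\Psi}^*$. The goal is to deduce $\Phi^*f = 0$. The idea is to transfer the orthogonality conditions on $f$ back into conditions about a cyclic subspace generated by $g$, and then use cyclicity of $g$ to upgrade them. Because $\Phi$ has a meromorphic pseudo-continuation of bounded type in $\mathbb{D}^e$, by Lemma \ref{thgg6688} we may write $\Phi = \theta A^*$ for a scalar inner function $\theta$ and some $A\in H^\infty(\mathcal B(E,D))$. Then $\Phi(z)g(z) = \theta(z)A^*(z)g(z)$ and $\Phi^*(z)f(z) = \overline{\theta(z)}A(z)f(z)$, so $\Phi^*f = 0$ if and only if $Af = 0$ if and only if (since $A\in H^\infty$) $Af\in H^2_D\ominus zH^2_D$ combined with $Af$ being orthogonal to constants — more precisely $\widehat{Af}(n) = 0$ for all $n\ge 1$ forces, together with $Af\in H^2$, a finite-dimensional constant to remain, but I will instead argue directly. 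From $f\in\ker H_{\breve\Psi}^*$ and the substitution $\Phi = \theta A^*$, for all $n\ge 1$,
$$
0 = \int_{\mathbb T}\bigl\langle z\theta(z)A^*(z)g(z), z^n f(z)\bigr\rangle_E\,dm(z) = \int_{\mathbb T}\overline{z}^{\,n-1}\bigl\langle g(z), \overline{\theta(z)}A(z)f(z)\bigr\rangle_D\,dm(z),
$$
which says precisely that $\langle g, \overline{z}^{\,n-1}\,\overline{\theta}\,Af\rangle_{L^2_D} = 0$ for all $n\ge1$, i.e. $\overline\theta\, Af \perp \overline{z}^{\,k}g$ for all $k\ge 0$ in $L^2_D$; equivalently, writing $u := \overline\theta\,Af\in L^2_D$, we have $u\perp \{S_D^{*k}g : k\ge0\}^{\smallsmile}$ — here one must be careful with the flip, using the computation of $S_E^{*k}f$ in the proof of Lemma \ref{thm327}. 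Unwinding, this means $\breve u$ is orthogonal to $E_g^* = \bigvee\{S_D^{*k}g\}$. Since $g$ is a cyclic vector of $S_D^*$, $E_g^* = H^2_D$, and hence $\breve u\perp H^2_D$, forcing... well, $\breve u\in L^2_D\ominus H^2_D$. Translating back, $u = \overline\theta\,Af$ lies in $\overline z\,\overline{H^2_D}$, i.e. $\theta\,u = Af$ has all Fourier coefficients in negative indices and also — since $Af\in H^2_D$ — this forces $Af$ to be a suitable analytic-plus-coanalytic intersection; I expect the upshot, after incorporating that $\theta$ is inner and $Af\in H^2_D$, to be $Af = 0$, hence $\Phi^*f = 0$.

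The main obstacle is this last step: carefully handling the flip operation (the map $J$ and the conventions $\breve f(z) = f(\bar z)$) so that the orthogonality relations extracted from $\ker H_{\breve\Psi}^*$ are correctly identified with orthogonality against $E_g^*$, and then extracting $Af = 0$ from the resulting ``$\overline\theta Af\in L^2_D\ominus H^2_D$ together with $Af\in H^2_D$'' condition — one needs that $\theta$ is inner to conclude $Af = 0$ (for instance: $Af\in H^2_D$ and $\overline\theta Af\in \overline z\overline{H^2_D}$ gives $Af\in \theta H^2_D$ yet also the negative-coefficient condition pins it to $0$). A cleaner route may be to bypass the factorization $\Phi=\theta A^*$ and instead argue: $f\in\ker H^*_{\breve\Psi}$ says $z\Phi g \perp z^n f$ for $n\ge1$, i.e. $\langle \Phi g, z^{n-1}f\rangle = 0$, i.e. $\langle g, \Phi^*(z^{n-1}f)\rangle_{L^2_D} = 0$; since $\Phi\in H^\infty$, $\Phi^*(z^{n-1}f) = \overline{z}^{\,1-n}\Phi^*f + (\text{something})$... this also needs care. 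I would present the $\theta A^*$ version as the cleanest, and flag the flip bookkeeping as the only genuinely delicate point, the rest being the cyclicity of $g$ applied exactly as in the reverse inclusion of the proof of Theorem \ref{kkknvnv}.
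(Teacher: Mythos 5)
Your easy inclusion $\ker\Phi^*\subseteq\ker H^*_{[z\Phi g]^{\smallsmile}}$ is correct and is exactly how the paper handles that direction (a one-line application of Lemma \ref{thm4.2}). The hard inclusion, however, has a genuine gap, precisely at the point you yourself flag. Substituting $\Phi=\theta A^*$ into the pairing against $z^nf$ gives $\int_{\mathbb T}\overline z^{\,k}\bigl\langle g(z),\overline{\theta(z)}A(z)f(z)\bigr\rangle_D\,dm(z)=0$ for all $k\ge0$, i.e. $u:=\overline\theta Af=\Phi^*f$ is orthogonal in $L^2_D$ to $\overline z^{\,k}g$ for every $k\ge0$. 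Since $u$ need not lie in $H^2_D$, the cyclicity of $g$ (a statement about $\bigvee_k S_D^{*k}g=H^2_D$) cannot be applied to it directly. Your proposed bridge fails twice: (i) the flip does not convert orthogonality to $\{\overline z^{\,k}g\}$ into orthogonality to $E_g^*$ (a change of variables shows $u\perp\overline z^{\,k}g$ is equivalent to $\breve u\perp z^k\breve g$, not to $\breve u\perp S_D^{*k}g$); and (ii) the endgame is false: from ``$Af\in H^2_D$ and $\overline\theta Af\in L^2_D\ominus H^2_D$'' one only gets $Af\perp\theta H^2_D$, i.e. $Af\in\mathcal H(\theta I_D)$ (scalar example: $\theta=z^2$, $Af=z$), certainly not $Af\in\theta H^2_D$ nor $Af=0$ --- and in any case your relations do not even yield that premise, since you tested only against $\overline z^{\,k}g$ rather than against $\overline z^{\,k}x$ for arbitrary constants $x\in D$. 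If one insists on your route, the correct use of the relations is $0=\langle\theta\overline z^{\,k}g,\,Af\rangle=\langle S_D^{*k}(\theta g),\,Af\rangle$ (using $Af\in H^2_D$, Corollary \ref{lemma4.100000}), which gives only $Af\perp E^*_{\theta g}$; to conclude $Af=0$ you would additionally need that $\theta g$ is still $S_D^*$-cyclic, a nontrivial fact you neither state nor prove.

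The paper's proof avoids all of this with one move you are missing: it inserts the scalar inner function into the \emph{test functions} rather than into the symbol. By Lemma \ref{thgg6688} there is a scalar inner $\delta$ with $\delta H^2_E\subseteq\ker H_{\Phi^*}$, so $\delta\Phi^*h\in H^2_D$ for every $h\in H^2_E$. If $h\in\ker H^*_{[z\Phi g]^{\smallsmile}}$, then $\delta h$ is again in this kernel (it is of the form $\Delta H^2_{E'}$ by Lemma \ref{kerhadjoint}, hence invariant under multiplication by scalar $H^\infty$ functions), so Lemma \ref{thm4.2} gives $\int_{\mathbb T}\langle z\Phi(z)g(z),\,z^n\delta(z)h(z)\rangle_E\,dm(z)=0$ for all $n\ge1$, which rewrites as $\langle\overline z^{\,n-1}g,\,\delta\Phi^*h\rangle_{L^2_D}=\langle S_D^{*(n-1)}g,\,\delta\Phi^*h\rangle=0$ because $\delta\Phi^*h$ is already analytic. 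Now cyclicity of $g$ applies at once and yields $\delta\Phi^*h=0$, hence $\Phi^*h=0$. So either adopt this modification of the test functions, or supply the missing lemma that multiplication by a scalar inner function preserves $S_D^*$-cyclicity; as written, your argument does not close.
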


\begin{proof}
Let $\Phi \in H^\infty(\mathcal B(D, E))$ have a meromorphic
pseudo-continuation of bounded type in $\mathbb{D}^e$. \ Then by Lemma
\ref{thgg6688}, there exists a scalar inner function $\delta$ such
that $\delta H^2_E \subseteq \hbox{ker}\, H_{\Phi^*}$. \ We thus
have
\begin{equation}\label{813813}
\delta\Phi^*h\in H^2_D\quad\hbox{for any} \ h\in H^2_E.
\end{equation}
Let $g$ be a cyclic vector of $S_D^*$ and $h \in \hbox{ker}\,
H_{[z\Phi g]^{\smallsmile}}^*$. \ Then it follows from Lemma
\ref{thm4.2} that for all $ n=1,2,3\cdots$,
$$
\aligned 0&=\int_{\mathbb T}\bigl\langle z\Phi(z)g(z), \
z^{n}\delta(z)h(z)
\bigr\rangle_{E}dm(z)\\
&=\int_{\mathbb T}\bigl\langle S_D^{*(n-1)} g(z), \
\delta(z)\Phi^*(z)h(z)\bigr\rangle_{D}dm(z)\\
&=\bigl\langle S_D^{*(n-1)} g(z), \
\delta(z)\Phi^*(z)h(z)\bigr\rangle_{L^2_{D}},
\endaligned
$$
which implies, by (\ref{813813}), that $\delta \Phi^* h=0$, and
hence $h\in \ker\, \Phi^*$. \ We thus have
$$
\hbox{ker}\, H_{[zg_{\Phi}]^{\smallsmile}}^*\subseteq \hbox{ker}\,
\Phi^*\,.
$$
The reverse inclusion follows  at once from Lemma \ref{thm4.2}. \
This completes the proof.
\end{proof}

\bigskip

\begin{lem}\label{corcyclicgggg}  Let
$\Phi \in H^\infty(\mathcal B(D, E))$ have a meromorphic
pseudo-continuation of bounded type in $\mathbb{D}^e$. \ Then for each cyclic
vector $g$ of $S^*_{D}$,
\begin{equation}\label{jdefwvkjer}
E^*_{\{\Phi g\}}=\mathcal H((\Phi^i)_c),
\end{equation}
where $\Phi^i$ denotes the inner part in the inner-outer
factorization of $\Phi$. \ Hence, in particular, $S_{E}^*|_{\mathcal
H((\Phi^i)_c)}$ is multiplicity-free.

\end{lem}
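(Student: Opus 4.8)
The plan is to prove \eqref{jdefwvkjer} by combining Lemma \ref{thm327} (which converts $E^*_{\{\Phi g\}}$ into a kernel of a Hankel operator), Lemma \ref{cyclicgggg} (which identifies that kernel with $\ker\Phi^*$), and Lemma \ref{thm2.9} (which describes $\ker\Phi^*$, or rather $\ker H_{\Phi^*}$, via a complementary factor). The subtlety is that $\Phi$ need not be inner, so I first pass to the inner-outer factorization $\Phi=\Phi^i\Phi^e$ and reduce everything to the inner function $\Phi^i$.

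First I would observe that since $\{\Phi g\}$ is the singleton $\{\Phi g\}$ (here $g\in H^2_D$ is a cyclic vector of $S_D^*$, so $[g]$ is a strong $H^2$-function with values in $\mathcal B(\mathbb C,D)$ and $\Phi g=\Phi[g]$ is a strong $H^2$-function with values in $\mathcal B(\mathbb C,E)$, i.e.\ an element of $H^2_E$), Lemma \ref{thm327} gives
$$
E^*_{\{\Phi g\}}=\hbox{cl ran}\,H_{\overline z[\widebreve{\Phi g}]}=\bigl(\ker H_{[\overline z\,\widebreve{\Phi g}]}^*\bigr)^\perp .
$$
Actually it is cleaner to use $E^*_{\{\Phi g\}}=\bigl(\ker H_{[z\Phi g]^{\smallsmile}}^*\bigr)^\perp$ after noting $\ker H_{[z\Phi g]^{\smallsmile}}^*=\ker H_{[\overline z\,\widebreve{\Phi g}]}^*$; the point is only that $E^*_{\{\Phi g\}}$ is the orthogonal complement of the kernel appearing in Lemma \ref{cyclicgggg}. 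Then Lemma \ref{cyclicgggg} applies verbatim (with the same $\Phi$, $D$, $E$, $g$) to give
$$
\ker H_{[z\Phi g]^{\smallsmile}}^*=\ker\Phi^* .
$$
Next I would reduce $\ker\Phi^*$ to the inner part: writing $\Phi=\Phi^i\Phi^e$ with $\Phi^e$ outer with values in $\mathcal B(D,E')$ and $\Phi^i$ inner with values in $\mathcal B(E',E)$, one has $\Phi^*=\Phi^{e*}\Phi^{i*}$, and since $\Phi^e$ is outer the map $f\mapsto \Phi^{i*}f$ has the same kernel restricted to $H^2$ — more precisely $f\in\ker\Phi^*\cap H^2_E$ iff $\Phi^{i*}f=0$, because $\Phi^{e*}(z)$ is injective for a.e.\ $z$ when $\Phi^e$ is outer (an outer operator-valued function has a.e.\ dense range, hence adjoint injective a.e.). This needs a short justification but is routine. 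Hence $\ker\Phi^*=\ker(\Phi^i)^*$.

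Finally, Lemma \ref{thm2.9}(c) applied to the inner function $\Phi^i$ gives $\ker H_{(\Phi^i)^*}=\Phi^iH^2_{E'}\oplus(\Phi^i)_cH^2_{?}$; but here I want $\ker(\Phi^i)^*$ (the symbolic kernel, functions $f$ with $(\Phi^i)^*f=0$ a.e.), which by Lemma \ref{thm2.9}(a) equals $(\Phi^i)_cH^2_{D''}$. So $E^*_{\{\Phi g\}}=\bigl((\Phi^i)_cH^2_{D''}\bigr)^\perp=\mathcal H((\Phi^i)_c)$, which is \eqref{jdefwvkjer}. The last sentence — that $S_E^*|_{\mathcal H((\Phi^i)_c)}$ is multiplicity-free — is then immediate: $g$ is a cyclic vector of $S_D^*$, so $\{\Phi g\}$ is a single-element generating set for the backward-shift-invariant subspace $E^*_{\{\Phi g\}}=\mathcal H((\Phi^i)_c)$, whence $\mu_{S_E^*|_{\mathcal H((\Phi^i)_c)}}=1$ (and when $D=\{0\}$ the operator acts on the zero space, which we regard as multiplicity-free by convention). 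The main obstacle I anticipate is the passage from $\ker\Phi^*$ to $\ker(\Phi^i)^*$: I must be careful about whether the kernel is taken symbolically (inside $H^2_E$, with $\Phi^*f=0$ a.e.) or as the kernel of the Hankel operator, and about the a.e.\ injectivity of $\Phi^{e*}$; but the cleanest route is to apply Lemma \ref{cyclicgggg} first and then recognize, via the definitions of $[g]^i$ and the construction preceding Lemma \ref{thm2.9}, that $\ker\Phi^*$ and $\ker(\Phi^i)^*$ coincide because the outer factor contributes no null functions in $H^2$.
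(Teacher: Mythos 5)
Your proposal is correct and follows essentially the same route as the paper: Lemma \ref{thm327} and Lemma \ref{cyclicgggg} give $E^*_{\{\Phi g\}}=(\ker H_{[z\Phi g]^{\smallsmile}}^*)^{\perp}=(\ker \Phi^*)^{\perp}$, the inner-outer factorization with the (a.e.) injectivity of $(\Phi^e)^*$ gives $\ker\Phi^*=\ker(\Phi^i)^*$, and Lemma \ref{thm2.9}(a) identifies this with $(\Phi^i)_cH^2$, hence the complement is $\mathcal H((\Phi^i)_c)$. The extra care you take in justifying the a.e. injectivity of the adjoint of the outer factor, and the explicit remark on the multiplicity-free conclusion, are fine elaborations of exactly the steps the paper uses.
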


\begin{proof} Let $\Phi\equiv \Phi^i \Phi^e$ be the
inner-outer factorization of $\Phi$. \ Since $\Phi^e$ has dense
range, $(\Phi^{e})^*$ is one-one, so that $\hbox{ker}\,
\Phi^*=\hbox{ker}\, (\Phi^{i})^*$. \ It thus follows from Lemma
\ref{thm2.9}, Lemma \ref{thm327} and Lemma \ref{cyclicgggg} that
$$
E^*_{\{\Phi g\}}=\bigl(\hbox{ker}\, H_{[z\Phi
g]^{\smallsmile}}^*\bigr)^{\perp} =\bigl(\hbox{ker}\,
\Phi^*\bigr)^{\perp}=\mathcal H((\Phi^i)_c),
$$
which proves (\ref{jdefwvkjer}). \ This completes the
proof.\end{proof}

\medskip

The following corollary is a matrix-valued version of Lemma
\ref{corcyclicgggg}.

\begin{cor}\label{corjtyucyclicgggg}
Let $\Delta$ be an $n\times r$ inner matrix function such that
$\breve{\Delta}$ is of bounded type. \ If $g$ is a cyclic vector of
$S^*_{\mathbb C^r}$, then $E^*_{\{\Delta g\}}=\mathcal H(\Delta_c)$.
\end{cor}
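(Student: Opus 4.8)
\textbf{Proof plan for Corollary \ref{corjtyucyclicgggg}.} The plan is to deduce this corollary directly from Lemma \ref{corcyclicgggg} applied to $\Phi := \Delta$, once I verify that the hypotheses transfer and that the conclusion simplifies in the matrix-valued inner case. First I would observe that $\Delta$ is an $n\times r$ inner matrix function, so $\Delta \in H^\infty(\mathcal B(\mathbb C^r, \mathbb C^n))$ with $\mathbb C^r$ and $\mathbb C^n$ finite-dimensional. Since $\breve{\Delta}$ is assumed to be of bounded type, Corollary \ref{cor512,222} (the equivalence (a) $\Leftrightarrow$ (b)) tells us that $\Delta$ has a meromorphic pseudo-continuation of bounded type in $\mathbb{D}^e$. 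This is exactly the standing hypothesis of Lemma \ref{corcyclicgggg}.

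Next I would apply Lemma \ref{corcyclicgggg} with $D = \mathbb C^r$, $E = \mathbb C^n$, and the given cyclic vector $g$ of $S^*_{\mathbb C^r}$, to obtain
$$
E^*_{\{\Delta g\}} = \mathcal H\bigl((\Delta^i)_c\bigr),
$$
where $\Delta^i$ is the inner part of the inner-outer factorization of $\Delta$. The only remaining point is to identify $(\Delta^i)_c$ with $\Delta_c$. Since $\Delta$ is itself an inner function, its inner-outer factorization is trivial: $\Delta = \Delta^i \Delta^e$ with $\Delta^e$ a unitary constant (equivalently, one may take $\Delta^i = \Delta$ outright, as the inner-outer factorization is unique up to a unitary constant right factor, and by the convention on p.~\pageref{beur} inner functions that differ by such a factor are regarded as equal). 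Hence $\Delta^i = \Delta$, and therefore $(\Delta^i)_c = \Delta_c$, the complementary factor of $\Delta$. Substituting gives $E^*_{\{\Delta g\}} = \mathcal H(\Delta_c)$, which is the desired conclusion.

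I do not anticipate a serious obstacle here: the corollary is essentially a specialization of Lemma \ref{corcyclicgggg}, and the two things to check are both routine. The first—that "$\breve\Delta$ of bounded type" implies "$\Delta$ has a meromorphic pseudo-continuation of bounded type in $\mathbb{D}^e$"—is precisely Corollary \ref{cor512,222}, valid because $\Delta$ is matrix-valued. The second—that the inner part of an inner function is (equal to) itself—follows from the uniqueness clause in the inner-outer factorization for strong $H^2$-functions stated on p.~\pageref{beur} together with the convention identifying inner functions up to unitary constant right factors. The mildest care is needed in making sure $g$, a cyclic vector of $S^*_{\mathbb C^r}$, plays the role of "cyclic vector of $S^*_D$" with $D = \mathbb C^r$, which is immediate. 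The "in particular" sentence of Lemma \ref{corcyclicgggg}, that $S_E^*|_{\mathcal H((\Phi^i)_c)}$ is multiplicity-free, then also specializes to say $S_{\mathbb C^n}^*|_{\mathcal H(\Delta_c)}$ is multiplicity-free, though the statement of the corollary as written only records the subspace identity.
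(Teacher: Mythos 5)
Your proposal is correct and follows exactly the paper's route: the paper's proof is the one-line citation of Corollary \ref{cor512,222} together with Lemma \ref{corcyclicgggg}, and your write-up simply supplies the (routine) details of that citation, including the observation that $\Delta^i=\Delta$ up to a unitary constant right factor so that $(\Delta^i)_c=\Delta_c$.
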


\begin{proof} It follows from Corollary \ref{cor512,222} and
Lemma \ref{corcyclicgggg}.
\end{proof}

\medskip

The following lemma shows that the flip of the adjoint of an inner
function may be an outer function.

\medskip

\begin{lem}\label{qqqqqjjjj}
Let $\Delta$ be an inner function with values in $\mathcal B(D,E)$,
with its complementary factor $\Delta_c$ with values in $\mathcal
B(D^\prime, E)$. \ If $\dim D^\prime<\infty$, then
$\widetilde{\Delta_c}$ is an outer function.
\end{lem}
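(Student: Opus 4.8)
The plan is to unwind the definitions of $\Delta_c$ and of ``outer'' and reduce everything to a statement about the successive complementary factors together with the bounded-type dichotomy supplied by Corollary~\ref{cor512,222} and Lemma~\ref{thnfhjjjfhbbbbskl}. First I would observe that $\widetilde{\Delta_c}$ being outer is, by the inner-outer factorization for strong $H^2$-functions, equivalent to $\widetilde{\Delta_c}$ having trivial inner part, which in turn (passing to flips and adjoints, using $\widetilde{\widetilde{\Phi}}=\Phi$ and Lemma~\ref{corfgghh2.9}) is equivalent to the statement that $\Delta_c$ has no nonconstant \emph{two-sided} inner left divisor coming from a scalar inner function---more precisely, $\widetilde{\Delta_c}=(\widetilde{\Delta_c})^i(\widetilde{\Delta_c})^e$ and we must show $(\widetilde{\Delta_c})^i$ is unitary. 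I would reformulate this using the characteristic scalar inner function: $\widetilde{\Delta_c}$ is outer if and only if there is no nonconstant scalar inner $\theta$ with $\theta H^2 \subseteq \Delta_c^* H^2_{?}$ in the appropriate sense---and this is exactly controlled by $\omega_{\Delta_c}$ from Section~7.1, provided $\Delta_c$ has a meromorphic pseudo-continuation of bounded type.

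The key steps in order would be: (1) Since $\dim D^\prime<\infty$, I am in the matrix-valued (or finite-domain) setting, so I first want to know that $\breve{\Delta_c}$ is of bounded type. This does \emph{not} follow from the hypotheses alone---$\breve\Delta$ need not be of bounded type (cf.\ Example~\ref{ex3.6})---so I expect the statement as written is really being applied in contexts where $\Delta$ has a pseudo-continuation; but reading carefully, the lemma claims it \emph{unconditionally} once $\dim D^\prime<\infty$. So step (1) must instead be: regard $\Delta_c$ as an $n\times p$ (or $\mathcal{B}(D^\prime,E)$-valued with finite $\dim D^\prime$) inner function built as a \textbf{left-g.c.d.} of the inner parts $[g]^i$ of vectors $g\in\ker\Delta^*$; each such $[g]^i$ is a columnwise (rank-one-range) inner function, hence $[g]^i=[h]$ for an inner $h\in H^2_E$ whose flip need not be of bounded type. (2) Apply Lemma~\ref{thm2.9}(c): $\ker H_{\Delta^*}=\Delta H^2_{D}\oplus\Delta_c H^2_{D^\prime}$, and more importantly the reverse construction $\ker H_{\Delta_c^*}=\Delta_c H^2_{D^\prime}\oplus\Delta_{cc}H^2$. (3) Suppose, for contradiction, $\widetilde{\Delta_c}$ is not outer; then $(\widetilde{\Delta_c})^i$ is a nonconstant inner function, so $\widetilde{\Delta_c}=(\widetilde{\Delta_c})^i\widetilde{C}$ for some strong $H^2$-function $C$, which upon flipping-and-adjointing gives $\Delta_c = C^*\cdot\widetilde{(\widetilde{\Delta_c})^i}$, exhibiting a nontrivial \emph{right} inner factor of $\Delta_c$. (4) Translate this right inner factor into the statement that $\Delta_c H^2_{D^\prime}$ is properly contained in a shift-invariant subspace of the form $\Omega H^2$ with $\Omega$ having strictly smaller ``width''---and derive a contradiction with the maximality/left-g.c.d.\ definition of $\Delta_c$ in (\ref{2.9.9.9}), or with $D^\prime$ being the \emph{smallest} space for which $\ker\Delta^* = \Omega H^2_{D^\prime}$.

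The main obstacle I anticipate is step~(4): pinning down exactly why a nontrivial \emph{right} inner factor of $\Delta_c$ contradicts the defining property $\Delta_c=\mathrm{left\text{-}g.c.d.}\{[g]^i: g\in\ker\Delta^*\}$. The left-g.c.d.\ definition controls left divisors, not right ones, so I would need the intermediate observation that each generator $[g]^i$, being a \emph{vector}-valued (i.e.\ $\mathcal{B}(\mathbb{C},E)$-valued) inner function, is automatically outer from the right---its domain is one-dimensional, so $\widetilde{[g]^i}=[\widetilde g]^{\,?}$ is a scalar-into-$E$ map whose only right inner divisors are unimodular constants. Hence the left-g.c.d.\ of a family of ``columnwise'' inner functions, computed as in the proof of Lemma~\ref{thm2.9} via $\bigvee\{[g]^iH^2\}$, produces an inner function $\Delta_c$ whose own right inner part must divide the right inner part of \emph{each} generator (by a coprimeness/l.c.m.\ duality argument), which is trivial; therefore $\Delta_c$ has trivial right inner part, i.e.\ $\widetilde{\Delta_c}$ is outer. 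Making this duality precise---that $\mathrm{left\text{-}g.c.d.}$ of right-outer inner functions is right-outer---when $\dim D^\prime<\infty$ (so that finite-dimensional bookkeeping and the Complementing Lemma are available) is the crux; I would carry it out by transposing/flipping and invoking that $\mathrm{left\text{-}l.c.m.}$ of a finite family of two-sided inner matrix functions, restricted appropriately, behaves well, closing the argument.
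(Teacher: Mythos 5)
The crux of your argument, step (4), rests on a false claim. You assert that each generator $[g]^i$, being $\mathcal B(\mathbb C,E)$-valued, is ``automatically outer from the right'' (only unimodular constants as right inner divisors), and that the left-g.c.d.\ $\Delta_c$ then inherits a trivial right inner part by a coprimeness/l.c.m.\ duality. Neither holds. The column $h:=(z,0)^t$ is an inner function with values in $\mathcal B(\mathbb C,\mathbb C^2)$ --- indeed it is exactly $[g]^i$ for $g=ze_1$ --- and it factors as $h=(1,0)^t\cdot z$, so $z$ is a nonconstant right inner divisor and $\widetilde h=(z,0)$ is not outer; there is likewise no general principle forcing the right inner part of a left-g.c.d.\ to divide those of the generators. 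What actually rules out such a $\Delta_c$ is not the g.c.d.\ description at all but the identity $\Delta_c H^2_{D^\prime}=\ker\Delta^*$ of Lemma \ref{thm2.9}(a): $\ker\Delta^*$ is the analytic part of the reducing subspace $\{f\in L^2_E:\Delta^*f=0\ \hbox{a.e.}\}$, and this maximality is what forbids a nontrivial right inner factor (in the example, $ze_1\in\ker\Delta^*$ forces $e_1\in\ker\Delta^*$, so $zH^2\oplus\{0\}$ can never be a full kernel $\ker\Delta^*$). Your proposal never brings this identity to bear, so the contradiction you hope for in step (4) does not materialize.

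This is also where the hypothesis $\dim D^\prime<\infty$ must do concrete work, and your sketch leaves it unused beyond an appeal to ``finite-dimensional bookkeeping.'' The paper's proof runs as follows: write $\widetilde{\Delta_c}=(\widetilde{\Delta_c})^i(\widetilde{\Delta_c})^e$; a rank count using $\dim D^\prime=p<\infty$ shows the inner part is a square, hence two-sided, $p\times p$ inner matrix; the Complementing Lemma then produces $G\in H^{\infty}_{M_p}$ and a scalar inner $\theta$ with $G(\widetilde{\Delta_c})^i=\theta I_p$; consequently $\breve{\theta}I_E\,\Delta_c\widetilde{G}\,H^2_{\mathbb C^p}\subseteq H^2_E$ is annihilated by $\Delta^*$, hence lies in $\ker\Delta^*=\Delta_c H^2_{\mathbb C^p}$, which forces $\breve{\theta}\,\widetilde{G}\in H^{\infty}_{M_p}$, then $G=\theta I_pG_1$ and $G_1(\widetilde{\Delta_c})^i=I_p$, so the inner part is unitary and $\widetilde{\Delta_c}$ is outer. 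To repair your approach you would have to replace the ``right-outer generators'' claim by an argument of this kind, exploiting that $\Delta_c H^2_{D^\prime}$ is the full kernel of $\Delta^*$ together with the finite dimensionality (square inner part plus the Complementing Lemma); as written, the proposal does not prove the lemma.
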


\begin{proof} If $D^{\prime}=\{0\}$, then this is trivial.
Suppose that  $D^\prime=\mathbb C^p$ for some $p\geq 1$. \ Write
\begin{equation}\label{fjfjfj}
\widetilde{\Delta_c}\equiv
(\widetilde{\Delta_c})^{i}(\widetilde{\Delta_c})^{e}
\quad(\hbox{inner-outer factorization}),
\end{equation}
where $(\widetilde{\Delta_c})^{i}\in H^{\infty}_{M_{p\times q}}$ and
$(\widetilde{\Delta_c})^{e} \in H^{\infty}(\mathcal B(E, \mathbb
C^q))$ for some $q\le p$. \ It thus follows that
$$
q=\hbox{Rank}\,(\widetilde{\Delta_c})^{i}\geq\hbox{Rank}\,\widetilde{\Delta_c}
=\hbox{max}_{\zeta \in \mathbb
D}\,\hbox{rank}\,\widetilde{\Delta_c}(\zeta)\widetilde{\Delta_c}(\zeta)^*
=p,
$$
which implies $p=q$. \ Since $(\widetilde{\Delta_c})^{i}\in
H^{\infty}_{M_p}$ is two-sided inner, by the Complementing Lemma,
there exists a function $G\in H^{\infty}_{M_{p}}$ and  a scalar
inner function $\theta$ such that $G
(\widetilde{\Delta_c})^{i}=\theta I_{p}$. \ Thus by (\ref{fjfjfj}),
we have $G\widetilde{\Delta_c}=\theta
I_{p}(\widetilde{\Delta_c})^{e}$, and hence we have
$$
\breve{\theta}I_E \Delta_c
\widetilde{G}=\widetilde{\overline{\theta} I_pG
\widetilde{\Delta_c}}=\widetilde{(\widetilde{\Delta_c})^e}\in
H^{\infty}(\mathcal B(\mathbb C^p, E)).
$$
Thus
we have
\begin{equation}\label{mmmzmzmmz}
\breve{\theta}I_E \Delta_c \widetilde{G} H^2_{\mathbb C^p} \subseteq
H^2_{E}.
\end{equation}
It thus follows from Lemma \ref{thm2.9} and (\ref{mmmzmzmmz}) that
$$
\Delta_c \breve{\theta}I_p \widetilde{G} H^2_{\mathbb
C^p}=\breve{\theta}I_E \Delta_c \widetilde{G}H^2_{\mathbb
C^p}\subseteq \hbox{ker}\, \Delta^*=\Delta_cH^2_{\mathbb C^{p}},
$$
which implies $\breve{\theta}I_p \widetilde{G} H^2_{\mathbb
C^p}\subseteq H^2_{\mathbb C^p}$. \ We thus have $\breve{\theta}I_p
\widetilde{G} \in H^{\infty}_{M_{p}}$, so that $\overline{\theta}I_p
G\in H^{\infty}_{M_{p}}$. \ Therefore we may write $G=\theta I_p
G_1$ for some $G_1\in H^{\infty}_{M_{p}}$. \  It thus follows that
$$
\theta I_p= G (\widetilde{\Delta_c})^i =\theta I_p G_1
(\widetilde{\Delta_c})^i ,
$$
which gives that $G_1 (\widetilde{\Delta_c})^i =I_p$. \ Therefore
we have
\begin{equation}\label{ggglhlhl}
H^2_{\mathbb
C^p}=\widetilde{(\widetilde{\Delta_c})^i}\widetilde{G_1}
H^2_{\mathbb C^p}\subseteq \widetilde{(\widetilde{\Delta_c})^i}
H^2_{\mathbb C^p},
\end{equation}
which implies that $\widetilde{(\widetilde{\Delta_c})^i}$ is a
unitary matrix, and so is $(\widetilde{\Delta_c})^i$. \ Thus,
$\widetilde{\Delta_c}$ is an outer function. \ This completes the
proof.
\end{proof}

\medskip

\begin{cor}\label{orqqqqqjjjj}
If $\Delta$ is an inner matrix function, then $\Delta_c^t$ is an
outer function.
\end{cor}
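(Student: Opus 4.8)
The plan is to read this off Lemma \ref{qqqqqjjjj}, once one observes that its finiteness hypothesis is automatic in the matrix-valued setting and that, for matrix functions, ``$\widetilde{\Delta_c}$ outer'' and ``$\Delta_c^t$ outer'' say the same thing. First I would note that if $\Delta$ is an $n\times r$ inner matrix function, then it takes values in $\mathcal B(\mathbb C^r,\mathbb C^n)$, and by Lemma \ref{thm2.9} its complementary factor arises from $\ker\Delta^*=\Omega H^2_{D^\prime}$ with $D^\prime$ a (nonzero) subspace of $\mathbb C^n$; hence $\dim D^\prime\le n<\infty$. Therefore Lemma \ref{qqqqqjjjj} applies and yields that $\widetilde{\Delta_c}$ is an outer function, say with values in $\mathcal B(\mathbb C^n,\mathbb C^p)$, i.e. $\hbox{cl}\,\widetilde{\Delta_c}\mathcal P_{\mathbb C^n}=H^2_{\mathbb C^p}$.

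It then remains to convert this into outerness of $\Delta_c^t$. I would record the elementary bookkeeping identity: writing $\Delta_c\equiv[\psi_{ij}]$, the $(i,j)$ entry of $\widetilde{\Delta_c}(z)=\Delta_c(\bar z)^*$ is $\overline{\psi_{ji}(\bar z)}=\widetilde{\psi_{ji}}(z)$, so $\widetilde{\Delta_c}$ is obtained from the transpose $\Delta_c^t=[\psi_{ji}]_{ij}$ by applying the scalar operation $\psi\mapsto\widetilde\psi$ entrywise. Introduce the conjugate-linear ``coefficient conjugation'' map $C$ on $H^2_{\mathbb C^k}$ defined by $\widehat{Cf}(n)=\overline{\widehat f(n)}$ (equivalently $(Cf)(z)=\overline{f(\bar z)}$): it is a bijective isometry sending $\mathcal P_{\mathbb C^k}$ onto itself, and a direct check gives $C(\Delta_c^t\,p)=\widetilde{\Delta_c}\,(Cp)$ for every $p\in\mathcal P_{\mathbb C^n}$.

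Consequently $C$ carries $\Delta_c^t\mathcal P_{\mathbb C^n}$ isometrically onto $\widetilde{\Delta_c}\mathcal P_{\mathbb C^n}$, and since $C$ is bijective on $H^2_{\mathbb C^p}$ we get $\hbox{cl}\,\Delta_c^t\mathcal P_{\mathbb C^n}=H^2_{\mathbb C^p}$ if and only if $\hbox{cl}\,\widetilde{\Delta_c}\,\mathcal P_{\mathbb C^n}=H^2_{\mathbb C^p}$. By the first paragraph the right-hand condition holds, so $\Delta_c^t$ is outer, which is the assertion. (Equivalently: the entrywise application of $\widetilde{\,\cdot\,}$ is an outerness-preserving involution on matrix $H^2$-functions that sends $\widetilde{\Delta_c}$ back to $\Delta_c^t$.)

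I do not anticipate a real obstacle: all the substance is already packaged in Lemma \ref{qqqqqjjjj} (itself built on the Complementing Lemma, Lemma \ref{thm2.9}, and the local-rank comparison), and the only care required is the routine verification that, in the matrix-valued case, $\widetilde{\,\cdot\,}$ and $(\,\cdot\,)^t$ differ only by conjugating Taylor coefficients — an operation that visibly leaves the defining equality $\hbox{cl}\,(\cdot)\mathcal P=H^2$ intact.
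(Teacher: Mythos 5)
Your proposal is correct and follows the same route as the paper, whose proof is simply ``Immediate from Lemma \ref{qqqqqjjjj}'': in the matrix case $\dim D^\prime\le n<\infty$ is automatic, so the lemma gives that $\widetilde{\Delta_c}$ is outer. Your extra paragraphs merely spell out the routine fact that the coefficient-conjugation $f\mapsto\overline{f(\bar z)}$ intertwines multiplication by $\Delta_c^t$ and by $\widetilde{\Delta_c}$ and preserves $\hbox{cl}\,(\cdot)\mathcal P=H^2$, which the paper leaves implicit.
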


\begin{proof}
Immediate from Lemma \ref{qqqqqjjjj}.
\end{proof}

\medskip

\begin{rem}
Let $T:=S_{\mathbb C^n}^*|_{\mathcal H(\Delta)}$ for some non-square
inner matrix function $\Delta$. \ Then Corollary
\ref{corjtyucyclicgggg} shows that if $\Delta=\Omega_c$ for an inner
matrix function $\Omega$ such that $\breve\Omega$ is of bounded
type, then $T$ is multiplicity-free. \ However, the converse is not
true in general, i.e., the condition ``multiplicity-free" does not
guarantee that $\Delta=\Omega_c$. \ To see this, let $\Delta:=[0 \
z]^t$. Then $\Delta$ is inner and $\breve{\Delta}$ is of bounded
type. \ Since $\Delta^t=[0  \ z]$ is not an outer function, it
follows from Corollary \ref{orqqqqqjjjj} that $\Delta \neq \Omega_c$
for any inner matrix function. \ Let $f:=(a \ 1)^t$ ($\overline a$
is not of bounded type). \ Then $E_f^*=\mathcal H(\Delta)$, so that
$T$ is multiplicity-free.
\end{rem}

\medskip

\begin{lem}\label{hfhnbnnn} Let $\Delta$ be an inner function
and have a meromorphic pseudo-continuation of bounded type in $\mathbb{D}^e$.
If $\widetilde{\Delta}$ is an outer function and $\hbox{ker}\,
\Delta^*=\{0\}$, then $\Delta$ is a unitary operator.
\end{lem}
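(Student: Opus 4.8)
The plan is to use the characteristic-scalar-inner-function machinery from Section 7.1 together with Corollary \ref{thmthjdkfjkf} and Lemma \ref{thm2.9}. Since $\Delta$ has a meromorphic pseudo-continuation of bounded type in $\mathbb{D}^e$, Corollary \ref{thmthjdkfjkf} (via Corollary \ref{thmthjdkfjkf} applied to $\breve\Delta$ of bounded type, which follows from Lemma \ref{thgg668}) tells us that $[\Delta, \Delta_c]$ is two-sided inner. But the hypothesis $\hbox{ker}\,\Delta^*=\{0\}$ forces $\Delta_c=0$ by the definition \eqref{2.9.9.9} of the complementary factor (the g.c.d. over an empty collection, or more precisely over $\{0\}$, gives the trivial factor). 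Hence $[\Delta,\Delta_c]=\Delta$, so $\Delta$ itself is two-sided inner: $\Delta\Delta^*=I_E$ and $\Delta^*\Delta=I_D$ a.e. on $\mathbb T$.

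It remains to show that a two-sided inner function $\Delta$ whose flip is of bounded type and whose $\widetilde\Delta$ is outer must be unitary (i.e., a constant). First I would note that since $\Delta$ is two-sided inner and has a meromorphic pseudo-continuation of bounded type in $\mathbb{D}^e$, Proposition \ref{C0condition} shows that $T\equiv P_{\mathcal H(\Delta)}S_E|_{\mathcal H(\Delta)}\in C_0$, and $\omega_\Delta=m_\Delta$ by \eqref{cefcvwf}. Now $\widetilde\Delta$ is outer by hypothesis; on the other hand, since $\Delta$ is two-sided inner, $\widetilde\Delta=\breve\Delta^*$ is also inner (by Lemma \ref{corfgghh2.9}, $\widetilde\Delta\in H^\infty$, and $\widetilde\Delta^*\widetilde\Delta = \breve\Delta\breve\Delta^* = I$, $\widetilde\Delta\widetilde\Delta^* = \breve\Delta^*\breve\Delta = I$ a.e.). A function that is simultaneously inner and outer is a unitary constant: indeed $\widetilde\Delta H^2_D = H^2_E$ (outer) while $\widetilde\Delta H^2_D$ is a full-range shift-invariant subspace with $\widetilde\Delta$ inner, so by the Beurling--Lax--Halmos Theorem $\widetilde\Delta$ is a unitary constant. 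Hence $\Delta=\widetilde{\widetilde\Delta}$ is a unitary constant, which completes the proof.

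The main obstacle, and the step requiring care, is the deduction that $\Delta_c=0$ from $\hbox{ker}\,\Delta^*=\{0\}$ and then the clean combination with Corollary \ref{thmthjdkfjkf}: one must be careful that $[\Delta,\Delta_c]$ two-sided inner genuinely reduces to $\Delta$ two-sided inner when the complementary factor is trivial, which is exactly the content of the parenthetical remark in the proof of Corollary \ref{thmthjdkfjkf} (if $\Delta$ is two-sided inner then $[\Delta,\Delta_c]=\Delta$) read in the reverse direction — here we are told $\breve\Delta$ is of bounded type so $[\Delta,\Delta_c]$ is two-sided inner, and $\Delta_c=0$ gives $[\Delta,\Delta_c]=\Delta$, hence $\Delta$ is two-sided inner. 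A secondary subtlety is verifying that $\widetilde\Delta$ being both inner (automatic from two-sidedness of $\Delta$) and outer (hypothesis) forces it to be a unitary constant; this is the standard fact that $\mathcal H(\widetilde\Delta)=\{0\}$ forces $\widetilde\Delta H^2_D = H^2_E$ and combined with $\widetilde\Delta$ inner, the Beurling--Lax--Halmos uniqueness clause gives a unitary constant. Everything else is bookkeeping with the lemmas already established.
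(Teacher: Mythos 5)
Your proof is correct and follows essentially the same route as the paper's: Lemma \ref{thgg668} gives that $\breve\Delta$ is of bounded type, the hypothesis $\ker\Delta^*=\{0\}$ trivializes $\Delta_c$ so Corollary \ref{thmthjdkfjkf} forces $[\Delta,\Delta_c]=\Delta$ (and hence $\widetilde\Delta$) to be two-sided inner, and a function that is both inner and outer is a unitary constant. The detour through Proposition \ref{C0condition} is unnecessary, and note a small slip of spaces—the outer hypothesis yields $\widetilde\Delta H^2_E=H^2_D$ rather than $\widetilde\Delta H^2_D=H^2_E$—but this does not affect the argument.
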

\begin{proof}  Let $\Delta$ be an inner function with values in
$\mathcal B(D,E)$ and have a meromorphic pseudo-continuation of
bounded type in $\mathbb{D}^e$. \ Then by Lemma \ref{thgg668},
$\breve{\Delta}$ is of bounded type. \ Suppose that
$\widetilde{\Delta}$ is an outer function and $\hbox{ker}\,
\Delta^*=\{0\}$. \ Then by Lemma \ref{thm2.9}, Corollary
\ref{thmthjdkfjkf} and Lemma \ref{thgg668}, $\Delta$ is two-sided
inner, and so is $\widetilde{\Delta}$. \ Thus $\Delta$ is a unitary
operator, as desired.
\end{proof}

\medskip

The following lemma is a key idea for an answer to Question
\ref{q444}.

\begin{lem}\label{ffbfbbfbfbfvvvbbfLll}
Let $\Delta$ be an inner function and have a meromorphic
pseudo-continuation of bounded type in $\mathbb{D}^e$. \ If
$\widetilde{\Delta}$ is an outer function, then
$$
\Delta_{cc}=\Delta.
$$
\end{lem}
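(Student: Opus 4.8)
The plan is to show the two inclusions $\Delta_{cc} H^2 \supseteq \Delta H^2$ and $\Delta_{cc} H^2 \subseteq \Delta H^2$, which by the uniqueness clause of the Beurling-Lax-Halmos Theorem gives $\Delta_{cc}=\Delta$ (up to a unitary constant right factor). Let $\Delta$ have values in $\mathcal B(D,E)$, with complementary factor $\Delta_c$ having values in $\mathcal B(D',E)$ and second iterated complementary factor $\Delta_{cc}\equiv(\Delta_c)_c$ having values in $\mathcal B(D'',E)$. First I would record the standing consequences of the hypotheses: since $\Delta$ has a meromorphic pseudo-continuation of bounded type in $\mathbb D^e$, Lemma \ref{thgg668} gives that $\breve\Delta$ is of bounded type, hence by Corollary \ref{thmthjdkfjkf} $[\Delta,\Delta_c]$ is two-sided inner; by Lemma \ref{thnfhjjjfhbbbbskl} the factor $\Delta_c$ itself has a meromorphic pseudo-continuation of bounded type in $\mathbb D^e$, so the same reasoning applies to $\Delta_c$, giving that $[\Delta_c,\Delta_{cc}]$ is two-sided inner.

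\textbf{The easy inclusion.} By Lemma \ref{thm2.9}(b) applied to $\Delta$ we have $\Delta_c^*\Delta=0$, i.e. $\Delta H^2_D\subseteq \ker\Delta_c^*=\Delta_{cc}H^2_{D''}$; this is exactly the statement that $\Delta_{cc}$ is a left inner divisor of $\Delta$, so we may write $\Delta=\Delta_{cc}\Lambda$ for some inner function $\Lambda\in H^\infty(\mathcal B(D,D''))$. This gives one inclusion $\Delta H^2_D\subseteq\Delta_{cc}H^2_{D''}$ for free. The work is all in the reverse direction, i.e. showing $\Lambda$ is in fact a unitary constant.

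\textbf{The hard inclusion.} For the reverse inclusion I would use the two-sided inner-ness of $[\Delta,\Delta_c]$ together with the additional hypothesis that $\widetilde\Delta$ is outer. The idea is this: since $[\Delta,\Delta_c]$ is two-sided inner, $\Delta\Delta^*+\Delta_c\Delta_c^*=I_E$ a.e.; applying this to $\Delta_{cc}$ (whose columns lie in $\ker\Delta^*$, because $\Delta^*\Delta_{cc}=0$ follows from $\Delta_{cc}$ being a left inner divisor of $\Delta_c$ — wait, more carefully, from $\Delta_{cc}H^2_{D''}\subseteq\ker\Delta_c^*$ one does not immediately get $\Delta^*\Delta_{cc}=0$, so I need $\ker\Delta_c^*$'s relation to $\Delta$). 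The cleaner route is: from $[\Delta,\Delta_c]$ two-sided inner we get $\Delta_c\Delta_c^*=I-\Delta\Delta^*$, so $\ker\Delta_c^* = \{f\in H^2_E: \Delta_c^*f=0\}$, and applying Lemma \ref{thm2.9}(a) to $\Delta_c$ gives $\ker\Delta_c^*=\Delta_{cc}H^2_{D''}$. Now I want to see that $\Delta_{cc}H^2_{D''}\subseteq\Delta H^2_D$. Note $\Delta H^2_D\oplus\ker\Delta^* = \ker H_{\Delta^*}=[\Delta,\Delta_c]H^2 = \Delta H^2_D\oplus\Delta_c H^2_{D'}$, so $\ker\Delta^*=\Delta_c H^2_{D'}$. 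Taking complementary factors once more: by definition $\Delta_{cc}$ is built from $\ker(\Delta_c)^*$, and dualizing, I expect $\ker\Delta_c^* = \Delta H^2_D$ precisely when $\Delta$ is "saturated," which is where outer-ness of $\widetilde\Delta$ enters. Concretely, $\Delta$ being a left inner divisor of $\Delta_{cc}$'s... — the key computation is to show $\ker\Delta_c^*\subseteq\Delta H^2_D$. Take $f\in\ker\Delta_c^*$; then $f\perp\Delta_c H^2_{D'}=\ker\Delta^*$, and $f\in\ker H_{\Delta^*}=\Delta H^2_D\oplus\ker\Delta^*$, hence $f\in\Delta H^2_D$. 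This shows $\Delta_{cc}H^2_{D''}=\ker\Delta_c^*\subseteq\Delta H^2_D$, completing the reverse inclusion — and, I realize, this direction did not even need $\widetilde\Delta$ outer.

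\textbf{Where the outer hypothesis is really used.} So both inclusions in fact follow from $[\Delta,\Delta_c]$ and $[\Delta_c,\Delta_{cc}]$ being two-sided inner, via the decomposition of $\ker H_{\Delta^*}$ and $\ker H_{\Delta_c^*}$ from Lemma \ref{thm2.9}. I should double-check that $[\Delta,\Delta_c]$ two-sided inner is genuinely available: this needs $\breve\Delta$ of bounded type (Lemma \ref{thgg668} from the pseudo-continuation hypothesis) plus Corollary \ref{thmthjdkfjkf}. The subtle point — and the main obstacle I anticipate — is verifying that $\Delta_{cc}$ has the right codomain structure so that "$\Delta_{cc}$ is a left inner divisor of $\Delta$" and "$\Delta$ is a left inner divisor of $\Delta_{cc}$" together force equality via Beurling-Lax-Halmos; this is routine once the subspace identifications $\ker\Delta_c^*=\Delta_{cc}H^2_{D''}$ and $\ker\Delta_c^*=\Delta H^2_D$ are both established, since then $\Delta_{cc}H^2_{D''}=\Delta H^2_D$ and uniqueness applies. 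I suspect the actual role of "$\widetilde\Delta$ is an outer function" is to guarantee $\ker\Delta^*\neq H^2_E$ in degenerate cases and, more importantly, to ensure that when one iterates, $\Delta_{cc}$ does not pick up an extra inner factor — so in the write-up I would route the argument through Lemma \ref{qqqqqjjjj}/Lemma \ref{hfhnbnnn} where outer-ness is exploited, applying them to the inner-outer factorization of $\widetilde{\Delta_{cc}}$ and using that $\widetilde\Delta$ outer propagates. The careful bookkeeping of which complementary factor's kernel equals which shift-invariant subspace is the step most likely to hide an error, so that is where I would spend the most care.
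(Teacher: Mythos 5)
Your ``easy inclusion'' is fine, but the ``hard inclusion'' contains a genuine gap at the decisive step: for $f\in\ker\Delta_c^*$ you assert ``$f\in\ker H_{\Delta^*}$'' with no justification. Membership in $\ker H_{\Delta^*}$ means $\Delta^*f\in H^2_D$, and this does not follow from $\Delta_c^*f=0$: two-sided inner-ness of $[\Delta,\Delta_c]$ only gives $f=\Delta(\Delta^*f)$ with $\Delta^*f\in L^2_D$, not analytic. The step is in fact false under your running hypotheses. Take $\Delta=\left[\begin{smallmatrix} z&0\\ 0&1\\ 0&0\end{smallmatrix}\right]$, which has a meromorphic pseudo-continuation of bounded type in $\mathbb{D}^e$ (Corollary \ref{cor512,222}); then $\Delta_c=[0,0,1]^t$, $\ker\Delta_c^*=\Delta_{cc}H^2_{\mathbb C^2}$ with $\Delta_{cc}=\left[\begin{smallmatrix}1&0\\0&1\\0&0\end{smallmatrix}\right]\ne\Delta$, and $f=(1,0,0)^t$ lies in $\ker\Delta_c^*$ and is orthogonal to $\ker\Delta^*$, yet $\Delta^*f=(\overline z,0)^t\notin H^2_{\mathbb C^2}$, so $f\notin\ker H_{\Delta^*}$. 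Since your argument for the reverse inclusion never invokes the hypothesis that $\widetilde\Delta$ is outer (as you yourself observe), it would prove $\Delta_{cc}=\Delta$ for every $\Delta$ with $\breve\Delta$ of bounded type, which the example refutes ($\widetilde\Delta$ there is not outer). Your closing paragraph acknowledges that outer-ness must enter but only gestures at Lemmas \ref{qqqqqjjjj} and \ref{hfhnbnnn} without an argument, so the proof is not complete.

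For comparison, the paper's proof routes the outer hypothesis through a factorization rather than a subspace inclusion: starting from your easy inclusion one writes $\Delta=\Delta_{cc}\Omega$ with $\Omega$ inner, and the pseudo-continuation hypothesis is used to show $\Omega$ is two-sided inner (if not, $\theta H^2_E\subseteq\ker H_{\Delta^*}$ yields $\theta H^2_{D''}\subseteq\ker H_{\Omega^*}$, so $\breve\Omega$ is of bounded type and $\Omega_c\ne 0$; then the identity $[\Delta,\Delta_c]^*\Delta_{cc}\Omega_c f=\bigl[\begin{smallmatrix}\Omega^*\Omega_c f\\ \Delta_c^*\Delta_{cc}\Omega_c f\end{smallmatrix}\bigr]=0$ together with $[\Delta,\Delta_c]$ two-sided inner forces $\Omega_c=0$, a contradiction). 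Only then does outer-ness of $\widetilde\Delta=\widetilde\Omega\,\widetilde{\Delta_{cc}}$ force the two-sided inner factor $\widetilde\Omega$ to be a unitary constant, giving $\Delta_{cc}=\Delta$. If you want to salvage your write-up, replace the unjustified inclusion $\ker\Delta_c^*\subseteq\ker H_{\Delta^*}$ by this factorization argument; the inclusion itself is equivalent to the conclusion and cannot be obtained for free.
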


\begin{proof} Let $\Delta$ be an inner function with values in
$\mathcal B(D,E)$ and have a meromorphic pseudo-continuation of
bounded type in $\mathbb{D}^e$. \ Also, suppose $\widetilde{\Delta}$ is an
outer function. \ If $\hbox{ker}\, \Delta^*=\{0\}$, then the result
follows at one from Lemma \ref{hfhnbnnn}. \ Assume that
$\hbox{ker}\, \Delta^*\neq \{0\}$. \ By Lemma \ref{thgg668},
$\breve{\Delta}$ is of bounded type, so that by Corollary
\ref{thmthjdkfjkf}, $[\Delta, \Delta_c]$ is a two-sided inner
function with values in $\mathcal B(D\oplus D^{\prime},E)$ for some
nonzero Hilbert space $D^\prime$. \ We now claim that
\begin{equation}\label{615}
\Delta=\Delta_{cc}\Omega \quad \hbox{for a two-sided inner function
$\Omega$ with values in $\mathcal B(D)$}.
\end{equation}
Since $\Delta_{cc}$ is a left inner divisor of $\Delta$ (cf. the Proof of Lemma \ref{thnfhjjjfhbbbbskl}), we may write
\begin{equation}\label{dddhfhfh}
\Delta=\Delta_{cc}\Omega
\end{equation}
for an inner function $\Omega$ with values in $\mathcal
B(D,D^{\prime\prime})$. \ Assume to the contrary that $\Omega$ is
not two-sided inner. \  Since $\Delta$ has a meromorphic
pseudo-continuation of bounded type in $\mathbb{D}^e$, it follows from Lemma
\ref{thgg6688} that
$$
\theta H^2_E \subseteq \hbox{ker}\, H_{\Delta^*}=\hbox{ker}\,
H_{\Omega^*\Delta_{cc}^*}
$$
for some scalar inner function $\theta$. \ Thus
$\Omega^*\Delta_{cc}^*\theta H^2_E\subseteq H^2_D$.  \ In
particular, we have
$$
\Omega^*\theta H^2_{D^{\prime\prime}}=\Omega^*\Delta_{cc}^*\theta
\Delta_{cc}H^2_{D^{\prime\prime}}\subseteq H^2_D,
$$
and hence $\theta H^2_{D^{\prime\prime}} \subseteq \hbox{ker}\,
H_{\Omega^*}$, which implies, by Lemma \ref{thgg6688},  that
$\Omega$ has a meromorphic pseudo-continuation of bounded type in
$\mathbb{D}^e$. \ Thus by Lemma \ref{thgg668}, $\breve{\Omega}$ is of bounded
type.  \ It thus follows from Lemma \ref{thm2.9} that
$$
[\Omega, \Omega_c] \ \hbox{is two-sided inner},
$$
where $\Omega_c$ is the complementary factor of $\Omega$, with
values in $\mathcal B(D_1, D^{\prime\prime})$ for some nonzero
Hilbert space $D_1$. \ On the other hand, it follows from
(\ref{dddhfhfh}) that for all $f \in H^2_{D_1}$,
$$
[\Delta, \Delta_c]^*\Delta_{cc}\Omega_cf =
\begin{bmatrix}\Omega^*\Omega_cf\\
\Delta_c^*\Delta_{cc}\Omega_cf\end{bmatrix}=0,
$$
which implies that $D_1=\{0\}$, a contradiction.  \ This proves
(\ref{615}). Thus we may write
\begin{equation}\label{vvfffffvvsvsvs}
\widetilde{\Delta}=\widetilde{\Omega}\widetilde{\Delta_{cc}}
\end{equation}
for a two-sided inner function $\widetilde{\Omega}$ with values in
$\mathcal B(D)$. \ Since $\widetilde{\Delta}$ is an outer function
and $\widetilde{\Omega}$ is two-sided inner, it follows from
(\ref{vvfffffvvsvsvs}) that $\widetilde{\Omega}$ is a  unitary
operator, and so is $\Omega$. \ This completes the proof.
\end{proof}

\medskip

Lemma \ref{ffbfbbfbfbfvvvbbfLll} may fail if the condition
``$\Delta$ has a meromorphic pseudo-continuation of bounded type in
$\mathbb{D}^e$" is dropped. \ To see this. let
$$
\Delta:=\begin{bmatrix}f\\g\\0\end{bmatrix},
$$
where $f$ and $g$ are given in Example \ref{ex3.6}. \ Then
$\widetilde{\Delta}$ is an outer function. \ A straightforward
calculation shows that
$$
\Delta_c=\begin{bmatrix}0\\0\\1\end{bmatrix}  \quad \hbox{and} \quad
\Delta_{cc}=\begin{bmatrix}1&0\\0&1\\0&0\end{bmatrix}\neq \Delta.
$$
Note that $\breve{\Delta}$ is not of bounded type. \ Thus, by
Corollary \ref{cor512,222}, $\Delta$ has no meromorphic
pseudo-continuation of bounded type in $\mathbb{D}^e$.

\bigskip

We are ready to give an answer to Question \ref{q444}.

\medskip

\begin{thm} (Multiplicity-free model operators)\label{wwwkslgjho}
Let $T:= S_{E}^*\vert_{\mathcal H(\Delta)}$. \ If $\Delta$ has a
meromorphic pseudo-continuation of bounded type in $\mathbb{D}^e$ and
$\widetilde{\Delta}$ is an outer function, then $T$ is
multiplicity-free.
\end{thm}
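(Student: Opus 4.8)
The plan is to combine the key lemma (Lemma \ref{ffbfbbfbfbfvvvbbfLll}) with the description of the smallest backward shift-invariant subspace generated by $\{\Phi g\}$ for a cyclic vector $g$, as established in Lemma \ref{corcyclicgggg}. The point is that $\mathcal H(\Delta)$ itself should be realized as $E^*_{\{\Delta_c g\}}$ for a suitable cyclic vector $g$, which would immediately give $\mu_T = 1$.

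First I would dispose of the trivial case $\ker \Delta^* = \{0\}$: by Lemma \ref{thgg668}, $\breve\Delta$ is of bounded type, and then by Lemma \ref{hfhnbnnn}, $\Delta$ is a unitary operator, so $\mathcal H(\Delta) = \{0\}$ and $T$ acts on the zero space; by our convention such $T$ is multiplicity-free. So assume $\ker \Delta^* \ne \{0\}$, and let $\Delta_c$ be the complementary factor of $\Delta$, with values in $\mathcal B(D', E)$ for some nonzero Hilbert space $D'$. Next I would invoke Lemma \ref{ffbfbbfbfbfvvvbbfLll}: since $\Delta$ has a meromorphic pseudo-continuation of bounded type in $\mathbb D^e$ and $\widetilde\Delta$ is outer, we get $\Delta_{cc} = \Delta$, i.e. the complementary factor of $\Delta_c$ is (a unitary right factor of) $\Delta$ itself. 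In particular, $\mathcal H(\Delta) = \mathcal H(\Delta_{cc}) = \mathcal H((\Delta_c)_c)$.

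The plan is then to apply Lemma \ref{corcyclicgggg} (or rather the reasoning in its proof) to the inner function $\Delta_c$ in place of $\Phi$. For this I need $\Delta_c$ to have a meromorphic pseudo-continuation of bounded type in $\mathbb D^e$: this is precisely the second assertion of Lemma \ref{thnfhjjjfhbbbbskl}, which applies since $\Delta$ itself has such a pseudo-continuation. Now choose a cyclic vector $g$ of $S^*_{D'}$ — this exists because $S^*$ always has a cyclic vector (indeed a dense set of them), as recalled in the Introduction. Since $\Delta_c$ is already inner, its inner part is $\Delta_c$ itself, so Lemma \ref{corcyclicgggg} gives
\begin{equation*}
E^*_{\{\Delta_c g\}} = \mathcal H\bigl((\Delta_c)_c\bigr) = \mathcal H(\Delta_{cc}) = \mathcal H(\Delta).
\end{equation*}
Hence $\mathcal H(\Delta)$ admits the single cyclic vector $\Delta_c g$ for $T = S^*_E|_{\mathcal H(\Delta)}$, so $\mu_T \le 1$, and since $\mathcal H(\Delta) \ne \{0\}$ in this case we conclude $\mu_T = 1$; that is, $T$ is multiplicity-free.

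\textbf{Main obstacle.} The delicate point is making sure that all the hypotheses of Lemma \ref{corcyclicgggg} transfer correctly to $\Delta_c$ — in particular the meromorphic pseudo-continuation of bounded type for $\Delta_c$ (handled by Lemma \ref{thnfhjjjfhbbbbskl}) and the precise identification $(\Delta_c)_c = \Delta$ up to a unitary constant right factor, which is where Lemma \ref{ffbfbbfbfbfvvvbbfLll} does the real work. One should also be careful that the equality $\mathcal H((\Delta_c)_c) = \mathcal H(\Delta)$ is insensitive to the unitary-constant-right-factor ambiguity in the definition of the complementary factor, which it is, since $\mathcal H(\Delta V) = \mathcal H(\Delta)$ for any unitary constant $V$. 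Modulo these bookkeeping matters the argument is a direct concatenation of the cited lemmas, and I do not expect any genuinely new difficulty beyond what is already encapsulated in Lemma \ref{ffbfbbfbfbfvvvbbfLll}.
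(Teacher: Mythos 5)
Your proposal is correct and follows essentially the same route as the paper's own proof: dispose of the case $\ker \Delta^*=\{0\}$ via Lemma \ref{hfhnbnnn}, then use Lemma \ref{thnfhjjjfhbbbbskl} to transfer the meromorphic pseudo-continuation to $\Delta_c$, Lemma \ref{ffbfbbfbfbfvvvbbfLll} to get $\Delta_{cc}=\Delta$, and Lemma \ref{corcyclicgggg} applied to $\Phi\equiv\Delta_c$ with a cyclic vector $g$ of $S^*_{D'}$ to conclude $E^*_{\{\Delta_c g\}}=\mathcal H(\Delta)$. The only difference is cosmetic (your extra appeal to Lemma \ref{thgg668} in the trivial case is redundant, since Lemma \ref{hfhnbnnn} already assumes the pseudo-continuation hypothesis).
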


\begin{proof}
Let $T:= S_{E}^*\vert_{\mathcal H(\Delta)}$. \ Suppose that $\Delta$
has a meromorphic pseudo-continuation of bounded type in $\mathbb{D}^e$ and
$\widetilde{\Delta}$ is an outer function. \ If $\ker
\Delta^*=\{0\}$, then by Lemma \ref{hfhnbnnn}, $\Delta$ is a unitary
operator, so that
 $T$ is multiplicity-free. If instead $\hbox{ker}\,
\Delta^* \neq \{0\}$, then by Lemma \ref{thnfhjjjfhbbbbskl},
$\Delta_c$ has a meromorphic pseudo-continuation of bounded type in
$\mathbb{D}^e$.  \ Since $\widetilde{\Delta}$ is an outer function it follows
from Lemma \ref{ffbfbbfbfbfvvvbbfLll} that $\Delta=\Delta_{cc}$.\
Applying Lemma \ref{corcyclicgggg} with $\Phi\equiv \Delta_c$, we
can see that $T$ has a cyclic vector, i.e., $T$ is
multiplicity-free.
\end{proof}

\medskip

The following corollary is an immediate result of Theorem
\ref{wwwkslgjho}.


\begin{cor} \label{cor5.16}
Let $T:=S_{\mathbb C^n}^*|_{\mathcal H(\Delta)}$ for an inner matrix
function $\Delta$ whose flip $\breve{\Delta}$ is of bounded type. \
If $\Delta^t$ is an outer function, then $T$ is multiplicity-free.
\end{cor}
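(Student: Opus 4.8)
The plan is to obtain Corollary~\ref{cor5.16} as the matrix-valued specialization of Theorem~\ref{wwwkslgjho}. That theorem requires two hypotheses on the inner function $\Delta$: (i) that $\Delta$ admit a meromorphic pseudo-continuation of bounded type in $\mathbb{D}^e$, and (ii) that $\widetilde\Delta$ be an outer function. Hypothesis (i) is immediate in the present setting: since $\Delta$ is matrix-valued and $\breve\Delta$ is of bounded type, Corollary~\ref{cor512,222} yields at once that $\Delta$ has a meromorphic pseudo-continuation of bounded type in $\mathbb{D}^e$. So the entire content of the corollary is the translation of hypothesis (ii), namely the equivalence ``$\widetilde\Delta$ is outer'' $\Longleftrightarrow$ ``$\Delta^t$ is outer''.

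For this equivalence I would first record the pointwise identity. By definition $\widetilde\Delta=\breve\Delta^{\,*}$, so for a.e.\ $z\in\mathbb T$ one has $\widetilde\Delta(z)=\Delta(\overline z)^{*}=\overline{\Delta(\overline z)^{t}}=\overline{\Delta^{t}(\overline z)}$; comparing this with $\widehat{\widetilde\Delta}(n)=\widehat{\Delta}(n)^{*}$ (Lemma~\ref{corfgghh2.9}) also shows $\widetilde\Delta$ is again a matrix-valued $H^{\infty}$-function, so it does make sense to speak of it being outer. Now, for an arbitrary matrix-valued $H^{2}$-function $F$ with values in $\mathcal B(\mathbb C^{k},\mathbb C^{\ell})$, put $F^{\flat}(z):=\overline{F(\overline z)}$, which is again an $H^{2}$-function with the same values; the key claim is that $F$ is outer if and only if $F^{\flat}$ is. To prove it, introduce the conjugate-linear involutions $C$ on $H^{2}_{\mathbb C^{k}}$ and on $H^{2}_{\mathbb C^{\ell}}$ given by $(Cg)(z):=\overline{g(\overline z)}$. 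Then $C$ is an isometric bijection with $C^{2}=\mathrm{id}$, it carries $\mathcal P_{\mathbb C^{k}}$ onto itself, and a direct computation gives $C(Fq)=F^{\flat}(Cq)$ for every $q\in H^{2}_{\mathbb C^{k}}$. Hence $F^{\flat}\mathcal P_{\mathbb C^{k}}=C\bigl(F\mathcal P_{\mathbb C^{k}}\bigr)$, and applying the continuous map $C$ to closures gives $\hbox{cl}\,F^{\flat}\mathcal P_{\mathbb C^{k}}=C\bigl(\hbox{cl}\,F\mathcal P_{\mathbb C^{k}}\bigr)$; since $C(H^{2}_{\mathbb C^{\ell}})=H^{2}_{\mathbb C^{\ell}}$, outerness of $F$ and of $F^{\flat}$ are equivalent.

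Taking $F=\Delta^{t}$ and noting $(\Delta^{t})^{\flat}=\widetilde\Delta$, one concludes that $\widetilde\Delta$ is outer exactly because $\Delta^{t}$ is. Thus both hypotheses of Theorem~\ref{wwwkslgjho} are in force, and that theorem delivers the conclusion that $T=S_{\mathbb C^{n}}^{*}|_{\mathcal H(\Delta)}$ is multiplicity-free. I expect the only point needing genuine care to be the $\flat$-equivalence of outerness for (possibly non-square) matrix functions --- that is, verifying that the conjugation $C$ interacts correctly with left multiplication by $F$ and with the linear span of polynomials; once that is in place, the rest is a purely formal reduction to Corollary~\ref{cor512,222} and Theorem~\ref{wwwkslgjho}.
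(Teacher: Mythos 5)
Your proof is correct and follows the paper's own route: the paper derives Corollary \ref{cor5.16} exactly by combining Corollary \ref{cor512,222} with Theorem \ref{wwwkslgjho}. The only difference is that you spell out the equivalence of outerness for $\Delta^t$ and $\widetilde\Delta$ via the conjugation $C$, a step the paper treats as immediate (cf.\ the remark ``(a)$\Rightarrow$(b) is clear'' in Corollary \ref{cor33fbfbbfLll}), and your verification of it is sound.
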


\begin{proof}
This follows from Theorem \ref{wwwkslgjho} and Corollary
\ref{cor512,222}.
\end{proof}

\medskip

If $\Delta$ is an inner matrix function then the converse of Lemma
\ref{ffbfbbfbfbfvvvbbfLll} is also true.

\begin{cor}\label{cor33fbfbbfLll} \
Let $\Delta$ be an inner matrix function whose flip $\breve\Delta$
is of bounded type. \ Then the following are equivalent:

\begin{itemize}
\item[(a)]  $\Delta^t$ is an outer function;
\item[(b)] $\widetilde{\Delta}$ is an outer function;
\item[(c)] $\Delta_{cc}=\Delta$;
\item[(d)] $\Delta=\Omega_{c}$ for some inner
  matrix function $\Omega$.
\end{itemize}
Hence, in particular, $\Delta_{ccc}=\Delta_c$.
\end{cor}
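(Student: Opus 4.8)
\textbf{Proof proposal for Corollary \ref{cor33fbfbbfLll}.}

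The plan is to establish the cycle of implications (a)\,$\Leftrightarrow$\,(b)\,$\Rightarrow$\,(c)\,$\Rightarrow$\,(d)\,$\Rightarrow$\,(b), using the matrix-valued reductions of the operator-theoretic lemmas already proved, plus Corollary \ref{cor512,222} which tells us that for an inner matrix function, ``$\breve\Delta$ of bounded type'' is the same as ``$\Delta$ has a meromorphic pseudo-continuation of bounded type in $\mathbb{D}^e$.'' With that in hand, Lemma \ref{ffbfbbfbfbfvvvbbfLll} and Lemma \ref{thnfhjjjfhbbbbskl} are both applicable to $\Delta$, and Corollary \ref{orqqqqqjjjj} (the matrix-valued form of Lemma \ref{qqqqqjjjj}) says that for \emph{any} inner matrix function $\Omega$, $\Omega_c^t$ is automatically outer.

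First I would dispose of (a)\,$\Leftrightarrow$\,(b): since $\widetilde\Delta(z) = \overline{\Delta(\overline z)} = \Delta^t(z)^{**}$ --- more precisely, for a matrix function $\widetilde\Delta = \breve\Delta^*$ and $\Delta^t$ differ only by the pointwise relation $\widetilde\Delta(z) = \overline{\Delta(\overline z)}$ versus $\Delta^t(z) = \Delta(z)^t$; the key point is that the outer parts of $\widetilde\Delta$ and $\Delta^t$ have the same local rank and the same inner complementary data, so one is outer iff the other is. Concretely, $\Delta^t$ is outer $\iff$ $\overline{\Delta^t}$ (which is $\widetilde\Delta$ up to the flip $z\mapsto\overline z$, an automorphism of $\mathbb T$ preserving outerness) is outer $\iff$ $\widetilde\Delta$ is outer. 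Next, (b)\,$\Rightarrow$\,(c) is exactly Lemma \ref{ffbfbbfbfbfvvvbbfLll} applied to $\Delta$, once we invoke Corollary \ref{cor512,222} to convert the bounded-type hypothesis on $\breve\Delta$ into the pseudo-continuation hypothesis that Lemma \ref{ffbfbbfbfbfvvvbbfLll} requires.

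For (c)\,$\Rightarrow$\,(d): if $\Delta_{cc}=\Delta$, set $\Omega:=\Delta_c$; then $\Omega_c = \Delta_{cc} = \Delta$, so $\Delta = \Omega_c$ for the inner matrix function $\Omega=\Delta_c$. (Here I use that $\Delta_c$ is itself an inner matrix function, which follows from Lemma \ref{thm2.9} and the finite-dimensionality in the matrix setting.) For (d)\,$\Rightarrow$\,(b): if $\Delta = \Omega_c$ for some inner matrix function $\Omega$, then by Corollary \ref{orqqqqqjjjj}, $\Delta^t = \Omega_c^t$ is an outer function, and then (a)\,$\Leftrightarrow$\,(b) gives that $\widetilde\Delta$ is outer. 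This closes the cycle. Finally, for the last assertion $\Delta_{ccc}=\Delta_c$: apply the equivalence (c)\,$\Leftrightarrow$\,(d) with $\Delta$ replaced by $\Delta_c$. By Lemma \ref{thnfhjjjfhbbbbskl}, $\Delta_c$ has a meromorphic pseudo-continuation of bounded type in $\mathbb{D}^e$ (equivalently, $\breve{\Delta_c}$ is of bounded type), so the Corollary applies to $\Delta_c$; since $\Delta_c = \Delta_{cc,c}$ would need $\Delta_c = \Omega_c$ with $\Omega = \Delta_{cc} = \Delta$ --- which holds by (c) applied to $\Delta$, i.e. $\Delta_{cc}=\Delta$ so $\Delta_c = \Delta_c = (\Delta_{cc})_c$, and hence by (d)\,$\Rightarrow$\,(c) applied to $\Delta_c$ we get $(\Delta_c)_{cc} = \Delta_{ccc} = \Delta_c$.

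I expect the only genuinely delicate point to be the bookkeeping in (a)\,$\Leftrightarrow$\,(b) --- making the relationship between $\Delta^t$, $\widetilde\Delta$, and the flip operation precise enough that ``outer'' transfers cleanly; everything else is an orchestration of lemmas already in hand. One should double-check that the finite-dimensionality of the domain of $\Delta_c$ (needed to invoke Corollary \ref{orqqqqqjjjj}/Lemma \ref{qqqqqjjjj}) is automatic in the matrix setting, which it is since $\Delta_c \in H^\infty_{M_{n\times p}}$ with $p \le n$ by Lemma \ref{thm2.9}.
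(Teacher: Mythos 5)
Your cycle (a)$\Leftrightarrow$(b)$\Rightarrow$(c)$\Rightarrow$(d)$\Rightarrow$(b) for the four equivalences is essentially the paper's own argument: the paper runs (a)$\Rightarrow$(b)$\Rightarrow$(c)$\Rightarrow$(d)$\Rightarrow$(a), with Corollary \ref{cor512,222} plus Lemma \ref{ffbfbbfbfbfvvvbbfLll} doing (b)$\Rightarrow$(c) and Corollary \ref{orqqqqqjjjj} closing the loop, exactly as you do (your detour (d)$\Rightarrow$(b) via (a) is the same content). Your discussion of (a)$\Leftrightarrow$(b) is notationally garbled ($\widetilde\Delta(z)=\Delta(\overline z)^*$ is the transpose of the entrywise tilde of $\Delta$, not ``$\Delta^t(z)^{**}$''), but the underlying point --- that the entrywise map $\phi\mapsto\widetilde\phi$ is an isometric bijection of $H^2$ preserving outerness, so $\Delta^t$ is outer iff $\widetilde\Delta$ is --- is correct, and the paper simply declares this step clear.

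The genuine gap is in your proof of the last assertion $\Delta_{ccc}=\Delta_c$. You justify $\Delta_c=\Omega_c$ with $\Omega=\Delta_{cc}$ by saying this ``holds by (c) applied to $\Delta$, i.e.\ $\Delta_{cc}=\Delta$.'' But (c) is one of the four \emph{equivalent} conditions, not a consequence of the hypothesis: the corollary assumes only that $\breve\Delta$ is of bounded type, and the final assertion is meant to hold in that generality, including when (a)--(d) all fail. For instance, take $\Delta=[0\ \ z]^t$: here $\breve\Delta$ is of bounded type but $\Delta^t=[0\ \ z]$ is not outer; one computes $\Delta_c=[1\ \ 0]^t$, $\Delta_{cc}=[0\ \ 1]^t\neq\Delta$, yet $\Delta_{ccc}=[1\ \ 0]^t=\Delta_c$, a case your argument does not reach. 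The repair needs no appeal to (c): by definition $\Delta_c$ \emph{is} the complementary factor of the inner matrix function $\Delta$, so condition (d) holds for $\Delta_c$ with $\Omega:=\Delta$; moreover $\breve{\Delta_c}$ is of bounded type (by Lemma \ref{thnfhjjjfhbbbbskl}, or because $[\Delta,\Delta_c]$ is two-sided inner by Corollary \ref{thmthjdkfjkf}), so the first assertion applies to $\Delta_c$ and (d)$\Rightarrow$(c) gives $(\Delta_c)_{cc}=\Delta_c$. This is in substance the paper's route, which invokes Corollary \ref{orqqqqqjjjj} (so $\Delta_c^t$ is outer) together with Corollary \ref{thmthjdkfjkf} and then uses (a)$\Rightarrow$(c) for $\Delta_c$.
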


\begin{proof}
The implication (a)$\Rightarrow$(b) is clear and the implication
(b)$\Rightarrow$(c) follows from Corollary \ref{cor512,222} and
Lemma \ref{ffbfbbfbfbfvvvbbfLll}. \ Also the implication
(c)$\Rightarrow$(d) is clear and the implication (d)$\Rightarrow$(a)
follows from Corollary \ref{orqqqqqjjjj}. \ The second assertion
follows from the first assertion together with Corollary
\ref{thmthjdkfjkf} and Corollary \ref{orqqqqqjjjj}.
\end{proof}

\vskip 1cm

%
%
%
%

\noindent {\bf \S\ 7.3. A reduction to the case of
$C_0$-contractions} \

\bigskip
\noindent On the other hand, the theory of spectral multiplicity for
$C_0$-operators has been well developed in terms of their
characteristic functions (cf. \cite[Appendix 1]{Ni1}). \ However
this theory is not applied directly to $C_{0\, \bigcdot}$-operators, in
which cases their characteristic functions need not be two-sided
inner. \ The object of this section is to show that if the
characteristic function of a $C_{0\, \bigcdot}$-operator $T$ has a
meromorphic pseudo-continuation of bounded type in $\mathbb{D}^e$, then its
spectral multiplicity can be computed by that of the $C_0$-operator
induced by $T$.

\bigskip

We first observe:

\medskip

\begin{lem}\label{nclf;gop} If $\Phi \in L^2_{\mathcal{B}(D,E)}$ and
$f\in H^{\infty}_D$, then $\Phi f \in L^2_E$.
\end{lem}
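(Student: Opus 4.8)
\textbf{Proof proposal for Lemma \ref{nclf;gop}.} \ The plan is to show that multiplication by a bounded analytic $H^\infty_D$-function maps the matrix-valued (operator-valued) $L^2$-class into the vector-valued $L^2$-class, by reducing the pointwise product $\Phi(z)f(z)$ to an action that is controlled in norm by $\|\Phi(z)\|_{\mathcal B(D,E)}$ and $\|f\|_{H^\infty_D}$, and then checking measurability and integrability separately. \ First I would fix an orthonormal basis $\{d_k\}_{k\ge 1}$ of $D$ and, since $f\in H^\infty_D\subseteq L^2_D$, expand $f(z)=\sum_{k\ge 1}\langle f(z),d_k\rangle d_k$ for almost all $z\in\mathbb T$, exactly as in \eqref{kkkngd} in the proof of Lemma \ref{lem3.4ed}. \ Then for every $y\in E$ we have
$$
\bigl\langle \Phi(z)f(z),\ y\bigr\rangle_E=\sum_{k\ge 1}\bigl\langle f(z),d_k\bigr\rangle_D\,\bigl\langle \Phi(z)d_k,\ y\bigr\rangle_E,
$$
and since $\Phi\in L^2_{\mathcal B(D,E)}\subseteq L^2_s(\mathcal B(D,E))$ each summand is measurable (because $\langle f(\cdot),d_k\rangle\in L^2$ and $\langle \Phi(\cdot)d_k,y\rangle\in L^2$), so the countable sum $z\mapsto\langle \Phi(z)f(z),y\rangle$ is measurable for every $y\in E$; since $E$ is separable this shows $\Phi f$ is WOT measurable, hence (by the remarks preceding Lemma \ref{remmeasurable}) strongly measurable.

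For the integrability estimate I would simply use the pointwise operator-norm bound: for almost all $z\in\mathbb T$,
$$
\|\Phi(z)f(z)\|_E\le \|\Phi(z)\|_{\mathcal B(D,E)}\,\|f(z)\|_D\le \|f\|_{\infty}\,\|\Phi(z)\|_{\mathcal B(D,E)},
$$
where $\|f\|_\infty=\|f\|_{H^\infty_D}=\mathrm{ess\,sup}_{z\in\mathbb T}\|f(z)\|_D<\infty$. \ Squaring and integrating gives
$$
\int_{\mathbb T}\|\Phi(z)f(z)\|_E^2\,dm(z)\le \|f\|_\infty^2\int_{\mathbb T}\|\Phi(z)\|_{\mathcal B(D,E)}^2\,dm(z)=\|f\|_\infty^2\,\|\Phi\|_{L^2_{\mathcal B(D,E)}}^2<\infty,
$$
since $\Phi\in L^2_{\mathcal B(D,E)}$. \ Combining measurability with this finite $L^2_E$-norm bound yields $\Phi f\in L^2_E$.

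I do not expect a serious obstacle here; the only point requiring a little care is the measurability of $z\mapsto\Phi(z)f(z)$, because $f$ is a genuine $E$-valued (here $D$-valued) function rather than a fixed vector, so one cannot invoke WOT measurability of $\Phi$ directly — this is why the basis expansion of $f(z)$ and the countable-sum argument (mirroring the proof of Lemma \ref{lem3.4ed}) is needed. \ Everything else is the elementary norm inequality $\|\Phi(z)f(z)\|_E\le\|\Phi(z)\|\,\|f\|_\infty$ together with the hypothesis $\Phi\in L^2_{\mathcal B(D,E)}$. \ One minor subtlety: the lemma as stated does not claim $\Phi f\in H^2_E$ even when $\Phi$ is analytic, only that it lies in $L^2_E$, so I would not attempt to track the negative Fourier coefficients; if an $H^2$-conclusion were wanted one would additionally invoke Lemma \ref{corfgghh2.9} and an argument like the second half of Lemma \ref{lem3.4ed}, but that is not required for the statement at hand.
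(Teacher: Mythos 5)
Your proof is correct, and its overall shape (measurability of $z\mapsto\Phi(z)f(z)$ plus the pointwise bound $\|\Phi(z)f(z)\|_E\le\|f\|_\infty\|\Phi(z)\|_{\mathcal B(D,E)}$, squared and integrated) matches the paper's; the only place you diverge is the measurability step, which is indeed the one delicate point. The paper handles it via the strong measurability of $f$: it chooses countable-valued functions $f_n=\sum_k d_k^{(n)}\chi_{\sigma_k^{(n)}}$ with $f_n\to f$ a.e., observes that $\langle\Phi(\cdot)f_n(\cdot),e\rangle=\sum_k\chi_{\sigma_k^{(n)}}(\cdot)\langle\Phi(\cdot)d_k^{(n)},e\rangle$ is measurable by WOT measurability of $\Phi$ (this is \eqref{fveboll}), and then passes to the a.e. limit to get weak, hence strong, measurability of $\Phi f$. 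You instead expand $f(z)$ in a fixed orthonormal basis of $D$ and push $\Phi(z)$ through the series, exactly as in \eqref{kkkngd} in the proof of Lemma \ref{lem3.4ed}; this is equally valid, but note it quietly uses that $\Phi(z)\in\mathcal B(D,E)$ for a.e.\ $z$ (so that $\Phi(z)$ commutes with the norm-convergent sum $\sum_k\langle f(z),d_k\rangle d_k$), which holds here since $\Phi\in L^2_{\mathcal B(D,E)}$, whereas the paper's route needs only WOT measurability of $\Phi$ together with a.e.\ convergence of the approximants $f_n$. Both arguments reduce to the countable family of measurable scalar functions $\langle\Phi(\cdot)d,y\rangle$ and then invoke separability of $E$ (the equivalence of weak and strong measurability noted before Lemma \ref{remmeasurable}), so the two proofs buy essentially the same thing; yours avoids choosing an approximating sequence, the paper's is marginally more economical in what it assumes about $\Phi(z)$ pointwise.
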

\begin{proof} Suppose $\Phi \in L^2_{\mathcal{B}(D,E)}$ and
$f\in H^{\infty}_D$. \ Since $f$ is strongly measurable, there exist
countable valued functions $f_n=\sum_{k= 1}^{\infty}d_k^{(n)}
\chi_{\sigma_k^{(n)}}$ such that $f(z)=\lim_n f_n(z)$ for almost all
$z \in \mathbb T$. \ For all $e \in E$ and $n=1,2,3,\cdots$,
\begin{equation}\label{fveboll}
\bigl \langle \Phi(z)f_n(z), \ e \bigr\rangle_E=\sum_{k= 1}^{\infty}
\chi_{\sigma_k^{(n)}}(z) \cdot \bigl \langle \Phi(z)d_k^{(n)} , \ e
\bigr\rangle_D.
\end{equation}
But since $\Phi$ is WOT measurable, by (\ref{fveboll}), $\Phi f_n$
is weakly measurable and in turn, $\Phi f:\mathbb T\to E$ is weakly
measurable, and hence it is strongly measurable. \ Observe that
$$
\int_{\mathbb T}||\Phi(z)f(z)||^2_Edm(z)\leq
||f||_{\infty}\int_{\mathbb T}||\Phi(z)||^2 dm(z)<\infty,
$$
which implies that $\Phi f \in L^2_E$. \ This completes the proof.
\end{proof}

\medskip

\begin{lem}\label{bnmmdjfjjf} Let $\Phi \in L^2_{\mathcal B(D,
E)}$ and let $A:H^2_D \to H^2_E$ be a densely defined operator, with
domain $H^\infty_D\subset H^2_D$, defined by
$$
Af:=JP_-(\Phi f) \quad(f \in H^{\infty}_D).
$$
Then
$$
\hbox{ker}\, A^*=\hbox{ker}\, H_{\Phi}^*.
$$
\end{lem}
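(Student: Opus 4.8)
The plan is to show the two kernels coincide by showing the two operators agree on a dense set. Recall $H_\Phi$ is defined on the dense subset $\mathcal P_D$ of polynomials, while $A$ is defined on the (also dense) subset $H^\infty_D$, and both send $f$ to $JP_-(\Phi f)$. Since $\mathcal P_D \subseteq H^\infty_D$, the operator $A$ is an extension of $H_\Phi$. First I would note, using Lemma \ref{nclf;gop}, that $A$ is genuinely well-defined: for $f \in H^\infty_D$ we have $\Phi f \in L^2_E$ (this uses $\Phi \in L^2_{\mathcal B(D,E)}$, not merely strong $L^2$), so $JP_-(\Phi f) \in H^2_E$ makes sense. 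Since $A \supseteq H_\Phi$, passing to adjoints gives $A^* \subseteq H_\Phi^*$, hence immediately $\ker A^* \subseteq \ker H_\Phi^*$. The work is the reverse inclusion.

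For the reverse inclusion, suppose $g \in \ker H_\Phi^*$, i.e.\ $\langle H_\Phi p, g\rangle = 0$ for all $p \in \mathcal P_D$. I want to conclude $\langle Af, g\rangle = 0$ for all $f \in H^\infty_D$. The natural route is approximation: given $f \in H^\infty_D$, approximate $f$ by polynomials $p_n \in \mathcal P_D$ (for instance Fej\'er or Ces\`aro means of $f$, which converge to $f$ in $L^2_D$ and stay uniformly bounded in $L^\infty_D$ norm by $\|f\|_\infty$). The key point is then that $Af = \lim A p_n$ in $H^2_E$ (or at least weakly), so that $\langle Af, g\rangle = \lim \langle H_\Phi p_n, g\rangle = 0$. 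To justify $A p_n \to Af$, observe $A p_n - A f = JP_-(\Phi(p_n - f))$, and $\|JP_-(\Phi(p_n-f))\|_{L^2_E} \le \|\Phi(p_n - f)\|_{L^2_E}$; I would then argue $\Phi(p_n - f) \to 0$ in $L^2_E$ using the dominated convergence theorem, with pointwise convergence $\Phi(z)(p_n(z) - f(z)) \to 0$ a.e.\ (from $p_n \to f$ a.e.\ along a subsequence, continuity of $\Phi(z)$, or one can avoid this by using $\|\Phi(z)(p_n(z)-f(z))\|_E \le \|\Phi(z)\|_{\mathcal B(D,E)}\|p_n(z)-f(z)\|_D$) and a dominating function built from $\|\Phi(z)\|^2_{\mathcal B(D,E)} \cdot (\sup_n \|p_n - f\|_\infty)^2 \in L^1$.

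The main obstacle I anticipate is the approximation step: one needs polynomial approximations $p_n$ of $f \in H^\infty_D$ that converge to $f$ both in $L^2_D$ and boundedly in $L^\infty_D$ (so the $L^1$-dominating function is available), and it is cleanest to invoke the Ces\`aro/Fej\'er means, for which $\|p_n\|_\infty \le \|f\|_\infty$ and $p_n \to f$ in every $L^q_D$, $q < \infty$, hence along a subsequence a.e. An alternative that sidesteps a.e.\ convergence entirely: since $\|\Phi(z)(p_n(z)-f(z))\|_E \le \|\Phi(z)\|_{\mathcal B(D,E)} \|p_n(z) - f(z)\|_D$, Cauchy--Schwarz gives $\|\Phi(p_n - f)\|_{L^2_E}^2 \le \|\,\|\Phi(\cdot)\|\,\|_{L^\infty}^2 \|p_n - f\|_{L^2_D}^2$ when $\Phi \in L^\infty$, but here $\Phi$ is only $L^2$, so I would instead split: write $\|\Phi(p_n-f)\|_{L^2_E}^2 = \int \|\Phi(z)\|^2 \|p_n(z)-f(z)\|_D^2\, dm$ and dominate by $\int \|\Phi(z)\|^2 (2\|f\|_\infty)^2\, dm < \infty$, then apply dominated convergence along an a.e.-convergent subsequence of $p_n$. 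Once $A p_n \to A f$ in $L^2_E$ is established, continuity of the inner product against $g$ finishes the argument, yielding $g \in \ker A^*$ and hence $\ker H_\Phi^* \subseteq \ker A^*$, completing the proof.
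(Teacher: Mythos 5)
Your proposal is correct and follows essentially the same route as the paper: both obtain $\ker A^*\subseteq \ker H_{\Phi}^*$ from the fact that $A$ extends $H_{\Phi}$, and both obtain the reverse inclusion by approximating $f\in H^{\infty}_D$ by analytic polynomials and passing the orthogonality $\langle H_{\Phi}p,\,g\rangle=0$ to the limit. The only difference is the limiting step: the paper takes the ordinary partial sums of $f$ and passes the limit in the pairing $\langle p_n,\,\Phi^*Jg\rangle$ rather tersely, whereas your use of Fej\'er means (uniformly bounded by $\|f\|_{\infty}$) together with dominated convergence yields $\Phi p_{n_k}\to\Phi f$ in $L^2_E$ along a subsequence, which justifies the same limit more carefully.
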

\begin{proof}  Let $\Phi \in L^2_{\mathcal B(D,
E)}\subseteq L^2_s(\mathcal B(D, E))$. \ Since the domain of
$H_{\Phi}$ is a subset of the domain of $A$, it follows that the
domain of $A^*$ is a subset of the domain of $H_{\Phi}^*$, so that
$\hbox{ker}\, A^*\subseteq \hbox{ker}\, H_{\Phi}^*$. \ For the
reverse inclusion, suppose $g \in \hbox{ker}\, H_{\Phi}^*$. \ Then
\begin{equation}\label{nnswndcnvnk}
\bigl\langle H_{\Phi} p, \ g \bigr\rangle_{L^2_E}=0 \quad \hbox{for
all} \ p \in \mathcal P_{D}.
\end{equation}
Let $f\in H^{\infty}_D$ be arbitrary. \ Then we may write
$$
f(z)=\sum_{k=0}^{\infty}a_k z^k \quad(a_k \in D).
$$
Let
$$
p_n(z):=\sum_{k=0}^{n}a_k z^k \in \mathcal P_D.
$$
Then  it follows from (\ref{nnswndcnvnk}) that
$$
0=\lim_{n\rightarrow \infty}\bigl\langle  H_{\Phi}p_n, \ g
\bigr\rangle_{L^2_E}=\lim_{n\rightarrow \infty}\bigl\langle  p_n, \
\Phi^*Jg \bigr\rangle_{L^2_D}=\bigl\langle  \Phi f, \ Jg
\bigr\rangle_{L^2_E}=\bigl\langle  A f, \ g \bigr\rangle_{L^2_E},
$$
which implies that $g \in \hbox{ker}\, A^*$, so that  $\hbox{ker}\,
H_{\Phi}^*\subseteq \hbox{ker}\, A^*$. \ This completes the proof.
\end{proof}

\medskip

\begin{cor}\label{llllamdmnd} If $\Phi \in H^2_{\mathcal B(D, E)}$, then
$$
E^*_{\{\Phi\}}=\hbox{cl}\,
\Bigl\{JP_-\bigl(\overline{z}\breve{\Phi}h\bigr): h \in
H^{\infty}_D\Bigr\}.
$$
\end{cor}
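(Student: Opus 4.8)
The plan is to combine Lemma \ref{thm327} with Lemma \ref{bnmmdjfjjf}. First I would observe that $\Phi\in H^2_{\mathcal B(D,E)}\subseteq H^2_s(\mathcal B(D,E))$ (since $H^2_s(\mathcal B(D,E))$ contains all $\mathcal B(D,E)$-valued $H^2$-functions), so Lemma \ref{thm327} applies and gives
\[
E^*_{\{\Phi\}}=\hbox{cl ran}\,H_{\overline z\breve\Phi}.
\]
Thus it suffices to show that $\hbox{cl ran}\,H_{\overline z\breve\Phi}=\hbox{cl}\,\bigl\{JP_-(\overline z\breve\Phi h):h\in H^\infty_D\bigr\}$.

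Next I would verify that $\overline z\breve\Phi\in L^2_{\mathcal B(D,E)}$: the map $z\mapsto\overline z$ is a measure-preserving homeomorphism of $\mathbb T$, so $\breve\Phi$ inherits (WOT/strong) measurability from $\Phi$ and $\|\breve\Phi\|_{L^2_{\mathcal B(D,E)}}=\|\Phi\|_{L^2_{\mathcal B(D,E)}}<\infty$, and multiplying by the bounded scalar $\overline z$ preserves membership in $L^2_{\mathcal B(D,E)}$. This puts us in position to apply Lemma \ref{bnmmdjfjjf} with its symbol taken to be $\overline z\breve\Phi$: the densely defined operator $A$ on $H^\infty_D$ given by $Af:=JP_-(\overline z\breve\Phi f)$ satisfies $\hbox{ker}\,A^*=\hbox{ker}\,H_{\overline z\breve\Phi}^*$.

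Finally, recalling from Chapter 2 that $\ker B^*=(\hbox{ran}\,B)^\perp$ for any densely defined operator $B$ (applied to both $A$ and $H_{\overline z\breve\Phi}$, which is densely defined on $\mathcal P_D$), I would conclude
\[
\hbox{cl ran}\,H_{\overline z\breve\Phi}=\bigl(\ker H_{\overline z\breve\Phi}^*\bigr)^\perp=(\ker A^*)^\perp=\hbox{cl ran}\,A=\hbox{cl}\,\bigl\{JP_-(\overline z\breve\Phi h):h\in H^\infty_D\bigr\},
\]
which together with the first display proves the corollary.

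The main obstacle here is essentially bookkeeping rather than mathematics: one must be careful that the symbol $\overline z\breve\Phi$ actually lies in $L^2_{\mathcal B(D,E)}$ (the hypothesis of Lemma \ref{bnmmdjfjjf}), which is strictly stronger than being a strong $L^2$-function and is where the assumption $\Phi\in H^2_{\mathcal B(D,E)}$ (rather than merely $\Phi\in H^2_s(\mathcal B(D,E))$) is used; and one must note that $A$ is a densely defined extension of $H_{\overline z\breve\Phi}$, so that equality of the kernels of the adjoints yields equality of the closed ranges.
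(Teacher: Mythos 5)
Your proposal is correct and follows essentially the same route as the paper's own proof: both invoke Lemma \ref{thm327} for $E^*_{\{\Phi\}}=\hbox{cl ran}\,H_{\overline z\breve\Phi}$ and Lemma \ref{bnmmdjfjjf} with symbol $\overline z\breve\Phi$ to identify $\ker A^*=\ker H_{\overline z\breve\Phi}^*$, hence the closed ranges. Your explicit check that $\overline z\breve\Phi\in L^2_{\mathcal B(D,E)}$ is a point the paper leaves implicit, and it is a sound addition.
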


\begin{proof} Define $A:H^2_D \to H^2_E$ by
$Af:=JP_-(\overline{z}\breve{\Phi} h)$ ($h \in H^{\infty}_D$). \ By
Lemma \ref{bnmmdjfjjf}, $\hbox{ker}\,
H_{\overline{z}\breve{\Phi}}^*=\hbox{ker}\, A^*$. By
(\ref{ephistar}), we have
$$
E^*_{\{\Phi\}}=\hbox{cl ran}\, H_{\overline{z}\breve{\Phi}}=\hbox{cl
ran}\, A =\hbox{cl}\,
\Bigl\{JP_-\bigl(\overline{z}\breve{\Phi}h\bigr): h \in
H^{\infty}_D\Bigr\}.
$$
\end{proof}

\medskip

We thus have:

\medskip

\begin{lem}\label{dcebsjdhvkrjw} Suppose $\Delta$ is a
two-sided inner function and  has a meromorphic pseudo-continuation
of bounded type in $\mathbb{D}^e$. \ Let $F\equiv \{f_1, f_2, \cdots,
f_p\}\subseteq \mathcal H(\Delta)$ . \ Then
$$
E_F^*=\bigvee\Bigl\{P_+(\breve{h}_jf_j): h_j \in H^{\infty}\cap
\mathcal H(\widetilde{\omega_{\Delta}}),  \ j=1,2,\cdots p\Bigr\},
$$
where $\omega_\Delta$ is the pseudo-characteristic scalar inner
function of $\Delta$.
\end{lem}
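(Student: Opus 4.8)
\textbf{Proof proposal for Lemma \ref{dcebsjdhvkrjw}.}

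The plan is to reduce the statement to a single-vector assertion and then to the already-established machinery of Corollary \ref{llllamdmnd} and Lemma \ref{cyclicgggg}. First I would observe that, since $E_F^* = \bigvee_{j=1}^p E_{f_j}^*$, it suffices to prove the claim for a single vector $f \in \mathcal H(\Delta)$, namely that $E_f^* = \bigvee\{P_+(\breve h f): h \in H^\infty \cap \mathcal H(\widetilde{\omega_\Delta})\}$. Because $\Delta$ is two-sided inner and has a meromorphic pseudo-continuation of bounded type in $\mathbb D^e$, Corollary \ref{thmthjdkfjkf} and (\ref{cefcvwf}) give $m_\Delta = \omega_\Delta$, and Lemma \ref{thgg6688} supplies a scalar inner $\delta$ with $\delta H^2_E \subseteq \ker H_{\Delta^*}$; the g.c.d.\ of all such $\delta$ is exactly $\omega_\Delta$. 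The key structural fact I would extract is that for $f \in \mathcal H(\Delta) = H^2_E \ominus \Delta H^2_E$, one has $\Delta^* f \in L^2_E \ominus H^2_E$ by Corollary \ref{thm2.4edfrgt}, hence $\omega_\Delta \Delta^* f \in L^2_E$ and in fact $\omega_\Delta f$ is controlled by $\mathcal H(\omega_\Delta I_E)$ in the relevant sense; concretely, $\breve h f$ for $h \in H^\infty$ lands in $L^2_E$ by Lemma \ref{nclf;gop}, and its positive part $P_+(\breve h f)$ is what we must span.

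The main computational step is the identification $E_f^* = \mathrm{cl}\,\{JP_-(\overline z \breve f h): h \in H^\infty\}$ from Corollary \ref{llllamdmnd} (applied with $D = \mathbb C$, so that $\{\Phi\} = \{f\}$ and $E_{\{f\}}^* = E_f^*$), combined with the flip identities $JP_-(\overline z \breve f h) $ rewritten via $J$ and the substitution $z \mapsto \overline z$ as $P_+(\breve h' f)$ for a corresponding $h'$. Here the role of $\omega_\Delta$ enters: because $\delta H^2_E \subseteq \ker H_{\Delta^*}$ and $f \perp \Delta H^2_E$, any $h$ that is divisible by $\omega_\Delta$ contributes nothing new — the point being that $P_+(\breve{(\omega_\Delta k)} f) = P_+(\widetilde{\omega_\Delta}\,\breve k f)$ can be absorbed, using that $\widetilde{\omega_\Delta}$ acts like an inner multiplier whose effect on $f \in \mathcal H(\Delta)$ is already accounted for by the model-space orthogonality. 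Thus one may restrict the spanning index $h$ from all of $H^\infty$ to $H^\infty \cap \mathcal H(\widetilde{\omega_\Delta})$, since $H^\infty = (H^\infty \cap \mathcal H(\widetilde{\omega_\Delta})) + \widetilde{\omega_\Delta} H^\infty$ and the second summand is redundant.

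The hard part will be making the last reduction rigorous: showing that enlarging the index set from $H^\infty \cap \mathcal H(\widetilde{\omega_\Delta})$ to all of $H^\infty$ does not enlarge the closed span. This requires precisely the minimality property of $\omega_\Delta = m_\Delta$ — that $\omega_\Delta$ is the \emph{greatest} common inner divisor of the scalar inner functions $\delta$ with $\delta H^2_E \subseteq \ker H_{\Delta^*}$ — so that multiplication by $\widetilde{\omega_\Delta}$ followed by projection $P_+$ maps $\mathcal H(\Delta)$ back into quantities already reachable, while no smaller inner function would suffice. I would handle this by writing $h = h_0 + \widetilde{\omega_\Delta} h_1$ with $h_0 \in H^\infty \cap \mathcal H(\widetilde{\omega_\Delta})$ and $h_1 \in H^\infty$, computing $P_+(\breve h f) = P_+(\breve h_0 f) + P_+(\breve{\widetilde{\omega_\Delta}}\, \breve h_1 f) = P_+(\breve h_0 f) + P_+(\omega_\Delta \breve h_1 f)$, and then showing the last term is a limit of vectors of the form $P_+(\breve h' f)$ with $h' \in H^\infty \cap \mathcal H(\widetilde{\omega_\Delta})$ by exploiting $\omega_\Delta \Delta^* f \in H^2_E$ together with a density argument over $\mathcal P_{\mathbb C}$. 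Everything else is bookkeeping with the flip operators $\breve{(\cdot)}$, $\widetilde{(\cdot)}$, and the projections $P_\pm$, carried out exactly as in the proofs of Lemma \ref{thm327} and Corollary \ref{llllamdmnd}.
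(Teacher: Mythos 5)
Your first two steps do follow the paper's route: the reduction to the identity $E_{F}^*=\hbox{cl}\,\{JP_-(\overline z\,\breve{[F]}h):h\in H^\infty_{\mathbb C^p}\}$ from Corollary \ref{llllamdmnd}, and the splitting $H^\infty=(H^\infty\cap\mathcal H(\widetilde{\omega_\Delta}))+\widetilde{\omega_\Delta}H^\infty$, after which one must show that the $\widetilde{\omega_\Delta}H^\infty$ part contributes nothing. But the way you handle that last step contains a genuine gap, in two respects. First, a flip miscalculation: with the paper's conventions, $\breve{\widetilde{\omega_\Delta}}(z)=\widetilde{\omega_\Delta}(\overline z)=\overline{\omega_\Delta(z)}$, so the residual term is $P_+\bigl(\overline{\omega_\Delta}\,\breve h_1 f\bigr)$, not $P_+\bigl(\omega_\Delta\,\breve h_1 f\bigr)$; the term you propose to "absorb" is not the one that actually arises. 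Second, and more seriously, your plan to prove absorption by a density argument resting on the minimality (g.c.d.) property of $\omega_\Delta$ is the wrong mechanism: minimality is never used, and no approximation is needed, because the correct residual term vanishes identically.

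What is actually needed -- and what your proposal never establishes -- is the inclusion $\mathcal H(\Delta)\subseteq\mathcal H(\omega_\Delta I_E)$. Since $\omega_\Delta H^2_E\subseteq\ker H_{\Delta^*}$, Lemma \ref{thgg6688} gives $\Delta=\omega_\Delta A^*$ with $A\in H^\infty(\mathcal B(E))$; because $\Delta$ is two-sided inner, $A^*A=\Delta\Delta^*=I_E$, hence $\omega_\Delta H^2_E=\Delta A H^2_E\subseteq\Delta H^2_E$, i.e., $\mathcal H(\Delta)\subseteq\mathcal H(\omega_\Delta I_E)$. Therefore $\overline{\omega_\Delta}f\in L^2_E\ominus H^2_E$ for every $f\in\mathcal H(\Delta)$, and since $\breve h_1$ is coanalytic, $\overline{\omega_\Delta}\,\breve h_1 f\in L^2_E\ominus H^2_E$, so $P_+\bigl(\overline{\omega_\Delta}\,\breve h_1 f\bigr)=0$ exactly; this is where the two-sided inner hypothesis enters, and your appeal to "model-space orthogonality" does not substitute for it. Note also that this argument works for any scalar inner $\theta$ with $\theta H^2_E\subseteq\ker H_{\Delta^*}$ (the paper later invokes the lemma through such a larger $\theta$ in Theorem \ref{hwysonghkkl}), which confirms that the minimality of $\omega_\Delta$ you flagged as the crux plays no role. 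Your opening reduction $E_F^*=\bigvee_j E_{f_j}^*$ and the flip bookkeeping identifying $JP_-(\overline z\,\breve{[f_j]}h_j)$ with $P_+(\breve h_j f_j)$ are fine and match the paper.
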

\begin{proof} Suppose $\Delta$ is a two-sided inner function with values
in $\mathcal B(E)$ and  has a meromorphic pseudo-continuation of
bounded type in $\mathbb{D}^e$. \ Let $F\equiv \{f_1, f_2, \cdots,
f_p\}\subseteq \mathcal H(\Delta)$. \ Write
$[F]:=\begin{bmatrix}[f_1], [f_2], \cdots, [f_p]\end{bmatrix}$ and
$\theta:= \omega_{\Delta}$. \ Since $[f_j] \in H^2_{\mathcal
{B}(\mathbb C, E)}$ for each $j=1,2,\cdots, p$, it is easy to see
that $F \in  H^{2}_{\mathcal B(\mathbb C^p, E)}$. \ We first claim
that
\begin{equation}\label{766}
E_F^* = \hbox{cl}\,\Bigl\{JP_-(\overline{z}\breve{[F]}h): h \in
H^{\infty}_{\mathbb C^p}\cap \mathcal
H(\widetilde{\theta}I_p)\Bigr\}.
\end{equation}
By Corollary \ref{llllamdmnd} we have
$$
E_F^* =\hbox{cl}\,\Bigl\{JP_-(\overline{z}\breve{[F]}h): h \in
H^{\infty}_{\mathbb C^p}\Bigr\} \supseteq
\hbox{cl}\,\Bigl\{JP_-(\overline{z}\breve{[F]}h): h \in
H^{\infty}_{\mathbb C^p}\cap \mathcal
H(\widetilde{\theta}I_p)\Bigr\}.
$$
For the reverse inclusion, it suffices to show that
\begin{equation}\label{vbfjdvsfj}
P_-(\overline{z}\breve{[F]}\widetilde{\theta}h) =0 \quad \hbox{for
all} \ h \in H^{\infty}_{\mathbb C^p}.
\end{equation}
By Lemma \ref{thgg6688}, we may write
$$
\Delta=\theta A^* \quad\hbox{for some $A\in H^{\infty}(\mathcal
B(E))$.}
$$
Since $\Delta$ is two-sided inner, it follows that
$I_E=\Delta\Delta^*=A^* A$, so that $\theta H^2_{E}=\Delta A
H^2_{E}\subseteq \Delta H^2_{E}$. \ We thus have
$$
\mathcal H(\Delta)\subseteq \mathcal H(\theta I_E).
$$
Thus $f_j\in \mathcal H(\theta I_E)$ ($j=1,\cdots, p$), so that $
\overline{\theta}f_j \in L^2_{E}\ominus H^2_{E}$. \ Hence for all $h
\in H^{\infty}_{\mathbb C^p}$, by Lemma \ref{nclf;gop}, we have
$\overline{\theta}[F] \breve{h} \in L^2_{E} \ominus H^2_{E}$, so
that $\overline{z}\breve{[F]}\widetilde{\theta}h \in H^2_{E}$, and
hence $P_-(\overline{z}\breve{[F]}\widetilde{\theta}h)=0$, which
gives (\ref{vbfjdvsfj}). \ This proves (\ref{766}). \
Write
$h=(h_1,h_2, \cdots h_p)^t\in H^{\infty}_{\mathbb C^p}\,\cap\,
\mathcal H(\widetilde{\theta}I_p)$, and hence $h_j \in
H^{\infty}\cap \mathcal H(\widetilde{\theta})$. \ Thus it follows
from  (\ref{766}) that
$$
\aligned E_F^*&=\hbox{cl}\,\Bigl\{JP_-(\overline{z}\breve{[F]}h): h
\in H^{\infty}_{\mathbb C^p}\cap \mathcal
H(\widetilde{\theta}I_p)\Bigr\}\\
&=\bigvee\Bigl\{JP_-(\overline{z}\breve{[f_j]}h_j): h_j \in
H^{\infty} \cap \mathcal
H(\widetilde{\theta}), \ j=1,2,\cdots p\Bigr\}\\
&=\bigvee \Bigl\{P_+(\breve{h}_jf_j): h_j \in  H^{\infty} \cap
\mathcal H(\widetilde{\theta}), \ j=1,2,\cdots p\Bigr\}.
\endaligned
$$
This completes the proof.
\end{proof}

\medskip

\begin{lem}\label{lemdchgvfervbhyu} \
Let $\Delta$ be an inner function with values in $\mathcal
B(E^\prime, E)$, with $\dim E^\prime<\infty$. \ If
$\widetilde{\Delta}= (\widetilde{\Delta})^i(\widetilde{\Delta})^e$
is the inner-outer factorization of $\widetilde \Delta$, then we
have:
\begin{itemize}
\item[(a)] $(\widetilde{\Delta})^i$ is a
two-sided inner function with values in  $\mathcal B(E^\prime)$;
\item[(b)] $\widetilde{(\widetilde{\Delta})^e}$ is an inner function
with values in $\mathcal B(E^\prime,E)$.
\end{itemize}
\end{lem}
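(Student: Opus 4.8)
\textbf{Proof proposal for Lemma \ref{lemdchgvfervbhyu}.}

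The plan is to exploit the finite-dimensionality of $E^\prime$ together with the local-rank machinery (the Complementing Lemma, the notion of $\mathrm{Rank}$ for strong $H^2$-functions, and the inner-outer factorization). First I would record the elementary fact that $\widetilde{\Delta}$ is a strong $H^2$-function with values in $\mathcal B(E^\prime, E)$: indeed $\Delta\in H^\infty(\mathcal B(E^\prime,E))$, so by Lemma \ref{corfgghh2.9} $\widetilde{\Delta}\in H^\infty(\mathcal B(E,E^\prime))$, and hence $\widetilde{\Delta}=\breve{\Delta}^{\,*}$ lands in $\mathcal B(E^\prime,E)$ after flipping; in any case the Inner-Outer Factorization for strong $H^2$-functions applies, giving $\widetilde{\Delta}=(\widetilde{\Delta})^i(\widetilde{\Delta})^e$ with $(\widetilde{\Delta})^e$ outer with values in $\mathcal B(E^\prime, E^{\prime\prime})$ and $(\widetilde{\Delta})^i$ inner with values in $\mathcal B(E^{\prime\prime}, E)$, where $E^{\prime\prime}\subseteq E$ is some subspace. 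Note $\widetilde{\Delta}$ has domain $E^\prime$, which is finite-dimensional; this is the key leverage point.

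For part (a) I would argue that $E^{\prime\prime}$ is forced to be finite-dimensional and in fact $\dim E^{\prime\prime}=\dim E^\prime$, so that $(\widetilde{\Delta})^i$ is a square — hence two-sided — inner function with values in $\mathcal B(E^\prime)$ (identifying $E^{\prime\prime}\cong E^\prime$). The mechanism is a local-rank comparison, exactly as in the proof of Lemma \ref{qqqqqjjjj}: for $\zeta\in\mathbb D$, the outer factor $(\widetilde{\Delta})^e(\zeta):E^\prime\to E^{\prime\prime}$ has dense range among the reachable directions, and since $\Delta(z)$ is isometric a.e.\ on $\mathbb T$, $\widetilde{\Delta}$ has "full rank" $\dim E^\prime$ in the sense that $\mathrm{Rank}\,\widetilde{\Delta}=\dim E^\prime$ (because $\widetilde\Delta(z)^*\widetilde\Delta(z) = \breve\Delta(z)\breve\Delta(z)^*$ has rank $\dim E'$ boundary-a.e., and one passes to the interior via the fact that $\widetilde\Delta$ is a strong $H^2$-function with isometric boundary values of its flip). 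Then $\dim E^{\prime\prime}=\mathrm{Rank}\,(\widetilde{\Delta})^i\ge \mathrm{Rank}\,\widetilde{\Delta}=\dim E^\prime$, while $(\widetilde{\Delta})^i$ being a left inner divisor composed with the outer $(\widetilde{\Delta})^e: E^\prime \to E^{\prime\prime}$ forces $\dim E^{\prime\prime}\le\dim E^\prime$; hence equality, $(\widetilde{\Delta})^i$ is a square inner matrix function, and by the Complementing Lemma (or directly, since square inner matrix functions are two-sided inner) it is two-sided inner with values in $\mathcal B(E^\prime)$.

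For part (b), once (a) is in hand I would use that two-sidedness of $(\widetilde{\Delta})^i$ means $(\widetilde{\Delta})^i\big((\widetilde{\Delta})^i\big)^*=I_{E^{\prime\prime}}$ a.e., and then compute $\widetilde{(\widetilde{\Delta})^e} = \breve{\big((\widetilde{\Delta})^e\big)^*}$: since $\widetilde{\Delta}=(\widetilde{\Delta})^i(\widetilde{\Delta})^e$ we get $\Delta=\breve{\widetilde{\Delta}}^{\,*}=\widetilde{(\widetilde{\Delta})^e}\,\widetilde{(\widetilde{\Delta})^i}$, and because $\widetilde{(\widetilde{\Delta})^i}$ is also two-sided inner (flip and adjoint of a two-sided inner function is two-sided inner, by Lemma \ref{corfgghh2.9}) and $\Delta$ is inner, Lemma \ref{rem.sdcfcfc} (or a direct isometry computation) shows $\widetilde{(\widetilde{\Delta})^e}$ is a left inner divisor of the inner function $\Delta$, hence itself inner, with values in $\mathcal B(E^\prime, E)$. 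I expect the main obstacle to be the rank/dimension bookkeeping in part (a): one must be careful that the outer factor of a strong $H^2$-function with infinite-dimensional codomain $E$ still has finite-dimensional "intermediate" space $E^{\prime\prime}$, and this is precisely where $\dim E^\prime<\infty$ and the local-rank argument of Lemma \ref{qqqqqjjjj} must be invoked rather than the structure of $E$; everything else is a routine chase through Lemma \ref{corfgghh2.9} and Lemma \ref{rem.sdcfcfc}.
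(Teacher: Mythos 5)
There is a genuine gap, and it sits exactly at the point you call the ``key leverage point.'' Since $\Delta$ takes values in $\mathcal B(E^\prime,E)$, its flip-adjoint $\widetilde{\Delta}(z)=\Delta(\overline z)^*$ takes values in $\mathcal B(E,E^\prime)$: its domain is $E$ and its codomain is the finite-dimensional space $E^\prime$ (your own citation of Lemma \ref{corfgghh2.9} says this, but you then reverse it). Consequently the inner-outer factorization of the strong $H^2$-function $\widetilde{\Delta}$ reads $\widetilde{\Delta}=(\widetilde{\Delta})^i(\widetilde{\Delta})^e$ with $(\widetilde{\Delta})^e$ outer with values in $\mathcal B(E,E^{\prime\prime})$ and $(\widetilde{\Delta})^i$ inner with values in $\mathcal B(E^{\prime\prime},E^\prime)$, where $E^{\prime\prime}$ is a subspace of the \emph{codomain} $E^\prime$ --- not of $E$, as you assert. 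This is precisely where $\dim E^\prime<\infty$ enters: automatically $p:=\dim E^{\prime\prime}\le r:=\dim E^\prime$, so $(\widetilde{\Delta})^i$ is an $r\times p$ inner matrix function, and the local-rank inequality $p=\hbox{Rank}\,(\widetilde{\Delta})^i\ge \hbox{Rank}\,\widetilde{\Delta}=r$ forces $p=r$, i.e., $(\widetilde{\Delta})^i$ is square, hence two-sided, with values in $\mathcal B(E^\prime)$. In your setup the inner factor maps $E^{\prime\prime}$ into $E$, which may be infinite-dimensional, so even after showing $\dim E^{\prime\prime}=\dim E^\prime$ you cannot conclude squareness or two-sidedness, and ``identifying $E^{\prime\prime}\cong E^\prime$'' does not change the codomain $E$; moreover the paper's local rank is only defined for functions with finite-dimensional codomain, so ``$\hbox{Rank}\,\widetilde{\Delta}$'' only makes sense in the correct orientation. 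Your substitute argument for the reverse inequality (that composing with the outer factor $E^\prime\to E^{\prime\prime}$ forces $\dim E^{\prime\prime}\le\dim E^\prime$) is both unproved and unnecessary once the spaces are placed correctly.

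Part (b) is essentially right once (a) is established with the correct spaces: from $\Delta=\widetilde{(\widetilde{\Delta})^e}\,\widetilde{(\widetilde{\Delta})^i}$, $\Delta^*\Delta=I_{E^\prime}$, and the two-sidedness of $\widetilde{(\widetilde{\Delta})^i}$, a direct computation gives $\widetilde{(\widetilde{\Delta})^e}^*\,\widetilde{(\widetilde{\Delta})^e}=I_{E^\prime}$, which is exactly the ``direct isometry computation'' you mention and is the paper's argument. But your primary citation of Lemma \ref{rem.sdcfcfc} is not apt there: that lemma concerns left inner divisors of a \emph{two-sided} inner function and presupposes the divisor is already inner, whereas here $\Delta$ is only one-sided inner and innerness of $\widetilde{(\widetilde{\Delta})^e}$ is precisely what must be proved. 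So keep the rank-comparison strategy (it is the paper's), but redo (a) with $\widetilde{\Delta}\in H^\infty(\mathcal B(E,E^\prime))$ and $E^{\prime\prime}\subseteq E^\prime$, and in (b) rely on the algebraic identity rather than Lemma \ref{rem.sdcfcfc}.
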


\begin{proof}
Let $\dim E^\prime=r$. Then the inner part $(\widetilde{\Delta})^i$
is an $r\times p$ inner matrix function for some $p\leq r$. \ Thus
we have
$$
p=\hbox{Rank}\, (\widetilde{\Delta})^i \ge \hbox{Rank}\,
\widetilde{\Delta}=\hbox{Rank}\, \Delta=r,
$$
which proves (a). \ For (b), observe
$\Delta=\widetilde{(\widetilde{\Delta})^e}\widetilde{(\widetilde{\Delta})^i}$.
\  Since $\Delta$ is inner, we have that
$$
I_r=\Delta^*\Delta=\widetilde{(\widetilde{\Delta})^i}^*
\widetilde{(\widetilde{\Delta})^e}^*\widetilde{(\widetilde{\Delta})^e}
\widetilde{(\widetilde{\Delta})^i}.
$$
But since $(\widetilde{\Delta})^i$ is two-sided inner, so is
$\widetilde{(\widetilde{\Delta})^i}$. \ Thus it follows that
$$
\widetilde{(\widetilde{\Delta})^e}^*
\widetilde{(\widetilde{\Delta})^e}=\widetilde{(\widetilde{\Delta})^i}\widetilde{(\widetilde{\Delta})^i}^*
=I_r,
$$
which implies that $\widetilde{(\widetilde{\Delta})^e}$ is an inner
function. \ This proves (b).
\end{proof}

\medskip

\begin{lem}\label{thmfgdbkkknvttttnvp}
Suppose $\Delta$ is an inner function with values in $\mathcal
B(E^\prime, E))$, with $\dim E^\prime<\infty$ and  has a meromorphic
pseudo-continuation of bounded type in $\mathbb{D}^e$. \ Write
$$
\Delta_1:=\widetilde{(\widetilde{\Delta})^e}.
$$
Then $\Delta_{1}$ has a meromorphic pseudo-continuation of bounded
type in $\mathbb{D}^e$.
\end{lem}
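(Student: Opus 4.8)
The plan is to reduce the statement to the $L^\infty$-level characterization of meromorphic pseudo-continuations of bounded type (Lemma~\ref{thgg6688}) by exploiting the factorization $\Delta=\widetilde{(\widetilde\Delta)^e}\,\widetilde{(\widetilde\Delta)^i}$ that already appears in the proof of Lemma~\ref{lemdchgvfervbhyu}. Concretely, I would write $\Delta=\Delta_1 U$, where $\Delta_1=\widetilde{(\widetilde\Delta)^e}$ is the function in question and $U=\widetilde{(\widetilde\Delta)^i}$, and then ``divide out'' the two-sided inner factor $U$ from a Douglas--Shapiro--Shields-type representation of $\Delta$.

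\textbf{Ingredients.} First I would assemble the two facts that do the work. On one hand, since $\Delta\in H^\infty(\mathcal B(E^\prime,E))\subseteq L^\infty(\mathcal B(E^\prime,E))$ has a meromorphic pseudo-continuation of bounded type in $\mathbb D^e$, the implication (a)$\Rightarrow$(c) of Lemma~\ref{thgg6688} furnishes a scalar inner function $\theta$ and a function $B\in H^\infty(\mathcal B(E,E^\prime))$ with $\Delta=\theta B^*$. On the other hand, since $\dim E^\prime<\infty$, Lemma~\ref{lemdchgvfervbhyu} (and its proof) gives, for the inner-outer factorization $\widetilde\Delta=(\widetilde\Delta)^i(\widetilde\Delta)^e$, that $U:=\widetilde{(\widetilde\Delta)^i}$ is a \emph{two-sided} inner function with values in $\mathcal B(E^\prime)$, that $\Delta_1=\widetilde{(\widetilde\Delta)^e}$ is an inner function with values in $\mathcal B(E^\prime,E)$, and that $\Delta=\Delta_1 U$ (one must keep track of domains and codomains under the flip/tilde operations, so that $U$ acts on $E^\prime$ and $B$ maps $E\to E^\prime$).

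\textbf{Main computation and conclusion.} Using $UU^*=I_{E^\prime}$ (valid because $U$ is two-sided inner) and $\Delta_1 U=\Delta$, I would compute
$$
\Delta_1=\Delta_1 UU^*=\Delta U^*=\theta B^*U^*=\theta(UB)^*,
$$
and note that $UB\in H^\infty(\mathcal B(E,E^\prime))$, being the product of the $H^\infty$-functions $B\colon E\to E^\prime$ and $U\colon E^\prime\to E^\prime$ (Lemma~\ref{lem3.4ed}). Thus $\Delta_1$ is an $L^\infty(\mathcal B(E^\prime,E))$-function of the form $\theta(UB)^*$ with $\theta$ scalar inner, so the implication (c)$\Rightarrow$(a) of Lemma~\ref{thgg6688}, applied with domain $E^\prime$ and codomain $E$, yields that $\Delta_1$ has a meromorphic pseudo-continuation of bounded type in $\mathbb D^e$, as asserted.

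\textbf{Where the hypothesis bites.} The finiteness $\dim E^\prime<\infty$ enters only through Lemma~\ref{lemdchgvfervbhyu}, which is precisely what makes $(\widetilde\Delta)^i$ square, hence two-sided inner; this is the crux, since without it one would only have $\Delta=\Delta_1 U$ with $U$ merely left inner, and the cancellation $\Delta U^*=\Delta_1$ would be unavailable. Apart from that, the argument is routine: the only care needed is the bookkeeping of domains/codomains under flips and the verification that both $\Delta$ and $\Delta_1$, being inner, are genuine $L^\infty$ operator-valued functions so that Lemma~\ref{thgg6688} legitimately applies to each.
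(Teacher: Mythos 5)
Your proof is correct and follows essentially the same route as the paper: both arguments rest on Lemma \ref{lemdchgvfervbhyu} (which makes $U=\widetilde{(\widetilde{\Delta})^i}$ two-sided inner, the only place $\dim E^\prime<\infty$ is used) and then reduce to Lemma \ref{thgg6688}, the sole difference being that you invoke condition (c) of that lemma, writing $\Delta=\theta B^*$ and cancelling via $\Delta_1=\Delta U^*=\theta(UB)^*$, whereas the paper invokes condition (b) and cancels the square inner factor inside the kernel inclusion $\Delta_s^*\Delta_1^*\theta H^2_E\subseteq H^2_{E^\prime}$. This is an inessential variation, and your algebraic cancellation using $UU^*=I_{E^\prime}$ is, if anything, slightly more streamlined.
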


\begin{proof}
Let $\dim E^\prime =r$ and let
$\widetilde{\Delta}=(\widetilde{\Delta})^i(\widetilde{\Delta})^e$ be
the inner-outer factorization of $\widetilde\Delta$. \ Then
$$
\Delta=\widetilde{(\widetilde{\Delta})^e}
\widetilde{(\widetilde{\Delta})^i}\equiv\Delta_1\Delta_{\rm s} \quad
\Bigl( \hbox{where}
 \  \Delta_1\equiv\widetilde{(\widetilde{\Delta})^e} \
\hbox{and} \ \Delta_{\rm s}\equiv
\widetilde{(\widetilde{\Delta})^i}\Bigr).
$$
By Lemma \ref{lemdchgvfervbhyu}, $\Delta_{\rm s}\in
H^{\infty}_{M_r}$ is square inner and $\Delta_1\in H^\infty(\mathcal
B(\mathbb C^r, E))$ is inner. \ Since $\Delta$ has a meromorphic
pseudo-continuation of bounded type in $\mathbb{D}^e$, it follows from Lemma
\ref{thgg6688} that there exists a scalar inner function $\theta$
such that $ \theta H^2_{E} \subseteq
\hbox{ker}\,H_{\Delta^*}=\hbox{ker}\,H_{\Delta_{\rm
s}^*\Delta_1^*}$. Thus we have
\begin{equation}\label{nvnnvnvnv}
\Delta_{\rm s}^*\Delta_1^*\theta H^2_{E}\subseteq H^2_{\mathbb C^r}.
\end{equation}
Since $\Delta_{\rm s}$ is square inner, it follows from
(\ref{nvnnvnvnv}) that $\Delta_1^*\theta H^2_{E}\subseteq
\Delta_{\rm s} H^2_{\mathbb C^r}\subseteq H^2_{\mathbb C^r}$, so
that $\theta H^2_E\subseteq \hbox{ker}\,H_{\Delta_1^*}$, which
implies, by  Lemma \ref{thgg6688}, that $\Delta_1$ has a meromorphic
pseudo-continuation of bounded type in $\mathbb{D}^e$. \ This completes the
proof.
\end{proof}

\medskip

\begin{lem}\label{lemhbcsfvhs}
Let $\Delta_1$ be an inner function with values in $\mathcal B(D,
E)$ and $\Delta_2$ be a two-sided inner function with values in
$\mathcal B(D)$. \ Then,
$$
\mathcal H(\Delta_1\Delta_2)=\mathcal H(\Delta_1)\bigoplus \Delta_1
\mathcal H(\Delta_2).
$$
\end{lem}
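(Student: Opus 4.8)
The plan is to realize $\mathcal H(\Delta_1\Delta_2)$ as an orthogonal direct sum by inserting the intermediate subspace $\Delta_1 H^2_D$ between $\Delta_1\Delta_2 H^2_D$ and $H^2_E$, and then to identify the two resulting pieces. First I would record the chain of closed subspaces
$$
\Delta_1\Delta_2 H^2_D \ \subseteq\ \Delta_1 H^2_D \ \subseteq\ H^2_E .
$$
The first inclusion holds because $\Delta_2\in H^\infty(\mathcal B(D))$ forces $\Delta_2 H^2_D\subseteq H^2_D$ (Corollary \ref{lemma4.100000}), and the second because $\Delta_1\in H^\infty(\mathcal B(D,E))$ forces $\Delta_1 H^2_D\subseteq H^2_E$; all three subspaces are closed, since $\Delta_1$, $\Delta_2$, and $\Delta_1\Delta_2$ are inner functions (for the product use $(\Delta_1\Delta_2)^*\Delta_1\Delta_2=\Delta_2^*\Delta_1^*\Delta_1\Delta_2=\Delta_2^*\Delta_2=I_D$ a.e.\ on $\mathbb T$), so multiplication by each is an isometry of the relevant Hardy space and therefore has closed range. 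From this chain one reads off the orthogonal decomposition
$$
\mathcal H(\Delta_1\Delta_2)=H^2_E\ominus\Delta_1\Delta_2 H^2_D=\bigl(H^2_E\ominus\Delta_1 H^2_D\bigr)\oplus\bigl(\Delta_1 H^2_D\ominus\Delta_1\Delta_2 H^2_D\bigr),
$$
whose first summand is $\mathcal H(\Delta_1)$ by definition.

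It then remains to identify the second summand with $\Delta_1\mathcal H(\Delta_2)$. The key observation is that the multiplication operator $M_{\Delta_1}\colon H^2_D\to H^2_E$, $f\mapsto\Delta_1 f$, is an isometry, because $\langle\Delta_1 f,\Delta_1 g\rangle=\langle\Delta_1^*\Delta_1 f,g\rangle=\langle f,g\rangle$ using $\Delta_1^*\Delta_1=I_D$ a.e.; hence $M_{\Delta_1}$ is a unitary map of $H^2_D$ onto its closed range $\Delta_1 H^2_D$, and it carries the closed subspace $\Delta_2 H^2_D$ onto $\Delta_1\Delta_2 H^2_D$. Since a unitary takes the orthogonal complement of a subspace (inside the domain) to the orthogonal complement of its image (inside the range), we obtain
$$
M_{\Delta_1}\bigl(H^2_D\ominus\Delta_2 H^2_D\bigr)=\Delta_1 H^2_D\ominus\Delta_1\Delta_2 H^2_D,
$$
that is, $\Delta_1\mathcal H(\Delta_2)=\Delta_1 H^2_D\ominus\Delta_1\Delta_2 H^2_D$ (note $\Delta_1\mathcal H(\Delta_2)\subseteq H^2_E$, again by Corollary \ref{lemma4.100000} since $\mathcal H(\Delta_2)\subseteq H^2_D$). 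Substituting this back into the decomposition above yields $\mathcal H(\Delta_1\Delta_2)=\mathcal H(\Delta_1)\oplus\Delta_1\mathcal H(\Delta_2)$.

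I do not anticipate any serious obstacle; the argument is entirely formal. The only points requiring a little care are: (i) confirming that $\Delta_1\Delta_2$ is genuinely inner, so that $\Delta_1\Delta_2 H^2_D$ is a closed subspace — plain innerness of $\Delta_2$ already suffices here, so the two-sided hypothesis on $\Delta_2$ is available but not essential for this step; and (ii) the passage "unitary maps orthogonal complements to orthogonal complements," which is immediate from the inner-product identity above together with the surjectivity of $M_{\Delta_1}$ onto $\Delta_1 H^2_D$. Everything else is bookkeeping with the inclusions of Corollary \ref{lemma4.100000}.
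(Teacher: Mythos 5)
Your proof is correct, but it runs along a different track from the paper's. The paper establishes the easy inclusion $\mathcal H(\Delta_1)\bigoplus\Delta_1\mathcal H(\Delta_2)\subseteq\mathcal H(\Delta_1\Delta_2)$ first and then, for the reverse inclusion, takes $f\in\mathcal H(\Delta_1\Delta_2)$, splits it as $f_1+\Delta_1 g$ with $f_1=P_{\mathcal H(\Delta_1)}f$, and verifies $g\in\mathcal H(\Delta_2)$ by working with the boundary-value criterion of Corollary \ref{thm2.4edfrgt} (namely $h\in\mathcal H(\Delta)$ iff $h\in H^2$ and $\Delta^*h\in L^2\ominus H^2$), using the identity $\langle\Delta_2^*\Delta_1^*f_1,h\rangle=\langle\Delta_1^*f_1,\Delta_2h\rangle$ to see that $\Delta_2^*\Delta_1^*f_1$ is coanalytic. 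You instead argue entirely at the level of the subspace lattice: the chain $\Delta_1\Delta_2H^2_D\subseteq\Delta_1H^2_D\subseteq H^2_E$ of closed subspaces gives the two-step orthogonal decomposition, and the identification of the middle gap with $\Delta_1\mathcal H(\Delta_2)$ comes from the fact that $M_{\Delta_1}$ is unitary from $H^2_D$ onto $\Delta_1H^2_D$ and so carries orthogonal complements to orthogonal complements. Your route is more elementary in that it bypasses Corollary \ref{thm2.4edfrgt} altogether, makes the orthogonality of the two summands automatic rather than something to be checked, and, as you note, uses only plain innerness of $\Delta_2$ (in fact the paper's proof also never invokes two-sidedness, which is only needed where the lemma is applied). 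What the paper's computation buys is consistency with its own toolkit: the same criterion $\Delta^*f\in L^2\ominus H^2$ is the workhorse in the surrounding arguments (e.g.\ Lemma \ref{thm2.9}), and the explicit splitting $f=f_1+\Delta_1g$ is exactly the form in which the decomposition is used later in the proof of Theorem \ref{hwysonghkkl}.
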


\begin{proof}
The inclusion $\mathcal H(\Delta_1)\subseteq \mathcal
H(\Delta_1\Delta_2)$ is clear and by Corollary \ref{thm2.4edfrgt},
$\Delta_1\mathcal H (\Delta_2)\subseteq \mathcal
H(\Delta_1\Delta_2)$, which gives $\mathcal H(\Delta_1)\bigoplus
\Delta_1 \mathcal H(\Delta_2)\subseteq \mathcal
H(\Delta_1\Delta_2)$. \ For the reverse inclusion, suppose $f\in
\mathcal H(\Delta_1\Delta_2)$. \ Let $f_1:= P_{\mathcal
H(\Delta_1)}(f)$ and $f_2:= f-f_1$. \ Then $f_2=\Delta_1 g$ for some
$g \in H^2_{D}$. \ Since $f \in \mathcal H(\Delta_1\Delta_2)$, it
follows from Corollary \ref{thm2.4edfrgt} that
\begin{equation}\label{cdhbjbewvhvjk}
\Delta_2^*\Delta_1^*(f_1+\Delta_1g) \in L^2_D \ominus H^2_D.
\end{equation}
Since $f_1 \in \mathcal H(\Delta_1)$, it follows from Corollary
\ref{thm2.4edfrgt} that $\Delta_1^*f_1 \in  L^2_{D} \ominus
H^2_{D}$. \ Thus by Corollary \ref{lemma4.100000}, for all $h \in
H^2_D$,
$$
\bigl \langle \Delta_2^*\Delta_1^*f_1, \ h
\bigr\rangle_{L^2_D}=\bigl \langle \Delta_1^*f_1, \ \Delta_2 h
\bigr\rangle_{L^2_D}=0,
$$
which implies that $\Delta_2^*\Delta_1^* f_1 \in L^2_D \ominus
H^2_D$. \ Thus by Corollary \ref{thm2.4edfrgt} and
(\ref{cdhbjbewvhvjk}), we have $g \in \mathcal H(\Delta_2)$, and
hence $f_2\in \Delta_1\mathcal H(\Delta_2)$. \ Therefore we have
$\mathcal H(\Delta_1\Delta_2)\subseteq\mathcal H(\Delta_1)\bigoplus
\Delta_1 \mathcal H(\Delta_2)$. \ This completes the proof.
\end{proof}

\bigskip

We are ready for:

\medskip

\begin{thm} (The spectral multiplicity of model operators)\label{hwysonghkkl}
Given an inner function $\Delta$ with values in $\mathcal
B(E^\prime, E)$, with $\dim\, E^\prime<\infty$, let
$T:=S_{E}^*\vert_{\mathcal H(\Delta)}$. \ If $\Delta$ has a
meromorphic pseudo-continuation of bounded  type in $\mathbb{D}^e$, then
\begin{equation}\label{maineq}
\mu_T=\mu_{T_s},
\end{equation}
where $T_s$ is a $C_0$-contraction of the form $T_s:=
S^*_{E^{\prime}}|_{\mathcal H(\Delta_s)}$ with $\Delta_s:=
\widetilde{(\widetilde\Delta)^i}$. Hence in particular, $\mu_T\le
\dim\, E^{\prime}$.
\end{thm}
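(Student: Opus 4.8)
The plan is to reduce the computation of $\mu_T$ for the $C_{0\,\bigcdot}$-operator $T = S_E^*|_{\mathcal H(\Delta)}$ to the computation of $\mu_{T_s}$ for the genuine $C_0$-operator $T_s = S^*_{E'}|_{\mathcal H(\Delta_s)}$, where $\Delta_s = \widetilde{(\widetilde\Delta)^i}$. First I would record the factorization $\Delta = \Delta_1 \Delta_s$ with $\Delta_1 = \widetilde{(\widetilde\Delta)^e}$; by Lemma \ref{lemdchgvfervbhyu}, $\Delta_s$ is a two-sided inner function with values in $\mathcal B(E')$ (so $T_s$ is of class $C_0$ by (\ref{C00}), once we check it has a meromorphic pseudo-continuation — which follows from Lemma \ref{thmfgdbkkknvttttnvp} applied to $\Delta_s$, or directly since $\Delta$ does and $\Delta_1$ is a left inner divisor), and $\Delta_1$ is an inner function with values in $\mathcal B(E',E)$. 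By Lemma \ref{thmfgdbkkknvttttnvp}, $\Delta_1$ also has a meromorphic pseudo-continuation of bounded type in $\mathbb{D}^e$. Lemma \ref{lemhbcsfvhs} then gives the orthogonal decomposition
$$
\mathcal H(\Delta) = \mathcal H(\Delta_1) \bigoplus \Delta_1 \mathcal H(\Delta_s).
$$

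Next I would establish the inequality $\mu_T \le \mu_{T_s}$. Let $G_s = \{g_1,\dots,g_p\} \subseteq \mathcal H(\Delta_s)$ be a generating set for $T_s$ with $p = \mu_{T_s}$. The idea is to lift these to generators of $\mathcal H(\Delta)$. Since $\widetilde{\Delta_1}$ is an outer function (it equals $(\widetilde{\Delta})^e$ up to the flip, and $\Delta_1 = \widetilde{(\widetilde\Delta)^e}$), Lemma \ref{ffbfbbfbfbfvvvbbfLll} gives $(\Delta_1)_{cc} = \Delta_1$, and by Lemma \ref{corcyclicgggg} (applied with $\Phi \equiv (\Delta_1)_c$, noting $\Delta_1 = \big(((\Delta_1)_c)^i\big)_c$ via Corollary \ref{cor33fbfbbfLll} when $\Delta$ is matrix-valued, or the operator analogue), the subspace $\mathcal H(\Delta_1) = \mathcal H((\Delta_1)_{cc})$ has a cyclic vector $h_0$ for $S_E^*$. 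I would then show that the finite set $\{h_0\} \cup \{\Delta_1 g_j : j=1,\dots,p\}$ generates $\mathcal H(\Delta)$ under $S_E^*$: the span of $S_E^{*n}h_0$ fills out $\mathcal H(\Delta_1)$, while for the second summand one uses that $S_E^*(\Delta_1 f) = \Delta_1 S_{E'}^* f + (\text{correction in } \mathcal H(\Delta_1))$ — more precisely, the multiplication-by-$\Delta_1$ map intertwines $S_{E'}^*|_{\mathcal H(\Delta_s)}$ with $S_E^*|_{\Delta_1 \mathcal H(\Delta_s)}$ modulo $\mathcal H(\Delta_1)$, so closing under $S_E^*$ recovers all of $\Delta_1\mathcal H(\Delta_s)$ once $\mathcal H(\Delta_1)$ is already in the span. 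This yields $\mu_T \le p + 1$; to sharpen to $\mu_T \le p$ I would absorb $h_0$ into one of the perturbed generators by a bounded-type argument (replacing $\Delta_1 g_1$ by $\Delta_1 g_1 + h_0$ and using Lemma \ref{dcebsjdhvkrjw}), exactly as in the examples in Remark \ref{rembdbffvfv}.

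For the reverse inequality $\mu_{T_s} \le \mu_T$, I would start from a generating set $F = \{f_1,\dots,f_q\} \subseteq \mathcal H(\Delta)$ for $T$ with $q = \mu_T$, and compress: since $\Delta$ has a meromorphic pseudo-continuation, $[\Delta,\Delta_c]$ is two-sided inner (Corollary \ref{thmthjdkfjkf}), so Lemma \ref{dcebsjdhvkrjw} gives a concrete description of $E_F^* = \mathcal H(\Delta)$ in terms of products $P_+(\breve h_j f_j)$ with $h_j \in H^\infty \cap \mathcal H(\widetilde{\omega_\Delta})$. Projecting the $f_j$ appropriately — using the decomposition $\mathcal H(\Delta) = \mathcal H(\Delta_1)\oplus \Delta_1\mathcal H(\Delta_s)$ and the fact that $\Delta_1^*$ maps $\Delta_1\mathcal H(\Delta_s)$ isometrically onto $\mathcal H(\Delta_s)$ while annihilating nothing essential — I would produce $q$ vectors $\Delta_1^* P_{\Delta_1\mathcal H(\Delta_s)} f_j$ in $\mathcal H(\Delta_s)$ and argue these generate $\mathcal H(\Delta_s)$ under $S_{E'}^*$, again via the intertwining relation and Lemma \ref{dcebsjdhvkrjw} (now $\omega_{\Delta_s} = \omega_\Delta$ up to inner divisors, by Lemma \ref{thnfhjjjfhbbbbskl} and Corollary \ref{cor720jfjg}). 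Combining the two inequalities yields (\ref{maineq}), and since $T_s$ is a $C_0$-operator acting on $\mathcal H(\Delta_s) \subseteq H^2_{E'}$ with $\Delta_s$ square inner of size $\dim E' = r < \infty$, the classical bound $\mu_{T_s} \le r$ (e.g. \cite[Appendix 1]{Ni1}, or directly $\mu_{T_s}\le\deg_B(\Delta_s)\le 1+r$ refined by two-sidedness) gives $\mu_T \le \dim E'$.

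The main obstacle I anticipate is the bookkeeping in the reverse inequality: controlling how a generating set of $\mathcal H(\Delta)$, when compressed to $\Delta_1\mathcal H(\Delta_s)$ and pulled back by $\Delta_1^*$, remains generating for $S_{E'}^*|_{\mathcal H(\Delta_s)}$ — the projection $P_{\Delta_1\mathcal H(\Delta_s)}$ does not commute with $S_E^*$, so one cannot simply push the cyclicity through; the resolution is to work at the level of the Hankel-kernel descriptions (Lemma \ref{thm4.2}, Lemma \ref{kerhadjoint}, Corollary \ref{llllamdmnd}) and the $\omega_\Delta$-truncated generating description (Lemma \ref{dcebsjdhvkrjw}), which is precisely why those lemmas were assembled. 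A secondary subtlety is verifying that $T_s$ genuinely lies in $C_0$ (not merely $C_{0\,\bigcdot}$): this requires the meromorphic pseudo-continuation of $\Delta_s$, supplied by Lemma \ref{thmfgdbkkknvttttnvp} together with the two-sidedness from Lemma \ref{lemdchgvfervbhyu} and Proposition \ref{C0condition}.
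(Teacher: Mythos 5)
Your overall architecture coincides with the paper's: factor $\Delta=\Delta_1\Delta_{\rm s}$ with $\Delta_1=\widetilde{(\widetilde\Delta)^e}$, $\Delta_{\rm s}=\widetilde{(\widetilde\Delta)^i}$, use Lemma \ref{lemhbcsfvhs} to split $\mathcal H(\Delta)=\mathcal H(\Delta_1)\oplus\Delta_1\mathcal H(\Delta_{\rm s})$, get a cyclic vector for $\mathcal H(\Delta_1)$ from Lemma \ref{ffbfbbfbfbfvvvbbfLll} and Lemma \ref{corcyclicgggg}, and prove the two inequalities separately; your reverse direction ($\mu_{T_s}\le\mu_T$ via $f_j=g_j+\Delta_1\gamma_j$, invariance of $\mathcal H(\Delta_1)$, and the identities behind Lemma \ref{dcebsjdhvkrjw}) is essentially the paper's argument in outline. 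The genuine gap is in the forward direction. Getting $\mu_T\le\mu_{T_s}+1$ from the set $\{h_0\}\cup\{\Delta_1 g_j\}$ is easy, but that bound is no better than (\ref{muTTTT}); the entire content of the theorem lives in the sharpening to $\mu_T\le\mu_{T_s}$, and "replace $\Delta_1 g_1$ by $\Delta_1 g_1+h_0$" is not a proof of it. Adding a cyclic vector of one summand to a generator of the other does not in general produce a generating set for the direct sum (compare $\mathcal H(z)\oplus\mathcal H(z)$, where each summand is cyclic but the sum is not): the $S_E^*$-orbit of $h_0+\Delta_1 g_1$ mixes the two components, and you give no mechanism to separate them. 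Your appeal to Remark \ref{rembdbffvfv} does not supply one, because the trick there (a symbol whose conjugate is \emph{not} of bounded type) is exactly what the hypothesis of this theorem rules out: under meromorphic pseudo-continuation of bounded type all the relevant data are of bounded type, so that route is closed.

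What the paper actually does at this point is the decisive idea you are missing: it does not merge $h_0=\Omega g$ (with $\Omega=(\Delta_1)_c$) directly, but first twists it by the scalar inner function $\theta:=\omega_{\Delta_1}\omega_{\Delta_{\rm s}}$, setting $\gamma_1:=\theta\,\Omega g+\Delta_1 f_1$ and $\gamma_j:=\Delta_1 f_j$ for $j\ge 2$. The twist is what makes the orbits separable: one tests $\gamma_1$ against $\breve\eta_1$ with $\eta_1=\widetilde\theta h_0+h_1$, where the generators of $\mathcal H(\Delta_{\rm s})$ are produced by Lemma \ref{dcebsjdhvkrjw} using only $h_j\in H^\infty\cap\mathcal H(\widetilde\theta)$ (this restricted test class is the whole point of that lemma), and the cross term dies because $P_+(\overline{\theta}\breve h_0\Delta_1 f_1)=0$, which is proved by writing $\Delta_{\rm s}=\omega_{\Delta_{\rm s}}A^*$ via Lemma \ref{thgg6688} so that $\overline{\omega_{\Delta_{\rm s}}}f_1\in L^2\ominus H^2$; meanwhile $P_+(\breve h_1\theta\Omega g)$ stays inside $\mathcal H(\Delta_1)$ because $\ker(\theta\Omega)^*=\ker\Omega^*$ gives $(\theta\Omega)_c=\Omega_c=\Delta_1$. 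None of this is available without the pseudo-continuation hypothesis (which produces $\omega_{\Delta_1}$ and $\omega_{\Delta_{\rm s}}$ via Lemma \ref{thnfhjjjfhbbbbskl} and Lemma \ref{thmfgdbkkknvttttnvp}), and none of it is recoverable from a plain sum $h_0+\Delta_1 g_1$. So as written your argument establishes only $\mu_T\le\dim E'+1$, not the stated equality; to close the gap you must introduce the $\theta$-twisted generator and carry out the cancellation argument above.
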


\begin{proof}
Let $T:= S_{E}^*\vert_{\mathcal H(\Delta)}$. \ Suppose $\Delta$ has
a meromorphic pseudo-continuation of bounded type in $\mathbb{D}^e$. \ Let
$\Delta_{\rm s}\equiv\widetilde{(\widetilde{\Delta})^i}$ and write
$$
T_s:=S^*_{E^{\prime}}|_{\mathcal H(\Delta_{\rm s})}.
$$
If $\Delta$ is two-sided inner, then $\Delta=\Delta_{\rm s}$, so
that $\mu_{T}=\mu_{T_{\rm s}}$. \ Suppose that $\Delta$ is not
two-sided inner. \ Without loss of generality, we may assume that
$E^{\prime}=\mathbb C^r$. \ By Lemma \ref{lemdchgvfervbhyu},
$\Delta_{\rm s}\in H^{\infty}_{M_r}$ is square inner. \ Thus by
(\ref{C00}) and Proposition \ref{C0condition}, we have that $T_s\in
C_0$. \ We will prove that
\begin{equation}\label{mainequality}
\mu_T=\mu_{T_s}.
\end{equation}
Write
$$
\Delta_1\equiv \widetilde{(\widetilde{\Delta})^e}.
$$
Then it follows from Lemma \ref{lemdchgvfervbhyu} and Lemma
\ref{thmfgdbkkknvttttnvp} that $\Delta_1$ is an inner function
having a meromorphic pseudo-continuation of bounded type in $\mathbb{D}^e$. \
Let
\begin{equation}\label{jdsckjvll99789}
\theta:= \omega_{\Delta_1} \omega_{\Delta_{\rm s}}.
\end{equation}
Let $p:=\mu_{T_s}$. \ In view of (\ref{muTTTTe}), we have $p\leq r$.
Then there exists a set $F\equiv \{f_1, f_2, \cdots, f_p\}\subseteq
\mathcal H(\Delta_{\rm s})$ such that $E_F^*=\mathcal H(\Delta_{\rm
s})$. \ Since by (\ref{jdsckjvll99789}), $\mathcal
H(\widetilde{\omega_{\Delta_{\rm s}}}) \subseteq \mathcal
H(\widetilde\theta)$, it follows from Lemma \ref{dcebsjdhvkrjw} that
\begin{equation}\label{nnnsssbsbbs}
\mathcal H(\Delta_{\rm s})=\bigvee\Bigl\{P_+(\breve{h}_jf_j): h_j
\in H^{\infty} \cap \mathcal H(\widetilde{\theta}), \ j=1,2,\cdots
p\Bigr\}.
\end{equation}
Write
$$
\Omega:= (\Delta_1)_c \in H^{\infty}(E^{\prime\prime}, E) \quad
(E^{\prime\prime} \ \hbox{is a  subspace of} \ E).
$$
Since $\widetilde{\Delta_1}$ is outer, it follows from Lemma
\ref{ffbfbbfbfbfvvvbbfLll} that $\Delta_1=\Omega_c$. Choose a cyclic
vector $g$ of $S^*_{E^{\prime\prime}}$. \ Then it follows from
Lemma \ref{thnfhjjjfhbbbbskl}, Lemma \ref{corcyclicgggg} and Lemma
\ref{dcebsjdhvkrjw} that
\begin{equation}\label{fvnjrekjgjkbklbklb}
\mathcal
H(\Delta_1)=E^*_{\Omega g}
=\hbox{cl}\,\bigl\{P_+\bigl(\breve{h}\Omega g\bigr): h \in
H^{\infty}\bigr\}.
\end{equation}
Let
$$
\gamma_1:=\theta \Omega g+\Delta_1f_1 \quad \hbox{and} \quad
\gamma_j:=\Delta_1f_j \quad(j=2,3,\cdots,p).
$$
Now we will show that
\begin{equation}\label{ddnnmbltlh}
\mathcal H(\Delta)=\bigvee \Bigl\{P_+(\breve{\eta}_j\gamma_j):
\eta_j \in H^{\infty}, \ j=1,2,\cdots p\Bigr\}.
\end{equation}
Let $\xi \in \mathcal H(\Delta)$ and $\epsilon>0$ be arbitrary. \
Then, by Lemma \ref{lemhbcsfvhs}, we may write
$$
\xi=\xi_1+\Delta_1\xi_2 \quad \bigl(\xi_1\in \mathcal H(\Delta_1), \
\xi_2 \in \mathcal H(\Delta_{\rm s})\bigr).
$$
By (\ref{nnnsssbsbbs}), there exist $h_j \in H^{\infty}\cap \mathcal
H(\widetilde{\theta})$ ($j=1,2,\cdots,p$) such that
\begin{equation}\label{NJFVJGVKL}
\Biggl|\Biggl|~\sum_{j=1}^p
P_+(\breve{h}_jf_j)-\xi_2\Biggr|\Biggr|_{L^2_{\mathbb
C^r}}<\frac{\epsilon}{2}.
\end{equation}
For each $j=1,2,\cdots, p$, observe that
\begin{equation}\label{jjjflsls}
\aligned P_+(\breve{h}_j\Delta_1f_j)&=P_+(\Delta_1\breve{h}_jf_j)\\
&=\Delta_1
P_+(\breve{h}_jf_j)+P_+\bigl(\Delta_1P_-(\breve{h}_jf_j)\bigr),
\endaligned
\end{equation}
and
\begin{equation}\label{vbvbfbfbfb}
\Delta_1P_+(\breve{h}_jf_j)\in \Delta_1\mathcal H(\Delta_{\rm s})
\quad \hbox{and} \quad P_+\bigl(\Delta_1P_-(\breve{h}_jf_j)\bigr)
\in \mathcal H(\Delta_1).
\end{equation}
Since $\ker (\theta\Omega)^*=\ker \Omega^*$, we have
$(\theta\Omega)_c=\Omega_c$. \ Thus by (\ref{fvnjrekjgjkbklbklb}),
$P_+(\breve{h}_1\theta \Omega g)$ belongs to $\mathcal H(\Delta_1)$.
\  Thus it follows from (\ref{vbvbfbfbfb}) that
$$
\xi_0\equiv\xi_1-\sum_{j=1}^pP_+\bigl(\Delta_1P_-(\breve{h}_jf_j)\bigr)-P_+(\breve{h}_1\theta
\Omega g) \in \mathcal H(\Delta_1).
$$
Thus by (\ref{fvnjrekjgjkbklbklb}), there exists $h_0 \in
H^{\infty}$ such that
\begin{equation}\label{cvfvfvfvfffffff}
\Bigl|\Bigl|P_+\bigl(\breve{h}_0\Omega
g\bigl)-\xi_0\Bigr|\Bigr|_{L^2_E}<\frac{\epsilon}{2}.
\end{equation}
Let
$$
\eta_1:=\widetilde{\theta}h_0+h_1 \quad \hbox{and} \quad \eta_j:=h_j
\quad(j=2,3,\cdots,p).
$$
It follows from Lemma \ref{thgg6688} that
$$
\Delta_{\rm s}=\omega_{\Delta_{\rm s}}A^* \quad (A \in
H^{\infty}_{M_r}).
$$
It thus follows that $\overline{\omega_{\Delta_{\rm
s}}}f_1=A^*\Delta_{\rm s}^*f_1\in L^2_{\mathbb C^r}\ominus
H^2_{\mathbb C^r}$. \ Thus we have
$$
\overline{\theta}\breve{h}_0\Delta_1f_1=\breve{h}_0
\overline{\omega_{\Delta_1}}\Delta_1\overline{\omega_{\Delta_{\rm
s}}}f_1 \in L^2_{E}\ominus H^2_{E},
$$
which implies $P_+(\overline{\theta}\breve{h}_0\Delta_1f_1)=0$. \
Therefore,
$$
\aligned \sum_{j=1}^p
P_+(\breve{\eta}_j\gamma_j)&=P_+\bigl((\overline{\theta}\breve{h}_0+\breve{h}_1)(\theta \Omega g+\Delta_1f_1)\bigr)+\sum_{j=2}^p P_+(\breve{h}_j\Delta_1f_j)\\
&=P_+\bigl(\breve{h}_0 \Omega g\bigr)+P_+(\breve{h}_1\theta \Omega
g)+\sum_{j=1}^p P_+(\breve{h}_j\Delta_1f_j).
\endaligned
$$
Since $\Delta_1$ is inner,  it follows from (\ref{NJFVJGVKL}),
(\ref{jjjflsls}) and (\ref{cvfvfvfvfffffff})  that
$$
\aligned \Biggl|\Biggl|\sum_{j=1}^p P_+(\breve{\eta}_j\gamma_j)-\xi
\Biggr|\Biggr|_{L^2_{E}}&\leq\Biggl|\Biggl|P_+\bigl(\breve{h}_0\Omega
g\bigl)-\xi_0\Biggr|\Biggr|_{L^2_E}+\Biggl|\Biggl|~\sum_{j=1}^p
\Delta_1 P_+(\breve{h}_jf_j)-\Delta_1\xi_2\Biggr|\Biggr|_{L^2_E}\\
&<\epsilon.
\endaligned
$$
This proves (\ref{ddnnmbltlh}). \ Let $\Gamma:=\{\gamma_1, \gamma_2,
\cdots, \gamma_p\}$. \ It thus follows from Lemma
\ref{dcebsjdhvkrjw} and (\ref{ddnnmbltlh}) that
$$
 E^*_{\Gamma}=\bigvee\Bigl\{P_+(\breve{\eta}_j\gamma_j): \eta_j \in
H^{\infty}, \ j=1,2,\cdots p\Bigr\}=\mathcal H(\Delta),
$$
which implies that  $\mu_T\leq \mu_{T_s}$. \ For the reverse
inequality, let $q\equiv \mu_{T}<\infty$. \ Then there exists a set
$F\equiv \{f_1, f_2, \cdots, f_q\}\subseteq \mathcal H(\Delta)$ such
that $E_F^*=\mathcal H(\Delta)$. \ For each $j=1,2 \cdots q$, by
Lemma \ref{lemhbcsfvhs}, we can write
$$
f_j=g_j + \Delta_1 \gamma_j \qquad (g_j \in \mathcal H(\Delta_1), \
\gamma_j\in \mathcal H(\Delta_{\rm s})).
$$
Now we will show that
\begin{equation}\label{jefsnvdjb}
E_{\Gamma}^*=\mathcal H(\Delta_{\rm s}) \quad
(\Gamma\equiv\{\gamma_j: j=1,2,\cdots, q\}).
\end{equation}
Clearly, $E_{\Gamma}^* \subseteq \mathcal H(\Delta_{\rm s})$. \ On
the other hand, since $E_F^*=\mathcal H(\Delta)$ and $\mathcal
H(\Delta_1)$ is an invariant subspace for $S_E^*$, it follows from
Lemma \ref{dcebsjdhvkrjw}, (\ref{jjjflsls}) and (\ref{vbvbfbfbfb})
that
$$
\aligned \Delta_1 \mathcal H(\Delta_{\rm
s})&= \bigvee\Bigl\{P_{\Delta_1\mathcal H(\Delta_s)}\bigl(S_E^{*n}\Delta_1 \gamma_j\bigr): j=1,2,\cdots q, \ n=0,1,2\cdots\Bigr\}\\
&=\bigvee\Bigl\{P_{\Delta_1\mathcal H(\Delta_s)}(\breve{h}_j\Delta_1 \gamma_j): h_j \in H^{\infty},  \ j=1,2,\cdots p\Bigr\}\\
&=\bigvee\Bigl\{\Delta_1P_+(\breve{h}_j\gamma_j): h_j \in
H^{\infty},  \ j=1,2,\cdots p\Bigr\}\\
&= \Delta_1 E_\Gamma^*.
\endaligned
$$
This proves (\ref{jefsnvdjb}). \ Thus we have that $\mu_{T_s} \leq
q= \mu_{T}$. \ This proves (\ref{mainequality}). \ The last
assertion follows at once from (\ref{muTTTTe}) since $\Delta_{\rm
s}$ is square-inner. \ This completes the proof.
\end{proof}

\medskip

\medskip

\begin{cor}\label{cor110}
Suppose $\Delta$ is an $n\times r$ inner matrix function whose flip
$\breve\Delta$ is of bounded type. \ If  $T:=S^*_E|_{\mathcal
H(\Delta)}$, then $\mu_T\le r$.
\end{cor}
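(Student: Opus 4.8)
The plan is to deduce this as a direct consequence of Theorem~\ref{hwysonghkkl} together with the matrix-valued translation of the hypothesis provided by Corollary~\ref{cor512,222}. First I would note that an $n\times r$ inner matrix function $\Delta$ is precisely an inner function with values in $\mathcal B(E^\prime,E)$ where $E^\prime=\mathbb C^r$ and $E=\mathbb C^n$; in particular $\dim E^\prime=r<\infty$, which is exactly the standing hypothesis of Theorem~\ref{hwysonghkkl}.

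Next I would invoke the equivalence (a)$\Leftrightarrow$(b) of Corollary~\ref{cor512,222}: since $\breve\Delta$ is of bounded type, $\Delta$ has a meromorphic pseudo-continuation of bounded type in $\mathbb{D}^e$. (Strictly speaking Corollary~\ref{cor512,222} is stated for $\Phi\in L^2_{M_{n\times m}}$, but $\Delta\in H^\infty_{M_{n\times r}}\subseteq L^2_{M_{n\times r}}$, so it applies.) With both requirements of Theorem~\ref{hwysonghkkl} verified, that theorem yields $\mu_T=\mu_{T_s}$, where $T_s:=S^*_{\mathbb C^r}|_{\mathcal H(\Delta_s)}$ with $\Delta_s:=\widetilde{(\widetilde\Delta)^i}$ is a $C_0$-contraction; and the ``hence in particular'' clause of Theorem~\ref{hwysonghkkl} already records $\mu_T\le\dim E^\prime=r$, which is the desired conclusion. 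If one wants to be fully explicit about the last inequality rather than citing the clause directly, one can note that by Lemma~\ref{lemdchgvfervbhyu}(a) the function $\Delta_s$ is $r\times r$ square inner, so $T_s$ is of the form $S^*_{\mathbb C^r}|_{\mathcal H(\Delta_s)}$ with $\dim E^\prime=\dim E=r<\infty$, and then (\ref{muTTTTe}) gives $\mu_{T_s}\le r$, whence $\mu_T=\mu_{T_s}\le r$.

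Since every ingredient is already established in the excerpt, I do not anticipate any genuine obstacle; the only point requiring a moment's care is the bookkeeping that the matrix hypothesis ``$\breve\Delta$ of bounded type'' is the correct stand-in for the operator-theoretic hypothesis ``$\Delta$ has a meromorphic pseudo-continuation of bounded type in $\mathbb{D}^e$'' demanded by Theorem~\ref{hwysonghkkl}, and this is supplied verbatim by Corollary~\ref{cor512,222}. Thus the entire proof reduces to the single line: ``This follows from Theorem~\ref{hwysonghkkl} and Corollary~\ref{cor512,222}.''
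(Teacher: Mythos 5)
Your proposal is correct and follows exactly the paper's own route: Corollary \ref{cor512,222} converts the hypothesis ``$\breve\Delta$ is of bounded type'' into ``$\Delta$ has a meromorphic pseudo-continuation of bounded type in $\mathbb{D}^e$,'' and Theorem \ref{hwysonghkkl} (with $\dim E^\prime=r<\infty$) then gives $\mu_T=\mu_{T_s}\le r$. Your extra remark tracing the bound through Lemma \ref{lemdchgvfervbhyu} and (\ref{muTTTTe}) is a harmless elaboration of the ``hence in particular'' clause already contained in that theorem.
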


\begin{proof} It follows from Corollary \ref{cor512,222} and
Theorem \ref{hwysonghkkl}.
\end{proof}

%
%
%
%
%

\chapter{Miscellanea} \

In this chapter, by using the preceding results, we analyze left
and right coprimeness, the model operator, and an interpolation
problem for operator-valued functions.

\vskip 1cm

%
%
%
%

\noindent
{\bf \S\ 8.1. Left and right coprimeness} \

\bigskip
\noindent
In this section we consider conditions for the equivalence of left
coprime-ness and right coprime-ness.

If $\delta$ is a scalar inner function, a function $A \in
H^{\infty}(\mathcal B(E))$ is said to have a {\it
 scalar inner multiple} $\delta$  if there
exists a function $G \in H^\infty(\mathcal B(E))$ such that
$$
G A=A G=\delta I_E.
$$
We write $\hbox{mul}\,(A)$ for the set of all scalar inner multiples of $A$, and we define
\begin{equation}\label{81111}
m_{A}:=\hbox{g.c.d.}\, \bigl\{\delta: \delta \in
\hbox{mul}\,(A)\bigr\}.
\end{equation}
We note that if $\Delta$ is a two-sided inner function then by Lemma
\ref{thgg6688} and (\ref{cefcvwf}), the following are equivalent:
\medskip

\begin{itemize}
\item[(a)] $\Delta$ has a meromorphic pseudo-continuation of bounded type in $\mathbb{D}^e$;
\item[(b)] $\Delta$ has a scalar inner multiple.
\end{itemize}
Thus if $\Delta \in H^\infty(\mathcal B(E))$ is two-sided inner and
has a scalar multiple, then $m_{\Delta}$ defined in (\ref{81111})
coincides with the characteristic function of $\Delta$. \ This
justifies the use of the notation $m_A$ for (\ref{81111}).
\bigskip

On the other hand, we may ask:

\begin{q}\label{82222}
If $A\in H^\infty(\mathcal B(D,E))$ has a scalar inner multiple,
does it follow that $m_{A} \in \hbox{mul}\,(A)$ ?
\end{q}

\bigskip

If $A$ is two-sided inner with values in $\mathcal B(E)$, then the
answer to Question \ref{82222} is affirmative: indeed, by
(\ref{cefcvwf}),
$$
m_A H^2_E =\bigvee \bigl\{\delta H^2_E: \delta \in
\hbox{mul}\,(A)\bigr\} \subseteq \hbox{ker}\, H_{A^*},
$$
which implies, again by (\ref{cefcvwf}), that
\begin{equation}\label{83333}
m_A \in \hbox{mul}\,(A).
\end{equation}

\bigskip

\begin{lem}\label{lemdcwefvedf}
If $A \in H^{\infty}(\mathcal B(E))$ is an outer function having a
scalar inner multiple, then $1 \in \hbox{mul}\,(A)$, i.e., $A$ is
invertible in $H^{\infty}(\mathcal B(E))$.
\end{lem}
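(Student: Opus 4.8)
\noindent
The plan is to translate the hypotheses into a statement about the analytic Toeplitz operator $T_A$ on $H^2_E$, to show that $T_A$ is boundedly invertible, and then to read off the inverse as an analytic Toeplitz operator whose symbol is the desired two-sided inverse of $A$.

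First I would fix a scalar inner function $\delta\in\hbox{mul}\,(A)$ and a witness $G\in H^\infty(\mathcal B(E))$ with $GA=AG=\delta I_E$. Using the composition rule $T_\Phi T_\Psi=T_{\Phi\Psi}$ for analytic $\Psi$ (immediate on polynomials, where $P_+$ acts trivially, then by density), this gives $T_GT_A=T_{\delta I_E}=T_AT_G$. Since $\delta$ is inner, $T_{\delta I_E}$ is the isometry $f\mapsto\delta f$ on $H^2_E$, so $\|f\|=\|T_GT_Af\|\le\|T_G\|\,\|T_Af\|$ for all $f\in H^2_E$; hence $T_A$ is bounded below, in particular injective with closed range. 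On the other hand $A$ is outer, so $\hbox{cl}\,(A\mathcal P_E)=H^2_E$, and $T_Ap=P_+(Ap)=Ap$ for $p\in\mathcal P_E$, which makes $\hbox{ran}\,T_A$ dense. A closed, dense range is the whole space, so $T_A$ is a bounded bijection of $H^2_E$ and therefore has a bounded inverse.

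Next I would observe that $T_A$ commutes with the shift $S_E=T_{zI_E}$, since $T_AS_E=T_{zA}=S_ET_A$; hence $T_A^{-1}$ also commutes with $S_E$. Invoking the standard description of the commutant of the shift on $H^2_E$ (cf.\ \cite{Ni1}, \cite{SFBK}), there is $B\in H^\infty(\mathcal B(E))$ with $T_A^{-1}=T_B$. Applying the composition rule once more gives $T_{AB}=T_AT_B=I=T_BT_A=T_{BA}$, with $AB,BA\in H^\infty(\mathcal B(E))$. Evaluating an analytic Toeplitz operator on constant vectors, and using $\Phi H^2_E\subseteq H^2_E$ from Corollary~\ref{lemma4.100000} together with the separability of $E$, one sees that $T_\Phi=I$ forces $\Phi=I_E$ a.e. Thus $AB=BA=I_E$ a.e.\ on $\mathbb T$, so $G_0:=B$ lies in $H^\infty(\mathcal B(E))$ and satisfies $AG_0=G_0A=I_E$; equivalently $1\in\hbox{mul}\,(A)$, and $A$ is invertible in $H^\infty(\mathcal B(E))$.

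The only non-routine ingredient is the identification $T_A^{-1}=T_B$ with $B\in H^\infty(\mathcal B(E))$, which rests on the commutant-of-the-shift theorem; I would cite it rather than reprove it. The remaining steps — the Toeplitz composition rules for analytic symbols, ``bounded below $\Rightarrow$ closed range'', ``dense $+$ closed range $\Rightarrow$ onto'', and ``$T_\Phi=I$ with $\Phi$ analytic $\Rightarrow\Phi=I_E$'' — are standard and short.
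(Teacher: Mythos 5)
Your argument is correct, and it parallels the paper's proof in its first half: both exploit the scalar inner multiple $\delta$ to show that multiplication by $A$ has closed range on $H^2_E$ (in your formulation, $T_GT_Af=\delta f$ makes $T_A$ bounded below), and both then use outerness of $A$ to upgrade the density of $A\mathcal P_E$ to surjectivity, so that $AH^2_E=H^2_E$. Where you genuinely diverge is in producing the analytic inverse: you invoke the commutant-of-the-shift theorem to write $T_A^{-1}=T_B$ with $B\in H^\infty(\mathcal B(E))$ and then check $AB=BA=I_E$ by testing on constant vectors, whereas the paper simply writes the inverse down: from $AH^2_E=H^2_E$ it gets $\overline{\delta}G H^2_E=H^2_E\subseteq H^2_E$, so $G_1:=\overline{\delta}G$ has vanishing negative Fourier coefficients and hence lies in $H^\infty(\mathcal B(E))$, and $AG_1=G_1A=I_E$ follows at once from $AG=GA=\delta I_E$. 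Your route is a clean operator-theoretic packaging but imports a heavier black box (the commutant of $S_E$) than the situation requires; the paper's route is self-contained, produces the explicit formula $A^{-1}=\overline{\delta}G$, and gives the bound $||A^{-1}||_\infty\le ||G||_\infty$ for free. Both are valid; if you keep your version, just make sure the commutant-of-the-shift theorem you cite is stated for $H^2_E$ with $E$ an infinite-dimensional separable Hilbert space (it is, e.g., in \cite{Ni1} and \cite{SFBK}), and note that your final step needs only the elementary fact that an $L^\infty(\mathcal B(E))$-symbol whose multiplication operator preserves $H^2_E$ is analytic, which is exactly what the paper uses directly.
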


\begin{proof}
Suppose that $A \in H^{\infty}(\mathcal B(E))$ is an outer function
having a scalar inner multiple $\delta$. \ Then
\begin{equation}\label{vefgvtrhnb}
AG=GA=\delta I_E \quad \hbox{for some} \ G \in H^{\infty}(\mathcal
B(E)).
\end{equation}
We claim that
\begin{equation}\label{bghtnjh}
A H^2_E=\hbox{cl}\, AH^2_E.
\end{equation}
To see this, suppose $f\in \hbox{cl}\, A H^2_E$. \ Then there exists
a sequence $(g_n)$ in $H^2_E$ such that $||Ag_n-f||_{L^2_E}
\longrightarrow 0$. \ Thus we have that
\begin{equation}\label{84445}
||GAg_n-Gf||_{L^2_E}\leq ||G||_{\infty}||Ag_n - f||_{L^2_E}
\longrightarrow 0.
\end{equation}
It thus follows from (\ref{vefgvtrhnb}) and (\ref{84445}) that
$$
||g_n - \overline{\delta}Gf||_{L^2_E}=||\delta g_n-Gf||_{L^2_E}
=||GAg_n-Gf||_{L^2_E} \longrightarrow 0
$$
But since $H^2_E$ is a closed subspace of $L^2_E$, we have
$g\equiv\overline{\delta}Gf \in H^2_E$. \ Since $A\in
H^{\infty}(\mathcal B(E))$, it follows that
$$
||Ag_n-Ag||_{L^2_E}\leq ||A||_{\infty}||g_n -g||_{L^2_E}
\longrightarrow 0,
$$
which implies that $f=Ag \in A H^2_E$. \ This proves
(\ref{bghtnjh}). \ Since $A$ is an outer function, it follows from
(\ref{bghtnjh}) that
$$
AH^2_E=\hbox{cl}\, A H^2_E \supseteq \hbox{cl}\, A\mathcal
P_E=H^2_E,
$$
so that
\begin{equation}\label{8555}
A H^2_E=H^2_E.
\end{equation}
We thus have that
$$
H^2_E= (\overline{\delta}G)A H^2_E=\overline{\delta}G H^2_E,
$$
which implies that $G_1:=\overline{\delta}G \in H^{\infty}(\mathcal
B(E))$. \ It thus follows from (\ref{vefgvtrhnb}) that
$$
A G_1=G_1 A=I_E,
$$
which gives the result.
\end{proof}

\bigskip

We are tempted to guess that (\ref{8555}) holds for every outer
function $A$ in $H^\infty(\mathcal B(E))$. \ However, the following
example shows that this is not such a case.

\medskip

\begin{ex}
Let $A:=\hbox{diag}(\frac{1}{n}) \in H^{\infty}(\mathcal
B(\ell^2))$. \ Then $(1, \frac{1}{2}, \frac{1}{3}, \cdots)^t \notin
AH^2_{\ell^2}$, so that
$$
AH^2_{\ell^2} \neq H^2_{\ell^2}.
$$
Now we will show that $A$ is an outer function. \ Let $f \in
H^2_{\ell^2}$ and $\epsilon>0$ be arbitrary. \ Then we may write
$$
f(z)=\sum_{n=0}^{\infty}c_n z^n \quad  (c_n \in \ell^2).
$$
Thus there exists $M>0$ such that
\begin{equation}\label{efvgrthb}
\Biggl|\Biggl| \sum_{n=M}^{\infty}c_n
z^n\Biggr|\Biggr|_{L^2_{\ell^2}} <\frac{\epsilon}{2}.
\end{equation}
Write
$$
f_1(z):=\sum_{n=0}^{M-1}c_n z^n \quad \hbox{and} \quad
f_2(z):=\sum_{n=M}^{\infty}c_n z^n.
$$
For each $n=0,1,2,\cdots, M-1$, write
$$
c_n=\bigl(a_n^{(1)}, a_n^{(2)}, a_n^{(3)}, \cdots \bigr)^t \quad
(a_n \in \mathbb C).
$$
Then there exists $N>0$ such that
\begin{equation}\label{dcfvg}
\Biggl(\sum_{k=N+1}^{\infty}\bigl|a_n^{(k)}\bigr|^2\Biggr)^{\frac{1}{2}}
<\frac{\epsilon}{2M} \quad \hbox{for each} \ n=0,1,2,\cdots, M-1.
\end{equation}
Let
$$
p(z):=\sum_{n=0}^{M-1}b_n z^n, \quad \Bigl(b_n:=\bigl(a_n^{(1)},
2a_n^{(2)}, 3a_n^{(3)}, \cdots Na_n^{(N)}, 0 ,0,
\cdots\bigr)^t\Bigr).
$$
Then it follows from (\ref{efvgrthb}) and (\ref{dcfvg}) that
$$
\begin{aligned}
\bigl|\bigl|f(z)-(Ap)(z)\bigr|\bigr|_{L^2_{\ell^2}}&=\bigl|\bigl|f_1(z)+f_2(z)
-(Ap)(z)\bigr|\bigr|_{L^2_{\ell^2}}\\
&=\bigl|\bigl|f_1(z)-(Ap)(z)\bigr|\bigr|_{L^2_{\ell^2}}
+\bigl|\bigl|f_2(z)\bigr|\bigr|_{L^2_{\ell^2}}\\
&<\frac{\epsilon}{2M}M+\frac{\epsilon}{2}=\epsilon,
\end{aligned}
$$
which implies that $A$ is an outer function.
\end{ex}

\bigskip

\begin{lem}\label{fvgbhbnj}
If $A \in H^{\infty}(\mathcal B(E))$ has a scalar inner multiple,
then
\begin{itemize}
\item[(a)] $A^i$ is two-sided inner and has a scalar inner multiple with
$\hbox{mul}\,(A)\subseteq \hbox{mul}\,(A^i)$;
\item[(b)] $1 \in \hbox{mul}\,(A^e)$.
\end{itemize}
\end{lem}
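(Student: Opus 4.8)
The plan is to fix an arbitrary scalar inner function $\delta\in\mathrm{mul}(A)$, so that $GA=AG=\delta I_E$ for some $G\in H^\infty(\mathcal B(E))$, and to run everything through the inner-outer factorization $A=A^iA^e$ of $A$ regarded as a strong $H^2$-function, where $A^i\in H^\infty(\mathcal B(E',E))$ is inner and $A^e\in H^\infty(\mathcal B(E,E'))$ is outer; boundedness of $A^e$ is automatic since $A^e=(A^i)^*A$ and $\|(A^i)^*(z)\|=1$ a.e. All the products that will appear below ($A^eG$, $GA^i$, $A^eC$, $\delta(A^i)^*$, \dots) stay in the appropriate $H^\infty$-classes by Lemma \ref{lem3.4ed} and Corollary \ref{lemma4.100000}.

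For part (a), I would first rewrite $AG=\delta I_E$ as $A^i(A^eG)=\delta I_E$ with $A^eG\in H^\infty(\mathcal B(E,E'))$, so that $A^i$ is a left inner divisor of the two-sided inner function $\delta I_E$; Lemma \ref{rem.sdcfcfc} then gives that $A^i$ is two-sided inner, and in particular $E'\cong E$, so we may identify $E'=E$. Applying $(A^i)^*$ on the left to $A^i(A^eG)=\delta I_E$ yields $A^eG=\delta(A^i)^*\in H^\infty(\mathcal B(E))$, and the equivalence (\ref{cefcvwf}) for the two-sided inner function $A^i$ turns this into $A^i(\delta(A^i)^*)=(\delta(A^i)^*)A^i=\delta I_E$, i.e. $\delta\in\mathrm{mul}(A^i)$. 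Since $\delta\in\mathrm{mul}(A)$ was arbitrary, this simultaneously gives $\mathrm{mul}(A)\subseteq\mathrm{mul}(A^i)$ and shows that $A^i$ has a scalar inner multiple.

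For part (b), I would use the other relation $GA=\delta I_E$ in the form $CA^e=\delta I_E$ with $C:=GA^i\in H^\infty(\mathcal B(E))$, and then cancel $A^e$ on the right. Concretely, set $M:=A^eC-\delta I_E\in H^\infty(\mathcal B(E))$; a one-line computation using $CA^e=\delta I_E$ gives $MA^e=A^e(CA^e)-\delta A^e=0$ a.e. on $\mathbb T$, so $M$ annihilates $A^e\mathcal P_E$. Because $A^e$ is outer, $A^e\mathcal P_E$ is dense in $H^2_E$, and $M$ acts boundedly on $H^2_E$ (Corollary \ref{lemma4.100000}); hence $M\equiv 0$ on $H^2_E$, and evaluating on constant functions forces $M=0$ a.e. on $\mathbb T$, i.e. $A^eC=CA^e=\delta I_E$. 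Thus $A^e$ is an outer function possessing a scalar inner multiple, and Lemma \ref{lemdcwefvedf} immediately delivers $1\in\mathrm{mul}(A^e)$.

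The only genuinely delicate point is the cancellation step in (b): passing from $CA^e=\delta I_E$ to $A^eC=\delta I_E$. This is precisely where outerness of $A^e$ is indispensable (the conclusion is false for a general $A^e$), so the argument cannot be purely pointwise on $\mathbb T$ but must exploit the density of $A^e\mathcal P_E$ in $H^2_E$ together with the boundedness of $M$ on $H^2_E$. Everything else is bookkeeping with the inner-outer factorization and the two-sided-inner machinery (Lemma \ref{rem.sdcfcfc}, the equivalence (\ref{cefcvwf}), Lemma \ref{lemdcwefvedf}) already available.
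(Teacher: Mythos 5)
Your proof is correct and follows the paper's overall line: factor $A=A^iA^e$, show $A^i$ is two-sided inner and absorbs every scalar inner multiple of $A$, then show $A^e$ itself has a scalar inner multiple and invoke Lemma \ref{lemdcwefvedf}. The differences are in two intermediate steps. For the two-sided innerness of $A^i$ the paper argues pointwise: $A(z)G(z)=\delta(z)I_E$ makes $A(z)$ invertible a.e., hence $A^i(z)$ is onto a.e.; your appeal to Lemma \ref{rem.sdcfcfc}, reading $\delta I_E=A^i(A^eG)$ as exhibiting $A^i$ as a left inner divisor of a two-sided inner function, is an equally valid and arguably cleaner substitute. For part (b) the paper is more direct than you are: having already obtained $A^eG=\delta (A^i)^*$ (the same identity you derive in (a)), it multiplies on the right by $A^i$ to get $A^e(GA^i)=\delta(A^i)^*A^i=\delta I_E$, while $(GA^i)A^e=GA=\delta I_E$ is immediate; so the passage from $CA^e=\delta I_E$ to $A^eC=\delta I_E$ does not actually require your density argument, and your closing remark that the cancellation cannot be done pointwise and must exploit density of $A^e\mathcal P_E$ overstates the point---outerness of $A^e$ is really only needed at the very end, through Lemma \ref{lemdcwefvedf} (indeed the paper even observes that $A^e(z)$ is invertible a.e.). Your density argument is nonetheless valid, so this is a matter of economy rather than correctness.
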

\begin{proof}
Suppose that $A \in H^{\infty}(\mathcal B(E))$ has a scalar inner
multiple $\delta$, i.e., $\delta\in \hbox{mul}\,(A)$. \ Then there
exist a function  $G \in H^{\infty}(\mathcal B(E))$ such that
\begin{equation}\label{vefdcsvfnb}
AG=GA=\delta I_E.
\end{equation}
Thus $A(z)$ and $G(z)$ are invertible for almost all $z \in \mathbb
T$. \  Write
$$
A=A^iA^e \quad(\hbox{inner-outer factorization}).
$$
Since $A(z)$ is invertible for almost all $z \in \mathbb T$,
$A^i(z)$ is onto for almost all $z \in \mathbb T$, so that $A^i$ is
two-sided inner. \ Also  $A^e(z)$ is injective for almost all $z \in
\mathbb T$. \ By (\ref{vefdcsvfnb}),
$$
A^e(z)G(z)=\delta(z)(A^i(z))^*,
$$
which implies that $A^e(z)$ is onto, and hence invertible for almost
all $z \in \mathbb T$. \ Thus $(A^eG)(z)$ is invertible for almost
all $z \in \mathbb T$. \ We thus have that
$$
A^i(A^eG)=AG=\delta I_E=(A^eG)A^i,
$$
which implies that $A^i$ has a scalar inner multiple $\delta$, i.e.,
$\delta\in \hbox{mul}\,(A^i)$. \ This proves (a). \ Also observe
that
$$
(GA^i)A^e=\delta I_E=A^e(GA^i),
$$
which implies that $A^e$ has a scalar inner multiple. \ Thus by
Lemma \ref{lemdcwefvedf}, $1 \in \hbox{mul}\,(A^e)$. \ This proves
(b).
\end{proof}

\bigskip

\begin{lem} \label{hnjmkngbfvdc}
If $A \in H^{\infty}(\mathcal B(E))$ has a scalar inner multiple,
then
$$
\hbox{mul}\,(A)=\hbox{mul}\,(A^i).
$$
\end{lem}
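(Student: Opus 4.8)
The plan is to establish the two inclusions $\hbox{mul}\,(A)\subseteq\hbox{mul}\,(A^i)$ and $\hbox{mul}\,(A^i)\subseteq\hbox{mul}\,(A)$ separately. The first inclusion is immediate: it is precisely part (a) of Lemma \ref{fvgbhbnj}, which moreover guarantees that $A^i$ is two-sided inner (so that $\hbox{mul}\,(A^i)$ is a meaningful notion) and that $A^i$ itself has a scalar inner multiple, so $\hbox{mul}\,(A^i)\neq\emptyset$.

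For the reverse inclusion, I would take an arbitrary $\delta\in\hbox{mul}\,(A^i)$ and manufacture a two-sided inverse ``modulo $\delta$'' for $A$ out of the corresponding data for $A^i$ together with the invertibility of the outer part. Concretely: by definition there is $G\in H^{\infty}(\mathcal B(E))$ with $A^iG=GA^i=\delta I_E$. Since $A$ has a scalar inner multiple by hypothesis, Lemma \ref{fvgbhbnj}(b) gives $1\in\hbox{mul}\,(A^e)$, i.e. $A^e$ is invertible in $H^{\infty}(\mathcal B(E))$; write $B:=(A^e)^{-1}\in H^{\infty}(\mathcal B(E))$, so that $A^eB=BA^e=I_E$.

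Then I would set $H:=BG\in H^{\infty}(\mathcal B(E))$ and verify directly, using $A=A^iA^e$, that
\[
AH=A^iA^eBG=A^iG=\delta I_E,
\]
and, using that the scalar inner function $\delta$ commutes with operator-valued functions,
\[
HA=BGA^iA^e=B(\delta I_E)A^e=\delta BA^e=\delta I_E.
\]
Hence $\delta\in\hbox{mul}\,(A)$, which proves $\hbox{mul}\,(A^i)\subseteq\hbox{mul}\,(A)$ and therefore the claimed equality.

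I do not expect any serious obstacle: the statement is essentially a short corollary of Lemma \ref{fvgbhbnj}. The only point needing a little care is that Lemma \ref{fvgbhbnj}(b) must be used as providing a genuine \emph{two-sided} inverse of $A^e$ in $H^{\infty}(\mathcal B(E))$ (which it does, since $1\in\hbox{mul}\,(A^e)$ means $A^eG_e=G_eA^e=I_E$ for some $G_e$), so that both displayed identities above are legitimate; the rest is bookkeeping with the inner--outer factorization and the centrality of scalar inner functions.
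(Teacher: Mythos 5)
Your proof is correct and follows essentially the same route as the paper: the inclusion $\hbox{mul}\,(A)\subseteq\hbox{mul}\,(A^i)$ is quoted from Lemma \ref{fvgbhbnj}(a), and for the reverse inclusion the paper also forms $G_0:=(A^e)^{-1}G$ (your $H$) using the invertibility of $A^e$ from Lemma \ref{fvgbhbnj}(b) and checks $AG_0=\delta I_E$. The only (harmless) difference is at the very end: the paper concludes $G_0A=\delta I_E$ by appealing to the a.e.\ invertibility of $A(z)$, whereas you verify it by the direct computation $HA=B(GA^i)A^e=\delta BA^e=\delta I_E$, which is if anything slightly cleaner.
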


\begin{proof}
In view of Lemma \ref{fvgbhbnj} (a), it suffices to show that
$\hbox{mul}\,(A^i)\subseteq \hbox{mul}\,(A)$. \ To see this, let
$\delta \in \hbox{mul}\,(A^i)$. \ Then
$$
A^iG=GA^i=\delta I_E \quad \hbox{for some} \ G \in
H^{\infty}(\mathcal B(E)).
$$
Put $G_0:=(A^e)^{-1}G$. \ Then by Lemma \ref{fvgbhbnj}, $G_0 \in
H^{\infty}(\mathcal B(E))$ and
$$
AG_0=A^iA^e(A^e)^{-1}G=\delta I_E.
$$
But since $A$ has a scalar inner multiple, $A(z)$ is invertible for
almost all $z \in \mathbb T$. \ Thus we have $\delta \in
\hbox{mul}\,(A)$. \ This proves $\hbox{mul}\,(A^i)\subseteq
\hbox{mul}\,(A)$. \ This completes the proof.
\end{proof}

\bigskip

The following corollary gives an affirmative answer to Question
\ref{82222}.

\medskip

\begin{cor}\label{fvgbhgj}
If $A \in H^{\infty}(\mathcal B(E))$ has a scalar inner multiple
then
$$
m_{A} \in \hbox{mul}\,(A).
$$
\end{cor}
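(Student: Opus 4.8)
The plan is to reduce the general case to the two-sided inner case, which has already been disposed of in equation (\ref{83333}). The crucial point is that replacing $A$ by its inner part $A^i$ does not change the set of scalar inner multiples, hence does not change the greatest common inner divisor $m_A$ of that set. Since $A$ is assumed to have a scalar inner multiple, $\hbox{mul}\,(A)\neq\emptyset$, so $m_A$ in (\ref{81111}) is well-defined.

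First I would invoke Lemma \ref{hnjmkngbfvdc}: because $A$ has a scalar inner multiple, $\hbox{mul}\,(A)=\hbox{mul}\,(A^i)$. Taking the greatest common inner divisor of each side and recalling the definition (\ref{81111}) — which depends only on the underlying set of scalar inner functions — this gives $m_A=m_{A^i}$.

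Next I would apply Lemma \ref{fvgbhbnj}(a): $A^i$ is two-sided inner with values in $\mathcal B(E)$ and again has a scalar inner multiple. Thus the fact recorded in (\ref{83333}), which is precisely the assertion of the corollary in the two-sided inner case, applies to $A^i$ and yields $m_{A^i}\in\hbox{mul}\,(A^i)$.

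Combining these, $m_A=m_{A^i}\in\hbox{mul}\,(A^i)=\hbox{mul}\,(A)$, which is exactly the claim. There is no real obstacle here once Lemmas \ref{fvgbhbnj} and \ref{hnjmkngbfvdc} are available; the only step meriting a moment's attention is the passage from the set identity $\hbox{mul}\,(A)=\hbox{mul}\,(A^i)$ to the equality $m_A=m_{A^i}$, which is immediate since the greatest common inner divisor is an operation on sets of inner functions.
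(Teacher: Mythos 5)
Your argument is correct and follows essentially the same route as the paper: reduce to the two-sided inner part via Lemma \ref{fvgbhbnj}(a), apply (\ref{83333}) to $A^i$, and use Lemma \ref{hnjmkngbfvdc} to transfer the conclusion back through $m_A=m_{A^i}$ and $\hbox{mul}\,(A)=\hbox{mul}\,(A^i)$. Nothing is missing.
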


\begin{proof}
By Lemma \ref{fvgbhbnj}, $A^i$ is two-sided inner. \ By (\ref{83333}),
$m_{A^i}\in\hbox{mul}\,(A^i)$. \ Thus it follows from Lemma
\ref{hnjmkngbfvdc} that
$$
m_{A}=m_{A^i} \in \hbox{mul}\,(A^i)=\hbox{mul}\,(A).
$$
\end{proof}

\medskip

The following lemma is elementary.

\medskip

\begin{lem}\label{lem2fgt.1}
Let $E$ be a  complex Hilbert space. \ If $\theta$ and $\delta$ are
scalar inner functions, then
$$
\hbox{left-g.c.d.}\,\{\theta I_E,  \delta I_E\}=
\hbox{g.c.d.}\,\{\theta, \delta\}I_E.
$$
\end{lem}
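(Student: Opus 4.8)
The plan is to reduce the identity to its scalar incarnation. By the definition of (left-)g.c.d.\ given in \S 4.2, the scalar inner function $\eta:=\hbox{g.c.d.}\,\{\theta,\delta\}$ is characterized by $\eta H^2=\theta H^2\vee\delta H^2$, while $\hbox{left-g.c.d.}\,\{\theta I_E,\delta I_E\}$ is the inner function $\Xi$ (unique up to a unitary constant right factor, by the Beurling--Lax--Halmos Theorem) determined by $\Xi H^2_{D}=\theta I_EH^2_E\vee\delta I_EH^2_E=\theta H^2_E\vee\delta H^2_E$. So it suffices to prove $\theta H^2_E\vee\delta H^2_E=\eta I_EH^2_E$; then comparing with $\Xi H^2_D$ and invoking the uniqueness part of Beurling--Lax--Halmos gives $\Xi=\eta I_E=\hbox{g.c.d.}\,\{\theta,\delta\}\,I_E$.

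First I would record the componentwise description of inner multipliers on $H^2_E$. Fix an orthonormal basis $\{e_i\}$ of $E$ and, for $f\in H^2_E$, write $f_i:=\langle f(\cdot),e_i\rangle_E\in H^2$, so that $\|f\|^2=\sum_i\|f_i\|^2$ by Parseval. For a scalar inner function $\psi$, the operator $\psi I_E$ acts as $(\psi I_E)g=\psi g$, whose $i$-th component is $\psi g_i$; since multiplication by $\psi$ is isometric on $H^2$, one checks directly that $f\in\psi H^2_E$ iff $f_i\in\psi H^2$ for every $i$, and consequently $f\in\mathcal H(\psi I_E)=H^2_E\ominus\psi H^2_E$ iff $f_i\in\mathcal H(\psi)$ for every $i$; in particular $\psi H^2_E=(\psi I_E)H^2_E$ is closed. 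Applying this with $\psi=\theta$ and $\psi=\delta$ and taking orthogonal complements,
\[
\bigl(\theta H^2_E\vee\delta H^2_E\bigr)^{\perp}=\mathcal H(\theta I_E)\cap\mathcal H(\delta I_E)=\bigl\{f\in H^2_E:\ f_i\in\mathcal H(\theta)\cap\mathcal H(\delta)\ \text{for all }i\bigr\}.
\]
Now $\mathcal H(\theta)\cap\mathcal H(\delta)=(\theta H^2)^{\perp}\cap(\delta H^2)^{\perp}=(\theta H^2\vee\delta H^2)^{\perp}=(\eta H^2)^{\perp}=\mathcal H(\eta)$, the middle equalities being the definition of $\eta=\hbox{g.c.d.}\,\{\theta,\delta\}$. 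Hence $(\theta H^2_E\vee\delta H^2_E)^{\perp}=\{f:\ f_i\in\mathcal H(\eta)\ \forall i\}=\mathcal H(\eta I_E)$, and taking complements once more yields $\theta H^2_E\vee\delta H^2_E=\mathcal H(\eta I_E)^{\perp}=\eta I_EH^2_E$, which completes the reduction above.

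I expect the only step requiring genuine care to be the componentwise description in the second paragraph, i.e.\ checking that $H^2_E$ decomposes as the orthogonal sum of its scalar component subspaces and that the inner multiplier $\psi I_E$ acts diagonally with respect to this decomposition; everything else is a short formal manipulation of orthogonal complements and joins. Note that $\theta$ and $\delta$ need not be coprime, so no corona-type density input is needed: the scalar identity $\theta H^2\vee\delta H^2=\eta H^2$ is built into the definition of $\eta$.
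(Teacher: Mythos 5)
Your proof is correct, but it takes a different route from the paper's. The paper's argument factors out the scalar g.c.d.: writing $\omega:=\hbox{g.c.d.}\{\theta,\delta\}$, $\theta=\omega\theta_1$, $\delta=\omega\delta_1$ with $\theta_1,\delta_1$ coprime, it computes
$\theta H^2_E\vee\delta H^2_E=\omega\bigl(\theta_1H^2_E\vee\delta_1H^2_E\bigr)=\omega H^2_E$,
the last equality resting on the fact that coprime scalar inner functions jointly generate $H^2_E$ (and on multiplication by the inner scalar $\omega$ being an isometry, so it commutes with closed spans); Beurling--Lax--Halmos uniqueness then gives the conclusion. You instead dualize: you pass to orthogonal complements, verify the componentwise description $f\in\psi H^2_E\Leftrightarrow f_i\in\psi H^2$ for all $i$ (and the corresponding statement for $\mathcal H(\psi I_E)$), and reduce everything to the single scalar identity $\eta H^2=\theta H^2\vee\delta H^2$, which is the definition of $\eta$; no factorization and no coprimeness input is needed, and the final identification again uses BLH uniqueness. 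Your approach is slightly longer but more self-contained, since the only nontrivial step is the diagonal action of scalar inner multipliers on the component decomposition of $H^2_E$, which you check correctly (including closedness of $\psi H^2_E$); the paper's approach is shorter but quietly invokes the vector-valued form of the coprimeness fact $\theta_1H^2_E\vee\delta_1H^2_E=H^2_E$. Both are valid.
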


\begin{proof} Let
$$
\Omega:=\hbox{left-g.c.d.}\,\{\theta I_E,  \delta I_E\} \quad
\hbox{and} \quad \omega:= \hbox{g.c.d.}\,\{\theta, \delta\}.
$$
Then we can write
$$
\theta=\omega \theta_1 \quad \hbox{and} \quad \delta=\omega
\delta_1,
$$
where $\theta_1$ and $\delta_1$ are coprime inner functions. \ Thus
we have
$$
\Omega H^2_E=\theta H^2_E \bigvee \delta H^2_E=\omega  \theta_1
H^2_E \bigvee \omega \delta_1  H^2_E=\omega \Bigl( \theta_1 H^2_E
\bigvee\delta_1 H^2_E\Bigr) = \omega H^2_E,
$$
which implies that $\Omega= \omega I_E$. \ This completes the proof.
\end{proof}

\bigskip

\begin{lem}\label{lemedwqx5.2}
Let $A \in H^{\infty}(\mathcal B(E))$ have a scalar inner multiple
and $\theta$ be a scalar inner function. \ Suppose that $m_A$ is not
an inner divisor of $\theta$. \ If $\delta_0 \in \hbox{mul}\,(A)$ is
such that $A$ and $\omega I_E\equiv \hbox{g.c.d.}\{\theta,
\delta_0\}I_E$ are left coprime, then $\delta_0 \overline{\omega}
\in \hbox{mul}\,(A)$.
\end{lem}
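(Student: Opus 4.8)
\textbf{Proof proposal for Lemma \ref{lemedwqx5.2}.}

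The plan is to verify directly that the candidate scalar inner function $\delta_0\overline{\omega}$ serves as a two-sided inner multiple of $A$. First I would record the hypotheses in usable form: since $\delta_0\in\hbox{mul}\,(A)$, there is $G\in H^\infty(\mathcal B(E))$ with $AG=GA=\delta_0 I_E$, and $A(z)$ is invertible for almost all $z\in\mathbb T$ (this was observed in the proof of Lemma \ref{fvgbhbnj}). Writing $\delta_0=\omega\delta_1$ with $\omega=\hbox{g.c.d.}\{\theta,\delta_0\}$, the element I want to produce a multiple-witness for is $\delta_1=\delta_0\overline{\omega}$. So the goal is to find $G_1\in H^\infty(\mathcal B(E))$ with $AG_1=G_1 A=\delta_1 I_E$; equivalently, I must show that $G_1:=\overline{\omega}\,G$ (which formally satisfies $AG_1=G_1A=\delta_1 I_E$ as $L^\infty$-identities) actually lies in $H^\infty(\mathcal B(E))$, i.e.\ $\overline{\omega}G$ is analytic.

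The key step is to exploit the left-coprimeness of $A$ and $\omega I_E$. The natural route is via kernels of Hankel operators / invariant subspaces: from $AG=\delta_0 I_E=\omega\delta_1 I_E$ one gets $A(\omega\delta_1 H^2_E)=\omega\delta_1 AH^2_E$, and one wants to peel off the $\omega$. I would argue that $\overline{\omega}G H^2_E\subseteq H^2_E$ by testing against $H^2_E$: for $h\in H^2_E$, $A\overline{\omega}Gh=\overline{\omega}\delta_0 h=\delta_1 h\in H^2_E$, so $\overline{\omega}Gh\in A^{-1}H^2_E$ in the a.e.\ sense; then I must upgrade this to $\overline{\omega}Gh\in H^2_E$. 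Here is where left-coprimeness of $A$ and $\omega I_E$ enters: the common left inner divisors of $A$ and $\omega I_E$ being trivial forces the "denominator" $\omega$ to be absorbed. Concretely, consider $\ker H_{(\omega G)^{\sim}\cdots}$ or, more cleanly, work with the inner-outer factorization $A=A^iA^e$ (Lemma \ref{fvgbhbnj} gives $A^i$ two-sided inner and $A^e$ invertible in $H^\infty(\mathcal B(E))$, since $1\in\hbox{mul}\,(A^e)$). Then $\overline{\omega}G$ is analytic iff $\overline{\omega}(A^e)^{-1}G=(A^e)^{-1}\overline{\omega}G$ is, i.e.\ iff $\overline{\omega}G_0$ is analytic where $A^iG_0=G_0A^i=\delta_0 I_E$. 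So the problem reduces to the two-sided inner function $A^i$: I must show $\omega\mid m_{A^i}$ implies nothing bad, but actually I should show $\overline\omega G_0\in H^\infty$. Using $(\ref{cefcvwf})$ applied to $A^i$: $\delta_0 H^2_E\subseteq\ker H_{(A^i)^*}=A^iH^2_E$, and left-coprimeness of $A^i$ (equivalently $A$, by Lemma \ref{hnjmkngbfvdc} and the fact that inner divisors of $A$ and $A^i$ coincide) with $\omega I_E$ means $A^iH^2_E\cap \omega H^2_E=\omega A^iH^2_E$ together with $A^iH^2_E\vee\omega H^2_E=H^2_E$; from the first, $\delta_1 H^2_E=\overline\omega\delta_0 H^2_E\subseteq\overline\omega(A^iH^2_E\cap\omega H^2_E)=A^iH^2_E$, which gives $\overline\omega G_0=\overline\omega \delta_0(A^i)^*=\delta_1(A^i)^*\in H^\infty(\mathcal B(E))$, i.e.\ $\delta_1\in\hbox{mul}\,(A^i)=\hbox{mul}\,(A)$.

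The main obstacle I anticipate is making rigorous the passage "$A^iH^2_E\cap\omega H^2_E=\omega A^iH^2_E$" — i.e.\ that left-coprimeness of $A^i$ and $\omega I_E$ yields that their least common left inner multiple is $A^i\omega$ (equivalently $\omega A^i$, since $\omega$ is scalar). This is the operator-valued analogue of "coprime $\Rightarrow$ l.c.m.\ $=$ product," and I would justify it by the Beurling--Lax--Halmos Theorem: $A^iH^2_E\cap\omega I_E H^2_E=\Psi H^2_{E''}$ for an inner $\Psi$ that is a common left inner multiple, $\Psi=A^i\Psi_1=\omega I_E\Psi_2$; left-coprimeness forces $\Psi_2$ to be left-divisible by $A^i$ (or one uses Lemma \ref{lem2fgt.1} and the structure of $\hbox{left-g.c.d.}$), giving $\Psi=\omega A^i$ up to a unitary constant. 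One technical point to watch: the hypothesis "$m_A$ is not an inner divisor of $\theta$" guarantees $\omega\neq m_A$ in a suitable sense but is mainly there so that $\delta_1\neq\delta_0$ is a genuinely "smaller" multiple; I should check it is not actually needed for the analyticity argument above, or else locate precisely where it is invoked (likely only to ensure the statement is non-vacuous, i.e.\ that $\omega$ is a proper inner divisor of $\delta_0$ in the relevant sense). Once the l.c.m.\ identity is in hand, the rest is the short computation $\delta_1\in\hbox{mul}\,(A^i)=\hbox{mul}\,(A)$ via Lemma \ref{hnjmkngbfvdc}.
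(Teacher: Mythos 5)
Your reduction to the inner part (via Lemma \ref{fvgbhbnj} and Lemma \ref{hnjmkngbfvdc}) and your closing remark about the role of the hypothesis ``$m_A$ is not an inner divisor of $\theta$'' are both fine, but the proof hinges on the unproved identity $A^iH^2_E\cap\omega H^2_E=\omega A^iH^2_E$, and this is exactly where the argument breaks down. Your proposed justification via the Beurling--Lax--Halmos Theorem does not go through: writing $\Delta H^2_E\cap\omega H^2_E=\Psi H^2_{E''}$ with $\Psi=A^i\Psi_1=\omega\Psi_2$, the hypothesis that $A^i$ and $\omega I_E$ are left coprime controls their common \emph{left inner divisors} (equivalently the join $A^iH^2_E\vee\omega H^2_E$), but it says nothing directly about the factors $\Psi_1,\Psi_2$ of a common left inner \emph{multiple}; the scalar-style cancellation ``coprime $\Rightarrow$ l.c.m. $=$ product'' does not transfer to operator-valued inner functions. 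Indeed, if one sets $N:=\{f\in H^2_E:\omega f\in A^iH^2_E\}=\Theta H^2_{E_1}$, then $A^i=\Theta\Delta_1$ where $\Delta_1$ is only a \emph{right} factor of $A^i$, so left coprimeness of $A^i$ and $\omega I_E$ gives no contradiction when $\Delta_1$ is nonunitary. In fact your identity is equivalent to injectivity of $\omega(T)$ for the $C_0$-operator $T$ with characteristic function $A^i$, i.e.\ (by the $C_0$ theory) to the coprimeness of $\omega$ and $m_{A^i}=m_A$ --- which is precisely the hard implication (b)$\Rightarrow$(a) of Theorem \ref{lem5.2} that Lemma \ref{lemedwqx5.2} is designed to establish. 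So as it stands the route is circular (or, if you invoke the full $C_0$ multiplicity machinery, it still needs ``left coprime $\Rightarrow$ $\omega\wedge m_{A^i}=1$,'' which is again the point at issue).

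Note how the paper's proof avoids the l.c.m.\ side altogether and uses left coprimeness on the g.c.d.\ side, which is where it actually bites: from $G(\overline\omega I_E)A=\delta_1 I_E\in H^\infty(\mathcal B(E))$ and $G(\overline\omega I_E)\omega I_E=G\in H^\infty(\mathcal B(E))$ one gets $AH^2_E\subseteq\ker H_{G\overline\omega I_E}$ and $\omega H^2_E\subseteq\ker H_{G\overline\omega I_E}=\Theta H^2_{E'}$, so the BLH representative $\Theta$ is a common left inner divisor of $A$ and $\omega I_E$, hence unitary by hypothesis; then $\ker H_{G\overline\omega I_E}=H^2_E$ forces $\overline\omega G\in H^\infty(\mathcal B(E))$, and $\delta_1 I_E=(\overline\omega G)A=A(\overline\omega G)$ gives $\delta_0\overline\omega\in\hbox{mul}\,(A)$ directly. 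If you want to salvage your outline, you would have to replace the asserted intersection identity by an argument of this type (or give an independent proof that left coprimeness with $\omega I_E$ forces $\omega$ to be coprime to $m_{A^i}$), since the analyticity of $\overline\omega G$ is the entire content of the lemma.
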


\begin{proof}
Let $A \in H^{\infty}(\mathcal B(E))$ have a scalar inner multiple
and $\theta$ be a scalar inner function. \ Suppose that $m_A$ is not
an inner divisor of $\theta$. \ Then we should have $1 \notin
\hbox{mul}\,(A)$. \ Thus, by Lemma \ref{lemdcwefvedf}, $A$ is not an
outer function, so that $A^i$ is not a unitary operator. \ Let
$\delta_0 \in \hbox{mul}\,(A)$ be such that $A$ and $\omega
I_E\equiv \hbox{g.c.d.}\{\theta, \delta_0\}I_E$ are left coprime.\
Then, by Lemma \ref{lem2fgt.1}, we may write
\begin{equation}\label{jdfsfgj}
\theta=\omega \theta_1 \quad \hbox{and} \quad
\delta_0=\omega\delta_1,
\end{equation}
where $\theta_1$ and $\delta_1$ are coprime scalar inner functions.
\  On the other hand, since $\delta_0 \in \hbox{mul}\, (A)$, we have
that
\begin{equation}\label{vyhtcfrdsvuvfv}
\delta_0 I_E=G A =A G \quad\hbox{for some} \ G \in
H^{\infty}(\mathcal B(E)).
\end{equation}
Thus by (\ref{jdfsfgj}) and (\ref{vyhtcfrdsvuvfv}), we have that
$$
G (\overline{\omega} I_E) A=(\overline{\omega} I_E) GA=\delta_1 I_E
\in H^{\infty}(\mathcal B(E)),
$$
which implies that
\begin{equation}\label{dhfjgjkdxsj}
AH^2_E\subseteq \hbox{ker}\,H_{G (\overline{\omega} I_E)}\equiv
\Theta H^2_{E^{\prime}}.
\end{equation}
Thus $\Theta$ is a left inner divisor of $A$. \ Since also $\omega
H^2_E\subseteq \hbox{ker}\,H_{G \overline{\omega} I_E}= \Theta
H^2_{E^{\prime}}$, $\Theta$ is a left inner divisor of $\omega I_E$.
\  Thus $\Theta$ is a common left inner divisor of $A$ and $\omega
I_E$, so that, by our assumption, $\Theta$ is a unitary operator. \
Thus
$$
\hbox{ker}\, {H_{G\overline{\omega}I_E}}=\Theta
H^2_{E^{\prime}}=H^2_E,
$$
which implies that $\overline{\omega}I_E G \in H^{\infty}(\mathcal
B(E))$. \ On the other hand, by (\ref{jdfsfgj}) and
(\ref{vyhtcfrdsvuvfv}), we have
$$
\delta_1 I_E=(\overline{\omega}\delta_0)I_E=(\overline{\omega} I_E
G)A=A(\overline{\omega} I_E G),
$$
which implies that $\delta_1=\delta_0\overline{\omega} \in
\hbox{mul}\,(A)$. \ This completes the proof.
\end{proof}

\bigskip

We then have:

\medskip

\begin{thm}\label{lem5.2}
Let $A \in H^{\infty}(\mathcal B(E))$ and $\theta$ be a scalar inner
function. \ If $A$ has a scalar inner multiple, then the following
are equivalent:

\begin{itemize}

\item[(a)] $\theta$ and $m_{A}$ are coprime;

\item[(b)] $\theta I_E$ and $A$ are left coprime;

\item[(c)] $\theta I_E$ and $A$ are right coprime.

\end{itemize}

\end{thm}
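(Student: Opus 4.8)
The plan is to establish the equivalence (a) $\Leftrightarrow$ (b) directly and then to obtain (a) $\Leftrightarrow$ (c) by a flip argument. The tools I would use are Corollary \ref{fvgbhgj} (which guarantees $m_A\in\hbox{mul}\,(A)$, i.e.\ $AG=GA=m_AI_E$ for some $G\in H^\infty(\mathcal B(E))$), Lemma \ref{rem.sdcfcfc} (a left inner divisor of a two-sided inner function is again two-sided inner), Lemma \ref{lem2fgt.1} (the left-g.c.d.\ of $\{\theta I_E,\delta I_E\}$ is $\hbox{g.c.d.}\{\theta,\delta\}\,I_E$), and the elementary fact that a common left inner divisor of $\{B_1,B_2\}$ is a left inner divisor of $\hbox{left-g.c.d.}\{B_1,B_2\}$ (immediate from the defining relation $\Theta_dH^2=\bigvee\Theta_iH^2$).

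For (a) $\Rightarrow$ (b), let $D$ be a common left inner divisor of $\theta I_E$ and $A$; since $\theta I_E$ is two-sided inner, Lemma \ref{rem.sdcfcfc} makes $D$ two-sided inner, and we may take $D\in H^\infty(\mathcal B(E))$. Writing $A=DB$ and combining with $AG=m_AI_E$ gives $m_AI_E=D(BG)$, where the cofactor $BG=D^*(m_AI_E)$ is readily checked to be two-sided inner; hence $D$ also left-divides $m_AI_E$. Consequently $D$ left-divides $\hbox{left-g.c.d.}\{\theta I_E,m_AI_E\}=\hbox{g.c.d.}\{\theta,m_A\}\,I_E=I_E$ by Lemma \ref{lem2fgt.1} and (a), so $D$ is unitary, proving left coprimeness.

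For the contrapositive of (b) $\Rightarrow$ (a), set $\omega:=\hbox{g.c.d.}\{\theta,m_A\}$ and assume $\omega$ is nonconstant; I would exhibit a non-unitary common left inner divisor of $\theta I_E$ and $A$. Note $\omega\mid\theta$, so every left inner divisor of $\omega I_E$ is one of $\theta I_E$. If $m_A\mid\theta$ (so $\omega=m_A\neq1$), the inner part $A^i$ works: by Lemmas \ref{fvgbhbnj} and \ref{hnjmkngbfvdc} it is two-sided inner, left-divides $A$, satisfies $m_{A^i}=m_A\neq1$ (hence is not unitary), and left-divides $m_AI_E$ and therefore $\theta I_E$ exactly as in the previous paragraph. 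If $m_A\nmid\theta$, suppose for contradiction that $A$ and $\omega I_E$ were left coprime; then Lemma \ref{lemedwqx5.2} with $\delta_0=m_A$ yields $m_A\overline{\omega}\in\hbox{mul}\,(A)$, whence $m_A\mid m_A\overline{\omega}$, while $m_A\overline{\omega}\mid m_A$ since $m_A=\omega\cdot(m_A\overline{\omega})$, forcing $\omega$ constant, a contradiction; thus $A$ and $\omega I_E$ have a non-unitary common left inner divisor $D$, and $D\mid\omega I_E\mid\theta I_E$ finishes this case. In both cases (b) fails, so (a) $\Leftrightarrow$ (b).

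Finally, for (a) $\Leftrightarrow$ (c): flipping $AG=GA=\delta I_E$ gives $\widetilde G\widetilde A=\widetilde A\widetilde G=\widetilde\delta I_E$ (with $\widetilde G\in H^\infty$ by Lemma \ref{corfgghh2.9}), so $\widetilde A$ has a scalar inner multiple, $\hbox{mul}\,(\widetilde A)=\{\widetilde\delta:\delta\in\hbox{mul}\,(A)\}$, and $m_{\widetilde A}=\widetilde{m_A}$ because the flip preserves divisibility among scalar inner functions and hence carries g.c.d.'s to g.c.d.'s. Since ``$\theta I_E$ and $A$ are right coprime'' means ``$\widetilde\theta\,I_E$ and $\widetilde A$ are left coprime'', the already-proved equivalence applied to $\widetilde A,\widetilde\theta$ shows this is equivalent to $\widetilde\theta$ and $\widetilde{m_A}$ being coprime, i.e.\ to $\theta$ and $m_A$ being coprime, which is (a). The main obstacle I anticipate is the case analysis in the contrapositive of (b) $\Rightarrow$ (a), in particular verifying the hypothesis $m_A\nmid\theta$ of Lemma \ref{lemedwqx5.2} and checking that the various cofactors appearing (such as $BG$ and the analogue $A^eG$) are genuinely inner so that the divisibility manipulations are legitimate; once that bookkeeping is settled, the rest reduces to divisibility arithmetic of scalar inner functions.
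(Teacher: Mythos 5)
Your proof is correct and follows essentially the same route as the paper's: the same key ingredients (Corollary \ref{fvgbhgj}, Lemma \ref{lem2fgt.1}, Lemma \ref{lemedwqx5.2} with $\delta_0=m_A$, and the case split on whether $m_A$ divides $\theta$ with $A^i$ as the non-unitary common divisor), and the same flip argument via $m_{\widetilde A}=\widetilde{m_A}$ for the right-coprimeness equivalence. The only differences are cosmetic — you run (a)$\Rightarrow$(b) directly rather than contrapositively, and you use $m_{A^i}=m_A\neq 1$ (Lemma \ref{hnjmkngbfvdc}) in place of Lemma \ref{lemdcwefvedf} to see that $A^i$ is not unitary.
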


\begin{proof} Let $A \in H^{\infty}(\mathcal B(E))$ have a scalar
inner multiple. \ Write
$$
A=A^iA^e \quad(\hbox{inner-outer factorization}).
$$

\medskip
(a) $\Rightarrow$ (b): Suppose that $\theta I_E$ and $A$ are not
left coprime. \ Then
$$
\theta H^2_E \bigvee A^i H^2_E \neq H^2_E.
$$
By Corollary \ref{fvgbhgj}, there exists $G\in H^{\infty}(\mathcal
B(E))$ such that $G A = A G=m_A I_E$. \ Thus we have that
$$
\hbox{left-g.c.d.}\, \{\theta I_E, m_A I_E\}H^2_E=\theta H^2_E
\bigvee A G H^2_E \subseteq \theta H^2_E \bigvee A^i H^2_E \neq
H^2_E,
$$
which implies that $\theta I_E$ and $m_A I_E$ are not left coprime.
\  Thus by Lemma \ref{lem2fgt.1}, $\theta$ and $m_A$ are not
coprime.

\medskip

(b) $\Rightarrow$ (a): Suppose that $\theta$ and $m_A$ are not
coprime. \ If $m_A$ is an inner divisor of $\theta$,  then by
Corollary \ref{fvgbhgj} and Lemma \ref{lem2fgt.1}, we may write
$$
\theta I_E=m_A \theta_1 I_E=A^i A^eG\theta_1 I_E \quad(G \in
H^{\infty}(\mathcal B(E)), \ \theta_1 \ \hbox{is a scalar inner}).
$$
Thus, $A^i$ is a common left inner divisor of $\theta I_E$ and $A$. \
If $A^i$ is a unitary operator, then $A$ is an outer function. \ It
thus follows from Lemma \ref{lemdcwefvedf} that $m_A=1$, so that
$\theta$ and $m_A$ are coprime, a contradiction. \ Therefore $A^i$
is not a unitary operator, and hence $\theta I_E$ and $A$ are not
left coprime. \ Suppose instead that $m_A$ is not an inner divisor of
$\theta$. \ Write $\omega\equiv \hbox{g.c.d.}\{\theta, m_A\} \neq
1$. \ We then claim that
\begin{equation}\label{5.sdssd4}
A  \ \hbox{and} \ \omega I_E \ \hbox{are not left coprime}.
\end{equation}
Towards (\ref{5.sdssd4}), we assume to the contrary that $A$ and
$\omega I_E$ are left coprime. \ Then it follows from  Corollary
\ref{fvgbhgj} and Lemma \ref{lemedwqx5.2} that $
\overline{\omega}m_A \in \hbox{mul}\, (A)$, which contradicts the
definition of $m_A$. \ This proves (\ref{5.sdssd4}). \ But since
$\omega$ is an inner divisor of $\theta$, it follows from Lemma
\ref{lem2fgt.1} that $A$ and $\theta I_E$ is not left coprime.

\medskip

(b) $\Leftrightarrow$ (c). \ Since $\delta\in \hbox{mul}\,(A)$ if and
only if $\widetilde{\delta}\in \hbox{mul}\,(\widetilde{A})$, it
follows that $\widetilde{m_{A}}=m_{\widetilde{A}}$. \ It thus
follows from (a) $\Leftrightarrow$ (b). \ This completes the proof.
\end{proof}

\bigskip

\begin{cor}\label{lemed3w5.2}
Let $\Delta$ be an inner function with values in $\mathcal B(D, E)$
and $\theta$ be a scalar inner function. \ If $\Delta$ has a
meromorphic pseudo-continuation of bounded type in $\mathbb{D}^e$, then the
following are equivalent:

\begin{itemize}

\item[(a)] $\theta$ and $\omega_{\Delta}$ are coprime;

\item[(b)] $\theta I_E$ and $[\Delta, \Delta_c]$ are left coprime;

\item[(c)] $\theta I_E$ and $[\Delta, \Delta_c]$ are right coprime.

\end{itemize}
\end{cor}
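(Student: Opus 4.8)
The plan is to reduce the whole statement to Theorem \ref{lem5.2} applied to the two-sided inner function $A:=[\Delta,\Delta_c]$. First I would invoke Lemma \ref{thnfhjjjfhbbbbskl}: since $\Delta$ has a meromorphic pseudo-continuation of bounded type in $\mathbb{D}^e$, the function $[\Delta,\Delta_c]$ is two-sided inner (by Corollary \ref{thmthjdkfjkf} together with Lemma \ref{thgg668}), it has a meromorphic pseudo-continuation of bounded type in $\mathbb{D}^e$, and
$$
m_{[\Delta,\Delta_c]}=\omega_{[\Delta,\Delta_c]}=\omega_{\Delta}.
$$
Because $[\Delta,\Delta_c]$ is two-sided inner, we may assume its domain $D\oplus D^\prime$ coincides with $E$, so that $A:=[\Delta,\Delta_c]$ is legitimately an element of $H^{\infty}(\mathcal B(E))$, to which Theorem \ref{lem5.2} applies.

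Next I would verify the hypothesis of Theorem \ref{lem5.2}, namely that $A$ has a scalar inner multiple. Since $A$ is two-sided inner and has a meromorphic pseudo-continuation of bounded type in $\mathbb{D}^e$, this follows from Lemma \ref{thgg6688} combined with the equivalences (\ref{cefcvwf}). Moreover, by the remark surrounding (\ref{83333}), for a two-sided inner function the scalar inner function $m_A$ defined in (\ref{81111}) coincides with its characteristic scalar inner function; by the previous paragraph this equals $\omega_{\Delta}$, so $m_A=\omega_{\Delta}$.

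Finally, Theorem \ref{lem5.2} applied to $A=[\Delta,\Delta_c]$ and the given scalar inner function $\theta$ asserts the equivalence of: $\theta$ and $m_A$ are coprime; $\theta I_E$ and $[\Delta,\Delta_c]$ are left coprime; $\theta I_E$ and $[\Delta,\Delta_c]$ are right coprime. Substituting $m_A=\omega_{\Delta}$ gives precisely the equivalence of (a), (b), (c), which completes the proof. I expect the only delicate point to be the bookkeeping that identifies $m_{[\Delta,\Delta_c]}$ in the sense of (\ref{81111}) with $\omega_{\Delta}$ — this is exactly where Lemma \ref{thnfhjjjfhbbbbskl} and the discussion of scalar inner multiples are needed; everything else is a direct specialization of Theorem \ref{lem5.2}.
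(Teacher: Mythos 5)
Your proposal is correct and is essentially the paper's own proof: the paper likewise uses Lemma \ref{thgg668} and Corollary \ref{thmthjdkfjkf} to see that $[\Delta,\Delta_c]$ is two-sided inner and then concludes directly from Theorem \ref{lem5.2} together with Lemma \ref{thnfhjjjfhbbbbskl}, which supplies $m_{[\Delta,\Delta_c]}=\omega_{[\Delta,\Delta_c]}=\omega_{\Delta}$. Your extra bookkeeping (verifying the scalar inner multiple via Lemma \ref{thgg6688} and (\ref{cefcvwf}), and identifying $m_{[\Delta,\Delta_c]}$ with $\omega_{\Delta}$) just makes explicit what the paper leaves implicit.
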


\begin{proof} Suppose that $\Delta$ has a
meromorphic pseudo-continuation of bounded type in $\mathbb{D}^e$. \ Then by
Lemma \ref{thgg668}, $\breve{\Delta}$ is of bounded type, so that by
Corollary \ref{thmthjdkfjkf}, $[\Delta, \Delta_c]$ is two-sided
inner. \ Thus the result follows from Theorem \ref{lem5.2} and Lemma
\ref{thnfhjjjfhbbbbskl}.
\end{proof}

\bigskip

\begin{ex} Let
$$
\Delta:=\begin{bmatrix} b_{\alpha}&0\\0&b_{\beta}\\0&0\end{bmatrix}
\quad (\alpha \neq 0, \beta \neq 0).
$$
Then $zI_3$ and $\Delta$ are not left coprime because $zH^2_{\mathbb
C^3}\bigvee \Delta H^2_{\mathbb C^2}\ne H^2_{\mathbb C^3}$. \ But
$zI_3$ and $[\Delta, \Delta_c]$ are left coprime, so that, by
Corollary \ref{lemed3w5.2}, $z$ and $\omega_{\Delta}$ are coprime.\
Indeed, we note that $\hbox{ker}\, H_{\Delta^*}=[\Delta,
\Delta_c]H^2_{\mathbb C^3}$, and hence
$\omega_{\Delta}=b_{\alpha}b_{\beta}$.
\end{ex}

\bigskip

The following example shows that if the condition ``$A$ has a scalar
inner multiple" is dropped in Theorem \ref{lem5.2}, then Theorem
\ref{lem5.2} may fail.

\medskip

\begin{ex} Let
$$
\Delta(z)=S_E\quad(E=\ell^2(\mathbb{Z}_+))
$$
Then $\Delta$ is an inner function (not two-sided inner, an
isometric operator) with values in $\mathcal B(E)$. \ For $f \in
H^2_E$, we can write
$$
f(z)=\sum_{n=0}^{\infty}a_n z^n \quad (a_n \in E).
$$
We thus have that
$$
(\widetilde\Delta f)(z)=S^*\Bigl(\sum_{n=0}^{\infty}a_n
z^n\Bigr)=\sum_{n=0}^{\infty}(S^*a_n)z^n.
$$
Thus $\widetilde\Delta H^2_E=H^2_E$, so that $\Delta$ and $\theta I_E$ are
right coprime for all scalar inner function $\theta$. \ Let
$\theta(z)=z \theta_1$ ($\theta_1$ a scalar inner). \ Then
$$
(\Delta f)(z)=S\Bigl(\sum_{n=0}^{\infty}a_n
z^n\Bigr)=\sum_{n=0}^{\infty}(Sa_{n})z^{n}.
$$
We thus have
$$
\Delta H^2_E \bigvee \theta H^2_E=\Delta H^2_E \bigvee z\theta_1
H^2_E\subseteq \Delta H^2_E \bigvee z H^2_E \neq H^2_E,
$$
which implies that $\theta I_E$ and $\Delta$ are not left coprime.\
Note that $\Delta$ has no scalar inner multiple.

On the other hand, since $\hbox{ker}\, H_{\Delta^*}=H^2_E$, we have
$\omega_{\Delta}=1$. \ Thus, it follows from Corollary
\ref{lemed3w5.2} that $\theta I_E$ and $[\Delta, \Delta_c]$ are
left (and right) coprime for all scalar inner function $\theta$.
\qed
\end{ex}

\bigskip

\begin{lem}\label{lem83333}
If $\Delta\in H^\infty_{M_n}$ is an inner function then
\begin{equation}\label{mmnbfdss}
\hbox{$\theta$ and $m_\Delta$ are coprime}\Longleftrightarrow
\hbox{$\theta$ and $\det \Delta$ are coprime.}
\end{equation}
\end{lem}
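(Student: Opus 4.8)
\textbf{Proof proposal for Lemma \ref{lem83333}.} The plan is to reduce the statement to the known theory of the minimal annihilator/characteristic scalar inner function of square inner matrix functions, using the fact that $\det\Delta$ and $m_\Delta$ have the same inner divisors up to multiplicity. First I would recall that, for an inner matrix function $\Delta\in H^\infty_{M_n}$, the classical adjugate identity $(\operatorname{adj}\Delta)\,\Delta=\Delta\,(\operatorname{adj}\Delta)=(\det\Delta)I_n$ exhibits $\det\Delta$ as a scalar inner multiple of $\Delta$; hence $\det\Delta\in\operatorname{mul}(\Delta)$, so $m_\Delta$ is an inner divisor of $\det\Delta$. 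This already gives the implication ``$\theta$ and $\det\Delta$ coprime $\Rightarrow$ $\theta$ and $m_\Delta$ coprime'' for free, since a common inner divisor of $\theta$ and $m_\Delta$ would also divide $\det\Delta$.

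For the reverse implication, the key point is that $\det\Delta$ and $m_\Delta$ share the same \emph{set of prime (Blaschke/singular) inner divisors}, i.e. they differ only in the multiplicities with which each elementary inner factor occurs; equivalently, some power $m_\Delta^N$ is divisible by $\det\Delta$. This is a standard fact about $C_0$ matrix contractions (see \cite{SFBK}, \cite{Ni1}): if $T:=P_{\mathcal H(\widetilde\Delta)}S_E|_{\mathcal H(\widetilde\Delta)}$ is the associated $C_0$-contraction with minimal annihilator $m_T=m_\Delta$, then $T$ is quasi-similar, via the Smith/Jordan form of $\Delta$, to $\bigoplus_{j=1}^n S(\phi_j)$ where $\phi_1\mid\phi_2\mid\cdots\mid\phi_n$ are the invariant factors of $\Delta$; then $m_\Delta=\phi_n$ while $\det\Delta=\prod_j\phi_j$, and each $\phi_j$ divides $\phi_n=m_\Delta$. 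Consequently $\det\Delta$ divides $m_\Delta^{\,n}$. Now if $\theta$ and $\det\Delta$ are \emph{not} coprime, pick a nontrivial common inner divisor $\eta$ (a single Blaschke factor $b_\alpha$ or a point mass $s_\alpha$); since $\eta\mid\det\Delta\mid m_\Delta^{\,n}$ and $\eta$ is prime, $\eta\mid m_\Delta$, so $\eta$ is a common inner divisor of $\theta$ and $m_\Delta$, contradicting coprimeness. This yields ``$\theta$ and $m_\Delta$ coprime $\Rightarrow$ $\theta$ and $\det\Delta$ coprime,'' completing the equivalence.

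The step I expect to be the main obstacle is pinning down cleanly, within the framework of this paper, the assertion that $\det\Delta$ divides a power of $m_\Delta$ (equivalently that they have the same prime inner divisors). The cited monographs state this for $C_0$-operators of finite multiplicity via the invariant-factor decomposition of inner matrix functions; I would either invoke that directly, or give a self-contained argument: using Corollary \ref{cor720jfjg}, write $\Delta\equiv[\theta_{ij}\overline b_{ij}]$ (entrywise, with $\theta_{ij},b_{ij}$ coprime) so that $m_\Delta=\operatorname{l.c.m.}\{\theta_{ij}\}$, and then observe via the Leibniz expansion of $\det\Delta$ that every inner factor appearing in $\det\Delta$ is a product of factors among the $\theta_{ij}$, hence divides $m_\Delta^{\,n}$; conversely each $\theta_{ij}$ divides $m_\Delta$ by definition of l.c.m. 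Care is needed because the $\overline b_{ij}$ in the entries can \emph{cancel} inner factors in the determinant, but they can never introduce a prime inner divisor not already present among the $\theta_{ij}$, which is all that is required. Once this divisibility fact is in hand, the coprimeness equivalence \eqref{mmnbfdss} follows by the elementary observation that two inner functions are coprime iff they have no common prime inner divisor, and $\det\Delta$, $m_\Delta$ have identical prime inner divisors.
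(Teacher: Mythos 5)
Your proposal is correct in substance, and your reverse direction is exactly the paper's: the adjugate identity $(\hbox{adj}\,\Delta)\Delta=(\det\Delta)I_n$ shows $\det\Delta\in\hbox{mul}\,(\Delta)$, so $m_\Delta$ divides $\det\Delta$ and coprimeness with $\det\Delta$ passes to $m_\Delta$. Where you diverge is in the forward direction. You obtain the key divisibility ``$\det\Delta$ divides $m_\Delta^{\,n}$'' from the Jordan model/invariant-factor (quasi-similarity) theory of $C_0$-contractions, or alternatively from Corollary \ref{cor720jfjg} plus a Leibniz-expansion argument. The paper gets it in one line: since a square inner matrix function is automatically two-sided inner, $m_\Delta\in\hbox{mul}\,(\Delta)$ by (\ref{83333}) (equivalently Corollary \ref{fvgbhgj}), so $m_\Delta I_n=\Delta G$ for some $G\in H^\infty_{M_n}$, and taking determinants gives $\det\Delta\cdot\det G=m_\Delta^{\,n}$. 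Your route is valid but imports heavier machinery than needed, and your Leibniz alternative is left genuinely sketchy (cancellation is not the only issue; to make it rigorous one should instead note that $m_\Delta^{\,n}\,\overline{\det\Delta}=\det(m_\Delta\Delta^*)\in H^\infty$ because each entry $m_\Delta\overline{\theta_{ij}}b_{ij}$ is analytic, which again gives $\det\Delta\mid m_\Delta^{\,n}$). The paper's determinant trick is both shorter and stays inside the tools already developed in Section 8.1.

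One step in your argument is stated incorrectly, though it is easily repaired: you propose to pick a ``prime'' common inner divisor $\eta$ of $\theta$ and $\det\Delta$ (``a single Blaschke factor or a point mass''). A nontrivial common inner divisor need not contain any Blaschke factor or atomic singular factor -- for instance it could be a singular inner function with continuous singular measure -- so no such prime $\eta$ may exist, and the clause ``$\eta$ prime and $\eta\mid m_\Delta^{\,n}$ imply $\eta\mid m_\Delta$'' does not apply as written. The fix is the standard lattice fact the paper uses directly: if $\theta$ and $m_\Delta$ are coprime then $\theta$ and $m_\Delta^{\,n}$ are coprime (compare zero data and singular measures), hence $\theta$ is coprime with every inner divisor of $m_\Delta^{\,n}$, in particular with $\det\Delta$. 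With that substitution your argument closes correctly.
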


\begin{proof}
If $\Delta\in H^\infty_{M_n}$ is inner, then $m_{\Delta}\in
\hbox{mul}\,(\Delta)$, so that we may write
$$
\hbox{$m_{\Delta}I_n=\Delta G$ for some inner function $G\in
H^{\infty}_{M_n}$.}
$$
Thus, $\det\Delta\, \hbox{det}G = m_{\Delta}^n$. \ If $\theta$ and
$m_{\Delta}$ are coprime, then $\theta$ and $m_{\Delta}^n$ are
coprime, so that $\theta$ and $\det\Delta$ are coprime. \
Conversely, suppose that $\theta$ and $\det\Delta$ are coprime. \
Since $(\det\Delta) I_n=(\hbox{adj}\Delta) \Delta$, it follows that
$\det\Delta\in \hbox{mul}\,(\Delta)$. \ Thus, $m_{\Delta}$ is an
inner divisor of $\det\Delta$, and hence $\theta$ and $m_{\Delta}$
are coprime. \ This proves (\ref{mmnbfdss}).
\end{proof}

\bigskip

We can recapture \cite[Theorem 4.16]{CHL3}.

\medskip

\begin{cor}\label{coredcrwwww.2}
Let $A\in H^{2}_{M_n}$ and $\theta$ be a scalar inner function. \
Then the following are equivalent:
\begin{itemize}
\item[(a)] $\theta$ and $\hbox{det}A$ are coprime;
\item[(b)] $\theta I_n$ and $A$ are left coprime;
\item[(c)] $\theta I_n$ and $A$ are right coprime.
\end{itemize}
\end{cor}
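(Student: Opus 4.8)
The statement to prove is Corollary \ref{coredcrwwww.2}: for $A\in H^{2}_{M_n}$ and a scalar inner function $\theta$, the three conditions ``$\theta$ and $\det A$ are coprime'', ``$\theta I_n$ and $A$ are left coprime'', and ``$\theta I_n$ and $A$ are right coprime'' are equivalent. The natural route is to deduce this from Theorem \ref{lem5.2} (which already gives the equivalence of coprimeness with $m_A$ on both sides, once $A$ has a scalar inner multiple) together with Lemma \ref{lem83333} (which translates the $m_A$-condition into the $\det$-condition in the matrix case). So the first task is to reduce from $H^{2}_{M_n}$ to $H^{\infty}_{M_n}$ and to verify the hypothesis needed to invoke Theorem \ref{lem5.2}, namely that a matrix-valued $H^2$-function automatically has a scalar inner multiple after extracting its inner part.

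First I would pass to the inner-outer factorization $A=A^iA^e$ (the Inner-Outer Factorization for strong $H^2$-functions, applicable since $H^2_{M_n}\subseteq H^2_s(\mathcal B(\mathbb C^n,\mathbb C^n))$), where $A^i\in H^\infty_{M_n}$ is inner. Since $A^i$ is an inner matrix function, it has a scalar inner multiple: indeed $(\det A^i)I_n=(\mathrm{adj}\,A^i)A^i$, so $\det A^i\in\mathrm{mul}\,(A^i)$ — this is exactly the computation already performed inside the proof of Lemma \ref{lem83333}. Next I would observe that left/right coprimeness of $\theta I_n$ with $A$ depends only on $A^i$: a common left inner divisor of $\theta I_n$ and $A$ is the same thing as a common left inner divisor of $\theta I_n$ and $A^i$ (the outer factor $A^e$ contributes no nontrivial inner divisor), and dually on the right via the flip, using that $\widetilde A=\widetilde{A^e}\,\widetilde{A^i}$ with $\widetilde{A^e}$ outer. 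Thus ``$\theta I_n$ and $A$ left coprime'' $\Longleftrightarrow$ ``$\theta I_n$ and $A^i$ left coprime'', and similarly on the right.

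Now $A^i$ is inner with a scalar inner multiple, so Theorem \ref{lem5.2} applies to $A^i$ and yields: $\theta$ and $m_{A^i}$ coprime $\Longleftrightarrow$ $\theta I_n$ and $A^i$ left coprime $\Longleftrightarrow$ $\theta I_n$ and $A^i$ right coprime. By Lemma \ref{lem83333} applied to the inner matrix function $A^i$, ``$\theta$ and $m_{A^i}$ coprime'' is equivalent to ``$\theta$ and $\det A^i$ coprime''. Finally, since $A^e$ is outer, $\det A^e$ is an outer scalar function, hence $\det A=\det A^i\cdot\det A^e$ has the same inner part (up to a unimodular constant) as $\det A^i$; therefore ``$\theta$ and $\det A$ coprime'' $\Longleftrightarrow$ ``$\theta$ and $\det A^i$ coprime''. (Here one should note $\det A$ need not be in $H^\infty$, but it lies in some $H^p$ as a product of $H^2$ entries, so its inner-outer factorization still makes sense; only the inner part matters for coprimeness with $\theta$.) Chaining these equivalences gives (a) $\Leftrightarrow$ (b) $\Leftrightarrow$ (c).

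\textbf{Main obstacle.} The delicate point is the reduction from $A$ to its inner part $A^i$, and in particular the claim that $\det A$ and $\det A^i$ have the same inner part when $A\in H^2_{M_n}$ rather than $H^\infty_{M_n}$. One must check that $\det A^e$ is genuinely outer (equivalently has no inner divisor) for an outer $A^e\in H^2_s(\mathcal B(\mathbb C^n))$ — this follows because outerness of $A^e$ forces $A^e(z)$ to be invertible a.e.\ with $(A^e)^{-1}$ again of ``outer type'', so $\det A^e$ is a nonvanishing-in-the-appropriate-sense scalar function whose inner part is trivial; the cleanest argument uses $\mathrm{cl}\,A^e\mathcal P_{\mathbb C^n}=H^2_{\mathbb C^n}$ to rule out a common inner divisor of the rows. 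Everything else is a routine bookkeeping of inner divisors, and the substantive analytic content has already been isolated in Theorem \ref{lem5.2} and Lemmas \ref{lem83333}, \ref{fvgbhbnj}, and \ref{hnjmkngbfvdc}.
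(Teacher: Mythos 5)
Your overall architecture (reduce to the inner part $A^i$, apply Theorem \ref{lem5.2} together with Lemma \ref{lem83333}, and compare inner parts of determinants) is the same as the paper's, but two steps as you state them do not go through. First, you treat $A^i$ as a square inner matrix function and extract a scalar inner multiple from $(\det A^i)I_n=(\mathrm{adj}\,A^i)A^i$. This presupposes that $A^i$ is two-sided inner, i.e.\ that $\det A\not\equiv 0$; for a rank-deficient $A\in H^2_{M_n}$ the inner part is an $n\times p$ inner function with $p<n$, $\det A^i$ is not even defined, and Theorem \ref{lem5.2} (stated for $\mathcal B(E)$-valued functions with a scalar inner multiple) cannot be invoked. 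The paper deals with this explicitly: it shows that (b) or (c) forces $\det A\neq 0$ (citing \cite[Lemma 4.15]{CHL3}), so that $A(z)$ is invertible a.e.\ and $A^i$ is two-sided inner, and the degenerate case $\det A\equiv 0$ is thereby excluded; your write-up never addresses it.

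Second, your reduction of right coprimeness, ``$\theta I_n$ and $A$ right coprime $\Leftrightarrow$ $\theta I_n$ and $A^i$ right coprime, dually via the flip using $\widetilde A=\widetilde{A^e}\,\widetilde{A^i}$,'' is not justified by the argument you give: in $\widetilde A$ the outer factor sits to the \emph{left} of the inner factor, so ``the outer factor contributes no nontrivial inner divisor'' does not dualize — a common left inner divisor $\Theta$ of $\widetilde\theta I_n$ and $\widetilde{A^i}$ need not satisfy $\Theta^*\widetilde A=\Theta^*\widetilde{A^e}\widetilde{A^i}\in H^2_s$, and conversely. (The statement is true, but proving it directly needs extra work, e.g.\ a Smirnov-class argument, or essentially the corollary itself.) The paper sidesteps this entirely: once (a)$\Leftrightarrow$(b) is proved for every $A\in H^2_{M_n}$, it is applied to $\widetilde A$ and the identity $\det\widetilde A=\widetilde{\det A}$ yields (c)$\Leftrightarrow$(a). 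Finally, your ``main obstacle'' sketch that $\det A^e$ is outer (``rule out a common inner divisor of the rows'') is not a proof of that fact; what is needed is exactly the Helson--Lowdenslager Theorem, which the paper cites (cf.\ \cite[p.22]{Ni1}) to write $\det A=\det A^i\cdot\det A^e$ as the inner-outer factorization of $\det A$. With these three repairs your plan coincides with the paper's proof.
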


\begin{proof}
If $\Delta\in H^\infty_{M_n}$ is inner then by Theorem \ref{lem5.2}
and Lemma \ref{lem83333}, we have
\begin{equation}\label{814814}
\hbox{$\theta I_n$ and $\Delta$ are left coprime}\Longleftrightarrow
\hbox{$\theta$ and $\det \Delta$ are coprime.}
\end{equation}
We now write
$$
A=A^iA^e \quad(\hbox{inner-outer factorization}).
$$
Now we will show that if (b) or (c) holds, then $A^i$ is two-sided
inner: indeed if (b) or (c) holds, then by \cite[Lemma 4.15]{CHL3},
$\hbox{det}A \neq 0$, so that $A(z)$ is invertible, and hence
$A^i(z)$ is onto for almost all $z \in \mathbb T$. \ Thus $A^i$ is
two-sided inner. \ Then by the Helson-Lowdenslager Theorem (cf.
\cite[p.22]{Ni1}) we have that
$$
\hbox{det}A=\hbox{det}{A^i}\cdot \hbox{det}A^e
\quad(\hbox{inner-outer factorization})
$$
It thus follows from (\ref{814814}) that
$$
\begin{aligned}
\theta I_n \ \hbox{and} \ A \ \hbox{are left coprime}&
\Longleftrightarrow \theta I_n \ \hbox{and} \ A^i \ \hbox{are left
coprime}\\
& \Longleftrightarrow \theta  \ \hbox{and}  \
\hbox{det}A^i \ \hbox{are coprime}\\
&\Longleftrightarrow \theta  \ \hbox{and}  \
\hbox{det}A \ \hbox{are coprime}\\
\end{aligned}
$$
For right coprime-ness, we apply the above result and the fact that $\hbox{det}\widetilde{A}=\widetilde{\hbox{det}A}$.
\end{proof}

\vskip 1cm

%
%
%
%

\noindent {\bf \S\ 8.2. The model operator} \

\bigskip
\noindent
We recall that the model theorem (p. \pageref{MT}) states that if
$T\in\mathcal {B(H)}$
is a contraction such that $\lim_{n\to\infty} T^n x=0$ for each $x\in \mathcal H$
(i.e., $T\in C_{0\, \bigcdot}$), then there exists a unitary imbedding $V:\mathcal H\to H^2_E$
with $E:=\hbox{cl\,ran}(I-TT^*)$ such that
$V\mathcal H=\mathcal H(\Delta)$ for some inner function $\Delta$ with values in
$\mathcal B(E^\prime, E)$ and
\begin{equation}\label{MT2}
T=V^*\Bigl(S^*_E|_{\mathcal H(\Delta)}\Bigr)V.
\end{equation}
We may now ask what is a necessary and sufficient condition for
$\dim\,E^\prime<\infty$ in the Model Theorem. \ In this section, we
give a necessary condition for the finite-dimensionality of
$E^\prime$.

For an inner function $\Delta$ with values in $\mathcal B(E^\prime, E)$, define
\begin{equation} \label{CHL}
H_0:=\bigl\{f\in\mathcal H(\Delta): \ \lim_{n\to\infty} P_{\mathcal
H(\Delta)} S_E^n f=0\bigr\}.
\end{equation}
Then $H_0$ is a closed subspace of $\mathcal H(\Delta)$ and in this case,
write
$$
E_0(\Delta):=\mathcal H(\Delta)\ominus H_0.
$$
Then $E_0(\Delta)$ is an invariant subspace of $S_E^*$, so that
there exists an inner function $\Delta^s\in H^\infty(\mathcal B(E_1, E))$
such that
\begin{equation}\label{821}
E_0(\Delta)=\mathcal H(\Delta^s).
\end{equation}
We then have:

\medskip

\begin{lem}\label{lem821}
Let $\Delta$ be an inner function with values in $\mathcal
B(E^\prime, E)$. \ Then
$$
\Delta=\Delta^s\Delta_1
$$
for some two-sided inner function $\Delta_1$ with values in
$\mathcal B(E^\prime, E_1)$.
\end{lem}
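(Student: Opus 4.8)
The statement asserts that $\Delta^s$ is a left inner divisor of $\Delta$ with a two-sided inner complementary quotient. The natural route is to show $\Delta H^2_{E'}\subseteq \Delta^s H^2_{E_1}$ and then invoke the standard divisibility lemma (as used repeatedly in the excerpt, cf. the proofs citing \cite{FF}, \cite{Pe}): if $\Delta H^2_{E'}\subseteq \Delta^s H^2_{E_1}$ then $\Delta^s$ is a left inner divisor of $\Delta$, so we may write $\Delta=\Delta^s\Delta_1$ for an inner function $\Delta_1$ with values in $\mathcal B(E',E_1)$. The remaining work is to upgrade $\Delta_1$ to two-sided inner.

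First I would unwind the definitions. Since $E_0(\Delta)=\mathcal H(\Delta)\ominus H_0=\mathcal H(\Delta^s)$, the subspace $H_0$ equals $\mathcal H(\Delta)\cap \mathcal H(\Delta^s)^\perp$; equivalently, writing $H^2_E=\mathcal H(\Delta^s)\oplus \Delta^s H^2_{E_1}$, one has $H_0=\mathcal H(\Delta)\cap \Delta^s H^2_{E_1}$. The key observation is that $\Delta H^2_{E'}\subseteq H_0$: indeed for $f\in \Delta H^2_{E'}$ we have $f\perp \mathcal H(\Delta)$ and $S_E^n f\in \Delta H^2_{E'}$ for all $n$, so $P_{\mathcal H(\Delta)}S_E^n f=0$ trivially (here I must be careful: $H_0$ as defined in \eqref{CHL} is a subset of $\mathcal H(\Delta)$, so strictly $\Delta H^2_{E'}$ is not literally inside $H_0$; rather the point is that $\Delta H^2_{E'}$ and $H_0$ together should exhaust $\Delta^s H^2_{E_1}$). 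So the cleaner formulation I would pursue: show $\Delta^s H^2_{E_1}=H_0\oplus \Delta H^2_{E'}$, from which $\Delta H^2_{E'}\subseteq \Delta^s H^2_{E_1}$ is immediate. To get this decomposition, note $H^2_E=\mathcal H(\Delta)\oplus \Delta H^2_{E'}=\mathcal H(\Delta^s)\oplus\big(H_0\oplus\Delta H^2_{E'}\big)$ using $\mathcal H(\Delta)=\mathcal H(\Delta^s)\oplus H_0$; comparing with $H^2_E=\mathcal H(\Delta^s)\oplus \Delta^s H^2_{E_1}$ and using that the sums are orthogonal gives $\Delta^s H^2_{E_1}=H_0\oplus\Delta H^2_{E'}$. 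Hence $\Delta=\Delta^s\Delta_1$.

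Next I would show $\Delta_1$ is two-sided inner. From $\Delta^s H^2_{E_1}=H_0\oplus \Delta H^2_{E'}=H_0\oplus \Delta^s\Delta_1 H^2_{E'}$ and the fact that $\Delta^s$ is an isometry on $H^2_{E_1}$, we get $H^2_{E_1}=(\Delta^s)^*H_0 \oplus \Delta_1 H^2_{E'}$, i.e. $\mathcal H(\Delta_1)=(\Delta^s)^*H_0$. So it suffices to prove $H_0=\{0\}$, equivalently that $\Delta_1$ is a unitary constant... but that is false in general (the theorem only claims two-sided inner, not trivial). So instead I would argue directly that $S_E^*|_{\mathcal H(\Delta^s)}$ satisfies the strong-limit condition \eqref{stronglimit} by construction — $E_0(\Delta)$ is exactly the part of $\mathcal H(\Delta)$ on which the compression of $S_E$ tends strongly to zero — and combine this with the Model Theorem: $S_E^*|_{\mathcal H(\Delta^s)}$ is then a $C_{0\bigcdot}$ contraction whose model-space presentation via $\Delta^s$ forces, through \eqref{C00} and an analysis of when the compression of $S_E$ on $\mathcal H(\Delta^s)$ is actually $C_{00}$, that $\Delta^s$ absorbs precisely the "$C_{00}$-part" of $\Delta$. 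Concretely, I expect the cleanest argument is: the compression $P_{\mathcal H(\Delta)}S_E|_{\mathcal H(\Delta)}$ restricted to $\mathcal H(\Delta^s)$ is a $C_{00}$ operator, so by \eqref{C00} applied to that model, together with Lemma \ref{rem.sdcfcfc} applied to the factorization $\Delta=\Delta^s\Delta_1$ inside the larger space, $\Delta_1$ must be two-sided inner.

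**Main obstacle.** The delicate point is the last step: rigorously extracting that $\Delta_1$ is two-sided inner rather than merely inner. The relation $\mathcal H(\Delta_1)=(\Delta^s)^*H_0$ is correct but does not by itself give two-sidedness; one genuinely needs the dynamical characterization — that $H_0$ is the \emph{largest} subspace of $\mathcal H(\Delta)$ on which $P_{\mathcal H(\Delta)}S_E^n\to 0$ strongly, together with the dual condition that $S_E^{*n}\to 0$ strongly on all of $\mathcal H(\Delta)$ (which holds since $\mathcal H(\Delta)$ is a backward-shift model space) — to conclude that the compression of $S_E$ to $\mathcal H(\Delta^s)$ lies in $C_{00}$, and then to translate $C_{00}$-ness into two-sidedness of the relative inner factor $\Delta_1$ via \eqref{C00} and Lemma \ref{rem.sdcfcfc}. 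Handling the interplay of the two strong limits and making the "largest such subspace" claim precise is where the real content lies; the divisibility and orthogonal-decomposition bookkeeping is routine.
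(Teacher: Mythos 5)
Your first half is essentially the paper's first step: the orthogonal bookkeeping $H^2_E=\Delta H^2_{E^\prime}\oplus E_0(\Delta)\oplus H_0$ gives $\Delta H^2_{E^\prime}\subseteq H^2_E\ominus E_0(\Delta)=\Delta^s H^2_{E_1}$, hence $\Delta=\Delta^s\Delta_1$ with $\Delta_1$ inner. The genuine gap is exactly the step you flag: two-sidedness of $\Delta_1$, and the route you sketch would not close it. First, the dynamical claim is backwards. By definition $H_0$ is the set of $f\in\mathcal H(\Delta)$ whose forward orbit under the compression $T=P_{\mathcal H(\Delta)}S_E\vert_{\mathcal H(\Delta)}$ tends to zero (for a co-invariant subspace, $P_{\mathcal H(\Delta)}S_E^nf=T^nf$), so once you split $H_0$ off, the compression of $S_E$ to $E_0(\Delta)=\mathcal H(\Delta^s)$ has \emph{no} nonzero vector whose orbit tends to zero (the standard $C_{0\cdot}$/$C_{1\cdot}$ triangulation); it is not $C_{00}$. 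For instance, if $H_0=\{0\}$ (e.g.\ $\Delta$ with $\mathcal H(\Delta)=H^2_E$), that compression is the shift itself. The fact that $S_E^{*n}\to0$ on $\mathcal H(\Delta)$ only controls the adjoint direction and does not rescue the claim. Second, even granting some $C_{00}$ statement, the tools you invoke point at the wrong function: (\ref{C00}) would concern two-sidedness of $\Delta^s$ (the characteristic function of that compression), which is false in general and not what is wanted, while Lemma \ref{rem.sdcfcfc} applies to \emph{left} inner divisors of a \emph{two-sided} inner function, whereas $\Delta_1$ is a right factor of $\Delta$ and $\Delta$ itself need not be two-sided. So nothing in the sketch actually reaches $\Delta_1$.

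What does work — and is the paper's argument — is a norm identity in place of any classification of the compression: for every $f\in H^2_E$ one has $\lim_n\|P_{\mathcal H(\Delta)}S_E^nf\|^2=\|f\|^2-\|\Delta^*f\|^2$ (write $\|P_{\mathcal H(\Delta)}S_E^nf\|^2=\|f\|^2-\|P_+(z^n\Delta^*f)\|^2$ and note the negative Fourier part of $z^n\Delta^*f$ vanishes in norm as $n\to\infty$), whence $f\in\Delta^sH^2_{E_1}=H_0\oplus\Delta H^2_{E^\prime}$ if and only if $\|f\|_{L^2_E}=\|\Delta^*f\|_{L^2_{E^\prime}}$; this is (\ref{822}). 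Applying it to $f=\Delta^sx$ with $x\in E_1$ a unit vector and using $(\Delta^s)^*\Delta^s=I_{E_1}$ a.e.\ gives $\|\Delta_1^*x\|_{L^2_{E^\prime}}=\|\Delta^*\Delta^sx\|_{L^2_{E^\prime}}=\|\Delta^sx\|_{L^2_E}=1$, and since $\|\Delta_1^*(z)x\|\le1$ pointwise a.e., this forces $\|\Delta_1^*(z)x\|=1$ a.e.; thus $\Delta_1^*(z)$ is an isometry a.e., which together with $\Delta_1^*\Delta_1=I_{E^\prime}$ a.e.\ is precisely two-sidedness. Your observation $\mathcal H(\Delta_1)=(\Delta^s)^*H_0$ is consistent with this but, as you yourself noted, is not sufficient on its own.
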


\begin{proof}
Observe that $H^2_E=\Delta H^2_{E^\prime}\oplus E_0(\Delta)\oplus
H_0$. \ Thus,
$$
\Delta H^2_{E^\prime} \subseteq H^2_E\ominus E_0(\Delta)=\Delta^s H^2_{E_1},
$$
which implies that $\Delta=\Delta^s \Delta_1$ for some inner
function $\Delta_1$ with values in $\mathcal B(E^\prime, E_1)$. \ We
must show that $\Delta_1$ is two-sided. \ We first claim that
\begin{equation}\label{822}
f\in \Delta^s H^2_{E_1} \Longleftrightarrow
||f||_{L^2_E}=||\Delta^*f||_{L^2_{E^\prime}}:
\end{equation}
indeed, since $\lim_{n\to\infty}||(I_E-P_+) \Delta^* S_E^n f||_{L^2_{E^{\prime}}}=0$
for each $f\in H^2_E$,
a straightforward calculation shows that
$$
\lim_{n\to\infty} ||P_{\mathcal H(\Delta)} S_E^n f||^2_{L^2_E}=||f||^2_{L^2_E}
-||\Delta^* f||^2_{L^2_{E^{\prime}}},
$$
giving (\ref{822}). \ Thus for all $x \in E_1$ with $||x||=1$,
$$
1=||\Delta^s x||_{L^2_E}=||\Delta^*\Delta^s x||_{L^2_{E^\prime}}
=||\Delta_1^* x||_{L^2_{E^\prime}},
$$
which says that
$$
\int_{\mathbb T}||\Delta_1^*(z) x||^2dm(z)=1.
$$
But since $||\Delta_1^*(z)x||\le 1$, it follows that
$||\Delta_1^*(z)x||=1$ a.e. on $\mathbb T$, so that $\Delta_1^*(z)$
is isometry for almost all $z \in \mathbb T$ and therefore
$\Delta_1$ is two-sided inner. \  This completes the proof.
\end{proof}

\bigskip

We then have:

\medskip

\begin{thm}\label{thm822}
Let $T\in\mathcal B(H)$ be a contraction such that
$\lim_{n\to\infty} T^n x=0$ for each $x\in  H$ and have a
characteristic function $\Delta$ with values in $\mathcal
B(E^\prime, E)$. \ Then,
$$
\hbox{sup}_{\zeta \in \mathbb D}\dim\bigl\{f(\zeta) : f \in
H_0\bigr\}\leq \dim E^{\prime},
$$
where $H_0$ is defined by (\ref{CHL}). \ In particular, if $\dim
E^{\prime}<\infty$, then $ \hbox{max}_{\zeta \in \mathbb D}\dim
\bigl\{f(\zeta) : f \in H_0\bigr\}$ is finite.
\end{thm}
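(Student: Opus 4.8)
The plan is to use the factorization $\Delta=\Delta^s\Delta_1$ from Lemma \ref{lem821}, where $\Delta_1$ is a two-sided inner function with values in $\mathcal B(E^\prime, E_1)$. The key structural observation is that $\mathcal H(\Delta)=\mathcal H(\Delta^s)\oplus \Delta^s\mathcal H(\Delta_1)$, which follows from Lemma \ref{lemhbcsfvhs} applied with $\Delta_1$ (there) $=\Delta^s$ and $\Delta_2$ (there) $=\Delta_1$. Since $E_0(\Delta)=\mathcal H(\Delta^s)$ by (\ref{821}), this says precisely that $H_0=\mathcal H(\Delta)\ominus E_0(\Delta)=\Delta^s\mathcal H(\Delta_1)$. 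Thus every $f\in H_0$ is of the form $f=\Delta^s g$ for a unique $g\in\mathcal H(\Delta_1)\subseteq H^2_{E_1}$.

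Next I would estimate the pointwise dimension. For $\zeta\in\mathbb D$ and $f=\Delta^s g\in H_0$ we have $f(\zeta)=\Delta^s(\zeta)g(\zeta)$, so
$$
\bigl\{f(\zeta): f\in H_0\bigr\}=\Delta^s(\zeta)\bigl\{g(\zeta): g\in\mathcal H(\Delta_1)\bigr\}\subseteq \Delta^s(\zeta)\bigl\{g(\zeta): g\in H^2_{E_1}\bigr\},
$$
whence $\dim\bigl\{f(\zeta): f\in H_0\bigr\}\le \dim\bigl\{g(\zeta): g\in H^2_{E_1}\bigr\}=\dim E_1$. It therefore suffices to show $\dim E_1\le\dim E^\prime$. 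But $\Delta_1$ is an inner function with values in $\mathcal B(E^\prime, E_1)$, so $\Delta_1(z)$ is an isometry from $E^\prime$ into $E_1$ for almost all $z\in\mathbb T$; being two-sided inner, $\Delta_1(z)$ is in fact unitary, so $\dim E^\prime=\dim E_1$. Taking the supremum over $\zeta\in\mathbb D$ gives $\sup_{\zeta\in\mathbb D}\dim\{f(\zeta): f\in H_0\}\le\dim E^\prime$, as claimed. If $\dim E^\prime<\infty$, then $\dim\{f(\zeta): f\in H_0\}$ is a bounded nonnegative integer-valued function of $\zeta$ and hence attains its maximum, so $\max_{\zeta\in\mathbb D}\dim\{f(\zeta): f\in H_0\}$ is finite.

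The main obstacle I anticipate is justifying the identification $H_0=\Delta^s\mathcal H(\Delta_1)$ cleanly — one must be careful that $H_0$ as defined in (\ref{CHL}) (via $\lim_n P_{\mathcal H(\Delta)}S_E^n f=0$) is exactly the orthogonal complement of $E_0(\Delta)$ inside $\mathcal H(\Delta)$, which is immediate from the definition $E_0(\Delta)=\mathcal H(\Delta)\ominus H_0$, combined with the orthogonal decomposition $\mathcal H(\Delta)=\mathcal H(\Delta^s)\oplus\Delta^s\mathcal H(\Delta_1)$ coming from Lemma \ref{lemhbcsfvhs}. The remaining steps — the pointwise dimension bound and the finiteness conclusion — are routine once this structural fact and the two-sided innerness of $\Delta_1$ (Lemma \ref{lem821}) are in hand.
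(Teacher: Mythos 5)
Your proof is correct and takes essentially the same route as the paper: factor $\Delta=\Delta^s\Delta_1$ via Lemma \ref{lem821}, bound the pointwise dimension of elements of $H_0\subseteq \Delta^s H^2_{E_1}$, and use that $\Delta_1$ being two-sided inner gives $\dim E_1=\dim E^\prime$. The only (harmless) difference is that the paper does not invoke Lemma \ref{lemhbcsfvhs}: the inclusion $H_0=\mathcal H(\Delta)\ominus\mathcal H(\Delta^s)\subseteq H^2_E\ominus\mathcal H(\Delta^s)=\Delta^s H^2_{E_1}$ already suffices, so your exact identification $H_0=\Delta^s\mathcal H(\Delta_1)$ is more than is needed.
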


\begin{proof} It follows from (\ref{821}) and  Lemma \ref{lem821} that
$$
H_0=\mathcal H(\Delta)\ominus E_0(\Delta)=\mathcal H(\Delta)\ominus
\mathcal H(\Delta^s) \subseteq \Delta^s H^2_{E^{\prime}}.
$$
We thus have
$$
\hbox{sup}_{\zeta \in \mathbb D}\dim\bigl\{f(\zeta) : f \in
H_0\bigr\}\leq \hbox{sup}_{\zeta \in \mathbb
D}\dim\bigl\{\Delta^s(\zeta)g(\zeta) : g \in
H^2_{E^{\prime}}\bigr\}=\dim E^{\prime}.
$$
\end{proof}

\vskip 1cm

%
%
%
%

\noindent
{\bf \S\ 8.3. An interpolation problem} \

\bigskip
\noindent In the literature, many authors have considered the
special cases of the following (scalar-valued or operator-valued)
interpolation problem (cf. \cite{Co1}, \cite{CHL2}, \cite{CHL3},
\cite{FF}, \cite{Ga}, \cite{Gu}, \cite{GHR}, \cite{HKL}, \cite{HL1},
\cite{HL2}, \cite{NT}, \cite{Zh}).

\begin{prob}
For $\Phi\in
L^\infty(\mathcal B(E))$, when does there exist a function $K\in
H^\infty(\mathcal B(E))$ with $||K||_{\infty}\leq 1$ satisfying
\begin{equation}\label{interpolation}
\Phi-K\Phi^*\in H^\infty (\mathcal B(E)) \,?
\end{equation}
\end{prob}

If $\Phi$ is a matrix-valued rational function, this question
reduces to the classical Hermite-Fej\' er interpolation problem.

For notational convenience, we write, for $\Phi\in
L^\infty(\mathcal B(E))$,
$$
\mathcal C(\Phi)\label{cphi}:=\Bigl\{ K\in H^\infty(\mathcal B(E)):
\ \Phi-K\Phi^*\in H^\infty(\mathcal B(E)) \Bigr\}.
$$
We then have:

\medskip

\begin{thm}\label{thm41}
Let $\Phi\equiv \breve{\Phi}_-+\Phi_+\in L^\infty(\mathcal B(E))$. \
If $\mathcal C(\Phi)$ is nonempty then
$$
\ker H_{\breve{\Phi}_+}^*\subseteq \ker H_{\Phi_-^*}^*.
$$
In particular,
$$
\hbox{nc}\{\Phi_+\}\le \hbox{nc}\{\widetilde{\Phi_-}\}.
$$
\end{thm}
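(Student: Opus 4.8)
The plan is to recast both kernels in the statement in terms of the two Hankel operators $H_{\Phi^{*}}$ and $H_{\Phi}$ of the $L^{\infty}$-symbol $\Phi$ itself, and then to read off the inclusion from the Douglas--Shapiro--Shields-type relation imposed by $\mathcal C(\Phi)\neq\emptyset$. First I would record two kernel identifications. On one side: $\Phi-\Phi_{+}=P_{-}\Phi$ takes values in $L^{2}_{E}\ominus H^{2}_{E}$, so it is annihilated upon pairing against $z^{n}f$ with $n\ge 1$; by Lemma \ref{thm4.2} this gives $\ker H_{\breve{\Phi_{+}}}^{*}=\ker H_{\breve{\Phi}}^{*}$, and since $\Phi\in L^{\infty}(\mathcal B(E))$ forces $\breve{\Phi}\in L^{\infty}(\mathcal B(E))$, the identity (\ref{form_1}) yields $H_{\breve{\Phi}}^{*}=H_{\widetilde{\breve{\Phi}}}=H_{\Phi^{*}}$; hence $\ker H_{\breve{\Phi_{+}}}^{*}=\ker H_{\Phi^{*}}$. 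On the other side: writing $\widetilde{\Phi}=\Phi_{-}^{*}+\widetilde{\Phi_{+}}$, where $\widetilde{\Phi_{+}}=\breve{\Phi_{+}}^{\,*}$ is analytic (so $H_{\widetilde{\Phi_{+}}}=0$), and using $\Phi_{-}^{*}=\breve{\widetilde{\Phi_{-}}}$, one gets $H_{\Phi_{-}^{*}}=H_{\widetilde{\Phi}}=H_{\Phi}^{*}$ on $\mathcal P_{E}$; since $H_{\Phi}$ is bounded (Lemma \ref{boundedhankel}), $H_{\Phi_{-}^{*}}$ closes up to the bounded operator $H_{\Phi}^{*}$, so $H_{\Phi_{-}^{*}}^{*}=H_{\Phi}$ and $\ker H_{\Phi_{-}^{*}}^{*}=\ker H_{\Phi}$. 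Thus the first assertion is equivalent to $\ker H_{\Phi^{*}}\subseteq\ker H_{\Phi}$.

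To establish this inclusion I would pick $K\in\mathcal C(\Phi)$, so $\Phi=K\Phi^{*}+G$ with $G\in H^{\infty}(\mathcal B(E))$. The analytic summand $G$ contributes nothing to the Hankel operator, so $H_{\Phi}=H_{K\Phi^{*}}$, and the composition rule (\ref{formula22}), applied with $\Psi=K\in H^{\infty}(\mathcal B(E))$ and symbol $\Phi^{*}\in L^{\infty}(\mathcal B(E))$, gives
$$
H_{\Phi}=T_{\widetilde{K}}^{\,*}\,H_{\Phi^{*}}.
$$
Hence any $f$ with $H_{\Phi^{*}}f=0$ satisfies $H_{\Phi}f=T_{\widetilde{K}}^{\,*}H_{\Phi^{*}}f=0$, i.e. $\ker H_{\Phi^{*}}\subseteq\ker H_{\Phi}$, which by the previous paragraph is exactly $\ker H_{\breve{\Phi_{+}}}^{*}\subseteq\ker H_{\Phi_{-}^{*}}^{*}$.

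For the ``in particular'' statement I would push this inclusion through the Beurling--Lax--Halmos description. Note that $\widetilde{\Phi_{-}}$ is the strictly analytic part of $\Phi^{*}$, hence lies in $H^{2}_{s}(\mathcal B(E))$ and satisfies $(\widetilde{\Phi_{-}})_{+}=\widetilde{\Phi_{-}}$; also $\ker H_{\Phi_{-}^{*}}^{*}=\ker H_{\breve{\widetilde{\Phi_{-}}}}^{*}$ since $\Phi_{-}^{*}=\breve{\widetilde{\Phi_{-}}}$. Applying Theorem \ref{thm7566} to $\Phi$ and to $\widetilde{\Phi_{-}}$, we may write $\ker H_{\breve{\Phi_{+}}}^{*}=\ker H_{\breve{\Phi}}^{*}=\Theta H^{2}_{E'}$ and $\ker H_{\Phi_{-}^{*}}^{*}=\Theta'H^{2}_{D'}$, and by the equivalence (\ref{sssdv}) we have $\hbox{nc}\{\Phi_{+}\}=\dim E'$ and $\hbox{nc}\{\widetilde{\Phi_{-}}\}=\dim D'$. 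The inclusion $\Theta H^{2}_{E'}\subseteq\Theta'H^{2}_{D'}$ exhibits $\Theta'$ as a left inner divisor of $\Theta$, say $\Theta=\Theta'\Theta''$ with $\Theta''$ an inner function with values in $\mathcal B(E',D')$; then $\Theta''(z)$ is isometric a.e., so $\dim E'\le\dim D'$, that is, $\hbox{nc}\{\Phi_{+}\}\le\hbox{nc}\{\widetilde{\Phi_{-}}\}$.

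The only genuinely substantive point is the one-line identity $H_{\Phi}=T_{\widetilde{K}}^{\,*}H_{\Phi^{*}}$; I expect the main obstacle to be the surrounding bookkeeping — correctly tracking the flip and tilde operations and the analytic/co-analytic splittings in the two kernel identifications, and justifying the passage between the densely defined Hankel operators with strong $L^{2}$-symbols ($H_{\breve{\Phi_{+}}}$, $H_{\Phi_{-}^{*}}$) and their bounded $L^{\infty}$-avatars, so that the adjoint manipulations are legitimate.
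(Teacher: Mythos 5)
Your proposal is correct and follows essentially the same route as the paper: the identity $H_{\Phi}=T_{\widetilde K}^{*}H_{\Phi^{*}}$ from $K\in\mathcal C(\Phi)$ gives $\ker H_{\Phi^{*}}\subseteq\ker H_{\Phi}$, these kernels are identified with $\ker H_{\breve{\Phi}_+}^{*}$ and $\ker H_{\Phi_-^{*}}^{*}$, and the nc-inequality follows from Lemma \ref{kerhadjoint}, the left-inner-divisor comparison, and Theorem \ref{thm7566}. The only (harmless) difference is that you obtain $\ker H_{\breve{\Phi}_+}^{*}=\ker H_{\Phi^{*}}$ via the pairing criterion of Lemma \ref{thm4.2} rather than the paper's direct symbol identities $H_{\Phi^{*}}=H_{\Phi_+^{*}}=H_{\breve{\Phi}_+}^{*}$, which if anything handles the possible unboundedness of $\Phi_+$ more carefully.
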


\begin{proof}
Suppose $\mathcal C(\Phi)\ne \emptyset$. \ Then there exists a
function  $K\in H^\infty (\mathcal B(E))$ such that $\Phi-K\Phi^*\in
H^\infty(\mathcal B(E))$, then
 $H_{\Phi}=T_{\widetilde K}^* H_{\Phi^*}$, which
implies that $\ker H_{\Phi^*}\subseteq \ker H_{\Phi}$. \ But since
$\Phi\equiv \breve{\Phi}_-+\Phi_+\in L^\infty(\mathcal B(E))$, it
follows that
$$
H_{\Phi^*}=H_{\Phi_+^*}
=H_{\breve{\Phi}_+}^*
\quad \hbox{and} \quad
H_{\Phi}=H_{\breve{\Phi}_-}=H_{\Phi_-^*}^*.
$$
We thus have 
$$
\ker H_{\breve{\Phi}_+}^*\subseteq \ker H_{\Phi_-^*}^*.
$$
On the other hand, it follows from Lemma \ref{kerhadjoint} that
\begin{equation}\label{llllxmf}
\Omega H^2_{E^\prime}=\ker H_{\breve{\Phi}_+}^*\subseteq \ker
H_{\Phi_-^*}^*=\ker H_{\breve{\widetilde{\Phi}}_-}^* =\Delta
H^2_{E^{\prime\prime}}
\end{equation}
for some inner functions $\Omega$ and $\Delta$ with values in
$\mathcal B(E^\prime, E)$ and $\mathcal B(E^{\prime\prime},E)$,
respectively. \ Thus $\Delta$ is a left inner divisor of $\Omega$,
so that we have $\dim E^\prime\le \dim E^{\prime\prime}$, which
implies, by Theorem \ref{thm7566}, that $\hbox{nc}\{\Phi_+\}\le
\hbox{nc}\{\widetilde{\Phi_-}\}$.
\end{proof}

\bigskip

\begin{cor}
Let $\Phi\equiv \breve{\Phi}_-+\Phi_+\in L^\infty(\mathcal B(E))$
and $\mathcal C(\Phi)\ne \emptyset$. \ If $\breve\Phi_+$ is of
bounded type, then $\Phi_-^*$ is of bounded type.
\end{cor}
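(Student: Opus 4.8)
The plan is to combine Definition \ref{dfbundd} with Theorem \ref{thm41} and then sharpen the inclusion it produces by invoking Lemma \ref{rem.sdcfcfc}. Since $\Phi\in L^\infty(\mathcal B(E))$, both $\breve\Phi_+$ and $\Phi_-^*$ are strong $L^2$-functions with values in $\mathcal B(E)$, so the notion ``of bounded type'' in the sense of Definition \ref{dfbundd} applies to each; moreover, exactly as in the proof of Theorem \ref{thm41} (via Lemma \ref{kerhadjoint}), the subspaces $\ker H_{\breve\Phi_+}^*$ and $\ker H_{\Phi_-^*}^*$ are shift-invariant in $H^2_E$, hence of the form $\Theta H^2_E$ and $\Delta H^2_{E^\prime}$ for inner functions $\Theta$ (with values in $\mathcal B(E)$) and $\Delta$ (with values in $\mathcal B(E^\prime,E)$).

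First I would rephrase the hypothesis: by Definition \ref{dfbundd}, ``$\breve\Phi_+$ is of bounded type'' says precisely that the inner function $\Theta$ above may be taken \emph{two-sided} inner, i.e.\ $\ker H_{\breve\Phi_+}^*=\Theta H^2_E$ with $\Theta$ two-sided inner. Next, since $\mathcal C(\Phi)\neq\emptyset$, Theorem \ref{thm41} yields
$$
\Theta H^2_E=\ker H_{\breve\Phi_+}^*\subseteq \ker H_{\Phi_-^*}^*=\Delta H^2_{E^\prime}.
$$
The inclusion $\Theta H^2_E\subseteq \Delta H^2_{E^\prime}$ forces $\Delta$ to be a left inner divisor of $\Theta$ (cf.\ \cite{FF}, \cite{Pe}).

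The substantive step is then the upgrade from this inclusion to the desired two-sided structure: because $\Theta$ is two-sided inner, Lemma \ref{rem.sdcfcfc} guarantees that every left inner divisor of $\Theta$ is two-sided inner, so $\Delta$ is two-sided inner, and therefore $E^\prime$ may be identified with $E$. Consequently $\ker H_{\Phi_-^*}^*=\Delta H^2_E$ with $\Delta$ two-sided inner, which by Definition \ref{dfbundd} is exactly the assertion that $\Phi_-^*$ is of bounded type. I do not anticipate a genuine obstacle here; the only point deserving a line of care is the identification $H_{\Phi_-^*}^*=H_{\breve{(\widetilde{\Phi_-})}}^*$ that lets Lemma \ref{kerhadjoint} place $\ker H_{\Phi_-^*}^*$ in Beurling--Lax--Halmos form, and this computation is already contained in the proof of Theorem \ref{thm41}.
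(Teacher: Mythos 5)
Your proposal is correct and follows essentially the same route as the paper: Theorem \ref{thm41} gives $\ker H_{\breve\Phi_+}^*\subseteq \ker H_{\Phi_-^*}^*$, Lemma \ref{kerhadjoint} puts both kernels in Beurling--Lax--Halmos form with the left one generated by a two-sided inner function (by Definition \ref{dfbundd}), and Lemma \ref{rem.sdcfcfc} upgrades the left inner divisor $\Delta$ to a two-sided inner function. The only detail the paper spells out more explicitly is that $\Phi_-^*$ is indeed a strong $L^2$-function (via Lemma \ref{corfgghh2.9}), which you assert but do not justify; this is a one-line point, not a gap.
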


\begin{proof} Suppose that $\Phi\equiv \breve{\Phi}_-+\Phi_+\in L^\infty(\mathcal
B(E))$. \ Then by Lemma \ref{corfgghh2.9}, $\Phi^*=
(\breve{\Phi}_-)^*+(\Phi_+)^*\in L^\infty(\mathcal B(E))$. \ Thus
$(\breve{\Phi}_-)^*$  is a strong $L^2$-function and so is
$\Phi_-^*$. \ Assume that $\mathcal C(\Phi)\ne \emptyset$ and
$\breve\Phi_+$ is of bounded type. \ Then it follows from Theorem
\ref{thm41} and Lemma \ref{kerhadjoint} that
\begin{equation}\label{llllxmf2}
\Omega H^2_{E}=\ker H_{\breve{\Phi}_+}^*\subseteq \ker
H_{\Phi_-^*}^* =\Delta H^2_{E^{\prime\prime}}
\end{equation}
for some two-sided inner function $\Omega$ with values in $\mathcal
B(E)$ and an inner function $\Delta$ with values in $\mathcal
B(E^{\prime\prime},E)$. \ Thus, $\Delta$ is a left inner divisor of
$\Omega$ and hence, by Lemma \ref{rem.sdcfcfc}, $\Delta$ is
two-sided inner, so that $\Phi_-^*$ is of bounded type.
\end{proof}

\newpage

%
%
%
%
%

\chapter{Some unsolved problems} \

In this paper we have explored the Beurling-Lax-Halmos Theorem and
have tried to answer several outstanding questions. \ In this process, we have gotten
interesting results on a canonical decomposition of strong
$L^2$-functions, a connection between the Beurling degree and the
spectral multiplicity, and the multiplicity-free model operators. \
However there are still open questions in which we are interested. \
In this chapter, we pose several unsolved problems.

\bigskip

\noindent {\bf \S\ 9.1. The Beurling degree of an inner matrix
functions} \

\bigskip
\noindent The theory of spectral multiplicity for operators of the
class $C_0$ has been well developed (see \cite[Appendix 1]{Ni1},
\cite{SFBK}). \ For an inner matrix function $\Delta \in
H^\infty_{M_N}$ and $k=0,1,\cdots, N$, let
\begin{equation}\label{jcencisfirs}
\delta_k:=\hbox{g.c.d.}\,\{\hbox{all inner parts of the minors of
order $N-k$ of $\Delta$}\}.
\end{equation}
Then it is well-known that if $T\in C_0$ with characteristic
function $\Delta \in  H^\infty_{M_N}$, then
\begin{equation}\label{computeBeurling}
\mu_T=\min\, \bigl\{k:\ \delta_k =\delta_{k+1}\bigr\}.
\end{equation}
In fact, the proof for ``$\ge$" in (\ref{computeBeurling}) is not
difficult. \  But the proof for ``$\le$" is so complicated. \
However, Theorem \ref{maintheorem_sm} gives a simple proof for
``$\le$" in (\ref{computeBeurling}) with the aid of the
Moore-Nordgren Theorem. \ To see this, we recall that  for an inner
function $\Delta_k$ ($k=1,2$) with values in $M_N$, $\Delta_1$ and
$\Delta_2$ are called quasi-equivalent if there exist functions
$X,Y\in H^\infty_{M_N}$ such that $X\Delta_1=\Delta_2Y$ and such
that the inner parts $(\hbox{det}\, X)^i$ and $(\hbox{det}\,Y)^i$ of
the corresponding determinants are coprime to
$(\hbox{det}\,\Delta_k)^i$ ($k=1,2$).

The following theorem shows that the spectral multiplicity of
$C_0$-operators with square-inner characteristic functions can be
computed by studying diagonal characteristic functions (cf.
\cite{No}, \cite{MN}, \cite{Ni1}):

\medskip

\noindent {\bf Nordgren-Moore Theorem}. \
\begin{itemize}
\item[(a)]
Let $\Delta_k$ ($k=1,2$) be an inner function with values in $M_N$
and let $T_k:=P_{\mathcal H(\Delta)}S_{\mathbb C^N}\vert_{\mathcal
H(\Delta_k)}$ ($k=1,2$). \ If $\Delta_1$ and $\Delta_2$ are
quasi-equivalent then $\mu_{T_1}=\mu_{T_2}$.
\item[(b)]
Let $\Delta$ be an inner function with values in $M_N$. \ Then
$\Delta$ is quasi-equivalent to a unique diagonal inner function
$$
\hbox{diag}\,(\delta_0/\delta_1, \delta_1/\delta_2,
\cdots, \delta_{N-1}/\delta_N).
$$
\end{itemize}

\bigskip

By the Nordgren-Moore Theorem (a) and Theorem  \ref{maintheorem_sm},
we can see that
if $\Delta_1$ and $\Delta_2$ are quasi-equivalent square inner matrix functions
then
\begin{equation}\label{quasidegree}
\hbox{deg}_B(\widetilde{\Delta}_1)=\hbox{deg}_B(\widetilde{\Delta}_2).
\end{equation}

\medskip

We now have:

\begin{prop}\label{prop6.1}
If $\Delta$ is an $N\times N$ square-inner matrix function then
\begin{equation}\label{concludein}
\deg_B(\Delta) \le \min\, \bigl\{k:\ \delta_k =\delta_{k+1}\bigr\}.
\end{equation}
\end{prop}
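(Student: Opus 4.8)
The plan is to reduce the statement, by means of quasi-equivalence and the Nordgren--Moore diagonalization, to a direct count of cyclic vectors for a diagonal inner matrix function.

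First I would diagonalize. Since $\Delta$ is a square inner matrix function, so is $\widetilde{\Delta}$, and by the Nordgren--Moore Theorem (b) the function $\widetilde{\Delta}$ is quasi-equivalent to the diagonal inner matrix function $D_0:=\hbox{diag}\,(\tilde\delta_0/\tilde\delta_1,\dots,\tilde\delta_{N-1}/\tilde\delta_N)$, where $\tilde\delta_k$ denotes the quantity (\ref{jcencisfirs}) formed from the order-$(N-k)$ minors of $\widetilde{\Delta}$. Applying (\ref{quasidegree}) to the quasi-equivalent pair $(\widetilde{\Delta},D_0)$ gives $\deg_B(\widetilde{\widetilde{\Delta}})=\deg_B(\widetilde{D_0})$, i.e. $\deg_B(\Delta)=\deg_B(\widetilde{D_0})$. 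Next I would check that $\tilde\delta_k=\widetilde{\delta_k}$: the order-$(N-k)$ minors of $\widetilde{\Delta}$ are exactly the flips of the order-$(N-k)$ minors of $\Delta$ (the map $\Phi\mapsto\widetilde{\Phi}$ is a conjugation that acts on scalar entries as the ring automorphism $\phi\mapsto\widetilde{\phi}$ and interchanges rows with columns, and determinants are polynomial in the entries), while $\phi\mapsto\widetilde{\phi}$ preserves inner--outer factorizations and commutes with taking g.c.d.'s of inner functions. Hence $\widetilde{D_0}=\hbox{diag}\,(\delta_0/\delta_1,\dots,\delta_{N-1}/\delta_N)=:D$, and the proposition is reduced to $\deg_B(D)\le k_0$, where $k_0:=\min\{k:\delta_k=\delta_{k+1}\}$.

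Second I would estimate $\deg_B(D)$ directly. By Theorem \ref{maintheorem_sm}, $\deg_B(D)=\mu_T$ with $T:=S_{\mathbb C^N}^*|_{\mathcal H(D)}$. Because $D$ is diagonal, $D\,H^2_{\mathbb C^N}=\bigoplus_{j=1}^N(\delta_{j-1}/\delta_j)H^2$, so $\mathcal H(D)=\bigoplus_{j=1}^N\mathcal H(\delta_{j-1}/\delta_j)$ and $T=\bigoplus_{j=1}^N S^*|_{\mathcal H(\delta_{j-1}/\delta_j)}$. The $j$-th summand is the zero space exactly when $\delta_{j-1}=\delta_j$, and when it is nonzero it is a backward-shift--invariant subspace of the scalar space $H^2$, hence admits a cyclic vector (the case $\dim E=1$: every backward-shift--invariant subspace of $H^2$ has a cyclic vector), so the corresponding operator is multiplicity-free. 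Since the inner functions $\delta_{j-1}/\delta_j$ are the invariant factors of $\Delta$ and therefore satisfy the divisibility chain $\delta_{N-1}/\delta_N\mid\cdots\mid\delta_0/\delta_1$ (a structural feature of the Smith/Jordan normal form for inner matrix functions; see \cite{Ni1}, \cite{SFBK}, independent of any multiplicity count), once $\delta_k=\delta_{k+1}$ every later quotient is a unimodular constant; thus the nonzero summands occur exactly for $j=1,\dots,k_0$. Choosing one cyclic vector in each of these $k_0$ summands produces a generating subset of $\mathcal H(D)$ of cardinality $k_0$, whence $\mu_T\le k_0$. Combining with the first step, $\deg_B(\Delta)=\deg_B(D)=\mu_T\le k_0$, which is the assertion; together with the easy inequality ``$\ge$'' in (\ref{computeBeurling}) and Theorem \ref{maintheorem_sm}, it then also delivers the hard inequality ``$\le$'' in (\ref{computeBeurling}) for an arbitrary square inner $\Delta$.

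The arguments are routine once these ingredients are assembled; the step demanding the most care is the flip bookkeeping --- verifying that quasi-equivalence, the Beurling degree $\deg_B$ (through (\ref{quasidegree})), and the invariant-factor data $\{\delta_k\}$ all transform consistently under $\Delta\mapsto\widetilde{\Delta}$, so that the Nordgren--Moore diagonalization of $\widetilde{\Delta}$ can be transported back to a statement about $\Delta$. The other point to watch is that the divisibility chain of the quotients $\delta_{j-1}/\delta_j$ be used strictly as a structural property of inner matrix functions, not as a byproduct of (\ref{computeBeurling}) itself, since reproving the hard half of (\ref{computeBeurling}) is precisely the point.
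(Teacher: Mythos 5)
Your argument is correct, and it follows the paper's reduction up to the last step: like the paper, you pass to a diagonal representative via the Nordgren--Moore Theorem and transport the Beurling degree across quasi-equivalence using (\ref{quasidegree}) (both arguments also rest, explicitly in your case and implicitly in the paper's choice of $\Theta$ with trailing $1$'s, on the divisibility chain of the invariant factors $\delta_{j-1}/\delta_j$). Where you diverge is in how the bound $\deg_B(\hbox{diagonal})\le k_0$ is obtained. The paper does not go through cyclic vectors at all: it writes down an explicit $N\times m$ symbol $\Phi\in H^\infty_{M_{N\times m}}$ (the truncated diagonal with a column of $1$'s filling the last $N-m$ rows), verifies $\ker H_{\Phi^*}=\Theta H^2_{\mathbb C^N}$, and invokes only the characterization (\ref{main2k}) of $\deg_B$ as the infimal $\dim D$ for which the Hankel-kernel equation is solvable. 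You instead invoke the full equality $\mu_T=\deg_B(\Delta)$ of Theorem \ref{maintheorem_sm} and bound $\mu_T$ by choosing one cyclic vector in each nontrivial scalar summand $\mathcal H(\delta_{j-1}/\delta_j)$ of $\mathcal H(D)$. The two mechanisms are dual faces of the same count (the proof of Theorem \ref{maintheorem_sm} converts generating sets into Hankel symbols and back), so neither is circular with respect to (\ref{computeBeurling}); yours is more transparent on the operator side and, usefully, makes explicit the flip bookkeeping ($\widetilde{\delta_k}$ versus $\delta_k$, and $\widetilde{\widetilde\Delta}=\Delta$) that the paper elides when it passes from (\ref{quasidegree}) to the statement $\deg_B(\Delta)=\deg_B(\Theta)$, while the paper's route is slightly lighter, needing only (\ref{main2k}) and a direct kernel computation rather than the scalar cyclic-vector theory for model spaces. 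I see no gap in your proposal.
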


\begin{proof} Let
$m:=\min\, \bigl\{k:\ \delta_k =\delta_{k+1}\bigr\}$. \ Then by the
Nordgren-Moore Theorem, $\Delta$ is quasi-equivalent  to
$\Theta\equiv \hbox{diag}\,(\delta_0/\delta_1, \cdots,
\delta_{m-1}/\delta_m, 1, \cdots, 1)$. \ We now take
$$
\Phi:=\begin{bmatrix}
\delta_0/\delta_1&0&\cdots&0\\
0&\delta_1/\delta_2&&\vdots\\
\vdots&&\ddots&0\\
0&\cdots&0&\delta_{m-1}/\delta_m\\
0&\cdots&0&1\\
&\vdots&&\vdots\\
0&\cdots&0&1
\end{bmatrix}\in H^{\infty}_{M_{N\times m}}.
$$
Then a direct calculation shows that
$$
\hbox{ker}\, H_{\Phi^*}=\left(\sum_{k=1}^m \bigoplus
(\delta_{k-1}/\delta_k)H^2\right)\bigoplus H^2_{\mathbb C^{N-m}}=
\Theta H^2_{\mathbb C^n}.
$$
It thus follows from (\ref{main2k}) and (\ref{quasidegree}) that
$\deg_B(\Delta)=\deg_B(\Theta)\le m$.
\end{proof}

\bigskip

\begin{cor}
If $\Theta$ is a diagonal inner matrix function of the form
$\Theta:=\hbox{diag}(\theta_1,\cdots, \theta_N)$ (where each
$\theta_i$ is a scalar inner function) then
$$
\hbox{deg}_B(\Theta)=\max\hbox{card}
\Bigl\{\sigma: \ \sigma\subseteq\{1,\cdots,N\}, \ \hbox{g.c.d.}
\{\theta_i: i\in\sigma\} \ne 1\Bigr\}.
$$
\end{cor}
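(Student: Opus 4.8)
The plan is to combine Proposition \ref{prop6.1} (applied to a suitable square completion of $\Theta$) with the Nordgren-Moore computation of the invariants $\delta_k$ for a diagonal inner function, together with the lower bound $\hbox{deg}_B\le\mu_T$ versus $\mu_T=\hbox{deg}_B$ already available from Theorem \ref{maintheorem_sm}. First I would reduce to the square case: if $\Theta=\hbox{diag}(\theta_1,\dots,\theta_N)$ then $\Theta$ itself is already an $N\times N$ square inner matrix function, so Proposition \ref{prop6.1} applies directly and gives $\hbox{deg}_B(\Theta)\le\min\{k:\delta_k=\delta_{k+1}\}$, where $\delta_k=\hbox{g.c.d.}$ of the inner parts of all minors of order $N-k$ of $\Theta$. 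For a diagonal matrix, a minor of order $N-k$ is (up to sign) a product of $N-k$ of the $\theta_i$'s corresponding to a choice of $N-k$ rows and the same $N-k$ columns (off-diagonal minors vanish), so $\delta_k=\hbox{g.c.d.}\{\prod_{i\in\tau}\theta_i:\tau\subseteq\{1,\dots,N\},\ \hbox{card}\,\tau=N-k\}$.

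Next I would carry out the elementary divisor bookkeeping. Writing $c:=\max\hbox{card}\{\sigma:\hbox{g.c.d.}\{\theta_i:i\in\sigma\}\ne 1\}$, I claim $\min\{k:\delta_k=\delta_{k+1}\}=c$. For the inequality $\min\{k:\delta_k=\delta_{k+1}\}\le c$: when $N-k\le N-c$, i.e. $k\ge c$, any subset $\tau$ of size $N-k\le N-c$ contains (or can be enlarged within a size-$(N-k)$ window to avoid) ... more precisely, since the maximal set with nontrivial g.c.d. has size $c$, for $k\ge c$ one can always choose among the size-$(N-k)$ subsets one whose complement contains the ``bad'' indices, forcing $\delta_k$ to stabilize; a cleaner way is to invoke the Nordgren-Moore normal form: $\Theta$ is quasi-equivalent to $\hbox{diag}(\delta_0/\delta_1,\dots,\delta_{N-1}/\delta_N)$, and for a diagonal matrix this normal form is just the rearrangement of the $\theta_i$ into a divisibility chain $\psi_1\,|\,\psi_2\,|\,\cdots\,|\,\psi_N$ (the ``invariant factors'' of the diagonal inner function, obtainable by the usual $\hbox{g.c.d.}$/$\hbox{l.c.m.}$ extraction). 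The number of $\psi_j$ that are $\ne 1$ equals $c$, because a divisibility chain $\psi_1\,|\,\cdots\,|\,\psi_N$ has exactly $c$ nontrivial entries iff the largest family of the original $\theta_i$ sharing a common nontrivial inner factor has size $c$. Hence $\delta_{k-1}/\delta_k=\psi_{N-k+1}$, and $\delta_k=\delta_{k+1}$ exactly when $\psi_{N-k}=1$, i.e. for $k\ge c$; so $\min\{k:\delta_k=\delta_{k+1}\}=c$.

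Then, combining with Proposition \ref{prop6.1}, $\hbox{deg}_B(\Theta)\le c$. For the reverse inequality I would produce, for a maximizing set $\sigma$ with $p:=\hbox{card}\,\sigma=c$ and $\omega:=\hbox{g.c.d.}\{\theta_i:i\in\sigma\}\ne 1$, a lower bound $\hbox{deg}_B(\Theta)\ge p$. By Theorem \ref{maintheorem_sm} (or Remark \ref{rembdbffvfv}(a), which gives $\hbox{deg}_B(\Delta)\le\mu_T$ in general, and the equality $\mu_T=\hbox{deg}_B(\Delta)$ when $\dim E^\prime<\infty$), it suffices to show the model operator $T=S_{\mathbb C^N}^*|_{\mathcal H(\Theta)}$ has $\mu_T\ge p$, equivalently that $\mathcal H(\Theta)$ contains no generating set of fewer than $p$ vectors. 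Now $\mathcal H(\Theta)=\bigoplus_{i=1}^N\mathcal H(\theta_i)$, and the restriction of $T$ to the $\sigma$-coordinates is $\bigoplus_{i\in\sigma}S^*|_{\mathcal H(\theta_i)}$, each summand annihilated by $\widetilde{\omega}$ (since $\omega\,|\,\theta_i$). This is a $C_0$-operator with a common nontrivial minimal annihilator on $p$ summands; the classical theory of $C_0$-multiplicity (\cite[Appendix 1]{Ni1}, invoked via the Nordgren-Moore Theorem as in Proposition \ref{prop6.1}) gives $\mu\ge p$ for such a direct sum because its Jordan model has $p$ blocks all divisible by $\omega$. Passing from the compression on the $\sigma$-block to $\mathcal H(\Theta)$: a generating set for $\mathcal H(\Theta)$ projects to a generating set for the $\sigma$-block, so $\mu_T\ge p=c$. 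Therefore $\hbox{deg}_B(\Theta)\ge c$, and combined with the upper bound $\hbox{deg}_B(\Theta)=c$, as claimed.

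The main obstacle I expect is the reverse inequality, specifically making rigorous the lower bound $\mu\ge p$ for the $\sigma$-block without reproving the hard direction of $C_0$-multiplicity theory; the cleanest route is to cite the Nordgren-Moore normal form (already used in Proposition \ref{prop6.1}) for the $p\times p$ diagonal inner function $\hbox{diag}(\theta_i:i\in\sigma)$, whose invariant factors all have $\omega$ as a divisor, so its Jordan form genuinely has $p$ blocks and the model operator has multiplicity $p$ by \eqref{computeBeurling}. A secondary, purely combinatorial point requiring care is the identity ``number of nontrivial invariant factors of a diagonal inner function $=\max\hbox{card}\{\sigma:\hbox{g.c.d.}\{\theta_i:i\in\sigma\}\ne 1\}$'', which follows by localizing at each inner factor (each prime/singular factor and each Blaschke point) and counting multiplicities, exactly as in the classical structure theorem for finitely generated torsion modules.
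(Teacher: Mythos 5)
Your route is essentially the paper's own: the paper proves this corollary by combining the classical $C_0$ multiplicity formula (\ref{computeBeurling}) with Theorem \ref{maintheorem_sm}, and these are exactly your two ingredients — your upper bound via Proposition \ref{prop6.1} and your lower bound via the reducing $\sigma$-block are just the two halves of (\ref{computeBeurling}) reorganized. Your way of making the lower bound concrete is correct and clean: since $\mathcal H(\Theta)=\bigoplus_i\mathcal H(\theta_i)$ and the $\sigma$-block is a reducing subspace, the orthogonal projection of any generating set generates the block, the block operator is a $C_0$-operator with square diagonal characteristic function $\mathrm{diag}(\theta_i:i\in\sigma)$ whose invariant factors are all divisible by the nontrivial $\hbox{g.c.d.}$ of its entries (via the standard divisibility chain of the quotients $\delta_{k}/\delta_{k+1}$), so (\ref{computeBeurling}) gives multiplicity $p=\hbox{card}\,\sigma$, and Theorem \ref{maintheorem_sm} converts $\mu_T\ge p$ into $\hbox{deg}_B(\Theta)\ge p$.

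Two points in your write-up need repair, neither fatal. First, the claim that the Nordgren--Moore normal form of $\mathrm{diag}(\theta_1,\dots,\theta_N)$ is ``just the rearrangement of the $\theta_i$ into a divisibility chain'' is false: $\mathrm{diag}(z,b_\alpha)$ with $z,b_\alpha$ coprime has invariant factors $1$ and $zb_\alpha$, which is not a permutation of $(z,b_\alpha)$; the correct procedure is the iterated g.c.d./l.c.m.\ extraction you mention parenthetically, so drop the ``rearrangement'' phrasing. Note also that, given your separate operator-theoretic lower bound, the upper bound does not need the full identity $\min\{k:\delta_k=\delta_{k+1}\}=c$ but only the easy half $\delta_c=\delta_{c+1}$, which has a one-line proof: for a diagonal matrix the nonzero minors are the principal ones, so $\delta_k=\hbox{g.c.d.}\{\prod_{i\in\tau}\theta_i:\hbox{card}\,\tau=N-k\}$; if $\hbox{card}\,\tau'=N-c-1$ and $\sigma'$ is its complement (so $\hbox{card}\,\sigma'=c+1$ and $\hbox{g.c.d.}\{\theta_j:j\in\sigma'\}=1$ by maximality of $c$), then $\delta_c$ divides $\theta_j\prod_{i\in\tau'}\theta_i$ for every $j\in\sigma'$, hence divides $\prod_{i\in\tau'}\theta_i\cdot\hbox{g.c.d.}\{\theta_j:j\in\sigma'\}=\prod_{i\in\tau'}\theta_i$, giving $\delta_c\mid\delta_{c+1}$ and so $\delta_c=\delta_{c+1}$. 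Second, your ``localize at each prime/singular factor'' justification is not literally available for singular inner functions (singular measures have no primes); to make it rigorous, treat the Blaschke divisors (which are atomic) as you say, and for the singular parts take Radon--Nikodym derivatives with respect to the sum of the finitely many singular measures involved, so that g.c.d.\ and l.c.m.\ become pointwise min and max almost everywhere and the counting reduces to the sorted-values computation; alternatively invoke the Riesz decomposition property of the cone of positive measures. With these two repairs your argument is sound and matches the paper's proof, which cites (\ref{computeBeurling}) and Theorem \ref{maintheorem_sm} and leaves the diagonal-matrix combinatorics implicit.
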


\begin{proof}
This follows at once from (\ref{computeBeurling}) and Theorem
\ref{maintheorem_sm}.
\end{proof}

\bigskip

Now Proposition \ref{prop6.1} together with Theorem
\ref{maintheorem_sm} gives a simple proof for ``$\le$"  in
(\ref{computeBeurling}). \ Consequently, in (\ref{concludein}), we
may take ``$=$" in place of ``$\le$". \ However we were unable to
derive a similar formula to (\ref{concludein}) for {\it non-square}
inner matrix function. \ Thus we would like to pose:

\medskip

\begin{prob}\label{prob111}
If $\Delta$ is an $n\times m$ inner matrix function, describe
$\hbox{deg}_B(\Delta)$ in terms of its entries (e.g., minors).
\end{prob}

\vskip 1cm

%
%
%
%

\noindent {\bf \S\ 9.2. Spectra of model operators} \

\bigskip
\noindent
We recall that
if $\theta$ is a scalar inner function, then we may write
$$
\theta(\zeta)=B(\zeta) \hbox{exp}\left(-\int_{\mathbb T}
\frac{z+\zeta}{z-\zeta} d\mu(z)\right),
$$
where $B$ is a Blaschke product and $\mu$ is a singular measure on
$\mathbb T$ and that the spectrum, $\sigma(\theta)$, of $\theta$ is defined by
$$
\sigma(\theta):=\Bigl\{\lambda\in\hbox{cl}\,\mathbb D:\
\frac{1}{\theta}\ \hbox{can be continued analytically into a neighborhood of
$\lambda$}\Bigr\}.
$$
Then it was (\cite[p.63]{Ni1}) known that the
spectrum $\sigma(\theta)$ of $\theta$ is given
by
\begin{equation}\label{spect-measure}
\sigma(\theta)=\hbox{cl}\, \theta^{-1}(0)\, \bigcup \,
\hbox{supp}\,\mu.
\end{equation}
It was also (cf. \cite[p.72]{Ni1}) known that if $T \equiv
P_{\mathcal H(\Delta)}S_E\vert_{\mathcal H(\Delta)}\in C_0$, then
\begin{equation}\label{spect_C0}
\sigma(T)=\sigma(m_{\Delta}).
\end{equation}
In view of (\ref{spect_C0}), we may ask what is the spectrum of the
model operator $S_{E}^*|_{\mathcal H(\Delta)}$ ? \ Here is an
answer.

\medskip

\begin{prop}\label{propA}
Let $T:=S_{E}^*|_{\mathcal H(\Delta)}$ for an inner function
$\Delta$ with values in $\mathcal B(D,E)$. \ If $\Delta$ has a
meromorphic pseudo-continuation of bounded type in $\mathbb{D}^e$ and
$\omega_\Delta$ is the pseudo-characteristic scalar inner function
of $\Delta$, then
\begin{equation}\label{spect_C0.}
\overline{\sigma(\omega_{\Delta})} \subseteq \sigma(T).
\end{equation}
\end{prop}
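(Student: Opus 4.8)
The plan is to reduce the statement to the two-sided inner function $\Theta:=[\Delta,\Delta_c]$ and then invoke the spectral formula (\ref{spect_C0}) for $C_0$-operators. First I would record the relevant structure. Since $\Delta$ has a meromorphic pseudo-continuation of bounded type in $\mathbb{D}^e$, Lemma \ref{thgg668} gives that $\breve\Delta$ is of bounded type, so by Corollary \ref{thmthjdkfjkf} the function $\Theta=[\Delta,\Delta_c]$ is two-sided inner; moreover, by Lemma \ref{thnfhjjjfhbbbbskl} (and its proof) $\Theta$ itself has a meromorphic pseudo-continuation of bounded type in $\mathbb{D}^e$, with $m_{\Theta}=\omega_{\Theta}=\omega_{\Delta}$. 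By Lemma \ref{thm2.9}(c), $\ker H_{\Delta^*}=\Theta H^2_{D\oplus D^{\prime}}=\Delta H^2_D\oplus\Delta_c H^2_{D^{\prime}}$, hence $\mathcal H(\Theta)=H^2_E\ominus\ker H_{\Delta^*}$ is a closed subspace of $\mathcal H(\Delta)=H^2_E\ominus\Delta H^2_D$. Both are model spaces, hence invariant under $S_E^*$; therefore $\mathcal H(\Theta)$ is invariant under $T=S_E^*|_{\mathcal H(\Delta)}$, and $T_0:=T|_{\mathcal H(\Theta)}=S_E^*|_{\mathcal H(\Theta)}$ is a well-defined restriction of $T$ to an invariant subspace.

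Next I would compute $\sigma(T_0)$. Passing to adjoints, $Z_0:=T_0^*=P_{\mathcal H(\Theta)}S_E|_{\mathcal H(\Theta)}$. Since $\Theta$ is two-sided inner and has a meromorphic pseudo-continuation of bounded type in $\mathbb{D}^e$, Proposition \ref{C0condition} gives $S_E^*|_{\mathcal H(\Theta)}\in C_0$; as $C_0$ is closed under adjoints, $Z_0\in C_0$. By the remark following Definition \ref{def5.1} the minimal annihilator of $Z_0$ is $m_{\Theta}$, so (\ref{spect_C0}), applied to $\Theta$, yields $\sigma(Z_0)=\sigma(m_{\Theta})=\sigma(\omega_{\Delta})$, and therefore $\sigma(T_0)=\sigma(Z_0^*)=\overline{\sigma(\omega_{\Delta})}$.

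The remaining step is to show $\sigma(T_0)=\sigma_{\mathrm{ap}}(T_0)$, the approximate point spectrum. I would split $\sigma(T_0)$ along the unit circle. By (\ref{spect-measure}), $\sigma(\omega_{\Delta})\cap\mathbb D$ is exactly the set of zeros of $\omega_{\Delta}$ in $\mathbb D$, and these are precisely the eigenvalues of $Z_0$ (a standard fact about $C_0$-operators, cf. \cite{Ni1}); taking conjugates, $\sigma(T_0)\cap\mathbb D=\overline{\sigma(\omega_{\Delta})}\cap\mathbb D$ coincides with the point spectrum $\sigma_p(T_0)\subseteq\sigma_{\mathrm{ap}}(T_0)$. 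On the other hand $T_0$ is a contraction, so $\sigma(T_0)\subseteq\overline{\mathbb D}$ and $\sigma(T_0)\cap\mathbb T\subseteq\partial\sigma(T_0)\subseteq\sigma_{\mathrm{ap}}(T_0)$. Hence $\sigma(T_0)=\sigma_{\mathrm{ap}}(T_0)$. Because $\mathcal H(\Theta)$ is $T$-invariant, any sequence realizing $\lambda\in\sigma_{\mathrm{ap}}(T_0)$ already realizes $\lambda\in\sigma_{\mathrm{ap}}(T)$, so $\sigma_{\mathrm{ap}}(T_0)\subseteq\sigma_{\mathrm{ap}}(T)\subseteq\sigma(T)$. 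Combining, $\overline{\sigma(\omega_{\Delta})}=\sigma(T_0)=\sigma_{\mathrm{ap}}(T_0)\subseteq\sigma(T)$, which is (\ref{spect_C0.}).

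The main obstacle is this last step: one must rule out that a point of $\overline{\sigma(\omega_{\Delta})}$ sits in the residual spectrum of $T_0$, i.e. establish $\sigma(T_0)=\sigma_{\mathrm{ap}}(T_0)$. The part of the spectrum on $\mathbb T$ comes for free from the boundary-of-spectrum inclusion, but the part inside $\mathbb D$ requires the precise $C_0$-theoretic identification of the point spectrum of a $C_0$-operator (and of its adjoint) with the zeros of the minimal function, so that the open-disk part of the spectrum is accounted for by genuine eigenvectors rather than merely approximate ones. Everything else — the reduction to $\Theta$, the identification $m_{\Theta}=\omega_{\Delta}$, and the transfer from $T_0$ to $T$ — is routine given the results already established.
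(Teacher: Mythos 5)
Your argument is correct, and the first half (passing to $\Theta:=[\Delta,\Delta_c]$, which is two-sided inner with a meromorphic pseudo-continuation of bounded type and $m_\Theta=\omega_\Theta=\omega_\Delta$, and identifying $\sigma\bigl(S_E^*|_{\mathcal H(\Theta)}\bigr)=\overline{\sigma(\omega_\Delta)}$ via Proposition \ref{C0condition} and (\ref{spect_C0})) is the same as in the paper. Where you diverge is the transfer from $\mathcal H(\Theta)$ to $\mathcal H(\Delta)$. The paper exploits the orthogonal decomposition $\mathcal H(\Delta)=\mathcal H(\Theta)\oplus\Delta_c H^2_{D'}$ to write $T$ as an upper triangular $2\times 2$ operator matrix with $T_1=S_E^*|_{\mathcal H(\Theta)}$, and then uses the filling-in-holes theorem for triangular operator matrices (\cite{HLL}) together with the observation, via (\ref{spect-measure}), that $\sigma(T_1)$ has empty interior, to conclude $\sigma(T)=\sigma(T_1)\cup\sigma(T_2)\supseteq\overline{\sigma(\omega_\Delta)}$. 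You instead view $T_1$ only as the restriction of $T$ to the invariant subspace $\mathcal H(\Theta)$, correctly note that $\sigma$ of a restriction need not be contained in $\sigma(T)$, and repair this by proving $\sigma(T_1)=\sigma_{\rm ap}(T_1)$: inside $\mathbb D$ the spectrum consists of eigenvalues by the standard $C_0$ fact that the point spectrum is the (conjugated) zero set of the minimal function, and on $\mathbb T$ it lies in $\partial\sigma(T_1)\subseteq\sigma_{\rm ap}(T_1)$; then $\sigma_{\rm ap}(T_1)\subseteq\sigma_{\rm ap}(T)\subseteq\sigma(T)$. The trade-off: the paper's route needs the triangular-matrix spectral result and the empty-interior observation but no fine $C_0$ point-spectrum theory, while yours avoids the filling-in-holes machinery at the cost of invoking the $C_0$ description of the point spectrum (and the adjoint bookkeeping $T_1^*=P_{\mathcal H(\Theta)}S_E|_{\mathcal H(\Theta)}$, $m_{T_1^*}=m_\Theta$, which you handle correctly). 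Both arguments are complete; yours is a legitimate alternative, and its only soft spot is that the key $C_0$ fact is cited rather than proved, which is acceptable since it is indeed standard in \cite{Ni1}.
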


\begin{proof}
If $\Delta_c$ is the complementary factor, with values in $\mathcal
B(D^\prime, E)$, of $\Delta$, then by the proof of Lemma
\ref{thnfhjjjfhbbbbskl}, $[\Delta, \Delta_c]$ is two-sided inner and
has a meromorphic pseudo-continuation of bounded type in $\mathbb{D}^e$. \
Thus, by  Proposition \ref{C0condition}, $S_{E}^*|_{\mathcal
H([\Delta, \Delta_c])}$ belongs to $C_0$. \ Then by the Model
Theorem, we have
$$
S_{E}^*|_{\mathcal H([\Delta, \Delta_c])}\cong P_{\mathcal
H(\widetilde{[\Delta, \Delta_c]})}S_E|_{\mathcal
H(\widetilde{[\Delta, \Delta_c]})}.
$$
It thus follows from Lemma \ref{thnfhjjjfhbbbbskl} and
(\ref{spect_C0}) that
\begin{equation}\label{nhnroklkvfld}
\sigma(S_{E}^*|_{\mathcal H([\Delta,
\Delta_c])})=\sigma(m_{\widetilde{[\Delta,
\Delta_c]}})=\sigma(\widetilde{\omega_{\Delta}})=\overline{\sigma(\omega_{\Delta})}.
\end{equation}
On the other hand, observe
$$
[\Delta, \Delta_c] H^2_{D\oplus D^\prime}=\Delta H^2_D\oplus
\Delta_c H^2_{D^\prime},
$$
and hence
$$
\mathcal H(\Delta)=\mathcal H([\Delta, \Delta_c])\oplus \Delta_c
H^2_{D^\prime}.
$$
Thus we may write
\begin{equation}\label{Tmat}
T=\begin{bmatrix} T_1&\ast\\ 0&T_2\end{bmatrix}:
\begin{bmatrix}\mathcal H([\Delta, \Delta_c])\\ \Delta_c H^2_{D^\prime}\end{bmatrix}
\to \begin{bmatrix}\mathcal H([\Delta, \Delta_c])\\ \Delta_c
H^2_{D^\prime}\end{bmatrix}.
\end{equation}
Note that $T_1=S_{E}^*|_{\mathcal H([\Delta, \Delta_c])}$. \ Since
by (\ref{spect-measure}) and (\ref{nhnroklkvfld}), $\sigma (T_1)$
has no interior points, so that $\sigma(T_1)\cap \sigma(T_2)$ has no
interior points. \ Thus we have $\sigma(T)=\sigma(T_1)\cup
\sigma(T_2)$ because in the Banach space
setting, the passage from $\sigma\left(\begin{smallmatrix} A&C\\
0&B\end{smallmatrix}\right)$ to $\sigma(A)\cup\sigma(B)$ is the
filling in certain holes in $\sigma\left(\begin{smallmatrix} A&C\\
0&B\end{smallmatrix}\right)$, occurring in $\sigma(A)\cap \sigma(B)$
(cf. \cite{HLL}). \ Therefore, by (\ref{nhnroklkvfld}), we have $
\overline{\sigma(\omega_{\Delta})} \subseteq \sigma(T)$.
\end{proof}

\bigskip

We would like to pose:

\medskip

\begin{prob}\label{9222}
If $T:=S_{E}^*|_{\mathcal H(\Delta)}$ for an inner function
$\Delta$ having a
meromorphic pseudo-continuation of bounded type in $\mathbb{D}^e$, describe
the spectrum of $T$ in terms of the pseudo-characteristic scalar
inner function of $\Delta$.
\end{prob}

\vskip 1cm

%
%
%
%
%

\noindent {\bf \S\ 9.3. The spectral multiplicity of model operators} \

\bigskip
\noindent
It was known (cf. \cite[p. 41]{Ni1}) that if
$T:=S_{E}^*|_{\mathcal H(\Delta)}$
for an inner function $\Delta$ with values in $\mathcal B(E^\prime,E)$,
with $\dim\, E^\prime<\infty$, then
$\mu_T\le \dim\, E^\prime +1$.
Theorem \ref{hwysonghkkl} says that if $\Delta$
has a meromorphic
pseudo-continuation of bounded type in $\mathbb{D}^e$,
then
$$
\mu_T\le \dim\, E^\prime.
$$
However we were unable to find an example showing that Theorem
\ref{hwysonghkkl} may fail if the condition ``$\Delta$ has a
meromorphic pseudo-continuation of bounded type in $\mathbb{D}^e$" is dropped.
\ Thus we would like to pose:

\medskip

\begin{prob}\label{9333}
Find an example of the operator $T\equiv S_{E}^*|_{\mathcal
H(\Delta)}$ for an inner function $\Delta$ with values in $\mathcal
B(E^\prime,E)$, with $\dim\, E^\prime<\infty$, satisfying
$$
\mu_T= \dim\, E^\prime+1.
$$
\end{prob}

\vskip 1cm
%
%
%
%
\noindent
{\bf \S\ 9.4. The Model Theorem} \

\bigskip
\noindent
The Model Theorem (cf. p. \pageref{MT}) says that if $T\in C_{0\, \bigcdot}$,
i.e., $T\in \mathcal {B(H)}$ is a contraction such that
$\lim_{n\to\infty} T^n x=0$ for each $x\in H$, then
$T$ is unitarily equivalent to the truncated backward shift
$S_{E}^*|_{\mathcal H(\Delta)}$
with the characteristic function $\Delta$ with values in
$\mathcal B(E^\prime, E)$, where
\begin{equation}\label{94001}
E=\hbox{cl ran}\,(I-T^*T).
\end{equation}
However, we were unable to determine $E^\prime$ in terms of spectral
properties of $T$ as in (\ref{94001}). \ In Theorem \ref{thm822}, we
give a necessary condition for ``$\dim\, E^\prime<\infty$." \ Thus, we would
like to pose:
\medskip

\begin{prob}\label{9444}
Let $T\in C_{0\, \bigcdot}$ and $\Delta\in H^\infty(\mathcal B(E^\prime,
E))$ be the characteristic function of $T$. \ For which operator
$T$, we have $\dim\, E^\prime <\infty$\,?
\end{prob}

\vskip 1cm

%
%
%
%
%

\noindent
{\bf \S\ 9.5. Cowen's Theorem and Abrahamse's Theorem} \

\bigskip
\noindent
For $\Phi\in L^\infty(\mathcal B(E))$, write
$$
\mathcal{E}(\Phi):=\Bigl\{ K\in H^\infty(\mathcal B(E)): \
\Phi-K\Phi^*\in H^\infty(\mathcal B(E))\ \ \hbox{and}\ \ ||K||_\infty\le 1 \Bigr\},
$$
i.e., $\mathcal E(\Phi)=\{K\in\mathcal C(\Phi):\ ||K||_\infty\le
1\}$ (cf. p.\pageref{cphi}). \ If $\dim\,E=1$ and $\Phi\equiv
\varphi$ is a scalar-valued function then an elegant theorem of C.
Cowen (cf. \cite{Co1}, \cite{NT}, \cite{CL}) says that $\mathcal
E(\varphi)$ is nonempty if and only if $T_\varphi$ is hyponormal,
i.e., the self-commutator $[T_\varphi^*, T_\varphi]$ is positive
semi-definite. \ Cowen's Theorem is to recast the operator-theoretic
problem of hyponormality into the problem of finding a solution of
an interpolation problem. \ In \cite{GHR}, it was shown that the
Cowen's theorem still holds for a Toeplitz operator $T_\Phi$ with a
matrix-valued {\it normal} (i.e., $\Phi^*\Phi=\Phi\Phi^*$) symbol
$\Phi\in L^\infty_{M_n}$.

\medskip

We then have:

\medskip

\begin{prob}\label{9661}
Extend Cowen's theorem for a Toeplitz operator with
an operator-valued normal symbol $\Phi\in L^\infty(\mathcal B(E))$\,.
\end{prob}

\bigskip

We recall that an operator $T\in\mathcal {B(H)}$ is called {\it
subnormal} if $T$ has a normal extension, i.e., $T=N\vert_{\mathcal
H}$, where $N$ is a normal operator on some Hilbert space $\mathcal
K\supseteq \mathcal H$ such that $\mathcal H$ is invariant for $N$.
\ In 1979, P.R. Halmos posed the following problem, listed as
Problem 5 in his Lecture ``Ten problems in Hilbert space"
(\cite{Ha3}, \cite{Ha4}): Is every subnormal Toeplitz operator
$T_\varphi$ with symbol $\varphi\in L^\infty$ either normal or
analytic (i.e., $\varphi\in H^\infty$) ? \ In 1984, C. Cowen and J.
Long \cite{CoL} have answered this question in the negative. \ To date, a characterization of subnormality of Toeplitz
operators $T_{\varphi}$ in terms of the symbols $\varphi$ has not been found. \ The
best partial answer to Halmos' Problem 5 was given by
M.B. Abrahamse: If $\varphi\in L^\infty$ is such that $\varphi$ or
$\overline\varphi$ is of bounded type, then $T_\varphi$ is either
normal or analytic; this is called Abrahamse's Theorem. \ Very
recently, in \cite[Theorem 7.3]{CHL3}, Abrahamse's Theorem was
extended to the cases of Toeplitz operators $T_\Phi$ with
matrix-valued symbols $\Phi$ under some constraint on the symbols
$\Phi$; concretely, when ``$\Phi$ has a tensored-scalar singularity."

We would like to pose:

\medskip

\begin{prob}\label{9662}
Extend Abrahamse's Theorem to Toeplitz operators $T_\Phi$ with
operator-valued symbols $\Phi\in L^\infty(\mathcal B(E))$.
\end{prob}

\vskip 1cm

%
%
%
%

\newpage

%
%


\end{document}